\newtheorem{thm}{\normalfont\scshape Theorem}[section]
\newtheorem{prop}[thm]{\normalfont\scshape Proposition}
\newtheorem{lem}[thm]{\normalfont\scshape Lemma}
\newtheorem{cor}[thm]{\normalfont\scshape Corollary}
\theoremstyle{definition}
\newtheorem{defn}[thm]{\normalfont\scshape Definition}
\theoremstyle{remark}
\newtheorem{rem}[thm]{Remark}
\theoremstyle{remark}
\DeclareMathOperator{\TT}{\mathcal{T}}
\numberwithin{equation}{section}
\newcommand{\ccc}{\mathfrak{c}}
\newcommand{\qqq}{\mathfrak{q}}
\newcommand{\ddd}{\mathfrak{d}}
\newcommand{\ZZ}{\mathbb{Z}}
\newcommand{\CC}{\mathbb{C}}
\newcommand{\ppp}{\mathfrak{p}}
\newcommand{\NN}{\mathbb{N}}
\newcommand{\HH}{\mathcal{H\kern-.44em H}}
\newcommand{\git}{/\kern-.35em/}
\newcommand{\quot}{\mathrm{quot}}
\newcommand{\core}{\mathrm{core}}
\newcommand{\Sym}{\mathrm{Sym}}
\newcommand{\FF}{\mathbb{F}}
\newcommand{\UTor}{U_{\qqq,\ddd}(\ddot{\mathfrak{sl}}_r)}
\newcommand{\Sss}{\mathcal{S}}
\title{Tesler identities for wreath Macdonald polynomials}
\keywords{Macdonald polynomials, quantum toroidal algebra, shuffle algebra}
\subjclass[2020]{Primary: 05E05, 81R10; Secondary: 33D52, 81R12.}
\author{Marino Romero}
\address{Fakultät für Mathematik, Universität Wien, Austria}
\email{marino.romero@univie.ac.at}
\author{Joshua Jeishing Wen}
\address{Fakult\"{a}t f\"{u}r Mathematik, Universit\"{a}t Wien, Austria}
\email{joshua.jeishing.wen@univie.ac.at}
\dedicatory{In memory of Adriano M. Garsia}
\begin{document}

\begin{abstract}
We give an explicit formula for an operator that sends a wreath Macdonald polynomial to the delta function at a character associated to its partition.
This allows us to prove many new results for wreath Macdonald polynomials, especially pertaining to reciprocity: Macdonald--Koornwinder duality, evaluation formulas, etc.
Additionally, we initiate the study of wreath interpolation Macdonald polynomials, derive a plethystic formula for wreath $(q,t)$-Kostka coefficients, and present series solutions to the bispectral problem involving wreath Macdonald operators.
Our approach is to use the eigenoperators for wreath Macdonald polynomials that have been produced from quantum toroidal and shuffle algebras.
\end{abstract}
\maketitle

\section{Introduction}
Let $\Lambda$ be the ring of symmetric functions (in infinitely many variables), and set $\Lambda_{q,t} \coloneq \CC(q,t)\otimes\Lambda$.
The \textit{modified Macdonald polynomials} $\{H_\lambda\}$ are a distinguished basis of $\Lambda_{q,t}$ that have played a large role in algebraic combinatorics, algebraic geometry, integrable systems, knot theory, probability, quantum algebra, and other fields.
Macdonald polynomials satisfy an interesting and useful duality that is hinted at by \textit{Macdonald reciprocity}.
In terms of $H_\lambda$, it may be expressed as follows.
For a partition $\lambda$, let us view $\lambda$ as its set of boxes in the French convention; we assign each box a coordinate in $\ZZ^2$, with the box in the bottom-left corner assigned to $(0,0)$.
Write
\begin{align*}
B_\lambda  \coloneq  \sum_{(a,b)\in\lambda}q^at^b && \text{ and } &&
D_\lambda  \coloneq  (1-q)(1-t) B_\lambda -1.
\end{align*}
Reciprocity then takes the following form: for partitions $\lambda$ and $\mu$,
\begin{equation}
\frac{H_\lambda[D_\mu]}{H_\lambda[-1]}=\frac{H_\mu[D_\lambda]}{H_\mu[-1]}.
\label{MacRec}
\end{equation}
Here, the evaluations should be interpreted plethystically.
Equation (\ref{MacRec}) was proved by Koornwinder; a proof can also be found in \cite{Mac}.

The perspective we will take in accessing (\ref{MacRec}) and its deeper aspects comes from work of Garsia--Haiman--Tesler \cite{GHT}, which developed and refined ideas underlying the Garsia--Tesler proof of \textit{Macdonald integrality} \cite{GarsiaTesler}.
In \cite{GHT}, the authors prove what we call the \textit{Tesler identity}.
Namely, let
\begin{align*}
\Omega[X]& \coloneq \exp\left( \sum_{k>0} \frac{p_k}{k} \right),\\
\TT(f)& \coloneq  f[X+1],\\
\nabla H_\lambda& \coloneq  \left( \prod_{(a,b)\in\lambda}(-q^at^b) \right)H_\lambda,\\
\mathsf{V}& \coloneq \nabla\Omega\left[ \frac{X}{(1-q)(t-1)} \right]\TT\nabla.
\end{align*}
The identity is then
\begin{equation}
\Omega\left[ \frac{XD_\lambda}{(1-q)(t-1)} \right]=\mathsf{V}\left( \frac{H_\lambda}{H_\lambda[-1]} \right).
\label{TeslerId}
\end{equation}
Conceptually, the left-hand-side of (\ref{TeslerId}) is a delta function with respect to the \textit{modified Macdonald pairing} $\langle -, -\rangle_{q,t}'$ on $\Lambda_{q,t}$:
\[
\left\langle\Omega\left[ \frac{XD_\lambda}{(1-q)(t-1)} \right], f\right\rangle_{q,t}'=f[D_\lambda].
\]
The reciprocity (\ref{MacRec}) is a combination of (\ref{TeslerId}) and the fact that $\mathsf{V}$ is self-adjoint with respect to $\langle -,-\rangle_{q,t}'$:
\[
\left\langle \frac{H_{\lambda}}{H_\lambda[-1]},\mathsf{V}\left( \frac{H_\mu}{H_\mu[-1]} \right)\right\rangle_{q,t}'
=\left\langle \mathsf{V}\left(\frac{H_{\lambda}}{H_\lambda[-1]}\right), \frac{H_\mu}{H_\mu[-1]} \right\rangle_{q,t}'.
\]
Playing similar games with $\mathsf{V}$ and $\langle -,-\rangle_{q,t}'$, one can also prove \textit{Macdonald--Koornwinder duality}: introducing an extra variable $u$,
\begin{equation}
\frac{H_\lambda[1+uD_\mu]}{\displaystyle\prod_{(a,b)\in\lambda}(1-uq^at^b)}
=\frac{H_\mu[1+uD_\lambda]}{\displaystyle\prod_{(a,b)\in\mu}(1-uq^at^b)}.
\label{MK}
\end{equation}
Finally, we note that setting $\mu=\varnothing$ in (\ref{MK}) yields the \textit{evaluation formula}:
\begin{equation}
H_\lambda[1-u]=\prod_{(a,b)\in\lambda}(1-uq^at^b).
\label{MacEv}
\end{equation}
These are but a small fraction of the results one can tease out of the Tesler identity (\ref{TeslerId}).

Beyond type $A$, an analogue of (\ref{MacRec}) was proved by Cherednik \cite{ChereEv}.
Cherednik worked in the finite-variable setting, and he provided an underlying conceptual structure that explains the duality: the spherical \textit{double affine Hecke algebra} (DAHA) and its automorphism $\varphi$.  
To describe this, let us highlight two ways $f\in\Lambda_{q,t}$ can act on $\Lambda_{q,t}$: (1) via multiplication and (2) via its \textit{Delta operator} $\Delta[f]$, which acts diagonally on the basis $\{H_\lambda\}$ by setting
\[ 
\Delta[f](H_\lambda)=f[D_\lambda]H_\lambda.
\]
In the unmodified, finite-variable setting, the Delta operators are related to the \textit{Macdonald operators}.
The spherical subalgebra of the DAHA can be viewed as the algebra generated by multiplication and Macdonald operators, and $\varphi$ sends the former to the latter (but not vice-versa).
From this perspective, $\mathsf{V}$ also plays a role (cf. \cite{Garsia-Mellit}): for $f\in\Lambda_{q,t}$,
\begin{equation}
\Delta[f]\mathsf{V}=\mathsf{V}f,
\label{VMiki}
\end{equation}
where on the right-hand-side, we view $f$ as its multiplication operator.
Thus, commuting past $\mathsf{V}$ right-to-left performs a version of $\varphi$ in infinitely many variables.
In the infinite-variable setting, the spherical DAHA is replaced with the \textit{elliptic Hall algebra} (EHA) and $\varphi$ is replaced with the \textit{Miki automorphism} \cite{SchiffVass}.
Quite interestingly, shadows of the EHA are all over \cite{GHT}, a decade before the explosion of interest for this algebra in the 2010s.
This is because the EHA is the algebraic structure built out of Jing's vertex operators for Macdonald polynomials \cite{JingMac}, and these vertex operators are a key ingredient in \cite{GHT}. 

\subsection{Wreath Macdonald polynomials}
This paper is concerned with generalizing the picture above to the \textit{wreath Macdonald polynomials} defined by Haiman \cite{Haiman}.
Fix an integer $r\ge 1$.
The wreath Macdonald polynomials are elements of the tensor power $\Lambda_{q,t}^{\otimes r}$.
They are still indexed by a single partition and reduce to usual modified Macdonald polynomials when $r=1$.
We will therefore denote them again by $\{H_\lambda\}$.
An important ingredient of their definition is the \textit{core-quotient decomposition} of a partition, which yields a bijection:
\begin{align*}
\{\hbox{partitions}\}&\leftrightarrow\{\hbox{$r$-core partitions}\}\times\{\hbox{$r$-tuples of partitions}\}\\
\lambda &\mapsto \left( \core(\lambda), \quot(\lambda) \right).
\end{align*}
We review this decomposition in \ref{Partitions} below.
The decomposition depends on $r$, but as $r$ is fixed throughout, we will drop it from the notation.
Fixing an $r$-core $\alpha$, $\{H_\lambda\, |\, \core(\lambda)=\alpha\}$ gives a basis of $\Lambda_{q,t}^{\otimes r}$. 
The existence of $H_\lambda$ was first proved by \cite{BezFink}, which also proved an analogue of \textit{Macdonald positivity}.

The notion of ``colors'' permeates wreath Macdonald theory.
First, we index the tensorands of $\Lambda_{q,t}^{\otimes r}$ with $\ZZ/r\ZZ$ and denote by $p_n[X^{(i)}]$ the element with $p_n$ in the $i$th tensorand, i.e. the color $i$ power sum. With this indexing, $X^{(i)} = X^{(j)}$ if $i = j\hbox{ mod }r$.
A general element $f\in\Lambda_{q,t}^{\otimes r}$ depends on all $r$ families of variables, and so we denote it by $f[X^\bullet]$.
Plethysm in this setting takes on a richer character because we can take nontrivial linear maps across different colors.
Some important examples of this are the following:
\begin{align*}
p_n[\iota X^{(i)}]& \coloneq  p_n[X^{(-i)}],\\
p_n[\sigma X^{(i)}]& \coloneq  p_n[X^{(i+1)}],\\
p_n[(1-q\sigma)X^{(i)}]& \coloneq p_n[X^{(i)}]-q^np_n[X^{(i+1)}],\\
p_n\left[\frac{X^{(i)}}{(1-q\sigma)}\right]& \coloneq \frac{\sum_{j=0}^{r-1}q^jp_n[X^{(i+j)}]}{1-q^{nr}}=p_n[(1-q\sigma)^{-1}X^{(i)}].
\end{align*}
Likewise, we have a translation operator $\TT$ for each variable color, which we denote by $\TT[X^{(i)}]$.

To each box $\square=(a,b)$ in a partition, we assign its color to be $\bar{c}_\square \coloneq  b-a\hbox{ mod }r$.
In French convention, this is the SW-to-NE diagonal that the box lies on, modulo $r$.
Correspondingly, we assign the color $b-a\hbox{ mod }r$ to the character $q^at^b$, and set
\begin{align*}
B_\lambda^{(i)}& \coloneq \sum_{\substack{\square=(a,b)\in\lambda \\\bar{c}_\square =i}}q^at^b, &
B_\lambda^\bullet& \coloneq \sum_{i\in\ZZ/r\ZZ} B_\lambda^{(i)},\\
D_\lambda^{(i)}& \coloneq (1+qt)B^{(i)}-qB^{(i+1)}-tB^{(i-1)}-\delta_{i,0}, &
D_\lambda^\bullet& \coloneq \sum_{i\in\ZZ/r\ZZ}D_\lambda^{(i)}= (1-q)(1-t)B_\lambda^\bullet -1.
\end{align*}
When performing plethystic evaluations, we will assign $X^{(i)}$ to characters of color $i$.
For instance, 
\[
f[D_\lambda^\bullet] \coloneq f[X^\bullet]\,\bigg|_{X^{(i)}\mapsto D_\lambda^{(i)}}.
\]

Finally, color also plays a role in our notion of nabla operators.
Recall that the wreath Macdonald polynomials give a basis of $\Lambda_{q,t}^{\otimes r}$ only after restricting to partitions with a fixed $r$-core $\alpha$.
For $\lambda$ with $\core(\lambda)=\alpha$, we then define
\[
\nabla_\alpha H_\lambda=\left( \prod_{\substack{\square=(a,b)\in\lambda\backslash\alpha\\ \bar{c}_\square=0}}(-q^at^b) \right)H_\lambda
\]
and then define $\nabla_\alpha f$ by linearity.
Our notion of the nabla operator is significantly simpler than the one posed in \cite{OSWreath}.

\subsubsection{Main result}
Let
\[
\mathbb{E}_\lambda =\Omega\left[ \sum_{i\in\ZZ/r\ZZ}X^{(i)}\left( \frac{D_\lambda^\bullet}{(1-q)(t-1)} \right)^{(i)} \right],
\]
where in assigning colors to $D_\lambda^\bullet/(1-q)(1-t)$, we expand it into a power series assuming $|q|,|t|<1$.
There is a natural analogue of the modified Macdonald pairing, which we also denote by $\langle-,-\rangle_{q,t}'$, and $\mathbb{E}_\lambda$ plays the role of a delta function (Corollary \ref{DeltaFunc}):
\[
\langle\mathbb{E}_\lambda, f\rangle_{q,t}'=f[X^\bullet]\, \bigg|_{X^{(i)}\mapsto D_\lambda^{(-i)}} \eqcolon f[\iota D_\lambda^\bullet].
\]
The main theorem of this paper is the following wreath analogue of (\ref{TeslerId}):
\begin{thm}\label{TeslerThm}
Fix $r>2$.
For an $r$-core $\alpha$, let
\[
\mathsf{V}_\alpha \coloneq \nabla_\alpha\Omega\left[ \frac{X^{(0)}}{(1-q\sigma^{-1})(t\sigma-1)} \right]\TT[X^{(0)}]\nabla_\alpha.
\]
We then have $\nabla_{\core(\lambda)} H_\lambda = H_\lambda[\iota D_{w_0\core( \lambda)}^\bullet] H_\lambda$ and
\begin{equation}
\mathbb{E}_\lambda=\mathsf{V}_{\core(\lambda)}\left(\frac{H_\lambda}{H_\lambda[\iota D_{w_0\core(\lambda)}^\bullet]}\right).
\label{TeslerWr}
\end{equation}
\end{thm}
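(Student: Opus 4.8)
The plan is to deduce (\ref{TeslerWr}) from the abstract properties that the quantum toroidal and shuffle constructions attach to $\mathsf{V}_\alpha$, just as for $r=1$ the Tesler identity (\ref{TeslerId}) is a formal consequence of $\mathsf{V}$ implementing the Miki automorphism together with self-adjointness. The ingredients I will invoke are: (a) the reproducing/delta-function property of Corollary \ref{DeltaFunc}, $\langle\mathbb{E}_\mu,f\rangle_{q,t}'=f[\iota D_\mu^\bullet]$, and more generally that $\Omega$ of a colored Cauchy kernel pairs with $f$ to the value of $f$ at the corresponding colored alphabet; (b) self-adjointness of $\mathsf{V}_\alpha$ and of the wreath Delta operators $\Delta[f]$ (diagonal on $\{H_\mu:\core(\mu)=\alpha\}$ with $\Delta[f]H_\mu=f[\iota D_\mu^\bullet]H_\mu$, $\iota$ absorbed into the wreath Delta operator) with respect to $\langle-,-\rangle_{q,t}'$, up to the usual core bookkeeping; and (c) the wreath analogue of (\ref{VMiki}), the intertwining $\Delta[f]\,\mathsf{V}_\alpha=\mathsf{V}_\alpha\, f$ for $f\in\Lambda_{q,t}^{\otimes r}$ acting as multiplication on the right, i.e.\ the statement that $\mathsf{V}_\alpha$ realizes the Miki automorphism of $\UTor$ on the wreath Fock module $\Lambda_{q,t}^{\otimes r}$.

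\paragraph{The formal argument.}
Granting (a)--(c), fix $\lambda$ with $\core(\lambda)=\alpha$. For arbitrary $f$, writing $f=f\cdot 1$,
\begin{align*}
\langle \mathsf{V}_\alpha H_\lambda,\,f\rangle_{q,t}'
&\overset{\text{(b)}}{=}\langle H_\lambda,\,\mathsf{V}_\alpha f\rangle_{q,t}'
\overset{\text{(c)}}{=}\langle H_\lambda,\,\Delta[f]\,\mathsf{V}_\alpha(1)\rangle_{q,t}' \\
&\overset{\text{(b)}}{=}\langle \Delta[f]H_\lambda,\,\mathsf{V}_\alpha(1)\rangle_{q,t}'
= f[\iota D_\lambda^\bullet]\,\langle H_\lambda,\,\mathsf{V}_\alpha(1)\rangle_{q,t}'.
\end{align*}
Setting $c_\lambda\coloneq\langle H_\lambda,\mathsf{V}_\alpha(1)\rangle_{q,t}'$ and comparing with (a), the difference $\mathsf{V}_\alpha H_\lambda-c_\lambda\mathbb{E}_\lambda$ pairs to zero against every $f\in\Lambda_{q,t}^{\otimes r}$; since $\langle-,-\rangle_{q,t}'$ is nondegenerate on the graded completion in which both terms live, $\mathsf{V}_\alpha H_\lambda=c_\lambda\mathbb{E}_\lambda$. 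Thus (\ref{TeslerWr}) holds up to determining the scalar $c_\lambda$.

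\paragraph{Computing $c_\lambda$ and the eigenvalue formula.}
It remains to evaluate $c_\lambda$ and, in tandem, to prove $\nabla_\alpha H_\lambda=H_\lambda[\iota D_{w_0\alpha}^\bullet]H_\lambda$. Since $H_\alpha=1$ (the core-$\alpha$ polynomial with empty quotient is the multiplicative identity) and both $\nabla_\alpha$ and $\TT[X^{(0)}]$ fix it, $\mathsf{V}_\alpha(1)=\nabla_\alpha\,\Omega\!\left[\frac{X^{(0)}}{(1-q\sigma^{-1})(t\sigma-1)}\right]$; rewriting $(1-q\sigma^{-1})(t\sigma-1)=-(1-q\sigma^{-1})(1-t\sigma)$ exhibits the plethystic factor as a colored Cauchy kernel, and since $\nabla_\alpha H_\lambda=s_\lambda H_\lambda$ with $s_\lambda\coloneq\prod_{\square=(a,b)\in\lambda\backslash\alpha,\ \bar{c}_\square=0}(-q^at^b)$ by the definition of $\nabla_\alpha$, property (a) gives
\[
c_\lambda=s_\lambda\cdot H_\lambda[\iota\mathbf{1}^\bullet],\qquad\text{where}\quad \Omega\!\left[\sum_{i}X^{(i)}\Big(\tfrac{\mathbf{1}^\bullet}{(1-q)(t-1)}\Big)^{(i)}\right]=\Omega\!\left[\tfrac{-X^{(0)}}{(1-q\sigma^{-1})(1-t\sigma)}\right],
\]
so $\mathbf{1}^\bullet$ is the colored alphabet representing the unit in the $\sigma$-twisted sense. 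Evaluating $H_\lambda$ at $\iota\mathbf{1}^\bullet$ --- via the wreath counterpart of the classical $H_\lambda[1]=1$ together with the combinatorics of the core--quotient bijection --- produces simultaneously $c_\lambda=H_\lambda[\iota D_{w_0\alpha}^\bullet]$ (hence (\ref{TeslerWr}) in the stated form) and $s_\lambda=H_\lambda[\iota D_{w_0\alpha}^\bullet]$ (the eigenvalue formula); the dual core $w_0\alpha$ and the color-reversal $\iota$ enter precisely here, through the bra--ket structure of the wreath Macdonald pairing, which makes $\nabla_\alpha$ and $\nabla_{w_0\alpha}$ mutually adjoint.

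\paragraph{Main obstacle.}
The crux is (c): verifying that the explicitly defined $\mathsf{V}_\alpha=\nabla_\alpha\,\Omega\!\left[\frac{X^{(0)}}{(1-q\sigma^{-1})(t\sigma-1)}\right]\TT[X^{(0)}]\nabla_\alpha$ does intertwine multiplication operators with the wreath Delta operators, i.e.\ realizes the Miki automorphism of $\UTor$. I would carry this out in the vertex (Fock) representation of $\UTor$, in which the vertical Heisenberg acts by colored plethystic multiplication, the horizontal Heisenberg by the operators $\Delta[f]$, and $H_\lambda$ is obtained from the associated colored Jing-type vertex operators; the work is then to commute the three factors of $\mathsf{V}_\alpha$ --- the single-color translation $\TT[X^{(0)}]$, the color-$0$-twisted plethystic exponential, and the conjugating copies of $\nabla_\alpha$ --- past these vertex operators using the shuffle presentation of $\UTor$, and to match the outcome with the image of the vertical Heisenberg under the Miki automorphism. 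This is where the hypothesis $r>2$ is genuinely used, as the relevant toroidal/shuffle relations take their generic form only for $r\ge 3$. The remaining technical points --- convergence of the infinite products defining $\mathbb{E}_\lambda$, $\mathsf{V}_\alpha(1)$, and $\mathsf{V}_\alpha$ on a completion, and nondegeneracy of $\langle-,-\rangle_{q,t}'$ there --- are routine and handled degree-by-degree in the grading by $|\lambda|-|\alpha|$.
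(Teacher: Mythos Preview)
Your formal argument hinges on a property that fails precisely in the wreath setting: you assume in (b) that the Delta operators $\Delta[f]$ and the operator $\mathsf{V}_\alpha$ are self-adjoint for $\langle-,-\rangle_{q,t}'$. They are not. As the paper stresses (see the ``Dual polynomials'' discussion in the introduction and Corollary~\ref{NablaAdjoint}), for $r>1$ one has $\nabla_\alpha^\dagger=\nabla_{w_0\alpha}$ and hence $\mathsf{V}_\alpha^\dagger=\mathsf{V}_{w_0\alpha}$; likewise the adjoint of $\Delta[f]$ is a genuinely different operator $\Delta^\dagger[f]$ whose diagonal basis is $\{H_\lambda^\dagger\}$, not $\{H_\lambda\}$. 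Consequently your key step $\langle H_\lambda,\Delta[f]\,\mathsf{V}_\alpha(1)\rangle_{q,t}'=\langle \Delta[f]H_\lambda,\mathsf{V}_\alpha(1)\rangle_{q,t}'$ is unjustified, and the intertwining in (c) is also misstated: the correct relation (Proposition~\ref{MainPropertiesofV}) is $\Delta^\dagger[f[-\iota X^\bullet]]\,\mathsf{V}=\mathsf{V}\,\underline{f}$, with $\Delta^\dagger$ rather than $\Delta$. Your computation of $c_\lambda$ similarly moves $\nabla_\alpha$ across the pairing as if it were self-adjoint; once replaced by $\nabla_{w_0\alpha}$ it no longer acts diagonally on $H_\lambda$ (since $\core(\lambda)=\alpha$, not $w_0\alpha$), so that step collapses.

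The argument \emph{can} be repaired by passing to the extended space $\mathcal{W}=\Lambda_{q,t}^{\otimes r}\otimes\CC[Q]$ with the core-twisted pairing of \S\ref{PairCores}, where $\nabla$ and $\mathsf{V}$ are genuinely self-adjoint and the correct $\Delta/\Delta^\dagger$ adjunction applies. But once you do this, the two substantive inputs you need are exactly (i) the intertwining relation and (ii) the base case $\mathsf{V}(1\otimes e^\alpha)=\mathbb{E}_\alpha\otimes e^\alpha$, and these are the entire content of the paper's proof (\S\ref{Commute} and \S\ref{BaseCase}); your sketch of (c) via vertex operators does not address the delicate base-case computation, which is a lengthy pole/constant-term analysis exploiting that $\alpha$ is an $r$-core. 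Note also that the paper does \emph{not} obtain $\mathsf{V}H_\lambda\propto\mathbb{E}_\lambda$ by the adjunction route you propose: it instead proves directly (Lemma~\ref{DeltaDelta}, Theorem~\ref{VThm}) that $\Delta^\dagger[\hat e_n^{(p)}]$ and $\Delta^\dagger[\hat h_n^{(p)}]$ preserve $\mathrm{span}\{\mathbb{E}_\mu\}$ with triangular support, and combines this with the triangularity characterization of $H_\lambda$; this is what allows the argument to avoid wreath Pieri rules, which are unavailable.
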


\subsubsection{Symmetry of nabla}
The $w_0\core(\lambda)$ appearing in the denominator on the right-hand-side of (\ref{TeslerWr}) comes from an action of the symmetric group $\Sigma_r$ on partitions, defined in \ref{Reverse}; the element $w_0\in\Sigma_r$ is the longest element.
Another permutation that plays well with our machinery is the long cycle $\sigma$.
For any permutation $w$, we have $w\core(\lambda)=\core(w\lambda)$.

Concerning nabla, there are two significant new features in the wreath case:
\begin{itemize}
\item Besides $1$, every eigenvalue of $\nabla_\alpha$ is degenerate.
\item $\nabla_\alpha$ is not self-adjoint with respect to $\langle -, -\rangle_{q,t}'$; its adjoint is $\nabla_{w_0\alpha}$.
\end{itemize}
The second feature is basically responsible for $w_0\core(\lambda)$ appearing in (\ref{TeslerWr}).
The first feature gives room to produce variations on (\ref{TeslerWr}).
For instance, for $k\in\ZZ/r\ZZ$, we have the following (Lemma \ref{NablaK}):
\[
\nabla_{\sigma^{-k}\core(\lambda)}H_\lambda[\sigma^{-k}X^\bullet]=\left( \prod_{\substack{\square=(a,b)\in\lambda\backslash\alpha\\ \bar{c}_\square=0}}(-q^at^b) \right)H_\lambda[\sigma^{-k}X^\bullet].
\]
Thus, we can compensate for the $\sigma^{-k}$ plethysm by also changing the core parameter on nabla; we emphasize that this is a nontrivial symmetry.
This leads to the following shifted variant of Theorem \ref{TeslerThm}:

\begin{thm}\label{TeslerThmK}
Fix $r>2$ and $k\in\ZZ/r\ZZ$.
Let
\[
\mathbb{E}_\lambda^{(k)} =\Omega\left[ \sum_{i\in\ZZ/r\ZZ}X^{(i)}\left( \frac{D_\lambda^\bullet}{(1-q)(t-1)} \right)^{(i+k)} \right].
\]
For a $r$-core $\alpha$, we set
\[
\mathsf{V}_\alpha^{(k)} \coloneq \nabla_{\sigma^{-k}\alpha}\Omega\left[ \frac{X^{(-k)}}{(1-q\sigma^{-1})(t\sigma-1)} \right]\TT[X^{(-k)}]\nabla_{\sigma^{-k}\alpha}.
\]
We then have
\begin{equation}
 \mathbb{E}_\lambda^{(k)}=\mathsf{V}_{\core(\lambda)}^{(k)}\left( \frac{H_\lambda[\sigma^{-k}X^\bullet]}{H_\lambda[\iota D_{\core(w_0\lambda)}^\bullet]} \right).   
 \label{TeslerWrK}
\end{equation}
\end{thm}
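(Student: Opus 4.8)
The plan is to deduce Theorem \ref{TeslerThmK} from Theorem \ref{TeslerThm} by applying the colour-shift plethysm $X^{(i)}\mapsto\sigma^{-k}X^{(i)}=X^{(i-k)}$ to the whole of \eqref{TeslerWr}, so that the only real work is tracking how each ingredient transforms and then invoking the shifted nabla symmetry (Lemma \ref{NablaK}) to rewrite the resulting operator in the claimed form. First I would record the effect of the substitution on the left-hand side: applying $X^{(i)}\mapsto X^{(i-k)}$ to $\mathbb{E}_\lambda=\Omega\bigl[\sum_i X^{(i)}(D_\lambda^\bullet/(1-q)(t-1))^{(i)}\bigr]$ reindexes the sum and produces exactly $\mathbb{E}_\lambda^{(k)}=\Omega\bigl[\sum_i X^{(i)}(D_\lambda^\bullet/(1-q)(t-1))^{(i+k)}\bigr]$, since the colour-$i$ variable now gets paired with the colour-$(i+k)$ piece of $D_\lambda^\bullet$. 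On the right-hand side, the substitution sends $H_\lambda$ to $H_\lambda[\sigma^{-k}X^\bullet]$, leaves the scalar $H_\lambda[\iota D_{w_0\core(\lambda)}^\bullet]$ untouched (it is a scalar), and conjugates the operator $\mathsf{V}_{\core(\lambda)}$ by the plethysm $\sigma^{-k}$.

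The substantive step is the operator conjugation. I would let $\rho_k$ denote the algebra automorphism of $\Lambda_{q,t}^{\otimes r}$ given by $f[X^\bullet]\mapsto f[\sigma^{-k}X^\bullet]$, so that applying the substitution to \eqref{TeslerWr} yields $\mathbb{E}_\lambda^{(k)}=\rho_k\,\mathsf{V}_{\core(\lambda)}\,\rho_k^{-1}\bigl(\rho_k H_\lambda/H_\lambda[\iota D_{w_0\core(\lambda)}^\bullet]\bigr)$, and the task is to identify $\rho_k\,\mathsf{V}_{\core(\lambda)}\,\rho_k^{-1}$ with $\mathsf{V}_{\core(\lambda)}^{(k)}$. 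Writing $\mathsf{V}_\alpha=\nabla_\alpha\,\Omega[X^{(0)}/(1-q\sigma^{-1})(t\sigma-1)]\,\TT[X^{(0)}]\,\nabla_\alpha$, I would conjugate each of the three factors separately: $\rho_k$ shifts the colour index on the middle $\Omega$-operator and on $\TT$ from $0$ to $-k$ (since $\rho_k^{-1}$ sends $X^{(0)}$ to $X^{(-k)}$, noting the $\sigma$-operators inside commute through the reindexing because $\sigma$ is $\Sigma_r$-equivariant), giving $\Omega[X^{(-k)}/(1-q\sigma^{-1})(t\sigma-1)]$ and $\TT[X^{(-k)}]$ exactly as in $\mathsf{V}_\alpha^{(k)}$; and the conjugated nabla, $\rho_k\nabla_\alpha\rho_k^{-1}$, must be shown to equal $\nabla_{\sigma^{-k}\alpha}$. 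This last identity is precisely the content of Lemma \ref{NablaK}, which says $\nabla_{\sigma^{-k}\core(\lambda)}H_\lambda[\sigma^{-k}X^\bullet]=\bigl(\prod_{\square\in\lambda\backslash\alpha,\ \bar c_\square=0}(-q^at^b)\bigr)H_\lambda[\sigma^{-k}X^\bullet]$; comparing with the defining eigen-relation for $\nabla_\alpha$ shows $\rho_k\nabla_{\core(\lambda)}\rho_k^{-1}$ acts on the basis $\{H_\lambda[\sigma^{-k}X^\bullet]\}$ with the same eigenvalues that $\nabla_{\sigma^{-k}\core(\lambda)}$ does, whence the two operators coincide. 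Since $w_0\core(\lambda)=\core(w_0\lambda)$ by the stated compatibility of the $\Sigma_r$-action with cores, the scalar $H_\lambda[\iota D_{w_0\core(\lambda)}^\bullet]=H_\lambda[\iota D_{\core(w_0\lambda)}^\bullet]$ matches the denominator in \eqref{TeslerWrK}, and assembling the pieces gives \eqref{TeslerWrK}.

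The main obstacle I anticipate is not conceptual but bookkeeping: verifying that $\rho_k$ genuinely intertwines the $\sigma$-dependent plethystic operators $\Omega[X^{(0)}/(1-q\sigma^{-1})(t\sigma-1)]$ with their colour-shifted counterparts. One must check that the operator $(1-q\sigma^{-1})(t\sigma-1)$, acting on power sums by mixing adjacent colours, commutes appropriately with the global colour-rotation $\rho_k$ — concretely, that $\rho_k\circ(1-q\sigma^{-1})(t\sigma-1)=(1-q\sigma^{-1})(t\sigma-1)\circ\rho_k$ as operators on $\Lambda_{q,t}^{\otimes r}$, which holds because $\sigma$ and the constant rotation $\sigma^{-k}$ commute in $\Sigma_r$ but requires care about how these act through $\Omega$ (i.e.\ on each $p_n$ independently, with the spectral parameter $q^{nr}$ in the denominator of the inverse series being $\rho_k$-invariant). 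Once that commutation is pinned down, and once Lemma \ref{NablaK} is invoked for the nabla conjugation, the theorem follows formally; I would therefore structure the write-up as (i) state the substitution and compute its effect on $\mathbb{E}_\lambda$, (ii) a short lemma on $\rho_k$-equivariance of the plethystic building blocks, (iii) cite Lemma \ref{NablaK} for $\rho_k\nabla_\alpha\rho_k^{-1}=\nabla_{\sigma^{-k}\alpha}$, and (iv) reassemble.
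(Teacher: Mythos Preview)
Your proposal is correct and is essentially the paper's own argument: the paper defines $\mathsf{V}^{(k)}\coloneq\sigma^{-k}\mathsf{V}\sigma^{k}$ (your $\rho_k$-conjugation), records $\mathbb{E}_\lambda^{(k)}=\mathbb{E}_\lambda[\sigma^{-k}X^\bullet]$ (Proposition~\ref{EShift}), and then uses Lemma~\ref{NablaK} in the form $\sigma^{-k}\nabla_{\core(\lambda)}\sigma^{k}=\nabla_{\sigma^{-k}\core(\lambda)}$ together with the obvious conjugation of the $\Omega$- and $\TT$-factors to obtain the explicit formula for $\mathsf{V}^{(k)}$ (Theorems~\ref{VMacK} and~\ref{ShiftedTes}). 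One small slip: in your parenthetical it is $\rho_k$ (not $\rho_k^{-1}$) that sends $X^{(0)}$ to $X^{(-k)}$, since conjugating a multiplication operator by $\rho_k$ amounts to applying $\rho_k$ to the multiplier; your conclusions about the $\Omega$- and $\TT$-factors are nonetheless correct.
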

Let us note that $\mathbb{E}_\lambda^{(k)}$ is also some form of delta function (Proposition \ref{ShiftDelta}):
\[
\langle\mathbb{E}_{\lambda}^{(k)}, f\rangle_{q,t}'= f[X^\bullet]\,\bigg|_{X^{(i)}\mapsto D_\lambda^{(k-i)}} \eqcolon f[\sigma^k\iota D_\lambda^\bullet].
\]

\subsubsection{Applications}\label{Apps}
As in \cite{GHT}, Theorem \ref{TeslerThm} and its shifted generalization, Theorem \ref{TeslerThmK}, have many immediate applications.
In the wreath case, however, these all yield new results.
We cover these consequences in Section \ref{Consequence}.
Here is a list:
\begin{enumerate}
\item We prove an analogue of Macdonald--Koornwinder duality (\ref{MK}).
For $k\in\ZZ/r\ZZ$ and partitions $\lambda$ and $\mu$ with $\core(\mu)=w_0\sigma^{-k}\core(\lambda)$, this takes the following form:
\begin{equation}
\frac{H_\lambda[1+u\sigma^k\iota D_{\mu}^\bullet]}{\displaystyle\prod_{\substack{\square=(a,b)\in\lambda\backslash\core(\lambda)\\\bar{c}_\square=k}}(1-uq^at^b)}
=
\frac{H_{\mu}[1+u\sigma^k\iota D_{\lambda}^\bullet]}
{\displaystyle\prod_{\substack{\square=(a,b)\in \mu\backslash\core(\mu)\\\bar{c}_\square=k}}(1-uq^at^b)}.
\label{MKWr}
\end{equation}
Note that if we fix $\lambda$, $\core(\mu)$ may change as we vary $k$.
\item Setting $\mu=w_0\sigma^{-k}\core(\lambda)$ in (\ref{MKWr}), we obtain the evaluation formula:
\begin{equation}
H_\lambda[1+u\sigma^{k}\iota D_{w_0\sigma^{-k}\core(\lambda)}^\bullet]=\prod_{\substack{\square=(a,b)\in\lambda\backslash\core(\lambda)\\\bar{c}_\square=k}}(1-uq^at^b).
\label{EvalWr}
\end{equation}
This proves an evaluation conjecture used in \cite{AyersDinkins}.
\item Taking $u\rightarrow\infty$ in (\ref{EvalWr}), we obtain
\[
H_\lambda[\sigma^k\iota D_{w_0\sigma^{-k}\core(\lambda)}^\bullet]=\prod_{\substack{\square=(a,b)\in\lambda\backslash\core(\lambda)\\\bar{c}_\square=k}}(-q^at^b).
\]
Note that the case $k=0$ gives a formula for the evaluation $H_\lambda[\iota D_{w_0\core(\lambda)}^\bullet]$ appearing in the denominator of \eqref{TeslerWr} and \eqref{TeslerWrK}.
It also coincides with the eigenvalue of $\nabla_{\core(\lambda)}$ at $H_\lambda$.

\item Taking $u\rightarrow\infty$ in (\ref{MKWr}), we obtain a shifted reciprocity: if $\core(\mu)=w_0\sigma^{-k}\core(\lambda)$,
\[
\frac{H_\lambda[\sigma^k\iota D_\mu^\bullet]}{H_\lambda[\sigma^k\iota D_{\core(\mu)}^\bullet]}
=\frac{H_\mu[\sigma^k\iota D_\lambda^\bullet]}{H_\mu[\sigma^k\iota D_{\core(\lambda)}^\bullet]}.
\]
\item We define an analogue of the Fourier pairing from \cite{ChereEv} and prove an analogue of \cite[Theorem 1.2]{ChereMM}: for $\lambda$ and $\mu$ with $\core(\mu)=w_0\core(\lambda)$,
\begin{align*}
&\left\langle\TT[X^{(0)}]\nabla_{\core(\mu)}H_\mu, \TT[X^{(0)}]\nabla_{\core(\lambda)}H_\lambda\right\rangle_{q,t}'\\
&=H_\lambda[\iota D_{\mu}^\bullet]H_\mu[\iota D_{\core(\lambda)}^\bullet]=H_\mu[\iota D_\lambda^\bullet]H_\lambda[\iota D_{\core(\mu)}^\bullet].
\end{align*}
\item We define wreath analogues of \textit{interpolation Macdonald polynomials} (cf. \cite{SahiInt, KnopInt}) and give an explicit plethystic formula for them.
\item Given an $r$-tuple of partitions $\vec{\gamma}=(\gamma^{(0)},\gamma^{(1)},\ldots, \gamma^{(r-1)})$, one can define the \textit{multi-Schur function}
\[
s_{\vec{\gamma}} \coloneq  s_{\gamma^{(0)}}[X^{(0)}]s_{\gamma^{(1)}}[X^{(1)}]\cdots s_{\gamma^{(r-1)}}[X^{(r-1)}].
\]
The set $\{s_{\vec{\gamma}}\}$ forms a basis of $\Lambda_{q,t}^{\otimes r}$.
The \textit{wreath $(q,t)$-Kostka coefficients} $\{K_{\vec{\gamma},\lambda}\}$ are then the expansion coefficients in
\[
H_\lambda=\sum_{\vec{\gamma}}K_{\vec{\gamma},\lambda}s_{\vec{\gamma}}.
\]
We give a plethystic formula for $K_{\vec{\gamma},\lambda}$ in the spirit of \cite{GarsiaTesler,GHT}.
\item Finally, we define a wreath analogue of the \textit{global spherical function} from \cite{ChereMM} (cf. \cite{StokC}).
In a similar vein, for each $r$-core $\alpha$, we give the formula in terms of wreath Macdonald polynomials for a series $\mathcal{F}_\alpha[X^\bullet, Y^\bullet]$ that satisfies
\begin{align*}
\mathcal{F}_\alpha[X^\bullet, Y^\bullet]&=\mathcal{F}_{w_0\alpha}[Y^\bullet, X^\bullet]\\
\mathcal{F}_\alpha[X^\bullet, \iota D_\lambda^\bullet]&=H_\lambda\hbox{ if $\core(\lambda)=\alpha$}.
\end{align*}
\end{enumerate}

\subsection{Methods}
Notably missing in our discussion of the wreath case thus far are the wreath analogues of Delta operators.
They do indeed play a major role in this paper, but introducing them requires some machinery.
Hinted at in the work of Varagnolo--Vasserot \cite{VVCyclic} and made precise in \cite{WreathEigen}, the replacement for the EHA in the wreath setting is the \textit{quantum toroidal algebra} $\UTor$.
Namely, let $Q$ be the $A_{r-1}$ root lattice. 
$\UTor$ acts on the space $\mathcal{W} \coloneq \CC(q^{\frac{1}{2}},t^{\frac{1}{2}})\otimes\Lambda^{\otimes r}\otimes\CC[Q]$-- the so-called \textit{vertex representation} \cite{Saito}.
For $\alpha\in Q$, we denote by $e^\alpha\in\CC[Q]$ the corresponding generator.
In \ref{Partitions}, we specify a bijection
\[
\{\hbox{$r$-cores}\}\leftrightarrow Q.
\]
We will abuse notation and treat $\core(\lambda)\in Q$.
It is then natural to situate all the wreath Macdonald polynomials in $\mathcal{W}$ as
\[
H_\lambda\mapsto H_\lambda\otimes e^{\core(\lambda)},
\]
where they form a basis.

In \cite{WreathEigen}, the second author proved that when $r\ge3$, a large commutative subalgebra of $\UTor$ acts diagonally on $H_\lambda\otimes e^{\core(\lambda)}$.
The work \cite{OSW} then produced explicit formulas for some of these operators; when specialized to finitely-many variables, they yield wreath analogues of the Macdonald operators.
These formulas are almost always very complicated.
We catalogue in \ref{WreathOp} the operators used in this paper.
It is interesting that while in the original $r=1$ case, one can write down eigenoperators for $H_\lambda$ without knowledge of the EHA, it seems much less likely that one would organically stumble upon our eigenoperators in the $r>1$ case\footnote{We expect the formulas for the eigenoperators in \ref{WreathOp} to still hold when $r=2$. When $r=2$, the presentation for $\UTor$ is different. One needs to establish analogues of \cite{Tsym} and \cite{WreathEigen}, which at this point should require more precision than ingenuity.}.
Some calculations that are trivial in the $r=1$ case become far more complicated in the wreath setting (cf. \ref{BaseCase}). 

Almost all of our departures from \cite{GHT} are somehow related to the wreath Delta operators.
We highlight some of them below.

\subsubsection{Dual polynomials}
As hinted by the fact that $\nabla_\alpha$ is not necessarily self-adjoint with respect to $\langle - ,-\rangle_{q,t}'$, the $H_\lambda$ are no longer orthogonal under the pairing.
There seems to be no sensible way to fix this, and thus one must introduce dual versions 
\[
H^\dagger_\lambda \coloneq H_\lambda[-\iota X^\bullet ;q^{-1},t^{-1}].
\]
that satisfy, for $\lambda$ and $\mu$ with $\core(\lambda)=\core(\mu)$,
\[
\langle H^\dagger_\lambda, H_\mu\rangle_{q,t}'=0
\]
if and only if $\lambda\not=\mu$.
We note that in the case $r=1$, $H^\dagger_\lambda$ is a scalar multiple of $H_\lambda$ \cite{GarsiaHaimanqtCat}.

Correspondingly, in addition to the $\Delta$-operators, $\Delta[f]$, which act diagonally on $H_\lambda\otimes e^{\core(\lambda)}$, one has adjoint $\Delta^\dagger$-operators, $\Delta^\dagger[f]$, acting diagonally on $H^\dagger_\lambda\otimes e^{w_0\core(\lambda)}$. 
This is responsible for the appearance of $w_0$ in our reciprocity statements.
By writing $\nabla_\alpha$ in terms of $\Delta[f]$ for some $f$, we are able to compute its adjoint and show that it is $\nabla_{w_0\alpha}$.
In particular, this shows that $\nabla_{w_0\alpha}$ has an exotic diagonal basis given by $\{H_\lambda^\dagger \,|\, \core(\lambda)=\alpha\}$.
One can try to define other versions of nabla by using the characters of boxes with color $k\not=0$, but these seem to lack this adjunction property (or something compensating for its lack).
Finally, we note that the analogue of (\ref{VMiki}) roughly takes the form
\[
\Delta^\dagger[f]\mathsf{V}=\mathsf{V}f.
\]

\subsubsection{The series $\mathbb{D}$}
In \cite{GHT}, a significant role is played by the series of operators $D$: applied to $f\in\Lambda$, we obtain
\[
D(f) \coloneq  f\left[X+(1-q)(1-t)z^{-1}\right]\Omega[-zX] = \sum_{k} z^k D_k(f),
\]
 as a series in the variable $z$. This
$D$ is a modified version of the vertex operator from \cite{JingMac}.
In particular, $D_0$ is a $\Delta$-operator.
A critical technical result in \cite{GHT} is its Theorem 2.1: that any $f\in\Lambda_{q,t}$ can be written in terms of iterated applications of $e_1$ (by multiplication) and $D_1$ to $1$.

Here, an analogous role is played by our series $\mathbb{D}$ (cf. \ref{DD*}), which is now a series in $r$ variables $z_0, z_1,\ldots, z_{r-1}$.
To specify an operator, one now feeds it a vector of degrees $\vec{k}=(k_0,k_1,\ldots, k_{r-1})\in\ZZ^r$ to obtain an operator $\mathbb{D}_{\vec{k}}$.
Certain instances yield $\Delta$-operators, while others yield the adjoint $\Delta^\dagger$-operators.
In proving the shifted Tesler identity (Theorem \ref{TeslerThmK}), we introduce shifted $\Delta$- and $\Delta^\dagger$-operators, and $\mathbb{D}$ also yields instances of those as well.
The analogue of \cite[Theorem 2.1]{GHT} is Theorem \ref{InductThm}.

\subsubsection{A Pieri-free proof}
A significant obstacle in the development of wreath Macdonald theory is the lack of Pieri rules.
Multiple pathways to approach them seem to indicate that they can be very difficult to access \cite{OSW, WreathOrth}.
The proofs in \cite{GHT} utilize certain degree 1 Pieri rules.
We are able to sidestep this, which gives a novel approach even in the $r=1$ case.
Instead, in \ref{DeltaSec}, we study the action of certain higher order $\Delta^\dagger$-operators on the span of delta functions $\{\mathbb{E}_\lambda\}$.
In the $r=1$ case, these $\Delta^\dagger$-operators correspond under duality to $e_n$- and $g_n$-Pieri rules \cite{Mac}.
Our strategy is guided by Anton Mellit's original ideas for rethinking the $r=1$ case.

\subsection{Outline}
The paper is organized as follows:
\begin{itemize}
\item Section \ref{WreathSec} introduces the basic definitions for wreath Macdonald polynomials and their structures.
We review the core-quotient decomposition, discuss matrix plethysm (cf. \cite[2.2]{OSWreath}), define the wreath Macdonald polynomials, and introduce the pairing $\langle -, -\rangle_{q,t}'$.
\item Section \ref{TorShuffSec} introduces $\UTor$ and its corresponding shuffle algebra.
The goal is to review the framework of \cite{WreathEigen, OSW} for producing $\Delta$- and $\Delta^\dagger$-operators.
In \ref{FTSec} and \ref{WreathOp}, we introduce $\mathbb{D}$ and the various eigenoperators used in this paper.
Equipped with the formulas for these operators, we hope that others can work on wreath Macdonald polynomials without reference to $\UTor$.
\item Section \ref{TeslerSec} proves Theorem \ref{TeslerThm}.
We are very much intellectually indebted to \cite{GHT}, but as stated in the introduction, the new setting forces significant technical departures.
\item Section \ref{VarSec} proves variations of Theorem \ref{TeslerThm}.
This includes its shifted version, Theorem \ref{TeslerThmK}.
We also prove a ``star'' version involving inverse characters, i.e. $q^{-a}t^{-b}$.
\item Section \ref{Consequence} proves the consequences of Theorem \ref{TeslerThmK}, as outlined in \ref{Apps}.
\end{itemize}

\section{Wreath Macdonald theory}\label{WreathSec}
Throughout this paper, we fix $r>0$.

\subsection{Partitions}\label{Partitions}
We begin by setting the notation for partitions and reviewing the core-quotient decomposition.

\subsubsection{Basic notation}\label{PartNot}
A partition $\lambda=(\lambda_1,\lambda_2,\cdots)$ is a descending sequence of nonnegative integers such that $\lambda_k=0$ for some $k$.
Denote by $\ell(\lambda)$ the greatest index such that $\lambda_{\ell(\lambda)}\not=0$.
Setting
\[
|\lambda|=\sum_{i=1}^{\ell(\lambda)}\lambda_i,
\]
we say $\lambda\vdash n$ if $|\lambda|=n$.
We can also record a partition $\lambda$ using a sequence
$
(1^{m_1}, 2^{m_2},\ldots,)
$
where
\[
m_k \coloneq \#\left\{ i\, |\, \lambda_i=k \right\}.
\]
The \textit{transposed} partition ${}^t\lambda$ has parts
\[
{}^t\lambda_i \coloneq \#\left\{ k\, |\, m_k\ge i \right\}.
\]
For $\lambda,\mu\vdash n$, $\lambda\le\mu$ will denote dominance order: namely that
\[
\lambda_1+\cdots+\lambda_i\le\mu_1+\cdots\mu_i
\]
for all $i$.

To $\lambda$, we associate its \textit{Young diagram}, which we draw following the French convention.
For example, $\lambda=(4,2,2)$ has the Young diagram
\vspace{.1in}

\centerline{\begin{tikzpicture}[scale=.5]
\draw (0,0)--(4,0)--(4,1)--(2,1)--(2,3)--(0,3)--(0,0);;
\draw (0,1)--(4,1);;
\draw (0,2)--(2,2);;
\draw (1,0)--(1,3);;
\draw (2,0)--(2,1);;
\draw (3,0)--(3,1);;
\end{tikzpicture}}

\vspace{.1in}

\noindent We assign Cartesian coordinates to the boxes in the Young diagram such that the box at the lower left corner has coordinate $(0,0)$.
Thus, $(i,j)\in\lambda$ is in the $(i+1)$-th column and $(j+1)$-th row.
The \textit{content} of $\square=(i,j)$ is the quantity
\[
c_\square \coloneq  j-i
\]
Graphically, it is the SW-NE diagonal that the box lies on.
Note that the diagram of ${}^t\lambda$ is obtained by reflecting $\lambda$ across the content 0 line.
We call the class of $c_\square$ modulo $r$ its \textit{color} and denote it by $\bar{c}_\square$.

Next, we introduce some notation involving $r$.
We will be concerned with $r$-tuples of partitions:
\[
\vec{\lambda}=(\lambda^{(0)},\ldots, \lambda^{(r-1)}).
\]
To avoid confusion, we will index the coordinates within the tuple by superscripts.
It will be useful to view the indexing set of the tuple as $\ZZ/r\ZZ$, so that $\lambda^{(i)} = \lambda^{(j)}$ if $i= j \mod r$.
We denote the size of a tuple by
\[
|\vec{\lambda}| \coloneq \sum_{i\in\ZZ/r\ZZ}|\lambda^{(i)}|
\]
and write $\vec{\lambda}\vdash n$ if $|\vec{\lambda}| = n$.
Concerning a single partition $\lambda$, a \textit{ribbon} of length $r$ is a set of $r$ contiguous boxes on the outer (NE) edge of $\lambda$ whose removal leaves behind another partition. (This is often referred to as a rim-hook in other contexts.)
An $r$-core partition is a partition with no ribbons of length $r$.
An addable corner of a partition is a cell $\square$ for which $\lambda \cup \square$ is a partition. Similarly, a removable corner is a cell $\square$ for which $A-\square$ is a partition.
We denote by $A_{i}(\lambda)$ and $R_i(\lambda)$ the addable and removable boxes of color $i$.

Finally, we discuss characters of cells and partitions.
For $\square=(a,b)$, we denote by $\chi_\square=q^{a}t^{b}$ the character of the cell.
Denote by
\[
B_\lambda^{(i)} \coloneq \sum_{\substack{\square\in\lambda\\ \bar{c}_\square=i}}\chi_\square
\]
the generating function of characters of boxes with color $i$.
The sum over all boxes will be denoted
\[
B_\lambda^\bullet=\sum_{i\in\ZZ/r\ZZ}B_\lambda^{(i)}.
\]
These notations will interact with matrix plethysms in \ref{ColChar} below.

\subsubsection{Young--Maya correspondence}
Another visual representation of a partition is its \textit{Maya diagram}.
A Maya diagram is a function $m:\ZZ\rightarrow\{\pm 1\}$ such that
\[
m(n)=
\begin{cases}
-1 & n\gg0\\
1 & n\ll 0
\end{cases}.
\]
We can visualize $m$ as a string of black and white beads indexed by $\ZZ$: the bead at position $n$ is black if $m(n)=1$ and white if $m(n)=-1$.
These beads will be arranged horizontally with the index increasing towards the \textit{left}.
Between indices $0$ and $-1$, we draw a notch and call it the \textit{central line}.
Notice that beads left of the central line (nonpositive values of $n$) will tend to be white while beads right of the central line (negative values of $n$) will tend to be black.
The \textit{charge} $c(m)$ is the difference in the number of discrepancies:
\[
c(m)=\#\left\{ n\ge 0 \, |\, m(n)=1 \right\}
-\#\left\{n<0\, |\, m(n)= -1\right\}.
\]
The \textit{vacuum diagram} is the Maya diagram with only white beads left of the central line and only white beads right of the central line.

To a partition $\lambda$, we obtain a Maya diagram $m(\lambda)$ via its \textit{edge sequence}---we call this the \textit{Young--Maya correspondence}.
Namely, we tilt $\lambda$ by $45$ degrees counterclockwise and draw the content lines (which are now vertical).
We label by $n$ the gap between content lines $n$ and $n+1$.
Within the gap labeled by $n$, the outer (formerly NE) edge of $\lambda$ either has slope $1$ or $-1$; and $m(\lambda)(n)$ takes the value of the slope of that segment.
Outside of $\lambda$, we assign the default limit assignments (or we can also use a more generous definition of outer edge).
For example, Figure \ref{fig:maya-diagram} gives the Young--Maya correspondence for $\lambda=(4,2,2)$.

\begin{figure}[h]
\begin{tikzpicture}[scale=.5]
\draw[thick] (-2.5,5)--(-0.5,7)--(1.5,5)--(3.5,7)--(4.5,6);;
\draw (-1.5,4)--(0.5,6);;
\draw (-.5,3)--(3.5,7);;
\draw (-2.5,5)--(.5,2)--(4.5,6);;
\draw (-1.5,6)--(1.5,3);;
\draw (1.5,5)--(2.5,4);;
\draw (2.5,6)--(3.5,5);;
\draw (3.5,7)--(4.5,6);;
\draw (-4,0) node {$\cdots$};;
\draw (-3,0) node {3};;
\draw (-3,1) circle (5pt);;
\draw (-2,0) node {2};;
\draw[fill=black] (-2,1) circle (5pt);;
\draw (-1,0) node {1};;
\draw[fill=black] (-1,1) circle (5pt);;
\draw (0,0) node {0};;
\draw (0,1) circle (5pt);;
\draw (.5,.5)--(.5,1.5);;
\draw (1,0) node {-1};;
\draw (1,1) circle (5pt);;
\draw (2,0) node {-2};;
\draw[fill=black] (2,1) circle (5pt);;
\draw (3,0) node {-3};;
\draw[fill=black] (3,1) circle (5pt);;
\draw (4,0) node {-4};;
\draw (4,1) circle (5pt);;
\draw (5,0) node {-5};;
\draw[fill=black] (5,1) circle (5pt);;
\draw (6,0) node {$\cdots$};;
\draw[dashed] (1.5,2)--(1.5,9);;
\draw[dashed] (2.5,2)--(2.5,9);;
\draw[dashed] (3.5,2)--(3.5,9);;
\draw[dashed] (4.5,2)--(4.5,9);;
\draw[dashed] (0.5,2)--(0.5,9);;
\draw[dashed] (-0.5,2)--(-0.5,9);;
\draw[dashed] (-1.5,2)--(-1.5,9);;
\draw[dashed] (-2.5,2)--(-2.5,9);;
\end{tikzpicture}
 \label{fig:maya-diagram}
\caption{The Maya diagram for the partition $(4,2,2)$.}
\end{figure}
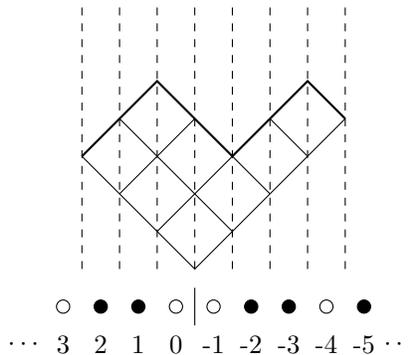

\vspace{.1in}

\begin{prop}\label{YoungMaya}
The Young--Maya correspondence is a bijection between partitions and charge $0$ Maya diagrams.
\end{prop}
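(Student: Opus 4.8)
The plan is to establish the bijection by writing down an explicit inverse, with the charge-$0$ condition entering exactly as the normalization that makes everything consistent. First I would record the forward map as a statement about lattice paths: after tilting $\lambda$ as above, its outer edge --- extended to the left and right by the default (vacuum) slopes --- is a bi-infinite lattice path whose segment in content-gap $n$ has slope $\pm1$, and $m(\lambda)(n)$ is that slope. Because $\lambda$ has finitely many boxes, for $n\ll0$ (resp.\ $n\gg0$) the relevant diagonals are empty and the path coincides with the vacuum path; hence $m(\lambda)(n)=1$ for $n\ll0$ and $m(\lambda)(n)=-1$ for $n\gg0$, so $m(\lambda)$ is a genuine Maya diagram and the map is well defined. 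Injectivity is then immediate, since $\lambda$ is recovered as the finite region enclosed between this path and the two vacuum rays, and so is determined by the slope sequence.

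To invert the map, take a Maya diagram $m$ of charge $0$; only finitely many beads differ from the vacuum, so running the recipe backwards builds a bi-infinite lattice path with slope $m(n)$ in gap $n$ that is eventually a fixed ray on each side. The charge-$0$ hypothesis is precisely what forces the two rays to line up --- a diagram of charge $c\ne0$ produces a path whose left and right tails are displaced by $c$, so that one recovers a partition diagram only after a compensating shift --- and therefore, when $c=0$, the region between the path and the rays is the Young diagram of a partition $\lambda(m)$. By construction $\lambda\mapsto m(\lambda)$ and $m\mapsto\lambda(m)$ are mutually inverse; along the way one also reads off the familiar closed formula for the parts of $\lambda(m)$ in terms of the ordered positions of the black beads of $m$.

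The one step needing real care is that $m(\lambda)$ always has charge $0$, together with the converse that every charge-$0$ diagram is of the form $m(\lambda)$; I would obtain both at once by induction on $|\lambda|$. For $\lambda=\varnothing$ one gets the vacuum diagram, of charge $0$ by inspection. Adding an addable box to a partition corresponds, in the abacus picture, to transposing an adjacent pair of opposite-coloured beads; a short case check --- according to where that pair sits relative to the central line --- shows the discrepancy count defining the charge is unchanged, so $m(\lambda)$ has charge $0$ for all $\lambda$. Conversely a charge-$0$ diagram differs from the vacuum in only finitely many beads and can be sorted to the vacuum by a finite sequence of such transpositions, each of them the removal of a box, so it equals $m(\lambda)$ for the corresponding $\lambda$. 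The main obstacle is thus the charge bookkeeping itself: matching the paper's discrepancy-counting definition of charge with the ``tail rays line up'' condition, and keeping the left-increasing index convention and the slope/colour dictionary straight throughout.
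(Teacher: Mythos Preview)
The paper does not supply a proof of this proposition; it is stated as a standard fact and immediately used. Your argument is correct and is the usual one: the edge-sequence/lattice-path description gives the forward map, the inverse is obtained by reading a charge-$0$ Maya diagram as a slope sequence, and the charge-$0$ condition is exactly what makes the two vacuum tails meet so that the enclosed region is a finite Young diagram. The inductive check that box addition corresponds to swapping an adjacent black--white pair (hence preserves charge) is the cleanest way to handle the bookkeeping, and your converse via sorting to the vacuum is fine.

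One minor comment: you don't actually need the induction once you have the ``tails line up'' interpretation. If the outer boundary of $\lambda$ starts and ends on the same horizontal (content-$0$) line after tilting, then the number of up-steps left of the central line equals the number of down-steps right of it automatically, which is the charge-$0$ statement directly. Either route is acceptable; since the paper omits the proof entirely, your write-up would serve as a complete justification.
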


\subsubsection{Cores and quotients}\label{CoreQuot}
The decomposition of a partition into its $r$-core and $r$-quotient is a basic ingredient in the definition of wreath Macdonald polynomials.
We describe it here in terms of the Maya diagram.
Given $\lambda$ and $0\le i\le r-1$, we take the following \textit{quotient subdiagrams} of $m(\lambda)$:
\[
m_i(\lambda)(n) \coloneq  m(\lambda)(i+nr).
\]
We call a bead in $m(\lambda)$ that goes to $m_i(\lambda)$ an \textit{$i$-bead}.
This subdiagram may possibly have nonzero charge, $c_i$.
We move the notch in $m_i(\lambda)$ to the left by $c_i$ units (right by $|c_i|$ units if $c_i<0$) and treat it as the central line, obtaining a charge zero Maya diagram.
By Proposition \ref{YoungMaya}, this corresponds to a partition $\lambda^{(i)}$, and we set
\[
\quot(\lambda)=(\lambda^{(0)},\ldots, \lambda^{(r-1)}).
\]
In Figure \ref{fig:quotient}, we show that for $r=3$ and $\lambda=(4,2,2)$, $\quot(\lambda)=( \varnothing, \varnothing, (1,1))$. Here, we have drawn the central line coming from $m(\lambda)$ using solid notches and the $c_i$-shifted central lines using dashed notches.

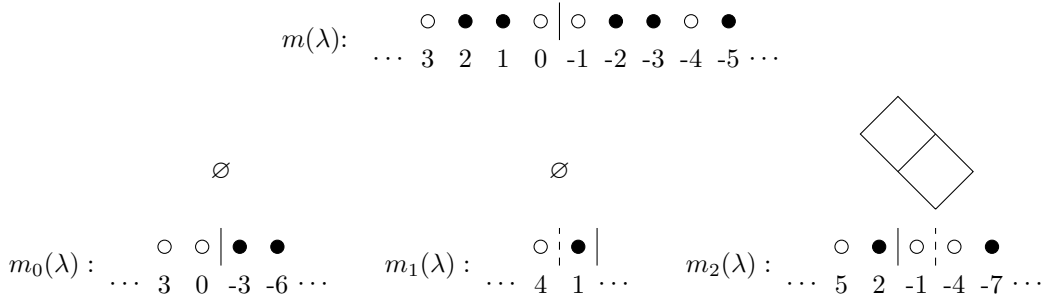
\begin{figure}
\centerline{
\begin{tikzpicture}[scale=.5]
\draw (-8,.5) node {$m(\lambda)$:};;
\draw (-6,0) node {$\cdots$};;
\draw (-5,0) node {3};;
\draw (-5,1) circle (5pt);;
\draw (-4,0) node {2};;
\draw[fill=black] (-4,1) circle (5pt);;
\draw (-3,0) node {1};;
\draw[fill=black] (-3,1) circle (5pt);;
\draw (-2,0) node {0};;
\draw (-2,1) circle (5pt);;
\draw (-1.5,.5)--(-1.5,1.5);;
\draw (-1,0) node {-1};;
\draw (-1,1) circle (5pt);;
\draw (0,0) node {-2};;
\draw[fill=black] (0,1) circle (5pt);;
\draw (1,0) node {-3};;
\draw[fill=black] (1,1) circle (5pt);;
\draw (2,0) node {-4};;
\draw (2,1) circle (5pt);;
\draw (3,0) node {-5};;
\draw[fill=black] (3,1) circle (5pt);;
\draw (4,0) node {$\cdots$};;
\draw (-15,-5.5) node {$m_0(\lambda):$};;
\draw (-13,-6) node {$\cdots$};;
\draw (-12,-6) node {3};;
\draw (-12,-5) circle (5pt);;
\draw (-11,-6) node {0};;
\draw (-11,-5) circle (5pt);;
\draw (-10,-6) node {-3};;
\draw[fill=black] (-10,-5) circle (5pt);;
\draw (-10.5,-3) node {$\varnothing$};;
\draw (-9,-6) node {-6};;
\draw[fill=black] (-9,-5) circle (5pt);;
\draw (-8,-6) node {$\cdots$};;
\draw (-10.5,-5.5)--(-10.5,-4.5);;
\draw (-5,-5.5) node {$m_1(\lambda):$};;
\draw (-3,-6) node {$\cdots$};;
\draw (-2,-6) node {4};;
\draw[dashed] (-1.5,-5.5)--(-1.5,-4.5);;
\draw (-1.5, -3) node {$\varnothing$};;
\draw (-2,-5) circle (5pt);;
\draw (-1,-6) node {1};;
\draw[fill=black] (-1,-5) circle (5pt);;
\draw (-.5,-5.5)--(-.5,-4.5);;
\draw (0,-6) node {$\cdots$};;
\draw (3, -5.5) node {$m_2(\lambda):$};;
\draw (5,-6) node {$\cdots$};;
\draw (6,-6) node {5};;
\draw (6,-5) circle (5pt);;
\draw (7,-6) node {2};;
\draw[fill=black] (7,-5) circle (5pt);;
\draw (7.5,-5.5)--(7.5,-4.5);;
\draw (8.5, -4)--(9.5,-3)--(7.5,-1)--(6.5,-2)--(8.5,-4);;
\draw (7.5,-3)--(8.5,-2);;
\draw (8,-6) node {-1};;
\draw[dashed] (8.5,-5.5)--(8.5,-4.5);;
\draw (8,-5) circle (5pt);;
\draw (9,-6) node {-4};;
\draw (9,-5) circle (5pt);;
\draw (10,-6) node {-7};;
\draw[fill=black] (10,-5) circle (5pt);;
\draw (11,-6) node {$\cdots$};;
\end{tikzpicture}
}
\caption{The quotient decomposition for $\lambda = (4,2,2)$ when $r=3$.}
\label{fig:quotient}
\end{figure}

The $r$-core $\core(\lambda)$ is the result of removing ribbons of length $r$ from $\lambda$ until it is no longer possible.
We can obtain it from $m(\lambda)$ by changing all $m_i(\lambda)$ into the vacuum diagrams centered at the $c_i$-shifted central lines and then reconstituting the total Maya diagram.
For $r=3$ and $\lambda=(4,2,2)$ we obtain $\core(\lambda)= (1,1)$:

\vspace{.1in}

\centerline{
\begin{tikzpicture}[scale=.5]
\draw (-6,.5) node {$m(\lambda)$:};;
\draw (-4,0) node {$\cdots$};;
\draw (-3,0) node {3};;
\draw (-3,1) circle (5pt);;
\draw (-2,0) node {2};;
\draw[fill=black] (-2,1) circle (5pt);;
\draw (-1,0) node {1};;
\draw[fill=black] (-1,1) circle (5pt);;
\draw (0,0) node {0};;
\draw (0,1) circle (5pt);;
\draw (.5,.5)--(.5,1.5);;
\draw (1,0) node {-1};;
\draw (1,1) circle (5pt);;
\draw (2,0) node {-2};;
\draw[fill=black] (2,1) circle (5pt);;
\draw (3,0) node {-3};;
\draw[fill=black] (3,1) circle (5pt);;
\draw (4,0) node {-4};;
\draw (4,1) circle (5pt);;
\draw (5,0) node {-5};;
\draw[fill=black] (5,1) circle (5pt);;
\draw (6,0) node {$\cdots$};;
\draw (0.5,-2) node {$\downarrow$};;
\draw (-3,-9) node {$\cdots$};;
\draw (-2,-9) node {2};;
\draw (-2,-8) circle (5pt);;
\draw (-1,-9) node {1};;
\draw[fill=black] (-1,-8) circle (5pt);;
\draw (0,-9) node {0};;
\draw (0,-8) circle (5pt);;
\draw (.5,-8.5)--(.5,-7.5);;
\draw (1,-9) node {-1};;
\draw (1,-8) circle (5pt);;
\draw (2,-9) node {-2};;
\draw[fill=black] (2,-8) circle (5pt);;
\draw (3,-9) node {-3};;
\draw[fill=black] (3,-8) circle (5pt);;
\draw (4,-9) node {-4};;
\draw[fill=black] (4,-8) circle (5pt);;
\draw (5,-9) node {-5};;
\draw[fill=black] (5,-8) circle (5pt);;
\draw (6,-9) node {$\cdots$};;
\draw (.5,-7)--(1.5, -6)--(-.5,-4)--(-1.5, -5)--(0.5,-7);;
\draw (-.5,-6)--(.5,-5);;
\end{tikzpicture}
}

\vspace{.1in}

\noindent
We can obtain any $r$-core in this way, and so they are in bijection with the tuples of charges $(c_0,\ldots, c_{r-1})$.
Note that we have
\[
c_0+\cdots+c_{r-1}=0
\]
due to $m(\lambda)$ being charge zero.
Thus, we naturally view $(c_0,\ldots, c_{r-1})$ as an element of the $A_{r-1}$ root lattice $Q$.

\begin{prop}\label{CoreQuotDec}
We have the following:
\begin{enumerate}
\item The core-quotient decomposition gives a bijection
\[
\{\hbox{\rm partitions}\}\leftrightarrow\{\hbox{\rm$r$-core partitions}\}\times\left\{ \hbox{\rm$r$-multipartitions} \right\}
\]
\item The map $\lambda\mapsto (c_0,\ldots, c_{r-1})$ restricts to a bijection between $r$-core partitions and the $A_{r-1}$ root lattice $Q$.
\item The number of addable minus removable boxes of each color can be read from the charges:
\begin{equation}
\#A_i(\lambda)-\#R_i(\lambda)=\delta_{i,0}+c_{i-1}-c_i
\label{ChargeAR}
\end{equation}
\item Adding an equal number boxes of each color does not change the core.
\end{enumerate}
\end{prop}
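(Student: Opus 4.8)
The plan is to prove all four parts directly from the Maya-diagram description of the core-quotient decomposition given just above, using Proposition~\ref{YoungMaya}.

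For part (1), I would show the decomposition map is a bijection by exhibiting an inverse. Given an $r$-core $\gamma$ (equivalently, by part (2), a tuple of charges $(c_0,\dots,c_{r-1})$ with $\sum c_i = 0$) and an $r$-multipartition $(\lambda^{(0)},\dots,\lambda^{(r-1)})$, reconstruct a Maya diagram as follows: for each $i$, let $m_i$ be the charge-zero Maya diagram of $\lambda^{(i)}$ under Proposition~\ref{YoungMaya}, shift its central line right by $c_i$ units to obtain a charge-$c_i$ diagram, and then interleave the $r$ diagrams by declaring the bead at position $i + nr$ of the total diagram to equal $m_i(n)$. Since $\sum c_i = 0$, the total charge is $0$, so by Proposition~\ref{YoungMaya} this is $m(\mu)$ for a unique partition $\mu$. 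One checks this construction is inverse to $\lambda \mapsto (\core(\lambda), \quot(\lambda))$ by unwinding the definitions of the quotient subdiagrams $m_i(\lambda)(n) = m(\lambda)(i+nr)$ and of the core-reconstitution procedure; both directions amount to the observation that passing to quotient subdiagrams and reinterleaving are mutually inverse operations on bead sequences, with the charge bookkeeping handled by the shifts.

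For part (2), surjectivity onto $Q$ is clear from the construction above (take all $\lambda^{(i)} = \varnothing$), and injectivity follows since an $r$-core is determined by its Maya diagram, which in turn — having vacuum quotient subdiagrams — is determined by the shift data $(c_0,\dots,c_{r-1})$. The constraint $\sum c_i = 0$ was already derived in the text from charge-$0$-ness. For part (3), I would compute $\#A_i(\lambda) - \#R_i(\lambda)$ locally on the Maya diagram: an addable box of color $i$ corresponds to a position $n \equiv \text{(appropriate residue)}$ where $m(\lambda)$ has a white bead immediately followed (toward the right) by a black bead, i.e. a ``$-1$ then $+1$'' pattern at a color-$i$ boundary; a removable box of color $i$ corresponds to the reverse pattern. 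Restricting to the $i$-beads $m_i(\lambda)$, the net count of (white-then-black) minus (black-then-white) transitions in a bi-infinite $\pm1$ sequence that is eventually $-1$ on the right and $+1$ on the left equals $1$; translating back through the shift that records $c_i$ and tracking how the color boundaries sit relative to the central line (which is where the $\delta_{i,0}$ correction enters) yields $\#A_i(\lambda) - \#R_i(\lambda) = \delta_{i,0} + c_{i-1} - c_i$. Finally part (4) is immediate from (3): summing \eqref{ChargeAR} over $i$ telescopes to $\sum_i(\#A_i - \#R_i) = 1$, and more to the point, adding one box of each color adds one addable/removable transition of each color in a way that leaves every $c_i$ fixed — equivalently, adding a full set of boxes of all colors is, on the level of the Maya diagram, a combination of moves within each $m_i(\lambda)$ that does not alter the charges $c_i$, hence does not change $\core(\lambda)$.

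The main obstacle I expect is part (3): getting the indexing conventions exactly right — which diagonal a box sits on modulo $r$, how that translates to a position parity in the tilted Maya picture, and precisely where the central line falls among the color classes so that the $\delta_{i,0}$ term (rather than some other $\delta_{i,k}$) appears. This is the kind of bookkeeping that is conceptually routine but easy to get off by one; I would pin it down by carefully re-examining the worked example $\lambda = (4,2,2)$, $r=3$ in Figure~\ref{fig:quotient}, where $(c_0,c_1,c_2) = (0,1,-1)$ (reading the shifts in the figure) and the addable/removable corner counts can be checked by hand against \eqref{ChargeAR}. Parts (1), (2), (4) are then essentially formal consequences of Proposition~\ref{YoungMaya} and part (3).
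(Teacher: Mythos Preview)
The paper does not supply a proof of this proposition; it is stated as a standard fact about the core--quotient decomposition and used freely thereafter. Your sketch is the standard Maya-diagram argument and is correct in outline.

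Two remarks that may sharpen it. For part~(3), the clean computation is: an addable (resp.\ removable) corner of color $i$ at content $c\equiv i\pmod r$ is exactly a pair $m(\lambda)(c)=-1,\ m(\lambda)(c-1)=+1$ (resp.\ the reverse). For $1\le i\le r-1$ these two beads sit at the \emph{same} index $n$ in $m_i$ and $m_{i-1}$, and one checks
\[
\#A_i-\#R_i=\sum_n \tfrac{1}{2}\bigl(m_{i-1}(n)-m_i(n)\bigr)=c_{i-1}-c_i,
\]
using that $\sum_n\tfrac12(m(n)-m_{\mathrm{vac}}(n))$ computes the charge. For $i=0$ the bead at $c-1$ lies at index $n-1$ in $m_{r-1}$, and the one-step shift contributes exactly the extra $+1=\delta_{i,0}$. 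This pins down the correction term you were worried about without case-checking the example.

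For part~(4), your phrase ``moves within each $m_i(\lambda)$'' is right only as a statement about the \emph{net} effect. A single box of color $i$ changes one bead in $m_i$ (white~$\to$~black) and one in $m_{i-1}$ (black~$\to$~white), sending $(c_{i-1},c_i)\mapsto(c_{i-1}-1,\,c_i+1)$. Summing over one box of each color telescopes to zero; more generally, adding $n_i$ boxes of color $i$ changes $c_j$ by $n_j-n_{j+1}$, which vanishes when all $n_i$ are equal. That is the whole argument.
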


\subsubsection{Maya symmetries}\label{Reverse}
We can obtain new partitions by applying permutations to the quotient Maya diagrams.
\begin{defn}
Given a partition $\lambda$ with quotient Maya diagrams $\left( m_0(\lambda),m_1(\lambda),\ldots, m_{r-1}(\lambda) \right)$, we view $\pi \in \Sigma_r$ as a permutation of $\{0,1,\dots, r-1\}$ and define $\pi \lambda$ by setting
\[
\left( m_0(\pi\lambda), m_1(\pi\lambda),\ldots, m_{r-1}(\pi\lambda) \right) \coloneq \left( m_{\pi(0)}(\lambda), m_{\pi(1)}(\lambda),\ldots, m_{\pi(r-1)}(\lambda) \right).
\]
In particular, the longest element $w_0 \in \Sigma_r$ gives
\[
\left( m_0(w_0\lambda), m_1(w_0\lambda),\ldots, m_{r-1}(w_0\lambda) \right) \coloneq \left( m_{r-1}(\lambda), m_{r-2}(\lambda),\ldots, m_0(\lambda) \right).
\]
Put another way, we have
$
m_i(w_0\lambda) \coloneq m_{-i-1}(\lambda).
$
For the long cycle $\sigma = (0~1~\cdots ~ r-1)$, we have
\[
\left( m_0(\sigma\lambda),m_1(\sigma\lambda),\ldots, m_{r-1}(\sigma\lambda) \right) \coloneq \left( m_{r-1}(\lambda), m_0(\lambda),m_1(\lambda),\ldots, m_{r-2}(\sigma\lambda) \right).
\]
\end{defn}

Obviously, the two actions from $w_0$ and $\sigma$ send $r$-cores to $r$-cores, and the charges $(c_0,\ldots, c_{r-1})$ undergo the same transformation with respect to their coordinates.
We leave it to the reader to see that in our running example of $r=3$ and $\lambda=(4,2,2)$, we have 
\begin{align*}
w_0\lambda=(6,4), && 
\sigma\lambda= (6,4,1), && \text{ and } &&
\sigma^2\lambda= (5,3).
\end{align*}
Thus, $w_0\lambda\not= {}^t\lambda$, and it is not exactly straightforward to describe the boxes of $w_0\lambda$ in terms of those in $\lambda$.
Nonetheless, we have the following.
\begin{prop}\label{RevProp}
For any $\pi \in \Sigma_r$, the partitions $\lambda$ and $\pi\lambda$ have the same set of color $0$ boxes. Furthermore, if $\{\pi(0),\pi(1),\dots, \pi(k-1)\} = \{0,1,\dots,k-1\}$, then $\lambda$ and $\pi \lambda$ have the same color $k$ boxes.

In particular,
\begin{enumerate}
\item The partitions $\lambda$ and $w_0\lambda$ have the same set of color $0$ boxes.
\item For any $k\in\ZZ/r\ZZ$, $\lambda$ and $\sigma^k\lambda$ have the same set of color $0$ boxes.
\item For any $k\in\ZZ/r\ZZ$, $\lambda$ and $w_0\sigma^{-k}\lambda=\sigma^kw_0\lambda$ have the same set of color $k$ boxes.
\end{enumerate}
\end{prop}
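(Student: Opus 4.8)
The plan is to translate everything into the language of Maya diagrams and then run a short formal argument using adjacent transpositions. The crucial input is a dictionary between a single box move and an interchange of adjacent beads. With the conventions of \ref{PartNot}--\ref{Reverse}, I would first check (directly from the $\beta$-set description of $m(\mu)$) that a box of content $c$ is addable to a partition $\mu$ precisely when $m(\mu)(c-1)=1$ and $m(\mu)(c)=-1$, and that in that case $m(\mu\cup\square)$ is obtained from $m(\mu)$ by interchanging the black bead at position $c-1$ with the white bead at position $c$; removability is the mirror statement. The consequence I will actually use: if a charge-$0$ Maya diagram $m'$ differs from $m(\mu)$ only by interchanging the beads at two adjacent positions $p$ and $p+1$ carrying opposite values, then $m'$ represents a partition $\mu'$ with $\mu\mathbin{\triangle}\mu'$ a single box, of content $p+1$; moreover such an interchange preserves the charge.

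Next I would reduce to adjacent transpositions. Since ``$\lambda$ and $\mu$ have the same set of color-$j$ boxes'' is an equivalence relation and $\Sigma_r$ is generated by the $\tau_l:=(l,\,l+1)$, $0\le l\le r-2$, it suffices to analyze $\pi=\tau_l$. Unwinding the definition in \ref{Reverse}, $m(\tau_l\lambda)$ is obtained from $m(\lambda)$ by interchanging the beads at positions $l+nr$ and $l+1+nr$ for every $n\in\ZZ$. For $r\ge 2$ these interchanges have pairwise disjoint support, only finitely many are non-trivial (far from the notch both beads agree with the vacuum), and each preserves the charge; hence they can be performed one at a time, in any order, through a finite chain $\lambda=\mu_0\rightsquigarrow\mu_1\rightsquigarrow\cdots\rightsquigarrow\mu_N=\tau_l\lambda$ of partitions in which each step, by the dictionary above, adds or removes a single box of content $l+1+n_jr\equiv l+1\pmod r$. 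By the triangle inequality for the symmetric difference, $\lambda\mathbin{\triangle}\tau_l\lambda\subseteq\bigcup_j(\mu_{j-1}\mathbin{\triangle}\mu_j)$ consists entirely of boxes of color $(l+1)\bmod r$.

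It remains to assemble. The last display shows that $\lambda$ and $\tau_l\lambda$ have the same set of color-$j$ boxes for every $j\not\equiv l+1\pmod r$; taking $j=0$ for all $l$ yields the first sentence. For the ``Furthermore'' clause, the setwise stabilizer of $\{0,\dots,k-1\}$ in $\Sigma_r$ is $\Sigma_k\times\Sigma_{r-k}$, generated by the $\tau_l$ with $l\ne k-1$; each of those preserves color-$k$ boxes, hence so does every $\pi$ in that subgroup, and iterating the equivalence relation finishes the claim. Parts (1) and (2) are instances of the first sentence, and part (3) follows from the ``Furthermore'' clause once one checks that $w_0\sigma^{-k}=\sigma^k w_0$ stabilizes $\{0,\dots,k-1\}$ (and, for $k=0$, that $w_0$ and $\sigma^k$ stabilize $\emptyset$).

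The main obstacle is the bead-swap dictionary of the first paragraph: one must pin down the content of the affected box as exactly $p+1$ rather than, say, $-p-1$, and this is precisely where the several sign-and-side conventions fixed in \ref{PartNot}--\ref{Reverse} (which beads are black, the placement of the notch, the sign of the content) genuinely intervene; getting this wrong would swap color $k$ for color $r-k$ in the conclusion. Everything downstream of the dictionary is purely formal.
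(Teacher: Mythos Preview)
Your argument is correct and takes a different route from the paper. The paper argues directly and geometrically: it cuts the boundary path of $\lambda$ into consecutive length-$r$ segments $a=a_0a_1\cdots a_{r-1}$, one between each pair of content lines $(d-1)r$ and $dr$, and observes that $\pi$ sends each such segment to $a_{\pi(0)}\cdots a_{\pi(r-1)}$, which has the same multiset of north/west steps; hence the endpoints of every segment---the crossings of the boundary with the content lines $dr$---are preserved, and those crossings determine the color-$0$ boxes diagonal by diagonal. For color $k$ the hypothesis $\pi(\{0,\dots,k-1\})=\{0,\dots,k-1\}$ forces the first $k$ steps of each segment to be permuted among themselves, so the partial endpoint on content line $(d-1)r+k$ is also fixed. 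Your reduction to adjacent transpositions instead tracks single-box moves via the bead-swap dictionary and extracts the sharper fact that each $\tau_l$ alters only boxes of color $l+1$; this is more granular and makes the add/remove mechanism explicit, at the cost of the auxiliary chain $\mu_0,\dots,\mu_N$ and the group-theoretic assembly at the end. The paper's proof is more uniform (it handles every $\pi$ at once without decomposing into generators), while yours gives finer control and would adapt more readily if one wanted to describe exactly which boxes change under a given $\pi$. Your caveat about the dictionary is well placed but resolves correctly: with the paper's conventions the box affected by swapping beads at $p$ and $p+1$ has content $p+1$, so $\tau_l$ indeed touches only color $l+1$ and never color $0$.
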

\begin{proof}
Divide $m(\lambda)$ into contiguous segments of length $r$, starting from a $0$-bead on the right and ending with an $(r-1)$-bead on the left. In terms of the Young diagram of $\lambda$, we are considering the sequence of $r$ boundary segments between two content lines with values divisible by $r$. Starting from the SE, such a sequence is a list of north and west steps, $a = a_0 a_1 \cdots a_{r-1}$, starting on the content line $(d-1) r$ for some $d$, and ending on the content line $dr$. 

The action of $\pi \in \Sigma_r$ is given by taking all such segments and sending them to $\pi a = a_{\pi(0)} \cdots a_{\pi(r-1)}$, which has the same number of north and west steps as $a$. Since this is true for every such segment, we have that $a$ in $\lambda$ and $\pi a$ in $\pi \lambda$ start and end at the same point.
But these endpoints determine the boxes of color $0$ in their respective content lines, so $\lambda$ and $\pi\lambda$ have the same color $0$ boxes.

If in particular $a_0\cdots a_{k-1} $ has the same number of north and west steps as $a_{\pi(0)} \cdots a_{\pi(k-1)}$, then the endpoints of these two partial sequences are the same. But the end point after the step $a_{k-1}$ determines the boxes of color $k$. So if $\{\pi(0),\dots, \pi(k-1)\} = \{0,\dots,k-1\}$, then $\lambda$ and $\pi \lambda$ have the same color $k$ boxes.
\end{proof}

\subsection{Symmetric functions}
Let $\Lambda$ denote the ring of symmetric functions.
We recall some standard notation concerning $\Lambda$ \cite{Mac}:
\begin{itemize}
\item $p_k = \sum_i x_i^k$ denotes the $k$th \textit{power sum};
\item $e_n = \sum_{i_1<\dots < i_n} x_{i_1}\cdots x_{i_n}$ denotes the $n$th \textit{elementary symmetric function} and $e_\lambda \coloneq  \prod e_{ {}^t\lambda_i}$;
\item $h_n\sum_{i_1\leq \dots \leq i_n} x_{i_1}\cdots x_{i_n}$ denotes the $n$th \textit{complete homogeneous symmetric function} and $h_\lambda \coloneq \prod h_{\lambda_i}$;
\item $s_\lambda$ denotes the \textit{Schur function} associated to the partition $\lambda$.
\item The \textit{Hall pairing} $\langle -,-\rangle$ on $\Lambda$ has orthonomal basis $\{s_\lambda\}$.
\end{itemize}
Note here that, as opposed to more usual conventions, $e_\lambda$ is not the product of the $e_{\lambda_i}$. There is a distinction for when the subscript is a partition $(k)$ versus a number, so that 
$e_{(k)} = e_1^k \neq e_k = e_{(1^k)}$. This will make our future triangularity results easier to write.

We denote by $\omega$ the plethysm on $\Lambda$ that sends
\[
p_{k}[\omega X] \coloneq  (-1)^{k+1}p_k[X].
\]
It is an involution that exchanges $e_n$ and $h_n$, and more crucial for us, we have
\[
s_\lambda[\omega X]=s_{ {}^t\lambda}[X].
\]

\subsubsection{The Frobenius characteristic}
The irreducible representations of the symmetric group $\Sigma_n$ are indexed by partitions of $n$; given $\lambda\vdash n$, we denote by $V_\lambda$ the corresponding irreducible.
Recall the following basic result about $\Lambda$ (cf. \cite{Mac}):
\begin{thm}\label{FrobCh}
The map $s_\lambda \mapsto [V_\lambda]$ gives an isomorphism 
\begin{align*}
\Lambda&\cong \bigoplus_n \mathrm{Rep}(\Sigma_n)
\end{align*}
of rings,
where the right-hand-side is endowed with the induction product.
Moreover, this isomorphism sends the Hall pairing to the character pairing.
\end{thm}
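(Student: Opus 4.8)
The plan is to construct the inverse of the claimed map explicitly as the \emph{Frobenius characteristic} and to verify the three assertions (linear isomorphism, ring homomorphism, isometry) by pushing everything to the power-sum basis. For a class function $\chi$ on $\Sigma_n$, set
\[
\mathrm{ch}_n(\chi) \coloneq \sum_{\mu\vdash n} z_\mu^{-1}\,\chi(\mu)\,p_\mu,
\]
where $\chi(\mu)$ is the value of $\chi$ on the class of cycle type $\mu$ and $z_\mu=\prod_i i^{m_i}m_i!$ with $\lambda=(1^{m_1},2^{m_2},\dots)$. Assembling the $\mathrm{ch}_n$ and extending $\ZZ$-linearly to virtual characters gives $\mathrm{ch}\colon\bigoplus_n\mathrm{Rep}(\Sigma_n)\to\Lambda$. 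Since the indicators of conjugacy classes form a basis of the class functions on $\Sigma_n$ and $\{p_\mu:\mu\vdash n\}$ is a basis of the degree-$n$ part of $\Lambda_\QQ$, each $\mathrm{ch}_n$ is a $\QQ$-linear isomorphism; the fact that it restricts to an isomorphism of integral lattices $\mathrm{Rep}(\Sigma_n)\cong\Lambda^n$ follows once Step 3 below identifies the irreducible characters with the Schur basis.

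Next I would check that $\mathrm{ch}$ intertwines the induction product with multiplication in $\Lambda$. For $\chi\in\mathrm{Rep}(\Sigma_m)$ and $\psi\in\mathrm{Rep}(\Sigma_n)$, the induced character $\mathrm{Ind}_{\Sigma_m\times\Sigma_n}^{\Sigma_{m+n}}(\chi\boxtimes\psi)$ evaluated on a cycle type $\nu\vdash m+n$ is a sum over splittings $\nu=\mu\cup\rho$ with $\mu\vdash m$, $\rho\vdash n$, weighted exactly so that, together with $p_\mu p_\rho=p_{\mu\cup\rho}$ and the multiplicativity of $z$ on disjoint unions, one gets $\mathrm{ch}_{m+n}(\chi\cdot\psi)=\mathrm{ch}_m(\chi)\,\mathrm{ch}_n(\psi)$; this is a direct manipulation of the Frobenius formula for induced characters. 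For the pairings, orthogonality of characters gives $\langle\chi,\psi\rangle_{\Sigma_n}=\sum_{\mu\vdash n}z_\mu^{-1}\chi(\mu)\psi(\mu)$ (characters of $\Sigma_n$ are real), while the Hall pairing satisfies $\langle p_\mu,p_\rho\rangle=z_\mu\delta_{\mu\rho}$ — this last identity is forced by the orthonormality of $\{s_\lambda\}$ stated above via the classical transition matrices between $\{s_\lambda\}$ and $\{p_\mu\}$ \cite{Mac}. Combining, $\langle\mathrm{ch}_n\chi,\mathrm{ch}_n\psi\rangle=\sum_\mu z_\mu^{-2}\chi(\mu)\psi(\mu)\,z_\mu=\langle\chi,\psi\rangle_{\Sigma_n}$.

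Finally I would identify $\mathrm{ch}_n(\chi^\lambda)=s_\lambda$, where $\chi^\lambda$ is the irreducible character of $V_\lambda$. Applying Step 2 to trivial characters and using $\mathrm{ch}_1(\mathbf 1)=p_1=h_1$ gives $\mathrm{ch}_n\bigl(\mathrm{Ind}_{\Sigma_\lambda}^{\Sigma_n}\mathbf 1\bigr)=h_\lambda$. By Young's rule, $\mathrm{Ind}_{\Sigma_\lambda}^{\Sigma_n}\mathbf 1=\sum_\mu K_{\mu\lambda}\chi^\mu$ with $K_{\lambda\lambda}=1$ and $K_{\mu\lambda}=0$ unless $\mu\ge\lambda$ in dominance order, while on the symmetric-function side $h_\lambda=\sum_\mu K_{\mu\lambda}s_\mu$ with the identical unitriangularity \cite{Mac}. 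Thus $\{\mathrm{ch}_n(\chi^\mu)\}$ is an orthonormal basis (by the isometry of Step 3) related to $\{h_\lambda\}$ by the same unitriangular matrix as $\{s_\mu\}$; since $\{s_\mu\}$ is the unique orthonormal basis with this triangularity, $\mathrm{ch}_n(\chi^\lambda)=s_\lambda$. Inverting, $s_\lambda\mapsto[V_\lambda]$ is precisely $\mathrm{ch}^{-1}$, and the theorem follows from Steps 1--3.

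The main obstacle is Step 3: everything else is bookkeeping in the power-sum basis, but pinning down \emph{which} symmetric function corresponds to an irreducible character genuinely requires representation-theoretic input about $\Sigma_n$-modules — Young's rule together with the theory of Specht modules, or equivalently the Pieri/Murnaghan--Nakayama rule — rather than pure symmetric-function combinatorics.
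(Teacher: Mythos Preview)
Your proof sketch is the standard one and is essentially correct. Note, however, that the paper does not actually prove this statement: it is stated as a classical result with a reference to \cite{Mac} (``Recall the following basic result about $\Lambda$ (cf.\ \cite{Mac})'') and no proof is given. So there is nothing in the paper to compare your argument against; you have supplied the textbook proof that the paper simply cites.
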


\subsubsection{Wreath products}
Denote by
\[
\Gamma_n \coloneq \Sigma_n\wr \ZZ/r\ZZ=\Sigma_n\ltimes \left( \ZZ/r\ZZ \right)^n
\]
the wreath product of $\Sigma_n$ with the cyclic group $\ZZ/r\ZZ$.
We will work with a fixed $r$ throughout, so we omit it from our notation.

An analogous result to Theorem \ref{FrobCh} holds for $\Gamma_n$, but now with the tensor product $\Lambda^{\otimes r}$.
The irreducble representations are now indexed by $r$-tuples of partitions with a total of $n$ boxes.
For such an $r$-tuple $\vec{\lambda}$, we denote by $V_{\vec{\lambda}}$ the corresponding irreducible representation.
On the other hand, given $\vec{\lambda}=(\lambda^{(0)},\ldots, \lambda^{(r-1)})$, we can define the \textit{multi-Schur function}
\[
s_{\vec{\lambda}}=s_{\lambda^{(0)}}\otimes s_{\lambda^{(1)}}\otimes\cdots\otimes s_{\lambda^{(r-1)}}\in\Lambda^{\otimes r}.
\] 
We likewise define $e_{\vec{\lambda}}$ and $h_{\vec{\lambda}}$.
We can endow $\Lambda^{\otimes r}$ with the tensor product of the Hall pairing, which we will also denote by $\langle-,-\rangle$.
The multi-Schur basis $\{s_{\vec{\lambda}}\}$ is orthonormal for this pairing.
\begin{thm}[\protect{\cite[I.B]{Mac}}]
The map $s_{\vec{\lambda}}\mapsto [V_{\vec{\lambda}}]$ gives an isomorphism of rings
\begin{align*}
\Lambda^{\otimes r} & \cong \bigoplus_n \mathrm{Rep}(\Gamma_n)
\end{align*}
that intertwines the Hall pairing and the character pairing.
\end{thm}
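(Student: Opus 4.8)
The plan is to reprove this along the classical route of Macdonald \cite[I.B]{Mac}: bootstrap from the ordinary Frobenius isomorphism of Theorem \ref{FrobCh} using Clifford theory for the split extension $\Gamma_n=\Sigma_n\ltimes(\ZZ/r\ZZ)^n$. Fix a primitive $r$th root of unity $\zeta$. For $i\in\ZZ/r\ZZ$ and $m\ge0$, let $\epsilon^{(m)}_i$ be the linear character of $\Gamma_m$ that is trivial on $\Sigma_m$ and sends each standard generator of $(\ZZ/r\ZZ)^m$ to $\zeta^i$, and for $\mu\vdash m$ put $\widetilde V^{(i)}_\mu\coloneq\epsilon^{(m)}_i\otimes\mathrm{infl}(V_\mu)$, inflation being along $\Gamma_m\twoheadrightarrow\Sigma_m$; this is an irreducible $\Gamma_m$-module. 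For $\vec\lambda=(\lambda^{(0)},\dots,\lambda^{(r-1)})$ with $|\lambda^{(i)}|=n_i$ and $\sum_in_i=n$, I would set
\[
V_{\vec\lambda}\coloneq\mathrm{Ind}_{\Gamma_{n_0}\times\cdots\times\Gamma_{n_{r-1}}}^{\Gamma_n}\bigl(\widetilde V^{(0)}_{\lambda^{(0)}}\boxtimes\cdots\boxtimes\widetilde V^{(r-1)}_{\lambda^{(r-1)}}\bigr).
\]
The first step, which I expect to be the main obstacle, is the genuinely group-theoretic input: as $\vec\lambda$ ranges over $r$-multipartitions of $n$, the $V_{\vec\lambda}$ are irreducible, pairwise non-isomorphic, and exhaust $\mathrm{Irr}(\Gamma_n)$. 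Irreducibility and distinctness follow from Mackey's criterion (equivalently, from the standard Clifford analysis of $\Gamma_n$ acting on the character group of $(\ZZ/r\ZZ)^n$, whose point stabilizers are precisely the Young-type subgroups above); exhaustion then follows by counting: using $\dim V_{\vec\lambda}=\binom{n}{n_0,\dots,n_{r-1}}\prod_i\dim V_{\lambda^{(i)}}$ and $\sum_{\mu\vdash m}(\dim V_\mu)^2=m!$,
\[
\sum_{\vec\lambda\vdash n}(\dim V_{\vec\lambda})^2=(n!)^2\sum_{n_0+\cdots+n_{r-1}=n}\Bigl(\prod_i n_i!\Bigr)^{-1}=n!\,r^n=|\Gamma_n|.
\]
(Alternatively one may match $r$-multipartitions of $n$ against the colored-cycle-type parametrization of the conjugacy classes of $\Gamma_n$.)

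Next I would build the characteristic map. For each color $i$, the assignment $[V]\mapsto[\epsilon^{(m)}_i\otimes\mathrm{infl}(V)]$ defines a ring homomorphism $\bigoplus_m\Rep(\Sigma_m)\to\bigoplus_n\Rep(\Gamma_n)$ for the induction products: inflation along $\Gamma_\bullet\twoheadrightarrow\Sigma_\bullet$ commutes with induction, $\epsilon^{(m+m')}_i$ restricts on the Young-type subgroup to $\epsilon^{(m)}_i\boxtimes\epsilon^{(m')}_i$, and the projection formula then allows one to pull the $\epsilon$-twist out of an induction. Composing with the inverse of the isomorphism in Theorem \ref{FrobCh} yields, for each $i$, a ring homomorphism $\Lambda\to\bigoplus_n\Rep(\Gamma_n)$ carrying $s_\mu$ to $[\widetilde V^{(i)}_\mu]$. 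Since $\bigoplus_n\Rep(\Gamma_n)$ is commutative, these $r$ homomorphisms assemble into a single ring homomorphism $\Phi\colon\Lambda^{\otimes r}\to\bigoplus_n\Rep(\Gamma_n)$, and transitivity of induction gives
\[
\Phi(s_{\vec\lambda})=\prod_i[\widetilde V^{(i)}_{\lambda^{(i)}}]=\bigl[\mathrm{Ind}\bigl(\widetilde V^{(0)}_{\lambda^{(0)}}\boxtimes\cdots\boxtimes\widetilde V^{(r-1)}_{\lambda^{(r-1)}}\bigr)\bigr]=[V_{\vec\lambda}].
\]

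Finally I would conclude. By the first step $\{[V_{\vec\lambda}]\}$ is a $\ZZ$-basis of $\bigoplus_n\Rep(\Gamma_n)$, while $\{s_{\vec\lambda}\}$ is a $\ZZ$-basis of $\Lambda^{\otimes r}$, so the ring homomorphism $\Phi$, carrying one basis bijectively onto the other, is a ring isomorphism. For the pairings, the Hall pairing on $\Lambda^{\otimes r}$ is by definition the bilinear form for which $\{s_{\vec\lambda}\}$ is orthonormal (with components of distinct total degree orthogonal), and the character pairing on $\bigoplus_n\Rep(\Gamma_n)$ is the form for which $\{[V_{\vec\lambda}]\}$ is orthonormal, so $\Phi$ is automatically an isometry. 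One could instead deduce the whole statement from Zelevinsky's classification of positive self-adjoint Hopf algebras, identifying $\bigoplus_n\Rep(\Gamma_n)$ with the PSH algebra on the $r$ level-one primitives $[\epsilon^{(1)}_0],\dots,[\epsilon^{(1)}_{r-1}]$, but the route above reuses Theorem \ref{FrobCh} more directly.
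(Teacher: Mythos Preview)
Your proposal is correct and follows the classical Clifford-theoretic route that one finds in Macdonald's Appendix~B. Note, however, that the paper does not supply its own proof of this theorem: it is stated with attribution to \cite[I.B]{Mac} and no argument is given, so there is nothing in the paper to compare your approach against beyond the citation itself.
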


\subsubsection{Matrix plethysm}\label{MatPleth}
The usual plethystic substitution $f[E]$ of a symmetric function $f\in\Lambda$ at an expression $E(t_1,t_2,\dots)$ is defined by first setting $p_n[E] = E(t_1^n, t_2^n ,\dots)$ and then extending algebraically, since $\{p_n\}$ is a set of algebraically independent generators. 

In the multisymmetric function case, we first index the tensorands in $\Lambda^{\otimes r}$ using $\ZZ/r\ZZ$.
For any $f\in \Lambda$, and $i\in \ZZ/r\ZZ$, we denote by $f[X^{(i)}]\in\Lambda^{\otimes r}$ the element with $f$ in the $i$th tensorand and $1$ elsewhere; note that $X^{(i)} = X^{(j)}$ if $i=j\hbox{ mod } k$.
In particular, we have algebraically independent generators given by $ p_n[X^{(i)}]$.
Plethystic substitution in $\Lambda^{\otimes r}$ can be more complicated because we can take linear maps on $\{p_n[X^{(i)}]\}_{i\in \ZZ/r\ZZ}$.
Specifically, given an $r\times r$ matrix $A=(A_{ij})$ with entries in $\CC(t_1,t_2,\dots)$, we index its coordinates by $\left(\ZZ/r\ZZ\right)^2$ and define (cf. \cite[3.2.2]{OSWreath})
\begin{equation}
p_n[ AX^{(j)} ] \coloneq~ \sum_{i\in \ZZ/r\ZZ}  p_n[ A_{ij} X^{(i)}] ~= ~\sum_{i\in \ZZ/r\ZZ}A_{ij}(t_1^n,t_2^n,\dots) p_n[X^{(i)}].
\label{MatDef}
\end{equation}
For an arbitrary $f\in \Lambda^{\otimes r} \otimes \mathbb{C}(q,t)$, we define its \textit{matrix plethysm} $f[AX^\bullet]$ to be its image under the map generated by (\ref{MatDef}).

We will make heavy use of two particular examples:
\begin{align*}
p_n[\sigma X^{(i)}]& \coloneq  p_n[X^{(i+1)}] \text{ and }\\
p_n[\iota X^{(i)}]& \coloneq  p_n[X^{(-i)}].
\end{align*}

\begin{prop}\label{SigProp}
The following hold:
\begin{enumerate}
\item For any monomial $s$ (in particular, for $s = q,t$), the plethysms $1-s\sigma$ and $1-s\sigma^{-1}$ are invertible, giving
\begin{align}
\nonumber
p_n\left[ \frac{X^{(i)}}{1-s\sigma} \right]&= \frac{p_n[X^{(i)}]+s^n p_n[X^{(i+1)}]+ s^{2n}p_n[X^{(i+2)}]+\cdots+ s^{(r-1)n}p_n[X^{(i-1)}]}{1-s^{rn}} ~\text{ and }\\
p_n\left[ \frac{X^{(i)}}{1-s\sigma^{-1}} \right]&= \frac{p_n[X^{(i)}]+s^n p_n[X^{(i-1)}]+ s^{2n}p_n[X^{(i-2)}]+\cdots+ s^{(r-1)n}p_n[X^{(i+1)}]}{1-s^{rn}}.
\label{OneMinusInv}
\end{align}
\item The transpose of $\sigma$ is its inverse: $\sigma^T=\sigma^{-1}$.
\end{enumerate}
\end{prop}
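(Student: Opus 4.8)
The plan is to reduce the whole statement to elementary linear algebra on the finite-dimensional spaces spanned by colored power sums of a fixed degree. Throughout, let $s$ be a nonconstant monomial in $q,t$, so that $s^m \neq 1$ in $\CC(q,t)$ for every $m \geq 1$; this non-vanishing is the only arithmetic input. Treating $\sigma$ literally as the $r \times r$ permutation matrix of (\ref{MatDef}), for which $\sigma X^{(j)} = X^{(j+1)}$ and hence $\sigma^r$ is the identity matrix, I would first record the geometric-series identities
\[
(1 - s\sigma)\sum_{k=0}^{r-1} s^k \sigma^k \;=\; 1 - s^r\sigma^r \;=\; 1 - s^r \;=\; (1 - s\sigma^{-1})\sum_{k=0}^{r-1} s^k \sigma^{-k}
\]
over $\CC(q,t)$. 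Since $1 - s^r \neq 0$, both $1 - s\sigma$ and $1 - s\sigma^{-1}$ are invertible as $r\times r$ matrices over $\CC(q,t)$, with inverses $\tfrac{1}{1-s^r}\sum_{k=0}^{r-1}s^k\sigma^{\pm k}$.

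Next I would explain how invertibility of these matrices upgrades to invertibility of the corresponding plethysms. Because $\Lambda^{\otimes r}\otimes\CC(q,t)$ is a polynomial algebra on the colored power sums, matrix plethysm by an $r\times r$ matrix $A$ over $\CC(q,t)$ is the unique algebra endomorphism that on the span $V_n$ of $\{p_n[X^{(i)}]\}_{i\in\ZZ/r\ZZ}$, in the basis $\{p_n[X^{(i)}]\}$, has matrix $A$ with the substitution $q\mapsto q^n,\, t\mapsto t^n$ applied to its entries, in accordance with (\ref{MatDef}). Such an endomorphism is an automorphism exactly when each of these restrictions is invertible, and since matrix inversion commutes with the evaluation $q\mapsto q^n,\,t\mapsto t^n$, the inverse plethysm is then matrix plethysm by $A^{-1}$. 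Applying this with $A = 1 - s\sigma$ and $A = 1 - s\sigma^{-1}$ gives the claimed invertibility; and reading off the action of $\tfrac{1}{1-s^{rn}}\sum_{k=0}^{r-1}s^{kn}\sigma^{\pm k}$ on $p_n[X^{(i)}]$ — using that $\sigma^{\pm k}$ shifts the color by $\pm k$, and that $X^{(i+r-1)} = X^{(i-1)}$ — yields precisely the two displayed formulas in (\ref{OneMinusInv}). Part (2) is then immediate: $\sigma$ is a permutation matrix, hence orthogonal, so $\sigma^T = \sigma^{-1}$ (equivalently, $(\sigma^T)_{ij} = \sigma_{ji}$ sends $X^{(j)}$ to $X^{(j-1)} = \sigma^{-1}X^{(j)}$).

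The argument has no real obstacle; the content is bookkeeping. The one point that warrants care is the interface between matrix plethysm and ordinary plethysm: the matrix entries are evaluated via $q\mapsto q^n,\,t\mapsto t^n$ on the degree-$n$ piece, which is why the denominator in (\ref{OneMinusInv}) is $1 - s^{rn}$ rather than $1 - s^r$, and why the non-vanishing hypothesis must hold for all $m$ rather than just $m = r$. Keeping the index conventions of (\ref{MatDef}) straight — so that $\sigma^k$ acts as $X^{(i)}\mapsto X^{(i+k)}$, with the correct direction in the $\sigma^{-1}$ case — is the only other thing to watch.
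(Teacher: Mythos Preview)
Your proof is correct. The paper states this proposition without proof, treating it as elementary; your argument supplies exactly the routine linear-algebra verification one would expect, and handles the one genuine subtlety (that the matrix entries are evaluated at $n$th powers on the degree-$n$ piece, whence the denominator $1-s^{rn}$) carefully.
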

 
\subsubsection{Colors and characters}\label{ColChar}
Finally, recall our notation $B_\lambda^{(i)}$ and $B_\lambda^\bullet$ introduced at the end of \ref{PartNot}.
We will by default evaluate the variables $X^{(i)}$ at $B_\lambda^{(i)}$, so that the notation
\[
f[B^\bullet_\lambda]
\]
denotes this color-respecting evaluation.
Similarly, when we evaluate a symmetric function at a (possibly infinite) sum of $q,t$-characters,
we treat the color of any $(q,t)$-monomial $q^at^b$ as having color $b-a$ modulo $r$, evaluating with respect to its color.
Note that $1 = q^0 t^0$ has color $0$.
With this set, when performing evaluations, we will include color-changing plethysms such as $f[\iota B_\lambda^\bullet]$ to mean compute $f[\iota X^\bullet]$ first then perform the color-respecting evaluation.

For any sum $S^\bullet$ of characters, we denote by $S^{(i)}$ the partial sum of characters of color $i$.
Thus, multiplication by $q$ shifts the color down, whereas multiplication by $t$ shifts the color up.
An important example of this is
\[
D_\lambda^\bullet \coloneq (1-q)(1-t)B_\lambda^\bullet -1,
\]
where, in particular, one has
\begin{equation}
D_\lambda^{(i)}=qt\sum_{\square\in R_i(\lambda)}\chi_\square-\sum_{\square\in A_i(\lambda)}\chi_\square.
\label{DLambda}
\end{equation}
In this paper, denominators of the form $(1-q^k)^{-1}$ and $(1-t^k)^{-1}$ will appear.
We will by default expand them into power series where $|q|,|t|<1$.

For a sum $S^\bullet$ of characters split into colored summands $\{S^{(i)}\}$, we will denote by $S^{\bullet}_*$ the sum where $q$ and $t$ are inverted: 
\[
S^{(i)}_* \coloneq  S^{(i)}\bigg|_{(q,t)\mapsto (q^{-1},t^{-1})}
\] 
The key point here is that we determine the color of the characters \textit{prior} to inversion.

Another useful substitution we will encounter looks like
\[
\left( \frac{X^{\bullet}}{(1-q \sigma)(1-t\sigma^{-1})} \right)\Bigg|_{X^{(j)} \rightarrow S^{(j)}}.
\]
Since
\begin{align*}
\left( \frac{X^{\bullet}}{(1-q \sigma)(1-t\sigma^{-1})} \right)\Bigg|_{X^{(j)} \mapsto S^{(j)}}
&=   \sum_{a,b \geq 0} q^a t^b X^{(i+a-b)} \Big|_{X^{(j)} \mapsto S^{(j)}}  \\
& =  \sum_{a,b \geq 0} q^a t^b S^{(i+a-b)},
\end{align*}
and $q^a t^b S^{(i+a-b)} $ gives characters of color $i$, we have
\begin{equation}
  \left( \frac{X^{\bullet}}{(1-q \sigma)(1-t\sigma^{-1})} \right)\Big|_{X^{(j)} \mapsto S^{(j)}}
= \left( \frac{S^\bullet}{(1-q)(1-t)} \right)^{(i)}.   
\label{OverMEv}
\end{equation}

\subsection{Wreath Macdonald polynomials}
Let $\Lambda_{q,t}^{\otimes r} \coloneq \Lambda^{\otimes r}\otimes\CC(q,t)$.
To emphasize dependence on $(q,t)$, we may denote $f\in\Lambda_{q,t}^{\otimes r}$ by $f[X^\bullet;q,t]$.

\subsubsection{Definition of $H_\lambda$}
For two partitions $\lambda$ and $\mu$, we will use $\lambda\le_r\mu$ to denote that $\lambda\le\mu$ and $\core(\lambda)=\core(\mu)$.
The following definition is equivalent to the original one proposed by Haiman \cite{Haiman}.

\begin{defn}\label{ModDef}
For $\lambda$ with $\quot(\lambda)\vdash n$, the \textit{modified wreath Macdonald polynomial} $H_\lambda=H_\lambda[X^\bullet;q,t]$ is characterized by the following conditions:
\begin{enumerate}
\item $H_\lambda[(1-q\sigma^{-1})X^\bullet]$ lies in the span of $\left\{ s_{\quot(\mu)}\, \middle|\, \mu\ge_r\lambda  \right\}$;
\item $H_\lambda[(1-t^{-1}\sigma^{-1})X^\bullet]$ lies in the span of $\left\{ s_{\quot(\mu)}\, \middle|\,\mu\le_r\lambda \right\}$;
\item when expanded in terms of $\{s_{\vec{\mu}}\}$, the coefficient of $s_n[X^{(0)}]$ is $1$, i.e. $H_\lambda[1]=1$.
\end{enumerate}
\end{defn}

The existence of $\{H_\lambda\}$ was first proved by Bezrukavnikov--Finkelberg \cite{BezFink} using geometry; an alternative proof is given in \cite{WreathEigen}.
When $\lambda$ is allowed to range over all partitions sharing the same core (so only the quotient varies), we obtain a basis for $\Lambda_{q,t}^{\otimes r}$.
One can view the core as giving an order on multipartitions by constituting them into a single partition and applying dominance order.

\subsubsection{Modified elementary and complete symmetric functions}\label{ModEH}
We can rewrite conditions (1) and (2) of Definition \ref{ModDef} as stating that $H_\lambda$ spans the intersection of two subspaces.
Specifically, define
\begin{align*}
\hat{e}_n^{(p)}& \coloneq  e_n\left[ \frac{X^{(p)}}{1-t^{-1}\sigma^{-1}} \right],&
\hat{e}_{\vec{\lambda}}& \coloneq  e_{\vec{\lambda}}\left[ \frac{X^\bullet}{1-t^{-1}\sigma^{-1}} \right],
\\
\hat{h}_n^{(p)}& \coloneq  h_n\left[ \frac{X^{(p)}}{1-q\sigma^{-1}} \right], \text{ and }&
\hat{h}_{\vec{\lambda}}& \coloneq  h_{\vec{\lambda}}\left[ \frac{X^\bullet}{1-q\sigma^{-1}} \right].
\end{align*}

\begin{prop}[\protect{\cite[Proposition 2.12]{WreathEigen}}]\label{IntDef}
The intersection
\[
\mathrm{span}\big\{ \hat{h}_{\quot(\mu)}\,\big|\, \mu\ge_r\lambda \big\}
\cap
\mathrm{span}\big\{ \hat{e}_{\quot(\mu)}\,\big|\, \mu\le_r\lambda \big\}
\]
is one-dimensional and spanned by $H_\lambda$.
\end{prop}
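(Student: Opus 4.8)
The plan is to translate conditions (1) and (2) of Definition~\ref{ModDef} into statements about membership in the two spans appearing in the proposition, and then invoke a dimension count to get one-dimensionality. First I would recall that, under the Hall pairing on $\Lambda^{\otimes r}$, the bases $\{e_{\vec\mu}\}$ and $\{h_{\vec\mu}\}$ are dual up to the sign twist $\omega$ in each tensorand: precisely, $\langle h_{\vec\mu}, m_{\vec\nu}\rangle = \delta_{\vec\mu,\vec\nu}$ and the Schur functions are self-dual, so the change of basis between $\{s_{\vec\mu}\}$ and either $\{e_{\vec\mu}\}$ or $\{h_{\vec\mu}\}$ is unitriangular with respect to dominance order on the constituted partition (this is the reason for the nonstandard convention $e_{(k)}=e_1^k$ flagged in the text — it makes the triangularity go the "right" way). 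Applying the invertible plethysms $\frac{X^\bullet}{1-t^{-1}\sigma^{-1}}$ and $\frac{X^\bullet}{1-q\sigma^{-1}}$ (invertibility is Proposition~\ref{SigProp}(1)), the span of $\{s_{\quot(\mu)} : \mu \ge_r \lambda\}$ pulls back along $X^\bullet\mapsto (1-q\sigma^{-1})X^\bullet$ to the span of $\{\hat h_{\quot(\mu)} : \mu \ge_r \lambda\}$, and similarly for the $\hat e$'s and $\mu\le_r\lambda$; here one uses that these plethysms are compatible with the $\ZZ/r\ZZ$-grading and preserve the core, so the indexing set $\{\mu : \mu\ge_r\lambda\}$ is genuinely unchanged.

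Concretely, the key steps in order are: (i) fix the core $\alpha=\core(\lambda)$ and restrict attention to the finite-dimensional graded piece of $\Lambda_{q,t}^{\otimes r}$ corresponding to $\quot(\mu)\vdash n$, on which both $\{s_{\quot(\mu)}\}$ and the two twisted families are bases; (ii) observe that condition~(1) says exactly $H_\lambda \in \mathrm{span}\{\hat h_{\quot(\mu)} : \mu\ge_r\lambda\}$ after undoing the plethysm, and condition~(2) says $H_\lambda\in\mathrm{span}\{\hat e_{\quot(\mu)} : \mu\le_r\lambda\}$, so $H_\lambda$ lies in the intersection; (iii) show the intersection is at most one-dimensional by a triangularity argument — write both spans in the common basis $\{s_{\quot(\mu)}\}$, note that $\mathrm{span}\{\hat h_{\quot(\mu)} : \mu\ge_r\lambda\}$ consists of elements supported (in the $s$-basis) on $\{\quot(\mu) : \mu\ge_r\lambda\}$ while $\mathrm{span}\{\hat e_{\quot(\mu)} : \mu\le_r\lambda\}$ consists of elements supported on $\{\quot(\mu) : \mu\le_r\lambda\}$, so any element of the intersection is supported on $\{\quot(\mu) : \mu =\lambda\}$, a single line; (iv) conclude that the intersection is exactly one-dimensional because $H_\lambda$ is a nonzero element of it (nonzero by the normalization $H_\lambda[1]=1$). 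Since this is cited from \cite[Proposition 2.12]{WreathEigen}, an alternative is simply to quote it, but I would include the short triangularity argument for completeness.

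The main obstacle is verifying the triangularity claim in step~(iii) carefully: one needs that passing to the twisted families $\hat e, \hat h$ does not destroy the dominance-order triangularity. The plethysm $\frac{1}{1-t^{-1}\sigma^{-1}}$ mixes colors but, crucially, it acts within a fixed core-sector and is "lower-triangular" in an appropriate sense — it adds ribbons, which moves $\quot(\mu)$ up in the constituted-partition dominance order while fixing the core (Proposition~\ref{CoreQuotDec}(4) and the Maya-diagram description of cores/quotients). Making this precise requires checking how multiplication by $e_n[\tfrac{X^{(p)}}{1-t^{-1}\sigma^{-1}}]$ interacts with the partial order on multipartitions induced by constituting with $\alpha$; once one has that $\hat e_{\vec\mu} \in \mathrm{span}\{s_{\vec\nu} : \vec\nu \le \vec\mu\}$ with unit leading coefficient (and dually for $\hat h$), the rest is immediate. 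I expect the bookkeeping relating ribbon-addition to dominance order — rather than any conceptual difficulty — to be where the real work lies, and it is exactly the content that \cite{WreathEigen} establishes, so in the paper I would state the triangularity as a recalled fact and give the dimension count in two lines.
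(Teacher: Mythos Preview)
The paper does not prove this proposition; it merely cites \cite[Proposition 2.12]{WreathEigen}. So your proposal is being compared against a citation, and the relevant question is whether your sketch actually works.

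Your step (ii) is correct: applying the invertible plethysm $X^\bullet\mapsto (1-q\sigma^{-1})^{-1}X^\bullet$ to both sides, condition (1) of Definition~\ref{ModDef} becomes membership in $\mathrm{span}\{s_{\quot(\mu)}[\tfrac{X^\bullet}{1-q\sigma^{-1}}]:\mu\ge_r\lambda\}$, and this equals $\mathrm{span}\{\hat h_{\quot(\mu)}:\mu\ge_r\lambda\}$ precisely because $\mathrm{span}\{s_{\quot(\mu)}:\mu\ge_r\lambda\}=\mathrm{span}\{h_{\quot(\mu)}:\mu\ge_r\lambda\}$ in the \emph{untwisted} ring. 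That last equality is where the combinatorics of $\ge_r$ versus componentwise dominance on quotients enters, and you correctly flag it as the substantive point.

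Step (iii), however, is wrong. You assert that $\hat e_{\vec\mu}\in\mathrm{span}\{s_{\vec\nu}:\vec\nu\le\vec\mu\}$ and dually for $\hat h$, and then intersect the two support conditions in the common Schur basis. This triangularity is false already for $r=1$: a direct computation gives that $e_2[X/(1-t^{-1})]$ has nonzero $s_{(2)}$-coefficient, so $\hat e_{(1,1)}\notin\mathrm{span}\{s_{(1,1)}\}$. More tellingly, if your claim held, the intersection would equal $\CC\cdot s_{\quot(\lambda)}$, forcing $H_\lambda$ to be a multiple of a single multi-Schur function --- which it manifestly is not. The two spans live in \emph{different} twisted Schur bases (one via $(1-q\sigma^{-1})^{-1}$, the other via $(1-t^{-1}\sigma^{-1})^{-1}$), and you cannot compare their supports in a single common $s$-basis.

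What your step (ii) actually shows is that the intersection in the proposition is exactly the set of $f$ satisfying conditions (1) and (2) of Definition~\ref{ModDef}. Its one-dimensionality is therefore \emph{equivalent} to the existence-plus-uniqueness of $H_\lambda$ up to scalar. Uniqueness can be argued by a pairing/orthogonality trick, but existence is genuinely deep --- it is the Bezrukavnikov--Finkelberg theorem the paper cites just after Definition~\ref{ModDef}. Your ``two-line dimension count'' cannot replace that input.
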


\subsubsection{The Nabla operator}\label{Nabla}
Fixing an $r$-core $\alpha$, we obtain a basis $\{H_\lambda\, |\,\core(\lambda)=\alpha\}$ of $\Lambda^{\otimes r}_{q,t}$.
We can thus define the operator $\nabla_\alpha$ as follows.
Let $d_0(\lambda)$ be the number of color $0$ boxes in $\lambda$.
For $\lambda$ with $\core(\lambda)=\alpha$,
\begin{equation}
\begin{aligned}
\nabla_\alpha H_\lambda& \coloneq (-1)^{|\quot(\lambda)|}\left( \prod_{\substack{\square\in\lambda\backslash\alpha\\ \bar{c}_\square=0}} \chi_\square\right) H_\lambda\\
&= (-1)^{|\quot(\lambda)|}\frac{e_{d_0(\lambda)}\left[B_\lambda^{(0)}\right]}{e_{d_0(\core(\lambda))}\left[ B_{\core(\lambda)}^{(0)} \right]}H_\lambda
\end{aligned}
\label{NableDef}
\end{equation}

\begin{rem}
The nabla operator we use in this paper is far simpler than the one introduced in \cite[4.5]{OSWreath}.
The latter is attributed to Haiman.
In particular, for $\alpha\not=\varnothing$, the eigenvalue of $\nabla_\alpha$ is not related to the evaluation $H_\lambda[-1]$.
\end{rem}

\subsection{Orthogonality}
Recall that we denote the Hall pairing on $\Lambda^{\otimes r}$ by $\langle -, - \rangle$.
Note that $\sigma$ is \textit{not} self-adjoint with respect to this pairing when $r>2$; it is adjoint to $\sigma^{-1}$.
It is thus advantageous to twist the pairing with $\iota$: we define $\langle -, - \rangle^*$ by
\[
\langle f, g\rangle^* \coloneq \langle f[\iota X^\bullet], g\rangle=\langle f, g[\iota X^\bullet]\rangle
\]
The shift $\sigma$ is then self-adjoint with respect to $\langle - , - \rangle^*$.

\subsubsection{Skewing}
For any $f\in\Lambda_{q,t}^{\otimes r}$, we define the skewing operator $f^\perp$ as the adjoint under $\langle-,-\rangle$ of multiplication by $f$.
Namely,
\[
\langle fg,h\rangle=\langle g, f^\perp h\rangle.
\]
We then have
\[
\left(p_n[X^{(i)}]\right)^\perp=n\frac{\partial}{\partial p_n[X^{(i)}]} \eqcolon p_n^\perp[X^{(i)}].
\]

\subsubsection{The wreath Macdonald pairing}
The following is an analogue of the modified Macdonald pairing:
\begin{defn}\label{PairDef}
The \textit{modified wreath Macdonald pairing} $\langle -, -\rangle_{q,t}'$ is defined by
\begin{align*}
\langle f, g\rangle_{q,t}'& \coloneq \left\langle f, g\left[(1-q\sigma^{-1})(t\sigma-1) X^\bullet \right]\right\rangle^*\\
&= \left\langle f[\iota X^\bullet ;q,t], g\left[(1-q\sigma^{-1})(t\sigma-1) X^\bullet; q,t \right]\right\rangle
\end{align*}
\end{defn}
Because $\sigma$ is self-adjoint with respect to $\langle-,-\rangle^*$, the pairing $\langle -, -\rangle_{q,t}'$ is symmetric.
A key feature (or bug) is that $\{H_\lambda\}$ is no longer an orthogonal basis.
A dual orthogonal basis is given by
\[
H_\lambda^\dagger \coloneq H_{\lambda}[-\iota X^\bullet; q^{-1}, t^{-1}].
\]

\begin{prop}
For $\lambda$ and $\mu$ with $\core(\lambda)=\core(\mu)$, we have
\[
\langle H_\lambda^\dagger, H_\mu\rangle_{q,t}'\not= 0
\]
if and only if $\lambda=\mu$.
\end{prop}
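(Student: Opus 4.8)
The plan is to establish the orthogonality by comparing the triangularity properties of $H_\lambda$ and $H_\mu^\dagger$ with respect to appropriate monomial-type bases, exploiting the intersection characterization of Proposition \ref{IntDef}. First I would unwind the definition: we have
\[
\langle H_\lambda^\dagger, H_\mu\rangle_{q,t}' = \langle H_\lambda[-\iota X^\bullet; q^{-1}, t^{-1}][\iota X^\bullet; q,t],\, H_\mu\left[(1-q\sigma^{-1})(t\sigma-1)X^\bullet\right]\rangle.
\]
The inner plethysm $[-\iota X^\bullet; q^{-1}, t^{-1}]$ composed with $[\iota X^\bullet; q,t]$ applied to $H_\lambda$ produces something whose expansion in the multi-Schur basis is governed by condition (1) of Definition \ref{ModDef}, but with $q$ inverted and variables negated; the key identity to check is that $p_n[-\iota((1-q\sigma^{-1})X^\bullet)]|_{q\to q^{-1}}$ matches up with the dual of $(1-t^{-1}\sigma^{-1})$ or $(1-q\sigma^{-1})$ after accounting for the sign. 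The cleanest route is to show that $H_\lambda^\dagger$ satisfies triangularity conditions dual to those of $H_\lambda$: namely that $H_\lambda^\dagger$, when expanded appropriately, is triangular with respect to $\mu \le_r \lambda$ in one modified basis and $\mu \ge_r \lambda$ in another.

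The core of the argument is then a biorthogonality computation between the $\hat h$-basis and the $\hat e$-basis (or their duals). Concretely, I expect that under $\langle -,-\rangle_{q,t}'$ the family $\{\hat h_{\quot(\mu)}\}$ and the family $\{\hat e_{\quot(\nu)}\}$ are dual up to triangular change of basis — this follows from the fact that $e_n$ and $h_n$ are dual under the Hall pairing together with the adjointness of $\sigma$ under $\langle -,-\rangle^*$ and a direct check that $(1-q\sigma^{-1})(t\sigma-1)$ interacts correctly with the $(1-q\sigma^{-1})^{-1}$ and $(1-t^{-1}\sigma^{-1})^{-1}$ twists defining $\hat h$ and $\hat e$. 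Given this, $H_\mu$ lies in $\mathrm{span}\{\hat h_{\quot(\nu)} : \nu \ge_r \mu\}$ while $H_\lambda^\dagger$ should lie in the span of the $\langle -,-\rangle_{q,t}'$-duals of $\{\hat e_{\quot(\nu)} : \nu \le_r \lambda\}$ — and then the pairing vanishes unless there exists $\nu$ with $\nu \ge_r \mu$ and $\nu \le_r \lambda$ contributing, forcing $\mu \le_r \lambda$; running the symmetric argument with the roles of conditions (1) and (2) swapped forces $\mu \ge_r \lambda$, hence $\lambda = \mu$. For the nonvanishing when $\lambda = \mu$, the leading (diagonal) coefficient is a product of the normalizations $H_\lambda[1] = 1$ and a nonzero scalar coming from the pairing of the leading terms, which is a rational function in $q,t$ that one checks is not identically zero (e.g.\ by a degree or specialization argument).

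The main obstacle will be bookkeeping the plethystic substitutions in $H_\lambda^\dagger = H_\lambda[-\iota X^\bullet; q^{-1}, t^{-1}]$ and verifying precisely which modified basis it is triangular against — in particular confirming that the sign $(-1)$, the color-reversal $\iota$, and the inversion $(q,t)\mapsto(q^{-1},t^{-1})$ conspire so that condition (1) for $H_\lambda$ becomes condition (2)-type triangularity for $H_\lambda^\dagger$ after dualizing under $\langle -,-\rangle_{q,t}'$. Concretely one must verify an identity of the shape
\[
\left\langle \hat e_{\quot(\nu)},\, \hat h_{\quot(\rho)} \right\rangle_{q,t}' = (\text{nonzero scalar})\cdot \delta_{\vec{\nu},{}^t\!\vec{\rho}} + (\text{strictly lower order}),
\]
where ${}^t\!\vec{\rho}$ denotes the tuple of transposes (since $\omega$ swaps $e$ and $h$ and transposes Schur functions); getting the transpose and the core-compatibility right here, together with checking that $\omega$ commutes appropriately with the $\sigma$-twists, is the delicate point. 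Once that pairing identity is in hand, the orthogonality and nonvanishing of $\langle H_\lambda^\dagger, H_\mu\rangle_{q,t}'$ follow formally from the triangularity in Definition \ref{ModDef} and Proposition \ref{IntDef}.
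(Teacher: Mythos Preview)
Your overall strategy—using the two opposite triangularities to force $\mu\le_r\lambda$ and $\mu\ge_r\lambda$—is exactly the paper's idea, but your proposed execution takes an unnecessary detour. You plan to verify a biorthogonality between the $\hat e$- and $\hat h$-bases under $\langle-,-\rangle_{q,t}'$ (with a transpose), but this is not needed and the stated form of that identity is not quite right. The point is that conditions (1) and (2) in Definition~\ref{ModDef} are expansions in the \emph{multi-Schur} basis, which is already orthonormal for the Hall pairing; there is no need to pass through $\hat e$ or $\hat h$.

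The paper's direct route is: unwind the pairing to the Hall pairing, then split the plethysm $(1-q\sigma^{-1})(t\sigma-1)$ across the two arguments. Using $\sigma^T=\sigma^{-1}$, moving the factor $-(1-t\sigma)$ from the $H_\mu$ side to the $H_\lambda^\dagger$ side turns $H_\lambda[-X^\bullet;q^{-1},t^{-1}]$ into $H_\lambda[(1-t\sigma^{-1})X^\bullet;q^{-1},t^{-1}]$, which by condition~(2) (at parameters $q^{-1},t^{-1}$) lies in $\mathrm{span}\{s_{\quot(\nu)}:\nu\le_r\lambda\}$; meanwhile what remains on the other side is $H_\mu[(1-q\sigma^{-1})X^\bullet]$, which by condition~(1) lies in $\mathrm{span}\{s_{\quot(\nu)}:\nu\ge_r\mu\}$. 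If $\lambda\not\ge\mu$ these spans are Hall-orthogonal, so the pairing vanishes; the case $\lambda\not\le\mu$ is symmetric, moving the $(1-q\sigma^{-1})$ factor instead. This bypasses all the bookkeeping you flag as the ``main obstacle.''

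For the nonvanishing at $\lambda=\mu$, your leading-coefficient computation would work but is also more than necessary: since $\{H_\lambda\}$ and $\{H_\lambda^\dagger\}$ are both bases (for fixed core) and $\langle-,-\rangle_{q,t}'$ is nondegenerate, the already-established off-diagonal vanishing forces every diagonal entry to be nonzero.
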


\begin{proof}
If $\lambda\not\ge\mu$, then
\begin{align*}
\langle H^\dagger_\lambda, H_\mu\rangle_{q,t}'
&= \left\langle H_\lambda[-X; q^{-1}, t^{-1}], H_\mu\left[ (1-q\sigma^{-1})(t\sigma-1)X^\bullet \right]\right\rangle/H_{ \lambda}[-1;q^{-1},t^{-1}]\\
&= \left\langle H_\lambda\left[(1-t\sigma^{-1})X^\bullet; q^{-1},t^{-1} \right], H_\mu\left[ (1-q\sigma^{-1})X^\bullet \right]\right\rangle/H_{ \lambda}[-1;q^{-1},t^{-1}]\\
&= 0,
\end{align*}
due to the multi-Schur functions that appear in the definition of $H_\lambda$.
The case $\lambda\not\leq\mu$ is similar.
Finally, $\langle H_\lambda^\dagger,H_\lambda\rangle_{q,t}'\not=0$ because $\langle-,-\rangle_{q,t}'$ is nondegenerate and $\{H_\lambda^\dagger\}$ and $\{H_\lambda\}$ are bases. 
\end{proof}

\begin{rem}
Our pairing differs from the one in \cite{OSWreath} by a plethystic minus.
In \textit{loc. cit.}, the authors compute the value of the pairing $N_\lambda \coloneq \langle H_\lambda^\dagger,H_\lambda\rangle_{q,t}'$.
\end{rem}

\subsubsection{Cauchy kernel}
As usual, let $\Omega[X]$ denote the following infinite sum in $\Lambda$:
\[
\Omega[X]=\exp\left( \sum_{k>0} \frac{p_k[X]}{k} \right)=\sum_{n\ge 0}h_n[X].
\]
The \textit{Cauchy kernel} for the Hall pairing on $\Lambda$ is then given by
\[
\Omega[XY]=\exp\left( \sum_{k>1}\frac{p_k[X]p_k[Y]}{k} \right).
\]
We then have the corresponding kernel for the $\iota$-twisted Hall pairing $\langle-,-\rangle^*$ on $\Lambda^{\otimes r}$:
\begin{align*}
\Omega[X^{-\bullet} Y^{\bullet}]& \coloneq \Omega\left[X^{(0)}Y^{(0)}+X^{(-1)}Y^{(1)}+\cdots +X^{(-r+1)}Y^{(r-1)}\right]\\
&= \Omega [X^\bullet Y^{-\bullet}].
\end{align*}
Note that in this notation, we only take products of variables with negating colors.
When applying plethysm to this kernel, we do so on each summand above.
The Cauchy kernel for $\langle -,-\rangle_{q,t}'$ is then
\begin{align*}
\Omega_{q,t}[X^{-\bullet} Y^{\bullet}]& \coloneq \Omega\left[ X^{-\bullet}\left( \frac{Y^{\bullet}}{(1-q\sigma^{-1})(t\sigma-1)} \right) \right]
\end{align*}
Here, the denominators denote the application of the inverse of $(1-q\sigma^{-1})(1-t\sigma)$ to the given family of variables.
The following is a standard result from the theory of such kernels.

\begin{prop}\label{CauchyProp}
Set
\[
N_\lambda \coloneq \langle H^\dagger_\lambda,H_\lambda\rangle_{q,t}'.
\]
For a given $r$-core $\alpha$, we have
\[
\Omega_{q,t}[X^{-\bullet} Y^{\bullet}]=\sum_{\substack{\lambda\\ \core(\lambda)=\alpha}}\frac{H_\lambda^\dagger[X^\bullet]H_\lambda[Y^\bullet]}{N_\lambda}
=
\sum_{\substack{\lambda\\ \core(\lambda)=\alpha}}\frac{H_\lambda[X^\bullet]H_\lambda^\dagger[Y^\bullet]}{N_\lambda}.
\]
Moreover, $\Omega_{q,t}[X^{-\bullet}Y^\bullet]=\Omega_{q,t}[Y^{-\bullet}X^\bullet]$.
\end{prop}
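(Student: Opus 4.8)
The plan is to follow the standard template for Cauchy-kernel identities relative to a nondegenerate pairing, adapted to the twisted setting. First I would establish the completely general fact: if $V$ is a vector space with two bases $\{u_i\}$ and $\{v_i\}$ that are dual with respect to a nondegenerate bilinear form $\langle-,-\rangle$ (i.e. $\langle u_i, v_j\rangle = \delta_{ij}$ up to scalars), and $V\otimes V$ carries the associated ``diagonal'' kernel $K = \sum_i u_i \otimes v_i$, then $K$ is independent of the choice of such a dual pair and equals $\sum_i v_i \otimes u_i$ when the form is symmetric. In our situation, $\langle-,-\rangle_{q,t}'$ is symmetric (as noted after Definition \ref{PairDef}, because $\sigma$ is self-adjoint for $\langle-,-\rangle^*$), and Proposition \ref{CauchyProp} asserts that for each fixed core $\alpha$ the basis $\{H_\lambda \mid \core(\lambda)=\alpha\}$ and the basis $\{H_\lambda^\dagger/N_\lambda \mid \core(\lambda)=\alpha\}$ are dual: this is exactly the content of the preceding Proposition together with the normalization $N_\lambda = \langle H_\lambda^\dagger, H_\lambda\rangle_{q,t}'$. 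So the content to verify is really that the \emph{monomial-in-power-sums} kernel $\Omega_{q,t}[X^{-\bullet}Y^\bullet]$ is the reproducing kernel for $\langle-,-\rangle_{q,t}'$, i.e. $\langle \Omega_{q,t}[X^{-\bullet}Y^\bullet], f[Y^\bullet]\rangle_{q,t}' = f[X^\bullet]$ for all $f$, where the pairing acts in the $Y$ variables.

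The key computational step is to check that $\Omega[X^{-\bullet}Y^\bullet] = \Omega[X^{(0)}Y^{(0)} + X^{(-1)}Y^{(1)} + \cdots]$ is the reproducing kernel for the $\iota$-twisted Hall pairing $\langle-,-\rangle^*$. This reduces to the classical one-tensorand statement $\langle \Omega[XY], f[Y]\rangle = f[X]$ for the Hall pairing on $\Lambda$ (which follows since $p_\mu[Y]/z_\mu$ and $p_\mu[Y]$ are dual and $\Omega[XY] = \sum_\mu p_\mu[X]p_\mu[Y]/z_\mu$), applied color-by-color; the index negation in $X^{-\bullet}Y^\bullet$ is precisely engineered so that the colored pairing $\langle f[\iota X^\bullet], g\rangle$ picks out matching tensorands. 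Once this is in hand, I would peel off the extra plethystic factor: by definition
\[
\Omega_{q,t}[X^{-\bullet}Y^\bullet] = \Omega\!\left[X^{-\bullet}\left(\frac{Y^\bullet}{(1-q\sigma^{-1})(t\sigma-1)}\right)\right],
\]
so pairing against $f[Y^\bullet]$ under $\langle-,-\rangle^*$ yields $f\!\left[\frac{Y^\bullet}{(1-q\sigma^{-1})(t\sigma-1)}\right]$ evaluated at $Y^\bullet \mapsto X^\bullet$; comparing with Definition \ref{PairDef}, $\langle \Omega_{q,t}[X^{-\bullet}Y^\bullet], f[Y^\bullet]\rangle_{q,t}'$ applies the operator $(1-q\sigma^{-1})(t\sigma-1)$ and then its inverse, returning $f[X^\bullet]$. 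This identifies $\Omega_{q,t}$ as the reproducing kernel, and then the abstract duality argument of the first paragraph gives the stated expansion and its mirror image.

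For the final symmetry claim $\Omega_{q,t}[X^{-\bullet}Y^\bullet] = \Omega_{q,t}[Y^{-\bullet}X^\bullet]$, I would argue directly from the power-sum expansion rather than from the series identities: writing $\Omega_{q,t}[X^{-\bullet}Y^\bullet]$ out over partitions and colors, the exponent is a bilinear expression in the $p_n[X^{(i)}]$ and $p_n[Y^{(j)}]$ whose coefficient matrix — involving the operator $(1-q\sigma^{-1})^{-1}(t\sigma-1)^{-1}$ paired with the color-negation $\iota$ — is symmetric under swapping $X \leftrightarrow Y$ precisely because $\iota$ conjugates $\sigma$ to $\sigma^{-1}$ and $\sigma^T = \sigma^{-1}$ for the Hall pairing (Proposition \ref{SigProp}(2)). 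Equivalently, and perhaps more cleanly, it follows from symmetry of the pairing: $\Omega_{q,t}[X^{-\bullet}Y^\bullet]$ reproduces $f$ when paired in $Y$, hence also when paired in $X$ by symmetry of $\langle-,-\rangle_{q,t}'$, and the reproducing kernel is unique, so it must coincide with $\Omega_{q,t}[Y^{-\bullet}X^\bullet]$. The main obstacle I anticipate is purely bookkeeping: keeping the color-shift conventions for $\iota$, $\sigma$, and the $(q,t)$-denominators consistent throughout, and making sure the convergence convention ($|q|,|t|<1$, denominators expanded as power series) is respected so that all the formal manipulations with $\Omega$ are legitimate.
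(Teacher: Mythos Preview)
Your proposal is correct and is precisely the standard argument the paper has in mind: the paper does not actually prove this proposition, merely prefacing it with ``The following is a standard result from the theory of such kernels.'' Your reproducing-kernel verification, the dual-basis expansion, and the symmetry argument via uniqueness of the reproducing kernel together constitute exactly that standard proof.
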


\section{Quantum toroidal and shuffle algebras}\label{TorShuffSec}
Now, we fix $r>2$.

\subsection{The quantum toroidal algebra}
We will introduce two parameters $(\qqq,\ddd)$ such that
\begin{equation}
\begin{aligned}
q&= \qqq\ddd,&
t&= \qqq\ddd^{-1}
\end{aligned}
\label{qtqd}
\end{equation}
Let $\FF \coloneq \CC(\qqq^{\frac{1}{2}},\ddd^{\frac{1}{2}})$.

\subsubsection{The definition of $\UTor$}
For $i,j\in\ZZ/r\ZZ$, define $a_{i,j}$ and $m_{i,j}$ to be
\begin{equation*}
\begin{aligned}
a_{i,i}&=2,& a_{i,i\pm1}&=-1,& m_{i,i\pm 1}&=\mp 1,\hbox{ and }&a_{i,j}&=m_{i,j}=0\hbox{ otherwise.}
\end{aligned}
\end{equation*}
We then define
\[g_{i,j}(z) \coloneq \frac{\qqq^{a_{i,j}}z-1}{z-\qqq^{a_{i,j}}}.\]
The \textit{quantum toroidal algebra} $\UTor$ is a unital associative $\FF$-algebra generated by 
\[\{e_{i,k},f_{i,k},\psi_{i,k},\psi_{i,0}^{-1},\gamma^{\pm\frac{1}{2}}, \qqq^{\pm d_1},\qqq^{\pm d_2}\}_{i\in\ZZ/r\ZZ}^{k\in\ZZ}.\]
Its relations are described in terms of currents: Let
\begin{align*}
e_i(z)& \coloneq \sum_{k\in\ZZ}e_{i,k}z^{-k},\\
f_i(z)& \coloneq \sum_{k\in\ZZ}f_{i,k}z^{-k}, \text{ and }\\
\psi_i^\pm(z)& \coloneq \psi_{i,0}^{\pm 1}+\sum_{k>0}\psi_{i,\pm k}z^{\mp k}.
\end{align*}
The relations may then be written as follows:
\begin{gather*}
[\psi_i^\pm(z),\psi_j^\pm(w)]=0,~\,\gamma^{\pm\frac{1}{2}}\hbox{ are central},\\
\psi_{i,0}^{\pm1}\psi_{i,0}^{\mp1}=\gamma^{\pm\frac{1}{2}}\gamma^{\mp\frac{1}{2}}=\qqq^{\pm d_1}\qqq^{\mp d_1}=\qqq^{\pm d_2}\qqq^{\mp d_2}=1,\\
\qqq^{d_1}e_i(z)\qqq^{-d_1}=e_i(\qqq z),~\, \qqq^{d_1}f_i(z)\qqq^{-d_1}=f_i(\qqq z),~\, \qqq^{d_1}\psi_i^\pm(z)\qqq^{-d_1}=\psi_i^\pm(\qqq z),\\
\qqq^{d_2}e_i(z)\qqq^{-d_2}=\qqq e_i(z),~\, \qqq^{d_2}f_i(z)\qqq^{-d_2}=\qqq^{-1} f_i(z),~\, \qqq^{d_2}\psi_i^\pm(z)\qqq^{-d_2}=\psi_i^\pm( z),\\
g_{i,j}(\gamma^{-1}\ddd^{m_{i,j}}z/w)\psi_i^{+}(z)\psi_j^{-}(w)=g_{i,j}(\gamma\ddd^{m_{i,j}} z/w)\psi_j^{-}(w)\psi_i^{+}(z),\\
e_i(z)e_j(w)=g_{i,j}(\ddd^{m_{i,j}}z/w)e_j(w)e_i(z),\\
f_i(z)f_j(w)=g_{i,j}(\ddd^{m_{i,j}}z/w)^{-1}f_j(w)f_i(z),\\
(\qqq-\qqq^{-1})[e_i(z),f_j(w)]=\delta_{i,j}\left(\delta(\gamma w/z)\psi_i^+(\gamma^{\frac{1}{2}}w)-\delta(\gamma z/w)\psi_i^-(\gamma^\frac{1}{2}z)\right),\\
\psi_i^\pm(z)e_j(w)=g_{i,j}(\gamma^{\pm\frac{1}{2}}\ddd^{m_{i,j}}z/w)e_j(w)\psi_i^\pm(z),\\
\psi_i^\pm(z)f_j(w)=g_{i,j}(\gamma^{\mp\frac{1}{2}}\ddd^{m_{i,j}}z/w)^{-1}f_j(w)\psi_i^\pm(z),\\
\Sym_{z_1,z_2}[e_i(z_1),[e_i(z_2),e_{i\pm1}(w)]_\qqq]_{\qqq^{-1}}=0,~\,[e_i(z),e_j(w)]=0\hbox{ for }j\not=i,i\pm1,\\
\Sym_{z_1,z_2}[f_i(z_1),[f_i(z_2),f_{i\pm1}(w)]_\qqq]_{\qqq^{-1}}=0,~\,[f_i(z),f_j(w)]=0\hbox{ for }j\not=i,i\pm1.
\end{gather*}
Here, $\delta(z)$ denotes the delta function
\[\delta(z)=\sum_{k\in\ZZ}z^k\]
and $[a,b]_\qqq=ab-\qqq ba$ is the $\qqq$-commutator. 

\subsubsection{Horizontal and vertical subalgebras}
$\UTor$ contains two subalgebras isomorphic to the quantum affine algebra $U_\qqq(\dot{\mathfrak{sl}}_r)$.
Omitting the grading element $D$ of $U_\qqq(\dot{\mathfrak{sl}}_r)$, they are:
\begin{itemize}
\item the subalgebra generated by the constant terms $\{e_{i,0}, f_{i,0}, \psi_{i,0}^{\pm 1}\}_{i\in\ZZ/r\ZZ}$, called the \textit{horizontal subalgebra} and denoted $\dot{U}^h$;
\item the subalgebra generated by the currents indexed by $i=1,\ldots, r-1$, called the \textit{vertical subalgebra} and denoted $\dot{U}^v$.
\end{itemize}
Let $'\ddot{U}'$ denote the subalgebra obtained by omitting the elements $\{\qqq^{d_1},\qqq^{d_2}\}$.
The following beautiful construction of Miki gives an automorphism exchanging the ``two loops'' of the quantum toroidal algebra:
\begin{thm}[\cite{Miki}]\label{MikiAut}
There exists an algebra automorphism $\varpi$ of $'\ddot{U}'$ such that $\varpi(\dot{U}^h)=\dot{U}^v$.
\end{thm}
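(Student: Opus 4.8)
The plan is to realize $\varpi$ as the automorphism attached to the modular transformation $S\colon (a,b)\mapsto(-b,a)$ under a braid-group symmetry of $'\ddot{U}'$, in the spirit of Miki. As a first step I would fix a finite generating set that can be recognized from both sides. On the horizontal side one can take the Chevalley data $\{e_{i,0},f_{i,0},\psi_{i,0}^{\pm1}\}_{i\in\ZZ/r\ZZ}$ of $\dot{U}^h$ together with one further generator of nonzero vertical loop degree, say $\psi_{0,1}$: this suffices because the remaining loop generators $e_{i,k},f_{i,k}$ are then extracted inductively from the Drinfeld-type relations, which is the content of the minimalistic presentation of $\UTor$. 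On the vertical side one can take the currents $\{e_i(z),f_i(z),\psi_i^{\pm}(z)\}_{i=1}^{r-1}$ of $\dot{U}^v$ together with $e_{0,0}$ and $f_{0,0}$. The point of carrying both presentations is that a candidate map need only be written down on one generating set and then checked to land in, and to exhaust, the subalgebra generated by the other.

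Next I would construct a braid-group action on $'\ddot{U}'$. For each $i\in\ZZ/r\ZZ$ I would define an algebra automorphism $T_i$ by the Lusztig-type formulas familiar from $U_\qqq(\dot{\mathfrak{sl}}_r)$ (involving $\qqq$-commutators of $e_{i,0}$ and $f_{i,0}$ with the other Chevalley generators), together with a prescribed action on $\psi_{0,1}$ forced by the loop structure, and prove that the $T_i$ satisfy the type-$\widehat{A}_{r-1}$ braid relations. Combined with the evident diagram-rotation automorphism cyclically shifting the index of all currents, and with the grading automorphisms $\qqq^{d_1},\qqq^{d_2}$, the $T_i$ generate an action of the extended affine braid group whose lattice part provides commuting ``translation'' automorphisms in each of the two loop directions.

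I would then show that these translations, rotations and gradings assemble --- once the central element $\gamma^{\frac{1}{2}}$ is accounted for --- into an action of the toroidal (elliptic) braid group, which contains a central extension of $\mathrm{SL}_2(\ZZ)$ interchanging the two translation lattices, and define $\varpi$ to be the automorphism attached to a lift of $S$. Since $S$ carries the horizontal rank-one sublattice onto the vertical one, $\varpi$ sends the generators of $\dot{U}^h$ into the subalgebra generated by the vertical currents, i.e. $\varpi(\dot{U}^h)\subseteq\dot{U}^v$; running the same argument for $S^{-1}$ yields the reverse inclusion and simultaneously exhibits $\varpi^{-1}$, so $\varpi$ is an automorphism with $\varpi(\dot{U}^h)=\dot{U}^v$. (An alternative I would keep in reserve is to build $\varpi$ by conjugation on a sufficiently large representation --- for instance a sum of level-one Fock modules on which the horizontal and vertical Heisenberg subalgebras get exchanged by a boson-type correspondence --- but this requires a faithfulness input that the braid-group construction avoids.)

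The main obstacle will be the relation check inside the second step: verifying that the $\qqq$-exponential formulas for the $T_i$ genuinely preserve \emph{every} defining relation of $\UTor$, most delicately the mixed $e$--$f$ relation, whose right-hand side involves the full currents $\psi_i^{\pm}$, and the quartic Serre relations, and then establishing that the $T_i$ braid correctly and that the whole package extends to the toroidal braid group with the correct central charges. This is precisely where Miki's argument does its substantial work; everything else is organization around the two Drinfeld presentations.
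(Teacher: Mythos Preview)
The paper does not prove this theorem; it is quoted from \cite{Miki} as a black box, with the remark that ``explicit formulas for the images of elements under $\varpi$ are generally quite complicated'' and that ``thanks to existing work, $\varpi$ will only appear conceptually in this paper.'' So there is no proof in the paper to compare your proposal against.

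That said, your outline is broadly in the spirit of Miki's actual argument: he does build $\varpi$ from a toroidal braid group action, extending Lusztig-type automorphisms $T_i$ and assembling them into an $\mathrm{SL}_2(\ZZ)$-type symmetry that swaps the two loop gradings. Your identification of the main obstacle --- the honest relation-check that the $T_i$ preserve all defining relations (especially the $e$--$f$ commutator and the Serre relations) and satisfy the correct braid and central-extension relations --- is accurate, and that is where essentially all the work lies. Your alternative via conjugation on Fock modules is closer to how Miki handles the $\mathfrak{gl}_1$ case, and as you note it would require a faithfulness input that is not available here without further argument. One caution: your sketch does not address the parameter $\ddd$ at all, and the anti-involution $\eta$ recorded just after the theorem (which the paper uses repeatedly) inverts $\ddd$; getting the $\ddd$-dependence right in the braid-group formulas is part of the nontrivial content of Miki's construction and should not be elided.
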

\noindent Explicit formulas for the images of elements under $\varpi$ are generally quite complicated.
Thanks to existing work, $\varpi$ will only appear conceptually in this paper.

To end, let us note that the currents $\{\psi_{i,0}^{\mp 1}\psi_i^\pm(z)\}_{i\in\ZZ/r\ZZ}$ together with $\gamma^{\frac{1}{2}}$ generate a rank $r$ Heisenberg subalgebra $\mathcal{H}$.
Specifically, if we define logarithmic generators $\{b_{i,n}\}$ by
\[
\psi_{i}^\pm (z)=\psi_{i,0}^{\pm 1}\exp\left( \pm(\qqq-\qqq^{-1})\sum_{n>0}b_{i, \pm n}z^{\mp n} \right),
\]
then the $\{b_{i,n}\}$ satisfy the relation
\begin{equation}
\left[ b_{i,n},b_{j,n'} \right]= \delta_{n,-n'}\frac{\left( \gamma^n-\gamma^{-n} \right)\ddd^{-nm_{i,j}}\left[ na_{i,j} \right]_\qqq}{(\qqq-\qqq^{-1})n},
\label{HeisRel}
\end{equation}
where $[n]_v$ denotes the quantum number
\[
[n]_v=\frac{v^n-v^{-n}}{v-v^{-1}}.
\]
We call $\mathcal{H}$ the \textit{vertical Heisenberg subalgebra}.
There are two suitable notions of \textit{horizontal} Heisenberg subalgebras, namely $\varpi^{-1}(\mathcal{H})$ and $\varpi(\mathcal{H})$.
They seem to be different, and both will appear in this paper.
To relate the two, consider the $\CC(\qqq)$-linear algebra anti-involution $\eta$ on $'\ddot{U}'$ such that
\begin{equation*}
\begin{gathered}
    \eta(\ddd)=\ddd^{-1},\\
\eta(e_{i,k})=e_{i,-k},~\, \eta(f_{i,k})=f_{i,-k},~\, \eta(h_{i,k})=-h_{i,-k},\\
\eta(\psi_{i,0})=\psi_{i,0}^{-1},\, \text{ and }~ \eta(\gamma^{\frac{1}{2}})=\gamma^{\frac{1}{2}}.
\end{gathered}
\end{equation*}
\begin{prop}\cite{Miki}
The Miki automorphism $\varpi$ satisfies $\eta\varpi^{-1}=\varpi\eta$.
\end{prop}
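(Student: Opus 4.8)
The identity $\eta\varpi^{-1}=\varpi\eta$ becomes, after right-multiplying by $\varpi$ and using that $\eta$ is an anti-involution, the more symmetric statement
\[
\varpi\,\eta\,\varpi=\eta .
\]
Both sides are anti-automorphisms of $'\ddot{U}'$ — the left-hand side is automorphism $\circ$ anti-automorphism $\circ$ automorphism — so the plan is to check this on a generating set and extend multiplicatively. I would use the generating set $\dot{U}^h$ together with a single transverse Drinfeld generator such as $e_{1,1}$ (it is standard that these generate $'\ddot{U}'$); equivalently one may use the slightly larger set $\dot{U}^h\cup\dot{U}^v$, which is a bit cleaner for exposition.

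The first step is to recognize $\eta$ on each of the two affine subalgebras. On $\dot{U}^v$ in its Drinfeld (current) realization, the rules $\eta(e_{i,k})=e_{i,-k}$, $\eta(f_{i,k})=f_{i,-k}$, $\eta(h_{i,k})=-h_{i,-k}$, $\eta(\psi_{i,0})=\psi_{i,0}^{-1}$, $\eta(\gamma^{1/2})=\gamma^{1/2}$, $\eta(\ddd)=\ddd^{-1}$ translate into $e_i(z)\mapsto e_i(z^{-1})$, $f_i(z)\mapsto f_i(z^{-1})$, $\psi_i^{+}(z)\leftrightarrow\psi_i^{-}(z^{-1})$: this is the standard Drinfeld--Cartan anti-involution, call it $\varphi_v$, of $U_\qqq(\dot{\mathfrak{sl}}_r)$. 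On $\dot{U}^h$, generated by $e_{i,0},f_{i,0},\psi_{i,0}^{\pm1}$ with $i\in\ZZ/r\ZZ$, the same rules give $e_{i,0}\mapsto e_{i,0}$, $f_{i,0}\mapsto f_{i,0}$, $\psi_{i,0}\mapsto\psi_{i,0}^{-1}$, i.e.\ the standard transpose-type anti-involution $\varphi_h$ of $U_\qqq(\dot{\mathfrak{sl}}_r)$ in the Chevalley presentation. Next, using the standard facts that $\varpi$ interchanges $\dot{U}^h$ and $\dot{U}^v$ and that $\varpi^2$ preserves each of them (being, up to the Dynkin flip $i\mapsto -i$, the automorphism negating the $\ZZ^2$-grading), one deduces that $\varpi^{-1}$ also interchanges the two subalgebras. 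Tracing where each factor lands, the identity $\varpi\eta\varpi=\eta$ restricted to $\dot{U}^h$ becomes $\varpi\,\varphi_v\,\varpi\big|_{\dot{U}^h}=\varphi_h$ as maps $\dot{U}^h\to\dot{U}^h$, and restricted to $\dot{U}^v$ it becomes $\varpi\,\varphi_h\,\varpi\big|_{\dot{U}^v}=\varphi_v$.

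The heart of the argument is then to establish these two intertwining relations, i.e.\ that $\varpi$ is compatible with the bar-transpose anti-involution up to inversion. For this I would invoke Miki's construction of $\varpi$: it is assembled from the Lusztig braid operators $T_i$ on $'\ddot{U}'$ together with the cyclic rotation automorphism of the toroidal Dynkin diagram. Each braid operator satisfies a relation of the form $\varphi\,T_i\,\varphi=T_i^{-1}$ (up to the diagram involution $i\mapsto -i$ and a well-understood swap among Lusztig's four variants $T'_{i,\pm1},T''_{i,\pm1}$), and the rotation automorphism commutes with $\varphi$; conjugating the word defining $\varpi$ by the anti-involution therefore produces the word defining $\varpi^{-1}$, which is exactly the content of $\varpi\varphi_v\varpi=\varphi_h$ and its companion. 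Together with the generation statement this gives $\varpi\eta\varpi=\eta$ on all of $'\ddot{U}'$, hence the proposition. One can also phrase this via the near-uniqueness of $\varpi$: $\eta\varpi^{-1}\eta$ is again an automorphism interchanging $\dot{U}^h$ and $\dot{U}^v$, and it remains to match it with $\varpi$ on a handful of generators.

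The step I expect to be the main obstacle is precisely this last one: making the interaction between $\varpi$ and $\eta$ precise. The explicit formulas for $\varpi$ are notoriously complicated, so one must either carry out the braid-word bookkeeping carefully — tracking the half-integer powers of $\qqq$, $\ddd$, $\gamma$ and the diagram automorphism $i\mapsto -i$ that accompanies $\eta$ on the affine side — or, as a more pedestrian but entirely finite alternative, verify $\varpi\eta\varpi=\eta$ directly on the finitely many chosen generators using the known (if unwieldy) formulas for $\varpi(e_{i,0})$, $\varpi(f_{i,0})$, $\varpi(\psi_{i,0}^{\pm1})$ and $\varpi$ of one transverse generator, together with the quick checks on the central and scalar elements $\gamma^{1/2}$ and $\ddd$.
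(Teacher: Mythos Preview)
The paper does not prove this proposition at all: it is stated with the citation \cite{Miki} and no proof environment follows. The authors are simply quoting a known property of the Miki automorphism from the original source and using it as a black box (it is invoked later, e.g.\ in the proofs of Lemma \ref{FTShuff} and Lemma \ref{F0nLem}, to pass between $\varpi$ and $\varpi^{-1}$ via Proposition \ref{EtaShuff}).

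So there is nothing to compare your proposal against in this paper. Your outline --- rewriting the identity as $\varpi\eta\varpi=\eta$, identifying $\eta$ on $\dot{U}^h$ and $\dot{U}^v$ with the standard anti-involutions, and then using the braid-group construction of $\varpi$ together with $\varphi T_i\varphi=T_i^{-1}$ --- is the natural route and is essentially how this is handled in Miki's paper. Your honest assessment of the bookkeeping obstacle is accurate: the explicit check on generators is finite but unpleasant, and the cleaner conceptual argument via braid operators requires care with the four Lusztig variants and the diagram automorphism. If you actually want to write this up, you should consult Miki's original article directly rather than reconstruct it, since the present paper offers no independent argument.
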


\subsection{Representations}
We will use two representations of $\UTor$ to study wreath Macdonald polynomials.

\subsubsection{Vertex representation}
Recall that we denote the $A_{r-1}$ root lattice by $Q$; denote by $Q^\vee$ the coroot lattice and $(-,-): Q^\vee\times Q\rightarrow\ZZ$ the pairing given by the Cartan matrix, so that
\[
(h_i,\alpha_j)=a_{i,j}, 
\]
where $\alpha_1,\ldots, \alpha_r\in Q$ are the simple roots and $h_1,\ldots, h_r\in Q^\vee$ are the simple coroots.
Additionally, we set
\begin{equation*}
\begin{aligned}
\alpha_0 \coloneq -\sum_{i=1}^{r-1}\alpha_i&& \text{ and }&&
h_0 \coloneq -\sum_{i=1}^{r-1}h_i.
\end{aligned}
\end{equation*}
Finally, let $P$ denote the weight lattice with fundamental weights $\Lambda_1,\ldots, \Lambda_{r-1}$.
The set $\{\alpha_2,\ldots, \alpha_{r-1},\Lambda_{r-1}\}$ is a basis of $P$.

The \textit{twisted group algebra} $\FF\{P\}$ is the $\FF$-algebra generated by $\{e^{\alpha_j}\}_{j=2}^{r-1}\cup\{e^{\Lambda_{r-1}}\}$ satisfying the relations
\begin{align*}
e^{\alpha_i}e^{\alpha_j}= (-1)^{(h_i,\alpha_j)}e^{\alpha_j}e^{\alpha_i},&& \text{ and } &&
e^{\alpha_i}e^{\Lambda_{r-1}}= (-1)^{\delta_{i,r-1}}e^{\Lambda_{r-1}}e^{\alpha_i}.
\end{align*}
For $\alpha\in P$, we write $\alpha=\sum_{j=2}^{r-1}m_j\alpha_j +m_r\Lambda_{r-1}$ and set
\[
e^{\alpha}=e^{m_2\alpha_2}\cdots e^{m_{r-1}\alpha_{r-1}}e^{m_r\Lambda_{r-1}}.
\]
We then set $\FF\{Q\}$ to be the subalgebra generated by $e^{\alpha_1},\ldots, e^{\alpha_{r-1}}$, which has
$\{e^\alpha\}_{\alpha\in Q}$ as a basis.

The vertex representation is defined on the space $\mathcal{W} \coloneq \Lambda^{\otimes r}\otimes\FF\{Q\}$.
To define it, we need to introduce the following additional operators:
\begin{align*}
\TT\left[\pm X^{(i)}z \right]& \coloneq \exp\left(\pm\sum_{k>0}p_k^\perp[X^{(i)}]\frac{z^k}{k}  \right),\\
\partial_{\alpha_i}(e^\alpha)& \coloneq  (h_i,\alpha) e^\alpha, \text{ and }\\
z^{H_{i,0}}(e^{\alpha})& \coloneq z^{(h_i, \alpha)}\ddd^{\frac{1}{2}\sum_{j=1}^{r-1}(h_i,m_j\alpha_j)m_{i,j}}e^{\alpha},
\end{align*}
where $\alpha=\sum_{j=1}^{r-1}m_j\alpha_j$.
Because of the exponential in $\TT[X^{(i)}z]$, we can consolidate products of such operators using plethystic notation:
\[
\TT\left[X^{(i)}z\right]\TT\left[X^{(j)}w\right] \eqcolon\TT\left[ X^{(i)}z+X^{(j)}w \right].
\]
As usual with plethystic notation, a minus sign will not denote a substitution $z\mapsto -1$.

\begin{thm}[\cite{Saito}]
Let $\vec{\ccc}=(\ccc_0,\ldots, \ccc_{r-1})\in(\FF^\times)^r$.
The following define a $'\ddot{U}'$-action on $\mathcal{W}$:
\begin{align*}
\rho_{\vec{\ccc}}(e_i(z))&=\ccc_i\Omega\left[\qqq^{-\frac{1}{2}}X^{(i)}z\right]\TT\left[ -\qqq^{-\frac{1}{2}}\left( -\ddd^{-1}X^{(i-1)}+(\qqq+\qqq^{-1})X^{(i)}-\ddd X^{(i+1)} \right)z^{-1} \right]e^{\alpha_i} z^{1+H_{i,0}},\\
\rho_{\vec{\ccc}}(f_i(z))&=\frac{(-1)^{r\delta_{i,0}}}{\ccc_i}\Omega\left[-\qqq^{\frac{1}{2}} X^{(i)}z \right]
\TT\left[ \qqq^{\frac{1}{2}}\left( -\ddd^{-1}X^{(i-1)}+(\qqq+\qqq^{-1})X^{(i)}-\ddd X^{(i+1)} \right)z^{-1} \right]e^{-\alpha_i} z^{1-H_{i,0}},\\
\rho_{\vec{\ccc}}(\psi_i^+(z))&=\exp\left( (\qqq-\qqq^{-1})
\sum_{k>0}
\frac{[k]_\qqq}{k}\left(-\ddd^{-k}p_k^{\perp}[X^{(i-1)}]+(\qqq^k+\qqq^{-k})p_k^\perp[X^{(i)}]-\ddd^{k}p_k^\perp[X^{(i+1)}] \right)z^{-k} \right)
\qqq^{\partial_{\alpha_i}},\\
\rho_{\vec{\ccc}}(\psi_i^-(z))&=\exp\left( -(\qqq-\qqq^{-1})\sum_{k>0}\frac{[k]_\qqq}{k} p_{k}[X^{(i)}]z^k \right)\qqq^{-\partial_{\alpha_i}}, \text{ and }\\
\rho_{\vec{\ccc}}(\gamma^{\frac{1}{2}})&=\qqq^{\frac{1}{2}}.
\end{align*}
\end{thm}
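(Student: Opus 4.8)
The plan is to verify directly that the operators displayed above satisfy every defining relation of $'\ddot{U}'$; since $'\ddot{U}'$ is obtained from $\UTor$ by discarding $\qqq^{\pm d_1},\qqq^{\pm d_2}$, only the relations among the currents are at issue, and every one of them reduces to a computation with the vertex operators $\Omega[\,\cdot\,z]$ and $\TT[\,\cdot\,z^{-1}]$. The one technical input is the contraction formula: moving an annihilation-type exponential $\TT[\,\cdot\,z^{-1}]$ to the right of a creation-type exponential $\Omega[\,\cdot\,w]$ produces a scalar prefactor equal to the exponential of the Heisenberg commutator $[p_k^\perp[X^{(i)}],p_\ell[X^{(j)}]]=k\,\delta_{k\ell}\,\delta_{i,j}$, hence a product of geometric series in $z/w$ — a ratio of products of linear factors whose roots are read off from the monomials inside the two operators; two $\Omega$'s commute, as do two $\TT$'s, and the lattice operators $e^\alpha$, $\qqq^{\partial_{\alpha_i}}$, $z^{H_{i,0}}$ contribute only signs and monomials in $z,\ddd$. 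A partial shortcut is available: on the vertical subalgebra $\dot{U}^v\cong U_\qqq(\dot{\mathfrak{sl}}_r)$ generated by the currents with $i=1,\dots,r-1$, the action of $\dot{U}^v$ on $\mathcal{W}$ is, up to a rescaling of the currents by powers of $\ddd$, the level-one Frenkel--Jing vertex representation, so those relations may be taken as known; by the cyclic symmetry of the presentation the remaining relations, which involve the node $0$, are verified in the same manner.

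I would then go through the relations in turn. Centrality of $\gamma^{\pm\frac12}$, the relations $[\psi_i^\pm(z),\psi_j^\pm(w)]=0$, and the Heisenberg relation \eqref{HeisRel} are immediate, since $\rho_{\vec{\ccc}}(\gamma^{\frac12})=\qqq^{\frac12}$ is scalar and the $\rho_{\vec{\ccc}}(\psi_i^\pm(z))$ are exponentials in the commuting families $p_k^\perp[X^{(j)}]$, $p_k[X^{(j)}]$ and $\qqq^{\partial_{\alpha_i}}$; the logarithmic generators extracted from them realize \eqref{HeisRel} because $[k]_\qqq(\qqq^k+\qqq^{-k})=[2k]_\qqq$. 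The $\psi_i^+$--$\psi_j^-$ relation, $e_i(z)e_j(w)=g_{i,j}(\ddd^{m_{i,j}}z/w)e_j(w)e_i(z)$, $f_i(z)f_j(w)=g_{i,j}(\ddd^{m_{i,j}}z/w)^{-1}f_j(w)f_i(z)$, and the $\psi_i^\pm$--$e_j$ and $\psi_i^\pm$--$f_j$ relations each follow by normal-ordering both orderings of the product and assembling the $\Omega$--$\TT$ contraction with the cocycle sign $e^{\alpha_i}e^{\alpha_j}=(-1)^{a_{i,j}}e^{\alpha_j}e^{\alpha_i}$ and the $z^{a_{i,j}}$- and $\ddd^{m_{i,j}}$-type monomials produced by commuting $z^{H_{i,0}}$ past $e^{\alpha_j}$; matching against the appropriate products of factors $g_{i,j}$ is then a finite computation. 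The relations $[e_i(z),e_j(w)]=0=[f_i(z),f_j(w)]$ for $|i-j|>1$ are trivial, as then $a_{i,j}=m_{i,j}=0$ and there is no contraction, and the quantum Serre relations $\Sym_{z_1,z_2}[e_i(z_1),[e_i(z_2),e_{i\pm1}(w)]_\qqq]_{\qqq^{-1}}=0$ and their $f$-analogue reduce, after normal-ordering, to $\Sym_{z_1,z_2}$ of a fixed normal-ordered field times a rational function in $z_1,z_2,w$ which is annihilated by the symmetrization because of the location of its zeros. Throughout, the parameters $\ccc_i$ occur symmetrically on the two sides of each relation ($\ccc_i\ccc_j$ for $ee$, $\ccc_i^{-1}\ccc_j^{-1}$ for $ff$, $\ccc_i^{-1}\ccc_i=1$ for $[e_i,f_i]$), so they play no role.

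The step I expect to be the main obstacle is the mixed relation $(\qqq-\qqq^{-1})[e_i(z),f_j(w)]=\delta_{i,j}\bigl(\delta(\gamma w/z)\psi_i^+(\gamma^{\frac12}w)-\delta(\gamma z/w)\psi_i^-(\gamma^{\frac12}z)\bigr)$. For $i\ne j$ (including $j=i\pm1$) the bracket vanishes by the same normal-ordering-plus-symmetry argument. For $i=j$ one normal-orders $\rho_{\vec{\ccc}}(e_i(z))\rho_{\vec{\ccc}}(f_i(w))$ and $\rho_{\vec{\ccc}}(f_i(w))\rho_{\vec{\ccc}}(e_i(z))$; up to a common scalar their normal-ordered parts agree (as $\ccc_i\ccc_i^{-1}=1$ and the $e^{\pm\alpha_i}$ contributions match), while their contraction prefactors are the two formal expansions of rational functions with simple poles at $z=\gamma w$ and $z=\gamma^{-1}w$, so the difference is supported on those poles and produces the delta functions $\delta(\gamma w/z)$, $\delta(\gamma z/w)$. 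The delicate point is then identifying the residues there with $\rho_{\vec{\ccc}}(\psi_i^+(\gamma^{\frac12}w))$ and $\rho_{\vec{\ccc}}(\psi_i^-(\gamma^{\frac12}z))$ on the nose; this is what pins down all the half-integer powers of $\qqq$, the $\ddd$-dependence carried by $z^{H_{i,0}}$ through the factor $\ddd^{\frac12\sum_j(h_i,m_j\alpha_j)m_{i,j}}$, and the sign $(-1)^{r\delta_{i,0}}$ in $\rho_{\vec{\ccc}}(f_i(z))$. This verification is the content of Saito's theorem \cite{Saito}; the above is meant only to indicate its architecture.
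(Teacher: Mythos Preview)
The paper does not prove this theorem at all: it is stated with the attribution \cite{Saito} and no argument is given, as it is a known result imported from the literature. Your proposal is a reasonable outline of the standard vertex-operator verification (normal ordering, contraction formulas, delta-function extraction for the $[e_i,f_i]$ relation), and you correctly note at the end that this is precisely the content of Saito's theorem; but there is nothing in the present paper to compare it against.
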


\begin{rem}\label{VertexRem}
To reconcile our formulas above with the usual way the vertex representation is presented, we note
\begin{equation}
\rho_{\vec{\ccc}}(b_{i,k})=
\begin{cases}
\displaystyle\frac{[k]_\qqq}{k} p_{|k|}[X^{(i)}]& k<0\\
\vspace{-1em} \\
\displaystyle\frac{[k]_\qqq}{k}\left(-\ddd^{-k}p_k^{\perp}[X^{(i-1)}]+(\qqq^k+\qqq^{-k})p_k^\perp[X^{(i)}]-\ddd^{k}p_k^\perp[X^{(i+1)}] \right)& k>0
\end{cases}.
\label{VerSym}
\end{equation}
Together with $\rho_{\vec{\ccc}}(\gamma^{\frac{1}{2}})$, these indeed satisfy the Heisenberg relations (\ref{HeisRel}).
\end{rem}

\subsubsection{Fock representation}
The second representation is commonly called the Fock representation.
Let $\mathcal{F}$ be the $\FF$-vector space with basis indexed by partitions $\{|\lambda\rangle\}$.
The corresponding dual basis vectors will be denoted $\{\langle\lambda|\}$.
We recall that for a given partition $\lambda$,
\begin{itemize}
\item $d_i(\lambda)$ is the number of boxes in $\lambda$ with color $i$; 
\item $A_i(\lambda)$ and $R_i(\lambda)$ denote the addable and removable boxes of $\lambda$ with color $i$, respectively; and
\item $\bar{c}_\square$ is the color of $\square \in \lambda$.
\end{itemize}

\begin{thm}[\cite{FJMMRep,WreathEigen}]
Let $v\in\FF^\times$.
The following matrix elements define an action $\tau_v$ of $'\ddot{U}'$ on $\mathcal{F}$:
 \begin{gather*}
\begin{aligned}
\langle\lambda | \tau_v\left(e_i(z)\right)|\lambda+\square\rangle&=\delta_{\bar{c}_\square,i}(-\ddd)^{d_{i+1}(\lambda)}\delta\left( \frac{ z}{\chi_\square v}\right)
\frac{\displaystyle\prod_{\blacksquare\in R_{i}(\lambda)}\left( \chi_\square-\qqq^2\chi_\blacksquare \right)}
{\displaystyle\prod_{\substack{\blacksquare\in A_{i}(\lambda)\\\blacksquare\not=\square}}\left(\chi_\square-\chi_\blacksquare\right)},\\
\langle\lambda+\square |\tau_v\left(f_i(z)\right)|\lambda\rangle&=\delta_{\bar{c}_\square,i}(-\ddd)^{-d_{i+1}(\lambda)}\delta\left(\frac{ z}{\chi_\square v}\right)
\frac{\displaystyle\prod_{\substack{\blacksquare\in A_{i}(\lambda)\\\blacksquare\not=\square}}\left( \qqq\chi_\square-\qqq^{-1}\chi_\blacksquare \right)}
{\displaystyle\prod_{\blacksquare\in R_{i}(\lambda)}\qqq\left( \chi_\square-\chi_\blacksquare \right)},\\
\langle\lambda|\tau_v\left(\psi_i^\pm(z)\right)|\lambda\rangle&=
\prod_{\blacksquare\in A_{i}(\lambda)}\frac{\left(\qqq z-\qqq^{-1}\chi_{\blacksquare}v\right)}{\left( z-\chi_\blacksquare v\right)}
\prod_{\blacksquare\in R_{i}(\lambda)}\frac{\left(\qqq^{-1} z-\qqq\chi_\blacksquare v\right)}{\left( z-\chi_\blacksquare v\right)}, \text{ and }\\
\langle\lambda|\tau_v(\gamma^{\frac{1}{2}})|\lambda\rangle&=1.
\end{aligned}
\end{gather*}
\end{thm}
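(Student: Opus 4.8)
The plan is to verify the defining relations of $'\ddot{U}'$ one family at a time directly on the matrix coefficients; the complete verification is carried out in \cite{FJMMRep,WreathEigen}, so we only lay out its skeleton. First we check well-definedness. Applied to a basis vector $|\lambda\rangle$, the current $\tau_v(e_i(z))$ is the finite sum, over addable boxes $\square$ of color $i$, of a scalar times $\delta(z/\chi_\square v)$ times $|\lambda+\square\rangle$, and $\tau_v(f_i(z))$ is the analogous sum over removable boxes of color $i$; hence each is a well-defined operator on $\mathcal{F}$ valued in formal Laurent series as soon as the scalars are defined. The only denominators that appear are products of $\chi_\square-\chi_\blacksquare$ over pairs of distinct addable (resp.\ removable) boxes of a common color, and since distinct boxes lie on distinct diagonals while $q=\qqq\ddd$ and $t=\qqq\ddd^{-1}$ are multiplicatively independent over $\FF$, no such difference vanishes. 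The operators $\tau_v(\psi_i^\pm(z))$ are diagonal rational functions of $z$, expanded at $z=\infty$ and $z=0$ respectively, so $[\tau_v(\psi_i^\pm(z)),\tau_v(\psi_j^\pm(w))]=0$ is immediate and $\tau_v(\gamma^{\frac{1}{2}})=1$ is central.

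Next we dispatch the relations coupling $\psi_i^\pm$ with $e_j,f_j$ together with the quadratic $ee$ and $ff$ relations. All of these reduce to the single local fact that adding a box of color $j$ to $\lambda$ alters $A_i(\lambda)$ and $R_i(\lambda)$ only when $j\in\{i,i\pm1\}$, in which case the change is completely explicit. When $g_{i,j}\equiv 1$ the two sides of the relation coincide on the nose; when $j\in\{i,i\pm1\}$ we substitute the value $\chi_\square v$ that the delta function of the relevant $e$- or $f$-current forces for the box $\square$ being added or removed, and then verify that the resulting ratio of scalar coefficients equals the prescribed $g_{i,j}$-factor evaluated at the corresponding ratio of spectral parameters. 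Each of these is a short rational-function identity in the contents of a handful of boxes, and one organizes the bookkeeping by which corners of $\lambda$ appear, disappear, or change type.

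The first substantive relation is $(\qqq-\qqq^{-1})[e_i(z),f_j(w)]=\delta_{i,j}\big(\delta(\gamma w/z)\psi_i^+(\gamma^{\frac{1}{2}}w)-\delta(\gamma z/w)\psi_i^-(\gamma^{\frac{1}{2}}z)\big)$. For $i\neq j$ we check that adding a color-$i$ box and removing a color-$j$ box commute as operators, prefactors included: the sets governing $f_j$ move only under color-$j$ or color-$(j\pm1)$ operations, so the only interaction is the adjacent case, where the boundary terms that move cancel between the two orderings. For $i=j$ we evaluate $\tau_v(e_i(z))\tau_v(f_i(w))-\tau_v(f_i(w))\tau_v(e_i(z))$ on $|\lambda\rangle$: the contributions landing on $|\lambda+\square'-\square''\rangle$ with $\square'\neq\square''$ cancel in pairs, while the surviving diagonal part telescopes over $A_i(\lambda)\cup R_i(\lambda)$ to give exactly $(\qqq-\qqq^{-1})^{-1}$ times the right-hand side above. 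This is the standard residue/delta-function computation, and it is what pins down the precise $\qqq^{\pm1}$-shifts in the numerators and denominators of the $e$- and $f$-coefficients.

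The hard part, and the step we expect to be the main obstacle, is the Serre relations $\Sym_{z_1,z_2}[e_i(z_1),[e_i(z_2),e_{i\pm1}(w)]_\qqq]_{\qqq^{-1}}=0$ and their $f$-analogues. A head-on verification means evaluating the symmetrized triple bracket on each $|\lambda\rangle$ and showing that the sum over all ways of adding two color-$i$ boxes and one color-$(i\pm1)$ box vanishes after the symmetrization in $z_1,z_2$ --- a genuinely intricate rational-function identity. Rather than attack this directly, one realizes $\mathcal{F}$ inside a semi-infinite $q$-wedge built from the much smaller vector representations of $'\ddot{U}'$, on which the Serre relations are immediate for degree reasons and then propagate to the wedge via the fusion construction; alternatively, one transports all the relations at once from the vertex representation of the preceding theorem through the identification of $\mathcal{F}$ with a suitable twist of $\mathcal{W}$ underlying \cite{WreathEigen}. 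In all of this the spectral parameter $v$ enters only as the global rescaling $z\mapsto z/v$, and the standing hypothesis $r>2$ is used solely through the particular presentation of $'\ddot{U}'$ adopted here.
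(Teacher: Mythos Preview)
The paper does not give its own proof of this theorem; it is quoted from the cited references \cite{FJMMRep,WreathEigen} and used as a black box. Your sketch is a fair outline of the strategy carried out in those references---direct verification of the quadratic relations via explicit corner combinatorics, with the Serre relations handled through the semi-infinite $q$-wedge construction of \cite{FJMMRep}.

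One caveat: your alternative suggestion of transporting all the relations from the vertex representation through the identification of $\mathcal{F}$ with a twist of $\mathcal{W}$ is circular in the logical order of this paper. The Tsymbaliuk isomorphism (Theorem~\ref{TsymIso}) is stated only after both $\tau_v$ and $\rho_{\vec{\ccc}}$ are already known to be representations, and its proof presupposes this; so it cannot be invoked to establish that $\tau_v$ satisfies the defining relations. The $q$-wedge route you mention first is the one that actually does the work in the cited literature.
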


It will be useful to compute the following:

\begin{lem}\label{BEigen}
For $k>0$,the Heisenberg generators $\{b_{i,\pm k}\}$ act on $\mathcal{F}$ as
\begin{equation*}
\begin{aligned}
\langle\lambda|\tau_v(b_{i,k})|\lambda\rangle 
&= \frac{v^{k}[ k]_{\qqq}}{\qqq^{ k}k}\left( \sum_{\blacksquare\in A_{i}(\lambda)}\chi_\blacksquare^{k} - \sum_{\blacksquare\in R_{i}(\lambda)}(qt\chi_\blacksquare)^{k}\right)
=\frac{v^{k}[ k]_{\qqq}}{\qqq^{ k}k}p_k[-D_\lambda^{(i)}] \text{ and }\\
\langle\lambda|\tau_v(b_{i,-k})|\lambda\rangle 
&= \frac{\qqq^{k}[ k]_{\qqq}}{v^{k}k}\left( \sum_{\blacksquare\in A_{i}(\lambda)}\chi_\blacksquare^{-k} - \sum_{\blacksquare\in R_{i}(\lambda)}(qt\chi_\blacksquare)^{-k}\right)
=\frac{\qqq^{k}[ k]_{\qqq}}{v^{k}k}p_k\left[-(D_\lambda^{(i)})_*\right].
\end{aligned}
\end{equation*}
\end{lem}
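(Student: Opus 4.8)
The plan is to read off the eigenvalues of the $b_{i,\pm k}$ directly from the diagonal matrix elements of the currents $\psi_i^\pm(z)$, using the defining expansions
\[
\psi_i^\pm(z)=\psi_{i,0}^{\pm1}\exp\!\left(\pm(\qqq-\qqq^{-1})\sum_{n>0}b_{i,\pm n}z^{\mp n}\right),
\]
where $\psi_i^+(z)$ is a series in $z^{-1}$ and $\psi_i^-(z)$ a series in $z$. Let $R_i^\lambda(z)$ denote the rational function appearing in the theorem as the common value of $\langle\lambda|\tau_v(\psi_i^\pm(z))|\lambda\rangle$. Since $\psi_i^+(z)=\psi_{i,0}^{+1}+\sum_{k>0}\psi_{i,k}z^{-k}$, the eigenvalue of $\psi_{i,0}$ on $|\lambda\rangle$ is the $z\to\infty$ limit of $R_i^\lambda(z)$; each factor of $R_i^\lambda$ tends to $\qqq$ on an addable box and to $\qqq^{-1}$ on a removable box, so this eigenvalue is $\qqq^{\#A_i(\lambda)-\#R_i(\lambda)}$ (which equals $\qqq^{\delta_{i,0}+c_{i-1}-c_i}$ by \eqref{ChargeAR}, although only its being the leading $z$-term is used).

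First I would divide $R_i^\lambda(z)$ by that leading term, which multiplies each addable factor by $\qqq^{-1}$ and each removable factor by $\qqq$ and thereby simplifies them:
\[
\qqq^{-\#A_i(\lambda)+\#R_i(\lambda)}R_i^\lambda(z)=\prod_{\blacksquare\in A_i(\lambda)}\frac{z-\qqq^{-2}\chi_\blacksquare v}{z-\chi_\blacksquare v}\prod_{\blacksquare\in R_i(\lambda)}\frac{z-\qqq^{2}\chi_\blacksquare v}{z-\chi_\blacksquare v}.
\]
Then I would take logarithms and expand at $z=\infty$ using $\log\frac{z-aw}{z-bw}=\sum_{n\geq1}\frac{(b^n-a^n)w^n}{n}z^{-n}$; the coefficient of $z^{-n}$ on the left is $(\qqq-\qqq^{-1})\langle\lambda|\tau_v(b_{i,n})|\lambda\rangle$, while on the right it is
\[
\frac{v^n}{n}\left((1-\qqq^{-2n})\sum_{\blacksquare\in A_i(\lambda)}\chi_\blacksquare^{n}+(1-\qqq^{2n})\sum_{\blacksquare\in R_i(\lambda)}\chi_\blacksquare^{n}\right).
\]
Clearing the factor $\qqq-\qqq^{-1}$ via $1-\qqq^{\mp2n}=\pm\qqq^{\mp n}(\qqq-\qqq^{-1})[n]_\qqq$ and recalling $qt=\qqq^2$ from \eqref{qtqd}, this reduces to $\frac{v^n[n]_\qqq}{\qqq^n n}\bigl(\sum_{\blacksquare\in A_i(\lambda)}\chi_\blacksquare^{n}-\sum_{\blacksquare\in R_i(\lambda)}(qt\chi_\blacksquare)^{n}\bigr)$, and the bracket is $p_n[-D_\lambda^{(i)}]$ by \eqref{DLambda}. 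This gives the first formula.

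The computation for $b_{i,-k}$ is the mirror image: one expands $R_i^\lambda(z)$ at $z=0$, normalizes by $\lim_{z\to0}R_i^\lambda(z)=\qqq^{-(\#A_i(\lambda)-\#R_i(\lambda))}$ as forced by $\psi_i^-(z)=\psi_{i,0}^{-1}+\sum_{k>0}\psi_{i,-k}z^{k}$, and tracks the extra minus sign in the exponential of $\psi^-$. The normalized factors become $\frac{\qqq^{2}z-\chi_\blacksquare v}{z-\chi_\blacksquare v}$ on addable boxes and $\frac{\qqq^{-2}z-\chi_\blacksquare v}{z-\chi_\blacksquare v}$ on removable boxes, and the same logarithmic expansion (now about $z=0$) together with the identities above yields $\langle\lambda|\tau_v(b_{i,-k})|\lambda\rangle=\frac{\qqq^k[k]_\qqq}{v^k k}\bigl(\sum_{\blacksquare\in A_i(\lambda)}\chi_\blacksquare^{-k}-\sum_{\blacksquare\in R_i(\lambda)}(qt\chi_\blacksquare)^{-k}\bigr)$; since the $*$-operation inverts $q,t$ only after the colors are fixed, the bracket is precisely $p_k[-(D_\lambda^{(i)})_*]$. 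There is no genuine obstacle here, as the content is a bookkeeping exercise; the only care needed is in keeping the two expansion regimes apart ($z^{-n}$ for $\psi^+$, $z^n$ for $\psi^-$), dividing out $\psi_{i,0}$ before taking logarithms, and handling the sign discrepancy between the exponentials of $\psi^+$ and $\psi^-$, which is what exchanges the roles of the $\qqq^{\pm n}$ prefactors between the two formulas.
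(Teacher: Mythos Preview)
Your proof is correct and follows exactly the natural approach: the paper states this lemma without proof, treating it as a routine computation, and what you have written is precisely that computation. Your bookkeeping of the expansion regions, the normalization by $\psi_{i,0}^{\pm1}$, and the use of $\qqq^2=qt$ and \eqref{DLambda} are all handled correctly.
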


\subsubsection{Tsymbaliuk isomorphism}
The representation $\tau_v$ on $\mathcal{F}$ is generated as a module by $|\varnothing\rangle$, whereas on the other hand, $\rho_{\vec{\ccc}}$ is generated by $1\otimes 1\in \mathcal{W}$.
Both can be considered vacuum vectors for their respective representations.
The following theorem was proved by Tsymbaliuk.
\begin{thm}[\cite{Tsym,WreathEigen}]\label{TsymIso}
Let
\begin{equation*}
v=\frac{(-1)^{\frac{(r-2)(r-3)}{2}}\qqq}{\ddd^{\frac{r}{2}}\ccc_0\cdots \ccc_{r-1}}.
\label{FockVertexPar}
\end{equation*}
The vacuum-to-vacuum map
\begin{equation}
\mathcal{F}\ni|\varnothing\rangle\mapsto 1\otimes 1\in \mathcal{W}
\label{VacToVac}
\end{equation}
induces an isomorphism $\mathrm{T}:\mathcal{F}\rightarrow \mathcal{W}$ between the 
representation $\tau_v$ and the $\varpi$-twisted representation $\rho_{\vec{\ccc}}\circ\varpi$.
\end{thm}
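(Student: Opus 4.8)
The plan is to show that the vacuum-to-vacuum assignment $|\varnothing\rangle\mapsto 1\otimes 1$ extends to a $'\ddot U'$-module homomorphism $\phi\colon\mathcal F\to\mathcal W$ intertwining $\tau_v$ with $\rho_{\vec{\ccc}}\circ\varpi$, and then to argue that $\phi$ is forced to be bijective; the precise value of $v$ will emerge from the first step. For bijectivity, observe that both representations are cyclic: $\mathcal F$ is generated over $'\ddot U'$ by $|\varnothing\rangle$, since suitable linear combinations of the modes $e_{i,k}$ adjoin a single box of prescribed colour and content, and $\mathcal W$ is generated over $'\ddot U'$ by $1\otimes 1$ by Saito's construction \cite{Saito}, a property unchanged by precomposing with the automorphism $\varpi$. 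Both spaces carry compatible gradings with finite-dimensional graded pieces --- on $\mathcal F$ one grading is $|\lambda|$, on $\mathcal W$ one uses the natural $\ZZ^2$-grading of $\UTor$ that $\varpi$ transports to the relevant one --- and Proposition \ref{CoreQuotDec}, which matches the basis $\{|\lambda\rangle\}$ of $\mathcal F$ with $Q\times\{s_{\vec{\mu}}\}$, a basis of $\Lambda^{\otimes r}\otimes\FF\{Q\}$, shows corresponding graded pieces have equal dimension. Hence any graded module map $\phi$ with $\phi(|\varnothing\rangle)=1\otimes 1$ is automatically surjective (its image is a graded submodule containing a cyclic generator) and therefore an isomorphism.

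Everything thus reduces to producing $\phi$. From the matrix coefficients of $\tau_v$, the vacuum $|\varnothing\rangle$ satisfies $\tau_v(f_i(z))|\varnothing\rangle=0$ for all $i$ (nothing to remove), $\tau_v(e_i(z))|\varnothing\rangle=0$ for $i\neq 0$ (no addable box of colour $i$), $\tau_v(e_0(z))|\varnothing\rangle$ supported at $z=v$ (the sole addable box $(0,0)$), and $\tau_v(\psi_i^\pm(z))|\varnothing\rangle$ equal to an explicit scalar function of $z$ times $|\varnothing\rangle$; these relations present $\mathcal F$ as a cyclic module, so producing $\phi$ amounts to checking the same identities for $1\otimes 1$ under $\rho_{\vec{\ccc}}\circ\varpi$, namely
\begin{align*}
\rho_{\vec{\ccc}}\!\left(\varpi(f_i(z))\right)(1\otimes 1)&=0 \quad\text{for all } i,\\
\rho_{\vec{\ccc}}\!\left(\varpi(e_i(z))\right)(1\otimes 1)&=0 \quad\text{for } i\neq 0,\\
\rho_{\vec{\ccc}}\!\left(\varpi(\psi_i^\pm(z))\right)(1\otimes 1)&=(\text{the same scalar})\,(1\otimes 1),
\end{align*}
together with the statement that $\rho_{\vec{\ccc}}\!\left(\varpi(e_0(z))\right)(1\otimes 1)$ is concentrated at a single value of $z$, which one then names $v$.

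The hard part is precisely that $\varpi$ has no workable closed form on the Drinfeld currents, so the displayed identities are not direct computations --- verifying them is the true content of the theorem. A convenient starting point is that $1\otimes 1$ is already highest-weight for the horizontal subalgebra $\dot{U}^h$ under $\rho_{\vec{\ccc}}$: a short vertex-operator calculation shows it is killed by every $e_{i,0}$ and every $f_{i,0}$ and fixed by every $\psi_{i,0}$. The main tools for transporting this through $\varpi$ and completing the check are Miki's automorphism theorem (Theorem \ref{MikiAut}), which governs the interchange of the ``two loops'' and in particular identifies the horizontal Heisenberg subalgebras $\varpi^{\pm1}(\mathcal{H})$, and the anti-involution $\eta$ satisfying $\eta\varpi^{-1}=\varpi\eta$, which lets one trade a computation for $\varpi$ against one for $\varpi^{-1}$. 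Carrying this out is the substance of Tsymbaliuk's isomorphism theorem \cite{Tsym} for $r>2$, adapted to the presentation of $\UTor$ used here in \cite{WreathEigen}; bookkeeping the scalar prefactors $(-1)^{r\delta_{i,0}}$ and $\ccc_i$ and the powers of $\qqq$ and $\ddd$ that appear in $\rho_{\vec{\ccc}}$ through the computation then yields $v=(-1)^{(r-2)(r-3)/2}\qqq\,\ddd^{-r/2}(\ccc_0\cdots\ccc_{r-1})^{-1}$. With $\phi$ constructed, the dimension count of the first paragraph completes the proof.
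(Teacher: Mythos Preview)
The paper does not supply its own proof of this theorem: it is stated with attribution to \cite{Tsym,WreathEigen} and used as a black box. So there is no in-paper argument to compare against; your proposal is an attempt to sketch what such a proof would look like.

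As a sketch, your outline is reasonable in its broad shape --- both modules are cyclic, so an intertwiner is determined by the image of the vacuum, and one checks compatibility of the vacuum relations --- but it is not a proof. You yourself identify the gap: the displayed identities
\[
\rho_{\vec{\ccc}}\!\left(\varpi(f_i(z))\right)(1\otimes 1)=0,\qquad
\rho_{\vec{\ccc}}\!\left(\varpi(e_i(z))\right)(1\otimes 1)=0\ (i\neq 0),\qquad\text{etc.}
\]
cannot be checked without concrete control over $\varpi$ on the currents, and you defer this entirely to \cite{Tsym,WreathEigen}. That deferral is exactly what the paper does by citing the result, so your proposal does not add independent content beyond a description of the strategy. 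Two smaller points also need tightening: the claim that the vacuum relations you list \emph{present} $\mathcal{F}$ as a cyclic module (i.e.\ are a complete set of relations) is asserted rather than argued, and the graded-dimension match between $\mathcal{F}$ and $\mathcal{W}$ requires specifying precisely which gradings $\varpi$ intertwines, which you gesture at but do not pin down. In short, the proposal is an accurate roadmap but, like the paper, ultimately relies on the cited references for the substance.
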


Recall that in Proposition \ref{CoreQuotDec}, we give a bijection between $r$-cores and $Q$.
We will abuse notation and write $\core(\lambda)\in Q$.
The following result provides more details on the Tsymbaliuk isomorphisms:
\begin{thm}[\cite{WreathEigen}]\label{Eigenstates}
For a nonzero scalar $c_\lambda$, we have:
\[\mathrm{T}\left(|\lambda\rangle\right)=c_\lambda H_\lambda\otimes e^{\core(\lambda)}.\]
Therefore, $\{H_\lambda\otimes e^{\core(\lambda)}\}$ is a diagonal basis for $(\rho_{\vec{\ccc}}\circ\varpi)(\mathcal{H})$.
\end{thm}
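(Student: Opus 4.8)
The plan is as follows. The ``Therefore'' clause will follow formally once the displayed identification is established: by the matrix elements above, $\tau_v(\psi_i^\pm(z))$ is diagonal in the basis $\{|\lambda\rangle\}$, hence so is $\tau_v(b_{i,\pm k})$, with diagonal entry governed (Lemma \ref{BEigen}) by $p_k[-D_\lambda^{(i)}]$ and $p_k[-(D_\lambda^{(i)})_*]$; since the full collection of the $D_\lambda^{(i)}$ records the addable and removable corners of every color and hence determines $\lambda$, distinct partitions have distinct joint $\mathcal{H}$-eigenvalues. Thus $\{|\lambda\rangle\}$ is, up to scalars, the unique joint eigenbasis of $\tau_v(\mathcal{H})$, and applying the intertwiner $\mathrm{T}$ of Theorem \ref{TsymIso}, the $\mathrm{T}(|\lambda\rangle)$ form the unique joint eigenbasis of $(\rho_{\vec{\ccc}}\circ\varpi)(\mathcal{H})$ on $\mathcal{W}$. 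It therefore suffices to prove $\mathrm{T}(|\lambda\rangle)\in\FF^{\times}\cdot\big(H_\lambda\otimes e^{\core(\lambda)}\big)$.

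\emph{Locating the core and the degree.} The elements $\psi_{i,0}$ act diagonally under $\tau_v$; letting $z\to\infty$ in the $\psi_i^+$ matrix element and invoking \eqref{ChargeAR} shows the eigenvalue on $|\lambda\rangle$ is $\qqq^{\delta_{i,0}+c_{i-1}-c_i}$, depending only on $\core(\lambda)=(c_0,\dots,c_{r-1})\in Q$. The same commuting, invertible elements act under $\rho_{\vec{\ccc}}\circ\varpi$ preserving the $\FF\{Q\}$-grading of $\mathcal{W}$, and on the $e^\alpha$-component by a character of $Q$ separating the $\alpha$'s; matching eigenvalues through $\mathrm{T}$ forces $\mathrm{T}(|\lambda\rangle)\in\Lambda^{\otimes r}\otimes\FF e^{\core(\lambda)}$. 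A parallel argument with the principal grading operator (whose eigenvalue counts boxes up to a core-dependent shift) shows the $\Lambda^{\otimes r}$-component is homogeneous of degree $|\quot(\lambda)|$; write it $\widetilde H_\lambda$, so $\mathrm{T}(|\lambda\rangle)=\widetilde H_\lambda\otimes e^{\core(\lambda)}$.

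\emph{Triangularity.} The heart of the matter is to verify conditions (1)--(2) of Definition \ref{ModDef} for $\widetilde H_\lambda$ (condition (3) then merely pins down the scalar, which is nonzero since $\mathrm{T}$ is injective and $H_\lambda\neq 0$). The purely positive-mode part of $\rho_{\vec{\ccc}}(f_i(z))$ is a Jing-type vertex operator built from $\Omega[-\qqq^{\frac{1}{2}}X^{(i)}z]$; normal-ordering an iterated product of these over varying colors --- the crossing factors being dictated by the Heisenberg relations \eqref{HeisRel}, which inject precisely the $\ddd^{\pm1}$- and $\qqq$-powers assembling the color-mixing denominator $(1-q\sigma^{-1})^{-1}$ --- produces, up to strictly lower terms in dominance order, the modified functions $\hat h_{\vec\nu}$ of \ref{ModEH}. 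On $\mathcal{F}$ the corresponding modes of $f_i(z)$ add boxes, and reading the Young--Maya bookkeeping the vectors obtained by iterating them from $|\varnothing\rangle$ expand $\le_r$-triangularly in $\{|\mu\rangle\}$. Transporting one description to the other via $\mathrm{T}$ yields $\widetilde H_\lambda\in\mathrm{span}\{\hat h_{\quot(\mu)}\mid\mu\ge_r\lambda\}$. The mirror computation with the $\Omega[\qqq^{-\frac{1}{2}}X^{(i)}z]$-part of $\rho_{\vec{\ccc}}(e_i(z))$, producing the $\hat e_{\vec\nu}$, gives $\widetilde H_\lambda\in\mathrm{span}\{\hat e_{\quot(\mu)}\mid\mu\le_r\lambda\}$. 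By Proposition \ref{IntDef} this intersection is one-dimensional and spanned by $H_\lambda$, so $\widetilde H_\lambda\in\FF^{\times}H_\lambda$, completing the identification.

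\emph{Main obstacle.} The delicate point is the triangularity step: one must extract from the explicit, color-entangling vertex-operator formulas exactly which dominance-triangular combinations of the $\hat h_{\vec\nu}$ (resp.\ $\hat e_{\vec\nu}$) arise from the iterated horizontal creation operators, and match the resulting partial order with the box-addition combinatorics on $\mathcal{F}$ extracted from Maya diagrams, all the while keeping the $\FF\{Q\}$-cocycle signs consistent. Everything else --- the uniqueness of the eigenbasis and the weight- and degree-grading identifications --- is routine once the relevant operators are in hand. (The details of the triangularity analysis are carried out in \cite{WreathEigen}.)
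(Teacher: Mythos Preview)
The paper does not give its own proof of this statement: the theorem is quoted with a citation to \cite{WreathEigen} in its header and no proof environment follows. So there is nothing to compare your attempt against in the present paper; the result is imported wholesale from the cited reference, and your last parenthetical correctly acknowledges this.

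That said, your sketch contains a genuine gap in how it handles the Miki automorphism $\varpi$. The intertwiner $\mathrm{T}$ of Theorem~\ref{TsymIso} satisfies $\mathrm{T}\circ\tau_v(x)=(\rho_{\vec{\ccc}}\circ\varpi)(x)\circ\mathrm{T}$, so the plain vertex operators $\rho_{\vec{\ccc}}(f_i(z))$ on $\mathcal{W}$ correspond on $\mathcal{F}$ to $\tau_v(\varpi^{-1}(f_{i,k}))$, \emph{not} to the box-addition operators $\tau_v(f_{i,k})$. Conversely, the box-addition action of $f_i(z)$ on $\mathcal{F}$ transports to $\rho_{\vec{\ccc}}(\varpi(f_{i,k}))$ on $\mathcal{W}$, which is not a Jing-type creation operator for $\hat h$'s in any direct way. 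Your triangularity paragraph reads as if these two pictures align without the twist; they do not, and bridging them is precisely the nontrivial content of \cite{WreathEigen}. The actual mechanism (visible later in this paper in the proof of Lemma~\ref{P1Supp}) is: multiplication by $\hat e_n^{(p)}$ or $\hat h_n^{(p)}$ on $\mathcal{W}$ equals $\rho_{\vec{\ccc}}$ of an explicit vertical-Heisenberg element, and it is $\tau_v\circ\varpi^{-1}$ of \emph{that} element whose matrix coefficients on $\mathcal{F}$ exhibit the required $\le_r$-triangularity --- a computation carried out in \cite{WreathEigen} via the shuffle realization. A similar caveat applies to your ``locating the core'' step: the operator acting as $\qqq^{\partial_{\alpha_i}}$ on the $e^\alpha$ factor is $\rho_{\vec{\ccc}}(\psi_{i,0})$, which through $\mathrm{T}$ matches $\tau_v(\varpi^{-1}(\psi_{i,0}))$ rather than $\tau_v(\psi_{i,0})$; one has to know how $\varpi$ moves the Cartan-type elements before the eigenvalue matching goes through.
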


\subsubsection{Plethysm for $\mathcal{T}$-operators}
Note that
\begin{equation}
\mathcal{T}\left[ \pm X^{(i)}z \right]\left(f[X^{(j)}]\right)=
\begin{cases}
f[X^{(i)}\pm z] & i=j\\
f[X^{(j)}] & i\not=j
\end{cases},
\label{TOp}
\end{equation}
which can be checked on power sums.
We can extend the matrix plethysm notation \ref{MatPleth} to $\mathcal{T}$-operators:
\[
\mathcal{T}\left[ A X^{(j)} \right] \coloneq \mathcal{T}\left[ \sum_{i\in\ZZ/r\ZZ}A_{ij}X^{(i)} \right].
\]
Below is a lemma that makes this plethystic notation more viable to work with.
\begin{lem}\label{TOLem}
For two $r\times r$ matrices $A$ and $B$ with entries in $\CC(q,t)$ and $f\in \Lambda_{q,t}^{\otimes r}$, we have
\[
\mathcal{T}\left[ AX^{(j)}z \right]\Omega\left[ BX^{(k)}w \right]=\Omega\left[\left(A^TB\right)_{jk}zw\right]\Omega\left[ BX^{(k)}w \right]\mathcal{T}\left[ AX^{(j)}z \right]
\]
\end{lem}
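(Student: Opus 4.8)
The plan is to reduce the identity to a computation on power sums, exploiting the fact that both $\mathcal{T}\left[AX^{(j)}z\right]$ and $\Omega\left[BX^{(k)}w\right]$ are exponentials of operators built linearly out of the $p_n^\perp[X^{(i)}]$ and the $p_n[X^{(i)}]$, respectively. First I would unwind the definitions using \eqref{MatDef} and the definition of $\mathcal{T}\left[\pm X^{(i)}z\right]$: we have
\[
\mathcal{T}\left[AX^{(j)}z\right]=\exp\left(\sum_{n>0}\frac{z^n}{n}\sum_{i\in\ZZ/r\ZZ}A_{ij}(q^n,t^n,\dots)\,p_n^\perp[X^{(i)}]\right),
\qquad
\Omega\left[BX^{(k)}w\right]=\exp\left(\sum_{m>0}\frac{w^m}{m}\sum_{i\in\ZZ/r\ZZ}B_{ik}(q^m,t^m,\dots)\,p_m[X^{(i)}]\right),
\]
so the whole statement is an instance of the Baker--Campbell--Hausdorff identity for two exponentials whose arguments have a central commutator.

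The key computation is the commutator $\left[p_n^\perp[X^{(i)}],\,p_m[X^{(i')}]\right]$. Since $p_n^\perp[X^{(i)}]=n\,\partial/\partial p_n[X^{(i)}]$ acts as a derivation, this bracket equals $n\,\delta_{n,m}\delta_{i,i'}$, a scalar; in particular it is central, so BCH truncates: $e^{P}e^{Q}=e^{[P,Q]}e^{Q}e^{P}$ when $[P,Q]$ is central. Assembling the scalar, I would get
\[
\left[\sum_{n>0}\frac{z^n}{n}\sum_i A_{ij}p_n^\perp[X^{(i)}],\ \sum_{m>0}\frac{w^m}{m}\sum_{i'}B_{i'k}p_m[X^{(i')}]\right]
=\sum_{n>0}\frac{(zw)^n}{n}\sum_{i\in\ZZ/r\ZZ}A_{ij}(q^n,\dots)B_{ik}(q^n,\dots),
\]
and I would recognize the inner sum $\sum_i A_{ij}B_{ik}=\left(A^TB\right)_{jk}$ (evaluated at $q^n,t^n,\dots$), so the exponential of the commutator is exactly $\Omega\left[\left(A^TB\right)_{jk}zw\right]$ by definition of $\Omega$ and of scalar plethysm. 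Finally I would invoke the BCH truncation to move $\mathcal{T}\left[AX^{(j)}z\right]$ past $\Omega\left[BX^{(k)}w\right]$, producing the claimed identity.

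The only genuine subtlety — not really an obstacle, but the point requiring care — is the bookkeeping of the matrix-plethysm conventions: one must check that $p_n^\perp[X^{(i)}]$ really does appear with coefficient $A_{ij}(q^n,t^n,\dots)$ (i.e. that the matrix entry is evaluated at the $n$th powers of the alphabet, matching the convention in \eqref{MatDef}), that the transpose index pattern $\sum_i A_{ij}B_{ik}$ lands on the $(j,k)$ entry of $A^TB$ rather than $AB^T$ or a permuted version, and that the degenerate colors $X^{(i)}=X^{(i')}$ for $i\equiv i'\bmod r$ do not cause double counting. One should also note that since the arguments are formal power series in $z$ and $w$, the infinite sums and the exponentials are all well-defined in the appropriate completion, so the formal BCH manipulation is legitimate. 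Once these conventions are pinned down the identity is immediate; I do not expect to need any Pieri rule, any property of $H_\lambda$, or anything beyond the derivation property of $p_n^\perp$.
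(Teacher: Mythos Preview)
Your argument is correct. The paper's proof takes a slightly different but equivalent route: rather than invoking BCH, it uses directly the translation property \eqref{TOp}, namely that $\mathcal{T}\left[AX^{(j)}z\right]$ acts on any symmetric function by the alphabet shift $X^{(i)}\mapsto X^{(i)}+A_{ij}z$. Applying this shift to $\Omega\left[\sum_i B_{ik}X^{(i)}w\right]$ and then using the multiplicativity of $\Omega$ immediately gives
\[
\Omega\!\left[\sum_i B_{ik}(X^{(i)}+A_{ij}z)w\right]
=\Omega\!\left[\sum_i A_{ij}B_{ik}\,zw\right]\Omega\!\left[BX^{(k)}w\right],
\]
and recognizing $\sum_i A_{ij}B_{ik}=(A^TB)_{jk}$ finishes the proof. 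Your BCH computation is the infinitesimal version of this: the Heisenberg relation $[p_n^\perp[X^{(i)}],p_m[X^{(i')}]]=n\,\delta_{n,m}\delta_{i,i'}$ is exactly what makes $\mathcal{T}$ a translation operator. The paper's phrasing is marginally quicker because it avoids writing out the exponentials and the commutator, while your phrasing makes the Heisenberg structure explicit; neither requires anything beyond the derivation property of $p_n^\perp$, and your remarks about convergence and the index bookkeeping for $A^TB$ versus $AB^T$ are well taken.
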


\begin{proof}
Writing things out, we have
\begin{align*}
\mathcal{T}\left[ AX^{(j)}z \right]= \mathcal{T}\left[ \sum_{i\in\ZZ/r\ZZ}A_{ij}X^{(i)}z \right] && \text{ and } &&
\Omega\left[ BX^{(k)}w \right]= \Omega\left[ \sum_{i\in\ZZ/r\ZZ}B_{ik}X^{(i)}w \right].
\end{align*}
Thus, applying (\ref{TOp}), we have
\begin{align*}
\mathcal{T}\left[ AX^{(j)}z \right] \Omega\left[BX^{(k)}w\right] &= \Omega\left[\sum_{i\in\ZZ/r\ZZ}B_{ik}\left(X^{(i)}+A_{ij}z\right)w\right] \mathcal{T}\left[ AX^{(j)}z \right]\\
&= \Omega\left[ \sum_{i\in\ZZ/r\ZZ}B_{ik}A_{ij}zw \right]\Omega\left[ BX^{(k)}w \right]\mathcal{T}\left[ AX^{(j)}z \right]\\
&= \Omega\left[ (A^TB)_{jk}zw \right]\Omega\left[ BX^{(k)}w \right]\mathcal{T}\left[ AX^{(j)}z \right].\qedhere
\end{align*}
\end{proof}

\subsubsection{Rescaled vertex operators}
Let us rescale $\mathcal{W}$ by
\[
p_k[X^{(i)}]\mapsto \qqq^{-\frac{k}{2}}p_k[X^{(i)}].
\]
This yields an isomorphic representation $\rho_{\vec{\ccc}}^+$.
We will only be concerned with the action of the $e$-currents:
\begin{align*}
E_i(z)& \coloneq \rho_{\vec{\ccc}}^+(e_i(z))\\
&=
\ccc_i\Omega\left[X^{(i)}z \right]
\TT\left[-\left( -q^{-1}X^{(i-1)}+(1+q^{-1}t^{-1})X^{(i)}-t^{-1} X^{(i+1)} \right)z^{-1} \right]e^{\alpha_i} z^{1+H_{i,0}}\\
&=
 \ccc_i\Omega\left[X^{(i)}z \right]
\TT\left[-(1-q^{-1}\sigma^{-1})(1-t^{-1}\sigma)X^{(i)}z^{-1} \right]e^{\alpha_i} z^{1+H_{i,0}}.
\end{align*}
Likewise, we define $\rho_{\vec{\ccc}}^-$ by rescaling
\[
p_k[X^{(i)}]\mapsto \qqq^{\frac{k}{2}}p_k[X^{(i)}].
\]
We will only be interested in the action of the $f$-currents:
\begin{align*}
F_i(z)& \coloneq \rho_{\vec{\ccc}}^-(f_i(z))\\
&=
\frac{(-1)^{r\delta_{i,0}}}{\ccc_i}\Omega\left[-X^{(i)}z \right]
\TT\left[ \left( -tX^{(i-1)}+(1+qt)X^{(i)}-q X^{(i+1)} \right)z^{-1} \right]e^{-\alpha_i} z^{1-H_{i,0}}\\
&= 
\frac{(-1)^{r\delta_{i,0}}}{\ccc_i}\Omega\left[-X^{(i)}z \right]
\TT\left[ (1-q\sigma)(1-t\sigma^{-1})X^{(i)}z^{-1} \right]e^{-\alpha_i} z^{1-H_{i,0}}.
\end{align*}
Since we have only rescaled homogeneous elements, in particular $\{H_\lambda\}$, the diagonalization statement of Theorem \ref{Eigenstates} still holds for $\rho_{\vec{\ccc}}^\pm$.

With this rescaling, the components of $\rho_{\vec{\ccc}}^-(f_i(z))$ have nice adjoints under $\langle -, -\rangle'_{q,t}$.
\begin{lem}\label{FAdjoint1}
For $f,g\in\Lambda^{\otimes r}$, we have the following adjunction relation:
\[
\left\langle f, \Omega\left[-X^{(i)}z \right]g \right\rangle_{q,t}'
=
\left\langle \TT\left[ (1-q\sigma)(1-t\sigma^{-1})X^{(-i)}z \right]f,g\right\rangle_{q,t}'.
\]
Here, we view both sides as power series in the variable $z$.
\end{lem}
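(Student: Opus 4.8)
The plan is to reduce the claimed adjunction to a computation with the generators $p_n[X^{(j)}]$ and their skewing operators $p_n^\perp[X^{(j)}]$, using the explicit form of the pairing $\langle-,-\rangle'_{q,t}$ from Definition \ref{PairDef}. Recall that $\langle f,g\rangle'_{q,t}=\langle f[\iota X^\bullet], g[(1-q\sigma^{-1})(t\sigma-1)X^\bullet]\rangle$, so the statement is equivalent, after unwinding, to an identity between operators acting under the \emph{plain} Hall pairing $\langle-,-\rangle$ on $\Lambda^{\otimes r}$. Under that pairing, multiplication by $p_n[X^{(j)}]$ is adjoint to $p_n^\perp[X^{(j)}]$, and therefore $\Omega[-X^{(i)}z]=\exp(-\sum_{n>0}p_n[X^{(i)}]z^n/n)$ is adjoint to $\exp(-\sum_{n>0}p_n^\perp[X^{(i)}]z^n/n)$, which is precisely $\TT[-X^{(i)}z]$ in the notation of the vertex-representation operators. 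The whole proof is then bookkeeping to track how the auxiliary plethysms $\iota$ and $(1-q\sigma^{-1})(t\sigma-1)$ conjugate this adjoint into the claimed $\TT[(1-q\sigma)(1-t\sigma^{-1})X^{(-i)}z]$.

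Concretely, I would proceed as follows. First, reduce to power sums: it suffices to prove
\[
\left\langle f,\ \Omega\left[-X^{(i)}z\right] g\right\rangle'_{q,t}
= \left\langle \TT\left[(1-q\sigma)(1-t\sigma^{-1})X^{(-i)}z\right] f,\ g\right\rangle'_{q,t}
\]
coefficient by coefficient in $z$, and since both $\Omega[-X^{(i)}z]$ (as a multiplication operator) and $\TT[\cdots]$ are exponentials of the degree-$n$ pieces, it is enough to check the ``infinitesimal'' version: that the operator $g\mapsto p_n[X^{(i)}]g$ is $\langle-,-\rangle'_{q,t}$-adjoint to $f\mapsto n\frac{\partial}{\partial p_n[X^{(j)}]}$ composed with the appropriate matrix-plethystic coefficient extracted from $(1-q\sigma)(1-t\sigma^{-1})X^{(-i)}$. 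Second, compute that adjoint directly. Write $p_n[X^{(i)}]$ as a multiplication operator; its $\langle-,-\rangle^*$-adjoint (i.e. under $\langle f[\iota X^\bullet],g\rangle$) is $p_n^\perp[X^{(-i)}]$, because $\iota$ sends color $i$ to color $-i$ and is an involution. To pass from $\langle-,-\rangle^*$ to $\langle-,-\rangle'_{q,t}$ one conjugates by the plethysm $(1-q\sigma^{-1})(t\sigma-1)$ acting on the \emph{second} slot; by Proposition \ref{SigProp}(2) the transpose of $\sigma$ is $\sigma^{-1}$, so conjugating the skewing operator $p_n^\perp[X^{(-i)}]$ through this plethysm replaces it by the skewing operator associated with $(1-q\sigma)(1-t\sigma^{-1})$ applied to $X^{(-i)}$ — here the signs reorganize because $(t\sigma-1)=-(1-t\sigma)$ and $t^n$ gets absorbed, matching the $(1-t\sigma^{-1})$ in the target after taking transposes. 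Third, repackage the resulting family of skewing operators, indexed by $n$ and assembled into a generating series in $z$, as exactly $\TT[(1-q\sigma)(1-t\sigma^{-1})X^{(-i)}z]$ using the extended matrix-plethysm notation for $\TT$-operators from \ref{MatPleth} and equation \eqref{TOp}. Finally, re-exponentiate to recover $\Omega[-X^{(i)}z]$ on the right.

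The main obstacle I anticipate is not the exponentiation or the power-sum reduction — those are routine — but getting the \emph{matrix-plethystic transpose} bookkeeping exactly right: the pairing $\langle-,-\rangle'_{q,t}$ is built from $\langle-,-\rangle$ by applying $\iota$ to one argument and $(1-q\sigma^{-1})(t\sigma-1)$ to the other, and one must carefully verify that the adjoint of ``multiply by $p_n[X^{(i)}]$'' transforms, under this twist, into the skewing operator extracted from $(1-q\sigma)(1-t\sigma^{-1})X^{(-i)}$ rather than some sign- or color-shifted variant. The key facts making this work are $\sigma^T=\sigma^{-1}$ (Proposition \ref{SigProp}(2)), the involutivity $\iota^2=\mathrm{id}$, and the identity $(t\sigma-1)=-(1-t\sigma)$ together with the fact that $-1$ acting plethystically on an odd-degree power sum contributes a sign that cancels against the $(-1)$ in $\Omega[-X^{(i)}z]$; tracking these signs is the delicate part. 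Once the generator-level adjunction is pinned down, Lemma \ref{TOLem} (or a direct check on power sums) lets one reassemble the series cleanly, and the lemma follows.
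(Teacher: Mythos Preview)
Your proposal is correct and follows essentially the same route as the paper: reduce to the power-sum generators and verify directly that, under $\langle-,-\rangle'_{q,t}$, multiplication by $-p_n[X^{(i)}]$ is adjoint to the skewing operator $(1+q^nt^n)p_n^\perp[X^{(-i)}]-t^np_n^\perp[X^{(-i-1)}]-q^np_n^\perp[X^{(-i+1)}]$, which is precisely the degree-$n$ piece of $\TT[(1-q\sigma)(1-t\sigma^{-1})X^{(-i)}z]$. The paper simply writes out this identity explicitly and observes it follows from expanding $-p_n[\iota(1-q\sigma^{-1})(t\sigma-1)X^{(i)}]$; your more structural phrasing via $\sigma^T=\sigma^{-1}$ amounts to the same computation.

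One small caution: your aside that ``$-1$ acting plethystically on an odd-degree power sum contributes a sign'' is a red herring here. The minus in $\Omega[-X^{(i)}z]$ is a plethystic minus, giving $p_n\mapsto -p_n$ for \emph{all} $n$, and $(t\sigma-1)=-(1-t\sigma)$ is just a scalar $-1$; no parity of $n$ enters. The sign bookkeeping is simpler than you suggest, and once you expand $(1-q\sigma^{-1})(t\sigma-1)=-(1+qt)+t\sigma+q\sigma^{-1}$ and push $\iota$ through, the claimed formula drops out immediately. This does not affect the validity of your argument.
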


\begin{proof}
It suffices to show
\[
\left\langle f, -p_n[X^{(i)}]g\right\rangle_{q,t}'
=
\left\langle \left(-t^np_n^\perp[X^{(-i-1)}]+(1+q^nt^n)p_n^\perp[X^{(-i)}]-q^np_n^\perp[X^{(-i+1)}]\right)f, g\right\rangle_{q,t}'.
\]
This in turn follows from
\[
-p_n[\iota(1-q\sigma^{-1})(t\sigma-1)X^{(i)}]=-t^np_n[X^{(-i-1)}]+(1+q^nt^n)p_n[X^{(-i)}]-q^np_n[X^{(-i+1)}].
\qedhere
\]
\end{proof}

\subsubsection{Extending the pairing}\label{PairCores}
We will extend the wreath Macdonald pairing $\langle-,-\rangle_{q,t}'$ $\FF$-linearly from $\Lambda_{q,t}^{\otimes r}$ to $\mathcal{W}$ as follows.
Viewing $\alpha\in Q$ as $\alpha=(a_0,\ldots,a_{r-1} )\in\ZZ^r$, define its \textit{reverse} $w_0\alpha$ by:
\[
w_0\alpha \coloneq  (a_{r-1},\ldots, a_{0})
\]
Recall that in \ref{Reverse}, we have also defined reverse map $w_0\lambda$ on partitions.
The two notions of reverse agree when we regard $\core(\lambda)\in Q$ via its vector of charges $\core(\lambda)=(c_0,\ldots, c_{r-1})$.

We now set
\[
\langle f\otimes e^\alpha, g\otimes e^{\beta}\rangle_{q,t}'=\delta_{\alpha, w_0\beta}\langle f,g\rangle_{q,t}'.
\]
Thus, $\{H_\lambda\otimes e^{\core(\lambda)}\}$ is a basis of $\mathcal{W}$ with dual basis $\{H_\lambda^\dagger \otimes e^{\core(w_0\lambda)}\}$.
Recall that in Lemma \ref{FAdjoint1}, the $\Omega$- and $\TT$-components of $F_i(z)$ go to those of $F_{-i}(z^{-1})$ under adjunction.
Our choice of extension does something similar for the $z^{H_{i,0}}$ component:
\begin{lem}\label{FAdjoint2}
We have
\[
\left\langle f\otimes e^\alpha, \prod_{i=0}^{r-1}z_i^{H_{i,0}}(g\otimes e^\beta)\right\rangle_{q,t}'=
\left\langle \prod_{i=0}^{r-1}z_i^{-H_{-i,0}}(f\otimes e^\alpha), g\otimes e^\beta\right\rangle_{q,t}'.
\]
\end{lem}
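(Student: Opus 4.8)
The plan is to reduce the identity to a comparison of scalars on the $\FF\{Q\}$-factor. Each operator $z_i^{H_{i,0}}$ acts as the identity on the $\Lambda^{\otimes r}$-tensorand of $\mathcal{W}$ and multiplies the basis vector $e^\beta$ by the scalar $z_i^{(h_i,\beta)}\ddd^{c_i(\beta)}$, where $c_i(\gamma)\coloneq\tfrac12\sum_{j=1}^{r-1}(h_i,m_j\alpha_j)m_{i,j}$ for $\gamma=\sum_{j=1}^{r-1}m_j\alpha_j$; moreover the operators $z_i^{H_{i,0}}$ for different $i$ commute since they are simultaneously diagonal in $\{e^\alpha\}$. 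Hence the left-hand side of the asserted equality equals $\bigl(\prod_{i=0}^{r-1}z_i^{(h_i,\beta)}\ddd^{c_i(\beta)}\bigr)\,\langle f\otimes e^\alpha,\, g\otimes e^\beta\rangle_{q,t}'$, and, with $z_i^{-H_{-i,0}}\coloneq(z_i^{H_{-i,0}})^{-1}$, the right-hand side equals $\bigl(\prod_{i=0}^{r-1}z_i^{-(h_{-i},\alpha)}\ddd^{-c_{-i}(\alpha)}\bigr)\,\langle f\otimes e^\alpha,\, g\otimes e^\beta\rangle_{q,t}'$. By the definition of the extended pairing in \ref{PairCores}, $\langle f\otimes e^\alpha,\, g\otimes e^\beta\rangle_{q,t}'=\delta_{\alpha,w_0\beta}\langle f,g\rangle_{q,t}'$, so both sides vanish unless $\alpha=w_0\beta$; granting that, it remains to verify
\[
\prod_{i\in\ZZ/r\ZZ}z_i^{(h_i,\beta)}\ddd^{c_i(\beta)}=\prod_{i\in\ZZ/r\ZZ}z_i^{-(h_{-i},w_0\beta)}\ddd^{-c_{-i}(w_0\beta)}.
\]

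Comparing exponents of the independent variables $z_i$ reduces this to the purely root-lattice statement $(h_i,\beta)=-(h_{-i},w_0\beta)$ for all $i\in\ZZ/r\ZZ$ and $\beta\in Q$, which I would obtain by unwinding the identifications set up in \ref{Partitions} and \ref{PairCores}. Under the bijection of Proposition \ref{CoreQuotDec}(2), an element $\beta\in Q$ is recorded by a charge vector $(b_0,\dots,b_{r-1})$ with $\sum_k b_k=0$, with $(h_i,\beta)=b_{i-1}-b_i$ (indices mod $r$), while the reverse map $w_0$ of \ref{Reverse} acts on the charge vector by $(w_0\beta)_k=b_{r-1-k}$. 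One then computes $(h_{-i},w_0\beta)=(w_0\beta)_{-i-1}-(w_0\beta)_{-i}=b_{i}-b_{i-1}=-(h_i,\beta)$, as needed.

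Comparing exponents of $\ddd$ reduces to $\sum_{i}c_i(\beta)=-\sum_i c_{-i}(w_0\beta)=-\sum_i c_i(w_0\beta)$, and I would settle this by the sharper claim that $\sum_{i\in\ZZ/r\ZZ}c_i(\gamma)=0$ for every $\gamma\in Q$ (which also makes the precise reading of $z_i^{-H_{-i,0}}$ irrelevant, since the $\ddd$-part of $z^{H}$ is $z$-independent): indeed $\sum_i c_i(\gamma)=\tfrac12\sum_{j=1}^{r-1}m_j\sum_{i\in\ZZ/r\ZZ}a_{i,j}m_{i,j}$, and for each fixed $j$ the only nonzero terms are $i=j\pm1$, giving $a_{j+1,j}m_{j+1,j}+a_{j-1,j}m_{j-1,j}=(-1)(+1)+(-1)(-1)=0$. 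Hence both $\ddd$-exponents vanish and the scalar identity holds. I do not expect a genuine obstacle here; the only care required is the bookkeeping between the $\ZZ/r\ZZ$-grading of $\UTor$ and the charge coordinates on $Q$, and confirming that the $\ddd$-cocycle in $z^{H_{i,0}}$—an artifact of the twisted group algebra $\FF\{Q\}$—drops out, as it does above.
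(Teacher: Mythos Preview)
Your proof is correct and follows essentially the same approach as the paper's: both reduce to a scalar comparison on the $\FF\{Q\}$-factor, verify that the $z_i$-exponents match via the identity $(h_i,\beta)=-(h_{-i},w_0\beta)$ computed in charge coordinates, and show the $\ddd$-exponents cancel by the observation $\sum_i a_{i,j}m_{i,j}=0$ for each fixed $j$. Your write-up is slightly more explicit about the charge-vector bookkeeping, but the argument is the same.
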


\begin{proof}
For $\beta=(b_0,\ldots, b_{r-1})$, first observe that
\[
z_0^{H_{0,0}}\cdots z_{r-1}^{H_{r-1,0}}(g\otimes e^\beta)=z_0^{b_{r-1}-b_0}z_1^{b_{0}-b_1}\cdots z_{r-1}^{b_{r-2}- b_{r-1}}g\otimes e^\beta.
\]
The only nontrivial part of this calculation is to check that the powers of $\ddd$ in the action of $H_{i,0}$ cancel.
This is because we have exactly one term per color: if $\alpha=\sum_j m_j\alpha_j$, then
\[
\sum_{i}\langle h_i, m_j\alpha_j\rangle m_{i,j}=-\langle\alpha_{j-1},m_j\alpha_j\rangle+\langle\alpha_{j+1},m_j\alpha_{j}\rangle=0.
\]
We likewise then have
\[
z_0^{-H_{0,0}}z_1^{-H_{r-1,0}}\cdots z_{r-1}^{-H_{1,0}}(f\otimes e^{w_0\beta})=z_{0}^{b_{r-1}-b_0}z_1^{b_0-b_1}\cdots z_{r-1}^{b_{r-2}-b_{r-1}}f\otimes e^{w_0\beta}.\qedhere
\]
\end{proof}

\begin{rem}
We do not obtain that each $F_i(z)$ is adjoint to $F_{-i}(z^{-1})$.
For this to hold, we would need to twist the pairing to invert $\ddd$.
At best, we can make such a statement for products of $\{F_i(z)\}$ that have an equal number of each color.
All the operators we consider in this paper are like this. 
\end{rem}

\subsection{Shuffle algebra}
The shuffle algebra gives a different way to view $\UTor$ and its actions on $\mathcal{W}$ and $\mathcal{F}$.
In particular, we will use it to work with elements of the horizontal Heisenberg subalgebras $\varpi(\mathcal{H})$ and $\varpi^{-1}(\mathcal{H})$, which are otherwise inaccessible via generators and relations.

\subsubsection{Definition of $\Sss$}
For $\vec{n}=(n_0,\ldots, n_{r-1})\in\ZZ_{\ge 0}^r$, consider the space of rational functions
\[
\FF\left( z_{i,a} \right)_{i\in\ZZ/r\ZZ}^{1\le a\le n_i}.
\]
We will call the index $i$ of $z_{i,a}$ the \textit{color} of the variable $x_{i,a}$.
Let $\Sigma_{\vec{n}} \coloneq \prod_{i=0}^{r-1}\Sigma_{n_i}$ and consider its action on the above ring whereby the factor $\Sigma_{n_i}$ only permutes the variables of color $i$.
We then set
\begin{align*}
\mathbb{S}_{\vec{n}} \coloneq \left[\FF\left( z_{i,a} \right)_{i\in\ZZ/r\ZZ}^{1\le a\le n_i}\right]^{\Sigma_{\vec{n}}}&& \text{ and } &&
\mathbb{S} \coloneq  \bigoplus_{\vec{n}\in\ZZ_{\ge 0}^r}\mathbb{S}_{\vec{n}}.
\end{align*}
Thus, $\mathbb{S}_{\vec{n}}$ is a space of \textit{color-symmetric} rational functions.

We endow the direct sum $\mathbb{S}$ with the \textit{shuffle product}.
To define this product, we need the \textit{mixing terms}: for $i,j\in\ZZ/r\ZZ$, set
\[
\omega_{i,j}(z,w) \coloneq 
\begin{cases}
\left(z-\qqq^{2}w\right)^{-1}\left(z-w\right)^{-1} & i=j,\\
\left(\qqq w-\ddd^{-1}z\right) &i+1=j,\\
 \left(z-\qqq\ddd^{-1} w\right) &i-1=j, \text{ and }\\
1 &\hbox{otherwise.}
\end{cases}
\]
For $F\in \mathbb{S}_{\vec{n}}$ and $G\in \mathbb{S}_{\vec{m}}$, we define the shuffle product $F\star G\in \mathbb{S}_{\vec{n}+\vec{m}}$ to be
\[
F\star G \coloneq 
\mathrm{Sym}_{\vec{n}+\vec{m}}\left( 
F\bigg( \{z_{i,a}\}_{a=1}^{n_i} \bigg)
G\bigg( \{z_{j,b}\}_{b=n_j+1}^{n_j+m_j} \bigg)
\prod_{i,j\in\ZZ/r\ZZ}
\prod_{\substack{1\le a\le n_i\\ n_j+1\le b\le n_j+m_j}}
\omega_{i,j}(z_{i,a}, z_{j,b})
\right)
\]
Here, $\mathrm{Sym}_{\vec{n}+\vec{m}}$ denotes the symmetrization for the $\Sigma_{\vec{n}+\vec{m}}$ action.

Finally, we consider the subspace $\Sss_{\vec{n}}\subset \mathbb{S}_{\vec{n}}$ of functions $F$ satisfying the following two conditions:
\begin{enumerate}
\item \textit{Pole conditions:} $F$ is of the form
\begin{equation}
F=\frac{f(\{z_{i,a}\})}{\displaystyle \prod_{i\in\ZZ/r\ZZ}\,\prod_{\substack{1\le a, b\le n_i\\a\not= b}}(z_{i,a}-\qqq^2z_{i,b})}
\label{PoleCond}
\end{equation}
for a color-symmetric Laurent polynomial $f$.
\item \textit{Wheel conditions:} $F$ has a well-defined finite limit when
\[\frac{z_{i,a_1}}{z_{i+\epsilon,b}}\rightarrow\qqq\ddd^\epsilon\hbox{ and }\frac{z_{i+\epsilon,b}}{z_{i,a_2}}\rightarrow\qqq\ddd^{-\epsilon}\]
for any choice of $i$, $a_1$, $a_2$, $b$, and $\epsilon$, where $\epsilon\in\{\pm 1\}$. 
This is equivalent to specifying that the Laurent polynomial $f$ in the formula (\ref{PoleCond}) evaluates to zero.
\end{enumerate}
Define $\Sss$ to be the direct sum
\[\Sss \coloneq \bigoplus_{\vec{n}\in(\ZZ_{\ge 0})^{r}}\Sss_{\vec{n}}.\]

\begin{prop}
The product $\star$ is associative and $\Sss$ is closed under $\star$.
\end{prop}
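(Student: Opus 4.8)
The plan is to prove associativity of $\star$ on all of $\mathbb{S}$ and then show that the pole and wheel conditions cutting out $\Sss$ are preserved. \textbf{Associativity.} Expand $(F\star G)\star H$ and $F\star(G\star H)$ directly, writing each symmetrization as a sum over shuffle (coset) representatives. The composite of an outer shuffle with an inner one ranges over exactly the set of simultaneous shuffles of the three groups of colored variables, and the opposite bracketing produces the same index set; this is the transitivity of symmetrization (symmetrizing a function that is already symmetric in two blocks over their union, then over all variables, equals symmetrizing over all variables at once). It then suffices to note that the mixing factors multiply associatively: the product of all $\omega_{i,j}$ between $F$-block and $G$-block variables times all $\omega_{i,j}$ between $(F\cup G)$-block and $H$-block variables equals the product of all $\omega_{i,j}$ over cross-pairs among the three blocks, which is symmetric in the three roles and hence agrees with the expression produced by the other bracketing. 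Thus both triple products equal $\mathrm{Sym}$ of $F\,G\,H$ times all cross-color mixing factors, so $\star$ is associative; and $F\star G\in\mathbb{S}_{\vec{n}+\vec{m}}$ is immediate, since a symmetrization of a rational function is again a color-symmetric rational function.

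\textbf{Pole conditions.} The only denominators in $F\star G$ beyond those of $F$ and $G$ come from the color-diagonal mixing factors $\omega_{i,i}(z,w)=\big((z-\qqq^2 w)(z-w)\big)^{-1}$ between color-$i$ variables in different blocks, so I must show the spurious poles along $z_{i,a}=z_{i,b}$ cancel and that the poles along $z_{i,a}=\qqq^2 z_{i,b}$ stay simple. For the cancellation, pair the shuffle with $z_{i,a}$ in the $F$-block and $z_{i,b}$ in the $G$-block against the one with these interchanged; the two contributions share a common factor regular along $z_{i,a}=z_{i,b}$, times
\[
\omega_{i,i}(z_{i,a},z_{i,b})+\omega_{i,i}(z_{i,b},z_{i,a})=\frac{-(1+\qqq^2)}{(z_{i,a}-\qqq^2 z_{i,b})(z_{i,b}-\qqq^2 z_{i,a})},
\]
which is itself regular along $z_{i,a}=z_{i,b}$. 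Since each shuffle contributes at most a simple pole along $z_{i,a}=\qqq^2 z_{i,b}$ --- from a single factor of $F$, of $G$, or of $\omega_{i,i}$ --- and these loci coincide, the total pole is simple. Hence $F\star G$ has the form required in \eqref{PoleCond}, with a color-symmetric Laurent polynomial numerator.

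\textbf{Wheel conditions.} This is the main obstacle. By the stated equivalence, I need to show that $F\star G$ has a finite limit under each wheel specialization, i.e.\ that its numerator vanishes there. Fix $z_{i,a_1}/z_{i+\epsilon,b}\to\qqq\ddd^{\epsilon}$ and $z_{i+\epsilon,b}/z_{i,a_2}\to\qqq\ddd^{-\epsilon}$, along which $z_{i,a_1}=\qqq^2 z_{i,a_2}$, and split the shuffle sum by which of the two blocks contains each of the three distinguished variables. Summands with all three in one block are finite on this locus by that block's wheel condition. For the split summands one uses two mechanisms in tandem: first, the factors $\omega_{i,i+\epsilon}(z,w)$ and $\omega_{i+\epsilon,i}(z,w)$ are designed to vanish exactly at $z/w=\qqq\ddd^{\epsilon}$ and $z/w=\qqq\ddd^{-\epsilon}$ respectively, and so supply zeros along divisors through the wheel locus; second, the poles along $\{z_{i,a_1}=\qqq^2 z_{i,a_2}\}$ produced by an $\omega_{i,i}$ factor or by an internal pole of $F$ or $G$ cancel among paired summands, exactly as in the pole-condition step. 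The subtlety is that an individual split summand typically reaches the wheel locus as an indeterminate form --- a zero along one divisor through the locus, a pole along another --- so one must group the summands and balance these effects so that the grouped limit exists and is finite. I expect this bookkeeping to be the hard part; it can be carried out along the lines of the standard shuffle-algebra arguments for quantum toroidal $\ddot{\mathfrak{sl}}_r$, or bypassed once $\Sss$ is matched with the shuffle presentation of a subalgebra of $\UTor$, which is the viewpoint developed in this section. Together with associativity, this shows that $(\Sss,\star)$ is an associative algebra.
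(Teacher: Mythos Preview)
The paper does not prove this proposition; it is stated as a known fact about the type $\hat{A}_{r-1}$ shuffle algebra, with the subsequent theorem of Negu\cb{t} (\cite{NegutTor}) serving as the relevant reference. So there is no paper proof to compare against, and your outline is essentially the standard one from that literature. Two comments on your write-up.

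\textbf{Pole condition.} Your pairing argument is not literally correct. When you swap which block contains $z_{i,a}$ versus $z_{i,b}$, everything else changes too: $F$ and $G$ are evaluated at different subsets, and all cross $\omega$-factors involving $z_{i,a}$ or $z_{i,b}$ change. The two terms are \emph{not} a common regular factor times $\omega_{i,i}(z_{i,a},z_{i,b})$ and $\omega_{i,i}(z_{i,b},z_{i,a})$. The clean argument is: each summand in the symmetrization has at most a simple pole along $z_{i,a}=z_{i,b}$, hence so does $F\star G$; but $F\star G$ is color-symmetric, so $(z_{i,a}-z_{i,b})(F\star G)$ is regular and antisymmetric in $z_{i,a},z_{i,b}$, hence vanishes on the diagonal, and $F\star G$ is regular there. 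Your computation of $\omega_{i,i}(z,w)+\omega_{i,i}(w,z)$ is the residue incarnation of this, valid once you observe that the ``rest'' of the two paired terms agree \emph{on} the diagonal (not globally).

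\textbf{Wheel condition.} You are right that this is where the content is, and you are honest that your sketch is incomplete. The case split is correct, and the mechanism you name---zeros of $\omega_{i,i\pm1}$ versus the pole along $z_{i,a_1}=\qqq^2 z_{i,a_2}$---is the right one, but in the mixed cases these sit on \emph{transverse} divisors through the wheel locus, so individual terms are genuine $0/0$ forms and one must pass to the Laurent-polynomial numerator and argue there. Carrying this out is exactly the technical work in \cite{NegutTor} (and the earlier Feigin--Odesskii/Enriquez arguments); alternatively, as you say, one can bypass it via the isomorphism $\Sss\cong\ddot{U}^+$, which is how the paper effectively treats the matter.
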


We call $\Sss$ the \textit{shuffle algebra of type $\hat{A}_{r-1}$}.
The relation between $\Sss$ and $\UTor$ is given by the following result of Negu\cb{t}:

\begin{thm}[\cite{NegutTor}]\label{NegutShuffIso}
Let 
\begin{itemize}
\item $\ddot{U}^+\subset\UTor$ denote the subalgebra generated by $\{e_{i,k}\}_{i\in\ZZ/r\ZZ}^{k\in \ZZ}$ and
\item $\ddot{U}^-\subset\UTor$ denote the subalgebra generated by $\{f_{i,k}\}_{i\in\ZZ/r\ZZ}^{k\in\ZZ}$.
\end{itemize}
$\Sss$ is generated by $\{z_{i,1}^k\}_{i\in\ZZ/\ell\ZZ}^{k\in\ZZ}$ and the maps
\begin{align*}
\Psi_+\left(z_{i,1}^k\right)=e_{i,k}&& \text{ and } &&
\Psi_-\left(z_{i,1}^k\right)=f_{i,k}
\end{align*}
define algebra isomorphisms $\Psi_+:\Sss\rightarrow\ddot{U}^+$ and $\Psi_-:\Sss^{\mathrm{op}}\rightarrow\ddot{U}^-$.
\end{thm}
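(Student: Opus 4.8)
The plan is to construct the inverse maps $\phi_+\colon\ddot U^+\to\Sss$ and $\phi_-\colon\ddot U^-\to\Sss^{\mathrm{op}}$, determined on generators by $e_{i,k},f_{i,k}\mapsto z_{i,1}^k$; to prove they are surjective by showing that $\Sss$ is generated under $\star$ by the degree-one elements $\{z_{i,1}^k\}$; and finally to prove injectivity by comparing graded dimensions with the positive (resp.\ negative) part of $\UTor$. The isomorphisms $\Psi_\pm$ are then $\phi_\pm^{-1}$.

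\textbf{Well-definedness.} First I would check that $e_{i,k}\mapsto z_{i,1}^k$ respects the defining relations of $\ddot U^+$. The quadratic relations $e_i(z)e_j(w)=g_{i,j}(\ddd^{m_{i,j}}z/w)\,e_j(w)e_i(z)$ are essentially forced by the mixing terms: the shuffle products $z_{i,1}^k\star z_{j,1}^l$ and $z_{j,1}^l\star z_{i,1}^k$ arise from the same symmetrization but with $\omega_{i,j}$ versus $\omega_{j,i}$, and the ratio $\omega_{i,j}(z,w)/\omega_{j,i}(w,z)$ reproduces $g_{i,j}(\ddd^{m_{i,j}}z/w)$, with the diagonal case $i=j$ read off from the simple poles of $\omega_{i,i}$ as in the current form of the relation. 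The Serre relations are the real content: the image of $\mathrm{Sym}_{z_1,z_2}[e_i(z_1),[e_i(z_2),e_{i\pm1}(w)]_\qqq]_{\qqq^{-1}}$ is a symmetrized rational function in two variables of color $i$ and one of color $i\pm1$ whose numerator, once the $\omega$'s are cleared, vanishes precisely on the wheel locus from the definition of $\Sss$; after symmetrization this element is identically zero. The $f$-side is the same computation with the inverse mixing terms --- which is why the relation $f_i(z)f_j(w)=g_{i,j}(\ddd^{m_{i,j}}z/w)^{-1}f_j(w)f_i(z)$ forces the target to be $\Sss^{\mathrm{op}}$ --- or, alternatively, $\phi_-$ is transported from $\phi_+$ through the anti-involution $\eta$.

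\textbf{Surjectivity.} Next I would prove, internally to $\Sss$, that the $\{z_{i,1}^k\}$ generate under $\star$, by induction on $|\vec n|=\sum_i n_i$. Given $F\in\Sss_{\vec n}$ with $n_i>0$, I would examine $F$ as one color-$i$ variable tends to a generic point or to infinity; the pole condition \eqref{PoleCond} governs the denominators and the wheel conditions guarantee that the suitably normalized leading coefficient is again an element $F'\in\Sss$, of strictly smaller total degree. Then $F-z_{i,1}^k\star F'$, for an appropriate exponent $k$, is a sum of terms of lower complexity, handled by induction. Since $\phi_+$ sends the $z_{i,1}^k$ to the $e_{i,k}$, which generate $\ddot U^+$, generation of $\Sss$ makes $\phi_+$ (and likewise $\phi_-$) surjective.

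\textbf{Injectivity --- the main obstacle.} Both $\Sss$ and $\ddot U^+$ are graded by $(\vec n,d)\in\ZZ_{\ge0}^r\times\ZZ$ with finite-dimensional pieces ($\vec n$ the multidegree, $d$ the total $z$-degree), and surjectivity already gives $\dim\ddot U^+_{\vec n,d}\le\dim\Sss_{\vec n,d}$. To reverse this inequality I would (i) invoke the PBW theorem for the quantum toroidal algebra --- a Beck-type basis of $\ddot U^+$ built from the affine positive root vectors --- to obtain an explicit generating function for $\dim\ddot U^+_{\vec n,d}$, and (ii) exhibit an explicit spanning family of $\Sss$ consisting of shuffle products of the $z_{i,1}^k$ put into a combinatorial normal form (ordered by ``slope,'' as for the $\hat{\mathfrak{gl}}_1$ elliptic Hall algebra), together with a straightening procedure using the wheel conditions showing that the number of normal-form monomials in each bidegree does not exceed $\dim\ddot U^+_{\vec n,d}$. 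Matching the two counts then forces $\phi_\pm$ to be bijective, and we set $\Psi_\pm=\phi_\pm^{-1}$. The genuinely hard part is exactly (ii): quantifying how sharply the wheel conditions cut down the color-symmetric rational functions, i.e.\ the combinatorics of the straightening algorithm on shuffle products for the affine root system $\hat{A}_{r-1}$. A possible alternative to the dimension count would be to exhibit an $\Sss$-module that is faithful and, via $\phi_+$, compatible with a known faithful $\ddot U^+$-module --- the Fock representation $\mathcal F$ or the vertex representation $\mathcal W$ being the natural candidates --- but proving faithfulness on the toroidal side is of comparable difficulty.
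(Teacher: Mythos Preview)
The paper does not prove this theorem: it is quoted from Negu\cb{t}'s work \cite{NegutTor} and used as a black box, so there is no ``paper's own proof'' to compare against. Your proposal is therefore not competing with anything in the present text.

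That said, your sketch is broadly aligned with how the result is actually established in \cite{NegutTor}. The well-definedness step is routine and your description is accurate. For surjectivity, Negu\cb{t} indeed proves that $\Sss$ is generated by its degree-one pieces via an inductive argument that peels off one variable at a time; the mechanism is a specialization/limit map whose image lands in lower degree, with the wheel conditions guaranteeing that the output still lies in $\Sss$. Your description of this step is correct in outline, though the actual filtration used is more delicate than a single ``leading coefficient.'' For injectivity, the approach in \cite{NegutTor} is closer to your alternative suggestion than to a raw dimension count: one constructs a nondegenerate Hopf pairing between $\Sss$ and a copy of itself (or uses the action on a large enough family of modules) to separate elements, rather than independently computing $\dim\Sss_{\vec n,d}$ and matching it against a PBW count on the toroidal side. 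The straightening-by-slope picture you describe is the $r=1$ elliptic Hall story and does not transport directly to $\hat A_{r-1}$; the pairing argument is what replaces it.
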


\begin{prop}[\cite{OSW}]\label{EtaShuff}
For $F\in\mathcal{S}_{\vec{n}}$, we have
\begin{equation*}
\Psi_+^{-1}\eta\Psi_+(F)=\Psi_-^{-1}\eta\Psi_-(F)= 
 \left.F\left(\{z_{i,a}^{-1}\}\right)\prod_{i\in \ZZ/r\ZZ}\prod_{a=1}^{n_i}(-\ddd)^{n_{i+1}n_i} z_{i,a}^{n_{i+1}+n_{i-1}-2(n_i-1)}\right|_{\ddd\mapsto\ddd^{-1}}.
\end{equation*}
\end{prop}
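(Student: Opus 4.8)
The plan is to compute $\Psi_+^{-1}\eta\Psi_+$ on the shuffle-algebra generators $z_{i,1}^k$, recover how the anti-involution $\eta$ acts there via Theorem \ref{NegutShuffIso}, and then propagate the formula through shuffle products using multiplicativity of $\eta$ (as an anti-homomorphism) against the explicit form of $\star$. First I would record the base case: since $\Psi_+(z_{i,1}^k)=e_{i,k}$ and $\eta(e_{i,k})=e_{i,-k}=\Psi_+(z_{i,1}^{-k})$, on the single-variable pieces $\mathcal{S}_{\vec{e}_i}$ the claimed operation reads $z_{i,1}^k\mapsto z_{i,1}^{-k}$ after the substitution $\ddd\mapsto\ddd^{-1}$; note that for $\vec{n}=\vec{e}_i$ all the exponents $n_{i+1}+n_{i-1}-2(n_i-1)$ vanish and the factor $(-\ddd)^{n_{i+1}n_i}$ is $1$, so the formula is consistent. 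The companion identity $\Psi_-^{-1}\eta\Psi_-=\Psi_+^{-1}\eta\Psi_+$ follows because $\eta(f_{i,k})=f_{i,-k}$ and $\Psi_-$ is an isomorphism of $\Sss^{\mathrm{op}}$ onto $\ddot U^-$ with exactly the same generator recipe; the switch to the opposite multiplication on the source is precisely compensated by $\eta$ being an anti-involution, so the two composites agree as linear maps.

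Next I would define the candidate operation $\Theta:\Sss\to\Sss$ by the right-hand side of the displayed formula and check that $\Theta$ is an anti-homomorphism for $\star$, i.e. $\Theta(F\star G)=\Theta(G)\star\Theta(F)$. Since both $\Theta$ and $\Psi_+^{-1}\eta\Psi_+$ are anti-automorphisms of $\Sss$ that agree on the algebra generators $\{z_{i,1}^k\}$ (by the base case), they coincide on all of $\Sss$; this is the structural skeleton of the argument. The substantive content is the compatibility check for $\Theta$ with $\star$. Writing out $\Theta(F\star G)$ means inverting all variables, performing the global substitution $\ddd\mapsto\ddd^{-1}$, and multiplying by the monomial prefactor dictated by $\vec{n}+\vec{m}$; one then has to see that the mixing terms $\omega_{i,j}(z_{i,a},z_{j,b})$ transform, under $z\mapsto z^{-1}$ and $\ddd\mapsto\ddd^{-1}$, into the mixing terms for the reversed product $G\star F$ up to a monomial that is exactly absorbed by regrouping the prefactors $(-\ddd)^{\bullet}z^{\bullet}$ across the two tensor factors. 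Concretely: $\omega_{i,i}(z,w)$ is inversion-covariant up to the monomial $(zw)^{-2}\qqq^{-2}$, while for $j=i+1$ one has $\qqq w^{-1}-\ddd z^{-1}$ under the substitutions becoming $(\ddd^{-1} z - \qqq w)/(zw)$ up to sign, which matches $\omega_{i+1,i}$ of the swapped arguments up to a monomial and a sign; the signs are what produce the $(-\ddd)^{n_{i+1}n_i}$ factors, and the monomial degrees in $z_{i,a}$ are what produce the $n_{i+1}+n_{i-1}-2(n_i-1)$ exponents. Tracking these bookkeeping factors carefully and confirming that the totals add up additively under $\vec{n}\mapsto\vec{n}+\vec{m}$ is the heart of the computation.

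The main obstacle I expect is exactly this monomial-and-sign bookkeeping in the mixing-term transformation: one must be scrupulous about which variables are being inverted, how the symmetrization $\mathrm{Sym}$ interacts with relabeling blocks of variables, and that the $\ddd$-powers coming from the $i\pm 1$ mixing terms are precisely the ones that turn $(-\ddd)^{n_{i+1}n_i}$ into the correct thing after $\ddd\mapsto\ddd^{-1}$. A clean way to organize it is to verify the anti-homomorphism property first on $\Sss_{\vec{e}_i}\star\Sss_{\vec{e}_j}$ for all pairs $(i,j)$ (the only genuinely new cases are $j=i$ and $j=i\pm 1$), and then argue the general case by induction on $|\vec n|+|\vec m|$ using associativity of $\star$ (Proposition preceding Theorem \ref{NegutShuffIso}) and the fact that $\Sss$ is generated by the $z_{i,1}^k$. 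Finally, one should confirm that $\Theta$ genuinely lands in $\Sss$ — i.e. preserves the pole and wheel conditions — which is immediate since inversion of variables and $\ddd\mapsto\ddd^{-1}$ send the wheel locus to the wheel locus and the pole denominator $\prod(z_{i,a}-\qqq^2 z_{i,b})$ to a unit multiple of itself times a monomial, the monomial being reabsorbed into the numerator Laurent polynomial.
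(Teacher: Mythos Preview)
The paper does not prove this proposition; it is cited from \cite{OSW} and stated without proof. Your proposed argument is correct and is the natural one: both $\Psi_+^{-1}\eta\Psi_+$ and the candidate map $\Theta$ are $\CC(\qqq)$-linear anti-automorphisms of $\Sss$ that invert $\ddd$, and they agree on the generators $z_{i,1}^k$ by the base case you describe, so Theorem~\ref{NegutShuffIso} gives equality. The mixing-term bookkeeping you outline is exactly right; under $z\mapsto z^{-1}$, $w\mapsto w^{-1}$, $\ddd\mapsto\ddd^{-1}$ one finds
\[
\omega_{i,i\pm 1}(z^{-1},w^{-1})\big|_{\ddd\mapsto\ddd^{-1}}=-\ddd\,(zw)^{-1}\,\omega_{i\pm 1,i}(w,z),
\qquad
\omega_{i,i}(z^{-1},w^{-1})=(zw)^{2}\,\omega_{i,i}(w,z),
\]
and collecting these monomials over all mixing pairs in $F\star G$ versus $G\star F$ reproduces the displayed prefactor (the $(-\ddd)$-powers come from the adjacent-color pairs, the $z$-exponents from both). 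One small simplification: you need not separately verify that $\Theta$ preserves the pole and wheel conditions, since once $\Theta=\Psi_+^{-1}\eta\Psi_+$ is established on generators and hence on all of $\Sss$, the image automatically lies in $\Sss$.
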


\subsubsection{Action on vertex representation}
For a Laurent series $f$, denote by $\{f\}_0$ its constant term.
Moreover, for a set of noncommuting operators $a_1,\ldots, a_n$, we define the ordered products
\begin{equation*}
\begin{aligned}
\overset{\curvearrowright}{\prod_{i=1}^n} \coloneq  a_1\cdots a_n,&& \text{ and } &&
\overset{\curvearrowleft}{\prod_{i=1}^n} \coloneq  a_n\cdots a_1.
\end{aligned}
\end{equation*}
The following will allow us to compute the action of shuffle elements on $\mathcal{W}$.

\begin{prop}[\cite{OSW}]\label{ShuffVert}
For $F\in\Sss_{\vec{n}}$ and $f\otimes e^\alpha\in\mathcal{W}$, we have
\begin{equation*}
(\rho^+_{\vec{c}}\circ\Psi_+)(F)(f\otimes e^\alpha)
=\left\{\frac{\displaystyle F\times\overset{\curvearrowright}{\prod_{i=0}^{r-1}}\,\overset{\curvearrowright}{\prod_{a=1}^{n_i}}E_i(z_{i,a})(f\otimes e^\alpha)}
{\displaystyle\left(\prod_{i\in I}\prod_{1\le a<a'\le n_i}\omega_{i,i}(z_{i,a},z_{i,a'})\right)
\left(\prod_{0\le i< j\le r-1}\,\prod_{\substack{1\le a\le n_i\\1\le b\le n_j}}\omega_{i,j}(z_{i,a},z_{j,b})\right)}\right\}_0,
\end{equation*}
where the rational function on the right hand side is expanded into a Laurent series assuming
\begin{equation*}
\begin{aligned}
|z_{i,a}|= 1,&&
|\qqq|>1,&& \text{ and }&&
|\ddd|=1.
\end{aligned}
\end{equation*}
On the other hand, we have
\begin{equation*}
(\rho_{\vec{\ccc}}^-\circ\Psi_-)(F)(f\otimes e^\alpha)
=\left\{\frac{\displaystyle F\times\overset{\curvearrowleft}{\prod_{i=0}^{r-1}}\,\overset{\curvearrowleft}{\prod_{a=1}^{n_i}}F_i(z_{i,a})(f\otimes e^\alpha)}
{\displaystyle\left(\prod_{i\in \ZZ/r\ZZ}\prod_{1\le a<a'\le n_i}\omega_{i,i}(z_{i,a},z_{i,a'})\right)
\left(\prod_{0\le i< j\le r-1}\,\prod_{\substack{1\le a\le n_i\\1\le b\le n_j}}\omega_{i,j}(z_{i,a},z_{j,b})\right)}\right\}_0,
\end{equation*}
where the rational function on the right hand side is expanded into a Laurent series assuming
\begin{equation*}
\begin{aligned}
|z_{i,a}|=1,&&
|\qqq|<1, && \text{ and } &&
|\ddd|=1.
\end{aligned}
\end{equation*}
\end{prop}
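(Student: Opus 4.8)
The plan is to recognize the operators on the right-hand sides as algebra homomorphisms and to pin them down by comparison with $\rho^+_{\vec{\ccc}}\circ\Psi_+$ and $\rho^-_{\vec{\ccc}}\circ\Psi_-$ on generators. Let me treat the $e$-current statement first; write $\Phi(F)$ for the operator on the right-hand side of the first displayed formula attached to $F\in\Sss_{\vec n}$, and $\prod\omega(\vec z)$ for the full product of mixing terms in its denominator. I would begin by checking that $\Phi$ is well defined: $F\mapsto\Phi(F)$ is $\FF$-linear, and in the stated expansion regime ($|z_{i,a}|=1$, $|\qqq|>1$, $|\ddd|=1$) each current $E_i(z_{i,a})=\rho^+_{\vec{\ccc}}(e_i(z_{i,a}))$ is an honest operator on $\mathcal{W}$ whose matrix coefficients are Laurent series in that annulus, while the mixing terms $\omega_{i,j}$ only contribute poles away from $|z|=1$; hence the constant term is meaningful and $\Phi(F)\in\mathrm{End}(\mathcal{W})$. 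Next, on the one-variable generators $z_{i,1}^k$ of $\Sss$ (Theorem \ref{NegutShuffIso}) there are no mixing terms, and $\Phi(z_{i,1}^k)=\{z_{i,1}^k E_i(z_{i,1})\}_0$ is the coefficient of $z^{-k}$ in $E_i(z)$, i.e. $(\rho^+_{\vec{\ccc}}\circ\Psi_+)(z_{i,1}^k)$.

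It then remains to prove multiplicativity $\Phi(F\star G)=\Phi(F)\circ\Phi(G)$: since $\Sss$ is generated by the $z_{i,1}^k$ and $\rho^+_{\vec{\ccc}}\circ\Psi_+$ is an algebra homomorphism (the composition of the isomorphism $\Psi_+$ of Theorem \ref{NegutShuffIso} with the representation $\rho^+_{\vec{\ccc}}$) agreeing with $\Phi$ on those generators, multiplicativity forces $\Phi=\rho^+_{\vec{\ccc}}\circ\Psi_+$ on all of $\Sss$. To establish it I would give $F$ the variables $\{z_{i,a}\}$ and $G$ the variables $\{w_{j,b}\}$; since $G(\vec w)$ is a scalar it commutes past all the $E_i(z_{i,a})$, and taking constant terms commutes with applying the $E$-currents, so $\Phi(F)\circ\Phi(G)$ may be rewritten as a single joint constant term of
\[
\frac{F(\vec z)\,G(\vec w)}{\big(\prod\omega(\vec z)\big)\big(\prod\omega(\vec w)\big)}\,\Big(\overset{\curvearrowright}{\prod_{i=0}^{r-1}}\,\overset{\curvearrowright}{\prod_{a=1}^{n_i}}E_i(z_{i,a})\Big)\Big(\overset{\curvearrowright}{\prod_{j=0}^{r-1}}\,\overset{\curvearrowright}{\prod_{b=1}^{m_j}}E_j(w_{j,b})\Big)(f\otimes e^\alpha).
\]
The key step is to reorder the operator product so that all currents are grouped by color, with the $F$-currents preceding the $G$-currents inside each color block: each time a factor $E_j(w_{j,b})$ passes to the left of an $E_i(z_{i,a})$ with $i>j$, the relation $e_i(z)e_j(w)=g_{i,j}(\ddd^{m_{i,j}}z/w)e_j(w)e_i(z)$ produces a scalar $g_{i,j}(\ddd^{m_{i,j}}z_{i,a}/w_{j,b})$. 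Using the elementary identity $g_{i,j}(\ddd^{m_{i,j}}z/w)=\omega_{i,j}(z,w)/\omega_{j,i}(w,z)$, valid for all $i,j\in\ZZ/r\ZZ$ (a short check from the definitions of $g_{i,j}$ and $\omega_{i,j}$), the accumulated factors together with $\prod\omega(\vec z)$ and $\prod\omega(\vec w)$ reassemble into precisely the denominator $\prod\omega(\text{combined variables})$ of $\Phi(F\star G)$ and the shuffle-product kernel $\prod_{i,j}\prod_{a,b}\omega_{i,j}(z_{i,a},w_{j,b})$; the symmetrization $\Sym_{\vec n+\vec m}$ in $F\star G$ is recovered from the fact that the joint constant term and the color-grouped ordered product are insensitive to relabeling the combined variables within a given color.

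I expect the main obstacle to be exactly this reordering step: controlling the expansion domains so that the two nested constant terms genuinely fuse into one (and the scalar current relations may be invoked inside), and then matching the accumulated $g$-factors against the $\omega$-denominators and shuffle kernel so that $F\star G$ emerges on the nose. Pushing the $e^{\alpha_i}$ and $z^{H_{i,0}}$ factors through the $\Omega$- and $\TT$-pieces of each $E_i$ (via Lemma \ref{TOLem} and the definitions of $\partial_{\alpha_i}$ and $H_{i,0}$) is routine and does not interfere with the argument. Finally, the $f$-current statement follows in exactly the same manner, replacing $E_i(z)$ by $F_i(z)=\rho^-_{\vec{\ccc}}(f_i(z))$ and using $f_i(z)f_j(w)=g_{i,j}(\ddd^{m_{i,j}}z/w)^{-1}f_j(w)f_i(z)$, which reverses all the orderings ($\curvearrowleft$ in place of $\curvearrowright$) and makes $\Psi_-:\Sss^{\mathrm{op}}\rightarrow\ddot{U}^-$ the relevant isomorphism, with the expansion regime now $|\qqq|<1$.
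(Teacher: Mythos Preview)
The paper does not supply its own proof of this proposition; it is quoted from \cite{OSW} and used as a black box. So there is no in-paper argument to compare against. That said, your strategy is exactly the standard one used to establish such shuffle-to-vertex formulas (as in \cite{OSW}, \cite{NegutTor}, and related work): check on the degree-one generators $z_{i,1}^k$, then verify that the right-hand side defines an algebra map by reordering currents and matching the accumulated $g_{i,j}$ factors against the $\omega_{i,j}$ kernels. Your key identity $g_{i,j}(\ddd^{m_{i,j}}z/w)=\omega_{i,j}(z,w)/\omega_{j,i}(w,z)$ is correct and is precisely what makes the bookkeeping close up.

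Two points deserve more care than your sketch gives them. First, the ``symmetrization is recovered'' step: the reason the explicit $\mathrm{Sym}_{\vec n+\vec m}$ in $F\star G$ disappears on the operator side is that the quotient of the ordered current product by the $\omega$-denominator is already color-symmetric (swapping two same-color currents produces exactly the $g_{i,i}$ factor that the ratio of $\omega_{i,i}$'s absorbs). You should state this explicitly rather than appeal to insensitivity of the constant term. Second, fusing the nested constant terms into one requires that the individual expansions of $\Phi(F)$ and $\Phi(G)$ live on compatible annuli so that the product of Laurent series is the Laurent series of the product; the regime $|z_{i,a}|=|w_{j,b}|=1$, $|\qqq|>1$, $|\ddd|=1$ handles this, but when you commute $E_j(w_{j,b})$ past $E_i(z_{i,a})$ the rational function $g_{i,j}$ must be expanded consistently with that regime, and you should verify no spurious delta-function contributions arise from coinciding arguments. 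These are routine but not automatic, and they are exactly where a careful write-up would spend its effort.
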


\subsection{Feigin--Tsymbaliuk elements}\label{FTSec}
One set of elements in $\UTor$ to which we will apply the shuffle algebra machinery are those found by Feigin--Tsymbaliuk in \cite{FeiTsym, Tsym} as \textit{vacuum expectations of $L$-operators}.

\subsubsection{Dual bosons}
For a fixed $n$, the matrix 
\[ \left(
\frac{\ddd^{-nm_{i,j}}[na_{i,j}]_\qqq}{(\qqq-\qqq^{-1})n} \right)_{i,j}
\]
is invertible.
Thus, by (\ref{HeisRel}), we can define \textit{dual bosons} $\{b_{i,n}^*\}$ satisfying
\begin{equation}
\left[ b_{i,n}^*, b_{j,n'}  \right]=\delta_{n,-n'}\delta_{i,j}\left( \gamma^{n}-\gamma^{-n} \right).
\label{DualBoson}
\end{equation}

\begin{lem}
For $n>0$, we have
\begin{align}
\label{DualForm}
b_{i,n}^*&=
\frac{\qqq^{n}(\qqq-\qqq^{-1})n}{(1-q^{nr})(1-t^{nr})[n]_\qqq}
\sum_{j,k=0}^{r-1} q^{nj}t^{nk}b_{i+j-k, n},
\text{ and }\\
\nonumber
b_{i,-n}^*&=
\frac{\qqq^{n}(\qqq-\qqq^{-1})n}{(1-q^{nr})(1-t^{nr})[n]_\qqq}
\sum_{j,k=0}^{r-1} q^{nj}t^{nk}b_{i-j+k,-n}
\end{align}
\end{lem}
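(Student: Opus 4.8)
The plan is to verify the formulas (\ref{DualForm}) directly against the defining relation (\ref{DualBoson}), exploiting uniqueness. Each $b_{i,n}^*$ is sought in the degree-$n$ subspace $\mathrm{span}\{b_{\ell,n}\}_{\ell\in\ZZ/r\ZZ}$ of the vertical Heisenberg, and by (\ref{HeisRel}) the bracket $[b_{i,n},b_{j,n'}]$ vanishes unless $n'=-n$; hence the conditions $[b_{i,n}^*,b_{j,-n}]=\delta_{i,j}(\gamma^n-\gamma^{-n})$ determine $b_{i,n}^*$ uniquely, amounting to inverting the matrix $\big(\ddd^{-nm_{i,j}}[na_{i,j}]_\qqq/(\qqq-\qqq^{-1})n\big)_{i,j}$, which is invertible by hypothesis and depends only on $j-i\bmod r$. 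So it suffices to substitute the right-hand side of the first line of (\ref{DualForm}) into $[-,b_{m,-n}]$ and recover $\delta_{i,m}(\gamma^n-\gamma^{-n})$; the second line then follows by an entirely parallel computation, or by transporting the first identity through the anti-involution $\eta$, which reverses modes and interchanges $q$ and $t$.

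First I would regroup the double sum. Reducing $i+j-k$ modulo $r$ and collecting the pairs $(j,k)\in\{0,\dots,r-1\}^2$ with $j-k\equiv p\pmod r$, each group is a finite geometric series in $(qt)^n$, so
\[
\sum_{j,k=0}^{r-1}q^{nj}t^{nk}\,b_{i+j-k,n}=\sum_{p=0}^{r-1}C_p\,b_{i+p,n},\qquad C_p=\frac{q^{np}(1-t^{nr})+t^{n(r-p)}(1-q^{nr})}{1-(qt)^n},
\]
a closed form that is genuinely $r$-periodic in $p$ (one checks $C_r=C_0$). Plugging this in and applying (\ref{HeisRel}) with $a_{i,i}=2$, $a_{i,i\pm1}=-1$, $m_{i,i\pm1}=\mp1$, together with the scalar identities $\qqq^{2n}=(qt)^n$, $\qqq^n\ddd^n=q^n$, $\qqq^n\ddd^{-n}=t^n$, $[2n]_\qqq=(\qqq^n+\qqq^{-n})[n]_\qqq$, the bracket $[b_{i,n}^*,b_{m,-n}]$ collapses, after factoring out the scalar prefactor of (\ref{DualForm}) and $\frac{\gamma^n-\gamma^{-n}}{(\qqq-\qqq^{-1})n}$, to the three-term expression $C_{m-i}[2n]_\qqq-\ddd^{n}[n]_\qqq\,C_{m-i-1}-\ddd^{-n}[n]_\qqq\,C_{m-i+1}$.

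The crux is the elementary identity $(1+(qt)^n)\,C_p=q^n C_{p-1}+t^n C_{p+1}$, which holds for the closed form of $C_p$ when all three indices lie in $\{0,\dots,r-1\}$; this makes the three-term expression vanish whenever $m\ne i$. When $m=i$ the index $p-1=-1$ must instead be wrapped to $r-1$, and the residual is precisely $\ddd^n[n]_\qqq$ times the difference between the closed-form value at $-1$ and the true value $C_{r-1}$ --- a one-line geometric-series computation yielding $\qqq^{-n}[n]_\qqq(1-q^{nr})(1-t^{nr})$. Matching $[b_{i,n}^*,b_{i,-n}]=\gamma^n-\gamma^{-n}$ then forces the prefactor to equal $\qqq^n(\qqq-\qqq^{-1})n/\big((1-q^{nr})(1-t^{nr})[n]_\qqq\big)$, exactly as in (\ref{DualForm}). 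I expect the main obstacle to be precisely this wrap-around bookkeeping: the circulant sums must be reduced modulo $r$ before summing, and the whole point is that a naive unbounded geometric series would give $0$ on the diagonal as well, so one must isolate the boundary term carefully to see where the nonzero value $\gamma^n-\gamma^{-n}$ originates.
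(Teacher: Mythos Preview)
Your proposal is correct and is precisely the ``direct calculation'' the paper alludes to but does not spell out: you verify the defining commutation relation (\ref{DualBoson}) by computing $[b_{i,n}^*,b_{m,-n}]$ using (\ref{HeisRel}) and the parameter identifications $q=\qqq\ddd$, $t=\qqq\ddd^{-1}$. Your circulant regrouping, the three-term recurrence $(1+(qt)^n)C_p=q^nC_{p-1}+t^nC_{p+1}$, and the boundary analysis at $p_0=0$ (where $C_{-1}^{\text{formula}}\neq C_{r-1}$ produces the diagonal value, while $C_r^{\text{formula}}=C_0$ causes no discrepancy at $p_0=r-1$) are all correct and constitute a clean execution of what the paper leaves implicit.
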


\begin{proof}
This is by direct calculation, recalling that $q=\qqq\ddd$ and $t=\qqq\ddd^{-1}$.
\end{proof}

\begin{cor}\label{DualSym}
In terms of symmetric functions, the dual bosons act as follows: for $n>0$,
\begin{align*}
\rho_{\vec{\ccc}}(b^*_{i,n})&= (\qqq-\qqq^{-1})p_n^\perp[X^{(i)}], \text{ and }\\
\rho_{\vec{\ccc}}(b^*_{i,-n})&= \qqq^n(\qqq-\qqq^{-1})p_n\left[ \frac{X^{(i)}}{(1-q\sigma^{-1})(1-t\sigma)} \right].
\end{align*}
\end{cor}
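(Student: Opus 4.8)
The plan is to compute $\rho_{\vec{\ccc}}(b^*_{i,\pm n})$ by substituting the closed formulas \eqref{DualForm} into $\rho_{\vec{\ccc}}$, evaluating each $\rho_{\vec{\ccc}}(b_{\ell,\pm n})$ with \eqref{VerSym}, and then collapsing the resulting finite geometric sums using $\sigma^r=1$. This is a direct calculation; the substance is a cancellation, not an evaluation.

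For the positive modes, inserting \eqref{DualForm} and then \eqref{VerSym} makes the scalar $[n]_\qqq/n$ from \eqref{VerSym} cancel the $[n]_\qqq$ in the denominator of the prefactor, leaving
\[
\rho_{\vec{\ccc}}(b^*_{i,n})=\frac{\qqq^n(\qqq-\qqq^{-1})}{(1-q^{nr})(1-t^{nr})}\sum_{j,k=0}^{r-1}q^{nj}t^{nk}\Big(-\ddd^{-n}p_n^\perp[X^{(i+j-k-1)}]+(\qqq^n+\qqq^{-n})p_n^\perp[X^{(i+j-k)}]-\ddd^n p_n^\perp[X^{(i+j-k+1)}]\Big).
\]
Using $q=\qqq\ddd$ and $t=\qqq\ddd^{-1}$, I would rewrite the parenthesized expression as $\qqq^{-n}$ times $p_n^\perp$ applied to the matrix plethysm $(1-q\sigma)(1-t\sigma^{-1})X^{(i+j-k)}$, which absorbs the $\qqq^n$ in the prefactor. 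Then, since $\sigma^r=1$, one has $\sum_{j,k=0}^{r-1}q^{nj}t^{nk}\sigma^{j-k}=\tfrac{1-q^{nr}}{1-q^n\sigma}\cdot\tfrac{1-t^{nr}}{1-t^n\sigma^{-1}}$, so the operator $(1-q^n\sigma)(1-t^n\sigma^{-1})$ and the scalar $(1-q^{nr})(1-t^{nr})$ both cancel (the shift operators commute), leaving exactly $(\qqq-\qqq^{-1})p_n^\perp[X^{(i)}]$. For the negative modes the argument is the same but shorter, because \eqref{VerSym} gives $\rho_{\vec{\ccc}}(b_{\ell,-n})=\tfrac{[n]_\qqq}{n}p_n[X^{(\ell)}]$, a multiplication operator with no $\sigma$-structure: after substitution one gets
\[
\rho_{\vec{\ccc}}(b^*_{i,-n})=\frac{\qqq^n(\qqq-\qqq^{-1})}{(1-q^{nr})(1-t^{nr})}\sum_{j,k=0}^{r-1}q^{nj}t^{nk}\,p_n[X^{(i-j+k)}],
\]
and $\sum_{j,k}q^{nj}t^{nk}\sigma^{k-j}=\tfrac{1-q^{nr}}{1-q^n\sigma^{-1}}\cdot\tfrac{1-t^{nr}}{1-t^n\sigma}$ turns this, after cancelling $(1-q^{nr})(1-t^{nr})$, into $\qqq^n(\qqq-\qqq^{-1})\,p_n\!\left[\tfrac{X^{(i)}}{(1-q\sigma^{-1})(1-t\sigma)}\right]$.

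The only genuinely delicate point is the bookkeeping of shift directions: one must track $\sigma^{\pm1}$ carefully so that the factor $(1-q\sigma)(1-t\sigma^{-1})$ produced by \eqref{VerSym} is precisely the inverse of the factor produced by the geometric sum (and, in the negative case, that it is $(1-q\sigma^{-1})(1-t\sigma)$ that appears), so the cancellation is exact rather than merely a simplification. As an independent check I would also confirm the answer against the defining relation \eqref{DualBoson}: \eqref{DualForm} and \eqref{VerSym} force $\rho_{\vec{\ccc}}(b^*_{i,n})$ to be an $\FF$-linear combination of $\{p_n^\perp[X^{(m)}]\}_{m\in\ZZ/r\ZZ}$, and then \eqref{DualBoson} together with $[p_n^\perp[X^{(m)}],p_n[X^{(j)}]]=n\delta_{m,j}$ and $\rho_{\vec{\ccc}}(\gamma)=\qqq$ pins the coefficients down to $(\qqq-\qqq^{-1})\delta_{m,i}$, recovering the $n>0$ formula (and dually the $n<0$ one).
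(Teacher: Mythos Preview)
Your proof is correct. For the second equation your approach coincides with the paper's: both substitute \eqref{DualForm} into \eqref{VerSym} and recognize the resulting double sum as $p_n$ applied to $\frac{X^{(i)}}{(1-q\sigma^{-1})(1-t\sigma)}$.

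For the first equation there is a mild difference worth noting. The paper does not go through the explicit formula \eqref{DualForm} at all; it argues directly from the defining relation \eqref{DualBoson}. Since $\rho_{\vec{\ccc}}(b_{j,-n})=\tfrac{[n]_\qqq}{n}\,p_n[X^{(j)}]$ by \eqref{VerSym} and $\rho_{\vec{\ccc}}(\gamma)=\qqq$, the requirement $[\rho_{\vec{\ccc}}(b_{i,n}^*),\rho_{\vec{\ccc}}(b_{j,-n})]=\delta_{i,j}(\qqq^n-\qqq^{-n})$, together with the fact that $\rho_{\vec{\ccc}}(b_{i,n}^*)$ must lie in the span of $\{p_n^\perp[X^{(m)}]\}_m$, immediately forces $\rho_{\vec{\ccc}}(b_{i,n}^*)=(\qqq-\qqq^{-1})p_n^\perp[X^{(i)}]$. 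This is exactly the argument you offer as an ``independent check'' at the end; the paper takes it as the primary proof. Your direct computation via \eqref{DualForm} works just as well and has the virtue of treating both modes uniformly, at the cost of the geometric-sum bookkeeping you flag as the delicate point. The paper's route avoids that bookkeeping entirely for the positive modes.
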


\begin{proof}
The first equation comes from (\ref{VerSym}) and (\ref{DualBoson}).
Equation (\ref{VerSym}) also yields the second equation.
\end{proof}

\subsubsection{The elements $F_{p,\pm1}$}
For $p\in\ZZ/r\ZZ$, let
\begin{align*}
F_{p,1} \coloneq 
\frac{z_{p,1}}{z_{0,1}}\prod_{i\in\ZZ/r\ZZ}z_{i,1} && \text{ and } &&
F_{p,-1} \coloneq 
\frac{z_{0,1}}{z_{p,1}}\prod_{i\in\ZZ/r\ZZ}z_{i,1}.
\end{align*}
We note that both elements are in $\Sss_{\delta}$ where $\delta=(1,\ldots, 1)$ is the diagonal vector.

\begin{lem}[\cite{Tsym, WreathEigen}]\label{FTShuff}
We have
\begin{align}
\label{FT1}
-(\qqq-\qqq^{-1})^{r-1}\Psi_+(F_{p,-1})&= \frac{1}{(\qqq-\qqq^{-1})}\varpi(b_{p,-1}^*),\text{ and }\\
\label{FT2}
\ddd^{-r}\qqq(\qqq^2-1)^{r-1}
\Psi_-\left( F_{p,1}\right)&= \frac{1}{(\qqq-\qqq^{-1})}\varpi(b_{p,1}^*).
\end{align}
\end{lem}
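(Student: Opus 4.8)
The plan is to reduce each identity to an operator computation on the vertex representation $\mathcal W$, where everything is explicit. Since $\Psi_+$ and $\Psi_-$ are isomorphisms onto $\ddot U^+$ and $\ddot U^-$ (Theorem \ref{NegutShuffIso}) and the rescaled vertex actions $\rho^{\pm}_{\vec\ccc}$ are faithful on these subalgebras, it is enough to verify \eqref{FT1} and \eqref{FT2} after applying $\rho^+_{\vec\ccc}$ and $\rho^-_{\vec\ccc}$ respectively. For the left side of \eqref{FT1}, I would first invoke Proposition \ref{ShuffVert}: since $F_{p,-1}\in\Sss_{\delta}$ with $\delta=(1,\dots,1)$, the operator $\rho^+_{\vec\ccc}(\Psi_+(F_{p,-1}))$ is the total constant term in $z_0,\dots,z_{r-1}$ of $F_{p,-1}$ times the ordered product $E_0(z_0)E_1(z_1)\cdots E_{r-1}(z_{r-1})$, divided by $\prod_{0\le i<j\le r-1}\omega_{i,j}(z_i,z_j)$. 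Normal-ordering this product by repeated use of Lemma \ref{TOLem}, the correlation factors $\Omega[(A^{T}B)\,z_iz_j]$ produced when each $\TT$-piece is moved past the $\Omega$-pieces to its right are exactly the $\omega_{i,j}$ appearing in the denominator, so they cancel; what survives is a Laurent monomial in the $z_i$, the scalar $\ccc_0\cdots\ccc_{r-1}$, a single $\Omega$-piece, a single $\TT$-piece, and $\prod_i e^{\alpha_i}z_i^{1+H_{i,0}}$. Because $\sum_i\alpha_i=0$ in $Q$, the product $\prod_i e^{\alpha_i}$ collapses to a scalar sign coming from the twisted group-algebra cocycle, and $\prod_i z_i^{H_{i,0}}$ contributes only powers of the $z_i$ and of $\ddd$; extracting the constant term in all $z_i$ then leaves a single operator on $\mathcal W$, homogeneous of the same degree as $b^*_{p,-1}$.

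For the right side, I would transport through the Tsymbaliuk isomorphism of Theorem \ref{TsymIso}, which gives $\rho_{\vec\ccc}(\varpi(b^*_{p,-1})) = \mathrm T\,\tau_v(b^*_{p,-1})\,\mathrm T^{-1}$. By Lemma \ref{BEigen} together with the closed form \eqref{DualForm}, $\tau_v(b^*_{p,-1})$ is diagonal in the basis $\{|\lambda\rangle\}$ with an eigenvalue that is an explicit local expression in the (colour-shifted) characters of the addable and removable boxes of $\lambda$; by Theorem \ref{Eigenstates} this means $\rho_{\vec\ccc}(\varpi(b^*_{p,-1}))$ is diagonal on $\{H_\lambda\otimes e^{\core(\lambda)}\}$ with the same eigenvalue. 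So \eqref{FT1} amounts to showing that the constant-term operator constructed above is also diagonal on this basis with that eigenvalue. Diagonality is where the triangularity/diagonalization results of \cite{WreathEigen} enter: a balanced product of one $E$-current per colour is triangular with respect to the $\hat h$- and $\hat e$-bases used to define $H_\lambda$, hence diagonal. Once both operators are known to be diagonal on the same basis, matching them reduces to identifying two local expressions in the box characters, after which the prefactor $-(\qqq-\qqq^{-1})^{r-1}$ is forced; concretely it repackages the $n=1$ coefficient in \eqref{DualForm} together with the product $\ccc_0\cdots\ccc_{r-1}$ and the Fock parameter $v$ of Theorem \ref{TsymIso}, while the $\ccc_i$ and the cocycle sign cancel out.

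Identity \eqref{FT2} is proven by the mirror-image computation: apply Proposition \ref{ShuffVert} with the $f$-current operators to the ordered product $F_{r-1}(z_{r-1})\cdots F_0(z_0)$, where the adjunction of Lemma \ref{FAdjoint1} streamlines the bookkeeping and the powers of $\ddd$ hidden in the $z_i^{1-H_{i,0}}$-factors produce the prefactor $\ddd^{-r}$. As an internal check on the relative normalization of the two identities, Proposition \ref{EtaShuff} gives $\Psi_{\pm}^{-1}\eta\Psi_{\pm}(F_{p,\mp1}) = (-1)^r\ddd^{-r}F_{p,\pm1}$, a one-line computation from \eqref{DualForm} gives $\eta(b^*_{p,\pm1}) = -b^*_{p,\mp1}$, and these together with $\eta\varpi^{-1}=\varpi\eta$ are consistent with \eqref{FT1}–\eqref{FT2}. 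The step I expect to be the real obstacle is the non-circular argument for diagonality in the second paragraph: a priori one knows only that \emph{some} commutative subalgebra is diagonalized by $\{H_\lambda\otimes e^{\core(\lambda)}\}$, so one must show directly — via the triangularity argument of \cite{WreathEigen}, or by commuting $\Psi_+(F_{p,-1})$ past operators already known to be diagonal — that this particular homogeneous element of $\ddot U^{\pm}$ lies in it; the normal-ordering combinatorics for general $r$ is tedious but routine by comparison.
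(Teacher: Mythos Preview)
Your approach is a genuine attempt at a direct proof, but it takes a much harder route than the paper and the gaps you yourself flag are real. The paper's proof is essentially two lines: the lemma is attributed to \cite{Tsym,WreathEigen}, and those references already contain the $\varpi^{-1}$ versions
\[
(-1)^{r-1}\ddd^{-r}(\qqq-\qqq^{-1})^{r-1}\Psi_+(F_{p,1})=\tfrac{1}{\qqq-\qqq^{-1}}\varpi^{-1}(b^*_{p,1}),
\qquad
(-1)^{r-1}\qqq(\qqq^2-1)^{r-1}\Psi_-(F_{p,-1})=\tfrac{1}{\qqq-\qqq^{-1}}\varpi^{-1}(b^*_{p,-1}).
\]
One then applies the anti-involution $\eta$ to each of these, using Proposition~\ref{EtaShuff} on the shuffle side (your computation $\Psi_\pm^{-1}\eta\Psi_\pm(F_{p,\mp1})=(-1)^r\ddd^{-r}F_{p,\pm1}$ is exactly what is needed), the relation $\eta\varpi^{-1}=\varpi\eta$, and $\eta(b^*_{p,\pm1})=-b^*_{p,\mp1}$. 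That yields \eqref{FT1} and \eqref{FT2} directly. In other words, what you relegated to an ``internal check on the relative normalization'' at the end of your proposal \emph{is} the entire argument.

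By contrast, your primary strategy --- compare both sides as operators on $\mathcal W$ and match eigenvalues on $\{H_\lambda\otimes e^{\core(\lambda)}\}$ --- runs into the two obstacles you anticipated. First, equality of operators on $\mathcal W$ does not obviously imply equality in $'\ddot U'$: you assert faithfulness of $\rho^{\pm}_{\vec\ccc}$ but nothing in the paper provides it, and the identities \eqref{FT1}--\eqref{FT2} are stated at the level of the algebra, not the representation. Second, the diagonality of $\rho^+_{\vec\ccc}(\Psi_+(F_{p,-1}))$ on the $H_\lambda$ basis is precisely the content of the lemma: Theorem~\ref{Eigenstates} tells you that $\varpi(\mathcal H)$ acts diagonally, but you do not yet know $\Psi_+(F_{p,-1})\in\varpi(\mathcal H)$. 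Proving diagonality independently via bi-triangularity would amount to redoing the work of \cite{WreathEigen} for $\varpi$ rather than $\varpi^{-1}$, which the $\eta$-trick sidesteps entirely.
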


\begin{proof}
By \cite[B.2]{WreathEigen}\footnote{Note that our $b_{i,n}^*$ is equal to $-b_{i,n}^\perp$ from \textit{loc. cit.}}, we have
\[
(-1)^{r-1}\ddd^{-r}(\qqq-\qqq^{-1})^{r-1}\Psi_+(F_{p,1})=\frac{1}{(\qqq-\qqq^{-1})} \varpi^{-1}(b_{p,1}^*).
\]
We then obtain \eqref{FT1} by applying Proposition \ref{EtaShuff}.
Likewise, we obtain
\[
(-1)^{r-1}\qqq(\qqq^2-1)^{r-1}
\Psi_-\left( F_{p,-1}\right)= \frac{1}{(\qqq-\qqq^{-1})}\varpi^{-1}(b_{p,-1}^*)
\]
by combining Lemma 4.11 and Theorem 4.12 from \cite{WreathEigen}.
Equation \eqref{FT2} then follows by applying Proposition \ref{EtaShuff}.
\end{proof}

\subsubsection{The elements $\tilde{p}_1^{(i)}$}\label{PTilde}
For $i\in\ZZ/r\ZZ$, let
\[
\tilde{p}_1^{(i)} \coloneq p_1\left[ \frac{X^{(i)}}{(1-q\sigma^{-1})(1-t\sigma)} \right].
\]
These elements will play a key role later on.
Note that by Corollary \ref{DualSym},
\begin{align*}
\tilde{p}_1^{(i)}=\frac{\qqq^{-1}}{(\qqq-\qqq^{-1})}\rho_{\vec{\ccc}}(b^*_{i,-1}) && \text{ and } &&
p_1^\perp[X^{(i)}]= \frac{1}{(\qqq-\qqq^{-1})}\rho_{\vec{\ccc}}(b_{i,1}^*).
\end{align*}
Thus, they are related to the formulas in Lemma \ref{FTShuff} via the Miki automorphism $\varpi$.

\subsubsection{The elements $F_{0,n}$}
Notice that $F_{0,1}=F_{0,-1}$.
To access nabla operators, we will also make use of higher degree analogues of $F_{0,1}$.
Namely, consider the following element of $\mathcal{S}_{n\delta}$:
\[
F_{0,n} \coloneq \prod_{i=0}^r\prod_{a=1}^n z_{i,a}.
\]
Setting $F_{0,0}=1$, 
\begin{lem}\label{F0nLem}
We have
\begin{align*}
\sum_{n=0}^\infty\frac{(-1)^{\frac{rn(n-1)}{2}+n}(\qqq^2-1)^{nr}}{\qqq^n\ddd^{\frac{rn(n+1)}{2}}}\Psi_-(F_{0,n})z^n&= \exp\left( \sum_{k>0}\frac{\qqq^{-2k}-1}{(\qqq-\qqq^{-1})k}\varpi(b_{0,k}^*)z^k \right).
\end{align*}
\end{lem}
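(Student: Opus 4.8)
The plan is to reduce the identity to a statement purely about the dual bosons $\{b_{0,k}^*\}_{k>0}$ and the elements $F_{0,n}$, evaluated through the shuffle-to-algebra dictionary, and then to match generating functions. First I would observe that the right-hand side is the exponential of a sum of $\varpi$-images of dual bosons. Since $\varpi$ is an algebra homomorphism (Theorem \ref{MikiAut}) and the $b_{0,k}^*$ commute among themselves (they all have color $0$ and live in the Heisenberg $\mathcal{H}$, and $[b_{0,k}^*,b_{0,k'}^*]$ vanishes by \eqref{DualBoson} together with \eqref{HeisRel}), the exponential on the right is $\varpi$ applied to $\exp\left(\sum_{k>0}\frac{\qqq^{-2k}-1}{(\qqq-\qqq^{-1})k}b_{0,k}^*z^k\right)$. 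So it suffices to show that in $\Sss^{\mathrm{op}}\cong\ddot U^-$, the generating series $\sum_n(\text{scalar}_n)\,F_{0,n}z^n$ equals the image under $\Psi_-^{-1}$ of that exponential. By Lemma \ref{FTShuff} we already know the $n=1$ coefficient: $\Psi_-(F_{0,1})$ is a known scalar multiple of $\varpi(b_{0,1}^*)$ (recall $F_{0,1}=F_{0,-1}$, so \eqref{FT2} applies). The content of the lemma is that the higher $F_{0,n}$ assemble into the exponential of the degree-one piece — i.e. that $F_{0,n}$ is, up to the stated scalar, the $n$-th ``divided power'' $\tfrac{1}{n!}F_{0,1}^{\star n}$ in the shuffle algebra, matching $\tfrac{1}{n!}(b_{0,1}^*)^n$ on the other side.

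The key computation is therefore the shuffle self-product $F_{0,1}^{\star n}$. Here $F_{0,1}=\prod_{i\in\ZZ/r\ZZ}z_{i,1}$ lives in $\Sss_\delta$, and $F_{0,1}^{\star n}\in\Sss_{n\delta}$. Computing the shuffle product of $n$ copies of $\prod_i z_{i,1}$ means symmetrizing $\prod_{a=1}^n\prod_i z_{i,\sigma_i(a)}$ against the product of all the relevant mixing terms $\omega_{i,j}$. The mixing terms $\omega_{i,j}$ for $|i-j|=1$ are linear (the factors $(\qqq w-\ddd^{-1}z)$ and $(z-\qqq\ddd^{-1}w)$), while $\omega_{i,i}(z,w)=(z-\qqq^2 w)^{-1}(z-w)^{-1}$ produces exactly the poles allowed by the pole condition \eqref{PoleCond}. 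I would argue that after symmetrization the antisymmetric Laurent-polynomial numerator coming from the same-color factors collapses — by the usual Vandermonde-type argument — so that $F_{0,1}^{\star n}$ is a scalar (depending on $n$, $\qqq$, $\ddd$) times $F_{0,n}=\prod_i\prod_{a=1}^n z_{i,a}$, which is manifestly the unique lowest-degree color-symmetric element of $\Sss_{n\delta}$ that is totally symmetric of degree one in each variable. Tracking this scalar is the bookkeeping heart of the proof: one must count the contributions of the cross-color mixing terms $\omega_{i,i\pm1}$ at the common value forced by the same-color residues, which is where the powers of $\ddd$, the sign $(-1)^{rn(n-1)/2+n}$, and the factor $(\qqq^2-1)^{nr}/\qqq^n$ come from. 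On the algebra side, $\Psi_-$ is an \emph{anti}-homomorphism $\Sss^{\mathrm{op}}\to\ddot U^-$, but since we are dealing with commuting elements this poses no sign subtlety; one gets $\Psi_-(F_{0,1}^{\star n})=\Psi_-(F_{0,1})^n$, hence $\propto\varpi(b_{0,1}^*)^n$, and dividing by $n!$ produces the exponential.

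Concretely I would proceed as follows. \textbf{Step 1:} Record that $\{b_{0,k}^*\}_{k>0}$ pairwise commute, so the RHS is $\varpi$ of a genuine exponential, and reduce to an identity in $\Sss$. \textbf{Step 2:} Show $F_{0,1}^{\star n}=\kappa_n\,F_{0,n}$ for an explicit scalar $\kappa_n$, by a direct symmetrization computation exploiting that the same-color mixing factors force the residue structure and the Vandermonde cancellation makes all permutations contribute equally up to the cross-color factors. \textbf{Step 3:} Combine with \eqref{FT2} (using $F_{0,1}=F_{0,-1}$) to get $\Psi_-(F_{0,n})=\frac{\kappa_n}{n!}\,\bigl(\text{scalar from \eqref{FT2}}\bigr)^{n}\varpi(b_{0,1}^*)^n$ and verify that the prefactor $\frac{(-1)^{rn(n-1)/2+n}(\qqq^2-1)^{nr}}{\qqq^n\ddd^{rn(n+1)/2}}$ in the statement is exactly what is needed to turn $\sum_n\Psi_-(F_{0,n})z^n$ with these coefficients into $\exp\bigl(\sum_{k>0}\frac{\qqq^{-2k}-1}{(\qqq-\qqq^{-1})k}\varpi(b_{0,k}^*)z^k\bigr)$ — noting that the only $k$ that appears on the RHS inside the exponential is reconstructed from the $n=1$ term, so matching the $z^1$ coefficient plus the divided-power structure from Step 2 suffices. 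The main obstacle is unquestionably Step 2: getting the scalar $\kappa_n$ right requires careful handling of the quadratic-in-$n$ sign and $\ddd$-power, and one should sanity-check it against the $n=1$ case (where $\kappa_1=1$ and the lemma must reduce to \eqref{FT2}) and ideally against $n=2$ before trusting the general formula.
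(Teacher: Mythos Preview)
Your proposal has a genuine gap in Steps 2 and 3, and they are not independent errors: each one forces the other to fail.

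The right-hand side of the lemma is $\exp\bigl(\sum_{k>0}c_k\,\varpi(b_{0,k}^*)z^k\bigr)$ with $c_k=\frac{\qqq^{-2k}-1}{(\qqq-\qqq^{-1})k}$, and $c_k\neq 0$ for \emph{every} $k\geq 1$. Expanding the exponential, the $z^n$ coefficient is a polynomial in $\varpi(b_{0,1}^*),\ldots,\varpi(b_{0,n}^*)$ that genuinely involves $\varpi(b_{0,n}^*)$ (with coefficient $c_n$). Since the dual bosons $b_{0,k}^*$ are algebraically independent inside the commutative Heisenberg, this $z^n$ coefficient is \emph{not} proportional to $\varpi(b_{0,1}^*)^n$. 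Consequently your Step~3 conclusion ``matching the $z^1$ coefficient plus the divided-power structure from Step~2 suffices'' is false: the exponential is not reconstructed from its $z^1$ term alone. Running the argument backwards, since $\Psi_-$ is an (anti-)isomorphism, the lemma itself forces $\Psi_-(F_{0,n})$ to differ from any scalar multiple of $\Psi_-(F_{0,1})^n$, hence $F_{0,n}$ is not a scalar multiple of $F_{0,1}^{\star n}$ for $n\geq 2$. So Step~2 is also false, and the ``Vandermonde-type collapse'' you invoke does not occur.

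The paper's proof takes a completely different route: it quotes from \cite{Tsym} (cf.\ Lemma~4.11 and Corollary~4.13 of \cite{WreathEigen}) an already-established identity
\[
\sum_{n\geq 0}\frac{(-1)^{\frac{rn(n+1)}{2}+n}(\qqq^2-1)^{nr}}{\qqq^n\ddd^{\frac{rn(n-1)}{2}}}\Psi_-(F_{0,n})z^n
=\exp\Bigl(-\sum_{k>0}\tfrac{\qqq^{-2k}-1}{(\qqq-\qqq^{-1})k}\varpi^{-1}(b_{0,-k}^*)z^k\Bigr),
\]
and then applies the anti-involution $\eta$ via Proposition~\ref{EtaShuff} together with $\eta\varpi^{-1}=\varpi\eta$ to convert $\varpi^{-1}(b_{0,-k}^*)$ into $\varpi(b_{0,k}^*)$ and adjust the scalars. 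The substantive input --- that the full family $\{F_{0,n}\}$ exponentiates the \emph{entire} tower of dual bosons --- is imported from those references, not derived from $F_{0,1}$ alone.
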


\begin{proof}
The following was proved in \cite{Tsym} (cf. Lemma 4.11 and Corollary 4.13 of \cite{WreathEigen}, noting that $b_{0,n}^*=-b_{0,n}^\perp$):
\begin{align*}
\sum_{n=0}^\infty\frac{(-1)^{\frac{rn(n+1)}{2}+n}(\qqq^2-1)^{nr}}{\qqq^n\ddd^{\frac{rn(n-1)}{2}}}\Psi_-(F_{0,n})z^n&= \exp\left(-\sum_{k>0}\frac{\qqq^{-2k}-1}{(\qqq-\qqq^{-1})k}\varpi^{-1}(b_{0,-k}^*)z^k \right).
\end{align*}
We obtain the second equality by applying Proposition \ref{EtaShuff}.
\end{proof}

\subsubsection{Eigenvalues}
The bosonic expressions in Lemma \ref{FTShuff} are in fact quite natural when one considers their action in the Fock representation.
\begin{lem}
On the Fock representation $\mathcal{F}$, we have
\begin{align}
\label{FTEigen}
\left\langle\lambda\left|\frac{1}{(\qqq-\qqq^{-1})v}\tau_v(b_{p,1}^*) \right|\lambda\right\rangle
&= \left(\frac{-D_\lambda^\bullet}{(1-q)(1-t)}\right)^{(p)}, \text{ and }\\
\nonumber
\left\langle\lambda\left|\frac{v}{(\qqq-\qqq^{-1})}\tau_v(b_{p,-1}^*) \right|\lambda\right\rangle
&= \left(\frac{-D_\lambda^\bullet}{(1-q)(1-t)}\right)^{(p)}_*.
\end{align}
Similarly, we have
\begin{align*}
\left\langle\lambda\left|\exp\left( \sum_{k>0}\frac{\qqq^{-2k}-1}{(\qqq-\qqq^{-1})k}\tau_v(b_{0,k}^*)\frac{z^k}{v^k} \right) \right|\lambda\right\rangle
&= \Omega\left[ -\left(\frac{(q^{-1}t^{-1}-1)D_\lambda^\bullet}{(1-q)(1-t)}\right)^{(0)}z \right].
\end{align*}
\end{lem}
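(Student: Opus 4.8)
The plan is to exploit the fact that all three operators act \emph{diagonally} in the basis $\{|\lambda\rangle\}$ of $\mathcal{F}$ and then pin down the scalars by hand. By \eqref{DualForm}, each dual boson $b_{i,n}^{*}$ is an $\FF$-linear combination of the vertical Heisenberg generators $\{b_{j,n}\}$, so Lemma \ref{BEigen} shows it acts diagonally on $\mathcal{F}$; consequently $\exp\!\big(\sum_{k>0}\frac{\qqq^{-2k}-1}{(\qqq-\qqq^{-1})k}\tau_v(b_{0,k}^*)z^{k}v^{-k}\big)$ is diagonal as well, with eigenvalue the exponential of the series of scalar eigenvalues. (One might instead try to transport the question to $\mathcal{W}$ via the Tsymbaliuk isomorphism of Theorem \ref{TsymIso}, but then $\tau_v(b_{i,n}^{*})$ corresponds to a horizontal Heisenberg element $\rho_{\vec{\ccc}}(\varpi(b_{i,n}^{*}))$, to which we have no generators-and-relations access; the direct computation in $\mathcal{F}$ is cleaner.) So in each case it suffices to evaluate the diagonal matrix entry $\langle\lambda|\,\cdot\,|\lambda\rangle$.

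For the first identity I would substitute \eqref{DualForm} at $n=1$ to write $b_{p,1}^{*}=\frac{\qqq(\qqq-\qqq^{-1})}{(1-q^{r})(1-t^{r})}\sum_{j,l=0}^{r-1}q^{j}t^{l}\,b_{p+j-l,1}$, apply the $k=1$ case of Lemma \ref{BEigen} (where $[1]_\qqq=1$) to each summand, cancel the matching powers of $\qqq$, and use $p_1[X]=X$; this gives $\frac{1}{(\qqq-\qqq^{-1})v}\langle\lambda|\tau_v(b_{p,1}^{*})|\lambda\rangle=\frac{1}{(1-q^{r})(1-t^{r})}\sum_{j,l=0}^{r-1}q^{j}t^{l}\bigl(-D_\lambda^{(p+j-l)}\bigr)$. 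It then remains to recognize the right-hand side as $\bigl(\frac{-D_\lambda^\bullet}{(1-q)(1-t)}\bigr)^{(p)}$: since multiplication by $q$ (resp.\ $t$) lowers (resp.\ raises) the color of a $(q,t)$-monomial by one, the color-$p$ part of $\frac{-D_\lambda^\bullet}{(1-q)(1-t)}=(-D_\lambda^\bullet)\sum_{a,b\ge0}q^{a}t^{b}$ is $\sum_{a,b\ge0}q^{a}t^{b}\bigl(-D_\lambda^{(p+a-b)}\bigr)$, and collecting the residues of $a,b$ modulo $r$ — exactly the unfolding of Proposition \ref{SigProp}, cf.\ \eqref{OneMinusInv} — yields $\frac{1}{(1-q^{r})(1-t^{r})}\sum_{j,l=0}^{r-1}q^{j}t^{l}\bigl(-D_\lambda^{(p+j-l)}\bigr)$. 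The second identity is the same computation starting from $b_{p,-1}^{*}$; here the $k=1$ case of the second line of Lemma \ref{BEigen} contributes an extra factor $\qqq^{2}=qt$, and one checks that after the substitution $(q,t)\mapsto(q^{-1},t^{-1})$ this factor is precisely what is needed for $\frac{\qqq^{2}}{(1-q^{r})(1-t^{r})}\sum_{j,l}q^{j}t^{l}\bigl(-(D_\lambda^{(p-j+l)})_*\bigr)$ to equal $\bigl(\frac{-D_\lambda^\bullet}{(1-q)(1-t)}\bigr)^{(p)}_*$, recalling from \ref{ColChar} that the $*$-operation inverts $q,t$ only after colors have been assigned.

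For the third identity I would rerun the first computation with a general mode $k$ in place of $n=1$: \eqref{DualForm} together with the $k$-th case of Lemma \ref{BEigen} gives, after the normalizations $[k]_\qqq$, $\qqq^{k}$, $v^{k}$ all cancel, that the eigenvalue of $\frac{\qqq^{-2k}-1}{(\qqq-\qqq^{-1})k}\tau_v(b_{0,k}^{*})z^{k}v^{-k}$ is $\frac{\qqq^{-2k}-1}{k}\,p_k\!\bigl[\bigl(\tfrac{-D_\lambda^\bullet}{(1-q)(1-t)}\bigr)^{(0)}\bigr]z^{k}$. The key point is that $\qqq^{-2k}=(qt)^{-k}=q^{-k}t^{-k}$ because $q=\qqq\ddd$ and $t=\qqq\ddd^{-1}$; since $p_k$ is additive for plethystic sums and $p_k[(q^{-1}t^{-1})W]=(qt)^{-k}p_k[W]$, one rewrites $(\qqq^{-2k}-1)p_k[W]=p_k[(q^{-1}t^{-1}-1)W]$, and as $q^{-1}t^{-1}$ has color $0$ this commutes with extracting the color-$0$ part. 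Summing over $k>0$ and using $\exp\!\bigl(\sum_{k>0}p_k[W]z^{k}/k\bigr)=\Omega[Wz]$ then identifies the eigenvalue with $\Omega\!\bigl[-\bigl(\tfrac{(q^{-1}t^{-1}-1)D_\lambda^\bullet}{(1-q)(1-t)}\bigr)^{(0)}z\bigr]$, as required.

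The only real obstacle is the bookkeeping just flagged: matching the finite sums $\frac{1}{(1-q^{r})(1-t^{r})}\sum_{j,l=0}^{r-1}q^{j}t^{l}\bigl(-D_\lambda^{(\,\cdot\,)}\bigr)$ with color-graded pieces of rational functions in $q,t$, and, for the $b_{p,-1}^{*}$ case, verifying that the $*$-operation interacts with the geometric series so that the leftover power of $\qqq$ is exactly absorbed — concretely, that $\frac{1}{(1-q^{-r})(1-t^{-r})}=\frac{q^{r}t^{r}}{(1-q^{r})(1-t^{r})}$ combines with $\frac{1}{1-(qt)^{-1}}=\frac{-qt}{1-qt}$ to leave precisely the single factor $qt=\qqq^{2}$ supplied by Lemma \ref{BEigen}. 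This is elementary but should be carried out with care; everything else is a substitution.
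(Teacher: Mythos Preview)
Your proposal is correct and follows essentially the same approach as the paper: both substitute the expression \eqref{DualForm} for the dual bosons, apply Lemma \ref{BEigen} termwise, and then identify the resulting finite sum $\frac{1}{(1-q^r)(1-t^r)}\sum_{j,l}q^jt^l(-D_\lambda^{(p+j-l)})$ with the color-$p$ part of $\frac{-D_\lambda^\bullet}{(1-q)(1-t)}$. The paper in fact only writes out the first identity and leaves the other two implicit, whereas you sketch all three and flag the one genuinely delicate point (the interaction of the $*$-operation with the geometric series for $b_{p,-1}^*$).
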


\begin{proof}
We will only prove (\ref{FTEigen}).
Lemma \ref{BEigen} gives us
\begin{align*}
\nonumber
\langle\lambda|\tau_v(b_{i, 1})|\lambda\rangle&=-v\qqq^{-1}D_\lambda^{(i)}.
\end{align*}
Applying (\ref{DualForm}), consider
\begin{align*}
\left\langle\lambda\left|\frac{1}{(\qqq-\qqq^{-1})v}\tau_v(b^*_{p,1})\right|\lambda\right\rangle
&= \frac{1}{(1-q^{r})(1-t^{r})}\sum_{j,k=0}^{r-1}q^{j}t^{k}\left( -D_{\lambda}^{(p+j-k)}\right)\\
&= \left( \frac{-D_\lambda}{(1-q)(1-t)} \right)^{(p)}.\qedhere
\end{align*}
\end{proof}

\begin{cor}
In $\mathcal{W}$, we have the following equalities:
\begin{align*}
\frac{1}{(\qqq-\qqq^{-1})v}(\rho_{\vec{\ccc}}^-\circ\varpi)(b_{p,1}^*)(H_\lambda\otimes e^{\core(\lambda)})
&=\left(\frac{-D_\lambda^\bullet}{(1-q)(1-t)} \right)^{(p)}H_\lambda\otimes e^{\core(\lambda)},\\
\frac{v}{(\qqq-\qqq^{-1})}(\rho_{\vec{\ccc}}^+\circ\varpi)(b_{p,-1}^*)(H_\lambda\otimes e^{\core(\lambda)})
&=\left(\frac{-D_\lambda^\bullet}{(1-q)(1-t)} \right)^{(p)}_*H_\lambda\otimes e^{\core(\lambda)},
\\
\exp\left( \sum_{k>0}\frac{\qqq^{-2k}-1}{(\qqq-\qqq^{-1})k}(\rho_{\ccc}^-\circ\varpi)(b_{0,-k}^*)z^n \right)(H_\lambda\otimes e^{\core(\lambda)})
&=
\Omega\left[ -\left(\frac{(q^{-1}t^{-1}-1)D_\lambda^\bullet}{(1-q)(1-t)}\right)^{(0)}z \right]H_\lambda\otimes e^{\core(\lambda)}.
\end{align*}
Here, $v$ and $\vec{\ccc}$ are related as in Theorem \ref{TsymIso}.
\end{cor}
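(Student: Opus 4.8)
The plan is to transport the Fock-space eigenvalue computations of the previous lemma over to $\mathcal{W}$ by means of the Tsymbaliuk isomorphism $\mathrm{T}$, together with the identification of eigenstates in Theorem \ref{Eigenstates}. The first point to record is that every element at issue—$b_{p,1}^*$, $b_{p,-1}^*$, and the operator assembled from the $b_{0,\pm k}^*$ inside the exponential—lies in the vertical Heisenberg subalgebra $\mathcal{H}$ (indeed, by \eqref{DualForm} each $b_{i,\pm n}^*$ is an explicit linear combination of the $b_{j,\pm n}$ with the same index). Since $\tau_v(\psi_i^\pm(z))$ acts diagonally on the basis $\{|\lambda\rangle\}$ of $\mathcal{F}$, so does all of $\tau_v(\mathcal{H})$; hence the matrix-element identities of the previous lemma are genuine eigenvalue equations, e.g. $\tau_v(b_{p,1}^*)|\lambda\rangle = (\qqq-\qqq^{-1})v\left(\tfrac{-D_\lambda^\bullet}{(1-q)(1-t)}\right)^{(p)}|\lambda\rangle$, and likewise for $b_{p,-1}^*$ and for the exponentiated operator (whose eigenvalue is the corresponding $\Omega[\cdots]$).

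Next I would apply $\mathrm{T}$. By Theorem \ref{TsymIso}, with $v$ and $\vec{\ccc}$ linked as in that theorem, $\mathrm{T}$ intertwines $\tau_v$ with $\rho_{\vec{\ccc}}\circ\varpi$, and by Theorem \ref{Eigenstates} one has $\mathrm{T}(|\lambda\rangle)=c_\lambda\,H_\lambda\otimes e^{\core(\lambda)}$ with $c_\lambda\neq 0$. Pushing each eigenvalue equation from the first step through $\mathrm{T}$ and cancelling the nonzero scalar $c_\lambda$ yields the asserted identities with $\rho_{\vec{\ccc}}\circ\varpi$ in place of $\tau_v$ and $H_\lambda\otimes e^{\core(\lambda)}$ in place of $|\lambda\rangle$, the eigenvalues being literally unchanged.

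Finally I would pass from $\rho_{\vec{\ccc}}$ to the rescaled representations $\rho_{\vec{\ccc}}^{\pm}$, using $\rho_{\vec{\ccc}}^{-}$ for the elements realized inside $\ddot{U}^-$ via $\Psi_-$ (namely $\varpi(b_{p,1}^*)$ and the $\varpi(b_{0,k}^*)$, cf. Lemmas \ref{FTShuff} and \ref{F0nLem}) and $\rho_{\vec{\ccc}}^{+}$ for $\varpi(b_{p,-1}^*)$, realized inside $\ddot{U}^+$ via $\Psi_+$. Here one only needs that $\rho_{\vec{\ccc}}^{\pm}$ is obtained from $\rho_{\vec{\ccc}}$ by conjugation with the grading-rescaling operator of $\mathcal{W}$ sending $p_k[X^{(i)}]\mapsto\qqq^{\mp k/2}p_k[X^{(i)}]$ and fixing $\FF\{Q\}$. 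Because $H_\lambda$ is homogeneous—of degree $|\quot(\lambda)|$, as follows from conditions (1)--(3) of Definition \ref{ModDef} since dominance and the plethysms $1-q\sigma^{-1},\,1-t^{-1}\sigma^{-1}$ preserve total degree—this operator merely rescales $H_\lambda\otimes e^{\core(\lambda)}$ by a nonzero constant and so fixes its eigenline. Hence $(\rho_{\vec{\ccc}}^{\pm}\circ\varpi)(b^*)$ has the same eigenvalue on $H_\lambda\otimes e^{\core(\lambda)}$ as $(\rho_{\vec{\ccc}}\circ\varpi)(b^*)$, namely the one recorded in the previous lemma; for the exponential one applies this to the full exponentiated operator (equivalently, componentwise on each homogeneous degree, on which the exponential truncates).

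I do not anticipate a genuine obstacle—this is a transport-of-structure argument—so the only delicate part is purely bookkeeping: tracking which rescaled representation $\rho_{\vec{\ccc}}^{\pm}$ is the correct one for each operator, and matching the normalization constants (the powers of $\qqq-\qqq^{-1}$ and of $v$, and the dictionary between $v$ and $\vec{\ccc}$ from Theorem \ref{TsymIso}) against the previous lemma so that no stray scalar survives.
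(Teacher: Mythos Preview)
Your argument is correct and is exactly the intended one: the paper states this as a corollary with no proof, because it is precisely the transport-of-structure you describe—the eigenvalue computations of the preceding lemma on $\mathcal{F}$ pushed through the Tsymbaliuk isomorphism (Theorems \ref{TsymIso} and \ref{Eigenstates}), followed by the observation that the rescalings $\rho_{\vec{\ccc}}^{\pm}$ only rescale the homogeneous eigenvectors $H_\lambda\otimes e^{\core(\lambda)}$ and hence leave eigenvalues unchanged. Your bookkeeping of which rescaling goes with which operator (via Lemmas \ref{FTShuff} and \ref{F0nLem}) is also on target.
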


\subsubsection{The series $\mathbb{D}$ and $\mathbb{D}^*$}\label{DD*}
Define
\begin{align*}
\mathbb{D}& \coloneq 
\frac{(1-qt)^{r-1}}{\prod_{i\in\ZZ/r\ZZ}(1-qz_{i+1}/z_{i})(1-tz_{i}/z_{i+1})}
\Omega\left[ -\sum_{i\in\ZZ/r\ZZ}X^{(i)}z_{i} \right]
\\
& \hspace{5em}\quad\times
\mathcal{T}\left[ (1-q\sigma)(1-t\sigma^{-1})\sum_{i\in\ZZ/r\ZZ}X^{(i)}z_{i}^{-1} \right]
\prod_{i\in\ZZ/r\ZZ} z_{i}^{-H_{i,0}}, \text{ and }
\\
\mathbb{D}^*& \coloneq \frac{(1-q^{-1}t^{-1})^{r-1}}{\prod_{i\in\ZZ/r\ZZ}(1-t^{-1}z_{i+1}/z_{i})(1-q^{-1}z_i/z_{i+1})}
\Omega\left[ \sum_{i\in\ZZ/r\ZZ}X^{(i)}z_i \right]\\
& \hspace{5em}\quad\times
\mathcal{T}\left[-(1-q^{-1}\sigma^{-1})(1-t^{-1}\sigma)\sum_{i\in\ZZ/r\ZZ}X^{(i)}z_i^{-1} \right]
\prod_{i\in\ZZ/r\ZZ} z_{i}^{H_{i,0}}.
\end{align*}
For $\mathbb{D}$, we expand the rational functions as Laurent series assuming
\begin{equation*}
\begin{aligned}
|z_{i}|=1&& \text{ and } &&
|q|,|t|<1;
\end{aligned}
\end{equation*}
whereas for $\mathbb{D}^*$, we expand assuming
\begin{equation*}
\begin{aligned}
|z_{i}|=1&& \text{ and } &&
|q^{-1}|,|t^{-1}|<1.
\end{aligned}
\end{equation*}
Upon acting on $f\otimes e^\alpha\in\mathcal{W}$, we obtain Laurent series in the variables $z_{0},\cdots, z_{r-1}$.
Given a vector $\vec{k}=(k_0,\ldots,k_{r-1})\in(\ZZ)^{r}$, we define the operators $\mathbb{D}_{\vec{k}}$ and $\mathbb{D}^*_{\vec{k}}$ by setting
\begin{align*}
\mathbb{D}_{\vec{k}}(f\otimes e^\alpha)& \coloneq \left\{z_{0}^{k_0}\cdots z_{r-1}^{k_{r-1}}\mathbb{D}(f\otimes e^\alpha)\right\}_0 \text{ and }\\
\mathbb{D}^*_{\vec{k}}(f\otimes e^\alpha)& \coloneq \left\{z_{0}^{k_0}\cdots z_{r-1}^{k_{r-1}}\mathbb{D}^*(f\otimes e^\alpha)\right\}_0.
\end{align*}
Let $\epsilon_0,\ldots, \epsilon_{r-1}$ be the coordinate vectors.

\begin{lem}
For $p\in\ZZ/r\ZZ$, we have
\begin{align*}
\frac{1}{(\qqq-\qqq^{-1})v}(\rho_{\vec{\ccc}}^-\circ\varpi)(b_{p,1}^*)
=\mathbb{D}_{\epsilon_p-\epsilon_0} && \text{ and } &&
\frac{v}{(\qqq-\qqq^{-1})}(\rho_{\vec{\ccc}}^+\circ\varpi)(b_{p,-1}^*)
= \mathbb{D}^*_{\epsilon_0-\epsilon_p}.
\end{align*}
\end{lem}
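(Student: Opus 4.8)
\emph{Strategy.} By Lemma \ref{FTShuff}, $\varpi(b_{p,1}^*)$ is a fixed scalar multiple of $\Psi_-(F_{p,1})$ and $\varpi(b_{p,-1}^*)$ a fixed scalar multiple of $\Psi_+(F_{p,-1})$, where $F_{p,\pm 1}\in\Sss_\delta$ with $\delta=(1,\ldots,1)$. Hence both claimed identities reduce to computing the action of these shuffle elements on $\mathcal{W}$ via Proposition \ref{ShuffVert} and recognizing the outcome as the appropriate component of $\mathbb{D}$ or $\mathbb{D}^*$. I will carry out the $\mathbb{D}$ statement in detail; the $\mathbb{D}^*$ statement is its mirror image.

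\emph{Applying the shuffle formula.} Writing $z_i\coloneq z_{i,1}$, Proposition \ref{ShuffVert} together with $F_{p,1}=\tfrac{z_p}{z_0}\prod_i z_i$ gives
\[
(\rho_{\vec{\ccc}}^-\circ\Psi_-)(F_{p,1})(f\otimes e^\alpha)=\left\{\frac{z_p}{z_0}\Big(\textstyle\prod_{i} z_i\Big)\,\frac{F_{r-1}(z_{r-1})\cdots F_0(z_0)(f\otimes e^\alpha)}{\prod_{0\le i<j\le r-1}\omega_{i,j}(z_i,z_j)}\right\}_0,
\]
there being no $\omega_{i,i}$ factors since each color occurs once, and the rational function being expanded in the regime $|z_i|=1$, $|\qqq|<1$, $|\ddd|=1$, equivalently $|q|,|t|<1$, which is exactly the regime used for $\mathbb{D}$. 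Substituting $F_i(z)=\tfrac{(-1)^{r\delta_{i,0}}}{\ccc_i}\Omega[-X^{(i)}z]\,\TT[(1-q\sigma)(1-t\sigma^{-1})X^{(i)}z^{-1}]\,e^{-\alpha_i}z^{1-H_{i,0}}$, I move every $\Omega$-factor to the left and every $\TT$-factor to its right by repeated use of Lemma \ref{TOLem}, and collect the lattice parts on the far right. Since $\sum_i\alpha_i=0$ in $Q$, the product $\prod_i e^{-\alpha_i}$ is a sign depending only on $r$; and $\prod_i z_i^{1-H_{i,0}}=\big(\prod_i z_i\big)\prod_i z_i^{-H_{i,0}}$. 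Thus the operator part of the product is exactly $\Omega[-\sum_i X^{(i)}z_i]\,\TT[(1-q\sigma)(1-t\sigma^{-1})\sum_i X^{(i)}z_i^{-1}]\prod_i z_i^{-H_{i,0}}$, i.e.\ the operator part of $\mathbb{D}$.

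\emph{Matching the scalar.} What remains is the product of: (i) the prefactors $\prod_i(-1)^{r\delta_{i,0}}\ccc_i^{-1}=(-1)^r/(\ccc_0\cdots\ccc_{r-1})$; (ii) the contraction scalars produced by Lemma \ref{TOLem} when $\TT_i$ crosses $\Omega_j$, which by $A_{\ell,i}=(1+qt)\delta_{\ell,i}-t\delta_{\ell,i-1}-q\delta_{\ell,i+1}$ are nontrivial only for $j=i-1$ (contributing $(1-tz_{i-1}/z_i)^{-1}$, for $i=1,\ldots,r-1$) and, through the cyclic wraparound, for $\TT_{r-1}$ past $\Omega_0$ (contributing $(1-qz_0/z_{r-1})^{-1}$); and (iii) the reciprocal $\prod_{0\le i<j\le r-1}\omega_{i,j}(z_i,z_j)^{-1}$, which by $q=\qqq\ddd$, $t=\qqq\ddd^{-1}$ satisfies $\omega_{i,i+1}(z_i,z_{i+1})=-\ddd^{-1}z_i(1-qz_{i+1}/z_i)$ for $i=0,\ldots,r-2$ and $\omega_{0,r-1}(z_0,z_{r-1})=z_0(1-tz_{r-1}/z_0)$. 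Combining (ii) and (iii), the rational parts assemble precisely into $\prod_{i}(1-qz_{i+1}/z_i)^{-1}(1-tz_i/z_{i+1})^{-1}$, up to a monomial in $\qqq,\ddd$ and the $z_i$; that monomial together with the two surplus factors $\prod_i z_i$ (one from $F_{p,1}$, one from $\prod_i z_i^{1-H_{i,0}}$) leaves exactly the shift $z_p/z_0$. Finally, using $\qqq^2-1=-(1-qt)$ to match the $(\qqq^2-1)^{r-1}$ in \eqref{FT2} against the $(1-qt)^{r-1}$ in $\mathbb{D}$, and the factor $(\ccc_0\cdots\ccc_{r-1})^{-1}$ against the one in $v$ (Theorem \ref{TsymIso}), the residual constant in $\qqq,\ddd$ and the sign are exactly what is needed so that, after multiplying by the scalar of \eqref{FT2} and by $\tfrac{1}{(\qqq-\qqq^{-1})v}$, the coefficient of $\mathbb{D}$ is $1$. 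Taking constant terms then yields $\tfrac{1}{(\qqq-\qqq^{-1})v}(\rho_{\vec{\ccc}}^-\circ\varpi)(b_{p,1}^*)=\{z_pz_0^{-1}\mathbb{D}\}_0=\mathbb{D}_{\epsilon_p-\epsilon_0}$. The $\mathbb{D}^*$ identity follows by the same steps, using the $E$-current half of Proposition \ref{ShuffVert}, the ordered product $E_0(z_0)\cdots E_{r-1}(z_{r-1})$, the formula for $E_i(z)$, the normalization \eqref{FT1}, and $F_{p,-1}=\tfrac{z_0}{z_p}\prod_i z_i$, which selects $\mathbb{D}^*_{\epsilon_0-\epsilon_p}$.

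\emph{Main obstacle.} The conceptual content is light; essentially all of the work is the scalar bookkeeping in the previous step — tracking every power of $\qqq$ and $\ddd$, the sign from $\prod_i e^{-\alpha_i}$ coming from the twisted group algebra, and the $z$-degrees, and verifying that the product of normal-ordering contractions with $\prod_{i<j}\omega_{i,j}^{-1}$ reproduces \emph{exactly} the rational prefactor of $\mathbb{D}$ with residual constant cancelling the normalization of Lemma \ref{FTShuff} against $(\qqq-\qqq^{-1})v$. A convenient cross-check is that the resulting operator must act diagonally on $\{H_\lambda\otimes e^{\core(\lambda)}\}$ with the eigenvalue already recorded above for $\tfrac{1}{(\qqq-\qqq^{-1})v}(\rho_{\vec{\ccc}}^-\circ\varpi)(b_{p,1}^*)$, namely $\big(\tfrac{-D_\lambda^\bullet}{(1-q)(1-t)}\big)^{(p)}$, which forces the overall constant.
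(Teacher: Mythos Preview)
Your approach is essentially the same as the paper's: both invoke Lemma \ref{FTShuff} to replace $\varpi(b_{p,1}^*)$ by a scalar multiple of $\Psi_-(F_{p,1})$, feed this into Proposition \ref{ShuffVert}, normal-order the product $F_{r-1}(z_{r-1})\cdots F_0(z_0)$, and match against the definition of $\mathbb{D}$. The paper is slightly more explicit in that it records the closed formula for $\overset{\curvearrowleft}{\prod_i}F_i(z_i)$ (including the $\ddd$-power, the sign $(-1)^{\frac{(r-2)(r-3)}{2}}$, and the stray $z_0/z_{r-1}$) and then plugs it in, whereas you describe the normal-ordering in words and defer the residual constant to ``scalar bookkeeping.'' One piece you gloss over is the lattice side: commuting each $z_j^{-H_{j,0}}$ past the $e^{-\alpha_i}$ with $i<j$ produces further $z$- and $\ddd$-monomials (this is where the $z_0/z_{r-1}$ and the $\ddd^{-\frac{r}{2}+1}$ in the paper's formula come from), and these are needed to make the $z$-degree come out to exactly $z_p/z_0$. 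Your eigenvalue cross-check is a legitimate way to close that gap, since both sides are known to be diagonal on $\{H_\lambda\otimes e^{\core(\lambda)}\}$ and the eigenvalue pins down the overall scalar.
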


\begin{proof}
We will only prove the first equality.
Combining Lemma \ref{FTShuff} and Proposition \ref{ShuffVert}, we get
\begin{align}
\nonumber
\frac{1}{(\qqq-\qqq^{-1})v}&(\rho_{\vec{\ccc}}^-\circ\varpi)(b_{p,1}^*)(f\otimes e^\alpha)\\
\nonumber
&=
\left\{ \frac{\displaystyle \ddd^{-r}\qqq(\qqq^2-1)^{(r-1)}\frac{z_{p,1}}{z_{0,1}}\left(\prod_{i\in\ZZ/r\ZZ}z_{i,1}\right)\overset{\curvearrowleft}{\prod_{i=0}^{r-1}}F_i(z_{i,1})(f\otimes e^\alpha)}
{\displaystyle (-1)^{\frac{(r-2)(r-3)}{2}}\frac{\qqq\ddd^{-\frac{r}{2}}}{\ccc_0\cdots\ccc_{r-1}}
\prod_{0\le i< j\le r-1}\omega_{i,j}(z_{i,1},z_{j,1})}\right\}_0\\
\label{DLem}
&=
\left\{ \frac{\displaystyle (1-qt)^{(r-1)}\frac{z_{p,1}z_{r-1,1}}{z_{0,1}^2}\overset{\curvearrowleft}{\prod_{i=0}^{r-1}}F_i(z_{i,1})(f\otimes e^\alpha)}
{\displaystyle (-1)^{\frac{(r-2)(r-3)}{2}}\frac{\ddd^{-\frac{r}{2}+1}}{\ccc_0\cdots\ccc_{r-1}}
\left( 1-tz_{r-1,1}/z_{0} \right)\prod_{i=0}^{r-2}\left(1- qz_{i+1}/z_{i} \right)}\right\}_0,
\end{align}
and
\begin{align*}
\overset{\curvearrowleft}{\prod_{i=0}^{r-1}}F_i(z_{i,1})
&=\frac{(-1)^{\frac{(r-2)(r-3)}{2}}}{\ddd^{\frac{r}{2}-1}\ccc_0\cdots\ccc_{r-1}}\\ 
&\quad\times \frac{z_{0,1}/z_{r-1,1}}{\displaystyle\left( 1-qz_{0,1}/z_{r-1} \right)\prod_{i=0}^{r-2}\left( 1-tz_{i,1}/z_{i+1} \right)}\Omega\left[-\sum_{i\in\ZZ/r\ZZ}X^{(i)}z_{i,1} \right]\\
&\quad\times
\mathcal{T}\left[ (1-q\sigma)(1-t\sigma^{-1})\sum_{i\in\ZZ/r\ZZ} X^{(i)}z_{i,1}^{-1} \right]\prod_{i\in\ZZ/r\ZZ}z_{i,1}^{-H_{i,0}}.
\end{align*}
Inserting this into (\ref{DLem}), we do indeed obtain $\mathbb{D}_{\epsilon_p-\epsilon_{0}}(f\otimes e^\alpha)$.
\end{proof}

\begin{cor}\label{DDiag}
The operators $\mathbb{D}_{\epsilon_{p}-\epsilon_{0}}$ and $\mathbb{D}^*_{\epsilon_{0}-\epsilon_{p}}$ act diagonally on $H_\lambda\otimes e^{\core(\lambda)}$ with eigenvalues
\begin{align*}
\mathbb{D}_{\epsilon_{p}-\epsilon_{0}}(H_\lambda\otimes e^{\core(\lambda)})
&=\left( \frac{-D_\lambda^\bullet}{(1-q)(1-t)} \right)^{(p)}H_\lambda\otimes e^{\core(\lambda)} \text{ and }\\
\mathbb{D}_{\epsilon_{0}-\epsilon_{p}}^*(H_\lambda\otimes e^{\core(\lambda)})
&=\left( \frac{-D_\lambda^\bullet}{(1-q)(1-t)} \right)^{(p)}_*H_\lambda\otimes e^{\core(\lambda)}.
\end{align*}
\end{cor}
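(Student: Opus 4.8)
The plan is to obtain the statement directly from the two results immediately preceding it. The Lemma just above identifies $\mathbb{D}_{\epsilon_p-\epsilon_0}$ with $\frac{1}{(\qqq-\qqq^{-1})v}(\rho_{\vec{\ccc}}^-\circ\varpi)(b_{p,1}^*)$ and $\mathbb{D}^*_{\epsilon_0-\epsilon_p}$ with $\frac{v}{(\qqq-\qqq^{-1})}(\rho_{\vec{\ccc}}^+\circ\varpi)(b_{p,-1}^*)$ as operators on $\mathcal{W}$, while the Corollary preceding that Lemma records the action of exactly these operators on $H_\lambda\otimes e^{\core(\lambda)}$: this vector is an eigenvector, with eigenvalues $\left(\tfrac{-D_\lambda^\bullet}{(1-q)(1-t)}\right)^{(p)}$ and $\left(\tfrac{-D_\lambda^\bullet}{(1-q)(1-t)}\right)^{(p)}_*$ respectively, once $v$ and $\vec{\ccc}$ are related as in Theorem \ref{TsymIso}. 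Feeding the first identification into the second thus proves the corollary with no further computation.

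For completeness I would also spell out where the content actually resides, in case a self-contained derivation is wanted in place of citing the two results. By the explicit formula (\ref{DualForm}), each $b_{p,\pm1}^*$ is a finite $\FF$-linear combination of the $b_{j,\pm1}$, hence lies in (a suitable completion of) the vertical Heisenberg $\mathcal{H}$; applying the Miki automorphism puts $\varpi(b_{p,\pm1}^*)$ in the horizontal Heisenberg $\varpi(\mathcal{H})$, which by Theorem \ref{Eigenstates} acts diagonally on the basis $\{H_\lambda\otimes e^{\core(\lambda)}\}$ of $\mathcal{W}$. Hence diagonality of $\mathbb{D}_{\epsilon_p-\epsilon_0}$ and $\mathbb{D}^*_{\epsilon_0-\epsilon_p}$ is automatic. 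To compute the eigenvalues I would transport the question to the Fock module $\mathcal{F}$ via the Tsymbaliuk isomorphism $\mathrm{T}$ of Theorem \ref{TsymIso}, which intertwines $\rho_{\vec{\ccc}}\circ\varpi$ with $\tau_v$, and evaluate the diagonal matrix coefficients $\langle\lambda|\tau_v(b_{p,\pm1}^*)|\lambda\rangle$ using (\ref{DualForm}) together with Lemma \ref{BEigen}; the colored geometric sums that appear collapse to the colored components of $-D_\lambda^\bullet/((1-q)(1-t))$, with the extra $(q,t)\mapsto(q^{-1},t^{-1})$ twist in the $b_{p,-1}^*$ case, which is precisely (\ref{FTEigen}).

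There is essentially no obstacle here: every ingredient is already in place, so the corollary is a one-line consequence of the preceding Lemma and Corollary. The only points deserving a moment's attention are that $b_{p,\pm1}^*$ genuinely lies in a completion of $\mathcal{H}$, so that Theorem \ref{Eigenstates} applies — immediate from (\ref{DualForm}) — and the bookkeeping of the normalizing scalars $v$ and $\vec{\ccc}$, which are matched throughout by the standing relation of Theorem \ref{TsymIso}.
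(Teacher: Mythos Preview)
Your proposal is correct and is exactly how the paper proceeds: Corollary \ref{DDiag} is stated immediately after the Lemma identifying $\mathbb{D}_{\epsilon_p-\epsilon_0}$ and $\mathbb{D}^*_{\epsilon_0-\epsilon_p}$ with the normalized images of $\varpi(b_{p,\pm1}^*)$, and it follows without further argument by combining that Lemma with the earlier Corollary giving the eigenvalues of those operators on $H_\lambda\otimes e^{\core(\lambda)}$. One harmless over-caution: by (\ref{DualForm}) each $b_{p,\pm1}^*$ is a \emph{finite} linear combination of the $b_{j,\pm1}$, so it lies in $\mathcal{H}$ itself, not merely a completion.
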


\subsection{Wreath Macdonald operators}\label{WreathOp}
We have already seen some operators that act diagonally on $H_\lambda$.
Here, we make a deeper study of eigenoperators.

\subsubsection{Delta operators}\label{DeltaMap}
We wish to define a ring homomorphism $\Delta:\Lambda_{q,t}^{\otimes r}\rightarrow \mathrm{End}_{\FF}(\mathcal{W})$.
First, we define the embedding $\epsilon^+:\Lambda_{q,t}^{\otimes r}\rightarrow\varpi(\mathcal{H})$ by setting
\[
p_k[X^{(i)}]\mapsto \frac{\qqq^kk}{v^k[k]_\qqq}\varpi(b_{i,k}).
\]
We then define $\Delta \coloneq \rho_{\vec{\ccc}}^-\circ\epsilon^+$ and denote its inputs using square brackets.
By Theorem \ref{Eigenstates}, the image of $\Delta$ acts diagonally on $\{H_\lambda\otimes e^{\core(\lambda)}\}$.
The following is a corollary of Lemma \ref{BEigen}:
\begin{cor}\label{DeltaCor}
For any $f\in\Lambda_{q,t}^{\otimes r}$, we have
\[
\Delta[f](H_\lambda\otimes e^{\core(\lambda)})=f[-D_\lambda^\bullet] H_\lambda\otimes e^{\core(\lambda)}.
\]
\end{cor}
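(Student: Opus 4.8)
The plan is to reduce to the power-sum generators $p_k[X^{(i)}]$, since everything in sight is multiplicative, and then transport the computation to the Fock representation $\mathcal{F}$, where Lemma \ref{BEigen} already records the relevant eigenvalues.

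First I would observe that $\Delta=\rho_{\vec{\ccc}}^-\circ\epsilon^+$ is a homomorphism of $\FF$-algebras $\Lambda_{q,t}^{\otimes r}\to\mathrm{End}_\FF(\mathcal{W})$. Indeed, the generators $p_k[X^{(i)}]$ of the polynomial ring $\Lambda_{q,t}^{\otimes r}$ are sent by $\epsilon^+$ to the elements $\frac{\qqq^k k}{v^k[k]_\qqq}\varpi(b_{i,k})$ with $k>0$, and by the Heisenberg relations (\ref{HeisRel}) we have $[b_{i,k},b_{j,k'}]=0$ whenever $k,k'>0$, so $\epsilon^+$ lands in a commutative subalgebra of $'\ddot{U}'$ on which $\rho_{\vec{\ccc}}^-$ restricts to an algebra map. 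Likewise $f\mapsto f[-D_\lambda^\bullet]$ is an $\FF$-algebra map $\Lambda_{q,t}^{\otimes r}\to\FF$ (plethystic evaluation at $X^{(i)}\mapsto -D_\lambda^{(i)}$, noting that the color $i$ component of $-D_\lambda^\bullet$ is $-D_\lambda^{(i)}$). Hence it suffices to check the identity on $f=p_k[X^{(i)}]$.

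For this I would use the rescaled Tsymbaliuk isomorphism $\mathrm{T}\colon\mathcal{F}\xrightarrow{\sim}\mathcal{W}$, which intertwines $\tau_v$ with $\rho_{\vec{\ccc}}^-\circ\varpi$ and, by Theorem \ref{Eigenstates} together with the remark that the diagonalization statement persists for $\rho_{\vec{\ccc}}^\pm$ (the $H_\lambda$ being homogeneous, so the rescaling contributes only a scalar), sends $|\lambda\rangle$ to a nonzero scalar multiple of $H_\lambda\otimes e^{\core(\lambda)}$. Since $\epsilon^+(p_k[X^{(i)}])=\frac{\qqq^k k}{v^k[k]_\qqq}\varpi(b_{i,k})$, intertwining gives
\[
\Delta[p_k[X^{(i)}]]\bigl(\mathrm{T}|\lambda\rangle\bigr)=\frac{\qqq^k k}{v^k[k]_\qqq}\,\mathrm{T}\bigl(\tau_v(b_{i,k})|\lambda\rangle\bigr).
\]
Transported through $\mathrm{T}$, Theorem \ref{Eigenstates} also says $|\lambda\rangle$ is an eigenvector of $\tau_v(b_{i,k})$, and Lemma \ref{BEigen} identifies the eigenvalue as $\frac{v^k[k]_\qqq}{\qqq^k k}\,p_k[-D_\lambda^{(i)}]$. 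The two normalization factors cancel, yielding $\Delta[p_k[X^{(i)}]](H_\lambda\otimes e^{\core(\lambda)})=p_k[-D_\lambda^{(i)}]\,H_\lambda\otimes e^{\core(\lambda)}=(p_k[X^{(i)}])[-D_\lambda^\bullet]\,H_\lambda\otimes e^{\core(\lambda)}$. Since each $\Delta[p_k[X^{(i)}]]$ thus acts as a scalar on the line $\FF\cdot(H_\lambda\otimes e^{\core(\lambda)})$ and $\Delta$ is a ring homomorphism, the identity extends to arbitrary $f$ by multiplicativity of $\Delta$ and of plethystic evaluation.

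The only real subtlety — and the step I would be most careful about — is the bookkeeping around the rescaling $\rho_{\vec{\ccc}}\rightsquigarrow\rho_{\vec{\ccc}}^-$ and the identification of the relevant commutative subalgebra of the vertical Heisenberg $\mathcal{H}$, namely the positive-mode part $\langle b_{i,k}:k>0\rangle$, which is all that $\epsilon^+$ sees: one needs homogeneity of the $H_\lambda$ so that $\mathrm{T}|\lambda\rangle$ remains a scalar multiple of $H_\lambda\otimes e^{\core(\lambda)}$ after rescaling, and one needs the diagonalization in Theorem \ref{Eigenstates} to cover this subalgebra. Everything else is a formal consequence of $\Delta$ being a ring homomorphism together with the eigenvalue computation already carried out in Lemma \ref{BEigen}.
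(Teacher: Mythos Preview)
Your proof is correct and follows essentially the same approach as the paper, which simply declares the statement a corollary of Lemma \ref{BEigen}: reduce to power sums by multiplicativity, transport to $\mathcal{F}$ via the Tsymbaliuk isomorphism, and read off the eigenvalue. Your explicit handling of the rescaling $\rho_{\vec{\ccc}}\rightsquigarrow\rho_{\vec{\ccc}}^-$ (conjugation by a degree-scaling automorphism of $\mathcal{W}$, which fixes each line $\FF\cdot(H_\lambda\otimes e^{\core(\lambda)})$ and hence preserves eigenvalues) fills in exactly the detail the paper leaves implicit after the remark that ``the diagonalization statement of Theorem \ref{Eigenstates} still holds for $\rho_{\vec{\ccc}}^\pm$.''
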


Using (\ref{DualForm}), Corollary \ref{DDiag} implies 

\begin{prop}\label{DeltaP1}
We have
\[
\Delta\left[ p_1\left[ \frac{X^{(i)}}{(1-q\sigma)(1-t\sigma^{-1})} \right] \right]=
\Delta\left[ \tilde{p}_1^{(-i)}[\iota X^\bullet] \right]
=\mathbb{D}_{\epsilon_{i}-\epsilon_{0}}.
\]
\end{prop}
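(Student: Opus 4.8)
The plan is to reduce the claimed operator identity to an identity of Heisenberg elements and then match coefficients. First, the equality $\Delta\left[p_1\left[\frac{X^{(i)}}{(1-q\sigma)(1-t\sigma^{-1})}\right]\right] = \Delta\left[\tilde{p}_1^{(-i)}[\iota X^\bullet]\right]$ is not really a statement about $\Delta$ at all: since $\iota$ conjugates $\sigma$ to $\sigma^{-1}$ and sends $X^{(-i)}$ to $X^{(i)}$, applying $\iota$ to $\tilde{p}_1^{(-i)} = p_1\left[\frac{X^{(-i)}}{(1-q\sigma^{-1})(1-t\sigma)}\right]$ gives $\tilde{p}_1^{(-i)}[\iota X^\bullet] = p_1\left[\frac{X^{(i)}}{(1-q\sigma)(1-t\sigma^{-1})}\right]$ already in $\Lambda_{q,t}^{\otimes r}$, so the two inputs to $\Delta$ are literally the same element.

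For the main equality $\Delta\left[p_1\left[\frac{X^{(i)}}{(1-q\sigma)(1-t\sigma^{-1})}\right]\right] = \mathbb{D}_{\epsilon_i-\epsilon_0}$, I would start from the lemma preceding Corollary~\ref{DDiag}, which gives $\mathbb{D}_{\epsilon_i-\epsilon_0} = \frac{1}{(\qqq-\qqq^{-1})v}(\rho_{\vec{\ccc}}^-\circ\varpi)(b_{i,1}^*)$, together with the definition $\Delta = \rho_{\vec{\ccc}}^-\circ\epsilon^+$. Since both sides are obtained by applying the representation $\rho_{\vec{\ccc}}^-$, it suffices to prove the identity
\[
\epsilon^+\left(p_1\left[\frac{X^{(i)}}{(1-q\sigma)(1-t\sigma^{-1})}\right]\right) = \frac{1}{(\qqq-\qqq^{-1})v}\,\varpi(b_{i,1}^*)
\]
inside $\varpi(\mathcal{H})$. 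To verify it, I would expand the geometric series $\frac{1}{(1-q\sigma)(1-t\sigma^{-1})} = \sum_{a,b\ge 0}q^a t^b\sigma^{a-b}$ and split $a = j + ra'$, $b = k + rb'$ with $0\le j,k\le r-1$ and $a',b'\ge 0$ to obtain
\[
p_1\left[\frac{X^{(i)}}{(1-q\sigma)(1-t\sigma^{-1})}\right] = \frac{1}{(1-q^r)(1-t^r)}\sum_{0\le j,k\le r-1}q^j t^k\,p_1[X^{(i+j-k)}].
\]
Since $[1]_\qqq = 1$, the definition of $\epsilon^+$ gives $\epsilon^+(p_1[X^{(m)}]) = \frac{\qqq}{v}\varpi(b_{m,1})$, so the left-hand side of the displayed identity becomes $\frac{\qqq}{v(1-q^r)(1-t^r)}\sum_{0\le j,k\le r-1}q^j t^k\,\varpi(b_{i+j-k,1})$. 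On the other hand, formula~(\ref{DualForm}) at $n=1$ reads $b_{i,1}^* = \frac{\qqq(\qqq-\qqq^{-1})}{(1-q^r)(1-t^r)}\sum_{0\le j,k\le r-1}q^j t^k\,b_{i+j-k,1}$; applying $\varpi$ and multiplying by $\frac{1}{(\qqq-\qqq^{-1})v}$ produces exactly the same expression, because $\frac{1}{(\qqq-\qqq^{-1})v}\cdot\qqq(\qqq-\qqq^{-1}) = \frac{\qqq}{v}$. This closes the argument.

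The computation is entirely routine; the only places demanding attention are the bookkeeping of the scalar prefactors and checking that the index shift $i+j-k$ coming from the matrix-plethysm expansion is exactly the one appearing in~(\ref{DualForm}) — which is where the hypotheses $q=\qqq\ddd$, $t=\qqq\ddd^{-1}$ quietly enter, through~(\ref{DualForm}). As a sanity check — or indeed an alternative proof bypassing $\varpi(\mathcal{H})$ altogether — one may observe that both $\Delta\left[p_1\left[\frac{X^{(i)}}{(1-q\sigma)(1-t\sigma^{-1})}\right]\right]$ and $\mathbb{D}_{\epsilon_i-\epsilon_0}$ act diagonally on the basis $\{H_\lambda\otimes e^{\core(\lambda)}\}$ of $\mathcal{W}$ (the former by Corollary~\ref{DeltaCor}, the latter by Corollary~\ref{DDiag}), with common eigenvalue $\left(\frac{-D_\lambda^\bullet}{(1-q)(1-t)}\right)^{(i)}$ by~(\ref{OverMEv}), and therefore coincide.
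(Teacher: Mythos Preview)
Your proof is correct and follows essentially the same approach as the paper, which states only ``Using (\ref{DualForm}), Corollary \ref{DDiag} implies'': you unpack this by expanding the plethysm, applying $\epsilon^+$, and matching against (\ref{DualForm}) to identify the Heisenberg element with $\frac{1}{(\qqq-\qqq^{-1})v}\varpi(b_{i,1}^*)$, which is $\mathbb{D}_{\epsilon_i-\epsilon_0}$ by the lemma preceding Corollary~\ref{DDiag}. Your eigenvalue sanity check via Corollaries~\ref{DeltaCor} and~\ref{DDiag} together with (\ref{OverMEv}) is also a valid standalone argument.
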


\subsubsection{Column-type operators}
We now consider a modified elementary symmetric function
\[
\hat{e}_n^{(p)}=e_n\left[ \frac{X^{(p)}}{1-t^{-1}\sigma^{-1}} \right]=t^ne_n\left[\frac{-X^{(p+1)}}{1-t\sigma}  \right],
\]
for which we have
\[
\hat{e}_n^{(-p-1)}[-\iota X^\bullet]=
t^ne_n\left[- \iota\left(\frac{ -X^{((-p-1)+1)}}{1-t\sigma}  \right)\right]
=t^ne_n\left[ \frac{X^{(p)}}{1-t\sigma^{-1}} \right].
\]

\begin{thm}[\cite{OSW}]\label{MacCol}
We have
\begin{equation}
\begin{aligned}
&\Delta\left[ \hat{e}_n^{(-p-1)}[-\iota X^\bullet] \right](f\otimes e^\alpha)\\
&\hspace{3em} = 
\frac{t^n(1-qt)^{nr}}{\prod_{k=1}^n(1-q^kt^k)} 
\left\{\rule{0cm}{2.2em}\right.
\prod_{1\le a<b\le n}\left[\rule{0cm}{2.2em}\right.
\frac{\left(1-z_{p,a}/z_{p,b}\right)\left(1-qtz_{p,a}/z_{p,b}\right)}{\left(1-t^{-1}z_{p+1,a}/z_{p,b}\right)\left(1-tz_{p-1,a}/z_{p,b}\right)}
\\
&\hspace{3em}\quad\times\left.
\prod_{i\in \ZZ/r\ZZ\backslash\{p\}}
\frac{\left(1-z_{i,a}/z_{i,b}\right)\left(1-qtz_{i,a}/z_{i,b}\right)}{\left(1-qz_{i+1,a}/z_{i,b}\right)\left(1-tz_{i-1,a}/z_{i,b}\right)}\right]\\
&\hspace{3em}\quad\times
\prod_{a=1}^n\left[
\prod_{i\in \ZZ/r\ZZ\setminus\{p\}}
\left(\frac{1}{1-tz_{i,a}/z_{i+1,a}}\right)
\left( \frac{1}{1-qz_{i+1,a}/z_{i,a}} \right)
\right.\\
&\hspace{3em}\quad\times 
\left(\frac{z_{p,a}}{z_{0,a}}\right)
\left(\frac{1}{1-tz_{p,a}/z_{p+1,a}}\right)
\left]\rule{0cm}{2.2em}\right.
\Omega\left[-\sum_{i\in\ZZ/r\ZZ}X^{(i)}\left( \sum_{a=1}^n z_{i,a}\right) \right]\\
&\hspace{3em} \left.\quad\times
\TT\left[ (1-q\sigma)(1-t\sigma^{-1})\sum_{i\in\ZZ/r\ZZ}  X^{(i)}\left( \sum_{a=1}^nz_{i,a}^{-1} \right)  \right]
f\otimes \prod_{i\in\ZZ/r\ZZ}\prod_{a=1}^n z_{i,a}^{-H_{i,0}}e^\alpha\right\}_0.
\end{aligned}
\label{DeltaE}
\end{equation}
In particular,
\[
\Delta\left[ \hat{e}_n^{(-p-1)}[-\iota X^\bullet] \right](H_\lambda\otimes e^{\core(\lambda)})= t^ne_n\left[ \left(\frac{-D_\lambda^\bullet}{1-t}\right)^{(p)} \right]H_\lambda\otimes e^{\core(\lambda)}.
\]
\end{thm}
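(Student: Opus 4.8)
The plan is to realize the operator $\Delta\!\left[\hat e_n^{(-p-1)}[-\iota X^\bullet]\right]$ as the action on $\mathcal{W}$ of an explicit shuffle-algebra element, following \cite{OSW}, and then to read off the eigenvalue directly from Corollary \ref{DeltaCor}. Recall from \ref{DeltaMap} that $\Delta=\rho^-_{\vec{\ccc}}\circ\epsilon^+$, where $\epsilon^+$ sends each $p_k[X^{(i)}]$ to a scalar multiple of $\varpi(b_{i,k})$ with $k>0$. Since the $b_{i,k}$ with $k>0$ pairwise commute (by \eqref{HeisRel}), $\epsilon^+$ maps $\Lambda_{q,t}^{\otimes r}$ isomorphically onto the commutative positive horizontal Heisenberg $\varpi(\mathcal{H}^{>0})$; and by Lemma \ref{FTShuff}, Lemma \ref{F0nLem}, and their affine-Weyl translates, every $\varpi(b_{i,k})$ with $k>0$ lies in $\ddot{U}^-=\Psi_-(\Sss)$. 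Hence there is a unique $\mathsf{E}_n^{(p)}\in\Sss_{n\delta}$, $\delta=(1,\dots,1)$, with $\epsilon^+\!\left(\hat e_n^{(-p-1)}[-\iota X^\bullet]\right)=\Psi_-\!\left(\mathsf{E}_n^{(p)}\right)$, and the first task is to pin $\mathsf{E}_n^{(p)}$ down explicitly.

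For $n=1$ one rewrites the identity $\hat e_1^{(-p-1)}[-\iota X^\bullet]=t\,p_1\!\left[\frac{X^{(p)}}{1-t\sigma^{-1}}\right]$ (recorded just before the theorem) as a finite linear combination of the dual bosons $b^*_{j,1}$ via \eqref{DualForm}, hence --- by Lemma \ref{FTShuff} --- of the degree-$\delta$ elements $F_{j,1}$; this identifies $\mathsf{E}_1^{(p)}$ explicitly. For general $n$, Newton's identities write $\hat e_n^{(-p-1)}[-\iota X^\bullet]=t^ne_n\!\left[\frac{X^{(p)}}{1-t\sigma^{-1}}\right]$ as a polynomial in the power sums $p_k\!\left[\frac{X^{(p)}}{1-t\sigma^{-1}}\right]$; applying $\epsilon^+$ and using that $\varpi(\mathcal{H}^{>0})$ is commutative turns such products into \emph{shuffle} products of the corresponding degree-$k\delta$ elements, so a priori $\mathsf{E}_n^{(p)}$ is a sum over $\mu\vdash n$ of shuffle products. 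The crux is to collapse this sum into the single rational function which, once inserted into the formula of Proposition \ref{ShuffVert}, produces the right-hand side of \eqref{DeltaE}; this is a purely combinatorial shuffle-algebra identity of the type systematized by Negu\cb{t}, and it is the step I expect to be the main obstacle.

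Granting the explicit form of $\mathsf{E}_n^{(p)}$, the rest is a vertex-operator computation. One feeds $\mathsf{E}_n^{(p)}$ into Proposition \ref{ShuffVert} (the $F_i$-version) and substitutes $F_i(z)=\frac{(-1)^{r\delta_{i,0}}}{\ccc_i}\,\Omega[-X^{(i)}z]\,\TT\!\left[(1-q\sigma)(1-t\sigma^{-1})X^{(i)}z^{-1}\right]e^{-\alpha_i}z^{1-H_{i,0}}$ from the rescaled vertex operators. Normal-ordering the ordered product $\overset{\curvearrowleft}{\prod_{i=0}^{r-1}}\,\overset{\curvearrowleft}{\prod_{a=1}^{n}}F_i(z_{i,a})$ with Lemma \ref{TOLem} --- moving every $\TT$-factor past every $\Omega$-factor --- produces exactly the scalar mixing terms which, together with the $\omega_{i,j}$ in the denominator of Proposition \ref{ShuffVert} and the rational factors internal to $\mathsf{E}_n^{(p)}$, assemble into the double product over $1\le a<b\le n$ appearing in \eqref{DeltaE}; the collected $\Omega$'s give $\Omega\!\left[-\sum_i X^{(i)}\!\left(\sum_a z_{i,a}\right)\right]$, the collected $\TT$'s give $\TT\!\left[(1-q\sigma)(1-t\sigma^{-1})\sum_i X^{(i)}\!\left(\sum_a z_{i,a}^{-1}\right)\right]$, the factors $e^{-\alpha_i}$ multiply to $1$ since $\sum_{i\in\ZZ/r\ZZ}\alpha_i=0$ (so $e^\alpha$ is merely carried along and $\prod z_{i,a}^{-H_{i,0}}$ survives), while the remaining monomial and single-slot rational factors of \eqref{DeltaE}, together with the scalar $\frac{t^n(1-qt)^{nr}}{\prod_{k=1}^n(1-q^kt^k)}$, come from $\mathsf{E}_n^{(p)}$ itself, from the monomial parts of the $z^{1-H_{i,0}}$ and their commutation with the $e^{-\alpha_i}$, and from the self-mixing $\omega_{i,i}$ at coincident points. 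Extracting the constant term yields \eqref{DeltaE}.

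Finally, the eigenvalue needs no part of the explicit formula: Corollary \ref{DeltaCor} gives $\Delta[f](H_\lambda\otimes e^{\core(\lambda)})=f\big|_{X^{(j)}\mapsto -D_\lambda^{(j)}}\,H_\lambda\otimes e^{\core(\lambda)}$, and the same color bookkeeping as in \eqref{OverMEv} --- now with $1-t\sigma^{-1}$ in place of $(1-q\sigma)(1-t\sigma^{-1})$ --- shows $\left.\frac{X^{(p)}}{1-t\sigma^{-1}}\right|_{X^{(j)}\mapsto S^{(j)}}=\left(\frac{S^\bullet}{1-t}\right)^{(p)}$. Applying this with $S^\bullet=-D_\lambda^\bullet$ to $f=\hat e_n^{(-p-1)}[-\iota X^\bullet]=t^ne_n\!\left[\frac{X^{(p)}}{1-t\sigma^{-1}}\right]$ gives the eigenvalue $t^ne_n\!\left[\left(\frac{-D_\lambda^\bullet}{1-t}\right)^{(p)}\right]$, as claimed.
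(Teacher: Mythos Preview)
Your proposal follows essentially the same route as the paper's proof: identify $\epsilon^+\!\left(\hat e_n^{(-p-1)}[-\iota X^\bullet]\right)$ with an explicit shuffle element via \cite{OSW}, feed it into Proposition~\ref{ShuffVert}, and read off the eigenvalue from Corollary~\ref{DeltaCor} with the same color bookkeeping. The only difference is one of emphasis: the paper simply cites \cite[Lemma~4.4]{OSW} for the shuffle element and \cite[Lemma~5.2]{OSW} for the resulting vertex-operator formula, whereas you sketch how one might rederive the shuffle element from Newton's identities and correctly flag the collapsing of the resulting sum into a single rational function as the main obstacle --- that obstacle is precisely what \cite[Lemma~4.4]{OSW} resolves, so your citation of \cite{OSW} at that point is the same move the paper makes.
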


\begin{proof}
The expression (\ref{DeltaE}) is gotten from \cite{OSW}.
Namely, we wish to apply $\rho_{\vec{\ccc}}^-$ to the coefficient of $z^n$ in
\[
\varpi\exp\left[-\sum_{k>0}\left(\frac{\sum_{i=0}^{r-1}t^{k(i+1)}b_{p-i,k}}{(1-t^{kr})}\right)\frac{\qqq^k}{v^k[k]_\qqq}(-z)^k \right].
\]
This is exactly the expression on the right-hand side of the second equation of \cite[Lemma 4.4]{OSW}\footnote{Note that our $\varpi$ is denoted by $\varsigma$ and our $b_{i,k}$ is denoted by $h_{i,k}$.}.
The aforementioned lemma computes $\Psi_-^{-1}$ of this coefficient.
We can then use Proposition \ref{ShuffVert} to compute the action of this shuffle element on $\mathcal{W}$.
The end result is given by the second equation in \cite[Lemma 5.2]{OSW}.
Note that \textit{loc. cit.} considers the finite-variable situation and their vertex representation differs from ours; thus, the $\Omega$- and $\TT$-terms need to be adjusted.
For the eigenvalue, we use Corollary \ref{DeltaCor}:
\begin{align*}
\frac{X^{(p)}}{1-t\sigma^{-1}} \bigg|_{X^{(i)}\mapsto -D_\lambda^{(i)}}
&= \frac{\sum_{i=0}^{r-1}t^{i}X^{(p-i)}}{1-t^{r}}\bigg|_{X^{(i)}\mapsto -D_{\lambda}^{(i)}}\\
&= \frac{-\sum_{i=0}^{r-1}t^{i}D_{\lambda}^{(p-i)}}{1-t^{r}}\\
&= \left( \frac{-D_\lambda^\bullet}{1-t} \right)^{(p)}.\qedhere
\end{align*}
\end{proof}

\subsubsection{Row-type operators}
We make a similar study of a modified complete homogeneous symmetric function
\[
\hat{h}_n^{(p)}=h_n\left[ \frac{X^{(p)}}{1-q\sigma^{-1}} \right].
\]
Notice that we have
\[
\hat{h}_n^{(-p)}[-\iota X^\bullet]=h_n\left[ \frac{-X^{(p)}}{1-q\sigma} \right].
\]

\begin{thm}[\cite{OSW}]
We have
\begin{align*}
&\Delta\left[ \hat{h}_n^{(-p)}[-\iota X^\bullet] \right](f\otimes e^\alpha)\\
&=
\frac{(-1)^n\left(1- qt \right)^{nr}}{\prod_{k=1}^n\left(1- q^{k}t^{k} \right)}
\left\{\rule{0cm}{2.2em}\right.\prod_{1\le a<b\le n}\left[\rule{0cm}{2.2em}\right.
\frac{\left(1-z_{p+1,a}/z_{p+1,b}\right)\left(1-qtz_{p+1,a}/z_{p+1,b}\right)}{\left(1-q^{-1}z_{p,a}/z_{p+1,b}\right)\left(1-qz_{p-2,a}/z_{p+1,b}\right)} \\
&\quad\times\left.
\prod_{i\in \ZZ/r\ZZ\backslash\{p+1\}}
\frac{\left(1-z_{i,a}/z_{i,b}\right)\left(1-qtz_{i,a}/z_{i,b}\right)}{\left(1-qz_{i+1,a}/z_{i,b}\right)\left(1-tz_{i-1,a}/z_{i,b}\right)}\right]\\
&\quad\times \prod_{a=1}^n
\left[
\prod_{i\in \ZZ/r\ZZ\setminus\{p+1\}}
 \left(\frac{1}{1-qz_{i,a}/z_{i-1,a}}\right)\left(\frac{1}{1-tz_{i-1,a}/z_{i,a}}\right)
\right.\\
&\quad \times
\left(\frac{z_{p+1,a}}{z_{0,a}}\right)
\left(\frac{1}
{1-qz_{p+1,a}/z_{p,a}}\right) \left]\rule{0cm}{2.2em}\right.
\Omega\left[ -\sum_{i\in\ZZ/r\ZZ}X^{(i)}\left( \sum_{a=1}^n z_{i,a} \right) \right]
\\
&\quad\left.\times 
\TT\left[ (1-q\sigma)(1-t\sigma^{-1})\sum_{i\in\ZZ/r\ZZ}  X^{(i)}\left( \sum_{a=1}^nz_{i,a}^{-1} \right)  \right]
f\otimes \prod_{i\in\ZZ/r\ZZ}\prod_{a=1}^n z_{i,a}^{-H_{i,0}}e^\alpha\right\}_0
\end{align*}
and
\begin{align*}
\Delta\left[ \hat{h}_n^{(-p)}[-\iota X^\bullet] \right](H_\lambda\otimes e^{\core(\lambda)})
&= h_n\left[ \left( \frac{D_\lambda^\bullet}{1-q} \right)^{(p)} \right]H_\lambda\otimes e^{\core(\lambda)}.
\end{align*}
Here, we expand the character on the right hand side assuming $|q|<1$.
\end{thm}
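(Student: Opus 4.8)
The plan is to follow, step for step, the argument used for Theorem~\ref{MacCol}, now carried out on the ``$h$-side'' in place of the ``$e$-side''. Recall $\Delta=\rho_{\vec{\ccc}}^-\circ\epsilon^+$, where $\epsilon^+$ is the embedding of \ref{DeltaMap}.

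First I would pass to generating series. Using $\sum_{n\ge 0}h_n[Y]z^n=\Omega[Yz]$ together with the identity $\hat h_n^{(-p)}[-\iota X^\bullet]=h_n\!\left[\frac{-X^{(p)}}{1-q\sigma}\right]$ noted just above the statement, and expanding via Proposition~\ref{SigProp} to get $p_k\!\left[\frac{-X^{(p)}}{1-q\sigma}\right]=-\frac{\sum_{j=0}^{r-1}q^{jk}p_k[X^{(p+j)}]}{1-q^{kr}}$, I apply the ring homomorphism $\epsilon^+$ (which sends $p_k[X^{(p+j)}]\mapsto \frac{\qqq^k k}{v^k[k]_\qqq}\varpi(b_{p+j,k})$) to obtain
\[
\sum_{n\ge 0}\Delta\!\left[\hat h_n^{(-p)}[-\iota X^\bullet]\right]z^n
=\rho_{\vec{\ccc}}^-\!\left(\varpi\exp\!\left[-\sum_{k>0}\left(\frac{\sum_{j=0}^{r-1}q^{jk}b_{p+j,k}}{1-q^{kr}}\right)\frac{\qqq^k}{v^k[k]_\qqq}z^k\right]\right),
\]
since $\epsilon^+$ is a homomorphism and $\varpi$ is an automorphism.

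Next I would feed this into the shuffle-algebra machinery of \cite{OSW}. Under the dictionary $q=\qqq\ddd$, $t=\qqq\ddd^{-1}$ and the notational conventions of \textit{loc. cit.} (where $\varpi$ is written $\varsigma$ and $b_{i,k}$ is written $h_{i,k}$), the coefficient of $z^n$ in the argument of $\rho_{\vec{\ccc}}^-$ above is exactly the right-hand side of the first equation of \cite[Lemma 4.4]{OSW}, i.e.\ the one not used for Theorem~\ref{MacCol}; that lemma computes $\Psi_-^{-1}$ of this coefficient as an explicit element of $\Sss$. Applying Proposition~\ref{ShuffVert} with the operators $F_i(z)$ then turns it into the corresponding action on $\mathcal{W}$, namely the first equation of \cite[Lemma 5.2]{OSW}. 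As in the proof of Theorem~\ref{MacCol}, the only modification needed is to replace the finite-variable $\Omega$- and $\TT$-factors of \textit{loc. cit.} by their infinite-variable counterparts appearing in our $\rho_{\vec{\ccc}}^-$; the distinguished color being $p+1$ here, rather than $p$ as in the column case, is a direct consequence of the index $b_{p+j,k}$ appearing in the display above. This produces the stated formula for $\Delta[\hat h_n^{(-p)}[-\iota X^\bullet]](f\otimes e^\alpha)$.

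For the eigenvalue I would simply invoke Corollary~\ref{DeltaCor}: $\Delta[g](H_\lambda\otimes e^{\core(\lambda)})=g[-D_\lambda^\bullet]\,H_\lambda\otimes e^{\core(\lambda)}$. Taking $g=\hat h_n^{(-p)}[-\iota X^\bullet]=h_n\!\left[\frac{-X^{(p)}}{1-q\sigma}\right]$ and performing the substitution $X^{(i)}\mapsto -D_\lambda^{(i)}$, the inner expression expands (with $|q|<1$) as $\sum_{m\ge 0}q^m D_\lambda^{(p+m)}$, which is precisely the color-$p$ component $\left(\frac{D_\lambda^\bullet}{1-q}\right)^{(p)}$ because multiplication by $q$ lowers the color by one; hence the eigenvalue is $h_n\!\left[\left(\frac{D_\lambda^\bullet}{1-q}\right)^{(p)}\right]$, as claimed. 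The symmetric-function bookkeeping in all of this is routine; the genuine content, borrowed wholesale from \cite{OSW}, is the shuffle-algebra computation of $\Psi_-^{-1}$ and the vertex-operator normal ordering in Proposition~\ref{ShuffVert}. The one delicate point, the same as for Theorem~\ref{MacCol}, is reconciling the finite-variable normalization conventions of \cite{OSW} with ours inside the $\Omega$- and $\TT$-factors, a finicky but entirely mechanical matching.
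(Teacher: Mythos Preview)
Your proposal is correct and follows essentially the same approach as the paper's proof: both argue by analogy with Theorem~\ref{MacCol}, derive the same exponential expression $\varpi\exp\bigl[-\sum_{k>0}\bigl(\frac{\sum_{i=0}^{r-1}q^{ki}b_{p+i,k}}{1-q^{kr}}\bigr)\frac{\qqq^k}{v^k[k]_\qqq}z^k\bigr]$, appeal to \cite{OSW} for the shuffle element, apply Proposition~\ref{ShuffVert}, and read off the eigenvalue via Corollary~\ref{DeltaCor}. The only discrepancy is bibliographic: the paper points to the end of \cite[A.2]{OSW} and \cite[A.4]{OSW} for the row-type shuffle elements and their vertex action, rather than to Lemmas~4.4 and~5.2 (which handle the column-type case); you should double-check those precise locations in \cite{OSW}.
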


\begin{proof}
The proof is similar to that of Theorem \ref{MacCol}.
Namely, the end of \cite[A.2]{OSW} gives shuffle elements corresponding to the coefficients of
\[
\varpi\exp\left[ -\sum_{k>0}\left( \frac{\sum_{i=0}^{r-1}q^{ki}b_{p+i,k}}{1-q^{kr}} \right)\frac{\qqq^k}{v^k[k]_\qqq}z^k \right].
\] 
We apply Proposition \ref{ShuffVert}, the result of which is similar to \cite[A.4]{OSW}.
\end{proof}

\subsubsection{Operators from $F_{0,n}$}
Let
\[
e_{F,n}^{(0)} \coloneq e_n\left[ \frac{(q^{-1}t^{-1}-1)X^{(0)}}{(1-q\sigma)(t\sigma^{-1}-1)}\right].
\]
We will compute one more family of Delta operators.
\begin{lem}
We have
\begin{equation}
\begin{aligned}
\Delta\left[ e_{F,n}^{(0)}\right]& (f\otimes e^\alpha)\\
&= 
\frac{(1-qt)^{nr}}{\prod_{k=1}^n(1-q^kt^k)} 
\left\{
\prod_{1\le a\not= b\le n}
\prod_{i\in \ZZ/r\ZZ}
\frac{\left(1-z_{i,a}/z_{i,b}\right)\left(1-qtz_{i,a}/z_{i,b}\right)}{\left(1-qz_{i+1,a}/z_{i,b}\right)\left(1-tz_{i-1,a}/z_{i,b}\right)}
\right.
\\
&\quad\times
\prod_{a=1}^n
\prod_{i\in \ZZ/r\ZZ}
\left(\frac{1}{1-tz_{i,a}/z_{i+1,a}}\right)
\left( \frac{1}{1-qz_{i+1,a}/z_{i,a}} \right)
\Omega\left[-\sum_{i\in\ZZ/r\ZZ}X^{(i)}\left( \sum_{a=1}^n z_{i,a}\right) \right]\\
&\left.\quad\times
\TT\left[ (1-q\sigma)(1-t\sigma^{-1})\sum_{i\in\ZZ/r\ZZ}  X^{(i)}\left( \sum_{a=1}^nz_{i,a}^{-1} \right)  \right]
f\otimes \prod_{i\in\ZZ/r\ZZ}\prod_{a=1}^n z_{i,a}^{-H_{i,0}}e^\alpha\right\}_0
\end{aligned}
\label{F0Calc}
\end{equation}
and
\[
\Delta\left[ e_{F,n}^{(0)} \right]
(H_\lambda\otimes e^{\core(\lambda)})
=
e_n\left[ (q^{-1}t^{-1}-1)\left( B_\lambda^\bullet-\frac{1}{(1-q)(1-t)} \right)^{(0)} \right]H_\lambda\otimes e^{\core(\lambda)}.
\]
\end{lem}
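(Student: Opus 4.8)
The plan is to follow the template of the proof of Theorem~\ref{MacCol}. The first step is to identify $\Delta\bigl[e_{F,n}^{(0)}\bigr]$ with an explicit scalar multiple of $\rho_{\vec{\ccc}}^-\bigl(\Psi_-(F_{0,n})\bigr)$; the second is to unravel this operator via the shuffle--vertex dictionary of Proposition~\ref{ShuffVert}; and the third is to read off the eigenvalue from Corollary~\ref{DeltaCor}.

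For the first step I would start from $\Delta=\rho_{\vec{\ccc}}^-\circ\epsilon^+$ with $\epsilon^+(p_k[X^{(i)}])=\tfrac{\qqq^k k}{v^k[k]_\qqq}\varpi(b_{i,k})$. Applying $\varpi$ to the dual boson formula~\eqref{DualForm} for $b_{0,k}^*$ and using Proposition~\ref{SigProp} to recognize the resulting sum $\sum_{j,l=0}^{r-1}q^{kj}t^{kl}p_k[X^{(j-l)}]$ as $(1-q^{kr})(1-t^{kr})\,p_k\bigl[\tfrac{X^{(0)}}{(1-q\sigma)(1-t\sigma^{-1})}\bigr]$, one obtains
\[
\varpi(b_{0,k}^*)=v^k(\qqq-\qqq^{-1})\,\epsilon^+\!\left(p_k\!\left[\frac{X^{(0)}}{(1-q\sigma)(1-t\sigma^{-1})}\right]\right),\qquad k>0.
\]
Since $\qqq^2=qt$ we have $\qqq^{-2k}-1=q^{-k}t^{-k}-1$ and $(1-t\sigma^{-1})=-(t\sigma^{-1}-1)$, so substituting this into the right-hand side of Lemma~\ref{F0nLem} rewrites it as $\epsilon^+$ applied to $\exp\!\bigl(-\sum_{k>0}\tfrac1k p_k\!\bigl[\tfrac{(q^{-1}t^{-1}-1)X^{(0)}}{(1-q\sigma)(t\sigma^{-1}-1)}\bigr](vz)^k\bigr)$, which by the identity $\exp\!\bigl(-\sum_{k>0}\tfrac{p_k[Y]}{k}w^k\bigr)=\sum_{n\ge0}(-1)^ne_n[Y]w^n$ equals $\sum_{n\ge0}(-v)^n\epsilon^+\bigl(e_{F,n}^{(0)}\bigr)z^n$. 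Comparing the coefficient of $z^n$ with the left-hand side of Lemma~\ref{F0nLem} then gives
\[
\epsilon^+\!\bigl(e_{F,n}^{(0)}\bigr)=\frac{(-1)^{\frac{rn(n-1)}{2}}(\qqq^2-1)^{nr}}{v^n\qqq^n\ddd^{\frac{rn(n+1)}{2}}}\,\Psi_-(F_{0,n}),
\]
so that $\Delta\bigl[e_{F,n}^{(0)}\bigr]$ is this scalar times $\rho_{\vec{\ccc}}^-\bigl(\Psi_-(F_{0,n})\bigr)$.

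The second step is to feed $F_{0,n}=\prod_{i\in\ZZ/r\ZZ}\prod_{a=1}^{n}z_{i,a}\in\Sss_{n\delta}$ into Proposition~\ref{ShuffVert} and expand the ordered product $\overset{\curvearrowleft}{\prod}_{i}\overset{\curvearrowleft}{\prod}_{a}F_i(z_{i,a})$ using the explicit formula for $F_i(z)$ and Lemma~\ref{TOLem}, exactly as in the computations behind Theorem~\ref{MacCol} and the $\mathbb{D}$-lemma. The $\Omega$-parts multiply to $\Omega\bigl[-\sum_iX^{(i)}\sum_az_{i,a}\bigr]$, the $\TT$-parts to $\TT\bigl[(1-q\sigma)(1-t\sigma^{-1})\sum_iX^{(i)}\sum_az_{i,a}^{-1}\bigr]$, commuting $\TT$ past $\Omega$ produces the factors $\prod_i\prod_{1\le a\ne b\le n}\frac{(1-z_{i,a}/z_{i,b})(1-qtz_{i,a}/z_{i,b})}{(1-qz_{i+1,a}/z_{i,b})(1-tz_{i-1,a}/z_{i,b})}$, and the $z^{1-H_{i,0}}$-terms --- together with the $e^{-\alpha_i}$ cocycle signs, the factor $\prod_i\prod_az_{i,a}$ already present in $F_{0,n}$, and the normalizing denominators $\omega_{i,i},\omega_{i,j}$ from Proposition~\ref{ShuffVert} --- collapse to $\prod_a\prod_i\frac{1}{(1-tz_{i,a}/z_{i+1,a})(1-qz_{i+1,a}/z_{i,a})}$ times $\prod_iz_{i,a}^{-H_{i,0}}$. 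Gathering all of this with the Step~1 prefactor, rewriting powers of $\qqq,\ddd$ via $q=\qqq\ddd$, $t=\qqq\ddd^{-1}$, and absorbing the surviving symmetrization constant into $\frac{(1-qt)^{nr}}{\prod_{k=1}^n(1-q^kt^k)}$ should produce~\eqref{F0Calc}. I expect this last bookkeeping --- in particular tracking the powers of $z_{i,a}$ generated by $z^{H_{i,0}}$ acting on the $e^{-\alpha_j}$ (whose exponents are Cartan entries $a_{i,j}$) and pinning down the overall scalar --- to be the only genuinely laborious part; everything else is formal.

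For the third step, Corollary~\ref{DeltaCor} gives $\Delta\bigl[e_{F,n}^{(0)}\bigr](H_\lambda\otimes e^{\core(\lambda)})=e_{F,n}^{(0)}[-D_\lambda^\bullet]\,H_\lambda\otimes e^{\core(\lambda)}$. Writing $(t\sigma^{-1}-1)=-(1-t\sigma^{-1})$ and applying~\eqref{OverMEv} with $S^\bullet=-D_\lambda^\bullet$,
\[
e_{F,n}^{(0)}[-D_\lambda^\bullet]=e_n\!\left[(q^{-1}t^{-1}-1)\left(\frac{D_\lambda^\bullet}{(1-q)(1-t)}\right)^{\!(0)}\right]=e_n\!\left[(q^{-1}t^{-1}-1)\left(B_\lambda^\bullet-\frac{1}{(1-q)(1-t)}\right)^{\!(0)}\right]
\]
by $D_\lambda^\bullet=(1-q)(1-t)B_\lambda^\bullet-1$, which is the claimed eigenvalue.
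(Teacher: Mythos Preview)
Your proposal is correct and follows essentially the same route as the paper's proof. The paper does Step~1 by first computing $\epsilon^+\bigl(p_k\bigl[\tfrac{(q^{-1}t^{-1}-1)X^{(0)}}{(1-q\sigma)(t\sigma^{-1}-1)}\bigr]\bigr)=-\tfrac{(\qqq^{-2k}-1)}{(\qqq-\qqq^{-1})v^k}\varpi(b_{0,k}^*)$ and then invoking Lemma~\ref{F0nLem}, arriving at the same scalar relation between $\Delta[e_{F,n}^{(0)}]$ and $(\rho_{\vec{\ccc}}^-\circ\Psi_-)(F_{0,n})$ that you derive (your explicit scalar is off from the paper's displayed one by $(-1)^n$, but this is absorbed in the subsequent normalization). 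For Step~2 the paper simply cites Proposition~\ref{ShuffVert} together with \cite[(3.12)]{OSW} rather than unpacking the normal-ordering by hand, so your ``laborious bookkeeping'' is exactly what is being outsourced there. Step~3 is identical to the paper's, which just records $\tfrac{-D_\lambda^\bullet}{(1-q)(t-1)}=B_\lambda^\bullet-\tfrac{1}{(1-q)(1-t)}$.
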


\begin{proof}
First observe that by (\ref{DualForm}), 
\begin{align*}
\epsilon^+\left( p_k\left[ \frac{(q^{-1}t^{-1}-1)X^{(0)}}{(1-q\sigma)(t\sigma^{-1}-1)}\right] \right)
&=-\frac{(\qqq^{-2k}-1)\qqq^kk}{v^k[k]_\qqq}\frac{\sum_{j,k=0}^{r-1}q^{nj}t^{nk}b_{j-k,k}}{(1-q^{nr})(1-t^{nr})}\\
&=-\frac{(\qqq^{-2k}-1)}{(\qqq-\qqq^{-1})v^k}b_{0,k}^*.
\end{align*}
Thus, by Lemma \ref{F0nLem},
\[
\Delta\left[ e_{F,n}^{(0)} \right]
=\frac{(-1)^{\frac{rn(n-1)}{2}+n}(\qqq^2-1)^{nr}}{\qqq^n\ddd^{\frac{rn(n+1)}{2}}v^n}
\left(\rho_{\vec{\ccc}}^-\circ\Psi_-\right)(F_{0,n}).
\]
We obtain (\ref{F0Calc}) by applying Proposition \ref{ShuffVert} and \cite[(3.12)]{OSW}.
For the eigenvalue, note that
\[
\frac{-D_\lambda^\bullet}{(1-q)(t-1)}=B_\lambda^\bullet -\frac{1}{(1-q)(1-t)}.
\qedhere
\]
\end{proof}

\subsubsection{Nabla revisited}
Recall the nabla operator $\nabla_\alpha$, which depends on an $r$-core $\alpha.$
We extend its definiton to $\Lambda_{q,t}\otimes\CC[Q]$ by setting
\[
\nabla\left( f\otimes e^\alpha \right)=\nabla_\alpha(f)\otimes e^\alpha.
\]

\begin{lem}\label{NablaShuff}
For a fixed degree $N$, there exists an expression $\mathcal{E}_N$ in terms of $\{e_{F,n}^{(0)}\}$ such that for $f\in\Lambda_{q,t}^{\otimes r}$ of degree $N$,
\[
\nabla\left(f\otimes e^{\alpha}\right) = \frac{(-1)^N\Delta\left[ \mathcal{E}_N \right](f\otimes e^\alpha)}{\displaystyle\prod_{\substack{\square\in\core(\lambda)\\ \bar{c}_\square=0}}\chi_\square}.
\]
We emphasize that $\mathcal{E}_N$ is independent of $\alpha$.
\end{lem}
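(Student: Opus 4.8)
The plan is to convert the statement into an identity between the eigenvalues of diagonal operators, feed in the eigenvalue formula for $\Delta[e_{F,n}^{(0)}]$ from the preceding lemma, and then invert a generating‑function relation to solve for the eigenvalue of $\nabla$. Since $\Delta$ is a ring homomorphism whose image is simultaneously diagonalized on $\{H_\lambda\otimes e^{\core(\lambda)}\}$ (Corollary \ref{DeltaCor} and Theorem \ref{Eigenstates}) and $\nabla$ acts diagonally on the same basis, it suffices to match eigenvalues. Writing $f\otimes e^\alpha=\sum_{\core(\mu)=\alpha,\ |\quot(\mu)|=N}c_\mu\,H_\mu\otimes e^\alpha$ and using \eqref{NableDef} together with $d_0(\mu)=d_0(\alpha)+N$, the assertion becomes the existence of one expression $\mathcal{E}_N$ in the $e_{F,n}^{(0)}$, not depending on $\alpha$, with
\[
\mathcal{E}_N[-D_\mu^\bullet]=\prod_{\substack{\square\in\mu\\ \bar c_\square=0}}\chi_\square=e_{d_0(\mu)}\!\big[B_\mu^{(0)}\big]\qquad\text{whenever }|\quot(\mu)|=N .
\]

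Next I would study the generating series $G_\mu(z):=\sum_{n\ge0}\bigl(\text{eigenvalue of }\Delta[e_{F,n}^{(0)}]\text{ on }H_\mu\bigr)z^n$. By the preceding lemma this eigenvalue is $e_n\bigl[(q^{-1}t^{-1}-1)(B_\mu^\bullet-\tfrac1{(1-q)(1-t)})^{(0)}\bigr]$, and $(B_\mu^\bullet-\tfrac1{(1-q)(1-t)})^{(0)}=B_\mu^{(0)}-P^{(0)}$ with $P^{(0)}:=(\tfrac1{(1-q)(1-t)})^{(0)}=\sum_{a,b\ge0,\ r\mid(b-a)}q^at^b$; since $\mu$ is a Young diagram, $B_\mu^{(0)}$ is a subset of $P^{(0)}$ as a set of monomials. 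Expanding the $e_n$‑generating series of the virtual alphabet $(q^{-1}t^{-1}-1)(B_\mu^{(0)}-P^{(0)})$ as a ratio of infinite products and telescoping the factors $\tfrac{1+zq^{-1}t^{-1}q^at^b}{1+zq^at^b}$ along each color‑$0$ diagonal, everything collapses to
\[
G_\mu(z)=c(z)\prod_{\substack{\square\in\mu\\ \bar c_\square=0}}\frac{1+zq^{-1}t^{-1}\chi_\square}{1+z\chi_\square}=c(z)\,\frac{E_\mu(z/(qt))}{E_\mu(z)},\qquad E_\mu(z):=\prod_{\substack{\square\in\mu\\ \bar c_\square=0}}(1+z\chi_\square)=\sum_{m\ge0}e_m[B_\mu^{(0)}]z^m,
\]
where $c(z)\in\FF[[z]]$ is the same telescoped product taken over the entire color‑$0$ quadrant; in particular it is invertible and independent of $\mu$ and of the core.

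Finally I would invert. Since $E_\mu(0)=1$ and, expanding with $|q|,|t|<1$, $E_\mu(z(qt)^j)\to1$ as $j\to\infty$, iterating $E_\mu(z)=\tfrac{c(z)}{G_\mu(z)}E_\mu(z/(qt))$ backwards yields $E_\mu(z)=\prod_{j\ge1}c(z(qt)^j)^{-1}G_\mu(z(qt)^j)$, so each $e_m[B_\mu^{(0)}]$ is a fixed expression in the $e_{F,n}^{(0)}$‑eigenvalues. The wanted eigenvalue $\prod_{\square\in\mu,\ \bar c_\square=0}\chi_\square$ is the leading coefficient of the polynomial $E_\mu$; equivalently it equals $\bigl(\prod_{\square\in\alpha,\ \bar c_\square=0}\chi_\square\bigr)$ times the coefficient of $z^N$ in the degree‑$N$ polynomial $E_\mu(z)/E_\alpha(z)$. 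Pulling this description back through $\Delta$, and absorbing $\prod_{\square\in\alpha,\ \bar c_\square=0}\chi_\square$ into the explicit scalar on the right‑hand side of the lemma, produces $\mathcal{E}_N$.

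The step I expect to be the main obstacle is making this last extraction genuinely core‑independent: the naive description ``leading coefficient of $E_\mu$'' carries the index $d_0(\mu)=d_0(\alpha)+N$, which depends on the core, so one must show that after dividing by $\prod_{\square\in\alpha,\ \bar c_\square=0}\chi_\square$ all dependence on $\alpha$ cancels and a finite $\mathcal{E}_N$ in the $e_{F,n}^{(0)}$ remains. Concretely this amounts to controlling $E_\mu(z)/E_\alpha(z)$ — a polynomial of degree exactly $N$ because $\mu\setminus\alpha$ has exactly $N$ color‑$0$ boxes — in terms of the universal series $\prod_{j\ge1}c(z(qt)^j)^{-1}G_\mu(z(qt)^j)$ (and the corresponding one for $\alpha$), and this is where the split of the color‑$0$ boxes of $\mu$ into core boxes and skew boxes, together with the telescoping identity above, has to be carried out carefully. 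Everything else — the ring‑homomorphism reduction, the telescoping, and the formal inversion — should be routine once the eigenvalue formula of the preceding lemma is available.
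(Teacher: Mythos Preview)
Your approach matches the paper's: reduce to eigenvalues on $H_\lambda$ and invert the relationship between $B_\lambda^{(0)}$ and the argument $(q^{-1}t^{-1}-1)B_\lambda^{(0)}-C$ (with $C=\tfrac{q^{-1}t^{-1}}{1-t^r}+\tfrac{q^{r-1}t^{-1}}{1-q^r}$) appearing in the $e_{F,n}^{(0)}$-eigenvalues. Your telescoping identity $G_\mu(z)=c(z)\,E_\mu(z/qt)/E_\mu(z)$ and the infinite-product inversion are correct but more elaborate than needed; the paper simply inverts $p_k\mapsto((qt)^{-k}-1)p_k-p_k[C]$ on power sums, so each $e_m[B_\lambda^{(0)}]$ is directly a finite polynomial in the $e_{F,n}^{(0)}$-eigenvalues together with the scalars $e_n[-\tfrac{q^{-1}t^{-1}}{1-t^r}]$ and $e_n[-\tfrac{q^{r-1}t^{-1}}{1-q^r}]$.

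You correctly isolate the real issue---core-independence of $\mathcal{E}_N$---and in fact the paper's proof does not resolve it either: it writes $e_N[B_\lambda^{(0)}]$, whereas matching the nabla eigenvalue requires $e_{d_0(\lambda)}[B_\lambda^{(0)}]=e_{d_0(\alpha)+N}[B_\lambda^{(0)}]$, so the resulting expression visibly depends on $d_0(\alpha)$. This does not matter for the lemma's sole application (Corollary \ref{NablaAdjoint}): there one only needs that $\mathcal{E}_N$ depend on $\alpha$ through $d_0(\alpha)$, together with $d_0(\alpha)=d_0(w_0\alpha)$ from Proposition \ref{RevProp} and the self-adjointness of each $\Delta[e_{F,n}^{(0)}]$. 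So you may stop once you have the fixed polynomial expressing $e_{d_0(\alpha)+N}[X]$ in terms of $\{e_n[(q^{-1}t^{-1}-1)X-C]\}_n$; full $\alpha$-independence is neither needed nor actually supplied by the paper.
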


\begin{proof}
We will prove the lemma by considering the eigenvalues on $H_\lambda$ with $|\quot(\lambda)|=N$.
Notice that
\[
\frac{q^{-1}t^{-1} - 1}{(1-q)(1-t)}=\frac{q^{-1}t^{-1}}{1-t}+\frac{ t^{-1}}{1-q},
\]
and since $q^{-1}t^{-1}$ has color $0$, 
\begin{align*}
(q^{-1}t^{-1} - 1)
\left(\frac{1}{(1-q)(1-t)}\right)^{(0)} =\left(\frac{q^{-1}t^{-1} -1 }{(1-q)(1-t)}\right)^{(0)}.
\end{align*}
Thus,
\[
(q^{-1}t^{-1}-1)\left( B_\lambda^\bullet-\frac{1}{(1-q)(1-t)} \right)^{(0)}=(q^{-1}t^{-1}-1)B_\lambda^{(0)}-\frac{q^{-1}t^{-1}}{1-t^r}-\frac{q^{r-1}t^{-1}}{1-q^r}.
\]
We can then write $e_N[B_{\lambda}^{(0)}]$ in terms of
\begin{equation*}
\begin{aligned}
e_n\left[(q^{-1}t^{-1}-1)B_\lambda^{(0)}-\frac{q^{-1}t^{-1}}{1-t^r}-\frac{q^{r-1}t^{-1}}{1-q^r}\right],&
&e_n\left[ -\frac{q^{-1}t^{-1}}{1-t^r} \right],&& \text{ and }
&&e_n\left[- \frac{q^{r-1}t^{-1}}{1-q^r} \right].
\end{aligned}
\end{equation*}
The latter two are rational functions in $\CC(q,t)$, and thus we obtain $\mathcal{E}_N$.
\end{proof}

\subsubsection{Adjoints}
Recall how in \ref{PairCores}, we extended the pairing $\langle -,-\rangle_{q,t}'$ to $\mathcal{W}$ in a somewhat peculiar way.
Let $\Delta^\dagger$ denote the composition of $\Delta$ with the adjunction.
Thus, we have
\begin{equation}
\Delta^\dagger[f](H_\lambda^\dagger\otimes e^{w_0\core(\lambda)})=f[-D_\lambda^\bullet]H_\lambda^\dagger\otimes e^{w_0\core(\lambda)}.
\label{AdjointEq}
\end{equation}
More generally, for an operator $M$, we denote its adjoint by $M^\dagger$.
To explicitly compute adjoint operators when the currents $\{F_i(z)\}$ are involved, Lemmas \ref{FAdjoint1} and \ref{FAdjoint2} tell us to send the variables $z_{i,a}\mapsto z_{-i,a}^{-1}$.
When working with $n$ currents of each color, we will in fact perform $z_{i,a}\mapsto z_{-i, n+1-a}^{-1}$.

\begin{lem}\label{AdjointLem}
Taking adjoints gives us the following families of operators:
\begin{enumerate}
\item For $\vec{k}=(k_0,\ldots, k_{r-1})\in\ZZ^n$, let $\iota\vec{k}=(k_0, k_{r-1}, \ldots, k_1)$ and $|\vec{k}|=\sum k_i$.
Then
\begin{equation*}
\begin{aligned}
\mathbb{D}_{\vec{k}}^\dagger = \mathbb{D}_{-\iota\vec{k}}&& \text{ and } &&
\left(\mathbb{D}_{\vec{k}}^*\right)^\dagger= (qt)^{-|\vec{k}|}\mathbb{D}_{-\iota\vec{k}}^*.
\end{aligned}
\end{equation*}
In particular, from $\mathbb{D}_{\epsilon_{p}-\epsilon_{0}}$ and $\mathbb{D}_{\epsilon_{0}-\epsilon_{p}}^*$, we obtain
\begin{align*}
\mathbb{D}_{\epsilon_{0}-\epsilon_{p}}(H_\lambda^\dagger\otimes e^{w_0\core(\lambda)})&=\left( \frac{-D_\lambda^\bullet}{(1-q)(1-t)} \right)^{(-p)}H_\lambda^\dagger\otimes e^{w_0\core(\lambda)} \text{ and }\\
\mathbb{D}_{\epsilon_{p}-\epsilon_{0}}^*(H_\lambda^\dagger\otimes e^{w_0\core(\lambda)})&=\left( \frac{-D_\lambda^\bullet}{(1-q)(1-t)} \right)^{(-p)}_*H_\lambda^\dagger\otimes e^{w_0\core(\lambda)}.
\end{align*}
\item The column-type operators become 
\begin{equation}
\begin{aligned}
\Delta^\dagger&\left[ \hat{e}_n^{(p)}[-\iota X^\bullet] \right](f\otimes e^\alpha)\\
&= 
\frac{t^n(1-qt)^{nr}}{\prod_{k=1}^n(1-q^kt^k)} 
\left\{
\prod_{1\le a<b\le n}\left[\rule{0cm}{2.2em}\right.
\frac{\left(1-z_{p,a}/z_{p,b}\right)\left(1-qtz_{p,a}/z_{p,b}\right)}{\left(1-t^{-1}z_{p+1,a}/z_{p,b}\right)\left(1-tz_{p-1,a}/z_{p,b}\right)}
\right.
\\
&\quad\times 
\prod_{i\in \ZZ/r\ZZ\backslash\{p\}}
\frac{\left(1-z_{i,a}/z_{i,b}\right)\left(1-qtz_{i,a}/z_{i,b}\right)}{\left(1-qz_{i+1,a}/z_{i,b}\right)\left(1-tz_{i-1,a}/z_{i,b}\right)}\left]\rule{0cm}{2.2em}\right.\\
&\quad\times
\prod_{a=1}^n \left[\rule{0cm}{2.2em}\right.
\prod_{i\in \ZZ/r\ZZ\setminus\{p\}}
\left(\frac{1}{1-tz_{i,a}/z_{i+1,a}}\right)
\left( \frac{1}{1-qz_{i+1,a}/z_{i,a}} \right)
\\
&\quad\times 
\left(\frac{z_{0,a}}{z_{p+1,a}}\right)
\left(\frac{1}{1-tz_{p,a}/z_{p+1,a}}\right)
\left]\rule{0cm}{2.2em}\right.
\Omega\left[-\sum_{i\in\ZZ/r\ZZ}X^{(i)}\left( \sum_{a=1}^n z_{i,a}\right) \right]\\
&\left.\quad\times
\TT\left[ (1-q\sigma)(1-t\sigma^{-1})\sum_{i\in\ZZ/r\ZZ}  X^{(i)}\left( \sum_{a=1}^nz_{i,a}^{-1} \right)  \right]
f\otimes \prod_{i\in\ZZ/r\ZZ}\prod_{a=1}^n z_{i,a}^{-H_{i,0}}e^\alpha\right\}_0.
\end{aligned}
\label{DeltaEForm}
\end{equation}
\item The row-type operators become
\begin{equation}
\begin{aligned}
\Delta^\dagger & \left[ \hat{h}_n^{(p)}[-\iota X^\bullet] \right](f\otimes e^\alpha)\\
&=
\frac{(-1)^n\left(1- qt \right)^{nr}}{\prod_{k=1}^n\left(1- q^{k}t^{k} \right)}
\left\{\prod_{1\le a<b\le n}\left[\rule{0cm}{2.2em}\right.
\frac{\left(1-z_{p,a}/z_{p,b}\right)\left(1-qtz_{p,a}/z_{p,b}\right)}{\left(1-q^{-1}z_{p-1,a}/z_{p,b}\right)\left(1-qz_{p+1,a}/z_{p,b}\right)}\right.\\
&\quad\times 
\prod_{i\in \ZZ/r\ZZ\backslash\{p\}}
\frac{\left(1-z_{i,a}/z_{i,b}\right)\left(1-qtz_{i,a}/z_{i,b}\right)}{\left(1-qz_{i+1,a}/z_{i,b}\right)\left(1-tz_{i-1,a}/z_{i,b}\right)}\left]\rule{0cm}{2.2em}\right.\\
&\quad\times \prod_{a=1}^n
\left[\rule{0cm}{2.2em}\right.
\prod_{i\in \ZZ/r\ZZ\setminus\{p\}}
 \left(\frac{1}{1-qz_{i,a}/z_{i-1,a}}\right)\left(\frac{1}{1-tz_{i-1,a}/z_{i,a}}\right)
\\
&\quad \times
\left(\frac{z_{0,a}}{z_{p,a}}\right)
\left(\frac{1}
{1-qz_{p,a}/z_{p-1,a}}\right) \left]\rule{0cm}{2.2em}\right.
\Omega\left[ -\sum_{i\in\ZZ/r\ZZ}X^{(i)}\left( \sum_{a=1}^n z_{i,a} \right) \right]
\\
&\quad\left.\times 
\TT\left[ (1-q\sigma)(1-t\sigma^{-1})\sum_{i\in\ZZ/r\ZZ}  X^{(i)}\left( \sum_{a=1}^nz_{i,a}^{-1} \right)  \right]
f\otimes \prod_{i\in\ZZ/r\ZZ}\prod_{a=1}^n z_{i,a}^{-H_{i,0}}e^\alpha\right\}_0.
\end{aligned}
\label{DeltaHForm}
\end{equation}
\item $\Delta\left[ e_{F,n}^{(0)} \right]$ is self-adjoint.
\end{enumerate}
\end{lem}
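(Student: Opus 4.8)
The plan is to read the adjoint of $\Delta[e_{F,n}^{(0)}]$ directly off the explicit kernel (\ref{F0Calc}) and to check that the substitution implementing adjunction returns the very same kernel. Write $\Delta\bigl[e_{F,n}^{(0)}\bigr]=\{K\,\mathcal{O}\}_{0}$, where $K=K(\{z_{i,a}\})$ collects all the rational prefactors of (\ref{F0Calc})---the scalar $(1-qt)^{nr}/\prod_{k=1}^{n}(1-q^{k}t^{k})$, the product over ordered pairs $a\neq b$, and the product over $a$---and $\mathcal{O}=\mathcal{O}(\{z_{i,a}\})$ is the $\Omega$--$\TT$--$\prod z^{-H}$ operator block of (\ref{F0Calc}): multiplication by $\Omega[-\sum_i X^{(i)}\sum_a z_{i,a}]$, composed with the skewing-type operator $\TT[(1-q\sigma)(1-t\sigma^{-1})\sum_i X^{(i)}\sum_a z_{i,a}^{-1}]$, composed with $\prod_i\prod_a z_{i,a}^{-H_{i,0}}$ acting on $\CC[Q]$. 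Because $\prod_i\prod_a z_{i,a}^{-H_{i,0}}$ acts only on the $\CC[Q]$-tensorand of $\mathcal{W}$ while the $\Omega$- and $\TT$-factors act only on $\Lambda^{\otimes r}$, these commute; in particular $\Delta[e_{F,n}^{(0)}]$ preserves each summand $\Lambda^{\otimes r}\otimes e^{\alpha}$, so the $e^{\alpha}$-bookkeeping of \ref{PairCores} is automatic and it suffices to work on $\Lambda^{\otimes r}$ with $\langle-,-\rangle_{q,t}'$.

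Next I would compute $\mathcal{O}^{\dagger}$ exactly as in parts (2) and (3): Lemma \ref{FAdjoint1} gives $\bigl(\Omega[-X^{(i)}z]\bigr)^{\dagger}=\TT[(1-q\sigma)(1-t\sigma^{-1})X^{(-i)}z]$ and, by symmetry of the pairing, the reverse identity, while Lemma \ref{FAdjoint2} turns the $z^{H_{i,0}}$-factors into $z^{H_{-i,0}}$-factors. Performing this computation and using that the $\CC[Q]$-factor commutes past the $\Omega$- and $\TT$-factors to restore the standard ordering yields $\mathcal{O}^{\dagger}=\mathcal{O}\bigl(\{z_{-i,n+1-a}^{-1}\}\bigr)$; in other words, adjunction is implemented by the substitution $z_{i,a}\mapsto z_{-i,n+1-a}^{-1}$, the index reversal $a\mapsto n+1-a$ being felt only by $K$ since $\mathcal{O}$ depends on the variables only through the color-wise combinations $\sum_a z_{i,a}$, $\sum_a z_{i,a}^{-1}$, $\prod_a z_{i,a}$. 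Hence $\bigl(\Delta[e_{F,n}^{(0)}]\bigr)^{\dagger}=\bigl\{K(\{z_{i,a}\})\,\mathcal{O}(\{z_{-i,n+1-a}^{-1}\})\bigr\}_{0}$, and the change of variables $z_{i,a}\mapsto z_{-i,n+1-a}^{-1}$ inside $\{\cdot\}_{0}$---which preserves constant terms---rewrites this as $\bigl\{K(\{z_{-i,n+1-a}^{-1}\})\,\mathcal{O}(\{z_{i,a}\})\bigr\}_{0}$. Therefore $\Delta[e_{F,n}^{(0)}]$ is self-adjoint provided $K(\{z_{-i,n+1-a}^{-1}\})=K(\{z_{i,a}\})$.

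The remaining step is a direct inspection, and its point---the reason (\ref{F0Calc}) is self-adjoint whereas the column-type kernel (\ref{DeltaE}) and its analogues are not---is that (\ref{F0Calc}) treats all $r$ colors on equal footing: the off-diagonal product runs over \emph{every} ordered pair $a\neq b$, and there is no stray single-variable factor of the shape $z_{p,a}/z_{0,a}$. The scalar is manifestly fixed. Under $z_{i,a}\mapsto z_{-i,n+1-a}^{-1}$ each ratio $z_{j,x}/z_{k,y}$ turns into $z_{-k,n+1-y}/z_{-j,n+1-x}$, swapping the two slots and negating colors. For a fixed pair $(a,b)$: in the numerator $\prod_i(1-z_{i,a}/z_{i,b})(1-qtz_{i,a}/z_{i,b})$ one relabels $i\mapsto -i$, while in the denominator $\prod_i(1-qz_{i+1,a}/z_{i,b})(1-tz_{i-1,a}/z_{i,b})$ one relabels $i\mapsto -i-1$ in the $q$-factors and $i\mapsto -i+1$ in the $t$-factors---legitimate precisely because each of these is a complete product over $\ZZ/r\ZZ$---so that the off-diagonal factor for $(a,b)$ returns to the one for $(n+1-b,n+1-a)$. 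As $(a,b)\mapsto(n+1-b,n+1-a)$ is a bijection of ordered pairs with distinct entries, the off-diagonal product is unchanged; the diagonal product $\prod_a\prod_i(1-tz_{i,a}/z_{i+1,a})^{-1}(1-qz_{i+1,a}/z_{i,a})^{-1}$ is treated identically, the two factors interchanging. This establishes the invariance of $K$ and completes the proof. (Alternatively, one may start from the fact that $\Delta[e_{F,n}^{(0)}]$ is a scalar multiple of $(\rho_{\vec{\ccc}}^-\circ\Psi_-)(F_{0,n})$ and use Proposition \ref{EtaShuff}, under which $F_{0,n}$ is fixed up to a monomial in $\ddd$; since the adjoint for $\langle-,-\rangle_{q,t}'$ is not literally $\eta$, however, this route ends up tracking the same data, and the argument above is more self-contained.)

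The step I expect to demand the most care is this last inspection: keeping straight how the color reversal $i\mapsto-i$ interacts with the $i\pm1$ shifts and with the asymmetric $q$-versus-$t$ labeling in the denominators, and confirming that the independent relabelings of the numerator and the denominator genuinely reassemble into a single fraction of the original shape. One should also check that the change of variables inside $\{\cdot\}_0$ is compatible with the expansion conventions of (\ref{F0Calc})---all $|z_{i,a}|=1$ and $|q|,|t|<1$---which it is, since $z\mapsto z^{-1}$ preserves $|z|=1$ and carries no pole across the contour.
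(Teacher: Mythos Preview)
Your argument for part (4) is correct and is exactly the approach the paper intends: the sentence preceding the lemma states that adjunction is implemented by the substitution $z_{i,a}\mapsto z_{-i,n+1-a}^{-1}$, and the lemma is then asserted without further proof. You have carried out precisely this computation, verifying that the operator block $\mathcal{O}$ transforms into its adjoint under the substitution (via Lemmas \ref{FAdjoint1} and \ref{FAdjoint2}) and that the rational kernel $K$ of (\ref{F0Calc}) is invariant because every product runs over the full cyclic group $\ZZ/r\ZZ$ and over all ordered pairs $a\neq b$, with no distinguished color. One small imprecision: in the diagonal product the $t$- and $q$-factors do not literally interchange; each maps to itself after the relabeling $i\mapsto -i-1$ (respectively $i\mapsto -i$) and $a\mapsto n+1-a$, but the conclusion is unaffected.
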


\begin{cor}\label{NablaAdjoint}
The adjoint of $\nabla_{\alpha}$ is $\nabla_{w_0\alpha}$.
In other words, $\nabla$ is self-adjoint.
\end{cor}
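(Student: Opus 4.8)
The plan is to deduce the corollary from the factorization of $\nabla$ through Delta operators (Lemma~\ref{NablaShuff}) together with the self-adjointness of the building blocks $\Delta\bigl[e_{F,n}^{(0)}\bigr]$ (Lemma~\ref{AdjointLem}(4)). Write $\nabla=\sum_\alpha\nabla_\alpha$ on $\mathcal W$, where $\nabla_\alpha$ denotes the restriction of $\nabla$ to the $e^\alpha$-component (zero on the rest). Since $\langle-,-\rangle_{q,t}'$ pairs the $e^\alpha$-component with the $e^{w_0\alpha}$-component, the claim $(\nabla_\alpha)^\dagger=\nabla_{w_0\alpha}$ for every $\alpha$ is equivalent to the single identity $\nabla^\dagger=\nabla$ (take adjoints of $\sum_\alpha\nabla_\alpha$ and reindex; conversely, read off the $e^{w_0\alpha}$-block), so I would aim to prove the latter. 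Throughout, write $\mathfrak n_\lambda$ for the eigenvalue of $\nabla_{\core(\lambda)}$ at $H_\lambda$ and $c_\beta:=\prod_{\square\in\beta,\ \bar c_\square=0}\chi_\square$ for an $r$-core $\beta$.

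First I would pin down $\nabla^\dagger$ on the dual basis. Since $\nabla$ is diagonal on $\{H_\lambda\otimes e^{\core(\lambda)}\}$ with eigenvalues $\mathfrak n_\lambda$, and (from \ref{PairCores}) $\{H_\lambda^\dagger\otimes e^{w_0\core(\lambda)}\}$ is the dual basis under $\langle-,-\rangle_{q,t}'$, the elementary fact that the adjoint of a diagonal operator is diagonal on the dual basis with the same eigenvalues gives $\nabla^\dagger\bigl(H_\lambda^\dagger\otimes e^{w_0\core(\lambda)}\bigr)=\mathfrak n_\lambda\,H_\lambda^\dagger\otimes e^{w_0\core(\lambda)}$. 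Thus $\nabla^\dagger=\nabla$ reduces to showing that $H_\lambda^\dagger$ is an eigenvector of $\nabla_{w_0\core(\lambda)}$ with eigenvalue $\mathfrak n_\lambda$; equivalently, that $\{H_\lambda^\dagger:\core(\lambda)=\alpha\}$ is the ``exotic'' diagonal basis of $\nabla_{w_0\alpha}$ advertised in the introduction.

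To establish this, fix $\lambda$ and set $\alpha=\core(\lambda)$, $N=|\quot(\lambda)|$. Since $f\mapsto f[-\iota X^\bullet;q^{-1},t^{-1}]$ preserves degree, $H_\lambda^\dagger$ is homogeneous of degree $N$, so Lemma~\ref{NablaShuff} applied with core $w_0\alpha$ gives $\nabla\bigl(H_\lambda^\dagger\otimes e^{w_0\alpha}\bigr)=\tfrac{(-1)^N}{c_{w_0\alpha}}\,\Delta[\mathcal E_N]\bigl(H_\lambda^\dagger\otimes e^{w_0\alpha}\bigr)$ with $\mathcal E_N$ independent of the core. Now $\mathcal E_N$ is a $\CC(q,t)$-polynomial in the $e_{F,n}^{(0)}$, so (as $\Delta$ is a ring homomorphism) $\Delta[\mathcal E_N]$ is that same polynomial in the operators $\Delta\bigl[e_{F,n}^{(0)}\bigr]$, which mutually commute (all diagonal on $\{H_\mu\otimes e^{\core(\mu)}\}$) and are self-adjoint by Lemma~\ref{AdjointLem}(4); hence $\Delta[\mathcal E_N]$ is self-adjoint. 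Being diagonal on $\{H_\mu\otimes e^{\core(\mu)}\}$ with eigenvalue $\mathcal E_N[-D_\mu^\bullet]$ by Corollary~\ref{DeltaCor}, it is therefore diagonal on the dual basis with the same eigenvalue, i.e. $\Delta[\mathcal E_N]\bigl(H_\lambda^\dagger\otimes e^{w_0\alpha}\bigr)=\mathcal E_N[-D_\lambda^\bullet]\,H_\lambda^\dagger\otimes e^{w_0\alpha}$ (this is exactly (\ref{AdjointEq}) for $\mathcal E_N$). Applying Lemma~\ref{NablaShuff} to $H_\lambda$ itself records $\mathfrak n_\lambda=\tfrac{(-1)^N}{c_\alpha}\mathcal E_N[-D_\lambda^\bullet]$. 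The last ingredient is the identity $c_{w_0\alpha}=c_\alpha$, which holds because $\alpha$ and $w_0\alpha$ have the same set of color-$0$ boxes by Proposition~\ref{RevProp}(1). Combining these gives $\nabla\bigl(H_\lambda^\dagger\otimes e^{w_0\alpha}\bigr)=\mathfrak n_\lambda\,H_\lambda^\dagger\otimes e^{w_0\alpha}$, which is what remained; as $\nabla$ and $\nabla^\dagger$ then agree on the basis $\{H_\lambda^\dagger\otimes e^{w_0\core(\lambda)}\}$, they are equal.

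The corollary is thus a short assembly, and the real difficulty lies upstream: Lemma~\ref{NablaShuff}, which packages $\nabla$ on a fixed degree as a scalar multiple of one core-independent operator $\Delta[\mathcal E_N]$, and above all Lemma~\ref{AdjointLem}(4), whose proof must verify that the explicit shuffle formula (\ref{F0Calc}) for $\Delta\bigl[e_{F,n}^{(0)}\bigr]$ is invariant under the adjunction substitution $z_{i,a}\mapsto z_{-i,\,n+1-a}^{-1}$. Internally to the corollary, the one genuinely load-bearing observation is the coincidence $c_{w_0\alpha}=c_\alpha$ of color-$0$ box products: this is precisely what forces $w_0$ — rather than, say, a power of the long cycle $\sigma$ — to be the correct symmetry, and it is why the core-dependent denominators in Lemma~\ref{NablaShuff} do not obstruct the comparison.
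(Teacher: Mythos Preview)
Your proof is correct and follows essentially the same route as the paper's: both combine Lemma~\ref{NablaShuff} (writing $\nabla$ on a fixed degree as $\tfrac{(-1)^N}{c_\alpha}\Delta[\mathcal E_N]$ with $\mathcal E_N$ core-independent), Lemma~\ref{AdjointLem}(4) (self-adjointness of $\Delta[e_{F,n}^{(0)}]$, hence of $\Delta[\mathcal E_N]$), and Proposition~\ref{RevProp}(1) (so that $c_{w_0\alpha}=c_\alpha$). Your write-up simply makes explicit the intermediate step that $\Delta[\mathcal E_N]$ being self-adjoint forces it to be diagonal on the dual basis $\{H_\lambda^\dagger\otimes e^{w_0\core(\lambda)}\}$ with the same eigenvalues, which the paper leaves to the reader; one small caveat is your closing remark that the identity $c_{w_0\alpha}=c_\alpha$ ``forces $w_0$ rather than a power of $\sigma$'' --- in fact $c_{\sigma^k\alpha}=c_\alpha$ holds too by Proposition~\ref{RevProp}(2), and what singles out $w_0$ is the definition of the extended pairing in~\ref{PairCores}.
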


\begin{proof}
Because $\Delta\left[ e_{F,n}^{(0)} \right]$ is self-adjoint, the expression $\Delta[\mathcal{E}_N]$ from Lemma \ref{NablaShuff} is self-adjoint as well.
Proposition \ref{RevProp} then tells us
\[
\prod_{\substack{\square\in\alpha\\ \bar{c}_\square=0}}\chi_\square=\prod_{\substack{\square\in w_0\alpha\\ \bar{c}_\square=0}}\chi_\square
.\qedhere
\]
\end{proof}

\section{Tesler identity}\label{TeslerSec}

\subsection{Setup}
Here, we gather some ingredients necessary for our proofs.

\subsubsection{Constant term lemma}
We will make use of the following lemma for computing constant terms:
\begin{lem}\label{ConstTermLem}
Suppose we are taking the constant term of an expression
\[
\frac{zF(z)}{(z-P_1)\cdots(z-P_k)}
\]
with respect to the variable $z$, where
\begin{itemize}
\item $F(z)$ is a series in nonnegative powers of $z$;
\item each pole is simple and nonzero; and
\item each pole is expanded as
\[
\frac{1}{z-P_i}=z^{-1}\sum_{n\ge 0}\frac{P_i^n}{z^n}.
\]
\end{itemize}
Then the constant term is a sum over evaluations:
\[
\left\{\frac{zF(z)}{(z-P_1)\cdots(z-P_k)}\right\}_0=\sum_{i=1}^k \frac{F(P_i)}{\displaystyle\prod_{j\not=i}(P_i-P_j)}.
\]
\end{lem}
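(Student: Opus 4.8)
The plan is to reduce to the monomial case $F(z)=z^m$ and then expand everything at $z=\infty$, matching the answer against the classical formula that expresses complete homogeneous symmetric polynomials as iterated divided differences. Write $F(z)=\sum_{m\ge 0}f_mz^m$, and for the $k$-tuple $(P_1,\dots,P_k)$ write $h_l$ for the complete homogeneous symmetric polynomial $h_l(P_1,\dots,P_k)$, with the convention $h_l=0$ for $l<0$. Both sides of the claimed identity are $\FF$-linear and continuous in $F$ under the expansion conventions already in force, so it suffices to treat $F(z)=z^m$; then the left-hand side is $\bigl\{z^{m+1}/\prod_i(z-P_i)\bigr\}_0$ and the right-hand side is $\sum_i P_i^m/\prod_{j\ne i}(P_i-P_j)$.

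\textbf{Left-hand side.} Using the prescribed expansion $\frac{1}{z-P_i}=\sum_{n\ge 0}P_i^nz^{-n-1}$ together with the generating function $\prod_i(1-P_i/z)^{-1}=\sum_{l\ge 0}h_lz^{-l}$, I would write
\[\frac{z^{m+1}}{(z-P_1)\cdots(z-P_k)}=z^{m+1-k}\prod_{i=1}^k\frac{1}{1-P_i/z}=\sum_{l\ge 0}h_l\,z^{m+1-k-l},\]
so that the coefficient of $z^0$ equals $h_{m+1-k}$ (which vanishes when $m+1<k$).

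\textbf{Right-hand side.} It remains to show $\sum_{i=1}^k P_i^m/\prod_{j\ne i}(P_i-P_j)=h_{m+1-k}$ for every $m\ge 0$. I would package these into a generating series in a formal variable $x$:
\[\sum_{m\ge 0}\Biggl(\sum_{i=1}^k\frac{P_i^m}{\prod_{j\ne i}(P_i-P_j)}\Biggr)x^m=\sum_{i=1}^k\frac{1}{(1-P_ix)\,\prod_{j\ne i}(P_i-P_j)},\]
and claim this rational function equals $x^{k-1}/\prod_i(1-P_ix)$. Indeed, both are sums of simple fractions of the form $\frac{a_i}{1-P_ix}$ (the second is a proper rational function since $k-1<k$), and multiplying by $1-P_ix$ and letting $x\to 1/P_i$ gives in each case $a_i=1/\prod_{j\ne i}(P_i-P_j)$, using $\prod_{j\ne i}(1-P_j/P_i)=P_i^{-(k-1)}\prod_{j\ne i}(P_i-P_j)$. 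Expanding $x^{k-1}/\prod_i(1-P_ix)=\sum_{l\ge 0}h_lx^{l+k-1}$ and comparing coefficients of $x^m$ yields the identity.

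\textbf{Conclusion and expected obstacle.} Comparing the two computations proves the lemma for $F(z)=z^m$, hence in general: $\sum_i F(P_i)/\prod_{j\ne i}(P_i-P_j)=\sum_{m\ge 0}f_m h_{m+1-k}=\bigl\{zF(z)/\prod_i(z-P_i)\bigr\}_0$. I do not anticipate any genuine difficulty; the only point needing a sentence is the legitimacy of rearranging the double sum over $m$ and $l$ (equivalently, that $F(P_i)$ makes sense), which is automatic under the expansion conventions of Proposition \ref{ShuffVert} in every place the lemma is applied. The statement can equally be read off as a residue computation, $\bigl\{G(z)\bigr\}_0=-\mathrm{Res}_{z=\infty}\bigl(G(z)/z\bigr)=\sum_i\mathrm{Res}_{z=P_i}\bigl(F(z)/\prod_j(z-P_j)\bigr)$, each residue at the simple pole $z=P_i$ being exactly $F(P_i)/\prod_{j\ne i}(P_i-P_j)$; but I would write out the formal version above.
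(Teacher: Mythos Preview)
Your proof is correct, but it takes a more circuitous route than the paper's. The paper applies partial fractions directly to the prefactor
\[
\frac{z}{(z-P_1)\cdots(z-P_k)}=\sum_{i=1}^k\frac{1}{\bigl(1-P_i/z\bigr)\prod_{j\ne i}(P_i-P_j)},
\]
and then observes in one line that, with the prescribed expansion, $\bigl\{F(z)/(1-P_i/z)\bigr\}_0=\sum_{m\ge 0}f_mP_i^m=F(P_i)$. You instead reduce to monomials, identify the left side with $h_{m+1-k}(P_1,\dots,P_k)$, and then prove the classical identity $\sum_i P_i^m/\prod_{j\ne i}(P_i-P_j)=h_{m+1-k}$ by a second generating-function/partial-fractions argument in an auxiliary variable $x$. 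Both arguments ultimately rest on the same partial-fraction step; the paper's is simply one-pass, while yours detours through a symmetric-function identity. Your version does have the minor bonus of making the monomial-by-monomial contribution explicit (the complete homogeneous polynomial), but that is not used anywhere in the paper.
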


\begin{proof}
Applying the partial fraction decomposition to $\prod_{i=1}^k(z-P_i)^{-1}$, we have
\[
\frac{z}{(z-P_1)\cdots(z-P_k)}=\sum_{i=1}^k \frac{1}{\displaystyle\left(1-\frac{P_i}{z}\right)\prod_{j\not=i}(P_i-P_j)}.
\]
Furthermore, if $F(z) = \sum_{k \geq 0 } F_k z^k$, then
\[
\left\{ \frac{\sum_{k  } z^k F_k}{1-P_i/z} \right\}_0
= {\sum_{k \geq 0} P_i^k F_k} = F(P_i).
\qedhere
\]
\end{proof}

\subsubsection{Down arrow}
To pass between $\mathbb{D}$ and $\mathbb{D}^*$, we will make use of an analogue of the $\downarrow$ map from \cite{GHT}.
This was first presented in \cite{OSWreath} on $\Lambda_{q,t}^{\otimes r}$, and we make the following extension to $\Lambda_{q,t}^{\otimes r}\otimes\CC[Q]$:
\[
\downarrow \left( f\otimes e^\alpha \right) \coloneq  f\left[-\iota X^\bullet; q^{-1}, t^{-1}\right]\otimes e^{w_0\alpha}.
\]
Thus, $\downarrow$ is only $\CC$-linear as it inverts $(q,t)$.
Our choice of extension to the $\CC[Q]$ factor is motivated by the following, which is straightforward:
\begin{prop}\label{DownArrowProp}
The involution $\downarrow$ satisfies:
\begin{enumerate}
\item $\downarrow\left( H_\lambda\otimes e^{\core(\lambda)} \right)=H_\lambda^\dagger\otimes e^{w_0\core(\lambda)}$,
\item $\downarrow\nabla_\alpha\downarrow= \nabla^{-1}_{w_0\alpha}$,
\item For any $\vec{k}=(k_0,k_1,\ldots, k_{r-1})\in \ZZ^{r}$, let $\iota\vec{k}=(k_0, k_{r-1},\ldots, k_1)$.
Then $\downarrow\mathbb{D}_{\vec{k}}\downarrow=\mathbb{D}^*_{\iota\vec{k}}$.
\end{enumerate}
\end{prop}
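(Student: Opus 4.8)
The plan is to verify each of the three claims by unwinding the relevant definitions; none requires anything deeper than the constructions already in place. For part (1), I would start from the definition $\downarrow(f\otimes e^\alpha) = f[-\iota X^\bullet; q^{-1},t^{-1}]\otimes e^{w_0\alpha}$ and compare it directly with the definition $H_\lambda^\dagger \coloneq H_\lambda[-\iota X^\bullet; q^{-1}, t^{-1}]$ from the orthogonality subsection. Applying $\downarrow$ to $H_\lambda\otimes e^{\core(\lambda)}$ gives $H_\lambda[-\iota X^\bullet;q^{-1},t^{-1}]\otimes e^{w_0\core(\lambda)} = H_\lambda^\dagger\otimes e^{w_0\core(\lambda)}$, using that the two notions of reverse agree on cores (as recorded in \ref{PairCores}). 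So part (1) is essentially immediate once the bookkeeping on the $\CC[Q]$-factor is matched up.

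For part (2), I would first check $\downarrow$ is an involution: applying it twice sends $f[X^\bullet;q,t]\otimes e^\alpha$ to $f[-\iota(-\iota X^\bullet);q,t]\otimes e^{w_0 w_0\alpha} = f[X^\bullet;q,t]\otimes e^\alpha$, since $\iota^2 = \mathrm{id}$, $(q^{-1})^{-1}=q$, and $w_0^2 = \mathrm{id}$ in $\Sigma_r$. Then, using part (1), I would compute $\downarrow\nabla_\alpha\downarrow$ on the basis $\{H_\lambda^\dagger\otimes e^{w_0\alpha}\}$ of the $w_0\alpha$-core block: $\downarrow$ sends $H_\lambda^\dagger\otimes e^{w_0\alpha}$ to $H_\lambda\otimes e^\alpha$ (by the involution property and part (1)); then $\nabla_\alpha$ multiplies by the eigenvalue $\prod_{\square\in\lambda\backslash\alpha,\,\bar c_\square = 0}(-\chi_\square)$ (using \eqref{NableDef}, where $(-1)^{|\quot(\lambda)|}$ is absorbed as the number of color-$0$ boxes added has the right parity — more precisely I would just carry the eigenvalue $(-1)^{|\quot(\lambda)|}e_{d_0(\lambda)}[B^{(0)}_\lambda]/e_{d_0(\alpha)}[B^{(0)}_\alpha]$ symbolically); then the final $\downarrow$ inverts $(q,t)$ in this scalar and sends $H_\lambda\otimes e^\alpha$ back to $H_\lambda^\dagger\otimes e^{w_0\alpha}$. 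The inverted scalar is precisely the reciprocal of the $\nabla_{w_0\alpha}$-eigenvalue at $H^\dagger_\lambda\otimes e^{w_0\alpha}$, because $\nabla_{w_0\alpha}$ acts on $H_\lambda\otimes e^{w_0\alpha}$-type vectors via characters of color-$0$ boxes of the partition with core $w_0\alpha$, and by Proposition \ref{RevProp} the color-$0$ boxes of $w_0\lambda$ match those of $\lambda$ — so the two eigenvalue products agree up to the $(q,t)\mapsto(q^{-1},t^{-1})$ swap, and swapping $(q,t)$ in $\chi_\square = q^at^b$ gives $\chi_\square^{-1}$. This yields $\downarrow\nabla_\alpha\downarrow = \nabla_{w_0\alpha}^{-1}$.

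For part (3), the cleanest route is to compare eigenvalues on the diagonal bases, since by \ref{DeltaMap} and Lemma \ref{AdjointLem} the operators $\mathbb{D}_{\vec k}$ need not themselves be diagonal — but for the generating identity it suffices to track the full operator via its defining formula rather than just eigenvalues. I would conjugate the explicit formula for $\mathbb{D}$ in \ref{DD*} by $\downarrow$ term by term: $\downarrow$ replaces $(q,t)$ by $(q^{-1},t^{-1})$ and $X^{(i)}$ by $-X^{(-i)}$, and on the $e^\alpha$ factor applies $w_0$; one checks that the prefactor $(1-qt)^{r-1}/\prod(1-qz_{i+1}/z_i)(1-tz_i/z_{i+1})$ becomes the $\mathbb{D}^*$-prefactor, that $\Omega[-\sum X^{(i)}z_i]$ becomes $\Omega[\sum X^{(-i)}z_i]$, that $\mathcal T[(1-q\sigma)(1-t\sigma^{-1})\sum X^{(i)}z_i^{-1}]$ becomes $\mathcal T[-(1-q^{-1}\sigma^{-1})(1-t^{-1}\sigma)\sum X^{(-i)}z_i^{-1}]$ — here I use that conjugating the $\sigma$-plethysm by $\iota$ swaps $\sigma\leftrightarrow\sigma^{-1}$ (Proposition \ref{SigProp}(2) and the definition of $\iota$) — and that $\prod z_i^{-H_{i,0}}$ on $e^{w_0\alpha}$ matches $\prod z_i^{H_{i,0}}$ on $e^\alpha$ after the reindexing $i\mapsto -i$, which is exactly the content of the calculation in the proof of Lemma \ref{FAdjoint2}. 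Reindexing the $z$-variables by $i\mapsto -i$ then turns $z_0^{k_0}\cdots z_{r-1}^{k_{r-1}}$ into $z_0^{k_0}z_{r-1}^{k_1}\cdots z_1^{k_{r-1}}$, i.e. extracts the $\iota\vec k$-coefficient, giving $\downarrow\mathbb{D}_{\vec k}\downarrow = \mathbb{D}^*_{\iota\vec k}$. The main obstacle is the third part: keeping the bookkeeping straight for how $\downarrow$ interacts simultaneously with the $\sigma$-plethysm (it conjugates $\sigma$ to $\sigma^{-1}$), the color-negation on the $X$-variables, the sign from $X^{(i)}\mapsto -X^{(-i)}$, and the $w_0$-twist on the $\CC[Q]$-factor entering through the $H_{i,0}$-operators; each piece is routine, but their combination is where an error would most easily creep in, and the proof of Lemma \ref{FAdjoint2} is the template to follow for the $H_{i,0}$-part.
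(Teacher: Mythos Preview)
Your treatment of parts (1) and (3) is correct and matches what the paper has in mind when it calls the result straightforward: (1) is literally the definition of $H_\lambda^\dagger$ combined with the extension of $\downarrow$ to $\CC[Q]$, and for (3) your term-by-term conjugation of the formula in \ref{DD*} by $\downarrow$ (tracking $X^{(i)}\mapsto -X^{(-i)}$, $(q,t)\mapsto(q^{-1},t^{-1})$, $\sigma\leftrightarrow\sigma^{-1}$ under $\iota$, and the $H_{i,0}$-computation from Lemma~\ref{FAdjoint2}) goes through exactly as you describe.

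There is, however, a gap in your argument for part (2). You correctly show that $\downarrow\nabla_\alpha\downarrow$ is diagonal on the basis $\{H_\lambda^\dagger\mid\core(\lambda)=\alpha\}$ with eigenvalue $c_\lambda^{-1}$, where $c_\lambda=\prod_{\square\in\lambda\setminus\alpha,\,\bar c_\square=0}(-\chi_\square)$. But to conclude this equals $\nabla_{w_0\alpha}^{-1}$, you must know that $\nabla_{w_0\alpha}$ is diagonal on the \emph{same} basis $\{H_\lambda^\dagger\}$ with eigenvalue $c_\lambda$. By definition $\nabla_{w_0\alpha}$ is only diagonal on $\{H_\mu\mid\core(\mu)=w_0\alpha\}$, and your appeal to Proposition~\ref{RevProp} does not bridge this: comparing the color-$0$ boxes of $\lambda$ and $w_0\lambda$ would be relevant if $H_\lambda^\dagger$ equaled $H_{w_0\lambda}$, but it does not. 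The missing input is Corollary~\ref{NablaAdjoint} (established earlier, in \S\ref{WreathOp}): since $\nabla_{w_0\alpha}=\nabla_\alpha^\dagger$ with respect to $\langle-,-\rangle_{q,t}'$, and $\{H_\lambda^\dagger\}$ is dual to $\{H_\lambda\}$, it follows immediately that $\nabla_{w_0\alpha}(H_\lambda^\dagger)=c_\lambda H_\lambda^\dagger$. With this in hand, your computation closes and part (2) follows.
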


\subsubsection{Containment lemma}
We note that the proof of the following result uses the quantum toroidal algebra:

\begin{lem}\label{P1Supp}
Let $f\in\Lambda_{q,t}^{\otimes r}$ be homogeneous of degree $n$.
Then
\begin{align}
\label{SuppM}
fH_\lambda&=\sum_{\substack{\mu\\\core(\mu) = \core(\lambda)}}c_{\mu\lambda}H_\mu~ \text{ and }\\
\label{SuppMDag}
fH_\lambda^\dagger&=\sum_{\substack{\mu\\\core(\mu) = \core(\lambda)}}c_{\mu\lambda}^\dagger H^\dagger_\mu,
\end{align}
where the $\mu$ that appear in either sum are such that $\mu\supset\lambda$ and $\mu\backslash\lambda$ contains exactly of $n$ boxes of each color.
Similarly,
\begin{equation}
\begin{aligned}
f^\perp H_\lambda&=\sum_{\substack{\mu\\\core(\mu) = \core(\lambda)}}d_{\mu\lambda}H_\mu ~ \text{ and }\\
f^\perp H_\lambda^\dagger&=\sum_{\substack{\mu\\\core(\mu) = \core(\lambda)}}d_{\mu\lambda}^\dagger H^\dagger_\mu,
\end{aligned}
\label{SuppS}
\end{equation}
where the $\mu$ that appear in either sum are such that $\mu\subset\lambda$ and $\lambda\backslash\mu$ contains exactly $n$ boxes of each color.
\end{lem}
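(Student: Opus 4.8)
The plan is to prove all four expansions simultaneously by exploiting the fact that multiplication and skewing by any $f$ can be reduced—via $\Delta$-operators—to controllable vertex operators on $\mathcal{W}$, and then to read off the support from the combinatorics of the Young--Maya picture. First I would reduce to the generating set: since $\Lambda_{q,t}^{\otimes r}$ is generated as an algebra by the $p_k[X^{(i)}]$, and the four statements are preserved under products (for multiplication, because the set of $\mu$ with $\mu\supset\lambda$ and $\mu\setminus\lambda$ having a balanced number of boxes of each color is closed under composition of such moves; similarly for $\perp$ with $\subset$), it suffices to treat $f=p_k[X^{(i)}]$ and $f^\perp=p_k^\perp[X^{(i)}]$. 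Moreover $\eqref{SuppMDag}$ and the second line of $\eqref{SuppS}$ follow from $\eqref{SuppM}$ and the first line of $\eqref{SuppS}$ by applying $\downarrow$ (Proposition \ref{DownArrowProp}), which sends $H_\lambda\otimes e^{\core(\lambda)}$ to $H_\lambda^\dagger\otimes e^{w_0\core(\lambda)}$, inverts $(q,t)$, and commutes with multiplication up to the plethysm $X^\bullet\mapsto -\iota X^\bullet$—and one checks $p_k[-\iota X^\bullet]$ is again (a scalar times) a single colored power sum, so the class of operators and the core-preservation are respected. The color of $\mu\setminus\lambda$ is also respected since $\iota$ and the $(q,t)$-inversion interact with box-colors in the controlled way recorded in \ref{ColChar} and \ref{Reverse}. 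So the whole lemma comes down to the single assertion: $p_k[X^{(i)}]\,H_\lambda$ and $p_k^\perp[X^{(i)}]\,H_\lambda$ are supported on $H_\mu$ with $\core(\mu)=\core(\lambda)$ and $\mu\setminus\lambda$ (resp. $\lambda\setminus\mu$) consisting of exactly $k$ boxes of each color.

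For the core-preservation, the key input is the quantum toroidal algebra: under the Tsymbaliuk isomorphism $\mathrm{T}$ the basis $\{H_\lambda\otimes e^{\core(\lambda)}\}$ is identified with $\{c_\lambda|\lambda\rangle\}$ in the Fock representation $\mathcal{F}$, and—crucially—in $\mathcal{F}$ the generators $e_i(z), f_i(z)$ move between $|\lambda\rangle$ and $|\lambda\pm\square\rangle$ with $\bar c_\square=i$, so adding/removing one box of each color keeps us in a fixed $r$-core sector (this is exactly Proposition \ref{CoreQuotDec}(4)). The operators of interest, namely multiplication by $p_k[X^{(i)}]$ and skewing $p_k^\perp[X^{(i)}]$, are (up to normalization) the vertical Heisenberg generators $b_{i,-k}$ and $b_{i,k}$ acting via $\rho_{\vec{\ccc}}$ (see Remark \ref{VertexRem}); transported through $\mathrm{T}\circ\varpi$ they become operators built from products of $e_i$'s or $f_i$'s (one of each color, repeated $k$ times), hence they preserve the $r$-core sector and shift the number of color-$j$ boxes by $\pm k$ for each $j$. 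This is essentially the content already used implicitly throughout \ref{FTSec}–\ref{WreathOp}.

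Next I would pin down the containment $\mu\supset\lambda$ (resp. $\mu\subset\lambda$) and the exact box count. For multiplication by $p_k[X^{(i)}]$ this is a degree/triangularity argument relative to the defining conditions of $H_\lambda$ in Definition \ref{ModDef} combined with the standard fact that $p_k[X^{(i)}]$ adds degree $k$ to the $i$-th tensorand; expanding $p_k[X^{(i)}]H_\lambda[(1-q\sigma^{-1})X^\bullet]$ and $p_k[X^{(i)}]H_\lambda[(1-t^{-1}\sigma^{-1})X^\bullet]$ in the multi-Schur basis and using that the Pieri-type action of $p_k$ only produces partitions obtained by adding a ribbon-free border strip of size $k$ shows the new multi-Schur indices $\quot(\mu)$ satisfy $\quot(\mu)^{(j)}\supseteq\quot(\lambda)^{(j)}$ in each color, which after reassembling via core-quotient is precisely $\mu\supset\lambda$ with $\mu\setminus\lambda$ having $k$ boxes of each color. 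For the skewing statement one runs the adjoint argument: $\langle p_k[X^{(i)}]H_\nu^\dagger, H_\mu\rangle_{q,t}'$ computes (via the dual basis and nondegeneracy of $\langle-,-\rangle_{q,t}'$) the coefficient of $H_\mu$ in $p_k^\perp[X^{(i)}]H_\nu$, transferring the containment from the multiplication case with arrows reversed.

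The main obstacle I anticipate is the bookkeeping of colors through the plethysms $1-q\sigma^{-1}$, $1-t^{-1}\sigma^{-1}$, $\iota$, and the $(q,t)$-inversion: one must verify that the ``balanced number of boxes of each color'' condition really is the intrinsic statement and is not twisted by a cyclic shift when passing between $H_\lambda$ and $H_\lambda^\dagger$ or between multiplication and skewing. I expect this to be handled cleanly by the observation that $p_k[X^{(i)}]$ lives in a single color-graded piece, that $\sigma$ and $\iota$ merely permute these pieces, and that—by Proposition \ref{RevProp}—the color-$0$ (and more generally color-$k$) box statistics are invariant under the relevant symmetries; the quantum-toroidal side makes the color-by-color count manifest, so the symmetric-function side only needs to match it. Everything else is degree counting and triangularity with respect to dominance order restricted to a fixed core, which is routine once set up.
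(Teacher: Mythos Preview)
Your overall architecture---reduce to generators, invoke the quantum toroidal/Fock picture, then get the $H^\dagger$ and $f^\perp$ statements by applying $\downarrow$ and taking adjoints---matches the paper's. However, the step where you ``pin down the containment $\mu\supset\lambda$'' by a symmetric-function triangularity argument does not work as written, and the paper avoids this route entirely.

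The issue is a conflation of dominance with containment. Definition~\ref{ModDef} says $H_\lambda[(1-q\sigma^{-1})X^\bullet]$ lies in the span of $\{s_{\quot(\nu)}:\nu\ge_r\lambda\}$, where $\ge_r$ is \emph{dominance} on partitions of fixed size with fixed core. After multiplying by $p_k[(1-q\sigma^{-1})X^{(i)}]$ and applying the border-strip rule, you get Schur indices $\quot(\mu)$ with $\mu\supseteq\nu$ for some $\nu\ge_r\lambda$; this does \emph{not} give $\mu\supseteq\lambda$. Dominance and containment are incomparable here, so the intersection argument needed to reassemble the product as a combination of $H_\mu$ with $\mu\supset\lambda$ simply does not go through. (The paper itself flags, just before the lemma, that ``the proof of the following result uses the quantum toroidal algebra,'' signalling that a purely symmetric-function proof is not available.)

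In fact the containment already comes for free from the Fock side, so your separate symmetric-function step is unnecessary once the toroidal input is made precise. The paper's proof does exactly this: embedding $p_n[X^{(i)}]\propto b_{i,-n}$ and citing \cite[Proposition~4.23 and Theorem~4.31]{WreathEigen}, which show that $(\tau_v\circ\varpi^{-1})(\hat e_n^{(p)})$ has matrix elements $\langle\mu|\,\cdot\,|\lambda\rangle$ supported on $\mu\supset\lambda$ with $\mu\setminus\lambda$ containing exactly $n$ boxes of each color. Since the $\hat e_n^{(p)}$ generate $\Lambda_{q,t}^{\otimes r}$, this settles \eqref{SuppM}. Your description of the toroidal side (``transported through $\mathrm T\circ\varpi$ they become products of $e_i$'s or $f_i$'s'') is morally in this direction but hand-waves the nontrivial content; the precise statement about $\varpi^{-1}$ of Heisenberg elements and their Fock support is exactly what \cite{WreathEigen} supplies and is not visible from \S\ref{FTSec}--\ref{WreathOp} alone (those sections work with $\varpi(b^*_{i,\pm k})$, not $\varpi^{-1}(b_{i,-k})$).

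One smaller point: for the skewing statements you invoke ``the dual basis and nondegeneracy of $\langle-,-\rangle_{q,t}'$,'' but $f^\perp$ is the adjoint under the Hall pairing, not under $\langle-,-\rangle_{q,t}'$. The paper handles this via Lemma~\ref{FAdjoint1}, which identifies the $\langle-,-\rangle_{q,t}'$-adjoint of $f^\perp$ as multiplication by $f\bigl[\tfrac{\iota X^\bullet}{(1-q\sigma^{-1})(t\sigma-1)}\bigr]$; then \eqref{SuppS} follows from \eqref{SuppM} and \eqref{SuppMDag}.
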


\begin{proof}
Let us embed $\Lambda_{q,t}^{\otimes r}$ into $\UTor$ compatibly with (\ref{VerSym}):
\[
p_n[X^{(i)}]= \frac{n}{[n]_\qqq}b_{i,-n}
\]
Recall the elements $\{\hat{e}_n^{(p)}\}$ from \ref{ModEH}.
Combining Proposition 4.23 and Theorem 4.31 from \cite{WreathEigen}, we have that in the Fock representation $\mathcal{F}$,
\[
\left\langle\mu\left|(\tau_v\circ\varpi^{-1})(\hat{e}_n^{(p)})\right|\lambda\right\rangle
\] 
is nonzero only if $\mu\supset\lambda$ and $\mu\backslash\lambda$ contains exactly $n$ boxes of each color.
Applying Theorems \ref{TsymIso} and \ref{Eigenstates}, we obtain (\ref{SuppM}) when $f=\hat{e}_n^{(p)}$.
Since $\{\hat{e}_n^{(p)} \}$ generates $\Lambda_{q,t}^{\otimes r}$ algebraically, we obtain (\ref{SuppM}) for general $f$.
By Proposition \ref{DownArrowProp}(1), (\ref{SuppMDag}) follows from applying $\downarrow$ to (\ref{SuppM}).
Finally, by Lemma \ref{FAdjoint1}, we have
\[
\left\langle g, f^\perp h \right\rangle_{q,t}'=\left\langle f\left[ \frac{\iota X^\bullet}{(1-q\sigma^{-1})(t\sigma-1)} \right]g, h\right\rangle_{q,t}'
\]
Thus, (\ref{SuppS}) follows from (\ref{SuppM}) and (\ref{SuppMDag}).
\end{proof}

\subsubsection{Relations}\label{Relations}
Here, we record some commutation relations involving $\mathbb{D}$ and $\mathbb{D}^*$.
These are completely analogous to those catalogued in \cite{GHT}.

\begin{lem}\label{ODRel}
The following relations hold:
\begin{align}
\label{Rel1}
\Omega\left[ \frac{X^{(i)}w}{(1-q\sigma^{-1})(1-t\sigma)}\right]\mathbb{D}&= \left(1-\frac{w}{z_i}\right)\mathbb{D}\Omega\left[ \frac{X^{(i)}}{(1-q\sigma^{-1})(1-t\sigma)}w \right],\\
\label{Rel2}
\TT\left[ X^{(i)}w \right]\mathbb{D}&= \left( 1-z_iw \right)\mathbb{D}\TT\left[ X^{(i)} w\right],\\
\Omega\left[ \frac{X^{(i)}w}{(1-q^{-1}\sigma)(t^{-1}\sigma^{-1}-1)}\right]\mathbb{D}^*&= \left(1-\frac{w}{z_i}\right)\mathbb{D}^*\Omega\left[ \frac{X^{(i)}}{(1-q^{-1}\sigma)(t^{-1}\sigma^{-1}-1)}w \right],\\
\TT\left[-X^{(i)}w \right]\mathbb{D}^*&= \left( 1-z_iw \right)\mathbb{D}^*\TT\left[-X^{(i)}w \right].
\end{align}
\end{lem}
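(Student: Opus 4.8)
The plan is to exploit the factored structure of $\mathbb{D}$ and $\mathbb{D}^*$. Write $\mathbb{D}$ as a product of four pieces: a scalar rational function in $z_0,\dots,z_{r-1}$; the multiplication operator $\Omega[-\sum_i X^{(i)}z_i]$; the operator $\TT[(1-q\sigma)(1-t\sigma^{-1})\sum_i X^{(i)}z_i^{-1}]$; and $\prod_i z_i^{-H_{i,0}}$ (and similarly for $\mathbb{D}^*$). The operator we must commute across $\mathbb{D}$ in each relation — either an $\Omega$-multiplication or a $\TT$-operator, each involving only the auxiliary variable $w$ and the symmetric-function variables — commutes trivially past three of these four pieces: past the scalar (it depends only on the $z_i$), past the multiplication operator $\Omega[-\sum_i X^{(i)}z_i]$ (two multiplication operators commute, and two $\TT$-operators commute), and past $\prod_i z_i^{\pm H_{i,0}}$ (which acts only on the $\CC[Q]$-factor). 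So every relation reduces to a single application of Lemma \ref{TOLem}, the commutation rule between an $\Omega$-multiplication and a $\TT$-operator.

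For the first relation I would set $B=(1-q\sigma^{-1})^{-1}(1-t\sigma)^{-1}$ and $A=(1-q\sigma)(1-t\sigma^{-1})$, so that the operator on the left is $\Omega[BX^{(i)}w]$ and the $\TT$-piece of $\mathbb{D}$ is $\TT[A\sum_j X^{(j)}z_j^{-1}]$. Lemma \ref{TOLem}, rearranged, gives $\Omega[BX^{(i)}w]\,\TT[AX^{(j)}z_j^{-1}]=\Omega[(A^TB)_{ji}z_j^{-1}w]^{-1}\,\TT[AX^{(j)}z_j^{-1}]\,\Omega[BX^{(i)}w]$. Using $\sigma^T=\sigma^{-1}$ from Proposition \ref{SigProp} one computes $A^T=(1-q\sigma^{-1})(1-t\sigma)$, hence $A^TB=1$ as plethysms (all factors $1\pm q\sigma^{\pm1}$, $1\pm t\sigma^{\pm1}$ commute), so $(A^TB)_{ji}=\delta_{ji}$ and the only surviving scalar is $\Omega[z_i^{-1}w]^{-1}=1-w/z_i$. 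This is exactly the claimed identity. The third relation is the same computation after rewriting $t^{-1}\sigma^{-1}-1=-(1-t^{-1}\sigma^{-1})$: there $B=-(1-q^{-1}\sigma)^{-1}(1-t^{-1}\sigma^{-1})^{-1}$ and $A=-(1-q^{-1}\sigma^{-1})(1-t^{-1}\sigma)$, and again $A^TB=1$.

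For the second relation the operator on the left is $\TT[X^{(i)}w]$, whose matrix is the identity plethysm, so it commutes outright with the $\TT$-piece of $\mathbb{D}$; the only interaction is with the plain multiplication $\Omega[-\sum_j X^{(j)}z_j]$, and Lemma \ref{TOLem} with $A=1$, $B=-1$ gives $\TT[X^{(i)}w]\,\Omega[-X^{(j)}z_j]=\Omega[-\delta_{ij}z_iw]\,\Omega[-X^{(j)}z_j]\,\TT[X^{(i)}w]$; since $-z_iw$ is minus a single monomial, $\Omega[-z_iw]=1-z_iw$, which is the claim. The fourth relation is identical with $A=-1$, $B=1$.

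The only genuine care needed — and the one place where a sign or an inversion could go astray — is tracking two conventions simultaneously: the direction of the commutation (this is why the $\Omega$-relations carry an inverted scalar while the $\TT$-relations do not), and the value of $\Omega$ at a one-term alphabet, namely $\Omega[z_i^{-1}w]=(1-w/z_i)^{-1}$ versus $\Omega[-z_iw]=1-z_iw$. Everything else is formal: the scalars produced are polynomials, so the expansion conventions in the definitions of $\mathbb{D}$ and $\mathbb{D}^*$ play no role in a purely algebraic commutation, and no pole or wheel conditions enter. I do not anticipate a real obstacle; this is the wreath analogue of the corresponding relations in \cite{GHT}, and the exponents in $\mathbb{D}$ and $\mathbb{D}^*$ were chosen precisely so that $A^TB$ collapses to the identity plethysm.
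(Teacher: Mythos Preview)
Your proposal is correct and follows essentially the same approach as the paper: reduce each relation to a single application of Lemma~\ref{TOLem} after observing that only one of the four factors of $\mathbb{D}$ (respectively $\mathbb{D}^*$) interacts nontrivially, and then use $\sigma^T=\sigma^{-1}$ from Proposition~\ref{SigProp} to see that $A^TB$ collapses to the identity plethysm. The paper spells out only~(\ref{Rel1}) and declares the rest analogous; your write-up is slightly more explicit about the remaining three cases but is otherwise the same argument.
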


\begin{proof}
These all follow from an application of Lemma \ref{TOLem}.
We will only go through the proof of (\ref{Rel1}).
The $\Omega$-term in the equation will only interact nontrivially with the $\mathcal{T}$-term of $\mathbb{D}$, which is
\[
\TT\left[ (1-q\sigma)(1-t\sigma^{-1})\sum_{j\in\ZZ/r\ZZ} X^{(j)}z_j^{-1} \right].
\]
By Proposition \ref{SigProp}(2), we have
\[
\left[ (1-q\sigma)(1-t\sigma^{-1}) \right]^T=(1-q\sigma^{-1})(1-t\sigma).
\]
Thus, Lemma \ref{TOLem} gives
\begin{align*}
\Omega&\left[ \frac{X^{(i)}w}{(1-q\sigma^{-1})(1-t\sigma)} \right]\TT\left[ (1-q\sigma)(1-t\sigma^{-1})\sum_{j\in\ZZ/r\ZZ}X^{(j)} z_j^{-1}\right]\\
&= \Omega\left[ -\sum_{j\in\ZZ/r\ZZ}\delta_{i,j}\frac{w}{z_{j}} \right]\TT\left[ (1-q\sigma)(1-t\sigma^{-1})\sum_{j\in\ZZ/r\ZZ}X^{(j)} z_j^{-1}\right]\Omega\left[ \frac{X^{(i)}w}{(1-q\sigma^{-1})(1-t\sigma)} \right]\\
&= \left( 1-\frac{w}{z_i} \right)
\TT\left[ (1-q\sigma)(1-t\sigma^{-1})\sum_{j\in\ZZ/r\ZZ}X^{(j)} z_j^{-1}\right]\Omega\left[ \frac{X^{(i)}w}{(1-q\sigma^{-1})(1-t\sigma)} \right].\qedhere
\end{align*}
\end{proof}

\begin{cor}
Recall the elements $\tilde{p}_1^{(i)}$ from \ref{PTilde}.
\begin{enumerate}
\item For any vector $\vec{k}=(k_0,\ldots, k_{r-1})\in \ZZ^r$, the following hold:
\begin{align}
\label{MultD}
\left[\mathbb{D}_{\vec{k}}, \tilde{p}_1^{(i)}\right]&=\mathbb{D}_{\vec{k}-\epsilon_{i}},\\
\label{SkewD}
\left[\mathbb{D}_{\vec{k}}, p_1^\perp[X^{(i)}]\right]&= \mathbb{D}_{\vec{k}+\epsilon_{i}},\\
\label{MultD*}
\left[\mathbb{D}_{\vec{k}}^*, \tilde{p}_1^{(i)}\right]&= -qt\mathbb{D}_{\vec{k}-\epsilon_{i}}^*,\\
\left[\mathbb{D}_{\vec{k}}^*, p_1^\perp[X^{(i)}]\right]&= -\mathbb{D}_{\vec{k}+\epsilon_{i}}^*.
\end{align}
\item We also have the following relations:
\begin{align}
\label{NablaP1}
\nabla \tilde{p}_1^{(i)}\nabla^{-1}&= \mathbb{D}_{-\epsilon_i}\\
\label{NablaPerp}
\nabla^{-1} p_1^\perp[X^{(i)}]\nabla&=-\mathbb{D}_{\epsilon_i},\\
\nabla^{-1} \tilde{p}_1^{(i)}\nabla&= -qt\mathbb{D}^*_{-\epsilon_i},\\
\label{NablaPerp*}
\nabla p_1^\perp[X^{(i)}]\nabla^{-1}&=\mathbb{D}_{\epsilon_i}^*.
\end{align}
\end{enumerate}
\end{cor}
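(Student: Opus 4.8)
The plan is to treat part (1) as a sequence of coefficient extractions from Lemma \ref{ODRel}, and part (2) as a reduction to part (1) via the diagonal operator $\mathbb{D}_{\epsilon_0-\epsilon_0}$ together with the involution $\downarrow$. For part (1), observe that in each of the four relations of Lemma \ref{ODRel} the $\Omega$- (resp. $\mathcal{T}$-) factor is a power series in $w$ whose degree-one term is immediate: the coefficient of $w^1$ in $\Omega\left[\frac{X^{(i)}w}{(1-q\sigma^{-1})(1-t\sigma)}\right]$ is $\tilde{p}_1^{(i)}$ by definition of $\tilde{p}_1^{(i)}$; the coefficient of $w^1$ in $\Omega\left[\frac{X^{(i)}w}{(1-q^{-1}\sigma)(t^{-1}\sigma^{-1}-1)}\right]$ is a scalar multiple of $\tilde{p}_1^{(i)}$, obtained by clearing the $(1-q^{\pm r})(1-t^{\pm r})$ denominators using Proposition \ref{SigProp}; and the coefficient of $w^1$ in $\mathcal{T}[\pm X^{(i)}w]$ is $\pm p_1^\perp[X^{(i)}]$. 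Extracting the coefficient of $w^1$ from (\ref{Rel1}), for instance, gives the operator identity $[\mathbb{D},\tilde{p}_1^{(i)}]=z_i^{-1}\mathbb{D}$ of power series in $z_0,\dots,z_{r-1}$; multiplying by $z_0^{k_0}\cdots z_{r-1}^{k_{r-1}}$ and taking the constant term in the $z$'s -- which passes through $\tilde{p}_1^{(i)}$ since that operator does not involve the $z$'s -- yields (\ref{MultD}). The identity (\ref{SkewD}) comes from (\ref{Rel2}) the same way, and the remaining two from the last two relations of Lemma \ref{ODRel}.

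For part (2), I would first prove (\ref{NablaP1}) and (\ref{NablaPerp}), from which the other two follow. Specializing (\ref{MultD}) and (\ref{SkewD}) to $\vec{k}=0$ writes $\mathbb{D}_{-\epsilon_i}=[\mathbb{D}_{\epsilon_0-\epsilon_0},\tilde{p}_1^{(i)}]$ and $\mathbb{D}_{\epsilon_i}=[\mathbb{D}_{\epsilon_0-\epsilon_0},p_1^\perp[X^{(i)}]]$, where by Corollary \ref{DDiag} the operator $\mathbb{D}_{\epsilon_0-\epsilon_0}$ acts diagonally on $\{H_\lambda\otimes e^{\core(\lambda)}\}$ with eigenvalue $\left(\frac{-D_\lambda^\bullet}{(1-q)(1-t)}\right)^{(0)} = -B_\lambda^{(0)}+\left(\frac{1}{(1-q)(1-t)}\right)^{(0)}$; in particular it commutes with $\nabla$. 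Now apply both $\nabla\tilde{p}_1^{(i)}\nabla^{-1}$ and $[\mathbb{D}_{\epsilon_0-\epsilon_0},\tilde{p}_1^{(i)}]$ to $H_\lambda\otimes e^{\core(\lambda)}$: by Lemma \ref{P1Supp} one has $\tilde{p}_1^{(i)}H_\lambda=\sum_\mu c_{\mu\lambda}H_\mu$ with $\mu\supset\lambda$, $\core(\mu)=\core(\lambda)$, and $\mu\setminus\lambda$ containing one box of each color, so the two operators produce respectively $\sum_\mu c_{\mu\lambda}\frac{n_\mu}{n_\lambda}H_\mu\otimes e^{\core(\mu)}$ and $\sum_\mu c_{\mu\lambda}(d_\mu-d_\lambda)H_\mu\otimes e^{\core(\mu)}$, where $n_\lambda$ and $d_\lambda$ denote the eigenvalues of $\nabla$ and $\mathbb{D}_{\epsilon_0-\epsilon_0}$ on $H_\lambda$. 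It thus suffices to verify the scalar identity $n_\mu/n_\lambda=d_\mu-d_\lambda$ for such a pair: writing $\square_0$ for the unique color-$0$ box of $\mu\setminus\lambda$, the fact that a length-$r$ ribbon is color-balanced gives $|\quot(\mu)|=|\quot(\lambda)|+1$ and $B_\mu^{(0)}-B_\lambda^{(0)}=\chi_{\square_0}$, so both sides equal $-\chi_{\square_0}$. The proof of (\ref{NablaPerp}) is identical, now with $p_1^\perp[X^{(i)}]$ removing one box of each color (Lemma \ref{P1Supp}) and the same eigenvalue identity with $\lambda$ and $\mu$ exchanged.

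Finally, (\ref{NablaPerp*}) and the remaining relation $\nabla^{-1}\tilde{p}_1^{(i)}\nabla=-qt\,\mathbb{D}^*_{-\epsilon_i}$ follow by conjugating (\ref{NablaP1}) and (\ref{NablaPerp}) with the involution $\downarrow$ of Proposition \ref{DownArrowProp}. Using $\downarrow\nabla\downarrow=\nabla^{-1}$ and $\downarrow\mathbb{D}_{\vec{k}}\downarrow=\mathbb{D}^*_{\iota\vec{k}}$ from that proposition, together with the observations that $\downarrow$ conjugates multiplication by $\tilde{p}_1^{(i)}$ into multiplication by $\tilde{p}_1^{(i)}[-\iota X^\bullet;q^{-1},t^{-1}]$ -- a scalar multiple of $\tilde{p}_1^{(-i)}$, again via Proposition \ref{SigProp} -- and conjugates $p_1^\perp[X^{(i)}]$ into $-p_1^\perp[X^{(-i)}]$ (since $\downarrow$ acts on $\Lambda^{\otimes r}$ by an isometry of the Hall pairing), one substitutes into $\downarrow(\cdot)\downarrow$ of (\ref{NablaP1}) and (\ref{NablaPerp}), relabels $-i\mapsto i$, and reads off the two remaining relations. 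As an alternative, these two can be proved directly by rerunning the argument of the previous paragraph verbatim with $\mathbb{D}^*_{\epsilon_0-\epsilon_0}$ -- which is likewise diagonal by Corollary \ref{DDiag} -- in place of $\mathbb{D}_{\epsilon_0-\epsilon_0}$.

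The heart of the argument, and the step that at first looks problematic, is part (2): with no Pieri rule available, one cannot pin down $\nabla\tilde{p}_1^{(i)}\nabla^{-1}$ by matching its matrix coefficients against those of $\mathbb{D}_{-\epsilon_i}$ directly. The device that resolves this is to realize $\mathbb{D}_{-\epsilon_i}$ as the commutator $[\mathbb{D}_{\epsilon_0-\epsilon_0},\tilde{p}_1^{(i)}]$ of a diagonal operator with the very same multiplication operator, so that the unknown off-diagonal coefficients $c_{\mu\lambda}$ factor out of both sides and only the elementary scalar identity $n_\mu/n_\lambda=d_\mu-d_\lambda=-\chi_{\square_0}$ remains. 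The one point needing care there is the sign bookkeeping: it works out precisely because $-D_\lambda^\bullet=-(1-q)(1-t)B_\lambda^\bullet+1$, so the $(1-q)(1-t)$ in the denominator of $d_\lambda$ cancels and the residual sign matches the change in $|\quot(\lambda)|$ governing $n_\lambda$.
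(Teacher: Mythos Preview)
Your proof is correct and follows essentially the same route as the paper. For part (1) both you and the paper extract the $w^1$ coefficient from Lemma \ref{ODRel} and then the constant term in the $z$'s; for part (2) both realize $\mathbb{D}_{\pm\epsilon_i}$ as the commutator of the diagonal operator $\mathbb{D}_{\vec 0}$ (your $\mathbb{D}_{\epsilon_0-\epsilon_0}$) with $\tilde{p}_1^{(i)}$ or $p_1^\perp[X^{(i)}]$, invoke Lemma \ref{P1Supp} to control the support, and compare the resulting scalars $-\chi_{\square_0}$ on each $H_\mu$ using Corollary \ref{DDiag}. Your alternative derivation of the two $\mathbb{D}^*$ relations in part (2) via the involution $\downarrow$ is a small variation the paper does not spell out (it simply reruns the same argument with $\mathbb{D}^*_{\vec 0}$, which you also list as an option); if you pursue the $\downarrow$ route, be aware that the scalar you leave implicit is exactly the identity $\tilde{p}_1^{(i)}=-q^{-1}t^{-1}p_1\big[\tfrac{X^{(i)}}{(1-q^{-1}\sigma)(t^{-1}\sigma^{-1}-1)}\big]$ that the paper records in its proof of (\ref{MultD*}), so the bookkeeping does close.
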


\begin{proof}
Part (1) follows from Lemma \ref{ODRel}.
For instance, for (\ref{MultD}), we take the coefficient of $wz^{-k_0}\cdots z^{-k_{r-1}}$ in (\ref{Rel1}).
For (\ref{MultD*}), we note that:
\[
\tilde{p}_1^{(i)}=p_1\left[ \frac{X^{(i)}}{(1-q\sigma^{-1})(1-t\sigma)} \right]= -q^{-1}t^{-1}p_1\left[ \frac{X^{(i)}}{(1-q^{-1}\sigma)(t^{-1}\sigma^{-1}-1)} \right]
\]

Part (2) follows from combining part (1) with Lemma \ref{P1Supp} and Corollary \ref{DDiag}.
We will only prove (\ref{NablaP1}).
First note that by (\ref{MultD}), 
\[
\left[\mathbb{D}_{\vec{0}},\tilde{p}_1^{(i)}\right]=\mathbb{D}_{-\epsilon_i}.
\]
Consider now the action of the left-hand-side on $H_\lambda\otimes e^{\core(\lambda)}$.
Lemma \ref{P1Supp} says that
\[
\tilde{p}_1^{(i)}H_\lambda=\sum_\mu c_{\lambda\mu}H_\mu
\]
where the $\mu$ are such that $\mu\supset\lambda$ and $\mu\backslash\lambda$ contains exactly one box of each color.
On the other hand, Corollary \ref{DDiag} tells us that
\begin{align*}
\mathbb{D}_{\vec{0}}\left(H_\nu\otimes e^{\core(\nu)}\right)
&= \left(\frac{-D_\lambda^{\bullet}}{(1-q)(1-t)}\right)^{(0)}\left(H_\nu\otimes e^{\core(\nu)}\right)\\
&=\left( \left( \frac{1}{(1-q)(1-t)} \right)^{(0)}-B_\nu^{(0)} \right)H_\nu\otimes e^{\core(\nu)}.
\end{align*}
Since only one color $0$ box $\square_\mu$ is in $\mu\backslash\lambda$, we have
\begin{align*}
\left[ \mathbb{D}_{\vec{0}}, \tilde{p}_1^{(i)} \right]\left( H_\lambda\otimes e^{\core(\lambda)} \right)&=\sum_{\mu}c_{\lambda\mu}\left( -B_\mu^{(0)}+B_\lambda^{(0)}  \right)\left(H_\mu\otimes e^{\core(\lambda)}\right)\\
&=\sum_{\mu}c_{\lambda\mu}\left( -\chi_{\square_\mu}  \right)\left(H_\mu\otimes e^{\core(\lambda)}\right)\\
&=\nabla\tilde{p}_1^{(i)}\nabla^{-1}\left(H_\lambda\otimes e^{\core(\lambda)}\right).
\qedhere
\end{align*}
\end{proof}

\subsubsection{Induction}
We will need an analogue of \cite[Theorem 2.1]{GHT}.
First, we prove the following:
\begin{thm}\label{InductThm}
Every element $f\otimes e^\alpha \in \Lambda_{q,t}^{\otimes r}\otimes\CC[Q]$ may be written as 
\begin{align}
\label{InductD}
 f\otimes e^\alpha&=P\left(\mathbb{D}_{-\epsilon_i}, \tilde{p}_1^{(i)}\right)(1\otimes e^\alpha)
\end{align}
for some noncommutative polynomial $P$ (with coefficients in $\CC(q,t)$) in the operators $\mathbb{D}_{-\epsilon_0},\ldots, \mathbb{D}_{-\epsilon_{r-1}}$ and $ \tilde{p}_1^{(0)},\ldots, \tilde{p}_1^{(r-1)}$.
Likewise, we can write
\begin{equation}
  f\otimes e^\alpha=P^*\left(\mathbb{D}^*_{-\epsilon_i}, \tilde{p}_1^{(i)}\right)(1\otimes e^\alpha)    
\label{InductD*}
\end{equation}
for some noncommutative polynomial $P^*$ in the operators $\mathbb{D}^*_{-\epsilon_0},\ldots, \mathbb{D}^*_{-\epsilon_{r-1}}$ and $ \tilde{p}_1^{(0)},\ldots, \tilde{p}_1^{(r-1)}$.
\end{thm}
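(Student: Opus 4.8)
\textbf{Reduction of (\ref{InductD*}) to (\ref{InductD}).} The plan is to prove (\ref{InductD}) and then obtain (\ref{InductD*}) by conjugating with $\nabla$. Note that $\nabla_\alpha$ acts as the identity in degree $0$ (its eigenvalue on the degree-$0$ polynomial $H_\alpha=1$ is $1$), so $\nabla(1\otimes e^\alpha)=1\otimes e^\alpha$. Moreover, by the relations in \ref{Relations}, $\nabla^{-1}\mathbb{D}_{-\epsilon_i}\nabla=\tilde p_1^{(i)}$ (from (\ref{NablaP1})) and $\nabla^{-1}\tilde p_1^{(i)}\nabla=-qt\,\mathbb{D}^*_{-\epsilon_i}$. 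Hence applying $\nabla^{-1}$ to an expression of the form (\ref{InductD}) for $f\otimes e^\alpha$ yields an expression of the form (\ref{InductD*}) for $\nabla^{-1}(f\otimes e^\alpha)$; since $\nabla^{-1}$ permutes $\Lambda_{q,t}^{\otimes r}\otimes e^\alpha$, the two assertions are equivalent and it suffices to prove (\ref{InductD}).

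\textbf{Two structural facts.} Let $\mathcal{B}_\alpha\subseteq\mathrm{End}_{\CC(q,t)}(\Lambda_{q,t}^{\otimes r}\otimes e^\alpha)$ be the subalgebra generated by the multiplication operators $\tilde p_1^{(i)}$ and the operators $\mathbb{D}_{-\epsilon_i}$; we must show $V_\alpha\coloneq\mathcal{B}_\alpha(1\otimes e^\alpha)=\Lambda_{q,t}^{\otimes r}\otimes e^\alpha$. First, iterating $\mathbb{D}_{\vec k-\epsilon_i}=[\mathbb{D}_{\vec k},\tilde p_1^{(i)}]$ from (\ref{MultD}) shows $\mathbb{D}_{\vec k}\in\mathcal{B}_\alpha$ for every $\vec k\in\ZZ_{\le 0}^r\setminus\{\vec 0\}$. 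Second, each $\mathbb{D}_{\vec k}$ is homogeneous of degree $-\sum_i k_i$ with respect to the usual grading on $\Lambda_{q,t}^{\otimes r}$: in the formula for $\mathbb{D}$ the rational prefactor is a Laurent series in the ratios $z_{i+1}/z_i$ and so has total $z$-degree $0$; $\prod_i z_i^{-H_{i,0}}$ sends $e^\alpha$ to a $z$-monomial of total degree $-(\sum_i h_i,\alpha)=0$ since $\sum_i h_i=0$; $\Omega[-\sum_i X^{(i)}z_i]$ raises $X$-degree by exactly the $z$-degree it contributes; and $\mathcal{T}[(1-q\sigma)(1-t\sigma^{-1})\sum_i X^{(i)}z_i^{-1}]$ lowers $X$-degree by exactly the negative of the $z$-degree it contributes. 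Extracting the coefficient of $z^{-\vec k}$ therefore forces the $X$-degree shift $-\sum_i k_i$. In particular $\tilde p_1^{(i)}$ and $\mathbb{D}_{-\epsilon_i}$ all have degree $1$, so $V_\alpha=\bigoplus_n V_\alpha^{(n)}$ with $V_\alpha^{(n)}\subseteq\Lambda_{q,t}^{(n)}\otimes e^\alpha$.

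\textbf{The degree induction.} We show $V_\alpha^{(n)}=\Lambda_{q,t}^{(n)}\otimes e^\alpha$ by induction on $n$, the case $n=0$ being clear. Assuming $V_\alpha^{(n-1)}=\Lambda_{q,t}^{(n-1)}\otimes e^\alpha$, closure of $V_\alpha$ under the generators, together with the fact that the $\tilde p_1^{(i)}$ span $\Lambda_{q,t}^{(1)}$ (because $(1-q\sigma^{-1})(1-t\sigma)$ is invertible, Proposition \ref{SigProp}), gives
\[\Lambda_{q,t}^{(1)}\Lambda_{q,t}^{(n-1)}\otimes e^\alpha+\sum_{i\in\ZZ/r\ZZ}\mathbb{D}_{-\epsilon_i}\big(\Lambda_{q,t}^{(n-1)}\otimes e^\alpha\big)\subseteq V_\alpha^{(n)},\]
so it is enough to prove $\Lambda_{q,t}^{(1)}\Lambda_{q,t}^{(n-1)}+\sum_i\mathbb{D}_{-\epsilon_i}(\Lambda_{q,t}^{(n-1)})=\Lambda_{q,t}^{(n)}$. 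The quotient $\Lambda_{q,t}^{(n)}/\Lambda_{q,t}^{(1)}\Lambda_{q,t}^{(n-1)}$ has a basis given by the classes of power-sum monomials $p_{\vec\nu}$ all of whose parts are $\ge 2$, so we must hit each such class by some $\mathbb{D}_{-\epsilon_i}(f)$. Given $p_{\vec\nu}$, single out a part $m=\nu_1\ge 2$ of color $k$, write $p_{\vec\nu}=p_m[X^{(k)}]p_{\vec\nu'}$, and apply $\mathbb{D}_{-\epsilon_i}$ to $f=p_1[X^{(j)}]^{m-1}p_{\vec\nu'}\in\Lambda_{q,t}^{(n-1)}$ for a suitable $i,j$. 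Using that $\mathcal{T}$ replaces a factor $p_1[X^{(j)}]$ by the Laurent monomial $(1+qt)z_j^{-1}-qz_{j-1}^{-1}-tz_{j+1}^{-1}$ and that the $X$-degree-$m$ part of $\Omega[-\sum_i X^{(i)}z_i]$ contains $(-1)^me_m[X^{(k)}]z_k^m$, whose class modulo $\Lambda_{q,t}^{(1)}\Lambda_{q,t}^{(m-1)}$ is $\tfrac{(-1)^{m-1}}{m}p_m[X^{(k)}]$ plus monomials with strictly more parts, one finds that the contribution of $\mathbb{D}_{-\epsilon_i}(f)$ from deleting all $m-1$ copies of $p_1[X^{(j)}]$ is a nonzero scalar times $p_m[X^{(k)}]p_{\vec\nu'}$, modulo monomials with more parts; every term with fewer deletions retains a $p_1$-factor and hence vanishes in the quotient. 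A downward induction on the number of parts of $\vec\nu$, using the freedom in $i$ and $j$ to keep the resulting triangular system nonsingular, yields surjectivity onto the quotient and completes the induction.

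\textbf{Main obstacle.} The crux is the last step: one must verify that, after extracting the coefficient of $z^{-\epsilon_i}$, the scalar multiplying the leading monomial $p_m[X^{(k)}]p_{\vec\nu'}$ is genuinely nonzero for a suitable $i,j$, and that the remaining contributions (to monomials with more parts, and arising from deletions inside $p_{\vec\nu'}$) can be cleared inductively. This is an explicit but delicate analysis of the Laurent expansion of $\mathbb{D}$ — the wreath analogue of the bookkeeping in \cite[\S2]{GHT}, now complicated by the $r$ colors and the $r$ variables $z_0,\dots,z_{r-1}$ — with the nonvanishing ultimately resting on nonvanishing Laurent coefficients of the rational prefactor of $\mathbb{D}$ together with $1+qt,q,t\ne 0$.
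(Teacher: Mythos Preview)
Your reduction of (\ref{InductD*}) to (\ref{InductD}) via conjugation by $\nabla$ is correct, though the paper instead uses the involution $\downarrow$ (Proposition \ref{DownArrowProp}); either route works.

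The main argument, however, has a genuine gap. You correctly reduce the inductive step to hitting the classes $p_m[X^{(k)}]p_{\vec\nu'}$ in the quotient $\Lambda_{q,t}^{(n)}/\Lambda_{q,t}^{(1)}\Lambda_{q,t}^{(n-1)}$, but the crucial nonvanishing of the leading scalar is only asserted, not proved. Extracting the coefficient of $z^{\epsilon_i}$ after deleting all $m-1$ copies of $p_1[X^{(j)}]$ requires picking out a specific Laurent coefficient of the rational prefactor $\prod_i(1-qz_{i+1}/z_i)^{-1}(1-tz_i/z_{i+1})^{-1}$ multiplied by the lattice factor $\prod_i z_i^{-H_{i,0}}$, and you have not identified which $(i,j)$ makes this nonzero, nor why the resulting system over all $\vec\nu$ is triangular with nonzero diagonal. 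Your own ``Main obstacle'' paragraph is essentially an acknowledgment that this step is missing; as written, the argument is a plausible outline rather than a proof.

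The paper sidesteps this difficulty entirely. Instead of a coefficient analysis, it applies the commutation relation (\ref{Rel1}) to the generating function $R_{a_1,\dots,a_k}$ of products $\tilde h_{m_1}^{(a_1)}\cdots\tilde h_{m_k}^{(a_k)}$ and takes constant terms on both sides. This yields the identity
\[
(C_\alpha-\mathbb{D}_{\vec 0})\bigl(\tilde h_{m_1}^{(a_1)}\cdots\tilde h_{m_k}^{(a_k)}\otimes e^\alpha\bigr)
=\sum_{\varnothing\ne\{b_1,\dots,b_l\}}(-1)^l\,\mathbb{D}_{-\epsilon_{b_1}-\cdots-\epsilon_{b_l}}(\text{lower degree}),
\]
whose right side lies in $\mathcal{B}_\alpha(1\otimes e^\alpha)$ by induction and by the iterated commutator formula you already noted. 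The key point is that $\mathbb{D}_{\vec 0}$ is diagonal on $\{H_\lambda\otimes e^{\core(\lambda)}\}$ with eigenvalue $C_\alpha-B_\lambda^{(0)}+B_\alpha^{(0)}$, so $C_\alpha-\mathbb{D}_{\vec 0}$ acts on $H_\lambda$ by the nonzero scalar $B_\lambda^{(0)}-B_\alpha^{(0)}$ whenever $\lambda\ne\alpha$. No delicate coefficient computation is needed; the nonvanishing is exactly the fact that $\lambda\setminus\alpha$ contains a color-$0$ box.
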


\begin{proof}
By Proposition \ref{DownArrowProp}, it suffices to prove (\ref{InductD}).
Let 
\begin{align*}
 \tilde{h}_n^{(p)} = h_n\left[\frac{X^{(p)}}{(1-q\sigma^{-1})(1-t\sigma)}\right]
 && \text{ and }
 &&\tilde{h}_{\vec{\lambda}}= h_{\vec{\lambda}}\left[\frac{X^{\bullet}}{(1-q\sigma^{-1})(1-t\sigma)}\right].
\end{align*}
We first study the basis given by $\{\tilde{h}_{\vec{\lambda}}\otimes e^\alpha\}$.
Let us set
\[R_{a_1,\dots,a_k}  \coloneq 
\Omega\left[  \sum_{j=1}^k w_j \frac{X^{(a_j)}}{(1-q \sigma^{-1})(1-t \sigma)} \right] (1\otimes e^\alpha)
= \sum_{m_1,\dots,m_k \geq 0} \prod_{j=1}^k
w_j^{m_j} \tilde{h}^{(a_j)}_{m_j}\otimes e^\alpha .
\]
Any complete homogeneous symmetric function $\tilde{h}_{\vec{\lambda}}$ can be realized as a coefficient of some $R_{a_1,\ldots, a_k}$ by choosing (possibly repeating) suitable colors $a_i$.

First note that for given colors $a_1,\dots,a_k$, by (\ref{Rel1}),
\begin{align}
\label{Induct1}
\prod_{j=1}^k (1-w_j/z_{a_j})~ & \mathbb{D} \Omega\left[  \sum_{j=1}^k w_j \frac{X^{(a_j)}}{(1-q \sigma^{-1})(1-t \sigma)} \right] (1\otimes e^\alpha)\\
\label{Induct2}
&= 
\Omega\left[  \sum_{j=1}^k w_j \frac{ X^{(a_j)}}{(1-q \sigma^{-1})(1-t \sigma)  } \right] \mathbb{D} (1\otimes e^\alpha).   
\end{align}
We will compare the constant terms in the $z$-variables of both sides.
By Corollary \ref{DDiag}, $\mathbb{D}_{\vec{0}}(1\otimes e^\alpha) = C_\alpha(1\otimes e^\alpha)$, where
\[
C_\alpha=\left(\frac{-D_\alpha^\bullet}{(1-q)(1-t)}\right)^{(0)}=-B_\alpha^{(0)}+\left(\frac{1}{(1-q)(1-t)}\right)^{(0)}\not=0.
\]
Thus, taking the constant term in the $z$-variables in (\ref{Induct2}) gives us
\begin{align}
\label{Induct3}
    \left\{\Omega\left[  \sum_{j=1}^k w_j \frac{ X^{(a_j)}}{(1-q \sigma^{-1})(1-t \sigma)  } \right] \mathbb{D} (1\otimes e^\alpha) \right\}_0
    =C_\alpha R_{a_1,\ldots, a_k}.
\end{align}

On the other hand, since $\left\{ (z_{b_1} \cdots z_{b_l})^{-1} \mathbb{D}  \right\}_0 = \mathbb{D}_{-\epsilon_{b_1}- \cdots - \epsilon_{b_l}}$, the constant term of the $z$-variables in (\ref{Induct1}) can be written as:
\begin{align}
\nonumber
&
\left\{\prod_{j=1}^k (1-w_j/z_{a_j}) ~ \mathbb{D} \Omega\left[  \sum_{j} w_j \frac{X^{(a_j)}}{(1-q\sigma^{-1})(1-t\sigma)} \right] (1\otimes e^\alpha) \right\}_{0}
\\
\label{Induct4}
& \hspace{5em}
= \mathbb{D}_{\vec{0}}\left(R_{a_1,\dots,a_k}\right)+ \sum_{ \substack {\varnothing \not = \{i_1,\dots,i_l\} \\
\hspace{2em} \subseteq \{1,\dots,k\}} } (-1)^l w_{i_1} \cdots w_{i_l}
\mathbb{D}_{-\epsilon_{a_{i_1}} - \cdots - \epsilon_{a_{i_l}}} R_{a_1,\dots, a_k}.
\end{align}
We will instead write $\{b_1,\dots, b_l\} \subseteq \{a_1,\dots, a_k\}$ to mean that $b_j = a_{i_j} $ for $\{i_1,\dots, i_l\} \subseteq \{1,\dots, k\}$. 
Extracting the coefficients of $w_1^{m_1}\cdots w_k^{m_k}$ from (\ref{Induct3}) and (\ref{Induct4}) and equating them, we get
\begin{align*}
C_\alpha\tilde{h}^{(a_1)}_{m_1} \cdots \tilde{h}^{(a_k)}_{m_k}\otimes e^\alpha  &= 
\mathbb{D}_{\vec{0}}\left( \tilde{h}^{(a_1)}_{m_1} \cdots \tilde{h}^{(a_k)}_{m_k}\otimes e^\alpha \right)\\
&\quad+
\sum_{ \substack {\varnothing\not=\{b_1,\dots,b_l\} \\
\hspace{2em} \subseteq \{a_1,\dots,a_k\}} } (-1)^l 
\mathbb{D}_{-\epsilon_{b_1} - \cdots - \epsilon_{b_l}} \left( 
\tilde{h}^{(a_1)}_{m_1-u_1} \cdots \tilde{h}^{(a_k)}_{m_k-u_k} 
\otimes e^\alpha
\right),
\end{align*}
or rather
\[ (C_\alpha- \mathbb{D}_{\vec{0}}) \left(\tilde{h}^{(a_1)}_{m_1} \cdots \tilde{h}^{(a_k)}_{m_k}\otimes e^\alpha\right)  = 
\sum_{ \substack {\varnothing\not= \{b_1,\dots,b_l\} \\
\hspace{2em} \subseteq \{a_1,\dots,a_k\}} } (-1)^l 
\mathbb{D}_{-\epsilon_{b_1} - \cdots - \epsilon_{b_l}} \left( 
\tilde{h}^{(a_1)}_{m_1-u_1} \cdots \tilde{h}^{(a_k)}_{m_k-u_k} \otimes e^\alpha
\right),
\]
where in the last sum, $u_i =\chi(a_i \in \{b_1,\dots, b_l\})$ indicates if $a_i$ is in the set of $b_j$'s.

By Equation (\ref{MultD}), we can write
\begin{align}
\nonumber
\mathbb{D}_{-\epsilon_{b_1} - \cdots - \epsilon_{b_k}}
& = \left[\mathbb{D}_{-\epsilon_{b_1} - \cdots - \epsilon_{b_{k-1}}}, \tilde{p}_1^{(b_k)} \right] \\
\nonumber
& = \left[ \left[\mathbb{D}_{-\epsilon_{b_1} - \cdots - \epsilon_{b_{k-2}}}, \tilde{p}_1^{(b_{k-1})} \right], \tilde{p}_1^{(b_k)} \right]  \\
\label{InductCommute}
& = \left[ \ldots \left[ \mathbb{D}_{-\epsilon_{b_1}}, \tilde{p}_1^{(b_2)}\right], \dots, \tilde{p}_1^{(b_k)}\right],
\end{align}
and by induction on degree, we can now assume that there is a noncommutative polynomial $P_{m_1,\dots,m_k}$ for which
\[
(C_\alpha- \mathbb{D}_{\vec{0}})\left(\tilde{h}^{(a_1)}_{m_1} \cdots \tilde{h}^{(a_k)}_{m_k}\otimes e^\alpha\right)
= P_{m_1,\dots,m_k}\left( \mathbb{D}_{-\epsilon_i}, \tilde{p}_1^{(i)} \right)(1\otimes e^\alpha).
\]
This implies that for the wreath Macdonald basis, for $\lambda$ with $\core(\lambda)=\alpha$,
\[
(C_\alpha-\mathbb{D}_{\vec{0}}) \left( H_\lambda\otimes e^\alpha\right) = P_{H_\lambda}\left( \mathbb{D}_{-\epsilon_i}, \tilde{p}_1^{(i)} \right)(1\otimes e^\alpha)
\]
for some noncommutative polynomial $P_{H_\lambda}$.
But 
\[(C_\alpha-\mathbb{D}_{\vec{0}}) \left( H_\lambda\otimes e^\alpha\right) =  \left(B_\lambda^{(0)}-B_\alpha^{(0)}\right)H_\lambda\otimes e^\alpha,\]
which, outside of the trivial case of $H_\lambda=H_\alpha=1$, is a nonzero scalar multiple.
The result then follows for $H_\lambda$  by dividing $P_{H_\lambda}$ by this scalar. 
Since $\{H_\lambda\,|\, \core(\lambda)=\alpha\}$ is a basis of $\Lambda_{q,t}^{\otimes r}$, the result holds for any $f$.
\end{proof}

\begin{cor}\label{D1P1Ind}
For any $f\in\Lambda_{q,t}^{\otimes r}$ of degree $k$, we can write
\begin{align*}
 f &= \sum_{i=0}^{r-1} \tilde{p}_1^{(i)} A_{i} 
+ \mathbb{D}_{-\epsilon_i} B_i =\sum_{i=0}^{r-1} \tilde{p}_1^{(i)} A^*_{i} 
+ \mathbb{D}^*_{-\epsilon_i} B_i^*
\end{align*}
for some $A_i,B_i, A^*_i,B^*_i \in \Lambda_{q,t}^{\otimes r}$ of degree $k-1$.
\end{cor}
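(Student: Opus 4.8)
The plan is to deduce Corollary~\ref{D1P1Ind} directly from Theorem~\ref{InductThm} by peeling off the leftmost letter of the noncommutative polynomial it produces. First I would apply \eqref{InductD} with $\alpha$ equal to the zero element of $Q$ and identify $\Lambda_{q,t}^{\otimes r}\otimes e^0$ with $\Lambda_{q,t}^{\otimes r}$. This identification is harmless because every operator in sight preserves the $Q$-grading: $\tilde{p}_1^{(i)}$ is multiplication by an element of $\Lambda_{q,t}^{\otimes r}$, while $\mathbb{D}$ (and hence each $\mathbb{D}_{\vec{k}}$) touches the $\CC[Q]$-factor only through the operators $z_i^{-H_{i,0}}$, with no shift by a group-algebra element; on $e^0$ these act as scalars. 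Thus Theorem~\ref{InductThm} furnishes a noncommutative polynomial $P$ in $\mathbb{D}_{-\epsilon_0},\ldots,\mathbb{D}_{-\epsilon_{r-1}},\tilde{p}_1^{(0)},\ldots,\tilde{p}_1^{(r-1)}$, regarded now as operators on $\Lambda_{q,t}^{\otimes r}$, with $f=P(1)$.

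Next I would do the degree bookkeeping. Each of the $2r$ generating operators raises polynomial degree by exactly $1$: for $\tilde{p}_1^{(i)}$ this is immediate, and for $\mathbb{D}_{-\epsilon_i}$ it follows from $\mathbb{D}_{-\epsilon_i}=[\mathbb{D}_{\vec{0}},\tilde{p}_1^{(i)}]$ (the $\vec{k}=\vec{0}$ case of \eqref{MultD}) together with the fact that $\mathbb{D}_{\vec{0}}=\mathbb{D}_{\epsilon_0-\epsilon_0}$ is diagonal on the basis $\{H_\lambda\}$ (Corollary~\ref{DDiag}), hence degree-preserving. Since $f$ is homogeneous of degree $k\ge 1$, I may retain from $P$ only the monomials of length exactly $k$ without changing $P(1)=f$; in particular the empty monomial is discarded. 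Writing the truncated polynomial as $P=\sum_M c_M\, Y^{M}_{1}Y^{M}_{2}\cdots Y^{M}_{k}$ with each letter $Y^{M}_{j}\in\{\mathbb{D}_{-\epsilon_i},\tilde{p}_1^{(i)}\}_{i\in\ZZ/r\ZZ}$, I would set $g_M\coloneq Y^{M}_{2}\cdots Y^{M}_{k}(1)\in\Lambda_{q,t}^{\otimes r}$, which is homogeneous of degree $k-1$. Collecting monomials by their leftmost letter and putting $A_i\coloneq\sum_{M:\,Y^{M}_{1}=\tilde{p}_1^{(i)}}c_M g_M$ and $B_i\coloneq\sum_{M:\,Y^{M}_{1}=\mathbb{D}_{-\epsilon_i}}c_M g_M$, both of degree $k-1$, then gives $f=\sum_{i}(\tilde{p}_1^{(i)}A_i+\mathbb{D}_{-\epsilon_i}B_i)$. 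The second identity is obtained identically, starting from \eqref{InductD*} and using \eqref{MultD*} in place of \eqref{MultD}, noting that $\mathbb{D}^{*}_{\vec{0}}=\mathbb{D}^{*}_{\epsilon_0-\epsilon_0}$ is likewise diagonal on $\{H_\lambda\}$ by Corollary~\ref{DDiag}.

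I do not expect a serious obstacle: the real content is packaged inside Theorem~\ref{InductThm}, and what remains is organizational. The two points deserving a moment of care are (i) confirming that $\tilde{p}_1^{(i)}$ and each $\mathbb{D}_{-\epsilon_i}$ (resp.\ $\mathbb{D}^{*}_{-\epsilon_i}$) genuinely preserve the $e^0$-component, so that the polynomial identity valid a priori in $\mathcal{W}$ descends to one in $\Lambda_{q,t}^{\otimes r}$; and (ii) verifying that $\mathbb{D}_{-\epsilon_i}$ and $\mathbb{D}^{*}_{-\epsilon_i}$ are homogeneous of degree $+1$, which is exactly what forces $A_i,B_i,A_i^{*},B_i^{*}$ into degree $k-1$ and excludes a stray degree-$0$ contribution.
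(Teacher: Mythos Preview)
Your argument is correct and is exactly the paper's approach: the paper's one-line proof simply records that $\tilde{p}_1^{(i)}$, $\mathbb{D}_{-\epsilon_i}$, and $\mathbb{D}^*_{-\epsilon_i}$ each raise degree by $1$, leaving the peeling-off-the-first-letter step implicit. Your additional care about the $Q$-grading and the commutator justification for the degree of $\mathbb{D}_{-\epsilon_i}$ are the right details to check, and they go through as you say.
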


\begin{proof}
Note that $\tilde{p}_1^{(i)}$, $\mathbb{D}_{-\epsilon_i}$, and $\mathbb{D}_{-\epsilon_i}^*$ increase degree by $1$.
\end{proof}

\begin{cor}\label{D0Gen}
Every element $f\otimes e^\alpha \in \Lambda_{q,t}^{\otimes r}\otimes\CC[Q]$ may be written as 
\begin{align*}
 f\otimes e^\alpha&=P\left(\mathbb{D}_{\vec{0}}, \tilde{p}_1^{(0)},\ldots, \tilde{p}_1^{(r-1)}\right)(1\otimes e^\alpha)   \\
 &=P^*\left(\mathbb{D}^*_{\vec{0}}, \tilde{p}_1^{(0)},\ldots, \tilde{p}_1^{(r-1)}\right)(1\otimes e^\alpha)   
\end{align*}
for some noncommutative polynomial $P$ in the operators $\{\mathbb{D}_{\vec{0}}, \tilde{p}_1^{(0)},\ldots, \tilde{p}_1^{(r-1)}\}$ and some noncommutative polynomial $P^*$ in $\{\mathbb{D}_{\vec{0}}^*, \tilde{p}_1^{(0)},\ldots, \tilde{p}_1^{(r-1)}\}$.
\end{cor}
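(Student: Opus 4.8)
The plan is to bootstrap directly from Theorem~\ref{InductThm} by rewriting each single-step operator $\mathbb{D}_{-\epsilon_i}$ as a commutator involving $\tilde{p}_1^{(i)}$ and $\mathbb{D}_{\vec{0}}$. Theorem~\ref{InductThm} furnishes a noncommutative polynomial $P$ with
\[
f\otimes e^\alpha = P\left(\mathbb{D}_{-\epsilon_0},\ldots,\mathbb{D}_{-\epsilon_{r-1}},\tilde{p}_1^{(0)},\ldots,\tilde{p}_1^{(r-1)}\right)(1\otimes e^\alpha),
\]
so it suffices to express each $\mathbb{D}_{-\epsilon_i}$ in terms of $\mathbb{D}_{\vec{0}}$ and the operators $\tilde{p}_1^{(j)}$. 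Specializing relation~\eqref{MultD} to $\vec{k}=\vec{0}$ gives exactly
\[
\mathbb{D}_{-\epsilon_i} = \left[\mathbb{D}_{\vec{0}},\tilde{p}_1^{(i)}\right] = \mathbb{D}_{\vec{0}}\,\tilde{p}_1^{(i)} - \tilde{p}_1^{(i)}\,\mathbb{D}_{\vec{0}}.
\]
Substituting this expression for each symbol $\mathbb{D}_{-\epsilon_i}$ appearing in $P$ produces a noncommutative polynomial in $\{\mathbb{D}_{\vec{0}},\tilde{p}_1^{(0)},\ldots,\tilde{p}_1^{(r-1)}\}$ whose evaluation on $1\otimes e^\alpha$ equals $f\otimes e^\alpha$; this is the first assertion.

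For the starred statement I would start instead from the expression~\eqref{InductD*} in Theorem~\ref{InductThm}, namely $f\otimes e^\alpha = P^*(\mathbb{D}^*_{-\epsilon_i},\tilde{p}_1^{(i)})(1\otimes e^\alpha)$, and use~\eqref{MultD*} at $\vec{k}=\vec{0}$, which reads $[\mathbb{D}^*_{\vec{0}},\tilde{p}_1^{(i)}] = -qt\,\mathbb{D}^*_{-\epsilon_i}$, hence
\[
\mathbb{D}^*_{-\epsilon_i} = -\frac{1}{qt}\left[\mathbb{D}^*_{\vec{0}},\tilde{p}_1^{(i)}\right].
\]
Substituting into $P^*$ yields the desired noncommutative polynomial in $\{\mathbb{D}^*_{\vec{0}},\tilde{p}_1^{(0)},\ldots,\tilde{p}_1^{(r-1)}\}$.

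There is essentially no obstacle here beyond invoking Theorem~\ref{InductThm}: the only thing to verify is that~\eqref{MultD} and~\eqref{MultD*} at $\vec{k}=\vec{0}$ are genuine operator identities, which they are as the $\vec{k}=\vec{0}$ instances of relations already established via Lemma~\ref{ODRel}, so the substitution of the commutator for $\mathbb{D}_{-\epsilon_i}$ (resp. $\mathbb{D}^*_{-\epsilon_i}$) is legitimate and the resulting polynomials are well-defined elements of the noncommutative polynomial ring over $\CC(q,t)$. If one wishes to track degrees one may additionally note that $\mathbb{D}_{\vec{0}}$ and $\mathbb{D}^*_{\vec{0}}$ are degree-preserving while $\tilde{p}_1^{(i)}$, $\mathbb{D}_{-\epsilon_i}$, and $\mathbb{D}^*_{-\epsilon_i}$ raise degree by one, consistent with Corollary~\ref{D1P1Ind}.
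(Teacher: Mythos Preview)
Your proof is correct and follows essentially the same approach as the paper: invoke Theorem~\ref{InductThm} and then replace each $\mathbb{D}_{-\epsilon_i}$ (resp.\ $\mathbb{D}^*_{-\epsilon_i}$) by the commutator $[\mathbb{D}_{\vec{0}},\tilde{p}_1^{(i)}]$ (resp.\ $-q^{-1}t^{-1}[\mathbb{D}^*_{\vec{0}},\tilde{p}_1^{(i)}]$) coming from~\eqref{MultD} and~\eqref{MultD*} at $\vec{k}=\vec{0}$.
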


\begin{proof}
    Combine Theorem \ref{InductThm} with (\ref{MultD}) and (\ref{MultD*}), utilizing
    \begin{equation*}
        \begin{aligned}
         \mathbb{D}_{-\epsilon_i}=\left[\mathbb{D}_{\vec{0}}, \tilde{p}_1^{(i)}\right] && \text{ and } &&
         \mathbb{D}^*_{-\epsilon_i}=-q^{-1}t^{-1}\left[\mathbb{D}_{\vec{0}}^*, \tilde{p}_1^{(i)}\right].
        \end{aligned}    
    \end{equation*}
     \qedhere       
\end{proof}

\subsection{Delta functions}\label{DeltaSec}
We now introduce the following ``delta functions'':
\begin{align*}
\mathbb{E}_\lambda& \coloneq \Omega\left[\sum_{i\in\ZZ/r\ZZ} X^{(i)}\left( \frac{D_\lambda^\bullet}{(1-q)(t-1)} \right)^{(i)} \right]\\
&= \Omega\left[\sum_{i\in\ZZ/r\ZZ} X^{(i)}\left( \left(\frac{1}{(1-q)(1-t)} \right)^{(i)}-B_\lambda^{(i)} \right)\right]
\end{align*}
It is an element in the completion of $\Lambda_{q,t}^{\otimes r}$ by degree.

\begin{prop}\label{DeltaExpand}
For any $r$-core $\alpha$, $\mathbb{E}_\lambda$ can be written as
\[
\mathbb{E}_\lambda=\sum_{\substack{\mu\\ \core(\mu)=\alpha}}\frac{H_\mu^\dagger[X^\bullet]H_\mu[\iota D_\lambda^\bullet]}{N_\mu}
=
\sum_{\substack{\mu\\ \core(\mu)=\alpha}}\frac{H_\mu[X^\bullet]H_\mu^\dagger[\iota D_\lambda^\bullet]}{N_\mu}.
\]
\end{prop}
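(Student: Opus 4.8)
The plan is to recognize $\mathbb{E}_\lambda$ as a specialization of the Cauchy kernel $\Omega_{q,t}[X^{-\bullet}Y^\bullet]$ from Proposition \ref{CauchyProp} and then invoke that proposition directly. The key observation is the identity \eqref{OverMEv}: for any sum of characters $S^\bullet$ split into colored pieces $\{S^{(i)}\}$,
\[
\left(\frac{X^\bullet}{(1-q\sigma)(1-t\sigma^{-1})}\right)\Bigg|_{X^{(j)}\mapsto S^{(j)}}=\left(\frac{S^\bullet}{(1-q)(1-t)}\right)^{(i)}
\]
as the color-$i$ component. I would apply this with $S^\bullet = \iota D_\lambda^\bullet$, i.e. $S^{(j)} = D_\lambda^{(-j)}$, and keep careful track of the sign coming from $(1-t)$ versus $(t-1)$ in the denominator of $\mathbb{E}_\lambda$.

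First I would write out the definition
\[
\Omega_{q,t}[X^{-\bullet}Y^\bullet]=\Omega\left[X^{-\bullet}\left(\frac{Y^\bullet}{(1-q\sigma^{-1})(t\sigma-1)}\right)\right],
\]
recall that $\Omega[X^{-\bullet}Y^\bullet]$ pairs only variables of negating colors, so that $X^{(i)}$ multiplies the color-$(-i)$ component of $Y^\bullet/(1-q\sigma^{-1})(t\sigma-1)$, and then specialize $Y^{(j)}\mapsto D_\lambda^{(j)}$. Using the transpose relation $\sigma^T=\sigma^{-1}$ (Proposition \ref{SigProp}(2)) to match $(1-q\sigma^{-1})(t\sigma-1)$ with the operator appearing in \eqref{OverMEv} up to the color reversal $\iota$, and applying \eqref{OverMEv}, the color-$(-i)$ component of $D_\lambda^\bullet/(1-q\sigma^{-1})(t\sigma-1)$ evaluated becomes $\left(D_\lambda^\bullet/(1-q)(t-1)\right)^{(?)}$; I would chase the indices to confirm it lands as $X^{(i)}\left(D_\lambda^\bullet/(1-q)(t-1)\right)^{(i)}$, which is precisely the exponent in $\mathbb{E}_\lambda$. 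This identifies $\mathbb{E}_\lambda = \Omega_{q,t}[X^{-\bullet}Y^\bullet]\big|_{Y^{(j)}\mapsto D_\lambda^{(j)}}$.

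With that identification in hand, the proposition follows immediately from Proposition \ref{CauchyProp}: substituting $Y^{(j)}\mapsto D_\lambda^{(j)}$ into
\[
\Omega_{q,t}[X^{-\bullet}Y^\bullet]=\sum_{\core(\mu)=\alpha}\frac{H_\mu^\dagger[X^\bullet]H_\mu[Y^\bullet]}{N_\mu}=\sum_{\core(\mu)=\alpha}\frac{H_\mu[X^\bullet]H_\mu^\dagger[Y^\bullet]}{N_\mu}
\]
and noting that $H_\mu[Y^\bullet]\big|_{Y^{(j)}\mapsto D_\lambda^{(j)}}$ must be interpreted via the color-respecting rule, which because the $Y$-variables get reversed in the kernel pairing yields exactly $H_\mu[\iota D_\lambda^\bullet]$ (and likewise $H_\mu^\dagger[\iota D_\lambda^\bullet]$ in the second expression). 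One subtlety to address is convergence: $\mathbb{E}_\lambda$ lives in the degree-completion of $\Lambda_{q,t}^{\otimes r}$, and the right-hand sides are infinite sums over $\mu$ with $\core(\mu)=\alpha$; since $H_\mu$ has $\quot(\mu)$-degree $|\quot(\mu)|$ and the sum in Proposition \ref{CauchyProp} is already understood in this completed sense, the substitution is legitimate degree by degree.

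The main obstacle I anticipate is purely bookkeeping: getting every color index and every sign right through the chain of identifications — the $\iota$-twist built into $\Omega[X^{-\bullet}Y^\bullet]$, the asymmetry between $(1-q\sigma^{-1})(t\sigma-1)$ and $(1-q\sigma)(1-t\sigma^{-1})$, the $(t-1)$ versus $(1-t)$ in the denominator, and the fact that the color of $q^at^b$ is fixed \emph{before} any substitution. I would handle this by first doing the $r=1$ sanity check (where $\mathbb{E}_\lambda$ reduces to $\Omega[XD_\lambda/(1-q)(t-1)]$ and the claim is the classical expansion of the modified Cauchy kernel), then carefully verifying the color-$i$ coefficient matches on power sums $p_n$, which is all that is needed since $\Omega$ is determined by its values on power sums.
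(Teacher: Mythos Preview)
Your approach is the same as the paper's: recognize $\mathbb{E}_\lambda$ as a specialization of the Cauchy kernel $\Omega_{q,t}[X^{-\bullet}Y^\bullet]$ and invoke Proposition \ref{CauchyProp}. The paper does this by first applying $\iota$ to the $Y$-variables and \emph{then} specializing $Y^{(j)}\mapsto D_\lambda^{(j)}$; on the expansion side this sends $H_\mu[Y^\bullet]\mapsto H_\mu[\iota Y^\bullet]\mapsto H_\mu[\iota D_\lambda^\bullet]$, as desired.

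Your proposal contains an internal inconsistency that you should fix. Early on you (correctly) suggest substituting $S^{(j)}=D_\lambda^{(-j)}$, i.e.\ $Y^\bullet\mapsto\iota D_\lambda^\bullet$, but you then assert $\mathbb{E}_\lambda=\Omega_{q,t}[X^{-\bullet}Y^\bullet]\big|_{Y^{(j)}\mapsto D_\lambda^{(j)}}$ and claim that $H_\mu[Y^\bullet]\big|_{Y^{(j)}\mapsto D_\lambda^{(j)}}=H_\mu[\iota D_\lambda^\bullet]$ ``because the $Y$-variables get reversed in the kernel pairing.'' That reasoning is wrong: the $X^{-\bullet}$ in the kernel only records how $X$ and $Y$ are paired inside $\Omega$; it does not alter $H_\mu[Y^\bullet]$ in the Cauchy expansion, which is simply $H_\mu$ in the $Y$-alphabet. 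Substituting $Y^{(j)}\mapsto D_\lambda^{(j)}$ there gives $H_\mu[D_\lambda^\bullet]$, not $H_\mu[\iota D_\lambda^\bullet]$, and on the kernel side the same substitution produces the wrong color pattern (you land on $D_\lambda^{(i-a+b)}$ rather than $D_\lambda^{(i+a-b)}$ in the expansion of $(1-q\sigma^{-1})^{-1}(t\sigma-1)^{-1}$, so \eqref{OverMEv} does not apply cleanly). Either stick with $Y^{(j)}\mapsto D_\lambda^{(-j)}$ throughout, or do as the paper does and apply $\iota$ to $Y$ first; both routes are equivalent and make the bookkeeping transparent.
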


\begin{proof}
We obtain $\mathbb{E}_\lambda[X^\bullet]$ from $\Omega_{q,t}[X^{-\bullet}Y^\bullet]$ by (1) applying $\iota$ to the $Y^\bullet$-variables and then (2) specializing them to $D_\lambda^\bullet$:
\begin{align*}
\Omega_{q,t}[X^{-\bullet}Y^\bullet]
&= \Omega\left[ X^{(0)}\left( \frac{Y^{(0)}}{(1-q\sigma^{-1})(t\sigma-1)}\right)+\cdots +X^{(r-1)}\left( \frac{Y^{(1)}}{(1-q\sigma^{-1})(t\sigma-1)} \right) \right]\\
&\overset{(1)}{\mapsto}\Omega\left[ X^{(0)}\left( \frac{Y^{(0)}}{(1-q\sigma)(t\sigma^{-1}-1)}\right)+\cdots +X^{(r-1)}\left( \frac{Y^{(r-1)}}{(1-q\sigma)(t\sigma^{-1}-1)} \right) \right]\\
&\overset{(2)}{\mapsto} \Omega\left[ X^{(0)}\left( \frac{D_\lambda^{\bullet}}{(1-q)(t-1)}\right)^{(0)}+\cdots +X^{(r-1)}\left( \frac{D_\lambda^\bullet}{(1-q)(t-1)} \right)^{(r-1)} \right].\
\end{align*}
The equations follow from Proposition \ref{CauchyProp}.
\end{proof}

\begin{cor}\label{DeltaFunc}
$\mathbb{E}_\lambda$ is a delta function for the $\iota$-twisted evaluation at $D_\lambda^\bullet$:
\[
\langle f, \mathbb{E}_\lambda\rangle_{q,t}'=f[\iota D_\lambda^\bullet].
\]
\end{cor}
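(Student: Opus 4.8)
The plan is to deduce the corollary directly from the expansion of $\mathbb{E}_\lambda$ furnished by Proposition \ref{DeltaExpand}, together with the biorthogonality of $\{H_\mu^\dagger\}$ and $\{H_\mu\}$ with respect to $\langle-,-\rangle_{q,t}'$. First I would fix an arbitrary $r$-core $\alpha$ and expand $f=\sum_{\core(\mu)=\alpha}a_\mu H_\mu$, a \emph{finite} linear combination since $\{H_\mu\mid\core(\mu)=\alpha\}$ is a basis of $\Lambda_{q,t}^{\otimes r}$. By Proposition \ref{DeltaExpand} applied with this same core $\alpha$, we may write $\mathbb{E}_\lambda=\sum_{\core(\mu)=\alpha}N_\mu^{-1}H_\mu^\dagger[X^\bullet]\,H_\mu[\iota D_\lambda^\bullet]$. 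Because $f$ is of bounded degree, only finitely many terms of this (a priori degree-completed) series pair nontrivially with $f$, so $\langle f,\mathbb{E}_\lambda\rangle_{q,t}'$ is well-defined and may be computed termwise.

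Next I would invoke the proposition following Definition \ref{PairDef}, which gives $\langle H_\mu^\dagger,H_\nu\rangle_{q,t}'=0$ unless $\mu=\nu$, together with the defining relation $N_\mu=\langle H_\mu^\dagger,H_\mu\rangle_{q,t}'$ from Proposition \ref{CauchyProp}; combined with the symmetry of $\langle-,-\rangle_{q,t}'$ this yields $\langle f,H_\mu^\dagger\rangle_{q,t}'=a_\mu N_\mu$. Substituting into the termwise expansion,
\[
\langle f,\mathbb{E}_\lambda\rangle_{q,t}'=\sum_{\core(\mu)=\alpha}\frac{\langle f,H_\mu^\dagger\rangle_{q,t}'}{N_\mu}\,H_\mu[\iota D_\lambda^\bullet]=\sum_{\core(\mu)=\alpha}a_\mu\,H_\mu[\iota D_\lambda^\bullet].
\]
Finally, since the colored evaluation $g\mapsto g[\iota D_\lambda^\bullet]$ is linear, the last sum equals $f[\iota D_\lambda^\bullet]$, which is the assertion.

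There is no serious obstacle here: the substantive content has already been packaged in Propositions \ref{CauchyProp} and \ref{DeltaExpand}, and what remains is bookkeeping. The only points that merit a word of care are (i) confirming that pairing a finite-degree element against the degree-completed series $\mathbb{E}_\lambda$ is legitimate, which holds because each graded piece of $\mathbb{E}_\lambda$ is a genuine element of $\Lambda_{q,t}^{\otimes r}$ while $f$ has finite degree; (ii) noting $N_\mu\neq 0$, which is built into Proposition \ref{CauchyProp}, so the divisions make sense; and (iii) observing that the choice of core $\alpha$ is immaterial, since $f$ and $\mathbb{E}_\lambda$ admit compatible expansions for every $\alpha$ whereas the right-hand side $f[\iota D_\lambda^\bullet]$ is manifestly independent of $\alpha$.
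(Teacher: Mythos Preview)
Your proof is correct and follows the approach the paper intends: the corollary is stated immediately after Proposition \ref{DeltaExpand} with no separate proof, precisely because it is the standard reproducing-kernel computation you carry out---expand $\mathbb{E}_\lambda$ in the dual basis $\{H_\mu^\dagger\}$, pair against $f=\sum a_\mu H_\mu$ using biorthogonality and $N_\mu=\langle H_\mu^\dagger,H_\mu\rangle_{q,t}'$, and read off $f[\iota D_\lambda^\bullet]$. Your remarks on the well-definedness of the pairing with a degree-completed series and on the irrelevance of the choice of $\alpha$ are apt and complete the argument.
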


\begin{cor}
The set $\{\mathbb{E}_\lambda\}$ is linearly independent in the completion of $\Lambda_{q,t}^{\otimes r}$.
\end{cor}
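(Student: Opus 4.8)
The plan is to reduce linear independence of $\{\mathbb{E}_\lambda\}$ to the statement that the evaluation maps $\mathrm{ev}_\lambda\colon f\mapsto f[\iota D_\lambda^\bullet]$ on $\Lambda_{q,t}^{\otimes r}$ are pairwise distinct, exploiting that $\mathbb{E}_\lambda$ represents $\mathrm{ev}_\lambda$ under the pairing (Corollary \ref{DeltaFunc}). Concretely, I would start from a hypothetical finite relation $\sum_{\lambda\in S}c_\lambda\mathbb{E}_\lambda=0$ with $c_\lambda\in\CC(q,t)$ and pair it against an arbitrary homogeneous $f\in\Lambda_{q,t}^{\otimes r}$. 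This is legitimate because $\langle-,-\rangle_{q,t}'$ is graded and nondegenerate on each graded piece (with dual bases $\{H_\mu\}$ and $\{H_\mu^\dagger\}$ of fixed core, cf.\ Proposition \ref{CauchyProp}), so only the finitely many terms of $\mathbb{E}_\lambda$ in degree $\deg f$ contribute. Applying Corollary \ref{DeltaFunc} then turns the relation into $\sum_{\lambda\in S}c_\lambda\,f[\iota D_\lambda^\bullet]=0$ for every such $f$.

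Next I would show that distinct partitions give distinct ring homomorphisms $\mathrm{ev}_\lambda$. Here I would invoke \eqref{DLambda}: each $D_\lambda^{(i)}$ is a \emph{finite} integral linear combination of $(q,t)$-monomials, so $\mathrm{ev}_\lambda$ is genuinely a homomorphism to $\CC(q,t)$, determined by the scalars $p_n[D_\lambda^{(-i)}]$. If $\mathrm{ev}_\lambda=\mathrm{ev}_\mu$, then $p_n[D_\lambda^{(i)}]=p_n[D_\mu^{(i)}]$ for all $i$ and all $n\ge 1$; summing over $i$ and using $D_\lambda^\bullet=(1-q)(1-t)B_\lambda^\bullet-1$ gives $(1-q^n)(1-t^n)\,p_n[B_\lambda^\bullet]=(1-q^n)(1-t^n)\,p_n[B_\mu^\bullet]$, hence $p_n[B_\lambda^\bullet]=p_n[B_\mu^\bullet]$ for all $n$. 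Since $B_\lambda^\bullet=\sum_{(a,b)\in\lambda}q^at^b$ has all coefficients equal to $1$, the collection of all its power sums determines the underlying set of exponents, i.e.\ the set of boxes of $\lambda$; therefore $\lambda=\mu$.

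Finally, a Lagrange-interpolation step closes the argument: fixing $\lambda_0\in S$, for each $\lambda\in S\setminus\{\lambda_0\}$ I would choose $h_\lambda\in\Lambda_{q,t}^{\otimes r}$ with $\mathrm{ev}_\lambda(h_\lambda)\ne\mathrm{ev}_{\lambda_0}(h_\lambda)$ (possible by the previous paragraph) and replace it by $h_\lambda-\mathrm{ev}_\lambda(h_\lambda)$, so that $h_\lambda$ vanishes at $\iota D_\lambda^\bullet$ but not at $\iota D_{\lambda_0}^\bullet$; then $f=\prod_{\lambda\ne\lambda_0}h_\lambda/\mathrm{ev}_{\lambda_0}(h_\lambda)$ satisfies $\mathrm{ev}_\lambda(f)=\delta_{\lambda,\lambda_0}$ for all $\lambda\in S$, and substituting this $f$ into $\sum_{\lambda\in S}c_\lambda\,f[\iota D_\lambda^\bullet]=0$ forces $c_{\lambda_0}=0$. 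As $\lambda_0\in S$ is arbitrary, all $c_\lambda$ vanish. I expect the only mildly delicate point to be the first one — justifying that pairing a finite-degree element against the completion-valued $\mathbb{E}_\lambda$ is well-defined and compatible with Corollary \ref{DeltaFunc}; the distinctness and interpolation steps are routine. (Alternatively one could expand every $\mathbb{E}_\lambda$ in a single fixed basis $\{H_\mu^\dagger\mid\core(\mu)=\alpha\}$ via Proposition \ref{DeltaExpand} and argue with the resulting coefficient array, but the delta-function argument above is cleaner.)
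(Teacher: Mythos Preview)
Your argument is correct and is precisely the natural expansion of what the paper leaves implicit: the corollary is stated without proof immediately after Corollary~\ref{DeltaFunc}, the intended argument being that the $\mathbb{E}_\lambda$ represent pairwise distinct evaluation functionals under $\langle-,-\rangle_{q,t}'$ and are therefore linearly independent. One minor simplification: in your distinctness step you do not need all power sums---already $n=1$ gives $B_\lambda^\bullet=\sum_{(a,b)\in\lambda}q^at^b$ as an element of $\CC(q,t)$, and since the monomials $q^at^b$ are linearly independent this alone recovers the box set of $\lambda$.
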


\subsubsection{Delta on delta}
To investigate how a Delta operator $\Delta^\dagger[f]$ acts on $\mathbb{E}_\lambda$, we will need to instead consider its action on $\mathbb{E}_\lambda\otimes e^{\core(\lambda)}$.

\begin{lem}\label{DeltaDelta}
Recall the elements $\hat{e}_n^{(p)}$ and $\hat{h}_n^{(p)}$ from \ref{ModEH}.
\begin{enumerate}
\item We have
\[
\Delta^\dagger\left[\hat{e}_n^{(p)}[-\iota X^\bullet]\right]\left( \mathbb{E}_\lambda\otimes e^{\core(\lambda)} \right)=\sum_{\mu}c_{\mu\lambda}\mathbb{E}_\mu\otimes e^{\core(\lambda)}
\]
where the $\mu$ that appear are such that:
\begin{itemize}
\item $\mu\supset\lambda$;
\item $\mu\backslash\lambda$ contains $n$ boxes of each color;
\item no color $p$ and color $(p+1)$ box in $\mu\backslash\lambda$ are horizontally adjacent.
\end{itemize}
 \item Similarly, we have
\[
\Delta^\dagger\left[\hat{h}_n^{(p)}[-\iota X^\bullet]\right]\left( \mathbb{E}_\lambda\otimes e^{\core(\lambda)} \right)=\sum_{\mu}d_{\mu\lambda}\mathbb{E}_\mu\otimes e^{\core(\lambda)}
\]
where the $\mu$ that appear are such that:
\begin{itemize}
\item $\mu\supset\lambda$;
\item $\mu\backslash\lambda$ contains $n$ boxes of each color;
\item no color $p$ and color $(p+1)$ box in $\mu\backslash\lambda$ are vertically adjacent. 
\end{itemize}
\end{enumerate}
\end{lem}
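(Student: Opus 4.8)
We prove part (1); part (2) is obtained by the identical argument with the row-type formula \eqref{DeltaHForm} in place of the column-type formula \eqref{DeltaEForm}, ``vertical'' replacing ``horizontal'' throughout. The plan is to feed $\mathbb{E}_\lambda\otimes e^{\core(\lambda)}$ directly into the explicit operator formula \eqref{DeltaEForm} and evaluate the resulting constant term by residues, in the spirit of \cite{GHT}.

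First, the reduction. Write $\mathbb{E}_\lambda=\Omega\bigl[\sum_i X^{(i)}c^\lambda_i\bigr]$ with $c^\lambda_i=\bigl(\tfrac{D^\bullet_\lambda}{(1-q)(t-1)}\bigr)^{(i)}$; since every map in sight is graded, \eqref{DeltaEForm} continues to make sense on the degree completion. In \eqref{DeltaEForm} the factor $\prod_{i,a}z_{i,a}^{-H_{i,0}}$ preserves the $\FF\{Q\}$-component $e^{\core(\lambda)}$ and contributes only a Laurent monomial in the $z$'s; by Lemma \ref{TOLem} (checked on power sums) the $\TT$-factor sends $\mathbb{E}_\lambda$ to itself times an explicit scalar rational function of the $z$'s. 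Multiplying through by $\Omega\bigl[-\sum_i X^{(i)}\sum_a z_{i,a}\bigr]$ and splitting off the factors $\Omega[-X^{(i)}z_{i,a}]$, one gets
\[
\Delta^\dagger[\hat e_n^{(p)}[-\iota X^\bullet]](\mathbb{E}_\lambda\otimes e^{\core(\lambda)})
= \Bigl\{ R_p(z;q,t)\,\mathbb{E}_\lambda\prod_{i\in\ZZ/r\ZZ}\prod_{a=1}^{n}\Omega[-X^{(i)}z_{i,a}]\Bigr\}_0\otimes e^{\core(\lambda)},
\]
where $R_p$ is an explicit rational function in the $z_{i,a}$ (the prefactor of \eqref{DeltaEForm}, the monomial from the $H_{i,0}$'s, and the scalar produced by $\TT$), all of whose poles in each variable are simple.

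Now evaluate the constant term by iterated residues using Lemma \ref{ConstTermLem}. In each variable $z_{i,a}$ the only poles come from $R_p$, and they sit either at the character of an addable box of color $i$ for $\lambda$ (these are produced by the $\TT$-scalar, whose denominators reconstitute the boundary-strip factors of $\lambda$ exactly as for the ordinary Macdonald operator) or at a pole of the cross-color factors, namely at $z_{i,a}$ equal to $q$ or $t^{-1}$ times an already-specialized variable, corresponding to adjoining $z_{i,a}$ as a box abutting a previously adjoined one. Substituting $z_{i,a}=\chi_\square$ turns $\Omega[-X^{(i)}z_{i,a}]$ into $\Omega[-X^{(\bar c_\square)}\chi_\square]$, and $\mathbb{E}_\lambda\prod_\square\Omega[-X^{(\bar c_\square)}\chi_\square]=\mathbb{E}_\mu$ whenever $\mu=\lambda\cup\{\square\mid\square\text{ adjoined}\}$ is a partition; residues producing a non-partition cancel against a vanishing numerator factor of $R_p$, the usual Macdonald-operator phenomenon. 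Hence the output is a \emph{finite} sum $\sum_\mu c_{\mu\lambda}\,\mathbb{E}_\mu\otimes e^{\core(\lambda)}$ over $\mu\supset\lambda$ with $\mu\setminus\lambda$ containing $n$ boxes of each color, and $\core(\mu)=\core(\lambda)$ by Proposition \ref{CoreQuotDec}(4).

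The remaining point --- the crux --- is the adjacency constraint. For the distinguished color $p$, the formula \eqref{DeltaEForm} differs from the pattern of the other colors precisely by the absence of the factor $(1-qz_{p+1,b}/z_{p,a})^{-1}$, present for every $i\ne p$, which is exactly the pole that would let a color-$p$ box be adjoined immediately to the right of an already adjoined color-$(p+1)$ box (the locus $z_{p,a}=q\,z_{p+1,b}$); in its place stand the modified factors $\tfrac{z_{0,a}}{z_{p+1,a}}(1-tz_{p,a}/z_{p+1,a})^{-1}$ and $(1-t^{-1}z_{p+1,a}/z_{p,b})^{-1}$. Tracking all orders in which the $nr$ residues may be taken, one checks that any $\mu$ whose skew $\mu\setminus\lambda$ contains a color-$(p+1)$ box immediately to the left of a color-$p$ box can be reached only through this missing pole, or else forces a vanishing numerator factor of $R_p$ --- the same partition-validity bookkeeping that already rules out, e.g., two NE-diagonally adjacent boxes of the same color in $\mu\setminus\lambda$. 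Therefore $c_{\mu\lambda}=0$ for such $\mu$, which is the claim. I expect this last step --- the colored refinement of the classical residue computation producing the $e_n$-Pieri coefficients, now with $r$ interleaved families of variables and one exceptional color --- to be by far the most delicate part of the proof.
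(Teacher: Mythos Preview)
Your overall strategy is the paper's: apply the explicit formula \eqref{DeltaEForm} to $\mathbb{E}_\lambda\otimes e^{\core(\lambda)}$, reduce to a constant term in the $z_{i,a}$'s, and evaluate by iterated residues via Lemma \ref{ConstTermLem}. Your identification of the adjacency constraint --- that the exceptional color $p$ lacks the factor $(1-qz_{p+1,a}/z_{p,a})^{-1}$ which would allow a color-$p$ box to sit horizontally right of a color-$(p+1)$ box --- is also the correct mechanism.

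That said, your sketch omits several technical ingredients that the paper's proof shows are essential, and without which the residue computation does not go through cleanly:
\begin{itemize}
\item \textbf{Expansion-region conflict.} The constant-term lemma requires the poles to be expanded in a definite direction, but the conditions $|q|,|t|<1$ needed for the operator formula conflict with those needed to isolate the ``correct'' poles in later variables. The paper resolves this by introducing an auxiliary parameter $\mathfrak{p}$ (replacing certain $t$'s) with $|\mathfrak{p}|<|q^at^b|$ for all relevant $(a,b)$, carrying out the residue analysis with $\mathfrak{p}$ generic, and specializing $\mathfrak{p}\to t$ only at the end. You do not address this, and without it your appeal to Lemma \ref{ConstTermLem} is not justified.
\item \textbf{Order of evaluation.} The paper takes residues in a specific order: start at $z_{p,1}$, proceed \emph{downward} in cyclic color order to $z_{p-1,1},\dots,z_{p+1,1}$, then move to the next $a$-index. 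This order is what makes the pole classification ($\chi$-, $q$-, $t$-poles) tractable. Your phrase ``tracking all orders in which the $nr$ residues may be taken'' suggests you have not fixed one, and an arbitrary order will not give a clean analysis.
\item \textbf{The $t^\uparrow$-poles.} For $n>1$, when you reach $z_{p+1,a}$ there are new poles $(z_{p,b}-\mathfrak{p}^{-1}z_{p+1,a})$ with $b>a$ coming from the cross-$a$ factors in \eqref{DeltaEForm}. These link variables across different second indices and must be chased down recursively; the paper devotes several paragraphs to this. Your description of the poles (``at the character of an addable box \dots\ or at $q$ or $t^{-1}$ times an already-specialized variable'') only captures the within-$a$ structure and misses these entirely.
\item \textbf{Partition-validity.} Your claim that ``residues producing a non-partition cancel against a vanishing numerator factor'' is correct in spirit, but the verification is not the ``usual'' Macdonald phenomenon: it requires matching specific zeros $(z_{i,a}-qt\chi_\square)$ for $\square\in R_i(\lambda)$ and $(z_{i,b}-qtz_{i,a})$, $(z_{i,b}-z_{i,a})$ against the problematic evaluations, with the $\mathfrak{p}\to t$ specialization playing a role.
\end{itemize}
Your final sentence acknowledges that the crux is unproved. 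The paper's argument fills exactly these gaps; I would regard your proposal as a correct outline of the strategy but not yet a proof.
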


\begin{proof}
First consider the action of the translation components of both Delta operators (\ref{DeltaEForm}) and (\ref{DeltaHForm}) for a variable $z_{i,a}$:
\begin{align}
\nonumber
&\TT\left[ \left(-t X^{(i-1)}+(1+qt)X^{(i)}-qX^{(i+1)}\right)z_{i,a}^{-1} \right]\mathbb{E}_\lambda\\
\nonumber
&= \Omega\left[ \left( -t\left( \frac{D_\lambda^\bullet}{(1-q)(t-1)} \right)^{(i-1)}
+(1+qt)\left( \frac{D_\lambda^\bullet}{(1-q)(t-1)} \right)^{(i)}
-q\left( \frac{D_\lambda^\bullet}{(1-q)(t-1)} \right)^{(i+1)} \right)z_{i,a}^{-1} \right]\mathbb{E}_\lambda\\
\nonumber
&= \Omega\left[ \left(  \frac{(1-q)(1-t)D_\lambda^\bullet}{(1-q)(t-1)} \right)^{(i)}z_{i,a}^{-1} \right]\mathbb{E}_\lambda\\
\nonumber
&= \Omega\left[ -D_\lambda^{(i)}z_{i,a}^{-1} \right]\mathbb{E}_\lambda\\
\label{SkewDelta}
&= \frac{\displaystyle\prod_{\square\in R_i(\lambda)}\left( 1-qt\chi_\square/z_{i,a} \right)}{\displaystyle\prod_{\square\in A_i(\lambda)}\left( 1-\chi_\square/z_{i,a} \right)}\mathbb{E}_\lambda.
\end{align}
For the last equality, we have used (\ref{DLambda}).
Next, we look at $z_{i,a}^{-H_{i,0}}e^{\core(\lambda)}$.
As in the proof of Lemma \ref{FAdjoint2}, we can ignore the powers of $\ddd$ because we have an equal number of such operators per color.
Setting $\core(\lambda)=(c_0,\ldots, c_{r-1})$, equation (\ref{ChargeAR}) gives us
\[
z_{i,a}^{-H_{i,0}}e^{\core(\lambda)}=z_{i,a}^{\#R_i(\lambda)-\#A_i(\lambda)+\delta_{i,0}}e^{\core(\lambda)}
\]
We will use this balance out the binomials in (\ref{SkewDelta}).
Our final piece of preparation is to absorb the $\Omega$-components of the operators into $\mathbb{E}_\lambda$.
In total, after balancing all the remaining binomials, we have
\begin{equation}
\begin{aligned}
&\Delta^\dagger\left[ \hat{e}_n^{(p)}[-\iota X^\bullet] \right]\left(\mathbb{E}_\lambda\otimes e^{\core(\lambda)}\right)\\
&=\frac{t^n(1-qt)^{nr}}{\prod_{k=1}^n(1-q^kt^k)} 
\left\{
\prod_{1\le a<b\le n}\left[\rule{0cm}{2.2em}\right.
\frac{\left(z_{p,b}-z_{p,a}\right)\left(z_{p,b}-qtz_{p,a}\right)}{\left(z_{p,b}-t^{-1}z_{p+1,a}\right)\left(z_{p,b}-tz_{p-1,a}\right)} 
\right.
\\
&\quad\times 
\prod_{i\in \ZZ/r\ZZ\backslash\{p\}}
\frac{\left(z_{i,b}-z_{i,a}\right)\left(z_{i,b}-qtz_{i,a}\right)}{\left(z_{i,b}-qz_{i+1,a}\right)\left(z_{i,b}-tz_{i-1,a}\right)}\left]\rule{0cm}{2.2em}\right.\\
&\quad\times
\prod_{a=1}^n \left[\rule{0cm}{2.2em}\right.
\prod_{i\in \ZZ/r\ZZ\setminus\{p\}}
\left(\frac{z_{i+1,a}}{z_{i+1,a}-tz_{i,a}}\right)
\left( \frac{z_{i,a}}{z_{i,a}-qz_{i+1,a}} \right)
 \\
&\quad\times 
\left(\frac{z_{0,a}}{z_{p+1,a}}\right)
\left(\frac{z_{p+1,a}}{z_{p+1,a}-tz_{p,a}}\right)
\left]\rule{0cm}{2.2em}\right.
\prod_{a=1}^nz_{0,a}\prod_{i\in\ZZ/r\ZZ}\frac{\displaystyle\prod_{\square\in R_i(\lambda)}\left( z_{i,a}-qt\chi_\square \right)}
{\displaystyle\prod_{\square\in A_i(\lambda)}\left( z_{i,a}-\chi_\square \right)}\\
&\left.\quad\times
\Omega\left[ \sum_{i\in\ZZ/r\ZZ}X^{(i)}\left(\left(\frac{1}{(1-q)(1-t)}\right)^{(i)}-B_\lambda^{(i)}   -\sum_{a=1}^nz_{i,a} \right) \right]
\right\}_0\otimes e^{\core(\lambda)},
\end{aligned}
\label{DeltaELem}
\end{equation}
and
\begin{equation}
\begin{aligned}
&\Delta^\dagger\left[ \hat{h}_n^{(p)}[-\iota X^\bullet] \right]\left(\mathbb{E}_\lambda\otimes e^{\core(\lambda)}\right)\\
&=
\frac{(-1)^n\left(1- qt \right)^{nr}}{\prod_{k=1}^n\left(1- q^{k}t^{k} \right)}
\left\{\prod_{1\le a<b\le n}\left[\rule{0cm}{2.2em}\right.
\frac{\left(z_{p,b}-z_{p,a}\right)\left(z_{p,b}-qtz_{p,a}\right)}{\left(z_{p,b}-q^{-1}z_{p-1,a}\right)\left(z_{p,b}-qz_{p+1,a}\right)}\right. \\
&\quad\times 
\prod_{i\in \ZZ/r\ZZ\backslash\{p\}}
\frac{\left(z_{i,b}-z_{i,a}\right)\left(z_{i,b}-qtz_{i,a}\right)}{\left(z_{i,b}-qz_{i+1,a}\right)\left(z_{i,b}-tz_{i-1,a}\right)} \left]\rule{0cm}{2.2em}\right.\\
&\quad\times \prod_{a=1}^n
\left[\rule{0cm}{2.2em}\right.
\prod_{i\in \ZZ/r\ZZ\setminus\{p\}}
 \left(\frac{z_{i-1,a}}{z_{i-1,a}-qz_{i,a}}\right)\left(\frac{z_{i,a}}{z_{i,a}-tz_{i-1,a}}\right)
 \\
&\quad \times
\left(\frac{z_{0,a}}{z_{p,a}}\right)
\left(\frac{z_{p-1,a}}
{z_{p-1,a}-qz_{p,a}}\right) \left]\rule{0cm}{2.2em}\right.
\prod_{a=1}^nz_{0,a}\prod_{i\in\ZZ/r\ZZ}\frac{\displaystyle\prod_{\square\in R_i(\lambda)}\left( z_{i,a}-qt\chi_\square \right)}
{\displaystyle\prod_{\square\in A_i(\lambda)}\left( z_{i,a}-\chi_\square \right)}\\
&\left.\quad\times
\Omega\left[ \sum_{i\in\ZZ/r\ZZ}X^{(i)}\left(\left(\frac{1}{(1-q)(1-t)}\right)^{(i)}-B_\lambda^{(i)}   -\sum_{a=1}^nz_{i,a} \right) \right]
\right\}_0\otimes e^{\core(\lambda)}.
\end{aligned}
\label{DeltaHLem}
\end{equation}
In both formulas, the rational functions are Laurent series expanded assuming
\begin{equation*}
\begin{aligned}
|z_{i,a}|&=1,&
|q|,|t|&<1
\end{aligned}
\end{equation*}
For technical reasons, we will introduce an auxilliary variable $\ppp$.
The constant term (\ref{DeltaELem}) is equal to
\begin{align}
\label{DeltaELem1}
&\frac{t^n(1-qt)^{nr}}{\prod_{k=1}^n(1-q^kt^k)} 
\left\{
\prod_{1\le a<b\le n}\left[\rule{0cm}{2.2em}\right.
\frac{\left(z_{p,b}-z_{p,a}\right)\left(z_{p,b}-qtz_{p,a}\right)}{\left(z_{p,b}-\ppp^{-1}z_{p+1,a}\right)\left(z_{p,b}-\ppp z_{p-1,a}\right)}
\right.
\\
\label{DeltaELem2}
&\quad\times 
\prod_{i\in \ZZ/r\ZZ\backslash\{p\}}
\frac{\left(z_{i,b}-z_{i,a}\right)\left(z_{i,b}-qtz_{i,a}\right)}{\left(z_{i,b}-qz_{i+1,a}\right)\left(z_{i,b}-\ppp z_{i-1,a}\right)} \left]\rule{0cm}{2.2em}\right.\\
\label{DeltaELem3}
&\quad\times
\prod_{a=1}^n\left[\rule{0cm}{2.2em}\right.
\prod_{i\in \ZZ/r\ZZ\setminus\{p\}}
\left(\frac{z_{i+1,a}}{z_{i+1,a}-\ppp z_{i,a}}\right)
\left( \frac{z_{i,a}}{z_{i,a}-qz_{i+1,a}} \right)
 \\
\label{DeltaELem4}
&\quad\times 
\left(\frac{z_{0,a}}{z_{p+1,a}}\right)
\left(\frac{z_{p+1,a}}{z_{p+1,a}-\ppp z_{p,a}}\right)
\left]\rule{0cm}{2.2em}\right.
\prod_{a=1}^nz_{0,a}\prod_{i\in\ZZ/r\ZZ}\frac{\displaystyle\prod_{\square\in R_i(\lambda)}\left( z_{i,a}-qt\chi_\square \right)}
{\displaystyle\prod_{\square\in A_i(\lambda)}\left( z_{i,a}-\chi_\square \right)}\\
\label{DeltaELem5}
&\left.\quad\times
\Omega\left[ \sum_{i\in\ZZ/r\ZZ}X^{(i)}\left(\left(\frac{1}{(1-q)(1-t)}\right)^{(i)}-B_\lambda^{(i)}   -\sum_{a=1}^nz_{i,a} \right) \right]
\right\}_0\otimes e^{\core(\lambda)}\bigg|_{\ppp\mapsto t}
\end{align}
where the denominators are expanded assuming
\begin{equation*}
\begin{aligned}
|z_{i,a}|&=1, & |q|,|t|&<1, & |\ppp|<|q^at^b|\hbox{ for all }(a,b)\in\ZZ^2.
\end{aligned}
\end{equation*}
We will only present the proof of part (1) since part (2) is analyzed in a similar way; but we also note that after swapping $q\leftrightarrow t$ in (\ref{DeltaHLem}) and negating the color of variables, we obtain the formula for $\Delta^\dagger[\hat{e}_n^{(-p-1)}[-\iota X^\bullet]]$.
\\

\underline{The strategy}: We apply Lemma \ref{ConstTermLem}, starting with $z_{p,1}$ and proceeding \textit{downward} in cyclic order to $z_{p-1,1}$, then $z_{p-2,1}$, and so on. At each step, we will be evaluating $z_{i,1}$ at the character of a box; and in (\ref{DeltaELem5}) this corresponds to adding a character to $B_{\lambda}^{(i)}$. We will then proceed similarly with the remaining $z_{i,a}$'s, as we will describe.
In the end, we will arrive at some $B_\mu^{\bullet}$, and the goal is to show that the boxes which are added produce a partition $\mu$ which satisfies the conditions from the lemma. \\

Now, for $z_{p,1}$, we can always find a power of $z_{p,1}$ in the numerator of (\ref{DeltaELem3}); and
there are only two kinds of poles where $z_{p,1}$ is expanded in negative powers:
\begin{enumerate}
\item[($\chi$)]\textit{$\chi$-poles}: for some $\square\in A_p(\lambda)$, the pole $(z_{p,1}-\chi_\square)$ in (\ref{DeltaELem4});
\item[($t$)]\textit{$t$-pole}: the pole $(z_{p,1}-\ppp z_{p-1,1})$ in (\ref{DeltaELem3}).
\end{enumerate}
Thus, we have
\[
\frac{z_{p,1}F(z_{p,1})}{(z_{p,1}-\ppp z_{p-1,1})\displaystyle\prod_{\square\in A_p(\lambda)}(z_{p,1}-\chi_\square)},
\]
where $F(z_{p,1})$ is a power series in $z_{p,1}$.
Applying Lemma \ref{ConstTermLem}, we split into the cases where $z_{p,1}$ is evaluated at a $\chi$-pole or a $t$-pole. 
In the first case, in (\ref{DeltaELem5}), $B_\lambda^{(p)}$ is appended with an addable box and thus we still have a partition.

As we progress downward in cyclic order along the color index to $z_{i,1}$ with $i\not=p,p+1$, 
we note that there is a $z_{i-1,1}^2$ in the numerator of (\ref{DeltaELem3}); and
the poles where $z_{i,1}$ is expanded negatively include $t$- and $\chi$-poles.
But we also now have new kinds of poles we call a
\begin{enumerate}
\item[($q$)]\textit{$q$-pole}: the pole $(z_{i,1}-qz_{i+1,1})$ in (\ref{DeltaELem3}).
\end{enumerate}
If the previous variable $z_{i+1,1}$ was evaluated at the $t$-pole $(z_{i+1,1}-\ppp z_{i,1})$, then we have
\begin{equation}
\frac{z_{i,1}}{z_{i,1}-qz_{i+1}}\mapsto\frac{z_{i,1}}{z_{i,1}-q\ppp z_{i,1}}=(1-q\ppp)^{-1}
\label{TPoleCancel}
\end{equation}
in (\ref{DeltaELem3}), and so $z_{i,1}$ has no $q$-pole.
Also note that if $z_{i+1,1}$ was evaluated at the $t$-pole, then one must also consider poles in (\ref{DeltaELem4}) of the form
\[
\ppp^k z_{i,1}-\chi_\square
\]
for some $0<k<r$ and $\square\in A_{i+k}(\lambda)$.
However, because $|\ppp|<|q^at^b|$, these are not expanded in negative powers of $z_{i-1}$.
Thus, the evaluations occur in:
\begin{itemize}
\item \textit{horizontal chains} initiated at a $\chi$-pole evaluation followed by consecutive $q$-pole evaluations;
\item \textit{vertical chains} of consecutive $t$-pole evaluations that terminate at a $\chi$-pole evaluation;
\item a chain of consecutive $t$-pole evaluations leading up to $z_{p+1,1}$.
\end{itemize}
The first two append characters to $B_\lambda^{(i)}$ in (\ref{DeltaELem5}) upon the evaluation $\ppp\mapsto t$.
Notice that here, $z_{i,1}$ is evaluated at the character of a box of color $i$.
The only way these additions might fail to yield a partition is if for some $i$, $z_{i,1}=qt\chi_\square$ for some $\square\in R_i(\lambda)$.
This is prevented by the zero $(z_{i,1}-qt\chi_\square)$ in (\ref{DeltaELem4}), which can only be canceled away by the pole $(z_{i,1}-qz_{i+1})$ if $z_{i+1,1}$ is evaluated at $t\chi_\square$ (i.e. the box additions successfully complete the corner).

At $z_{p+1,1}$, two new considerations arise.
First, $z_{p+1,1}$ only has a single power in the numerator from (\ref{DeltaELem3}) and (\ref{DeltaELem4}).
As in (\ref{TPoleCancel}), if $z_{p+2,1}$ was evaluated at the $t$-pole $(z_{p+2,1}-\ppp z_{p+1,1})$, that power of $z_{p+1,1}$ may be canceled away.
However, in this case, the initiating variable of this chain of consecutive $t$-poles will have a leftover power remaining that is evaluated at $\ppp^kz_{p+1,}$ for some $k$.
Thus, we will always have a positive power of $z_{p+1,1}$ to implement Lemma \ref{ConstTermLem}. 
Second, there is a new kind of pole expanded in negative powers of $z_{p+1,1}$:
\begin{enumerate}
\item [($t^\uparrow$)]\textit{$t^\uparrow$-pole}: for $b>1$, $(z_{p,b}-\ppp^{-1}z_{p+1,1})$ in (\ref{DeltaELem1}).
\end{enumerate}
If we evaluate $z_{p+1,1}$ at such a pole, then we continue with $z_{p,b}$.
It only has the following poles expanded in negative powers of $z_{p,b}$:
\begin{itemize}
\item[($\chi$)]\textit{$\chi$-poles}: for some $\square\in A_p(\lambda)$, the pole $(z_{p,b}-\chi_\square)$ in (\ref{DeltaELem4});
\item[($t$)]\textit{$t$-poles}: for $a\le b$, $(z_{p,b}-\ppp z_{p-1,a})$ in (\ref{DeltaELem1}).
\end{itemize}
Only evaluation at the latter has the possibility of not evaluating at some $\chi_\square$; thus, this chain of $t$-pole evaluations continues.
The plan is to follow this chain of $t$-poles until it finally evaluates at a character.
Suppose that along this chain, we have evaluated $z_{i+1,c_1}$ at the pole $z_{i+1,c_1}-\ppp z_{i,c_2}$ for $c_2<c_1$, and consider now $z_{i,c_2}$.
Like for $z_{p,b}$, we have $\chi$- and $t$-poles, but we may also need to consider
\begin{enumerate}
\item[($q$)]\textit{$q$-poles}: for $c_3\le c_2$, $(z_{i,c_2}-qz_{i+1,c_3})$.
\end{enumerate}
However, since $c_1\ge c_3$, either $c_1=c_3$ and this pole has been canceled as in (\ref{TPoleCancel}) or $c_1<c_3$, in which case the result vanishes when $\ppp\mapsto t$ due to the factor $(z_{i+1,c_3}-qtz_{i+1,c_1})$ from (\ref{DeltaELem1}).
Finally, when $i=p+1$, there is also a $t^\uparrow$-pole that continues the chain.
When we finally evaluate at a character $\chi_\square$, the zeros $(z_{i,b}-z_{i,a})$ prevent duplicate evaluations while $(z_{i,a}-qt\chi_\square)$ for $\square\in R_i(\lambda)$ in (\ref{DeltaELem4}) and $(z_{i,b}-qtz_{i,1})$ in (\ref{DeltaELem1}) will ensure that the box additions to $B_\lambda^{(i)}$ in (\ref{DeltaELem5}) result in a partition.
Note that the removable box at the top of this vertical chain comes from evaluating a $z_{i,1}$ for some $i$.

After chasing down such a chain of $t$-poles resulting from evaluating $z_{p+1,1}$ at a $t^\uparrow$-pole, we continue onto $z_{p,2}$.
We may have variables that have already been evaluated in the vertical chain involving the $t^\uparrow$-pole, in which case we move down in cyclic order along the color index until we hit $p+1$, after which we move up in the second index.
The analysis is as before: we continue step by step and chase down any chain resulting from a $t^\uparrow$-pole.
Zeroes of the form $(z_{i,b}-z_{i,a})$ in (\ref{DeltaELem1}) prevent duplicate evaluations, while those of the form $(z_{i,a}-qt\chi_\square)$ for $\square\in R_i(\lambda)$ in (\ref{DeltaELem4}) and $(z_{i,b}-qtz_{i,a})$ in (\ref{DeltaELem1}) keep the characters in (\ref{DeltaELem5}) to those of a partition.

At the end of this process, the box additions in (\ref{DeltaELem5}) result in $B_\mu^\bullet$ for some partition $\mu$, and no color $p$ and $(p+1)$ boxes in $\mu\backslash\lambda$ are horizontally adjacent due to the lack of $q$-poles for $z_{p+1,a}$.
\end{proof}

\begin{cor}\label{ClosedCor}
The span of $\{\mathbb{E}_\lambda\otimes e^{\core(\lambda)}\}$ is closed under the action of $\Delta^\dagger\left[f\right]$ for any $f\in\Lambda_{q,t}^{\otimes r}$.
\end{cor}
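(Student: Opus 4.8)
The plan is to deduce the corollary directly from Lemma~\ref{DeltaDelta} together with a standard generation argument. First I would record that $f\mapsto\Delta^\dagger[f]$ is multiplicative up to reversing the order of composition: since $\Delta=\rho_{\vec{\ccc}}^-\circ\epsilon^+$ is an algebra homomorphism we have $\Delta[f_1f_2]=\Delta[f_1]\Delta[f_2]$, and taking adjoints gives $\Delta^\dagger[f_1f_2]=(\Delta[f_1]\Delta[f_2])^\dagger=\Delta^\dagger[f_2]\Delta^\dagger[f_1]$; it is also additive. Consequently, if a subspace $V$ of the degree-completion of $\Lambda_{q,t}^{\otimes r}\otimes\CC[Q]$ is preserved by $\Delta^\dagger[g]$ for every $g$ in some set of algebra generators of $\Lambda_{q,t}^{\otimes r}$, then it is preserved by $\Delta^\dagger[f]$ for all $f$, because $\Delta^\dagger[f]$ is then, under composition, a polynomial in those generating operators.

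Next I would produce a convenient generating set, namely the $\{\hat{e}_n^{(p)}[-\iota X^\bullet]\}_{n\ge 1,\,p\in\ZZ/r\ZZ}$ appearing in Lemma~\ref{DeltaDelta}. By the identity recorded in~\ref{WreathOp}, $\hat{e}_n^{(-p-1)}[-\iota X^\bullet]=t^n e_n\big[X^{(p)}/(1-t\sigma^{-1})\big]$, and since $1-t\sigma^{-1}$ is an invertible plethysm (Proposition~\ref{SigProp}) the assignment $g\mapsto g\big[X^\bullet/(1-t\sigma^{-1})\big]$ is a ring automorphism of $\Lambda_{q,t}^{\otimes r}$ carrying the usual generating set $\{e_n[X^{(p)}]\}_{n\ge1,\,p\in\ZZ/r\ZZ}$ onto $\{e_n[X^{(p)}/(1-t\sigma^{-1})]\}$. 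Hence $\{\hat{e}_n^{(p)}[-\iota X^\bullet]\}_{n\ge1,\,p\in\ZZ/r\ZZ}$ generates $\Lambda_{q,t}^{\otimes r}$ as an $\FF$-algebra. Setting $V\coloneq\mathrm{span}\{\mathbb{E}_\lambda\otimes e^{\core(\lambda)}\}$, Lemma~\ref{DeltaDelta}(1) says precisely that $\Delta^\dagger[\hat{e}_n^{(p)}[-\iota X^\bullet]]V\subseteq V$ for all $n,p$, so the reduction in the previous paragraph closes the argument: writing any $f\in\Lambda_{q,t}^{\otimes r}$ as a polynomial in the $\hat{e}_n^{(p)}[-\iota X^\bullet]$ shows $\Delta^\dagger[f]V\subseteq V$.

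I do not expect a genuine obstacle; the real content lives inside Lemma~\ref{DeltaDelta}. The one point meriting care is that $\mathbb{E}_\lambda$ lies in the degree-completion of $\Lambda_{q,t}^{\otimes r}$ rather than in $\Lambda_{q,t}^{\otimes r}$ itself, so when iterating the operators $\Delta^\dagger[\hat{e}_n^{(p)}[-\iota X^\bullet]]$ one must check that the multiplicativity of $\Delta^\dagger$ survives passage to the completion. This is built into Lemma~\ref{DeltaDelta}: the expansion $\Delta^\dagger[\hat{e}_n^{(p)}[-\iota X^\bullet]](\mathbb{E}_\lambda\otimes e^{\core(\lambda)})=\sum_\mu c_{\mu\lambda}\,\mathbb{E}_\mu\otimes e^{\core(\lambda)}$ involves only $\mu\supseteq\lambda$ obtained by adding $n$ boxes of each color, so in every fixed total degree only finitely many terms occur; the iteration therefore stays locally finite, and the identity $\Delta^\dagger[f_1f_2]=\Delta^\dagger[f_2]\Delta^\dagger[f_1]$ can be verified on each $\mathbb{E}_\lambda\otimes e^{\core(\lambda)}$ degree by degree. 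With this bookkeeping in place, closedness of $V$ under all $\Delta^\dagger[f]$ follows.
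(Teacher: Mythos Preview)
Your proposal is correct and follows exactly the approach implicit in the paper, which states the result as an immediate corollary of Lemma~\ref{DeltaDelta} without further argument. Your reduction via algebra generators, together with the observation that the $\mu$ appearing in Lemma~\ref{DeltaDelta} satisfy $\core(\mu)=\core(\lambda)$ (so $\mathbb{E}_\mu\otimes e^{\core(\lambda)}=\mathbb{E}_\mu\otimes e^{\core(\mu)}$), is precisely the intended justification.
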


\subsubsection{The map $\mathsf{V}$}\label{MapV}
By Corollary \ref{ClosedCor}, we can define the following important map.

\begin{defn}\label{VDef}
The linear map
\[
\mathsf{V}:\Lambda_{q,t}^{\otimes r}\otimes \CC[Q]\rightarrow \mathrm{span}\left\{ \mathbb{E}_\lambda\otimes e^{\core(\lambda)} \right\}
\]
is defined by setting two conditions:
\begin{itemize}
\item $\mathsf{V}(fg\otimes e^\alpha)=\Delta^\dagger\left[ f[-\iota X^\bullet] \right]\left(\mathsf{V}\left( g\otimes e^\alpha \right)\right)$,
\item $\mathsf{V}(1\otimes e^{\alpha})=\mathbb{E}_\alpha\otimes e^\alpha$.
\end{itemize}
\end{defn}

\begin{thm}\label{VThm}
We have
\[
\mathsf{V}(H_\lambda\otimes e^{\core(\lambda)})=H_\lambda[\iota D_{w_0\core(\lambda)}^\bullet]\mathbb{E}_\lambda\otimes e^{\core(\lambda)}
\]
\end{thm}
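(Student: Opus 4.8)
The plan is to rewrite $\mathsf{V}(H_\lambda\otimes e^{\core(\lambda)})$ as a single $\Delta^\dagger$-operator applied to the delta function at the core, then to use a two-sided triangularity to force the result to be a scalar multiple of $\mathbb{E}_\lambda\otimes e^{\core(\lambda)}$, and finally to extract the scalar from a single pairing. Taking $g=1$ in the defining relation of Definition \ref{VDef} gives
\[
\mathsf{V}(H_\lambda\otimes e^{\core(\lambda)}) = \Delta^\dagger\!\big[H_\lambda[-\iota X^\bullet]\big]\big(\mathbb{E}_{\core(\lambda)}\otimes e^{\core(\lambda)}\big),
\]
which is unambiguous because $f\mapsto\Delta^\dagger[f[-\iota X^\bullet]]$ is multiplicative on the commutative ring $\Lambda_{q,t}^{\otimes r}$. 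By Corollary \ref{ClosedCor} the right-hand side lies in the span of the $\mathbb{E}_\mu\otimes e^{\core(\mu)}$, and since the operators $\Delta^\dagger[\hat e_n^{(p)}[-\iota X^\bullet]]$ and $\Delta^\dagger[\hat h_n^{(p)}[-\iota X^\bullet]]$ only adjoin boxes of each color to the indexing partition (Lemma \ref{DeltaDelta}) --- which preserves the core (Proposition \ref{CoreQuotDec}(4)) --- it lies in $\mathrm{span}\{\mathbb{E}_\mu\otimes e^{\core(\lambda)} : \core(\mu) = \core(\lambda)\}$.

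For the triangularity, recall from Proposition \ref{IntDef} that $H_\lambda$ lies both in $\mathrm{span}\{\hat h_{\quot(\nu)} : \nu\ge_r\lambda\}$ and in $\mathrm{span}\{\hat e_{\quot(\nu)} : \nu\le_r\lambda\}$. Writing $\hat h_{\quot(\nu)}$ and $\hat e_{\quot(\nu)}$ as products of the single factors $\hat h_m^{(p)}$, $\hat e_m^{(p)}$ over the parts of $\quot(\nu)$ and applying them one at a time via the multiplicativity above, one iterates Lemma \ref{DeltaDelta} starting from $\mathbb{E}_{\core(\lambda)}\otimes e^{\core(\lambda)}$. The combinatorial content --- the Pieri-free heart of the argument --- is that the box-addition rules, together with the non-adjacency constraints (horizontal for the $\hat e$-operators, vertical for the $\hat h$-operators), organize the reachable indexing partitions into dominance intervals; concretely, that
\[
\Delta^\dagger\!\big[\hat h_{\quot(\nu)}[-\iota X^\bullet]\big]\big(\mathbb{E}_{\core(\lambda)}\otimes e^{\core(\lambda)}\big) \in \mathrm{span}\{\mathbb{E}_\kappa\otimes e^{\core(\lambda)} : \kappa\ge_r\nu\}
\]
and symmetrically $\Delta^\dagger[\hat e_{\quot(\nu)}[-\iota X^\bullet]](\mathbb{E}_{\core(\lambda)}\otimes e^{\core(\lambda)})\in\mathrm{span}\{\mathbb{E}_\kappa\otimes e^{\core(\lambda)} : \kappa\le_r\nu\}$. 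Granting these, $\mathsf{V}(H_\lambda\otimes e^{\core(\lambda)})$ is simultaneously in $\mathrm{span}\{\mathbb{E}_\mu\otimes e^{\core(\lambda)} : \mu\ge_r\lambda\}$ and in $\mathrm{span}\{\mathbb{E}_\mu\otimes e^{\core(\lambda)} : \mu\le_r\lambda\}$, hence (by the linear independence of the $\mathbb{E}_\mu$) equals $c_\lambda\,\mathbb{E}_\lambda\otimes e^{\core(\lambda)}$ for a scalar $c_\lambda$. I expect verifying these two one-sided support statements --- turning the adjacency bookkeeping of Lemma \ref{DeltaDelta} into dominance of the reconstituted partitions --- to be the main obstacle, and the place where the absence of wreath Pieri rules must be worked around.

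It remains to compute $c_\lambda$. Pair both sides against $1\otimes e^{w_0\core(\lambda)}$, noting that $1\otimes e^{w_0\core(\lambda)} = H_{w_0\core(\lambda)}\otimes e^{\core(w_0\core(\lambda))}$ since $w_0\core(\lambda)$ is an $r$-core. Using that $\Delta^\dagger[f]$ is the adjoint of $\Delta[f]$, that $\Delta[f]$ acts on $H_\mu\otimes e^{\core(\mu)}$ by the scalar $f[-D_\mu^\bullet]$ (Corollary \ref{DeltaCor}), the plethystic identity $(H_\lambda[-\iota X^\bullet])[-D_\mu^\bullet] = H_\lambda[\iota D_\mu^\bullet]$, and $\langle\mathbb{E}_\nu, 1\rangle_{q,t}' = 1[\iota D_\nu^\bullet] = 1$ (Corollary \ref{DeltaFunc}), we obtain
\begin{align*}
\big\langle\mathsf{V}(H_\lambda\otimes e^{\core(\lambda)}),\, 1\otimes e^{w_0\core(\lambda)}\big\rangle_{q,t}'
&= \big\langle\mathbb{E}_{\core(\lambda)}\otimes e^{\core(\lambda)},\, \Delta\!\big[H_\lambda[-\iota X^\bullet]\big]\big(1\otimes e^{w_0\core(\lambda)}\big)\big\rangle_{q,t}' \\
&= H_\lambda[\iota D_{w_0\core(\lambda)}^\bullet]\,\langle\mathbb{E}_{\core(\lambda)}, 1\rangle_{q,t}' = H_\lambda[\iota D_{w_0\core(\lambda)}^\bullet].
\end{align*}
Since the left-hand side also equals $c_\lambda\langle\mathbb{E}_\lambda, 1\rangle_{q,t}' = c_\lambda$, we conclude $c_\lambda = H_\lambda[\iota D_{w_0\core(\lambda)}^\bullet]$, as desired.
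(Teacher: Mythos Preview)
Your proposal is correct and follows essentially the same approach as the paper: both reduce to the two one-sided triangularities you flag as the main obstacle, which the paper establishes by invoking the tableau combinatorics of \cite[Section~5]{WreathEigen} (strongly row/column-strict $\lambda$-tabloidizability and Lemma~5.6 there). Your scalar extraction via adjointness and pairing with $1\otimes e^{w_0\core(\lambda)}$ is equivalent to the paper's constant-term comparison after expanding $\mathbb{E}_{\core(\lambda)}$ in the $H_\mu^\dagger$ basis.
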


\begin{proof}
This theorem follows from the combinatorial results of Section 5 from \cite{WreathEigen}.
Specifically, we have $\Lambda_{q,t}^{\otimes r}$ acting on a vector space with a basis $\{\mathbb{E}_\lambda\otimes e^{\core(\lambda)}\}$ indexed by partitions, where $f\in \Lambda_{q,t}^{\otimes r}$ acts via the operator $\Delta^\dagger\left[ f[-\iota X^\bullet] \right]$.
For a partition $\lambda$, we consider the expansions of
\begin{align}
\label{VE}
\mathsf{V}\left( \hat{e}_{\quot(\lambda)}\otimes e^{\core(\lambda)} \right)&= \sum_{\mu}a_{\lambda\mu}\mathbb{E}_\mu\otimes e^{\core(\lambda)} ~ \text{ and }\\
\label{VH}
\mathsf{V}\left( \hat{h}_{\quot(\lambda)}\otimes e^{\core(\lambda)} \right)&= \sum_{\mu}b_{\lambda\mu}\mathbb{E}_\mu\otimes e^{\core(\lambda)}.
\end{align}
The actions of $\hat{e}_n^{(p)}$ and $\hat{h}_n^{(p)}$ add boxes to the partition index as prescribed in Lemma \ref{DeltaDelta}.
Thus, for any $\mu$ appearing on the right-hand-side of (\ref{VE}), each ordering $\mathcal{O}_c$ of the columns of $\quot(\lambda)$ endows $\mu$ with a certain kind of tableau, wherein each labeling records the boxes added from the action of a $\hat{e}_n^{(p)}$.
These are the \textit{row-strict $r$-tableaux} of content $(\lambda,\mathcal{O}_c)$ from \textit{loc. cit.}
Similarly, the $\mu$ in the right-hand-side of (\ref{VH}) possess a certain kind of tableau for each ordering $\mathcal{O}_r$ of the rows in $\quot(\lambda)$; these are called \textit{column-strict $r$-tableaux} of content $(\lambda,\mathcal{O}_r)$.

Additionally, these $\mu$ must satisfy a nested condition.
On the left-hand-side of (\ref{VE}), we can forget some columns of $\quot(\lambda)$ to obtain $\quot(\lambda')$ for some smaller partition where $\core(\lambda)=\core(\lambda')$; we take the smaller product $\hat{e}_{\quot(\lambda')}$.
Let $\mathcal{O}_c$ be an ordering of the columns of $\quot(\lambda)$ where the removed columns are at the end.
In terms of $\mu$, the column removal corresponds to taking a row-strict column tableau on $\mu$ of content $(\lambda,\mathcal{O}_c)$ and cutting off the boxes corresponding to the removed columns, yielding the smaller partition $\mu'$.
It should then be possible to produce such a $\mu'$ that has nonzero coefficient in the expansion for $\hat{e}_{\quot(\lambda')}$.

All-in-all, the existence of the row-strict $r$-tableaux and this nesting condition is summarized in \textit{loc. cit.} in the definition of \textit{strongly row-strict $\lambda$-tabloidizable}.
The $\mu$ appearing on the right-hand-side of (\ref{VE}) must be strongly row-strict $\lambda$-tabloidizable, and by Lemma 5.6 of \textit{loc. cit.}, this implies $\mu\le_r\lambda$.
Likewise, the $\mu$ appearing on the right-hand-side of (\ref{VH}) are \textit{strongly column-strict $\lambda$-tabloidizable}, which by the same lemma implies $\mu\ge_r\lambda$.
These together imply that given $\lambda$,
\begin{align*}
\mathsf{V}\left( \mathrm{span}\left\{ \hat{e}_{\quot(\mu)}\otimes e^{\core(\lambda)}\, \middle|\, \mu\le_r\lambda \right\} \right)&\subset
\mathrm{span}\left\{ \mathbb{E}_\mu\otimes e^{\core(\lambda)}\,\middle|\,\mu\le_r\lambda  \right\}~ \text{ and }\\
\mathsf{V}\left( \mathrm{span}\left\{ \hat{h}_{\quot(\mu)}\otimes e^{\core(\lambda)}\, \middle|\, \mu\ge_r\lambda \right\} \right)&\subset
\mathrm{span}\left\{ \mathbb{E}_\mu\otimes e^{\core(\lambda)}\,\middle|\,\mu\ge_r\lambda  \right\}.
\end{align*}
By Proposition \ref{IntDef}, it follows that $\mathsf{V}(H_\lambda\otimes e^{\core(\lambda)})$ is a scalar multiple of $\mathbb{E}_\lambda\otimes e^{\core(\lambda)}$.

To compute that scalar, first note that
\begin{equation}
\mathsf{V}\left( H_\lambda\otimes e^{\core(\lambda)} \right)=\Delta^\dagger\left[ H_\lambda[-\iota X^\bullet] \right]\left(\mathbb{E}_{\core(\lambda)}\otimes e^{\core(\lambda)}\right).
\label{VHLambda}
\end{equation}
It suffices to compare the constant term of the right-hand-side of (\ref{VHLambda}) with that of $\mathbb{E}_\lambda\otimes e^{\core(\lambda)}$.
To that end, we use Proposition \ref{DeltaExpand} to expand $\mathbb{E}_{\core(\lambda)}$ in the basis $\{H_{\mu}^\dagger\,|\, \core(\mu)=w_0\core(\lambda)\}$:
\[
\mathbb{E}_{\core(\lambda)}\otimes e^{\core(\lambda)}=\sum_{\substack{\mu \\ \core(\mu)=w_0\core(\lambda)}}\frac{H_{\mu}^\dagger[X^\bullet]H_{\mu}[\iota D_{\core(\lambda)}^\bullet]}{N_{\mu}}\otimes e^{\core(\lambda)}.
\]
By (\ref{AdjointEq}), the right-hand-side of (\ref{VHLambda}) is:
\begin{equation}
\Delta^\dagger\left[ H_\lambda[-\iota X^\bullet] \right]\left(\mathbb{E}_{\core(\lambda)}\otimes e^{\core(\lambda)}\right)
=
\sum_{\substack{\mu \\ \core(\mu)=w_0\core(\lambda)}}\frac{H_{\mu}^\dagger[X^\bullet]H_\lambda[\iota D_{\mu}^\bullet]H_{\mu}[\iota D_{\core(\lambda)}^\bullet]}{N_{\mu}}\otimes e^{\core(\lambda)}.
\label{DeltaHE}
\end{equation}
The constant term is the summand where $\mu=w_0\core(\lambda)$, in which case the right hand side of (\ref{DeltaHE}) gives us $H_\lambda[\iota D_{w_0\core(\lambda)}^\bullet]\otimes e^{\core(\lambda)}$.
On the other hand, from Proposition \ref{DeltaExpand}, the constant term of $\mathbb{E}_\lambda\otimes e^{\core(\lambda)}$ is $1\otimes e^{\core(\lambda)}$.
\end{proof}

Our main task will be to give an explicit formula for $\mathsf{V}$.
Note that despite lacking such a formula, we can already deduce reciprocity:

\begin{cor}\label{0Rec}
For partitions $\lambda$ and $\mu$ with $\core(\mu)=w_0\core(\lambda)$, the equality holds: 
\[
\frac{H_{\mu}\left[\iota D_\lambda^{\bullet}\right]}{H_{\mu}\left[\iota D^{\bullet}_{w_0\core(\mu)}\right]}=\frac{H_{\lambda}\left[\iota D^{\bullet}_{\mu}\right]}{H_\lambda\left[\iota D^{\bullet}_{w_0\core(\lambda)}\right]}.
\]
\end{cor}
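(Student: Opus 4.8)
The plan is to pair $\mathsf{V}(H_\lambda\otimes e^{\core(\lambda)})$ against the basis vector $H_\mu\otimes e^{\core(\mu)}$ of $\mathcal{W}$, under the standing hypothesis $\core(\mu)=w_0\core(\lambda)$, and to compute this pairing in two different ways. The input is that $\mathsf{V}(H_\lambda\otimes e^{\core(\lambda)})$ admits two descriptions: on one hand, unwinding Definition \ref{VDef} with $g=1$ gives $\mathsf{V}(H_\lambda\otimes e^{\core(\lambda)})=\Delta^\dagger[H_\lambda[-\iota X^\bullet]]\bigl(\mathbb{E}_{\core(\lambda)}\otimes e^{\core(\lambda)}\bigr)$, which is exactly (\ref{VHLambda}); on the other hand, Theorem \ref{VThm} identifies this with $H_\lambda[\iota D_{w_0\core(\lambda)}^\bullet]\,\mathbb{E}_\lambda\otimes e^{\core(\lambda)}$. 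Pairing both descriptions with $H_\mu\otimes e^{\core(\mu)}$ and equating will produce the reciprocity relation after a rearrangement.

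Carrying this out: for the description coming from Theorem \ref{VThm}, the extended pairing of \ref{PairCores} contributes the factor $\delta_{\core(\lambda),w_0\core(\mu)}$, which equals $1$ since $\core(\mu)=w_0\core(\lambda)$ and $w_0^2=\mathrm{id}$; then, using symmetry of $\langle -,-\rangle_{q,t}'$ together with the delta-function property $\langle\mathbb{E}_\nu,f\rangle_{q,t}'=f[\iota D_\nu^\bullet]$ of Corollary \ref{DeltaFunc}, I obtain
\[
\bigl\langle\mathsf{V}(H_\lambda\otimes e^{\core(\lambda)}),\,H_\mu\otimes e^{\core(\mu)}\bigr\rangle_{q,t}'=H_\lambda[\iota D_{w_0\core(\lambda)}^\bullet]\,H_\mu[\iota D_\lambda^\bullet].
\]
For the description coming from Definition \ref{VDef}, I move $\Delta^\dagger$ back across the pairing — this is precisely the defining adjunction of $\Delta^\dagger$ in \ref{DeltaMap} — and then invoke Corollary \ref{DeltaCor}; since $(H_\lambda[-\iota X^\bullet])[-D_\mu^\bullet]=H_\lambda[\iota D_\mu^\bullet]$ (a routine sign/color bookkeeping on power sums), this gives $\Delta[H_\lambda[-\iota X^\bullet]](H_\mu\otimes e^{\core(\mu)})=H_\lambda[\iota D_\mu^\bullet]\,H_\mu\otimes e^{\core(\mu)}$. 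Pairing $\mathbb{E}_{\core(\lambda)}\otimes e^{\core(\lambda)}$ against $H_\mu\otimes e^{\core(\mu)}$ just as before (the core delta is again $1$ and Corollary \ref{DeltaFunc} applies) yields
\[
\bigl\langle\mathsf{V}(H_\lambda\otimes e^{\core(\lambda)}),\,H_\mu\otimes e^{\core(\mu)}\bigr\rangle_{q,t}'=H_\lambda[\iota D_\mu^\bullet]\,H_\mu[\iota D_{\core(\lambda)}^\bullet].
\]
Equating the two displays, rewriting $H_\mu[\iota D_{\core(\lambda)}^\bullet]=H_\mu[\iota D_{w_0\core(\mu)}^\bullet]$ (valid because $w_0\core(\mu)=\core(\lambda)$), and dividing gives the claimed identity.

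I do not expect a genuine obstacle here: the corollary is essentially the remark that the proof of Theorem \ref{VThm} computes not merely the constant term of $\mathsf{V}(H_\lambda\otimes e^{\core(\lambda)})$ but every coefficient. Equivalently, one may phrase the whole argument as matching the coefficient of $H_\mu^\dagger[X^\bullet]$ in two expansions of $\mathbb{E}_\lambda\otimes e^{\core(\lambda)}$: the one supplied by Proposition \ref{DeltaExpand} with $\alpha=w_0\core(\lambda)$, and the one obtained by combining Theorem \ref{VThm} with (\ref{DeltaHE}). In either guise the only points demanding care are the $w_0$-twist in the $\CC[Q]$-factor of the extended pairing and the plethystic identity $(H_\lambda[-\iota X^\bullet])[-D_\mu^\bullet]=H_\lambda[\iota D_\mu^\bullet]$, both of which are elementary. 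In particular, unlike the $r=1$ case one cannot simply invoke self-adjointness of $\mathsf{V}$ — with the twisted pairing, self-adjointness of $\mathsf{V}$ is equivalent to the corollary — so the argument must go through the two distinct presentations of $\mathsf{V}(H_\lambda\otimes e^{\core(\lambda)})$ above.
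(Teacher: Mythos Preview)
Your proposal is correct and takes essentially the same approach as the paper. The paper's proof compares the coefficient of $H_\mu^\dagger[X^\bullet]$ in (\ref{DeltaHE}) with that in the expansion of $H_\lambda[\iota D_{w_0\core(\lambda)}^\bullet]\,\mathbb{E}_\lambda$ given by Proposition \ref{DeltaExpand}; since pairing against $H_\mu\otimes e^{\core(\mu)}$ is (up to the factor $N_\mu$) exactly this coefficient extraction, your argument and the paper's are the same computation phrased two equivalent ways --- a point you yourself note in your final paragraph.
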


\begin{proof}
Compare the coefficient of $H_{\mu}^\dagger[X^\bullet]$ in (\ref{DeltaHE}) with that of $H_\lambda[\iota D_{w_0\core(\lambda)}^\bullet]\mathbb{E}_\lambda\otimes e^{\core(\lambda)}$ given by using Proposition \ref{DeltaExpand}.
\end{proof}

\subsubsection{The main properties of $\mathsf{V}$} We reiterate here the main properties of $\mathsf{V}$.
In the next section, we will give an explicit expression for $\mathsf{V}$, as in Tesler's Identity, showing, in the end, that these identities hold for the explicit version. 
\begin{prop} \label{MainPropertiesofV}
    For any $f \in \Lambda_{q,t}^{\otimes r}$, let $\underline{f}$ denote the operation of multiplication by $f$. We have
    \begin{itemize}
        \item $\mathsf{V} \underline{f} = \Delta^\dagger\left[f[-\iota X^\bullet]\right] \mathsf{V}$,
        \item $f^{\perp} \mathsf{V} = \mathsf{V} \Delta\left[ f \left[ \frac{X^{\bullet}}{(1-q\sigma )(1-t\sigma^{-1})} \right] \right]$,
        \item $\mathsf{V}(1 \otimes e^{\alpha}) = \mathbb{E}_\alpha \otimes e^{\alpha},$
        \item $\mathsf{V} \left(H_{\lambda}[X^\bullet] \otimes e^{\core(\lambda)}\right) = H_\lambda[\iota D_{w_0\core(\lambda)}^\bullet]\mathbb{E}_\lambda \otimes e^{\core(\lambda)}.$
    \end{itemize}
\end{prop}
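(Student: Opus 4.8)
The third bullet is simply the second defining condition of $\mathsf{V}$ in Definition \ref{VDef}, and the first bullet is the first defining condition rewritten as an identity of operators: applying both sides to an arbitrary $h\otimes e^\alpha$ and using linearity, $\mathsf{V}\underline{f}=\Delta^\dagger[f[-\iota X^\bullet]]\mathsf{V}$ is exactly the relation $\mathsf{V}(fh\otimes e^\alpha)=\Delta^\dagger[f[-\iota X^\bullet]](\mathsf{V}(h\otimes e^\alpha))$ of Definition \ref{VDef}. The fourth bullet is precisely Theorem \ref{VThm}. Hence only the second bullet, $f^{\perp}\mathsf{V}=\mathsf{V}\,\Delta\left[ f\left[ \frac{X^{\bullet}}{(1-q\sigma )(1-t\sigma^{-1})} \right]\right]$, requires an argument.

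The plan is to verify this identity on the basis $\{H_\lambda\otimes e^{\core(\lambda)}\}$ of $\Lambda_{q,t}^{\otimes r}\otimes\CC[Q]$ (which is a basis since, for each $r$-core $\alpha$, $\{H_\lambda\mid\core(\lambda)=\alpha\}$ is a basis of $\Lambda_{q,t}^{\otimes r}$ and cores biject with $Q$). Recall $\mathbb{E}_\lambda=\Omega\left[\sum_{i}X^{(i)}\left(\frac{D_\lambda^\bullet}{(1-q)(t-1)}\right)^{(i)}\right]$. For the right-hand side, Corollary \ref{DeltaCor} gives that $\Delta\left[ f\left[\frac{X^\bullet}{(1-q\sigma)(1-t\sigma^{-1})}\right]\right]$ scales $H_\lambda\otimes e^{\core(\lambda)}$ by $f\left[\frac{X^\bullet}{(1-q\sigma)(1-t\sigma^{-1})}\right]\big|_{X^{(i)}\mapsto -D_\lambda^{(i)}}$, and the substitution identity \eqref{OverMEv} (with $S^{(j)}=-D_\lambda^{(j)}$) identifies this with the evaluation of $f[X^\bullet]$ at $X^{(i)}\mapsto\left(\frac{D_\lambda^\bullet}{(1-q)(t-1)}\right)^{(i)}$, i.e.\ the value of $f$ at the very argument appearing in $\mathbb{E}_\lambda$; call this scalar $\kappa_\lambda(f)$. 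Applying $\mathsf{V}$ and Theorem \ref{VThm}, the right-hand side therefore sends $H_\lambda\otimes e^{\core(\lambda)}$ to $\kappa_\lambda(f)\,H_\lambda[\iota D_{w_0\core(\lambda)}^\bullet]\,\mathbb{E}_\lambda\otimes e^{\core(\lambda)}$.

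For the left-hand side I would first use Theorem \ref{VThm} to write $\mathsf{V}(H_\lambda\otimes e^{\core(\lambda)})=H_\lambda[\iota D_{w_0\core(\lambda)}^\bullet]\,\mathbb{E}_\lambda\otimes e^{\core(\lambda)}$, and then compute $f^\perp\mathbb{E}_\lambda$ from the reproducing-kernel property of the exponential $\mathbb{E}_\lambda$: a direct check on the exponential gives $p_n^\perp[X^{(i)}]\,\mathbb{E}_\lambda=p_n\left[\left(\frac{D_\lambda^\bullet}{(1-q)(t-1)}\right)^{(i)}\right]\mathbb{E}_\lambda$, and since $f\mapsto f^\perp$ is a ring homomorphism, an induction on the degree of $f$ yields $f^\perp\mathbb{E}_\lambda=\kappa_\lambda(f)\,\mathbb{E}_\lambda$ with exactly the $\kappa_\lambda(f)$ above. (All of $f^\perp$, the $\Delta$-operators, and $\mathsf{V}$ act on the degree-completion of $\Lambda_{q,t}^{\otimes r}$ in which $\mathbb{E}_\lambda$ lives, so this manipulation is legitimate.) Thus the left-hand side also sends $H_\lambda\otimes e^{\core(\lambda)}$ to $\kappa_\lambda(f)\,H_\lambda[\iota D_{w_0\core(\lambda)}^\bullet]\,\mathbb{E}_\lambda\otimes e^{\core(\lambda)}$, and the two sides agree.

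The only substantive step is the identification of the two occurrences of $\kappa_\lambda(f)$: that the eigenvalue of $\Delta\left[ f\left[\frac{X^\bullet}{(1-q\sigma)(1-t\sigma^{-1})}\right]\right]$ on $H_\lambda$ coincides with the scalar by which $f^\perp$ acts on $\mathbb{E}_\lambda$. This is where the color bookkeeping of \eqref{OverMEv} enters, and it is the step most exposed to a sign or color-shift error; once past it, the proposition reduces entirely to Theorem \ref{VThm} and Corollary \ref{DeltaCor}, and I do not anticipate any further difficulty.
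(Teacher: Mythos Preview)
Your proof is correct and follows essentially the same approach as the paper: both verify the second bullet by checking it on the basis $\{H_\lambda\otimes e^{\core(\lambda)}\}$, using Theorem \ref{VThm} to reduce to computing $f^\perp\mathbb{E}_\lambda$ via the exponential structure of $\mathbb{E}_\lambda$, and matching the resulting scalar with the $\Delta$-eigenvalue through the color identity \eqref{OverMEv}. The only cosmetic difference is that the paper packages the computation using the generating series $\TT[vX^{(j)}]=\sum_n v^n h_n^\perp[X^{(j)}]$ applied to $\mathbb{E}_\lambda$, whereas you work directly with the power-sum action $p_n^\perp[X^{(i)}]\mathbb{E}_\lambda=p_n\bigl[(D_\lambda^\bullet/(1-q)(t-1))^{(i)}\bigr]\mathbb{E}_\lambda$ and extend multiplicatively; both amount to the same reproducing-kernel calculation.
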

\begin{proof}
    The first and third properties are from the definition of $\mathsf{V}$. 
    The last property follows from Theorem \ref{VThm}.  
    The second condition follows from these. We will show that 
    \[
h_n^{\perp}[X^{(j)}] \mathsf{V} = \mathsf{V} \Delta\left[ h_n \left[ \frac{X^{(j)}}{(1-q\sigma^{-1})(1-t\sigma)}\right] \right]
    \]
by applying $\TT[ v X^{(j)}] = \sum_{n \geq 0 } v^n h_n^{\perp}[X^{(j)}]$ to  $\mathbb{E}_\lambda \otimes e^{\core(\lambda)}$:    
    \begin{align*}
     \TT[ v X^{(j)}] \mathbb{E}_\lambda \otimes e^{\core(\lambda)} & =
 \TT[ v X^{(j)}] \Omega\left[\sum_{i\in\ZZ/r\ZZ} X^{(i)}\left( \frac{D_\lambda^\bullet}{(1-q)(t-1)} \right)^{(i)} \right] \otimes e^{\core(\lambda)} \\
 & = \Omega\left[\sum_{i\in\ZZ/r\ZZ} \left(X^{(i)} + \delta_{i,j} v \right) \left( \frac{D_\lambda^\bullet}{(1-q)(t-1)} \right)^{(i)} \right]\otimes e^{\core(\lambda)} \\
 & = \Omega\left[  v \left( \frac{D_\lambda^\bullet}{(1-q)(t-1)} \right)^{(j)} \right]
 \Omega\left[\sum_{i\in\ZZ/r\ZZ} X^{(i)}  \left( \frac{D_\lambda^\bullet}{(1-q)(t-1)} \right)^{(i)} \right]\otimes e^{\core(\lambda)} \\
 & = \mathsf{V} \sum_{n \geq 0 }v^n h_n \left[\left( \frac{D_\lambda^\bullet}{(1-q)(t-1)} \right)^{(j)} \right]
  H_\lambda\otimes e^{\core(\lambda)}
  \\
  & = \mathsf{V} \sum_{n \geq 0 }v^n \Delta \left[ h_n \left[ \frac{X^{(j)}}{(1-q \sigma)(1-t \sigma^{-1})}  \right] \right]
  H_\lambda\otimes e^{\core(\lambda)}.
    \end{align*}
Comparing the coefficient of $v^n$ on both sides gives the desired property.
\end{proof}

\subsection{Proving the identity}
Here, we state and prove the main result of the paper:

\begin{thm}\label{Tesler}
The operator $\mathsf{V}$ from Definition \ref{VDef} is given by
\[
\mathsf{V}=\nabla\Omega\left[ \frac{X^{(0)}}{(1-q\sigma^{-1})(t\sigma-1)} \right]\TT\left[ X^{(0)} \right]\nabla
\]
Thus, when combined with Theorem \ref{VThm}, we obtain the Tesler formula:
\[
\mathbb{E}_\lambda\otimes e^{\core(\lambda)}=\nabla\Omega\left[ \frac{X^{(0)}}{(1-q\sigma^{-1})(t\sigma-1)} \right]\TT\left[ X^{(0)} \right]\nabla\left( \frac{H_\lambda}{H_\lambda[\iota D_{w_0\core(\lambda)}^\bullet]}\otimes e^{\core(\lambda)} \right)
\]
\end{thm}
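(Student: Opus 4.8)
The plan is to prove that the explicit operator on the right‑hand side — call it $\mathsf{V}'$, so $\mathsf{V}'=\nabla\,M\,\TT\,\nabla$ with $M=\Omega\big[\tfrac{X^{(0)}}{(1-q\sigma^{-1})(t\sigma-1)}\big]$ and $\TT=\TT[X^{(0)}]$ — coincides with the map $\mathsf{V}$ of Definition \ref{VDef}; the displayed Tesler formula is then just this equality combined with Theorem \ref{VThm}. By Corollary \ref{D0Gen} every $f\otimes e^\alpha$ is obtained from $1\otimes e^\alpha$ by a word $W_1\cdots W_m$ in $\mathbb{D}_{\vec{0}}$ and the multiplication operators $\underline{\tilde p_1^{(0)}},\ldots,\underline{\tilde p_1^{(r-1)}}$, so it suffices to establish for $\mathsf{V}'$ the three facts: (a) $\mathsf{V}'(1\otimes e^\alpha)=\mathbb{E}_\alpha\otimes e^\alpha$; (b) $\mathsf{V}'\circ\underline{\tilde p_1^{(i)}}=\Delta^\dagger\big[\tilde p_1^{(i)}[-\iota X^\bullet]\big]\circ\mathsf{V}'$ for each $i$; and (c) $\mathsf{V}'\circ\mathbb{D}_{\vec{0}}=p_1^\perp[X^{(0)}]\circ\mathsf{V}'$. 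By Proposition \ref{MainPropertiesofV} (together with Proposition \ref{DeltaP1} for (c)) the abstract $\mathsf{V}$ obeys precisely (a), (b), (c); hence pushing either $\mathsf{V}$ or $\mathsf{V}'$ rightward through a word $W_1\cdots W_m$ — replacing each $W_j$ by $p_1^\perp[X^{(0)}]$ if $W_j=\mathbb{D}_{\vec{0}}$ and by $\Delta^\dagger[\tilde p_1^{(i)}[-\iota X^\bullet]]$ if $W_j=\underline{\tilde p_1^{(i)}}$ — yields one and the same operator $U_1\cdots U_m$ applied to $\mathbb{E}_\alpha\otimes e^\alpha$, so $\mathsf{V}'=\mathsf{V}$.

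For (a): the partition with core $\alpha$ and empty quotient has $H_\alpha=1$, $\nabla_\alpha(1)=1$, and $\TT[X^{(0)}]$ fixes scalars, so $\mathsf{V}'(1\otimes e^\alpha)=\nabla_\alpha\big(\Omega\big[\tfrac{X^{(0)}}{(1-q\sigma^{-1})(t\sigma-1)}\big]\big)\otimes e^\alpha$, and (a) becomes the identity $\nabla_\alpha\,\Omega\big[\tfrac{X^{(0)}}{(1-q\sigma^{-1})(t\sigma-1)}\big]=\mathbb{E}_\alpha$. I would establish this in \ref{BaseCase} by pairing both sides against an arbitrary $g\in\Lambda_{q,t}^{\otimes r}$: the left side becomes $\langle\nabla_{w_0\alpha}g,\ \Omega[\cdots]\rangle_{q,t}'$ by Corollary \ref{NablaAdjoint}, and one then matches this with the delta‑function property $\langle g,\mathbb{E}_\alpha\rangle_{q,t}'=g[\iota D_\alpha^\bullet]$ of Corollary \ref{DeltaFunc}, using the shuffle‑algebraic expression for $\nabla$ from Lemma \ref{NablaShuff} and the explicit eigenvalues of the relevant operators. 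For $r=1$ this degenerates to the classical $\nabla\,\Omega[\tfrac{X}{(1-q)(t-1)}]=\Omega[\tfrac{-X}{(1-q)(t-1)}]$; the dependence on a nonempty core makes the wreath version substantially more involved, and I expect it to be the main obstacle of the proof, since $\nabla_\alpha$ admits no description simpler than Lemma \ref{NablaShuff} and all of its eigenvalues but $1$ are degenerate.

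For (b): by (\ref{NablaP1}), $\nabla\circ\underline{\tilde p_1^{(i)}}=\mathbb{D}_{-\epsilon_i}\circ\nabla$, so $\mathsf{V}'\circ\underline{\tilde p_1^{(i)}}=\nabla\,M\,\TT\,\mathbb{D}_{-\epsilon_i}\,\nabla$; I would commute $\mathbb{D}_{-\epsilon_i}$ leftward past $\TT$ and $M$ using the constant‑term form of Lemma \ref{ODRel} — equations (\ref{Rel2}) and (\ref{Rel1}), with due care for the prescribed Laurent expansions — and absorb the outer $\nabla$ using that $\Delta^\dagger$ of any element is diagonal in the dual basis $\{H^\dagger_\lambda\otimes e^{w_0\core(\lambda)}\}$ (equation (\ref{AdjointEq})), on which $\nabla$ is also diagonal, so the two commute; what remains is a purely formal identity among the operators $\mathbb{D}_{\vec{k}}$, $M$, $\TT$, checked as the analogous relations in \cite{GHT}. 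For (c) the same mechanism applies: $\mathbb{D}_{\vec{0}}$ commutes with $\nabla$ (both diagonal on $\{H_\lambda\otimes e^{\core(\lambda)}\}$, by Corollary \ref{DDiag}), one moves it past $\TT$ and $M$ via Lemma \ref{ODRel}, and (\ref{NablaPerp}) lets the outer $\nabla$ absorb the resulting $\mathbb{D}_{\epsilon_0}$ into $p_1^\perp[X^{(0)}]$, again reducing to a bookkeeping identity among the $\mathbb{D}_{\vec{k}}$. Assembling (a), (b), (c) as in the first paragraph gives $\mathsf{V}'=\mathsf{V}$, and the Tesler formula then follows from Theorem \ref{VThm}.
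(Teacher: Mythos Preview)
Your reduction to the three conditions (a), (b), (c) is precisely the paper's Proposition \ref{VReduce}, and your treatment of (b) and (c) matches \S\ref{Commute} essentially line for line: the same uses of (\ref{NablaP1}), (\ref{NablaPerp}), Lemma \ref{ODRel}, and the commutation of $\mathbb{D}_{\epsilon_0-\epsilon_i}$ with $\nabla$ via their shared dual eigenbasis (Lemma \ref{AdjointLem}(1) and Corollary \ref{NablaAdjoint}).

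For (a), however, your sketch does not give a workable method and diverges from the paper's actual argument. Invoking Lemma \ref{NablaShuff} only expresses $\nabla_\alpha$ as a polynomial in the $\Delta[e_{F,n}^{(0)}]$; pairing these against $\Omega[\cdots]$ does not evaluate to anything explicit, so this line does not close. The paper's proof in \S\ref{BaseCase} instead inducts on $\deg f$: by Corollary \ref{D1P1Ind} one writes $f=\sum_i\tilde p_1^{(i)}A_i+\mathbb{D}^*_{-\epsilon_i}B_i$ and moves each piece across the pairing by adjunction (Lemma \ref{FAdjoint1}, Lemma \ref{AdjointLem}(1)), then past $\nabla$ and the $\Omega$-factor via (\ref{NablaPerp}), (\ref{NablaPerp*}), and (\ref{Rel1}), landing on operators that act simply on $1\otimes e^\alpha$. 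The $\tilde p_1^{(i)}$ terms are then routine; the $\mathbb{D}^*_{-\epsilon_p}$ terms reduce to the explicit identity (\ref{D1InductFinal}), a nontrivial constant-term computation executed variable by variable via Lemma \ref{ConstTermLem}. The poles organize into \rotatebox[origin=c]{180}{L}-shaped chains along the boundary of $\alpha$, and all but two ``long chain'' contributions vanish --- the vanishing relies essentially on $\alpha$ being an $r$-core (hence having no removable $r$-ribbon). This combinatorial use of the core hypothesis is the real content of (a), and it is invisible in your outline.
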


\subsubsection{The eigenvalue of $\nabla$}
Before proceeding to the proof of Theorem \ref{Tesler}, we make a slight interlude and mention a corollary of the theorem.

\begin{cor}\label{NablaEv}
Recall the eigenvalue of $\nabla$ from (\ref{NableDef}).
The following equalities hold:
\begin{align*}
\nabla_{\core(\lambda)} H_\lambda= \prod_{\substack{\square\in\lambda\backslash\core(\lambda)\\\bar{c}_\square=0)}}(-\chi_\square)= H_\lambda[\iota D_{w_0\core(\lambda)}^\bullet].
\end{align*}
\end{cor}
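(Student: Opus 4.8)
\emph{The first equality} is a direct unpacking of (\ref{NableDef}). I would observe that $\lambda\backslash\core(\lambda)$ decomposes into $|\quot(\lambda)|$ border strips of length $r$, and that the cells of any length-$r$ border strip have $r$ consecutive contents, hence one box of each color; therefore $\lambda\backslash\core(\lambda)$ contains exactly $|\quot(\lambda)|$ boxes of color $0$. Distributing the sign then gives $(-1)^{|\quot(\lambda)|}\prod_{\square\in\lambda\backslash\core(\lambda),\,\bar{c}_\square=0}\chi_\square=\prod_{\square\in\lambda\backslash\core(\lambda),\,\bar{c}_\square=0}(-\chi_\square)$. (The same count identifies the ratio of elementary symmetric functions in the second line of (\ref{NableDef}) with this product, since the color $0$ boxes of $\core(\lambda)$ form a subset of those of $\lambda$.) Write $\mathbf{e}_\lambda$ for this common value, so $\nabla_{\core(\lambda)}H_\lambda=\mathbf{e}_\lambda H_\lambda$.

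\emph{For the second equality} the plan is to specialize the Tesler formula at $X^\bullet=0$. First I would apply the operator identity of Theorem \ref{Tesler} to $H_\lambda\otimes e^{\core(\lambda)}$, using that $\nabla$ and $\TT[X^{(0)}]$ act only on the symmetric-function tensorand and that the innermost $\nabla$ multiplies $H_\lambda$ by $\mathbf{e}_\lambda$; comparing with Theorem \ref{VThm} this yields
\[
H_\lambda[\iota D_{w_0\core(\lambda)}^\bullet]\,\mathbb{E}_\lambda\otimes e^{\core(\lambda)} = \mathbf{e}_\lambda\,\nabla_{\core(\lambda)}\!\left(\Omega\!\left[\frac{X^{(0)}}{(1-q\sigma^{-1})(t\sigma-1)}\right]\TT[X^{(0)}]H_\lambda\right)\otimes e^{\core(\lambda)}.
\]
Then I would pass to the constant term, i.e. the image under $X^\bullet\mapsto 0$ (this is the degree $0$ component, which makes sense since everything lives in the degree-completion of $\Lambda_{q,t}^{\otimes r}$). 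On the left, $\mathbb{E}_\lambda\big|_{X^\bullet=0}=\Omega[0]=1$, so the left side collapses to $H_\lambda[\iota D_{w_0\core(\lambda)}^\bullet]\otimes e^{\core(\lambda)}$. For the right side I would use: (i) $\Omega\!\left[\frac{X^{(0)}}{(1-q\sigma^{-1})(t\sigma-1)}\right]\big|_{X^\bullet=0}=1$; (ii) $(\TT[X^{(0)}]H_\lambda)\big|_{X^\bullet=0}=H_\lambda[1]=1$ by the normalization (3) of Definition \ref{ModDef} (since $\TT[X^{(0)}]$ replaces $p_k[X^{(0)}]$ by $p_k[X^{(0)}]+1$); and (iii) $\nabla_{\core(\lambda)}$ preserves the constant term of any element of $\Lambda_{q,t}^{\otimes r}$. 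For (iii), each $H_\mu$ with $\core(\mu)=\core(\lambda)$ is homogeneous of degree $|\quot(\mu)|$ — by condition (1) of Definition \ref{ModDef}, since $\mu\ge_r\lambda$ forces $|\quot(\mu)|=|\quot(\lambda)|$ and the plethysm $X^\bullet\mapsto(1-q\sigma^{-1})X^\bullet$ is degree-preserving — so, writing any $g$ as $\sum_\mu c_\mu H_\mu$ (valid since $\{H_\mu:\core(\mu)=\core(\lambda)\}$ is a basis), the constant term of $g$ is $c_{\core(\lambda)}$, the coefficient of the vacuum $H_{\core(\lambda)}=1$, which $\nabla_{\core(\lambda)}$ fixes (empty product, $|\quot|=0$). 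Combining (i)--(iii), the right side collapses to $\mathbf{e}_\lambda\otimes e^{\core(\lambda)}$, and comparing coefficients of $e^{\core(\lambda)}$ gives $\mathbf{e}_\lambda=H_\lambda[\iota D_{w_0\core(\lambda)}^\bullet]$.

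\emph{Expected obstacle.} There is no serious obstacle once Theorems \ref{Tesler} and \ref{VThm} are in hand; the only steps needing care are the border-strip content count in the first equality and the verification in (iii) that $\nabla_{\core(\lambda)}$ leaves the constant term alone, which hinges on homogeneity of the wreath Macdonald polynomials. As an alternative to the second step one could instead extract the coefficient of $H_{w_0\core(\lambda)}^\dagger$ from both sides of (\ref{DeltaHE}), but that merely reproves Theorem \ref{VThm}; the substance of the corollary is the identification of that scalar with the eigenvalue of $\nabla_{\core(\lambda)}$, which is precisely what Theorem \ref{Tesler} supplies.
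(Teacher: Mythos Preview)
Your proof is correct, and the second half takes a genuinely different route from the paper's. The paper applies the Tesler identity at the vacuum $1\otimes e^{w_0\core(\lambda)}$, expands both sides in the basis $\{H_\mu^\dagger\,:\,\core(\mu)=\core(\lambda)\}$ via Propositions \ref{CauchyProp} and \ref{DeltaExpand}, uses Corollary \ref{NablaAdjoint} to compute how $\nabla_{w_0\core(\lambda)}$ acts on $H_\mu^\dagger$, and then matches the coefficient of $H_\lambda^\dagger$. You instead apply the full Tesler identity to $H_\lambda\otimes e^{\core(\lambda)}$ and extract the degree-zero component, relying only on the normalization $H_\lambda[1]=1$ and the fact (from homogeneity) that $\nabla_{\core(\lambda)}$ fixes constants. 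Your argument is more direct and bypasses the adjoint of nabla and the dual-basis Cauchy expansion entirely; the paper's route, on the other hand, makes the connection to the $H^\dagger$-expansion of $\mathbb{E}_{w_0\core(\lambda)}$ explicit, which is useful elsewhere. One small stylistic point: your justification of homogeneity via ``$\mu\ge_r\lambda$ forces $|\quot(\mu)|=|\quot(\lambda)|$'' is correct but phrased for the wrong partition---you mean that in condition (1) applied to $H_\mu$, all $\nu\ge_r\mu$ have $|\quot(\nu)|=|\quot(\mu)|$.
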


\begin{proof}
By Theorem \ref{Tesler} and the defintion of $\mathsf{V}$, we have
\begin{equation}
\mathbb{E}_{w_0\core(\lambda)}\otimes e^{w_0\core(\lambda)}=\nabla\left(\Omega\left[ \frac{X^{(0)}}{(1-q\sigma^{-1})(t\sigma-1)} \right]\otimes e^{w_0\core(\lambda)}\right)
\label{NablaEv1}
\end{equation}
Consider the right-hand-side of (\ref{NablaEv1}).
First, note that by Proposition \ref{CauchyProp},
\begin{align*}
\Omega\left[ \frac{X^{(0)}}{(1-q\sigma^{-1})(t\sigma-1)} \right]
&= \Omega_{q,t}\left[ Y^{-\bullet} X^\bullet \right]\bigg|_{Y^{(i)}\mapsto \delta_{i,0}}\\
&= \sum_{\substack{\mu\\\core(\mu)=\core(\lambda)}}\frac{H_\mu[1]H_\mu^\dagger[X^\bullet]}{N_\mu}\\
&= \sum_{\substack{\mu\\\core(\mu)=\core(\lambda)}}\frac{H_\mu^\dagger[X^\bullet]}{N_\mu},
\end{align*}
because we have normalized $H_\mu$ such that $H_\mu[1]=1$.
Therefore, by Corollary \ref{NablaAdjoint}, the right-hand-side of (\ref{NablaEv1}) is
\begin{align}
\nonumber
\sum_{\substack{\mu\\\core(\mu)=\core(\lambda)}}&\frac{\nabla_{w_0\core(\lambda)}H_\mu^\dagger[X^\bullet]}{N_\mu}\otimes e^{w_0\core(\lambda)}\\
&=\sum_{\substack{\mu\\\core(\mu)=\core(\lambda)}}
\left(\prod_{\substack{\square\in\mu\backslash\core(\mu)\\\bar{c}_\square=0)}}(-\chi_\square)\right)
\frac{H_\mu^\dagger[X^\bullet]}{N_\mu}\otimes e^{w_0\core(\lambda)}.
\label{NablaEv2}
\end{align}
On the other hand, by Proposition \ref{DeltaExpand}, the left-hand-side of (\ref{NablaEv1}) is
\begin{equation}
\mathbb{E}_{w_0\core(\lambda)}\otimes e^{w_0\core(\lambda)}
=
\sum_{\substack{\mu\\ \core(\mu)=\core(\lambda)}}\frac{H_{\mu}^\dagger[X^\bullet]H_\mu[\iota D_{w_0\core(\lambda)}^\bullet]}{N_\mu}\otimes e^{w_0\core(\lambda)}.
\label{NablaEv3}
\end{equation}
The result follows from comparing the coefficients of $H_\lambda^\dagger[X^\bullet]$ in (\ref{NablaEv2}) and (\ref{NablaEv3}).
\end{proof}

\subsubsection{Reduction of the conditions}\label{RedProof}
Using Corollary \ref{D0Gen}, we can simplify the defining conditions for $\mathsf{V}$ even further:

\begin{prop}\label{VReduce}
$\mathsf{V}$ is determined by the following three conditions:
\begin{itemize}
\item \textit{Commuting with $\mathbb{D}_{\vec{0}}$}: 
\begin{equation}
\mathsf{V}\mathbb{D}_{\vec{0}}=p_1^\perp[X^{(0)}]\mathsf{V}.
\label{D0Rel}
\end{equation}
\item \textit{Commuting with $\tilde{p}_1^{(i)}$}: 
\begin{equation}
\mathsf{V}\tilde{p}_1^{(i)}=-\mathbb{D}_{\epsilon_0-\epsilon_i}\mathsf{V}.
\label{P1Rel}
\end{equation}
\item \textit{Base case}:
\[
\mathsf{V}(1\otimes e^\alpha)=\mathbb{E}_\alpha\otimes e^\alpha.
\]
\end{itemize}
\end{prop}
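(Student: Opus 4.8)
The plan is to first verify that the map $\mathsf{V}$ of Definition \ref{VDef} satisfies the three displayed conditions, and then to show that any linear map satisfying them must equal $\mathsf{V}$; the uniqueness step is where Corollary \ref{D0Gen} enters.

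\textbf{Verifying the conditions.} The base case is precisely the third bullet of Proposition \ref{MainPropertiesofV}. For \eqref{D0Rel} I would apply the second bullet of Proposition \ref{MainPropertiesofV}, $f^\perp\mathsf{V}=\mathsf{V}\Delta\!\left[f\!\left[\tfrac{X^\bullet}{(1-q\sigma)(1-t\sigma^{-1})}\right]\right]$, with $f=p_1[X^{(0)}]$: by Proposition \ref{DeltaP1} the operator $\Delta\!\left[p_1\!\left[\tfrac{X^{(0)}}{(1-q\sigma)(1-t\sigma^{-1})}\right]\right]$ is exactly $\mathbb{D}_{\vec 0}$, giving $p_1^\perp[X^{(0)}]\mathsf{V}=\mathsf{V}\mathbb{D}_{\vec 0}$. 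For \eqref{P1Rel} I would apply the first bullet, $\mathsf{V}\underline f=\Delta^\dagger[f[-\iota X^\bullet]]\mathsf{V}$, with $f=\tilde p_1^{(i)}$, and the work is to identify $\Delta^\dagger[\tilde p_1^{(i)}[-\iota X^\bullet]]$. Since $\iota$ conjugates $\sigma$ to $\sigma^{-1}$ and the plethystic minus contributes an overall sign on $p_1$, one computes $\tilde p_1^{(i)}[-\iota X^\bullet]=-\,p_1\!\left[\tfrac{X^{(-i)}}{(1-q\sigma)(1-t\sigma^{-1})}\right]$; Proposition \ref{DeltaP1} then identifies $\Delta$ of the unsigned function with $\mathbb{D}_{\epsilon_{-i}-\epsilon_0}$, and Lemma \ref{AdjointLem}(1) gives $(\mathbb{D}_{\epsilon_{-i}-\epsilon_0})^\dagger=\mathbb{D}_{\epsilon_0-\epsilon_i}$. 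Hence $\Delta^\dagger[\tilde p_1^{(i)}[-\iota X^\bullet]]=-\mathbb{D}_{\epsilon_0-\epsilon_i}$, which is \eqref{P1Rel}.

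\textbf{Uniqueness.} Suppose $\mathsf{V}'$ is any linear map obeying the three conditions. Given $f\otimes e^\alpha$, Corollary \ref{D0Gen} expresses it as $P\big(\mathbb{D}_{\vec 0},\tilde p_1^{(0)},\dots,\tilde p_1^{(r-1)}\big)(1\otimes e^\alpha)$ for a noncommutative polynomial $P$. Expanding $P$ into words and transporting $\mathsf{V}'$ past each letter from left to right using \eqref{D0Rel} and \eqref{P1Rel} — each leading $\mathbb{D}_{\vec 0}$ becoming $p_1^\perp[X^{(0)}]$, each leading $\tilde p_1^{(i)}$ becoming $-\mathbb{D}_{\epsilon_0-\epsilon_i}$, the order of letters unchanged — rewrites $\mathsf{V}'(f\otimes e^\alpha)$ as a fixed word-polynomial in $p_1^\perp[X^{(0)}]$ and the operators $-\mathbb{D}_{\epsilon_0-\epsilon_i}$ applied to $\mathsf{V}'(1\otimes e^\alpha)=\mathbb{E}_\alpha\otimes e^\alpha$ (all these operators act within the completion, so the expression is defined). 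This value makes no reference to $\mathsf{V}'$, and running the same manipulation on $\mathsf{V}$, which satisfies the three conditions by the first part, produces the identical value; therefore $\mathsf{V}'=\mathsf{V}$. The non-uniqueness of the representing polynomial $P$ is immaterial, since only its existence is used.

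\textbf{Main obstacle.} The sole non-formal point is the plethystic and sign bookkeeping behind \eqref{P1Rel}: one must correctly compose the color reversal $\iota$, the plethystic minus, the passage from $\Delta$ to $\Delta^\dagger$, and the index reflection $\epsilon_{-i}\leftrightarrow\epsilon_i$ of Lemma \ref{AdjointLem}(1), so that the sign comes out $-1$ and the degree vector comes out exactly $\epsilon_0-\epsilon_i$. Everything else is immediate from Proposition \ref{MainPropertiesofV} and Corollary \ref{D0Gen}.
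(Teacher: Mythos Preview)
Your proposal is correct and follows essentially the same route as the paper's proof: both verify the relations via Proposition \ref{MainPropertiesofV} together with the identifications $\mathbb{D}_{\vec 0}=\Delta\big[p_1[\tfrac{X^{(0)}}{(1-q\sigma)(1-t\sigma^{-1})}]\big]$ and $\mathbb{D}_{\epsilon_0-\epsilon_i}=\Delta^\dagger[\tilde p_1^{(i)}[\iota X^\bullet]]$, and both invoke Corollary \ref{D0Gen} for uniqueness. Your version is simply more explicit about the uniqueness step and the sign/index bookkeeping (via Lemma \ref{AdjointLem}(1)) than the paper's terse argument.
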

We first prove the first two commutation relations and prove the \textit{base case} in the subsequent sections.
\begin{proof}
Applying Corollary \ref{D0Gen}, we can reduce the defining condition
\[
\mathsf{V}(f\otimes e^\alpha)=\Delta^\dagger\left[ f[-\iota X^\bullet] \right]\left(\mathsf{V}\left( 1\otimes e^\alpha \right)\right)
\]
to commutation relations with $\mathbb{D}_{\vec{0}}$ and $\tilde{p}_1^{(i)}$.
These are given by \eqref{D0Rel} and \eqref{P1Rel}.
To see that, we apply Proposition \ref{MainPropertiesofV}, noting that
\begin{align*}
 \mathbb{D}_{\vec{0}}&=\Delta\left[p_1\left[\frac{X^{(0)}}{(1-q\sigma)(1-t\sigma^{-1})}\right]\right]\\
 \mathbb{D}_{\epsilon_0-\epsilon_i} &=\Delta^\dagger\left[\tilde{p}_1^{(i)}[\iota X^\bullet]\right].\qedhere
\end{align*}
\end{proof}

\subsubsection{Commutation relations}\label{Commute}
To prove the \textit{base case}, we must first show
\begin{prop} The following hold:
\begin{align}
\label{ComRel1}
\nabla\Omega\left[ \frac{X^{(0)}}{(1-q\sigma^{-1})(t\sigma-1)} \right]\TT\left[ X^{(0)} \right]\nabla \mathbb{D}_{\vec{0}}
& = p_1^\perp[X^{(0)}]\nabla\Omega\left[ \frac{X^{(0)}}{(1-q\sigma^{-1})(t\sigma-1)} \right]\TT\left[ X^{(0)} \right]\nabla,\\
\label{ComRel2}
\nabla\Omega\left[ \frac{X^{(0)}}{(1-q\sigma^{-1})(t\sigma-1)} \right]\TT\left[ X^{(0)} \right]\nabla \tilde{p}_1^{(i)} 
& = -\mathbb{D}_{\epsilon_0-\epsilon_i}\nabla\Omega\left[ \frac{X^{(0)}}{(1-q\sigma^{-1})(t\sigma-1)} \right]\TT\left[ X^{(0)} \right]\nabla.
\end{align}
\end{prop}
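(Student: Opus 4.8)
The plan is to reduce both relations to the single operator identity
\begin{equation*}
\mathsf{M}\,\mathbb{D}=-z_0\,\mathbb{D}\,\mathsf{M},\qquad
\mathsf{M}\coloneq \Omega\left[\frac{X^{(0)}}{(1-q\sigma^{-1})(t\sigma-1)}\right]\TT[X^{(0)}],
\end{equation*}
an equality of operator-valued Laurent series in $z_0,\dots,z_{r-1}$, and then to extract coefficients and conjugate by $\nabla$. Note that the operator occurring in \eqref{ComRel1} and \eqref{ComRel2} is exactly $\nabla\mathsf{M}\nabla$.

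First I would establish the key identity. The essential bookkeeping point is that the factor $(t\sigma-1)$ contributes a \emph{plethystic} inverse: $\Omega\!\left[\frac{X^{(0)}}{(1-q\sigma^{-1})(t\sigma-1)}\right]=\Phi^{-1}$, where $\Phi\coloneq \Omega\!\left[\frac{X^{(0)}}{(1-q\sigma^{-1})(1-t\sigma)}\right]$. Taking $w=1$ in \eqref{Rel1} of Lemma~\ref{ODRel} gives $\Phi\mathbb{D}=(1-z_0^{-1})\mathbb{D}\Phi$, hence $\mathbb{D}\Phi^{-1}=(1-z_0^{-1})\Phi^{-1}\mathbb{D}$, and multiplying by $-z_0$ yields $(1-z_0)\Phi^{-1}\mathbb{D}=-z_0\,\mathbb{D}\Phi^{-1}$; combined with $\TT[X^{(0)}]\mathbb{D}=(1-z_0)\mathbb{D}\TT[X^{(0)}]$ from \eqref{Rel2} (again at $w=1$), this gives $\mathsf{M}\mathbb{D}=(1-z_0)\Phi^{-1}\mathbb{D}\TT[X^{(0)}]=-z_0\,\mathbb{D}\mathsf{M}$. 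Extracting the coefficient of $z_0$, resp. of $z_i$ for $i\neq0$ (tracking the shift by $z_0^{\pm1}$ induced on the right), turns this into
\begin{equation*}
\mathsf{M}\,\mathbb{D}_{\vec 0}=-\mathbb{D}_{\epsilon_0}\,\mathsf{M},\qquad
\mathsf{M}\,\mathbb{D}_{-\epsilon_i}=-\mathbb{D}_{\epsilon_0-\epsilon_i}\,\mathsf{M}\quad(i\in\ZZ/r\ZZ).
\end{equation*}

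Now I would deduce the two relations. For \eqref{ComRel1}: by Proposition~\ref{DeltaP1}, $\mathbb{D}_{\vec 0}$ is a $\Delta$-operator, hence diagonal on $\{H_\lambda\otimes e^{\core(\lambda)}\}$ and commuting with $\nabla$; thus $\nabla\mathsf{M}\nabla\,\mathbb{D}_{\vec 0}=\nabla\mathsf{M}\,\mathbb{D}_{\vec 0}\,\nabla=-\nabla\,\mathbb{D}_{\epsilon_0}\,\mathsf{M}\nabla$, and rewriting $\nabla\mathbb{D}_{\epsilon_0}=-p_1^\perp[X^{(0)}]\nabla$ via \eqref{NablaPerp} gives exactly $p_1^\perp[X^{(0)}]\nabla\mathsf{M}\nabla$. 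For \eqref{ComRel2}: from \eqref{NablaP1} we have $\nabla\tilde{p}_1^{(i)}=\mathbb{D}_{-\epsilon_i}\nabla$, so $\nabla\mathsf{M}\nabla\,\tilde{p}_1^{(i)}=\nabla\mathsf{M}\,\mathbb{D}_{-\epsilon_i}\,\nabla=-\nabla\,\mathbb{D}_{\epsilon_0-\epsilon_i}\,\mathsf{M}\nabla$; finally $\mathbb{D}_{\epsilon_0-\epsilon_i}$ commutes with $\nabla$, because its adjoint $(\mathbb{D}_{\epsilon_0-\epsilon_i})^\dagger=\mathbb{D}_{\epsilon_{-i}-\epsilon_0}$ (Lemma~\ref{AdjointLem}(1)) is a $\Delta$-operator by Proposition~\ref{DeltaP1} and hence commutes with $\nabla$, while $\nabla^\dagger=\nabla$ by Corollary~\ref{NablaAdjoint}. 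This produces $-\mathbb{D}_{\epsilon_0-\epsilon_i}\nabla\mathsf{M}\nabla$, as required.

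The only delicate point, and it is a matter of care rather than of substance, is the key identity $\mathsf{M}\mathbb{D}=-z_0\,\mathbb{D}\mathsf{M}$: one must recognize the plethystic inverse hidden in $(t\sigma-1)$, so that \eqref{Rel1} is applied at $w=1$ to $\Phi^{-1}$ rather than to a naive $w=-1$ specialization, and then correctly propagate the single $z_0^{\pm1}$ shift when passing from the Laurent-series identity to the coefficient identities for $\mathbb{D}_{\vec 0}$ and $\mathbb{D}_{-\epsilon_i}$.
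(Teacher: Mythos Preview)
Your proof is correct and follows essentially the same approach as the paper. Both arguments push $\mathbb{D}$ (respectively $\mathbb{D}_{\vec 0}$, $\mathbb{D}_{-\epsilon_i}$) leftward through $\Omega[\,\cdot\,]\TT[X^{(0)}]$ via Lemma~\ref{ODRel}, pick up the factor $\tfrac{1-z_0}{1-z_0^{-1}}=-z_0$, and then use \eqref{NablaP1}, \eqref{NablaPerp}, Lemma~\ref{AdjointLem}(1), and Corollary~\ref{NablaAdjoint} to handle the conjugation by $\nabla$; the only cosmetic difference is that you first isolate the clean series identity $\mathsf{M}\,\mathbb{D}=-z_0\,\mathbb{D}\,\mathsf{M}$ and then extract coefficients, whereas the paper computes the ratio $\tfrac{1-z_0}{1-z_0^{-1}}$ inline.
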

Both come from applying the relations from \ref{Relations}.
Let us begin with (\ref{ComRel1}).
$\mathbb{D}_{\vec{0}}$ commutes with $\nabla$ as they share the common eigenbasis $\{H_\lambda\otimes e^{\core(\lambda)}\}$.
Applying (\ref{Rel1}) and (\ref{Rel2}) yields
\begin{align*}
\nabla\Omega\left[ \frac{X^{(0)}}{(1-q\sigma^{-1})(t\sigma-1)} \right]\TT\left[ X^{(0)} \right]\nabla \mathbb{D}_{\vec{0}}
&=\nabla\Omega\left[ \frac{X^{(0)}}{(1-q\sigma^{-1})(t\sigma-1)} \right]\TT\left[ X^{(0)} \right]\mathbb{D}_{\vec{0}}\nabla\\
&= \left\{\left(\frac{1-z_0}{1-z_0^{-1}}\right)\nabla\mathbb{D}\Omega\left[ \frac{X^{(0)}}{(1-q\sigma^{-1})(t\sigma-1)} \right]\TT\left[ X^{(0)} \right]\nabla\right\}_0\\
&= -\nabla\mathbb{D}_{\epsilon_0}\Omega\left[ \frac{X^{(0)}}{(1-q\sigma^{-1})(t\sigma-1)} \right]\TT\left[ X^{(0)} \right]\nabla.
\end{align*}
Finally, we apply (\ref{NablaPerp}) to obtain the right-hand-side of (\ref{ComRel1}).

For (\ref{ComRel2}), we start with (\ref{NablaP1}):
\begin{align*}
\nabla\Omega\left[ \frac{X^{(0)}}{(1-q\sigma^{-1})(t\sigma-1)} \right]\TT\left[ X^{(0)} \right]\nabla \tilde{p}_1^{(i)}
&= \nabla\Omega\left[ \frac{X^{(0)}}{(1-q\sigma^{-1})(t\sigma-1)} \right]\TT\left[ X^{(0)} \right]\mathbb{D}_{-\epsilon_i}\nabla\\
&= \left\{\left( \frac{1-z_0}{1-z_0^{-1}} \right)\nabla\mathbb{D}\Omega\left[ \frac{X^{(0)}}{(1-q\sigma^{-1})(t\sigma-1)} \right]\TT\left[ X^{(0)} \right]\nabla\right\}_0\\
&= -\nabla\mathbb{D}_{\epsilon_0-\epsilon_i}\Omega\left[ \frac{X^{(0)}}{(1-q\sigma^{-1})(t\sigma-1)} \right]\TT\left[ X^{(0)} \right]\nabla,
\end{align*}
where, as in the previous paragraph, we applied (\ref{Rel1}) and (\ref{Rel2}).
We can commute $\mathbb{D}_{\epsilon_0-\epsilon_i}$ with $\nabla$ because by Lemma \ref{AdjointLem}(1) and Corollary \ref{NablaAdjoint}, both operators share the eigenbasis $\{H_\lambda^\dagger\otimes e^{w_0\core(\lambda)}\}$.

\subsubsection{Proof of the base case}\label{BaseCase}
Here, we want to show
\begin{equation}
\begin{aligned}
\mathbb{E}_\alpha\otimes e^\alpha&\overset{?}{=} 
\nabla\Omega\left[ \frac{X^{(0)}}{(1-q\sigma^{-1})(t\sigma-1)} \right]\TT\left[ X^{(0)} \right]\nabla (1\otimes e^\alpha)\\
&= \nabla\Omega\left[ \frac{X^{(0)}}{(1-q\sigma^{-1})(t\sigma-1)} \right](1\otimes e^\alpha).
\end{aligned}
\label{BaseCaseEq}
\end{equation}
Notice that because $H_\lambda[1]=1$, we have \begin{equation} \nabla\Omega\left[ \frac{X^{(0)}}{(1-q\sigma^{-1})(t\sigma-1)} \right](1\otimes e^\alpha)
=\sum_{\substack{\lambda\\ \core(\lambda)=w_0\alpha}}
\left(\prod_{\substack{\square\in\lambda\backslash\core(\lambda)\\\bar{c}_\square=0}}(-\chi_\square )\right)
\frac{H_\lambda^\dagger}{N_\lambda} \otimes e^\alpha.
\label{NablaP}
\end{equation}
In order to prove (\ref{BaseCaseEq}), by Corollary \ref{DeltaFunc}, it suffices to show that for any $f\in\Lambda_{q,t}^{\otimes r}$, we have
\[
\left\langle f\otimes e^{w_0\alpha}, \nabla\Omega\left[ \frac{X^{(0)}}{(1-q\sigma^{-1})(t\sigma-1)} \right](1\otimes e^\alpha)\right\rangle_{q,t}'
= f\left[ \iota D_\alpha^\bullet \right].
\]
By (\ref{NablaP}), this is obvious when $f=1$.
We will induct by degree.

If $\deg(f)=n$, by Corollary \ref{D1P1Ind}, we can write $f$ as
\[
f=\sum_{i=0}^{r-1} \tilde{p}_1^{(i)} A_i+\mathbb{D}^*_{-\epsilon_i}B_i,
\]
where $A_i,B_i\in\Lambda_{q,t}^{\otimes r}$ are of degree $n-1$.
It then suffices to show that for all $p\in\ZZ/r\ZZ$ and $g\in\Lambda_{q,t}^{\otimes r}$ with $\deg(g)=n-1$,
\begin{align}
\left\langle \tilde{p}_1^{(p)}g\otimes e^{w_0\alpha}, \nabla\Omega\left[ \frac{X^{(0)}}{(1-q\sigma^{-1})(t\sigma-1)} \right](1\otimes e^\alpha)\right\rangle_{q,t}'
&\overset{?}{=} \tilde{p}_1^{(p)}[\iota D_\alpha]g[\iota D_\alpha^\bullet]
\label{P1Induct},\\
\left\langle \mathbb{D}_{-\epsilon_p}^*\left(g\otimes e^{w_0\alpha}\right), \nabla\Omega\left[ \frac{X^{(0)}}{(1-q\sigma^{-1})(t\sigma-1)} \right](1\otimes e^\alpha)\right\rangle_{q,t}'
&\overset{?}{=} \mathbb{D}_{-\epsilon_p}^*(g\otimes e^{w_0\alpha})[\iota D_\alpha^\bullet].
\label{D1Induct}
\end{align}
Here, in the evaluation on the right-hand-side of (\ref{D1Induct}), we send $e^{w_0\alpha}\mapsto 1$.

First, consider (\ref{P1Induct}).
A consequence of Lemma \ref{FAdjoint1} is that
\begin{align*}
\left(\tilde{p}_1^{(p)}\right)^\dagger&= \left( p_1\left[ \frac{X^{(p)}}{(1-q\sigma^{-1})(1-t\sigma)} \right] \right)^\dagger\\
&=p_1^\perp\left[-(1-q\sigma)(1-t\sigma^{-1})\iota\left( \frac{X^{(p)}}{(1-q\sigma^{-1})(1-t\sigma)} \right) \right]\\
&=-p_1^\perp\left[ X^{(-p)} \right] .
\end{align*}
Thus, starting with the left-hand-side of (\ref{P1Induct}), taking adjoints and applying (\ref{NablaPerp}) and (\ref{Rel1}) yields
\begin{align*}
&\left\langle \tilde{p}_1^{(p)}g\otimes e^{w_0\alpha}, \nabla\Omega\left[ \frac{X^{(0)}}{(1-q\sigma^{-1})(t\sigma-1)} \right](1\otimes e^\alpha)\right\rangle_{q,t}'\\
& \hspace{2em}=\left\langle g\otimes e^{w_0\alpha}, -p_1^\perp[X^{(-p)}]\nabla\Omega\left[ \frac{X^{(0)}}{(1-q\sigma^{-1})(t\sigma-1)} \right](1\otimes e^\alpha)\right\rangle_{q,t}' \\
&\hspace{2em} = \left\langle g\otimes e^{w_0\alpha}, \nabla\mathbb{D}_{\epsilon_{-p}}\Omega\left[ \frac{X^{(0)}}{(1-q\sigma^{-1})(t\sigma-1)} \right](1\otimes e^\alpha)\right\rangle_{q,t}'\\
&\hspace{2em} = \left\langle g\otimes e^{w_0\alpha}, \nabla\Omega\left[ \frac{X^{(0)}}{(1-q\sigma^{-1})(t\sigma-1)} \right]\left(\mathbb{D}_{\epsilon_{-p}}-\mathbb{D}_{\epsilon_{-p}-\epsilon_0}\right)(1\otimes e^\alpha)\right\rangle_{q,t}'.
\end{align*}
Because $\mathbb{D}_{\epsilon_{-p}}$ decreases degree, it acts by zero on $1\otimes e^\alpha$.
Corollary \ref{DDiag} then implies
\begin{align*}
&\left\langle g\otimes e^{w_0\alpha}, \nabla\Omega\left[ \frac{X^{(0)}}{(1-q\sigma^{-1})(t\sigma-1)} \right]\left(\mathbb{D}_{\epsilon_{-p}}-\mathbb{D}_{\epsilon_{-p}-\epsilon_0}\right)(1\otimes e^\alpha)\right\rangle_{q,t}'\\
&\hspace{2em}= \left( \frac{D_\alpha^\bullet}{(1-q)(1-t)} \right)^{(-p)}
\left\langle g\otimes e^{w_0\alpha}, \nabla\Omega\left[ \frac{X^{(0)}}{(1-q\sigma^{-1})(t\sigma-1)} \right](1\otimes e^\alpha)\right\rangle_{q,t}'\\
&\hspace{2em}= \tilde{p}_1^{(p)}[\iota D_\alpha^\bullet]g[\iota D_\alpha^\bullet],
\end{align*}
where in the last step, we use the induction hypothesis.

We take a similar approach for (\ref{D1Induct}).
Starting from the left-hand-side, Lemma \ref{AdjointLem}(1) allows us to take the adjoint of $\mathbb{D}_{-\epsilon_{p}}^*$, after which we apply (\ref{NablaPerp*}):
\begin{align}
\nonumber
&\left\langle \mathbb{D}_{-\epsilon_p}^*\left(g\otimes e^{w_0\alpha}\right), \nabla\Omega\left[ \frac{X^{(0)}}{(1-q\sigma^{-1})(t\sigma-1)} \right](1\otimes e^\alpha)\right\rangle_{q,t}'\\
\nonumber
&\hspace{2em}=\left\langle g\otimes e^{w_0\alpha}, qt\mathbb{D}_{\epsilon_{-p}}^*\nabla\Omega\left[ \frac{X^{(0)}}{(1-q\sigma^{-1})(t\sigma-1)} \right](1\otimes e^\alpha)\right\rangle_{q,t}' \\
\label{D1Induct2}
&\hspace{2em}
=qt\left\langle g\otimes e^{w_0\alpha}, \nabla p_1^\perp[X^{(-p)}]\Omega\left[ \frac{X^{(0)}}{(1-q\sigma^{-1})(t\sigma-1)} \right](1\otimes e^\alpha)\right\rangle_{q,t}'.
\end{align}
To commute the $p_1^\perp[X^{(-p)}]$ further to the right, note that by Lemma \ref{TOLem},
\begin{equation}
\TT[X^{(-p)}u]\Omega\left[ \frac{X^{(0)}}{(1-q\sigma^{-1})(t\sigma-1)} \right]
=\Omega\left[-C^{(-p)}u \right]\Omega\left[ \frac{X^{(0)}}{(1-q\sigma^{-1})(t\sigma-1)} \right]\TT[X^{(-p)}u]
\label{TOComm}
\end{equation}
where
\[
C^{(-p)}=(1-q^r)^{-1}(1-t^r)^{-1}\sum_{\substack{0\le i,j\le r-1\\j-i=-p}}q^{i}t^j=\left( \frac{1}{(1-q)(1-t)} \right)^{(-p)}
\]
Taking the coefficient $u$ in (\ref{TOComm}), we get
\[
p_1^\perp[X^{(-p)}]\Omega\left[ \frac{X^{(0)}}{(1-q\sigma^{-1})(t\sigma-1)} \right]
=\Omega\left[ \frac{X^{(0)}}{(1-q\sigma^{-1})(t\sigma-1)} \right]\left(p_1^\perp[X^{(-p)}]- C^{(-p)}\right)
\]
Because $p_1^\perp[X^{(-p)}]$ annihilates $1\otimes e^\alpha$, (\ref{D1Induct2}) becomes
\begin{align*}
-qtC^{(-p)}
\left\langle g\otimes e^{w_0\alpha}, \nabla \Omega\left[ \frac{X^{(0)}}{(1-q\sigma^{-1})(t\sigma-1)} \right](1\otimes e^\alpha)\right\rangle_{q,t}'
=-qtC^{(-p)}g[\iota D_\alpha^\bullet],
\end{align*}
by induction.
It will be advantageous to rewrite the factor as
\begin{align*}
-qtC^{(p)}
&= -qt\left( \frac{1}{(1-q)(1-t)} \right)^{(-p)}\\
&=\frac{-qt}{(1-qt)}\left(\frac{1}{1-q}+\frac{t}{1-t}  \right)^{(-p)}\\
&= \frac{1}{(1-q^{-1}t^{-1})}\left( \frac{q^{p}}{1-q^r}+\frac{t^{r-p}}{1-t^r} \right)\\
&= \frac{-1}{(1-q^{-1}t^{-1})}\left( \frac{q^{p-r}}{1-q^{-r}}+\frac{t^{-p}}{1-t^{-r}} \right)
\end{align*}
for $0\le p\le r-1$.
All-in-all, we have reduced (\ref{D1Induct}) to showing
\begin{equation}
\mathbb{D}_{-\epsilon_p}^*(g\otimes e^{w_0\alpha})[\iota D_\alpha^\bullet]
\overset{?}{=}\frac{-1}{(1-q^{-1}t^{-1})}\left( \frac{q^{p-r}}{1-q^{-r}}+\frac{t^{-p}}{1-t^{-r}} \right)g[\iota D_\alpha^\bullet].
\label{D1InductFinal}
\end{equation}

We proceed by unpacking the left-hand-side of (\ref{D1InductFinal}).
To make the notation easier, it suffices to consider the case where $g$ can be factored according to its colors:
\[
g[X^\bullet]=\prod_{i\in\ZZ/r\ZZ}g_i[X^{(i)}].
\]
First, recall that in the definition of $\mathbb{D}^*$, all the rational functions are series expanded assuming
\begin{equation}
\begin{aligned}
|z_{i}|&=1,& |q|,|t|&>1.
\end{aligned}
\label{BaseConditions}
\end{equation}
On the other hand, when we evaluate the $\Omega$-terms, we have
\begin{equation}
\Omega\left[ D_\alpha^{(-i)}z_i \right]=\frac{\displaystyle\prod_{\square\in A_{-i}(\alpha)}(1-\chi_\square z_i)}{\displaystyle\prod_{\square\in R_{-i}(\alpha)}(1-qt\chi_\square z_i)}
\label{OmegaEv}
\end{equation}
For the above formula to hold, we need $|q^at^b|< |z_i|$, which is in conflict with (\ref{BaseConditions}).
This creates a conflict in describing the expansion region, and thus we introduce auxiliary variables $\ppp_q$ and $\ppp_t$ in place of $q$ and $t$ where expansions are taken with $|q|,|t|>1$.
Let us also note that the action of $z_i^{H_{i,0}}$ yields
\[
\left(\prod_{i\in\ZZ/r\ZZ }z_i^{H_{i,0}}\right)(e^{w_0\alpha})
=\left(\prod_{i\in\ZZ/r\ZZ}z_{i}^{c_{-i}-c_{-i-1}} \right)e^\alpha
=\left(z_0\prod_{i\in\ZZ/r\ZZ}z_i^{\#R_{-i}(\alpha)-\#A_{-i}(\alpha)}\right)e^\alpha,
\]
by Proposition \ref{CoreQuotDec}(4).
As in the proof of Lemma \ref{DeltaDelta}, these powers of $z_i$ can be used to balance the binomials in (\ref{OmegaEv}).
We obtain
\begin{align}
\label{BaseSetup}
\mathbb{D}_{-\epsilon_p}^*(g\otimes e^{w_0\alpha})[\iota D_\alpha^\bullet]
&= 
\left\{\left( \frac{z_0}{z_{p}} \right)\frac{(1-q^{-1}t^{-1})^{r-1}}{\prod_{i\in\ZZ/r\ZZ}(1-\ppp_t^{-1}z_{i+1}/z_{i})(1-\ppp_q^{-1}z_i/z_{i+1})}\right.\\
\nonumber
&\quad\times
\prod_{i\in\ZZ/r\ZZ}
\frac{\displaystyle\prod_{\square\in A_{-i}(\alpha)}(z_i^{-1}-\chi_\square)}{\displaystyle\prod_{\square\in R_{-i}(\alpha)}(z_i^{-1}-qt\chi_\square)}\\
\nonumber
& \quad\times
\left.\left.\prod_{i\in\ZZ/r\ZZ}g_i\left[ D_\alpha^{(-i)}+t^{-1}z_{i-1}^{-1}-(1+q^{-1}t^{-1})z_i^{-1}+q^{-1}z_{i+1}^{-1}  \right]\right\}_0\,\right|_{(\ppp_q,\ppp_t)\mapsto (q,t)}
\end{align}
where all rational functions are expanded assuming
\begin{equation}
\begin{aligned}
|z_{i}|&=1,& |q|,|t|&<1,& \text{ and }~ |\ppp_q|,|\ppp_t|&>1.
\end{aligned}
\label{BaseConditions2}
\end{equation}
It will also be convenient to make the variable switch $z_i\mapsto z_{-i}^{-1}$;
we also balance out the binomials in (\ref{BaseSetup}) and move things around:
\begin{align}
\label{Base1}
\mathbb{D}_{-\epsilon_p}^*(g\otimes e^{w_0\alpha})[\iota D_\alpha^\bullet]
&= 
\frac{1}{(1-q^{-1}t^{-1})}\left\{ \frac{z_{-p}}{z_{0}} \prod_{i\in\ZZ/r\ZZ}\frac{(1-q^{-1}t^{-1})z_i^2}{(z_i-\ppp_t^{-1}z_{i+1})(z_i-\ppp_q^{-1}z_{i-1})}\right.\\
\label{Base2}
&\quad\times
\prod_{i\in\ZZ/r\ZZ}
\frac{\displaystyle\prod_{\square\in A_{i}(\alpha)}(z_i-\chi_\square)}{\displaystyle\prod_{\square\in R_{i}(\alpha)}(z_i-qt\chi_\square)}\\
\nonumber
& \quad\times
\left.\left.\prod_{i\in\ZZ/r\ZZ}g_{i}\left[ D_\alpha^{(-i)}+t^{-1}z_{-i+1}-(1+q^{-1}t^{-1})z_{-i}+q^{-1}z_{-i-1}  \right]\right\}_0\,\right|_{(\ppp_q,\ppp_t)\mapsto(q,t)}
\end{align}
The functions are still expanded into series assuming (\ref{BaseConditions2}).
Finally, like in the proof of Lemma \ref{DeltaDelta}, we strengthen the conditions (\ref{BaseConditions2}) to include 
\begin{equation}
\begin{aligned}
|q^at^b\ppp_t|,|q^at^b\ppp_q|&>1
\hbox{ for all }(a,b)\in \NN^2.
\end{aligned}
\label{pqtcond}
\end{equation}
We are now ready to proceed with the computation.

\underline{The strategy}: We apply Lemma \ref{ConstTermLem} to find the constant term of $z_{r-1}$, then proceed down in cyclic order to $z_{r-2}$, then $z_{r-3}$, and so on until we reach $z_0$.
At each step, poles of $z_i$ will either give the evaluation of $z_i$ at a box $\square_i$ on the outer boundary of $\alpha$, or crucially, at the end, in terms of $z_0$.
We will show that the only surviving terms are of the latter kind, and the constant term is found by setting $z_0=0$.

We first note that there is a power of $z_{r-1}$ in the numerator; and the only poles expanded in negative powers of $z_{r-1}$ are given by the following terms:
\begin{enumerate}
\item[($t^{-1}$)]\textit{$t^{-1}$-pole}: the pole $(z_{r-1}-\ppp_t^{-1}z_{0})$ in (\ref{Base1});
\item[($q^{-1}$)]\textit{$q^{-1}$-pole}: the pole $(z_{r-1}-\ppp_q^{-1}z_{r-2})$ in (\ref{Base1});
\item[($\chi$)]\textit{$\chi$-poles}: for some $\square\in R_{r-1}(\alpha)$, the pole $(z_{r-1}-qt\chi_\square)$ in (\ref{Base2}).
\end{enumerate}
Once these are factored out, the remainder is a series in nonnegative powers of $z_{r-1}$.
Upon evaluating at the $t^{-1}$-pole, the factor
\[
\frac{(1-q^{-1}t^{-1})z_0}{z_0-\ppp_q^{-1}z_{r-1}}\mapsto\frac{1-q^{-1}t^{-1}}{1-\ppp_q^{-1}\qqq_t^{-1}}
\]
from (\ref{Base1}) becomes $1$ upon evaluation at $(\ppp_q,\ppp_t)\mapsto(q,t)$.
Likewise, upon evaluating at the $q^{-1}$ pole, the factor
\begin{equation}
\frac{(1-q^{-1}t^{-1})z_{r-2}}{z_{r-2}-\ppp_t^{-1}z_{r-1}}\mapsto\frac{1-q^{-1}t^{-1}}{1-\ppp_q^{-1}\qqq_t^{-1}}
\label{NoT}
\end{equation}
from (\ref{Base1}) becomes trivial as well.
Note that both of these cancellations remove a power of some variable in the numerator of (\ref{Base1}).

Our computation proceeds \textit{downward} in cyclic order.
For $z_i$ with $i\not=0$, we have the three poles as in the case of $z_{r-1}$.
If $z_{i+1}$ was evaluated at the $q^{-1}$-pole, then there are some additional considerations.
First, as in (\ref{NoT}), $z_i$ no longer has the $t$-pole.
Additionally, we encounter poles of the form $(\ppp_q^kz_{i}-qt\chi_\square)$ for some $\square\in R_{i+k}(\alpha)$ and $k>1$, but the stronger condition (\ref{pqtcond}) means that this pole is not expanded in negative powers of $z_i$.
Thus, by the time we reach $z_0$, the variables are broken up into contiguous groups, each evaluated at:
\begin{itemize}
\item a chain of $t^{-1}$-poles either \textit{starting} at $z_0$ or at a $\chi$-pole evaluation;
\item a chain of $q^{-1}$-poles either \textit{ending} at $z_0$ or at a $\chi$-pole evaluation.
\end{itemize}

At the final variable $z_0$, there are some slight differences, and one must consider how $z_1$ and $z_{r-1}$ were evaluated.
First, due to the $z_0$ in the denominator of (\ref{Base1}), one may be concerned that there is no leftover $z_0$ in the numerator to implement Lemma \ref{ConstTermLem} if $z_1$ is evaluated at a $q^{-1}$-pole or $z_{r-1}$ is evaluated at a $t^{-1}$-pole.
However, in the first case, the initial variable $z_i$ in the chain of $q^{-1}$-poles will have a leftover $z_i$ in the numerator that is evaluated to $\ppp_q^{-i}z_0$, and in the second case, the last variable $z_j$ in the chain of $t^{-1}$-poles will have a leftover $z_j$ that is evaluated at $\ppp_t^{-j+r}z_0$.

Thanks to the strengthened conditions (\ref{pqtcond}), we avoid poles from (\ref{Base2}) of the form
\begin{equation*}
\begin{aligned}
(\ppp_q^{-k}z_0-qt\chi_\square)&& \text{ or } &&
(\ppp_t^{-r+k}z_0-qt\chi_\square)
\end{aligned}
\end{equation*}
for $k>0$ and $\square\in R_{k}(\alpha)$.
Thus, the initial slate of options for $z_0$ is the same as the other variables, where as before, we need to take into account the cancellations that result when $z_1$ and $z_{r-1}$ are evaluated at $t^{-1}$- or $q^{-1}$-poles.
There are three cases where the $t^{-1}$- and $q^{-1}$-poles are both removed, which creates novelties:
\begin{itemize}
\item the variables all evaluated in one big chain of $t^{-1}$-poles starting at $z_0$;
\item the variables all evaluated in one big chain of $q^{-1}$-poles ending at $z_0$; or
\item $z_1$ is evaluated at the $q^{-1}$-pole and $z_{r-1}$ is evaluated at the $t^{-1}$-pole.
\end{itemize}
The first two cases will yield our desired result-- we address them at the end.
In the third case, if $z_0$ does not have a $\chi$-pole, then as remarked in the previous paragraph, there is a power of $z_0^2$ in the numerator.
After dividing by the $z_0$ in the denominator, we obtain $z_0F(z_0)$ for some power series $F$ and so the constant term is $0$.
Thus, this case results in either $0$ or a $\chi$-pole evaluation.

Outside of the cases where we have one big chain of $t^{-1}$-poles or $q^{-1}$-poles, our variables are split into contiguous segments consisting of a (possibly empty) chain of $q^{-1}$-poles ending at a $\chi$-pole evaluation, which then initiates a (possibly empty) chain of $t^{-1}$-poles.
Because of the $\chi$-pole evaluation, every variable $z_i$ in this segment specializes to a box of color $i$ when $(\ppp_q,\ppp_t)\mapsto(q,t)$.
The boxes form a sequence of cells on the boundary of $\alpha$ of shape \rotatebox[origin=c]{180}{L}-- we illustrate these segments in Figure \ref{fig:chichains}.
Our goal is to show that these terms vanish when $(\ppp_q,\ppp_t)\mapsto(q,t)$.
Vanishing occurs due to the zeros in (\ref{Base2}) of the form $(z_i-\chi_\square)$ for some $\chi\in A_i(\alpha)$, and here, the fact that $\alpha$ is a $r$-core plays a large role.

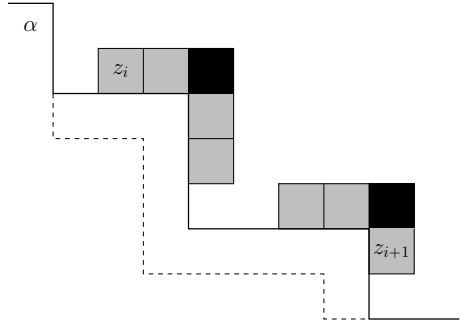
\begin{figure}[h]
\centering
\scalebox{.6}{
\begin{tikzpicture}
\draw[fill=black] (4,5)--(5,5)--(5,6)--(4,6)--(4,5);;
\draw[fill=lightgray] (4,5)--(2,5)--(2,6)--(4,6)--(4,5);;
\draw[fill=lightgray] (4,5)--(5,5)--(5,3)--(4,3)--(4,5);;
\draw (3,5)--(3,6);;
\draw (4,4)--(5,4);;
\draw[fill=black] (8,2)--(9,2)--(9,3)--(8,3)--(8,2);;
\draw[fill=lightgray] (8,2)--(8,3)--(6,3)--(6,2)--(8,2);;
\draw (7,2)--(7,3);;
\draw[fill=lightgray] (8,2)--(8,1)--(9,1)--(9,2);;
\draw (0.5,6.5) node {{\LARGE $\alpha$}};;
\draw (2.5,5.5) node {{\LARGE ${z_i}$}};;
\draw (8.5, 1.5) node {{\LARGE $z_{i+1}$}};;
\draw[dashed] (1,5)--(1,4)--(3,4)--(3,1)--(7,1)--(7,0)--(8,0);;
\draw[thick] (0,7)--(1,7)--(1,5)--(4,5)--(4,2)--(8,2)--(8,0)--(10,0);;
\end{tikzpicture}}
\caption{Two \rotatebox[origin=c]{180}{L}-shaped segments that avoid the addable boxes of $\alpha$.
The black squares are the $\chi$-poles while the chains of $t^{-1}$- and $q^{-1}$-poles are indicated by the gray squares.
If a segment starting at $z_i$ has $\chi$-pole northwest of the segment ending at $z_{i+1}$, then $\alpha$ has a removable ribbon of length $nr$ for some $n$, as illustrated by the dashed line.}
\label{fig:chichains}
\end{figure}

First, it is impossible that all the variables are evaluated in one \rotatebox[origin=c]{180}{L} shape that avoids the addable squares: 
If the northwesternmost square in the \rotatebox[origin=c]{180}{L} has color $i$, then the southeasternmost square has color $i+1$.
The $q^{-1}t^{-1}$-shifts of these two boxes are, respectively, the NW and SE endpoints of a removable ribbon of length $r$ in $\alpha$, contradicting $\alpha$ being an $r$-core.

If the variables are split into multiple \rotatebox[origin=c]{180}{L}-shapes, then we can find one shape $\hbox{\rotatebox[origin=c]{180}{L}}_1$ with northwestermost box $\square_i$ coming from evaluating some $z_i$ and another shape $\hbox{\rotatebox[origin=c]{180}{L}}_2$ with southeasternmost $\square_{i+1}$ box coming from evaluating $z_{i+1}$.
If both avoid addable squares, then we claim that $\hbox{\rotatebox[origin=c]{180}{L}}_1$ must be southeast of $\hbox{\rotatebox[origin=c]{180}{L}}_2$.
Otherwise, as before, the $q^{-1}t^{-1}$-shifts of $\square_i$ and $\square_{i+1}$ are the start and endpoints of a removable ribbon of $\alpha$ of length $kr$ for some $k$, in which case either the last $r$ cells or the first $r$ cells give a removable $r$-ribbon, contradicting $\alpha$ being an $r$-core.
We illustrate this in Figure \ref{fig:chichains}.
From this, we see that going down in cyclic order, later \rotatebox[origin=c]{180}{L}-shapes must be to the southeast, which leads to a contradiction once we go around the cycle of $\ZZ/r\ZZ$.

Finally, we are left with the two cases of a single long $t^{-1}$-chain and a single long $q^{-1}$-chain.
We will only address in detail the former.
Here we have evaluated $z_i\mapsto \ppp_t^{-r+i}z_0$ for $1<i\le r$.
This gives us
\begin{align}
\nonumber
&\frac{1}{(1-q^{-1}t^{-1})}\left\{ \left(\frac{\ppp_t^{-p}z_{0}}{z_{0}}\right) \frac{z_0}{(z_0-\ppp_t^{-r}z_{0})}\right.
\prod_{i=1}^r
\frac{\displaystyle\prod_{\square\in A_{i}(\alpha)}(\ppp_t^{-r+i}z_0-\chi_\square)}{\displaystyle\prod_{\square\in R_{i}(\alpha)}(\ppp_t^{-r+i}z_0-qt\chi_\square)}\\
\nonumber
& \quad\times
\left.\left.\prod_{i\in\ZZ/r\ZZ}g_{i}\left[ D_\alpha^{(-i)}+C_{(i)}z_0  \right]\right\}_0\,\right|_{(\ppp_q,\ppp_t)\mapsto(q,t)}\\
\label{AR}
&=\frac{t^{-p}}{(1-q^{-1}t^{-1})(1-t^{-r})}\left\{ 
\prod_{i=1}^r
\frac{\displaystyle\prod_{\square\in A_{i}(\alpha)}(\ppp_t^{-r+i}z_0-\chi_\square)}{\displaystyle\prod_{\square\in R_{i}(\alpha)}(\ppp_t^{-r+i}z_0-qt\chi_\square)}\right.\\
\nonumber
\nonumber
& \quad\times
\left.\left.\prod_{i\in\ZZ/r\ZZ}g_{i}\left[ D_\alpha^{(-i)}+C_{(i)}z_0  \right]\right\}_0\,\right|_{(\ppp_q,\ppp_t)\mapsto(q,t)} 
\end{align}
where $C_{(i)}\in\CC[q^{\pm 1},t^{\pm 1}, \ppp_t^{\pm 1}]$.
If there exists $\square\in R_0(\alpha)$, then $qt\chi_\square$ is the character of a box that is no more than $ r$ steps north of an addable box $\blacksquare$.
Otherwise, there would be a removable ribbon of length $r$ in $\alpha$.
Supposing that $\blacksquare$ has color $i$, we then have that if $z_0$ is evaluated at $qt\chi_\square$, then $\ppp_t^{-r+i}z_0-\chi_\blacksquare$ will evaluate to zero when $(\ppp_q,\ppp_t)\mapsto (q,t)$.

Thus, nothing is lost if we expand the poles $(z_0-qt\chi_\square)$ in positive powers of $z_0$.
We then have a power series in $z_0$.
Setting $z_0\mapsto0$ to find the constant term, we are left with
\begin{equation}
\frac{t^{-p}g[\iota D_\alpha^\bullet]}{(1-q^{-1}t^{-1})(1-t^{-r})}\prod_{i\in \ZZ/r\ZZ}\frac{\displaystyle\prod_{\square\in A_i(\alpha)}(-\chi_\square)}{\displaystyle\prod_{\square\in R_i(\alpha)}(-\chi_\square)}.
\label{TChar}
\end{equation}
The product of characters is $(-1)$:
every removable corner has exactly one addable corner in the same row, whereas every addable corner has exactly one removable corner in the same column. Therefore the $q$,$t$ powers in the product of character weights between addable and removable corners is the same. Lastly, there is one more addable corner than removable corners.

After this cancellation, (\ref{TChar}) becomes the $t$-summand of (\ref{D1InductFinal}).
The $q$-summand comes from the case of a single long $q^{-1}$-chain.\qed

\section{Variations}\label{VarSec}
Here, we catalogue some variants of Theorem \ref{Tesler}.

\subsection{Shifted versions}
The components of our operator $\mathsf{V}$ in Theorem \ref{Tesler} are very much centered on color $0$: the eigenvalues of $\nabla$ involve color $0$ boxes, the $\Omega$-term only involves the plethystic evaluation at $X^{(0)}$, and the $\TT$-term only acts on $X^{(0)}$.
One can try to shift all these colors simultaneously and ask if an analogous formula holds.
Here, we present the correct way of doing so.
Surprisingly, $\nabla$ does not change in the way outlined above. 
Recall that in \ref{Reverse}, we defined the action of $\sigma$ on partitions; we replace $\nabla_{\core(\lambda)}$ with $\nabla_{\sigma^{-k}\core(\lambda)}$.

\begin{thm}\label{ShiftedFourier}
Fix $k\in\ZZ/r\ZZ$ and set
\[
\mathbb{E}_\lambda^{(k)} \coloneq \Omega\left[\sum_{i\in\ZZ/r\ZZ}X^{(i)}\left( \frac{D_\lambda^\bullet}{(1-q)(t-1)} \right)^{(i+k)}  \right].
\]
We then have
\begin{equation}
\mathbb{E}_\lambda^{(k)}\otimes e^{\core(\sigma^{-k}\lambda)}
=
\nabla\Omega\left[ \frac{X^{(-k)}}{(1-q\sigma^{-1})(t\sigma-1)} \right]\TT\left[ X^{(-k)} \right]
\left(H_\lambda[\sigma^{-k}X^\bullet]\otimes e^{\sigma^{-k}\core(\lambda)}\right).
\label{TeslerK}
\end{equation}
\end{thm}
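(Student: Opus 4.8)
The plan is to deduce \eqref{TeslerK} from the unshifted Tesler identity (Theorem \ref{Tesler}) by conjugating with the color-shift map. First I would put the unshifted identity in a convenient form: by Corollary \ref{NablaEv} the inner $\nabla$ in Theorem \ref{Tesler} absorbs the normalization $H_\lambda[\iota D_{w_0\core(\lambda)}^\bullet]$, so Theorem \ref{Tesler} is equivalent to
\[
\mathbb{E}_\lambda\otimes e^{\core(\lambda)}=\nabla\,\Omega\left[ \frac{X^{(0)}}{(1-q\sigma^{-1})(t\sigma-1)} \right]\TT\left[ X^{(0)} \right]\left( H_\lambda\otimes e^{\core(\lambda)} \right)
\]
for every partition $\lambda$. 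This is the shape I want to rotate in color.

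Next I would introduce the color-shift operator $T_k$ on the degree completion of $\Lambda_{q,t}^{\otimes r}\otimes\CC[Q]$, defined on basis vectors by $T_k(f\otimes e^\alpha) = f[\sigma^{-k}X^\bullet]\otimes e^{\sigma^{-k}\alpha}$. This is an invertible, degree-preserving, $\CC(q,t)$-linear ring map --- a cyclic relabeling of the $r$ tensor factors of $\Lambda^{\otimes r}$ twisted by the coordinate permutation $\sigma^{-k}$ on $Q$, which is consistent because $\core(\sigma^{-k}\lambda)=\sigma^{-k}\core(\lambda)$. The heart of the argument is the following three intertwining relations:
\begin{align*}
T_k\,\nabla&=\nabla\,T_k,\\
T_k\,\Omega\left[ \tfrac{X^{(0)}}{(1-q\sigma^{-1})(t\sigma-1)} \right]&=\Omega\left[ \tfrac{X^{(-k)}}{(1-q\sigma^{-1})(t\sigma-1)} \right]T_k,\\
T_k\,\TT[X^{(0)}]&=\TT[X^{(-k)}]\,T_k,
\end{align*}
where in the middle line the $\Omega$'s denote multiplication operators. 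The last two should be routine, checked on the power-sum generators: $\sigma^{-k}$ commutes with the power series in $\sigma$ and $p_n[\sigma^{-k}X^{(0)}]=p_n[X^{(-k)}]$, so the ring map $T_k$ carries the multiplier $\Omega[\tfrac{X^{(0)}}{(1-q\sigma^{-1})(t\sigma-1)}]$ to $\Omega[\tfrac{X^{(-k)}}{(1-q\sigma^{-1})(t\sigma-1)}]$ and the translation in $X^{(0)}$ to the translation in $X^{(-k)}$. The first relation is the substantive one: since both sides are $\CC(q,t)$-linear it suffices to test on the basis $\{H_\lambda\otimes e^{\core(\lambda)}\}$, where it reduces to $\nabla_{\sigma^{-k}\core(\lambda)}(H_\lambda[\sigma^{-k}X^\bullet]) = (\nabla_{\core(\lambda)}H_\lambda)[\sigma^{-k}X^\bullet]$; both sides equal $\big(\prod_{\square\in\lambda\backslash\core(\lambda),\,\bar{c}_\square=0}(-\chi_\square)\big)\,H_\lambda[\sigma^{-k}X^\bullet]$, the right-hand side by Corollary \ref{NablaEv} and the left-hand side by the shifted eigenvalue identity $\nabla_{\sigma^{-k}\core(\lambda)}H_\lambda[\sigma^{-k}X^\bullet]=\big(\prod(-\chi_\square)\big)H_\lambda[\sigma^{-k}X^\bullet]$ (Lemma \ref{NablaK}).

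With these relations in hand, I would apply $T_k$ to the displayed unshifted identity and push $T_k$ through $\nabla$, the $\Omega$-multiplier, and $\TT[X^{(0)}]$, obtaining
\[
T_k(\mathbb{E}_\lambda\otimes e^{\core(\lambda)})=\nabla\,\Omega\left[ \tfrac{X^{(-k)}}{(1-q\sigma^{-1})(t\sigma-1)} \right]\TT\left[ X^{(-k)} \right](H_\lambda[\sigma^{-k}X^\bullet]\otimes e^{\sigma^{-k}\core(\lambda)}),
\]
whose right-hand side is precisely that of \eqref{TeslerK}. It then remains only to identify the left-hand side, which is a direct reindexing: $\mathbb{E}_\lambda[\sigma^{-k}X^\bullet]=\mathbb{E}_\lambda^{(k)}$ straight from the definitions, while $e^{\sigma^{-k}\core(\lambda)}=e^{\core(\sigma^{-k}\lambda)}$, so $T_k(\mathbb{E}_\lambda\otimes e^{\core(\lambda)})=\mathbb{E}_\lambda^{(k)}\otimes e^{\core(\sigma^{-k}\lambda)}$, giving \eqref{TeslerK}. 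I expect the only real obstacle to be the intertwining $T_k\nabla=\nabla T_k$, i.e.\ the nontrivial symmetry of nabla of Lemma \ref{NablaK}: for the $\Omega$- and $\TT$-terms relabeling is transparent, but $\nabla_{\sigma^{-k}\core(\lambda)}$ is a priori diagonalized by the wreath Macdonald basis of core $\sigma^{-k}\core(\lambda)$ rather than by $\{H_\lambda[\sigma^{-k}X^\bullet]\}$, and it is exactly the coincidence of the two eigenvalues that permits $\nabla$ to appear unchanged on the right of \eqref{TeslerK}.
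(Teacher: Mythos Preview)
Your proposal is correct and follows essentially the same approach as the paper: the paper defines $\mathsf{V}^{(k)}=\sigma^{-k}\mathsf{V}\sigma^{k}$ (your $T_k\mathsf{V}T_k^{-1}$), deduces Theorem~\ref{VMacK} by conjugating Theorem~\ref{VThm}, and obtains the explicit formula of Theorem~\ref{ShiftedTes} via Lemma~\ref{NablaK} (your intertwining $T_k\nabla=\nabla T_k$), with Proposition~\ref{EShift} supplying $\mathbb{E}_\lambda[\sigma^{-k}X^\bullet]=\mathbb{E}_\lambda^{(k)}$. The only cosmetic difference is that you first absorb the inner $\nabla$ via Corollary~\ref{NablaEv} before conjugating, whereas the paper states the two-nabla form and records the absorption separately.
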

Theorem \ref{ShiftedFourier} follows from combining Theorems \ref{VMacK} and \ref{ShiftedTes} below.
As in Theorem \ref{Tesler}, the operator appearing on the right-hand-side of (\ref{TeslerK}) satisfies commutation properties analogous to those of $\mathsf{V}$.
We will need to introduce more notation to discuss this in detail.
The series $\mathbb{E}_\lambda^{(k)}$ also has a natural interpretation; the following is proven similarly to Proposition \ref{DeltaExpand} and Corollary \ref{DeltaFunc}.

\begin{prop}\label{ShiftDelta}
For any $r$-core $\alpha$, we have 
\[
\mathbb{E}_\lambda^{(k)}
=\sum_{\substack{\mu\\\core(\mu)=\alpha}}\frac{H_\mu[X^\bullet]H_\mu^\dagger[\sigma^k\iota D_\lambda^\bullet]}{N_\mu}
=\sum_{\substack{\mu\\\core(\mu)=\alpha}}\frac{H^\dagger_\mu[X^\bullet]H_\mu[\sigma^k\iota D_\lambda^\bullet]}{N_\mu}.
\]
Therefore, for any $f\in\Lambda_{q,t}^{\otimes r}$,
\[
\langle f, \mathbb{E}_\lambda^{(k)}\rangle_{q,t}'=f[\sigma^k\iota D_\lambda^\bullet].
\]
\end{prop}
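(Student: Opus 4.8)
The plan is to transport the proofs of Proposition~\ref{DeltaExpand} and Corollary~\ref{DeltaFunc} almost verbatim, the only new ingredient being an extra application of the color-shift $\sigma^k$ from \ref{MatPleth} to one family of variables in the Cauchy kernel. First I would take the two expansions of $\Omega_{q,t}[X^{-\bullet}Y^\bullet]$ from Proposition~\ref{CauchyProp} and identify $\mathbb{E}_\lambda^{(k)}$ as the result of three successive operations on the $Y$-variables: apply $\iota$, then apply $\sigma^k$, then specialize $Y^\bullet\mapsto D_\lambda^\bullet$; compactly, this is the substitution $Y^\bullet\mapsto\sigma^k\iota D_\lambda^\bullet$ (with $\iota$ innermost). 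To confirm the identification I would compute on power sums from $\Omega_{q,t}[X^{-\bullet}Y^\bullet]=\exp(\sum_m\tfrac1m\sum_i p_m[X^{(-i)}]\,p_m[\tfrac{Y^{(i)}}{(1-q\sigma^{-1})(t\sigma-1)}])$: step one conjugates $(1-q\sigma^{-1})(t\sigma-1)$ into $(1-q\sigma)(t\sigma^{-1}-1)$ via $\iota\sigma\iota=\sigma^{-1}$ and lets one reindex $i\mapsto-i$; step two shifts the internal color label from $i$ to $i+k$; and step three, together with the sign-adjusted form of (\ref{OverMEv}), converts $p_m[\tfrac{Y^{(i+k)}}{(1-q\sigma)(t\sigma^{-1}-1)}]$ into $p_m[(\tfrac{D_\lambda^\bullet}{(1-q)(t-1)})^{(i+k)}]$, reproducing $\mathbb{E}_\lambda^{(k)}=\Omega[\sum_i X^{(i)}(\tfrac{D_\lambda^\bullet}{(1-q)(t-1)})^{(i+k)}]$ exactly.

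Since these three operations only touch the $Y$-variables, applying them term by term to the two expansions in Proposition~\ref{CauchyProp} yields at once
\[
\mathbb{E}_\lambda^{(k)}=\sum_{\substack{\mu\\ \core(\mu)=\alpha}}\frac{H_\mu^\dagger[X^\bullet]\,H_\mu[\sigma^k\iota D_\lambda^\bullet]}{N_\mu}=\sum_{\substack{\mu\\ \core(\mu)=\alpha}}\frac{H_\mu[X^\bullet]\,H_\mu^\dagger[\sigma^k\iota D_\lambda^\bullet]}{N_\mu},
\]
valid for every $r$-core $\alpha$, just as in Proposition~\ref{DeltaExpand}; convergence of the substituted series in the degree-completion of $\Lambda_{q,t}^{\otimes r}$ is handled exactly as there, expanding rational functions of $q,t$ with $|q|,|t|<1$.

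For the pairing formula I would fix any $r$-core $\alpha$, write $f=\sum_{\core(\mu)=\alpha}a_\mu H_\mu$, and pair $f$ against the right-hand expansion above, using that $\langle-,-\rangle_{q,t}'$ is symmetric and that $\langle H_\mu^\dagger,H_\nu\rangle_{q,t}'=\delta_{\mu\nu}N_\mu$ when $\core(\mu)=\core(\nu)=\alpha$ (the proposition preceding Proposition~\ref{CauchyProp}, together with the definition of $N_\mu$ there). Everything but the diagonal vanishes, leaving $\sum_\mu a_\mu H_\mu[\sigma^k\iota D_\lambda^\bullet]=f[\sigma^k\iota D_\lambda^\bullet]$, which is the claim; extending $\langle-,-\rangle_{q,t}'$ to the completion is harmless since $f$ has finite degree. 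This is the identical collapse used to deduce Corollary~\ref{DeltaFunc} from Proposition~\ref{DeltaExpand}.

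I do not expect a substantive obstacle: the argument is bookkeeping. The one point that needs care is the \emph{direction} of the color shift --- that the correct substitution is $\sigma^k\iota D_\lambda^\bullet$, i.e.\ $\iota$ applied before $\sigma^k$, and not $\iota\sigma^k D_\lambda^\bullet=\sigma^{-k}\iota D_\lambda^\bullet$ --- together with tracking the sign conventions relating $\tfrac1{(1-q\sigma)(t\sigma^{-1}-1)}$ to $\tfrac1{(1-q\sigma)(1-t\sigma^{-1})}$ when invoking (\ref{OverMEv}). Once the substitution is pinned down correctly, the remainder is a verbatim transport of the two cited proofs.
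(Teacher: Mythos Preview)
Your proposal is correct and follows essentially the same approach the paper indicates: the paper simply states that Proposition~\ref{ShiftDelta} ``is proven similarly to Proposition~\ref{DeltaExpand} and Corollary~\ref{DeltaFunc}'', and your write-up carries out exactly that transport, inserting the $\sigma^k$-shift on the $Y$-variables between the $\iota$-step and the specialization step. Your care in checking the order of $\sigma^k$ and $\iota$ (so that $X^{(i)}\mapsto D_\lambda^{(k-i)}$, matching the paper's convention for $f[\sigma^k\iota D_\lambda^\bullet]$) is the only subtle point, and you handle it correctly.
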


\subsubsection{Extending the $\sigma$-action}
In our setup, we will need operators that act diagonally on $H_\lambda[\sigma^{-k}X^\bullet]$.
Obviously, we will want to twist $\Delta$ and $\Delta^\dagger$ by the plethysm $\sigma^{-k}$ on $\Lambda_{q,t}^{\otimes r}$, but recall that the action of a Delta operator also depends on the tensorand $e^\alpha$.
For $\alpha=(c_0,\ldots, c_{r-1})$, we define
\begin{equation*}
\begin{aligned}
\sigma\alpha=(c_{r-1},c_0, c_1, \ldots, c_{r-2}) && \text{ and }
\sigma(e^\alpha)= e^{\sigma\alpha}.
\end{aligned}
\end{equation*}
Recall from \ref{Reverse} that we have defined an action of $\sigma$ on partitions.
One can see that when we identify an $r$-core with its vector of charges, both actions of $\sigma$ agree.
Note as well that we have $\sigma w_0\alpha=w_0\sigma^{-1}\alpha$.

We have the following compatibility with respect to the pairing:
\begin{prop}\label{ShiftHProp}
Fix $k\in\ZZ/r\ZZ$.
The shifted elements $\{H_\lambda[\sigma^{-k}X^\bullet]\otimes e^{\sigma^{-k}\core(\lambda)}\}$ form a basis of $\Lambda_{q,t}^{\otimes r}\otimes \CC[Q]$.
With respect to the extended pairing $\langle - , - \rangle_{q,t}'$ (cf. \ref{PairCores}), they have a dual orthogonal basis given by $\{H_\lambda^\dagger[\sigma^kX^\bullet]\otimes e^{ w_0\sigma^{-k}\core(\lambda)}\}$, i.e.
\[
\left\langle H_\mu^\dagger[\sigma^kX^\bullet]\otimes e^{w_0\sigma^{-k}\core(\mu)},H_\lambda[\sigma^{-k}X^\bullet]\otimes e^{\sigma^{-k}\core(\lambda)}\right\rangle_{q,t}'=0
\]
if and only if $\lambda\not=\mu$.
\end{prop}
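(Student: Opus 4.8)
The plan is to strip off the $\sigma^{\pm k}$-plethysms and reduce everything to the non-degeneracy and orthogonality already established for the pair of bases $\{H_\lambda\otimes e^{\core(\lambda)}\}$ and $\{H_\lambda^\dagger\otimes e^{w_0\core(\lambda)}\}$ in \ref{PairCores}. For the basis claim, I would first note that the $\CC(q,t)$-linear endomorphism of $\Lambda_{q,t}^{\otimes r}\otimes\CC[Q]$ defined on pure tensors by $f\otimes e^\alpha\mapsto f[\sigma^{-k}X^\bullet]\otimes e^{\sigma^{-k}\alpha}$ is invertible, its inverse being $f\otimes e^\alpha\mapsto f[\sigma^{k}X^\bullet]\otimes e^{\sigma^{k}\alpha}$: the plethysm $\sigma^{-k}$ is invertible on $\Lambda_{q,t}^{\otimes r}$ by Proposition \ref{SigProp}, and $\alpha\mapsto\sigma^{-k}\alpha$ is a bijection of $Q$. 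Since $\{H_\lambda\otimes e^{\core(\lambda)}\}_\lambda$ is a basis and $\core(\sigma^{-k}\lambda)=\sigma^{-k}\core(\lambda)$, applying this map shows $\{H_\lambda[\sigma^{-k}X^\bullet]\otimes e^{\sigma^{-k}\core(\lambda)}\}_\lambda$ is a basis; applying the analogous map to the dual basis $\{H_\lambda^\dagger\otimes e^{w_0\core(\lambda)}\}$, together with $\sigma^k w_0=w_0\sigma^{-k}$, shows $\{H_\lambda^\dagger[\sigma^kX^\bullet]\otimes e^{w_0\sigma^{-k}\core(\lambda)}\}_\lambda$ is a basis as well.

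For the orthogonality I would separate the core grading from the symmetric-function pairing. By the definition of the extended pairing in \ref{PairCores},
\[
\langle H_\mu^\dagger[\sigma^kX^\bullet]\otimes e^{w_0\sigma^{-k}\core(\mu)},\, H_\lambda[\sigma^{-k}X^\bullet]\otimes e^{\sigma^{-k}\core(\lambda)}\rangle_{q,t}'
= \delta_{w_0\sigma^{-k}\core(\mu),\,w_0\sigma^{-k}\core(\lambda)}\,\langle H_\mu^\dagger[\sigma^kX^\bullet],\, H_\lambda[\sigma^{-k}X^\bullet]\rangle_{q,t}',
\]
and since $w_0\sigma^{-k}$ is injective on $Q$ the Kronecker delta is just $\delta_{\core(\mu),\core(\lambda)}$. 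When the cores agree I would then unwind the remaining pairing using Definition \ref{PairDef}: expanding $\langle-,-\rangle_{q,t}'$ as $\langle (\,\cdot\,)[\iota X^\bullet],(\,\cdot\,)[(1-q\sigma^{-1})(t\sigma-1)X^\bullet]\rangle$ for the Hall pairing, using the plethystic identity $\iota\sigma^k=\sigma^{-k}\iota$, the Hall-adjointness relations $\sigma^T=\sigma^{-1}$ (Proposition \ref{SigProp}) and $\iota^T=\iota$ (as $\iota$ is an involutive permutation of colors), and the fact that $(1-q\sigma^{-1})(t\sigma-1)$ and its transpose are polynomials in $\sigma^{\pm1}$ and hence commute with every power of $\sigma$. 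Pushing the plethysm across the pairing, the factors $\sigma^{-k}$ and $\sigma^k$ cancel, leaving $\langle H_\mu^\dagger[\sigma^kX^\bullet],H_\lambda[\sigma^{-k}X^\bullet]\rangle_{q,t}'=\langle H_\mu^\dagger,H_\lambda\rangle_{q,t}'$. Therefore the pairing in the statement equals $\delta_{\core(\mu),\core(\lambda)}\,\langle H_\mu^\dagger,H_\lambda\rangle_{q,t}'$, which by the orthogonality of $\{H_\lambda^\dagger\}$ against $\{H_\lambda\}$ established in \ref{Orthogonality} and the non-vanishing of $N_\lambda$ is nonzero precisely when $\lambda=\mu$. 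Combined with the basis claim this shows the two families are dual bases (up to the scalars $N_\lambda$), as asserted.

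I do not expect any serious obstacle here: the only delicate point is the bookkeeping in the second step — tracking exactly which linear operator on the span of the $\{p_n[X^{(i)}]\}$ acts in each slot of the Hall pairing, confirming that its transpose is the claimed one, and verifying that the $\iota$-twist built into $\langle-,-\rangle_{q,t}'$ and the $w_0$-twist built into the core grading interact with $\sigma^{\pm k}$ exactly so as to cancel. This is entirely mechanical once the identities $\iota\sigma=\sigma^{-1}\iota$, $\sigma^T=\sigma^{-1}$, $\iota^T=\iota$, and $\sigma w_0=w_0\sigma^{-1}$ are in hand.
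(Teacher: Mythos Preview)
Your argument is correct and is exactly the routine verification the paper has in mind: the proposition is stated without proof in the text, and your reduction to the unshifted orthogonality via the identities $\iota\sigma^k=\sigma^{-k}\iota$, $\sigma^T=\sigma^{-1}$, and $\sigma^k w_0=w_0\sigma^{-k}$ is precisely the intended mechanical check.
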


Finally, we have the following easy observation:
\begin{prop}\label{EShift}
$\mathbb{E}_\lambda^{(k)}$ is obtained from $\mathbb{E}_\lambda$ via the $\sigma$-action:
\[
\mathbb{E}_\lambda^{(k)}\otimes e^{\sigma^{-k}\core(\lambda)}= \mathbb{E}_\lambda[\sigma^{-k}X^\bullet]\otimes e^{\sigma^{-k}\core(\lambda)}.
\]
\end{prop}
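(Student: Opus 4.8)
The plan is to unwind the definition of the plethystic substitution $\sigma^{-k}$ and reindex a sum over $\ZZ/r\ZZ$; there is essentially nothing else to do. Recall from \ref{MatPleth} that $f[\sigma^{-k}X^\bullet]$ is the image of $f$ under the algebra map that replaces each generator $p_n[X^{(i)}]$ by $p_n[\sigma^{-k}X^{(i)}]=p_n[X^{(i-k)}]$ while fixing every scalar in $\CC(q,t)$; in particular it fixes the colored characters $\left(\frac{D_\lambda^\bullet}{(1-q)(t-1)}\right)^{(i)}$, which enter the definition of $\mathbb{E}_\lambda$ merely as constants multiplying $p_n[X^{(i)}]$. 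First I would apply this substitution to
\[
\mathbb{E}_\lambda=\Omega\left[\sum_{i\in\ZZ/r\ZZ}X^{(i)}\left(\frac{D_\lambda^\bullet}{(1-q)(t-1)}\right)^{(i)}\right],
\]
obtaining
\[
\mathbb{E}_\lambda[\sigma^{-k}X^\bullet]=\Omega\left[\sum_{i\in\ZZ/r\ZZ}X^{(i-k)}\left(\frac{D_\lambda^\bullet}{(1-q)(t-1)}\right)^{(i)}\right].
\]

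Next I would reindex the sum by $j=i-k$, which is a bijection of $\ZZ/r\ZZ$, to rewrite the bracket as $\sum_{j\in\ZZ/r\ZZ}X^{(j)}\left(\frac{D_\lambda^\bullet}{(1-q)(t-1)}\right)^{(j+k)}$; applying $\Omega$ to this is, by definition, exactly $\mathbb{E}_\lambda^{(k)}$. Tensoring both sides with the element $e^{\sigma^{-k}\core(\lambda)}$ of $\CC[Q]$, which occurs unchanged on both sides and is simply carried along, yields the asserted equality $\mathbb{E}_\lambda^{(k)}\otimes e^{\sigma^{-k}\core(\lambda)}=\mathbb{E}_\lambda[\sigma^{-k}X^\bullet]\otimes e^{\sigma^{-k}\core(\lambda)}$.

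The only place where any care is needed — and the closest thing here to an obstacle — is the bookkeeping of colors: one must be sure that $\sigma^{-k}$ shifts only the superscripts on the $X$-variables and does not also permute the colors attached to the $(q,t)$-monomials inside $D_\lambda^\bullet$. This is guaranteed by the conventions of \ref{ColChar}, under which $\left(\frac{D_\lambda^\bullet}{(1-q)(t-1)}\right)^{(i)}$ is a fixed Laurent series in $q,t$ for each $i$. If one prefers not to invoke the general matrix-plethysm formalism, the identity can equally be verified on power sums directly, using $p_n[\sigma^{-k}X^{(i)}]=p_n[X^{(i-k)}]$ together with $\Omega[X]=\exp(\sum_{n>0}p_n[X]/n)$; either route reduces to the same reindexing. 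It is worth emphasizing, as the statement itself stresses, that $\mathbb{E}_\lambda^{(k)}$ is genuinely the $\sigma^{-k}$-plethysm of $\mathbb{E}_\lambda$ and \emph{not} the result of shifting the colors of $D_\lambda^\bullet$ everywhere — it is precisely this asymmetry that makes the formula what it is, even though the proof is a one-line reindexing.
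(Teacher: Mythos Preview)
Your proof is correct and is exactly the routine verification the paper has in mind; the paper in fact gives no proof at all, labeling the proposition an ``easy observation,'' so your reindexing argument is precisely what is intended.
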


\subsubsection{Shifted Delta operators}
In light of Proposition \ref{ShiftHProp}, for $k\in\ZZ/r\ZZ$ and $f\in\Lambda_{q,t}^{\otimes r}$, we set:
\begin{equation*}
\begin{aligned}
\Delta_{(k)}[f]& \coloneq \sigma^{-k}\Delta[f]\sigma^k,\\
\Delta_{(k)}^{\dagger}[f]& \coloneq \sigma^k\Delta^\dagger[f]\sigma^{-k}.
\end{aligned}
\end{equation*}
The following is by construction:
\begin{prop}\label{ShiftedDeltaOp}
For $k\in\ZZ/r\ZZ$ and $f\in\Lambda_{q,t}^{\otimes r}$, we have:
\begin{equation*}
\begin{aligned}
\Delta_{(k)}[f]\left( H_\lambda[\sigma^{-k}X^\bullet]\otimes e^{\sigma^{-k}\core(\lambda)} \right)
&= f[-D_\lambda^\bullet]H_\lambda[\sigma^{-k}X^\bullet]\otimes e^{\sigma^{-k}\core(\lambda)},\\
\Delta_{(k)}^\dagger[f]\left( H_\lambda^\dagger[\sigma^{k}X^\bullet]\otimes e^{w_0\sigma^{-k}\core(\lambda)} \right)
&= f[-D_\lambda^\bullet]H_\lambda[\sigma^{k}X^\bullet]^\dagger\otimes e^{w_0\sigma^{-k}\core(\lambda)}.
\end{aligned}
\end{equation*}
\end{prop}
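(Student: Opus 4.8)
The plan is to verify both identities by directly unwinding the definitions $\Delta_{(k)}[f]=\sigma^{-k}\Delta[f]\sigma^k$ and $\Delta_{(k)}^\dagger[f]=\sigma^k\Delta^\dagger[f]\sigma^{-k}$, reducing everything to Corollary \ref{DeltaCor} and equation \eqref{AdjointEq} applied to the \emph{unshifted} eigenbases. For the first identity I would start from the vector $H_\lambda[\sigma^{-k}X^\bullet]\otimes e^{\sigma^{-k}\core(\lambda)}$ and apply $\sigma^k$. Since $\sigma$ acts on $\Lambda_{q,t}^{\otimes r}$ by the plethysm $p_n[\sigma X^{(i)}]=p_n[X^{(i+1)}]$ and on the $\CC[Q]$ factor by $\sigma(e^\alpha)=e^{\sigma\alpha}$, extended diagonally, and since (as recorded in \ref{Reverse}) the $\sigma$-action on $r$-cores coincides with the $\sigma$-action on their charge vectors in $Q$, we get $\sigma^k\big(H_\lambda[\sigma^{-k}X^\bullet]\otimes e^{\sigma^{-k}\core(\lambda)}\big)=H_\lambda\otimes e^{\core(\lambda)}$. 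Then Corollary \ref{DeltaCor} says $\Delta[f]$ acts by the scalar $f[-D_\lambda^\bullet]$; since this scalar commutes with $\sigma^{-k}$, I would pull it out and apply $\sigma^{-k}$ to return to the shifted vector, yielding the first displayed equality.

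For the second identity the argument is the same, starting from $\Delta_{(k)}^\dagger[f]=\sigma^k\Delta^\dagger[f]\sigma^{-k}$, but the bookkeeping on the $\CC[Q]$ factor is slightly more delicate: applying $\sigma^{-k}$ to $e^{w_0\sigma^{-k}\core(\lambda)}$ produces $e^{\sigma^{-k}w_0\sigma^{-k}\core(\lambda)}$, and here I would invoke the relation $\sigma w_0\alpha=w_0\sigma^{-1}\alpha$ noted in \ref{Reverse} (equivalently $\sigma^{-k}w_0=w_0\sigma^k$) to simplify $\sigma^{-k}w_0\sigma^{-k}\core(\lambda)=w_0\core(\lambda)$. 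Thus $\sigma^{-k}\big(H_\lambda^\dagger[\sigma^kX^\bullet]\otimes e^{w_0\sigma^{-k}\core(\lambda)}\big)=H_\lambda^\dagger\otimes e^{w_0\core(\lambda)}$, to which equation \eqref{AdjointEq} applies, producing the scalar $f[-D_\lambda^\bullet]$; pulling the scalar out and applying $\sigma^k$ back (again using $\sigma^k w_0=w_0\sigma^{-k}$) returns the shifted dagger vector.

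Since the proposition holds essentially by construction, there is no real obstacle here; the only thing requiring care is the compatibility of the three operations $\sigma$, $w_0$, and the twisting on the $\CC[Q]$ summand, in particular the identity $\sigma^{-k}w_0\sigma^{-k}=w_0$ on $r$-cores. I would state this relation explicitly before the computation and note that it is exactly why the core label $w_0\sigma^{-k}\core(\lambda)$ is the correct one for the shifted adjoint eigenbasis, consistent with Proposition \ref{ShiftHProp}.
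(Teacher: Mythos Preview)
Your proposal is correct and is precisely the ``by construction'' argument the paper alludes to: you unwind the conjugation definitions and reduce to Corollary~\ref{DeltaCor} and equation~\eqref{AdjointEq} on the unshifted bases, using the identity $\sigma^{-k}w_0=w_0\sigma^{k}$ from \ref{Reverse} to handle the core label in the dagger case. There is nothing to add; this is the intended argument made explicit.
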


\begin{lem}\label{NablaK}
The operator obtained from $F_{0,n}$ remains unchanged:
\[
\Delta\left[ e_{F,n}^{(0)} \right]=\Delta_{(k)}\left[ e_{F,n}^{(0)} \right].
\]
In particular, $\sigma^{-k}\nabla_{\core(\lambda)}\sigma^k=\nabla_{\sigma^{-k}\core(\lambda)}$
\end{lem}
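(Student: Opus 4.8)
The statement has two parts, and the plan is to derive the second from the first. Since $\Delta_{(k)}[f]=\sigma^{-k}\Delta[f]\sigma^k$ by definition, the identity $\Delta[e_{F,n}^{(0)}]=\Delta_{(k)}[e_{F,n}^{(0)}]$ is equivalent to saying that $\Delta[e_{F,n}^{(0)}]$ commutes with the $\sigma$-action on $\mathcal{W}$, and it clearly suffices to check $k=1$. I would read this off the explicit formula (\ref{F0Calc}). Conjugating the right-hand side of (\ref{F0Calc}) by $\sigma$ amounts to the substitution $X^{(i)}\mapsto X^{(i+1)}$ in the $\Omega$- and $\TT$-components and in the argument $f$, together with $e^\alpha\mapsto e^{\sigma\alpha}$ in the $\prod_{i,a}z_{i,a}^{-H_{i,0}}$-component; after the simultaneous relabelling $z_{i,a}\mapsto z_{i+1,a}$ of the bound variables (which cyclically shifts the products $\prod_{i\in\ZZ/r\ZZ}$ and all mixing-type factors such as $(1-qz_{i+1,a}/z_{i,b})$, $(1-tz_{i-1,a}/z_{i,b})$) the $\Omega$- and $\TT$-exponentials and the numerator products all return to their original shape, because every factor in (\ref{F0Calc}) is indexed cyclically over $\ZZ/r\ZZ$ with no distinguished color. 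The one point that needs genuine care is the $\prod_{i,a}z_{i,a}^{-H_{i,0}}$-component: since $\sigma$ is a diagram automorphism of the affine $A_{r-1}$ diagram, $(h_{i+1},\sigma\beta)=(h_i,\beta)$, and the accompanying powers of $\ddd$ in each $z^{H_{i,0}}$ cancel because there are exactly $n$ variables of every color, by the same bookkeeping used in the proof of Lemma \ref{FAdjoint2}. Hence conjugation by $\sigma$ fixes (\ref{F0Calc}). (A more conceptual route: $F_{0,n}=\prod_{i\in\ZZ/r\ZZ}\prod_a z_{i,a}\in\Sss_{n\delta}$ is invariant under the color-rotation automorphism of the shuffle algebra, which the vertex representation intertwines with the $\sigma$-action, and $\Delta[e_{F,n}^{(0)}]$ is a scalar multiple of $(\rho_{\vec{\ccc}}^-\circ\Psi_-)(F_{0,n})$.)

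Next I would upgrade this to the global statement $\sigma^{-k}\nabla\sigma^k=\nabla$ on all of $\mathcal{W}$, where $\nabla(f\otimes e^\beta)=\nabla_\beta(f)\otimes e^\beta$. Work with $f$ homogeneous of degree $N$; the general case follows by linearity since $\sigma^k$ preserves degree. By Lemma \ref{NablaShuff}, on degree $N$ the operator $\nabla$ equals $(-1)^N\Delta[\mathcal{E}_N]$ divided, on the $e^\beta$-component, by the product of the characters of the color $0$ boxes of $\beta$, with $\mathcal{E}_N$ a polynomial in the $e_{F,n}^{(0)}$. Since $\Delta$ is a ring homomorphism, the first part forces $\Delta[\mathcal{E}_N]$ to commute with $\sigma^k$. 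Computing $\sigma^{-k}\nabla\sigma^k(f\otimes e^\alpha)$: one passes through $\sigma^k(f\otimes e^\alpha)=f[\sigma^kX^\bullet]\otimes e^{\sigma^k\alpha}$, applies $\nabla$ via Lemma \ref{NablaShuff}, commutes $\Delta[\mathcal{E}_N]$ past $\sigma^k$, and applies $\sigma^{-k}$, ending at $(-1)^N\Delta[\mathcal{E}_N](f\otimes e^\alpha)$ divided by the product of the characters of the color $0$ boxes of $\sigma^k\alpha$. By Proposition \ref{RevProp}(2), $\alpha$ and $\sigma^k\alpha$ have the same color $0$ boxes, so this denominator is the one attached to $\alpha$, and Lemma \ref{NablaShuff} identifies the result with $\nabla(f\otimes e^\alpha)$.

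Finally, restricting $\sigma^{-k}\nabla\sigma^k=\nabla$ to the summand $\Lambda_{q,t}^{\otimes r}\otimes e^\alpha$ and comparing $\Lambda$-parts gives $\bigl(\nabla_{\sigma^k\alpha}(f[\sigma^kX^\bullet])\bigr)[\sigma^{-k}X^\bullet]=\nabla_\alpha(f)$ for every $f$ and every $r$-core $\alpha$; that is, $\sigma^{-k}\circ\nabla_{\sigma^k\alpha}\circ\sigma^k=\nabla_\alpha$ as operators on $\Lambda_{q,t}^{\otimes r}$, equivalently $\sigma^{-k}\circ\nabla_\alpha\circ\sigma^k=\nabla_{\sigma^{-k}\alpha}$. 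Setting $\alpha=\core(\lambda)$ (and noting $\sigma^{-k}\core(\lambda)=\core(\sigma^{-k}\lambda)$) yields the asserted identity. I expect the main obstacle to be the first paragraph — honestly verifying the $\sigma$-covariance of (\ref{F0Calc}), especially the $z^{-H_{i,0}}$-term — though this is routine given the $\ddd$-power cancellation already carried out for Lemma \ref{FAdjoint2}.
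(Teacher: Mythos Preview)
Your argument is correct and follows essentially the same approach as the paper. For the first identity, both you and the paper observe that conjugating the explicit formula (\ref{F0Calc}) by $\sigma$ shifts each $X^{(i)}$, $\TT[X^{(i)}z]$, and $z^{H_{i,0}}$ by one color, and that the cyclic change of variables $z_{i,a}\mapsto z_{i\pm 1,a}$ restores the formula because every ingredient in (\ref{F0Calc}) is indexed over all of $\ZZ/r\ZZ$ with no distinguished color; the paper records this in three lines without the bookkeeping you supply for the $z^{H_{i,0}}$-term. For the second identity, the paper says nothing beyond ``In particular,'' whereas you spell out the intended route via Lemma \ref{NablaShuff} and Proposition \ref{RevProp}(2); your argument there is exactly the one the paper has in mind.
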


\begin{proof}
The components of (\ref{F0Calc}) interact with $\sigma$ as follows:
\begin{equation*}
\begin{aligned}
\sigma^{-1}\Omega[X^{(i)}z]\sigma&= \Omega[X^{(i-1)}],\\
\sigma^{-1}\TT[X^{(i)}z]\sigma&= \TT[X^{(i-1)}],\\
\sigma^{-1}z^{H_{i,0}}\sigma&= z^{H_{i-1,0}}.
\end{aligned}
\end{equation*}
Applying this and then making the change of variables $z_{i,a}\mapsto z_{i-k,a}$ leaves (\ref{F0Calc}) unchanged.
\end{proof}

\subsubsection{The map $\mathsf{V}^{(k)}$}
Let us set $\mathsf{V}^{(k)} \coloneq \sigma^{-k}\mathsf{V}\sigma^k$.
By Proposition \ref{EShift}, $\mathrm{span}\{\mathbb{E}_\lambda^{(k)}\otimes e^{\sigma^{-k}\core(\lambda)}\}$ is closed under the action of $\Delta^\dagger_{(k)}[f]$ for any $f\in\Lambda_{q,t}^{\otimes r}$.

\begin{prop}\label{VKProp}
The map $\mathsf{V}^{(k)}:\Lambda_{q,t}^{\otimes r}\rightarrow\mathrm{span}\{\mathbb{E}_\lambda^{(k)}\otimes e^{\sigma^{-k}\core(\lambda)}\}$ satisfies the following:
\begin{itemize}
\item $\mathsf{V}^{(k)}(fg\otimes e^{\alpha})=\Delta_{(-k)}^\dagger\left[ f[-\iota\sigma^k X^\bullet] \right]\left(\mathsf{V}^{(k)}(g\otimes e^{\alpha})\right)$,
\item $\mathsf{V}^{(k)}(1\otimes e^{\sigma^{-k}\alpha})=\mathbb{E}_{\alpha}^{(k)}\otimes e^{\sigma^{-k}\alpha}$.
\end{itemize}
\end{prop}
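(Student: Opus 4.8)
The plan is to obtain both identities by transporting the two defining properties of $\mathsf{V}$ from Definition \ref{VDef} through the $\sigma$-action, using only that $\mathsf{V}^{(k)}=\sigma^{-k}\mathsf{V}\sigma^k$, that $\sigma$ acts on $\Lambda_{q,t}^{\otimes r}\otimes\CC[Q]$ as a ring automorphism with $\sigma^k$ and $\sigma^{-k}$ mutually inverse, and that by the definition of the shifted Delta operators the conjugation $M\mapsto\sigma^{-k}M\sigma^k$ carries $\Delta^\dagger[f]$ to $\Delta_{(-k)}^\dagger[f]$.

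For the base case I would compute directly: $\sigma^k$ carries $1\otimes e^{\sigma^{-k}\alpha}$ to $1\otimes e^\alpha$, Definition \ref{VDef} gives $\mathsf{V}(1\otimes e^\alpha)=\mathbb{E}_\alpha\otimes e^\alpha$, and applying $\sigma^{-k}$ produces $\mathbb{E}_\alpha[\sigma^{-k}X^\bullet]\otimes e^{\sigma^{-k}\alpha}$, which equals $\mathbb{E}_\alpha^{(k)}\otimes e^{\sigma^{-k}\alpha}$ by Proposition \ref{EShift}. The same chain also shows that $\mathsf{V}^{(k)}$ indeed takes values in $\mathrm{span}\{\mathbb{E}_\lambda^{(k)}\otimes e^{\sigma^{-k}\core(\lambda)}\}$, since $\mathsf{V}$ takes values in $\mathrm{span}\{\mathbb{E}_\lambda\otimes e^{\core(\lambda)}\}$ (Definition \ref{VDef}, Corollary \ref{ClosedCor}) and $\sigma^{-k}$ sends the latter span onto the former by Proposition \ref{EShift}.

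For the multiplicative property, I would first use that $\sigma^k$ is a ring map to write $\sigma^k(fg\otimes e^\alpha)=f[\sigma^kX^\bullet]\,g[\sigma^kX^\bullet]\otimes e^{\sigma^k\alpha}$. Feeding this into the first bullet of Definition \ref{VDef}, applied to the symmetric functions $f[\sigma^kX^\bullet]$ and $g[\sigma^kX^\bullet]$, gives
\[
\mathsf{V}\sigma^k(fg\otimes e^\alpha)=\Delta^\dagger\bigl[(f[\sigma^kX^\bullet])[-\iota X^\bullet]\bigr]\,\mathsf{V}\bigl(g[\sigma^kX^\bullet]\otimes e^{\sigma^k\alpha}\bigr).
\]
The one algebraic point to check is the plethysm composition rule $(f[AX^\bullet])[BX^\bullet]=f[(B\circ A)X^\bullet]$ (meaning: apply $A$, then $B$), verified on power sums; specializing $A=\sigma^k$, $B=-\iota$ yields $(f[\sigma^kX^\bullet])[-\iota X^\bullet]=f[-\iota\sigma^kX^\bullet]$, where on colours $\sigma^k$ sends $i\mapsto i+k$ and then $-\iota$ sends $i+k\mapsto-i-k$ together with the plethystic sign. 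Conjugating by $\sigma^{-k}$, inserting $\sigma^k\sigma^{-k}=\mathrm{id}$ and recognizing $\sigma^k(g\otimes e^\alpha)=g[\sigma^kX^\bullet]\otimes e^{\sigma^k\alpha}$, I obtain
\[
\mathsf{V}^{(k)}(fg\otimes e^\alpha)=\bigl(\sigma^{-k}\Delta^\dagger[f[-\iota\sigma^kX^\bullet]]\,\sigma^k\bigr)\bigl(\sigma^{-k}\mathsf{V}\sigma^k(g\otimes e^\alpha)\bigr)=\Delta_{(-k)}^\dagger\bigl[f[-\iota\sigma^kX^\bullet]\bigr]\,\mathsf{V}^{(k)}(g\otimes e^\alpha),
\]
where the last equality is the definition $\Delta_{(-k)}^\dagger[h]=\sigma^{-k}\Delta^\dagger[h]\sigma^k$ together with $\mathsf{V}^{(k)}=\sigma^{-k}\mathsf{V}\sigma^k$.

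There is no serious obstacle; the only place demanding care is the plethystic bookkeeping — keeping the composition order straight in $(f[\sigma^kX^\bullet])[-\iota X^\bullet]=f[-\iota\sigma^kX^\bullet]$, and confirming that the outer conjugation $M\mapsto\sigma^{-k}M\sigma^k$ produces $\Delta_{(-k)}^\dagger$ with exactly the sign of $k$ and the plethystic twist that appear in Proposition \ref{VKProp} and match Proposition \ref{ShiftedDeltaOp}.
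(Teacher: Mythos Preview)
Your proof is correct and is exactly the argument the paper has in mind: the proposition is stated without proof because it follows immediately from conjugating the two defining conditions of $\mathsf{V}$ (Definition \ref{VDef}) by $\sigma^{-k}$, together with Proposition \ref{EShift} and the definition $\Delta_{(-k)}^\dagger[h]=\sigma^{-k}\Delta^\dagger[h]\sigma^{k}$. Your verification of the plethystic composition $(f[\sigma^kX^\bullet])[-\iota X^\bullet]=f[-\iota\sigma^kX^\bullet]$ is the only nontrivial bookkeeping step, and you have it right.
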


\begin{thm}\label{VMacK}
For $k\in\ZZ/r\ZZ$, we have
\[
\mathsf{V}^{(k)}\left(H_\lambda[\sigma^{-k}X^\bullet]\otimes e^{\sigma^{-k}\core(\lambda)}\right)=H_\lambda[\iota D_{w_0\core(\lambda)}^\bullet]
\mathbb{E}_\lambda^{(k)}\otimes e^{\sigma^{-k}\core(\lambda)}.
\]
\end{thm}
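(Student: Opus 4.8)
The plan is to obtain Theorem \ref{VMacK} as a direct corollary of Theorem \ref{VThm} by conjugating with the $\sigma$-action. Recall that $\sigma$ acts on $\Lambda_{q,t}^{\otimes r}\otimes\CC[Q]$ as an $\FF$-algebra automorphism: on the symmetric-function factor it is the color-shifting plethysm $p_n[X^{(i)}]\mapsto p_n[X^{(i+1)}]$, and on the group-algebra factor it is $e^\alpha\mapsto e^{\sigma\alpha}$, where on $Q$ the permutation $\sigma$ acts exactly as it does on $r$-cores (see \ref{Reverse} and the paragraph before Proposition \ref{ShiftHProp}), so that $\core(\sigma^{\pm k}\lambda)=\sigma^{\pm k}\core(\lambda)$. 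In particular $\sigma$ fixes $q$ and $t$, hence is $\FF$-linear, and by Proposition \ref{EShift} it carries $\mathrm{span}\{\mathbb{E}_\mu\otimes e^{\core(\mu)}\}$ bijectively onto $\mathrm{span}\{\mathbb{E}_\mu^{(k)}\otimes e^{\sigma^{-k}\core(\mu)}\}$; combined with Proposition \ref{ShiftHProp}, this confirms that $\mathsf{V}^{(k)}=\sigma^{-k}\mathsf{V}\sigma^k$ genuinely has the source and target recorded in Proposition \ref{VKProp}.

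The computation then runs as follows. Since $H_\lambda[\sigma^{-k}X^\bullet]$ is the image of $H_\lambda$ under the operator $\sigma^{-k}$, applying $\sigma^k$ returns $H_\lambda$, and $\sigma^k(e^{\sigma^{-k}\core(\lambda)})=e^{\core(\lambda)}$; hence
\begin{align*}
\mathsf{V}^{(k)}\left(H_\lambda[\sigma^{-k}X^\bullet]\otimes e^{\sigma^{-k}\core(\lambda)}\right)
&=\sigma^{-k}\mathsf{V}\sigma^k\left(H_\lambda[\sigma^{-k}X^\bullet]\otimes e^{\sigma^{-k}\core(\lambda)}\right)
=\sigma^{-k}\mathsf{V}\left(H_\lambda\otimes e^{\core(\lambda)}\right)\\
&=\sigma^{-k}\left(H_\lambda[\iota D_{w_0\core(\lambda)}^\bullet]\,\mathbb{E}_\lambda\otimes e^{\core(\lambda)}\right)
=H_\lambda[\iota D_{w_0\core(\lambda)}^\bullet]\,\mathbb{E}_\lambda^{(k)}\otimes e^{\sigma^{-k}\core(\lambda)},
\end{align*}
where the third equality is Theorem \ref{VThm} and the last uses that $\sigma^{-k}$ is $\FF$-linear (so the scalar passes through unchanged) together with Proposition \ref{EShift}. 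This is precisely the claimed identity.

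Because the argument is a pure conjugation, there is no analytic or combinatorial obstacle; the only point requiring care is the bookkeeping of the two meanings of $\sigma^{\pm k}$—as a plethysm on $\Lambda_{q,t}^{\otimes r}$ and as the $r$-core (equivalently $Q$-)permutation realized on $\CC[Q]$—so that the core labels track correctly through all three steps and the prefactor stays $H_\lambda[\iota D_{w_0\core(\lambda)}^\bullet]$. As a sanity check one could instead verify that $\sigma^{-k}\mathsf{V}\sigma^k$ satisfies the two characterizing properties in Proposition \ref{VKProp}: the intertwining property by conjugating $\mathsf{V}\,\underline{f}=\Delta^\dagger[f[-\iota X^\bullet]]\,\mathsf{V}$ from Proposition \ref{MainPropertiesofV} and matching $\sigma^{-k}\Delta^\dagger[\cdot]\sigma^k=\Delta^\dagger_{(-k)}[\cdot]$, and the base case by applying $\sigma^{-k}$ to $\mathsf{V}(1\otimes e^\alpha)=\mathbb{E}_\alpha\otimes e^\alpha$ and invoking Proposition \ref{EShift} once more. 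Either route yields Theorem \ref{VMacK} immediately.
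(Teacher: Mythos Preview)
Your proof is correct and follows essentially the same approach as the paper: both derive the result by directly conjugating Theorem \ref{VThm} by $\sigma^{-k}$, using $\mathsf{V}^{(k)}=\sigma^{-k}\mathsf{V}\sigma^k$ together with Proposition \ref{EShift}. Your write-up is in fact more careful than the paper's brief sketch (which contains minor typos in the displayed conjugation), and your added remark that one could alternatively verify the characterizing properties of Proposition \ref{VKProp} is a reasonable sanity check but not needed.
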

Note that this simply follows from the fact that $V^{(k)}$ is attained by conjugating $V$ by $\sigma^k$, so then
\[
\sigma^k V \sigma^{-k} \left( H_{\lambda}[\sigma^{-k} X^{\bullet}] \otimes e^{\sigma^{-k}\core(\lambda)} \right)
=
\sigma^k V \left(   H_{\lambda}[\sigma  X^{\bullet}] \otimes e^{ \core(\lambda)} \right).
\]

\begin{thm}\label{ShiftedTes}
The map $\mathsf{V}^{(k)}$ is given by
\[
\mathsf{V}^{(k)}=
\nabla\Omega\left[ \frac{X^{(-k)}}{(1-q\sigma^{-1})(t\sigma-1)} \right]\TT\left[ X^{(-k)} \right]\nabla.
\]
\end{thm}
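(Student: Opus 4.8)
The plan is to deduce Theorem~\ref{ShiftedTes} from the unshifted Tesler formula by pure $\sigma$-conjugation. Since $\mathsf{V}^{(k)}$ is by definition $\sigma^{-k}\mathsf{V}\sigma^k$ and Theorem~\ref{Tesler} identifies $\mathsf{V}$ with $\nabla\,\Omega\!\left[\frac{X^{(0)}}{(1-q\sigma^{-1})(t\sigma-1)}\right]\TT\!\left[X^{(0)}\right]\nabla$, it suffices to insert $\sigma^{k}\sigma^{-k}=\mathrm{id}$ between the four factors and compute the conjugate $\sigma^{-k}(-)\sigma^{k}$ of each one.

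First I would handle the two outer copies of $\nabla$. The relevant input is Lemma~\ref{NablaK}: the identity $\sigma^{-k}\nabla_{\core(\lambda)}\sigma^{k}=\nabla_{\sigma^{-k}\core(\lambda)}$ shows, at the level of the block-diagonal operator on $\Lambda_{q,t}^{\otimes r}\otimes\CC[Q]$, that $\sigma^{-k}\nabla\sigma^{k}=\nabla$. Indeed, $\sigma^{k}$ carries the $e^{\alpha}$-block to the $e^{\sigma^{k}\alpha}$-block, where $\nabla$ acts by $\nabla_{\sigma^{k}\alpha}$, and Lemma~\ref{NablaK} (applied with core $\sigma^{k}\alpha$) identifies the conjugate back as $\nabla_{\alpha}$. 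For the $\TT$- and $\Omega$-factors I would use the conjugation rules $\sigma^{-1}\TT[X^{(i)}z]\sigma=\TT[X^{(i-1)}z]$ and $\sigma^{-1}\Omega[X^{(i)}z]\sigma=\Omega[X^{(i-1)}z]$ recorded in the proof of Lemma~\ref{NablaK} (both checked on power sums), iterated $k$ times: this gives $\sigma^{-k}\TT[X^{(0)}]\sigma^{k}=\TT[X^{(-k)}]$ immediately. For the $\Omega$-factor one must also see that the matrix-plethystic denominator is unchanged and only its seed color is shifted; this follows because $(1-q\sigma^{-1})(t\sigma-1)$ is a Laurent series in $\sigma$ and so commutes with $\sigma$, or, concretely, by expanding $\frac{X^{(0)}}{(1-q\sigma^{-1})(t\sigma-1)}=-\sum_{a,b\ge 0}q^{a}t^{b}X^{(b-a)}$ via Proposition~\ref{SigProp}, shifting every color down by $k$, and reassembling the result as $\frac{X^{(-k)}}{(1-q\sigma^{-1})(t\sigma-1)}$.

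Stringing the four conjugations together yields
\[
\mathsf{V}^{(k)}=\sigma^{-k}\mathsf{V}\sigma^{k}=\nabla\,\Omega\!\left[\frac{X^{(-k)}}{(1-q\sigma^{-1})(t\sigma-1)}\right]\TT\!\left[X^{(-k)}\right]\nabla,
\]
which is the asserted formula. I do not expect a genuine obstacle here: the substance is already contained in Theorem~\ref{Tesler} and Lemma~\ref{NablaK}, and only routine bookkeeping remains. The one point deserving a line of care is the $\Omega$-factor computation — checking that ``shift every color down by $k$'' sends $\frac{X^{(0)}}{(1-q\sigma^{-1})(t\sigma-1)}$ to $\frac{X^{(-k)}}{(1-q\sigma^{-1})(t\sigma-1)}$ and does not also twist the denominator — but this is immediate once the series expansion is written out and one uses that $\sigma$ commutes with powers of $\sigma$.
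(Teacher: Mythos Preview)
Your proposal is correct and is precisely the argument the paper has in mind: since $\mathsf{V}^{(k)}:=\sigma^{-k}\mathsf{V}\sigma^{k}$ by definition, one conjugates the explicit formula of Theorem~\ref{Tesler} term by term, using Lemma~\ref{NablaK} for the two copies of $\nabla$ and the conjugation rules $\sigma^{-1}\Omega[X^{(i)}z]\sigma=\Omega[X^{(i-1)}z]$, $\sigma^{-1}\TT[X^{(i)}z]\sigma=\TT[X^{(i-1)}z]$ recorded in that lemma's proof for the middle factors. The paper does not spell out a separate proof of Theorem~\ref{ShiftedTes} beyond this, so your write-up (including the care you take with the matrix-plethystic denominator) is exactly what is needed.
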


Note that combining Corollary \ref{NablaEv} and Lemma \ref{NablaK}, we have
\[
\nabla\left( \frac{H_\lambda[\sigma^{-k}X^\bullet]}{H_\lambda\left[ \iota D_{w_0\core(\lambda)}^\bullet \right]}\otimes e^{\sigma^{-k}\core(\lambda)} \right)=
H_\lambda[\sigma^{-k}X^\bullet]\otimes e^{\sigma^{-k}\core(\lambda)}.
\]

\begin{rem}
We note here that, if one tries to deduce a reciprocity formula as we did in \ref{MapV}, one will simply rederive Corollary \ref{0Rec}.
\end{rem}

\subsection{Star versions}
We present a version of our results involving inverted characters.

\begin{thm}
For $k\in\ZZ/r\ZZ$, let
\[
{}^*\mathbb{E}_\lambda^{(k)} \coloneq 
\Omega\left[ \sum_{i\in\ZZ/r\ZZ}X^{(-i)}\left( \frac{-D_\lambda^\bullet}{(1-q)(t-1)} \right)^{(i+k)}_* \right]
\]
We then have
\begin{align}
\nonumber
{}^*\mathbb{E}_\lambda^{(k)}&\otimes e^{w_0\sigma^{-k}\core(\lambda)}\\
\label{Tesler*}
&=\nabla^{-1}\Omega\left[ \frac{-X^{(k)}}{(1-q^{-1}\sigma)(t^{-1}\sigma^{-1}-1)} \right]\TT[-X^{(k)}]\nabla^{-1}\left( \frac{H_\lambda^\dagger[\sigma^k X^\bullet]}{H_\lambda^\dagger\left[-(D_{w_0\core(\lambda)}^\bullet)_*\right]}\otimes e^{w_0\sigma^{-k}\core(\lambda)}  \right).
\end{align}
\end{thm}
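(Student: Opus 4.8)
The plan is to derive this from the shifted Tesler identity by applying the involution $\downarrow$ of Proposition \ref{DownArrowProp}: I would conjugate both Theorem \ref{ShiftedTes} (the explicit formula for $\mathsf{V}^{(k)}$) and Theorem \ref{VMacK} (its eigenvalue on shifted wreath Macdonald polynomials) by $\downarrow$. Recall that $\downarrow$ inverts $(q,t)$, sends $H_\lambda\otimes e^{\core(\lambda)}$ to $H_\lambda^\dagger\otimes e^{w_0\core(\lambda)}$, conjugates $\nabla$ to $\nabla^{-1}$, and sends $\mathbb{D}_{\vec k}$ to $\mathbb{D}^*_{\iota\vec k}$; in particular it interchanges the two Laurent-expansion conventions ($|q|,|t|<1$ versus $|q^{-1}|,|t^{-1}|<1$), which is precisely the mechanism turning the operators of Sections \ref{TeslerSec}--\ref{VarSec} into their starred analogues. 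Since $\downarrow$ preserves $\Lambda$-degree it extends to the degree-completion, where the series ${}^*\mathbb{E}_\lambda^{(k)}$ lives, so it may be applied to the identity of Theorem \ref{ShiftedFourier}.

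First I would record how $\downarrow$ conjugates the three factors of $\mathsf{V}^{(k)}$. Writing $\phi$ for the plethysm $p_n[X^{(i)}]\mapsto -p_n[X^{(-i)}]$ underlying $\downarrow$, one checks that $\phi$ is a Hall isometry, so $\phi^{-1}p_n^\perp[X^{(i)}]\phi=-p_n^\perp[X^{(-i)}]$ and hence $\downarrow\TT[X^{(-k)}]\downarrow=\TT[-X^{(k)}]$. As $\downarrow$ is a $\CC$-algebra automorphism of $\Lambda_{q,t}^{\otimes r}$, it conjugates a multiplication operator to multiplication by its image, and a short computation on power sums using Proposition \ref{SigProp} gives
\[
\downarrow\,\Omega\!\left[\frac{X^{(-k)}}{(1-q\sigma^{-1})(t\sigma-1)}\right]\downarrow=\Omega\!\left[\frac{-X^{(k)}}{(1-q^{-1}\sigma)(t^{-1}\sigma^{-1}-1)}\right].
\]
Combined with $\downarrow\nabla\downarrow=\nabla^{-1}$ and Theorem \ref{ShiftedTes}, this identifies
\[
{}^*\mathsf{V}^{(k)}\coloneq\downarrow\mathsf{V}^{(k)}\downarrow=\nabla^{-1}\Omega\!\left[\frac{-X^{(k)}}{(1-q^{-1}\sigma)(t^{-1}\sigma^{-1}-1)}\right]\TT[-X^{(k)}]\,\nabla^{-1},
\]
which is exactly the operator on the right-hand side of \eqref{Tesler*}.

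Next I would apply $\downarrow$ to the relevant vectors. Using $\sigma^{-k}\iota=\iota\sigma^k$, $w_0\sigma^{-k}=\sigma^kw_0$, and $H_\lambda^\dagger=H_\lambda[-\iota X^\bullet;q^{-1},t^{-1}]$, one gets $\downarrow\!\left(H_\lambda[\sigma^{-k}X^\bullet]\otimes e^{\sigma^{-k}\core(\lambda)}\right)=H_\lambda^\dagger[\sigma^kX^\bullet]\otimes e^{w_0\sigma^{-k}\core(\lambda)}$. Writing $\mathbb{E}_\lambda^{(k)}=\Omega\!\left[\sum_i X^{(i)}A_i\right]$ with $A_i$ the color-$(i+k)$ part of $\tfrac{D_\lambda^\bullet}{(1-q)(t-1)}$, applying $\phi$ and inverting $(q,t)$ turns this $\Omega$-argument into $\sum_i X^{(i)}(-\overline{A_{-i}})$, which after reindexing $i\mapsto -i$ is exactly the argument of ${}^*\mathbb{E}_\lambda^{(k)}$ (here one uses that negating and $(q,t)$-inverting a character leave its declared color unchanged, together with the explicit form \eqref{DLambda} of $D_\lambda^{(i)}$); thus $\downarrow\!\left(\mathbb{E}_\lambda^{(k)}\otimes e^{\sigma^{-k}\core(\lambda)}\right)={}^*\mathbb{E}_\lambda^{(k)}\otimes e^{w_0\sigma^{-k}\core(\lambda)}$. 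Finally, the normalizing scalar obeys
\[
\overline{H_\lambda[\iota D_{w_0\core(\lambda)}^\bullet]}=H_\lambda^\dagger\!\left[-(D_{w_0\core(\lambda)}^\bullet)_*\right],
\]
which one sees by unwinding both sides: each equals $H_\lambda$ with $q,t$ inverted in its coefficients and every $p_n[X^{(i)}]$ replaced by the $p_n$-plethysm of $(D_{w_0\core(\lambda)}^{(-i)})_*$ (the two minus signs in $H_\lambda^\dagger[-(\cdot)_*]$, coming from the $-\iota$ in the definition of $H_\lambda^\dagger$ and from the explicit $-$, cancel). Equivalently this is $\downarrow$ applied to Corollary \ref{NablaEv}, and with the note following Theorem \ref{ShiftedTes} it shows that $\nabla^{-1}$ acts on $H_\lambda^\dagger[\sigma^kX^\bullet]\otimes e^{w_0\sigma^{-k}\core(\lambda)}$ by this scalar.

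Putting these together, applying $\downarrow$ to Theorem \ref{VMacK} yields
\[
{}^*\mathsf{V}^{(k)}\!\left(H_\lambda^\dagger[\sigma^kX^\bullet]\otimes e^{w_0\sigma^{-k}\core(\lambda)}\right)=H_\lambda^\dagger\!\left[-(D_{w_0\core(\lambda)}^\bullet)_*\right]\;{}^*\mathbb{E}_\lambda^{(k)}\otimes e^{w_0\sigma^{-k}\core(\lambda)},
\]
and by linearity of ${}^*\mathsf{V}^{(k)}$, dividing the argument by $H_\lambda^\dagger[-(D_{w_0\core(\lambda)}^\bullet)_*]$ gives exactly \eqref{Tesler*}. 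I do not expect a genuine obstacle here; the entire argument is bookkeeping of how $\downarrow$ interacts with $\iota$, $(\cdot)_*$, plethystic minus signs, and $(q,t)$-inversion, and the only two points deserving real care are the conjugation of the multiplicative $\Omega$-factor and the scalar identity displayed above, both of which reduce to \eqref{DLambda} and Corollary \ref{NablaEv}.
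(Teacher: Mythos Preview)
Your proposal is correct and follows essentially the same approach as the paper: apply the involution $\downarrow$ to Theorem \ref{ShiftedFourier}, use Proposition \ref{DownArrowProp} to conjugate each factor of $\mathsf{V}^{(k)}$, track the action of $\downarrow$ on the vectors $H_\lambda[\sigma^{-k}X^\bullet]\otimes e^{\sigma^{-k}\core(\lambda)}$ and $\mathbb{E}_\lambda^{(k)}\otimes e^{\sigma^{-k}\core(\lambda)}$, and finish by identifying the normalizing scalar via $H_\lambda\!\left[\iota (D_{w_0\core(\lambda)}^\bullet)_*; q^{-1},t^{-1}\right]=H_\lambda^\dagger\!\left[-(D_{w_0\core(\lambda)}^\bullet)_*\right]$. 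Your write-up simply spells out more of the bookkeeping than the paper does.
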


\begin{proof}
We apply $\downarrow$ to Theorem \ref{ShiftedFourier}.
By Proposition \ref{DownArrowProp}, we get:
\begin{align*}
&{}^*\mathbb{E}_\lambda^{(k)}\otimes e^{w_0\sigma^{-k}\core(\lambda)}\\
&= \downarrow \mathbb{E}_\lambda^{(k)}\otimes e^{\sigma^{-k}\core(\lambda)}\\
&= \downarrow\nabla\Omega\left[ \frac{X^{(-k)}}{(1-q\sigma^{-1})(t\sigma-1)} \right]\TT\left[ X^{(-k)} \right]\nabla
\left(\frac{H_\lambda[\sigma^{-k}X^\bullet]}{H_\lambda[\iota D_{w_0\core(\lambda)}  ]}\otimes e^{\sigma^{-k}\core(\lambda)}\right)\\
&= \nabla^{-1}\Omega\left[ \frac{-X^{(k)}}{(1-q^{-1}\sigma)(t^{-1}\sigma^{-1}-1)} \right]\TT[-X^{(k)}]\nabla^{-1}\left( \frac{H_\lambda^\dagger[\sigma^k X^\bullet]}{H_\lambda\left[\iota (D_{w_0\core(\lambda)}^\bullet)_*; q^{-1},t^{-1}\right]} \otimes e^{w_0\sigma^{-k}\core(\lambda)}\right).
\end{align*}
The result follows from the fact that
\[
H_\lambda\left[\iota (D_{w_0\core(\lambda)}^\bullet)_*; q^{-1},t^{-1}\right]
= H_\lambda^\dagger\left[- (D_{w_0\core(\lambda)}^\bullet)_*\right].
\qedhere
\]
\end{proof}

As before, ${}^*\mathbb{E}_\lambda^{(k)}$ has a natural interpretation:

\begin{prop}
For any core $\alpha$,
\[
{}^*\mathbb{E}_\lambda^{(k)}=\sum_{\substack{\mu\\\core(\mu)=\alpha}}\frac{H_\mu[X^\bullet]H_\mu^\dagger\left[-\sigma^k qt(D_\lambda^\bullet)_* \right]}{N_\mu}
=\sum_{\substack{\mu\\\core(\mu)=\alpha}}\frac{H_\mu^\dagger[X^\bullet] H_\mu\left[-\sigma^k qt(D_\lambda^\bullet)_* \right]}{N_\mu}.
\]
Therefore, for any $f\in\Lambda_{q,t}^{\otimes r}$, 
\[
\langle f, {}^*\mathbb{E}_\lambda^{(k)}\rangle_{q,t}'=
f\left[-\sigma^k qt(D_\lambda^\bullet)_*  \right].
\]
\end{prop}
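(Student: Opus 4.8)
The plan is to deduce this from Proposition~\ref{ShiftDelta} by applying the involution $\downarrow$ of~\ref{DD*}, exactly as the star version of the Tesler identity, equation~\eqref{Tesler*}, was obtained from Theorem~\ref{ShiftedFourier}. Fix an $r$-core $\alpha$, tensor each of the two expansions
\[
\mathbb{E}_\lambda^{(k)}\otimes e^\alpha=\sum_{\core(\mu)=\alpha}\frac{H_\mu[X^\bullet]\,H_\mu^\dagger[\sigma^k\iota D_\lambda^\bullet]}{N_\mu}\otimes e^\alpha=\sum_{\core(\mu)=\alpha}\frac{H_\mu^\dagger[X^\bullet]\,H_\mu[\sigma^k\iota D_\lambda^\bullet]}{N_\mu}\otimes e^\alpha
\]
of Proposition~\ref{ShiftDelta} with $e^\alpha$, and apply $\downarrow$ to both sides. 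Granting that $\downarrow$ produces the two claimed expansions of ${}^*\mathbb{E}_\lambda^{(k)}$ (tensored with $e^{w_0\alpha}$, which is irrelevant to the underlying identity in the completion of $\Lambda_{q,t}^{\otimes r}$), the pairing formula is then immediate in the manner of Corollary~\ref{DeltaFunc}: since $\{H_\mu\}$ and $\{H_\mu^\dagger\}$ over a fixed core are dual bases for $\langle-,-\rangle_{q,t}'$ with norms $N_\mu$, pairing an arbitrary $f$ against $\sum_\mu N_\mu^{-1}H_\mu^\dagger[X^\bullet]\,H_\mu[-\sigma^k qt(D_\lambda^\bullet)_*]$ returns exactly $f[-\sigma^k qt(D_\lambda^\bullet)_*]$.

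It remains to evaluate $\downarrow$ on each side. On the left I would verify the plethystic identity $\mathbb{E}_\lambda^{(k)}[-\iota X^\bullet;q^{-1},t^{-1}]={}^*\mathbb{E}_\lambda^{(k)}$ directly from the exponential presentation of $\Omega$: the substitution $-\iota$ sends $X^{(i)}$ to $-X^{(-i)}$, turning $\Omega\bigl[\sum_i X^{(i)}\bigl(\tfrac{D_\lambda^\bullet}{(1-q)(t-1)}\bigr)^{(i+k)}\bigr]$ into $\Omega\bigl[\sum_i X^{(-i)}\bigl(\tfrac{-D_\lambda^\bullet}{(1-q)(t-1)}\bigr)^{(i+k)}\bigr]$, and then inverting $q$ and $t$ in the coefficients replaces each colour-$(i+k)$ power series $\bigl(\tfrac{-D_\lambda^\bullet}{(1-q)(t-1)}\bigr)^{(i+k)}$ by its term-by-term inversion, which, by the convention that the colour of a character is recorded \emph{before} inverting $q,t$, is precisely $\bigl(\tfrac{-D_\lambda^\bullet}{(1-q)(t-1)}\bigr)^{(i+k)}_*$; this matches the defining formula for ${}^*\mathbb{E}_\lambda^{(k)}$.

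On the right, Proposition~\ref{DownArrowProp}(1) gives $\downarrow(H_\mu[X^\bullet]\otimes e^\alpha)=H_\mu^\dagger[X^\bullet]\otimes e^{w_0\alpha}$ and $\downarrow(H_\mu^\dagger[X^\bullet]\otimes e^\alpha)=H_\mu[X^\bullet]\otimes e^{w_0\alpha}$, while $\downarrow$ acts on the scalars $H_\mu[\sigma^k\iota D_\lambda^\bullet]/N_\mu$ and $H_\mu^\dagger[\sigma^k\iota D_\lambda^\bullet]/N_\mu$ by inverting $q$ and $t$. Identifying the outputs with the scalars in the statement rests on a dictionary relation of the type $H_\mu[\iota S^\bullet]\big|_{q\to q^{-1},\,t\to t^{-1}}=H_\mu^\dagger[-S^\bullet_*]$ used in the proof of~\eqref{Tesler*}, applied with $S^\bullet$ an appropriate $\sigma$- and $qt$-twist of $D_\lambda^\bullet$ (recall $\sigma^k\iota D_\lambda^\bullet=\iota(\sigma^{-k}D_\lambda^\bullet)$), together with determining how $N_\mu$ transforms under $q\leftrightarrow q^{-1},\ t\leftrightarrow t^{-1}$ — or, if one prefers not to verify that, simply carrying the factor $N_\mu|_{q\to q^{-1},t\to t^{-1}}$ through the computation and recognising the result, since $\downarrow$ is an involution. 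The one genuinely delicate point throughout is exactly this bookkeeping: keeping straight that the colour grading on $\tfrac{D_\lambda^\bullet}{(1-q)(t-1)}$ and on $(D_\lambda^\bullet)_*$ is the one read off before $q,t$ are inverted, which is what the $*$-subscript encapsulates; the rest is formal. As an alternative one can bypass $\downarrow$ and argue directly as in Proposition~\ref{DeltaExpand}, realising ${}^*\mathbb{E}_\lambda^{(k)}$ as the image of the Cauchy kernel $\Omega_{q,t}[X^{-\bullet}Y^\bullet]$ of Proposition~\ref{CauchyProp} under a suitable substitution of the $Y$-variables and expanding via that proposition; the same colour-bookkeeping is the only obstacle.
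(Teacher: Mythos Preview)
Your ``alternative'' at the end is exactly what the paper does: it realises ${}^*\mathbb{E}_\lambda^{(k)}$ as the specialisation of the Cauchy kernel $\Omega_{q,t}[X^{-\bullet}Y^\bullet]$ obtained by applying $\sigma^k$ to the $Y$-variables and then setting $Y^{(i)}\mapsto -qt(D_\lambda^{(i)})_*$, after first rewriting the denominator $(1-q\sigma^{-1})(t\sigma-1)$ as $qt(1-q^{-1}\sigma)(t^{-1}\sigma^{-1}-1)$ so that the substitution manifestly produces the $(-)_*$ expression. Both expansions then drop out of Proposition~\ref{CauchyProp}, and the pairing statement follows exactly as you say. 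This is short and clean; the colour bookkeeping is absorbed into that single denominator identity.

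Your primary route via $\downarrow$ is sound in principle but is genuinely more laborious than you suggest. After applying $\downarrow$ to the expansion of Proposition~\ref{ShiftDelta}, the scalar attached to $H_\mu^\dagger[X^\bullet]$ is $\overline{H_\mu[\sigma^k\iota D_\lambda^\bullet]}/\overline{N_\mu}$ (bars denoting $q,t$-inversion). Unwinding the bar on the numerator gives $H_\mu^\dagger$ evaluated with $Z^{(j)}\mapsto -(D_\lambda^{(j+k)})_*$, whereas the target $H_\mu^\dagger[-\sigma^k qt(D_\lambda^\bullet)_*]$ has $Z^{(j)}\mapsto -qt(D_\lambda^{(j+k)})_*$; these differ by $(qt)^{|\quot(\mu)|}$ via homogeneity. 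So the match forces the identity $N_\mu=(qt)^{|\quot(\mu)|}\overline{N_\mu}$, which is exactly the loose end you flag. That identity is true but is not proved anywhere in the paper (one needs the explicit hook formula for $N_\mu$ from \cite{OSWreath}), so your $\downarrow$ argument is incomplete as written. The Cauchy-kernel route sidesteps this entirely because $N_\mu$ is untouched by the $Y$-specialisation.
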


\begin{proof}
We obtain $\mathbb{E}_\lambda^*$ from $\Omega_{q,t}\left[ X^{-\bullet}Y^\bullet \right]$ by (1) applying $\sigma^k$ to $Y^{(i)}$ and then (2) evaluating $Y^{(i)}\mapsto -qt(D_\lambda^{(i)})_*$:
\begin{align*}
\Omega_{q,t}\left[ X^{-\bullet}Y^\bullet \right]
&\overset{(1)}{\mapsto}\Omega\left[ \sum_{i\in\ZZ/r\ZZ} X^{(-i)}\left( \frac{Y^{(i+k)}}{(1-q\sigma^{-1})(t\sigma-1)} \right) \right]\\
&= \Omega\left[ \sum_{i\in\ZZ/r\ZZ} X^{(-i)}\left( \frac{q^{-1}t^{-1}Y^{(i+k)}}{(1-q^{-1}\sigma)(t^{-1}\sigma^{-1}-1)} \right) \right]\\
&\overset{(2)}{\mapsto} \Omega\left[ \sum_{i\in\ZZ/r\ZZ}X^{(-i)}\left( \frac{-D_\lambda^\bullet}{(1-q)(t-1)} \right)^{(i+k)}_* \right].\qedhere
\end{align*}
\end{proof}

\subsubsection{Delta-star operators}
Next, we consider how the operator in (\ref{Tesler*}) commutes with multiplication operators.
To that end, for any $f\in\Lambda_{q,t}^{\otimes r}$, we define
\begin{equation*}
\begin{aligned}
{}^*\Delta_{(k)}[f] \coloneq \downarrow \Delta_{(k)}^\dagger\left[\downarrow f[\iota X^\bullet]\right]\downarrow && \text{ and } &&
{}^*\Delta_{(k)}^\dagger[f] \coloneq \downarrow\Delta_{(k)}\left[\downarrow f[\iota X^\bullet]\right]\downarrow.
\end{aligned}
\end{equation*}
Combining Propositions \ref{DownArrowProp} and \ref{ShiftedDeltaOp} yields:
\begin{prop}
For any $f\in\Lambda_{q,t}^{\otimes r}$,
\begin{align*}
{}^*\Delta_{(k)}[f]\left( H_\lambda[\sigma^{-k}X^\bullet]\otimes e^{\sigma^{-k}\core(\lambda)} \right)
&= f\left[ (D_\lambda^\bullet)_* \right]H_\lambda[\sigma^{-k}X^\bullet]\otimes e^{\sigma^{-k}\core(\lambda)}\\
{}^*\Delta_{(k)}^\dagger[f]\left( H_\lambda^\dagger[\sigma^{k}X^\bullet]\otimes e^{w_0\sigma^{-k}\core(\lambda)} \right)
&= f\left[ (D_\lambda^\bullet)_* \right]H_\lambda^\dagger[\sigma^{k}X^\bullet]\otimes e^{w_0\sigma^{-k}\core(\lambda)}.
\end{align*}
\end{prop}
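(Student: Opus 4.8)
The plan is to obtain the statement by conjugating Proposition~\ref{ShiftedDeltaOp} with the involution $\downarrow$; no genuinely new idea is needed beyond a careful accounting of how $\downarrow$ interacts with the $\sigma$-twists, with the colored character sums $D_\lambda^\bullet$, and with the substitution $(q,t)\mapsto(q^{-1},t^{-1})$.

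First I would record the identity $\downarrow\sigma=\sigma^{-1}\downarrow$ on $\Lambda_{q,t}^{\otimes r}\otimes\CC[Q]$, which is a one-line check on power sums: on the $\Lambda_{q,t}^{\otimes r}$ factor it follows from $\iota\sigma=\sigma^{-1}\iota$, and on the $\CC[Q]$ factor from $w_0\sigma w_0=\sigma^{-1}$. Combined with Proposition~\ref{DownArrowProp}(1), this gives
\[
\downarrow\!\left(H_\lambda[\sigma^{-k}X^\bullet]\otimes e^{\sigma^{-k}\core(\lambda)}\right)=H_\lambda^\dagger[\sigma^{k}X^\bullet]\otimes e^{w_0\sigma^{-k}\core(\lambda)},
\]
and, since $\downarrow$ is an involution, the reverse identity too. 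Thus $\downarrow$ interchanges the two bases appearing in Proposition~\ref{ShiftHProp}.

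Next, ${}^*\Delta_{(k)}[f]=\downarrow\,\Delta_{(k)}^\dagger\!\left[\downarrow f[\iota X^\bullet]\right]\downarrow$ is the $\downarrow$-conjugate of $\Delta_{(k)}^\dagger\!\left[\downarrow f[\iota X^\bullet]\right]$, and the latter is diagonal on $\{H_\lambda^\dagger[\sigma^{k}X^\bullet]\otimes e^{w_0\sigma^{-k}\core(\lambda)}\}$ with eigenvalue $\left(\downarrow f[\iota X^\bullet]\right)[-D_\lambda^\bullet]$ by Proposition~\ref{ShiftedDeltaOp}. Hence ${}^*\Delta_{(k)}[f]$ is diagonal on $\{H_\lambda[\sigma^{-k}X^\bullet]\otimes e^{\sigma^{-k}\core(\lambda)}\}$, the eigenvalue being $\left(\downarrow f[\iota X^\bullet]\right)[-D_\lambda^\bullet]$ with $(q,t)$ inverted, since the outer $\downarrow$ inverts $(q,t)$ in the scalar. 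Unwinding the definitions gives $\downarrow f[\iota X^\bullet]=f[-X^\bullet;q^{-1},t^{-1}]$, so $\left(\downarrow f[\iota X^\bullet]\right)[-D_\lambda^\bullet]=f[D_\lambda^\bullet;q^{-1},t^{-1}]$; inverting $(q,t)$ restores the coefficients of $f$ and sends each $D_\lambda^{(i)}$ to $(D_\lambda^{(i)})_*$, the color being retained exactly as in \ref{ColChar}, yielding $f[(D_\lambda^\bullet)_*]$. The statement for ${}^*\Delta_{(k)}^\dagger[f]$ follows from the identical argument with $\Delta_{(k)}$ in place of $\Delta_{(k)}^\dagger$ and the roles of the two bases swapped.

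All the computations here are routine; the one point that I expect to require the most care is the parity bookkeeping of $\downarrow$ and of the $(q,t)$-inversion: $\downarrow$ acts once on the vector, once inside the operator through $\downarrow f[\iota X^\bullet]$, and once on the outside, so the two inversions that ultimately land on the scalar eigenvalue must be verified to compose into precisely the rule that keeps the coefficients of $f$ and replaces $D_\lambda^\bullet$ by $(D_\lambda^\bullet)_*$, rather than leaving $f$ evaluated with inverted parameters.
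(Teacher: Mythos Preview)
Your argument is correct and is exactly the approach the paper takes: the paper states the proposition as an immediate consequence of combining Propositions~\ref{DownArrowProp} and~\ref{ShiftedDeltaOp}, and you have simply unpacked that combination in detail, including the verification that $\downarrow$ swaps the two $\sigma$-twisted bases and that the scalar eigenvalue transforms to $f[(D_\lambda^\bullet)_*]$.
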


\subsubsection{The map $\mathsf{V}_*^{(k)}$}
Setting $\mathsf{V}_*^{(k)}=\downarrow\mathsf{V}^{(k)}\downarrow$, we have
\[
\mathsf{V}_*^{(k)}=\nabla^{-1}\Omega\left[ \frac{-X^{(k)}}{(1-q^{-1}\sigma)(t^{-1}\sigma^{-1}-1)} \right]\TT[-X^{(k)}]\nabla^{-1}.
\]
\begin{prop}
$\mathsf{V}_*^{(k)}$ satisfies the following:
\begin{itemize}
\item $\mathsf{V}_*^{(k)}(fg\otimes e^{\alpha})={}^*\Delta_{(-k)}\left[ f[-\sigma^{-k} X^\bullet] \right]\left(\mathsf{V}_*^{(k)}(g\otimes e^{\alpha})\right)$,
\item $\mathsf{V}_*^{(k)}(1\otimes e^{w_0\sigma^{-k}\alpha})={}^*\mathbb{E}_{\alpha}^{(k)}\otimes e^{w_0\sigma^{-k}\alpha}$.
\end{itemize}
\end{prop}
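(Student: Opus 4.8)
The plan is to obtain both identities purely formally, by transporting the two properties of $\mathsf{V}^{(k)}$ recorded in Proposition \ref{VKProp} through the involution $\downarrow$, in exactly the same spirit in which ${}^*\mathbb{E}_\lambda^{(k)}$ and the starred Delta operators ${}^*\Delta_{(k)}[\cdot]$ were produced from their un-starred counterparts. Since $\mathsf{V}_*^{(k)}=\downarrow\mathsf{V}^{(k)}\downarrow$ and $\downarrow$ is an involution, the whole argument is a matter of bookkeeping with $\downarrow$. First I would collect the facts needed: $\downarrow$ is $\CC$-linear, inverts $(q,t)$, sends $X^\bullet\mapsto-\iota X^\bullet$, restricts to a ring homomorphism on the $\Lambda_{q,t}^{\otimes r}$-factor so that $\downarrow\big((fg)\otimes e^\alpha\big)=(\downarrow f)(\downarrow g)\otimes e^{w_0\alpha}$, and satisfies $\downarrow^2=\mathrm{id}$ together with $w_0^2=\mathrm{id}$ (all from Proposition \ref{DownArrowProp} and the definition of $\downarrow$ in the Star versions subsection).

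For the multiplicativity property I would write $\mathsf{V}_*^{(k)}(fg\otimes e^\alpha)=\downarrow\mathsf{V}^{(k)}\big((\downarrow f)(\downarrow g)\otimes e^{w_0\alpha}\big)$, apply the first bullet of Proposition \ref{VKProp} with $(f,g,\alpha)$ replaced by $(\downarrow f,\downarrow g,w_0\alpha)$ to get $\downarrow\,\Delta_{(-k)}^\dagger\big[(\downarrow f)[-\iota\sigma^k X^\bullet]\big]\,\mathsf{V}^{(k)}\big((\downarrow g)\otimes e^{w_0\alpha}\big)$, and then reinsert $\downarrow\downarrow=\mathrm{id}$ to rewrite the right side as $\big(\downarrow\Delta_{(-k)}^\dagger[\,\cdot\,]\downarrow\big)\big(\downarrow\mathsf{V}^{(k)}\downarrow\big)(g\otimes e^\alpha)=\big(\downarrow\Delta_{(-k)}^\dagger[\,\cdot\,]\downarrow\big)\mathsf{V}_*^{(k)}(g\otimes e^\alpha)$. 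It then remains to recognize $\downarrow\Delta_{(-k)}^\dagger\big[(\downarrow f)[-\iota\sigma^k X^\bullet]\big]\downarrow$ as ${}^*\Delta_{(-k)}\big[f[-\sigma^{-k}X^\bullet]\big]$: unwinding the definition ${}^*\Delta_{(k)}[h]=\downarrow\Delta_{(k)}^\dagger\big[\downarrow\big(h[\iota X^\bullet]\big)\big]\downarrow$ with $k\mapsto-k$ and $h=f[-\sigma^{-k}X^\bullet]$, this reduces to the single plethystic identity $\downarrow\big((f[-\sigma^{-k}X^\bullet])[\iota X^\bullet]\big)=(\downarrow f)[-\iota\sigma^k X^\bullet]$, whose two sides both equal $f[\sigma^{-k}X^\bullet;q^{-1},t^{-1}]$ once one uses $\iota\sigma^k=\sigma^{-k}\iota$, $\iota^2=\mathrm{id}$, and the description of $\downarrow$ above.

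For the base case I would use $\downarrow(1\otimes e^{w_0\sigma^{-k}\alpha})=1\otimes e^{\sigma^{-k}\alpha}$ (since $w_0^2=\mathrm{id}$), apply the second bullet of Proposition \ref{VKProp} to obtain $\mathsf{V}^{(k)}(1\otimes e^{\sigma^{-k}\alpha})=\mathbb{E}_\alpha^{(k)}\otimes e^{\sigma^{-k}\alpha}$, and then apply $\downarrow$ once more, invoking the identity ${}^*\mathbb{E}_\alpha^{(k)}\otimes e^{w_0\sigma^{-k}\alpha}=\downarrow\big(\mathbb{E}_\alpha^{(k)}\otimes e^{\sigma^{-k}\alpha}\big)$, which was already recorded in the proof of the star-version Tesler identity (\ref{Tesler*}) and is immediate from Propositions \ref{EShift} and \ref{DownArrowProp}. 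This yields $\mathsf{V}_*^{(k)}(1\otimes e^{w_0\sigma^{-k}\alpha})={}^*\mathbb{E}_\alpha^{(k)}\otimes e^{w_0\sigma^{-k}\alpha}$, as desired.

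There is no genuine obstacle, since everything is a conjugation of Proposition \ref{VKProp} by $\downarrow$; the one place demanding real attention is the verification of the plethystic identity $\downarrow\big((f[-\sigma^{-k}X^\bullet])[\iota X^\bullet]\big)=(\downarrow f)[-\iota\sigma^k X^\bullet]$, where one must be careful about the order in which the nested substitutions ($-\sigma^{-k}$, then $\iota$, then the $\downarrow$-substitution $X^\bullet\mapsto-\iota X^\bullet$ with $(q,t)\mapsto(q^{-1},t^{-1})$) compose and keep the conjugation relation between $\iota$ and the powers of $\sigma$ straight.
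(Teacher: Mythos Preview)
Your proposal is correct and follows essentially the same route as the paper's proof: both conjugate Proposition \ref{VKProp} by $\downarrow$ and reduce the first bullet to the plethystic identity $(\downarrow f)[-\iota\sigma^k X^\bullet]=f[\sigma^{-k}X^\bullet;q^{-1},t^{-1}]=\downarrow\big(f[-\iota\sigma^{-k}X^\bullet]\big)$, while the second bullet is immediate from $\downarrow\big(\mathbb{E}_\alpha^{(k)}\otimes e^{\sigma^{-k}\alpha}\big)={}^*\mathbb{E}_\alpha^{(k)}\otimes e^{w_0\sigma^{-k}\alpha}$. Your write-up is slightly more explicit about tracking the lattice tensorand $e^\alpha$ through $\downarrow$, but the argument is the same.
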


\begin{proof}
Only the first condition is a bit tricky.
We have
\begin{align*}
\mathsf{V}_*^{(k)}f&= \downarrow\mathsf{V}^{(k)}(\downarrow f)\downarrow\\
&= \downarrow \Delta_{(-k)}^\dagger\left[(\downarrow f)[-\iota\sigma^k X^\bullet]\right]\mathsf{V}^{(k)}\downarrow\\
&= \downarrow \Delta_{(-k)}^\dagger\left[(\downarrow f)[-\iota\sigma^k X^\bullet]\right]\downarrow\mathsf{V}^{(k)}_*.
\end{align*}
Now, note that
\[
(\downarrow f)[-\iota\sigma^k X^\bullet]=f[\iota\sigma^{k}\iota X^\bullet; q^{-1},t^{-1}]=f[\sigma^{-k}X^\bullet; q^{-1}, t^{-1}]
=\downarrow\left(f[-\iota\sigma^{-k}X^\bullet]\right).\qedhere
\]
\end{proof}

\section{Consequences}\label{Consequence}
The dependence of $\mathsf{V}^{(k)}$ on the core tensorand $e^\alpha$ only plays a role when we apply $\nabla$.
In what follows, we will work with $\Lambda_{q,t}^{\otimes r}$, instead of $\Lambda_{q,t}^{\otimes r}\otimes \CC[Q]$, and encode the core-dependence into $\nabla_\alpha$.

\subsection{Reciprocal identities}
We have already seen one instance of reciprocity (Corollary \ref{0Rec}).
Combining Theorem \ref{ShiftedFourier} with the pairing $\langle -, - \rangle_{q,t}'$, we obtain other versions.

\subsubsection{A Macdonald--Koornwinder duality}
We will use a variable $u$ to record degrees. 
We note that with respect to $\langle -, -\rangle_{q,t}'$, if we insert a grading variable $u$, we have
\[\langle f[uX^{\bullet}], \TT[A X^{\bullet}] g[X^{\bullet}] \rangle_{q,t}'
=
\langle f[X^{\bullet}], \TT[A X^{\bullet} u^{-1}]g[uX^{\bullet}] \rangle_{q,t}'
\]

We have the following strong form of reciprocity:
\begin{thm}\label{MKThm}
For $k\in\ZZ/r\ZZ$ and partitions $\lambda,\mu$ with $\core(\mu)=w_0\sigma^{-k}\core(\lambda)$, we have
\begin{equation}
\begin{aligned}
\frac{H_\lambda\left[1+u \sigma^k\iota D_{\mu}^\bullet \right]}{\displaystyle\prod_{\substack{\square\in \lambda\backslash\core(\lambda)\\ \bar{c}_\square =k}}\left( 1-u\chi_\square \right)}
&= 
\frac{H_{\mu}\left[1+u \sigma^k\iota D_{\lambda}^\bullet \right]}{\displaystyle\prod_{\substack{\square\in \mu\backslash\core(\mu)\\ \bar{c}_\square =k}}\left( 1-u\chi_\square \right)}.
\end{aligned}
\label{MKDual}
\end{equation}
\end{thm}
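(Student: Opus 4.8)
The plan is to mimic the derivation of Macdonald--Koornwinder duality \eqref{MK} from the Tesler identity in \cite{GHT}: realize both sides of \eqref{MKDual} as a single bilinear form on $\mathcal{W}$, computed two ways, using that $\mathbb{E}^{(k)}_\lambda$ is a delta function (Proposition \ref{ShiftDelta}), that it is an explicit image under the operator appearing in \eqref{TeslerK} (Theorem \ref{ShiftedFourier}), and the adjunction properties of the pieces of that operator; a grading variable $u$ is threaded through to produce the products in the denominators.

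\textbf{Step 1: pairing form.} Applying Proposition \ref{ShiftDelta} homogeneous-component-wise gives $\langle g,\mathbb{E}^{(k)}_\mu[uX^\bullet]\rangle'_{q,t}=g[u\sigma^k\iota D_\mu^\bullet]$ for every $g$; taking $g=\TT[X^{(0)}]H_\lambda$ and recalling that the ``$1$'' has color $0$ yields
\[
H_\lambda[1+u\sigma^k\iota D_\mu^\bullet]=\bigl\langle \TT[X^{(0)}]H_\lambda,\ \mathbb{E}^{(k)}_\mu[uX^\bullet]\bigr\rangle'_{q,t},
\]
and symmetrically with $\lambda\leftrightarrow\mu$. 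Since $\core(\mu)=w_0\sigma^{-k}\core(\lambda)$ and this relation is an involution under $\sigma w_0=w_0\sigma^{-1}$, it is natural to tensor the left factor with $e^{\core(\lambda)}$ and the right with $e^{\sigma^{-k}\core(\mu)}$, so that the extended pairing on $\mathcal{W}$ lands on the desired component. Thus \eqref{MKDual} becomes the statement that
\[
\frac{\bigl\langle \TT[X^{(0)}]H_\lambda\otimes e^{\core(\lambda)},\ \mathbb{E}^{(k)}_\mu[uX^\bullet]\otimes e^{\sigma^{-k}\core(\mu)}\bigr\rangle'_{q,t}}{\displaystyle\prod_{\substack{\square\in\lambda\backslash\core(\lambda)\\ \bar c_\square=k}}(1-u\chi_\square)}
\]
is invariant under $\lambda\leftrightarrow\mu$, the two denominators being exchanged along with $\core(\lambda)\leftrightarrow\core(\mu)$.

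\textbf{Step 2: transpose the $\mathsf{V}$-operator.} By Theorem \ref{ShiftedFourier}, with the grading variable inserted (conjugation by the operator $u^{\deg}$ scaling $X^{(i)}\mapsto uX^{(i)}$ sends $\TT[X^{(-k)}]\mapsto\TT[X^{(-k)}u^{-1}]$ and $\Omega[\tfrac{X^{(-k)}}{(1-q\sigma^{-1})(t\sigma-1)}]\mapsto\Omega[\tfrac{uX^{(-k)}}{(1-q\sigma^{-1})(t\sigma-1)}]$, and fixes $\nabla$),
\[
\mathbb{E}_\mu^{(k)}[uX^\bullet]\otimes e^{\sigma^{-k}\core(\mu)}=u^{|\quot(\mu)|}\,\nabla\,\Omega\!\left[\tfrac{uX^{(-k)}}{(1-q\sigma^{-1})(t\sigma-1)}\right]\TT[X^{(-k)}u^{-1}]\bigl(H_\mu[\sigma^{-k}X^\bullet]\otimes e^{\sigma^{-k}\core(\mu)}\bigr),
\]
using $\nabla(H_\mu[\sigma^{-k}X^\bullet]\otimes e^{\sigma^{-k}\core(\mu)})=H_\mu[\iota D_{w_0\core(\mu)}^\bullet]\,H_\mu[\sigma^{-k}X^\bullet]\otimes e^{\sigma^{-k}\core(\mu)}$ (Corollary \ref{NablaEv} and Lemma \ref{NablaK}). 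Insert this into the pairing and move the operator onto the other argument: $\nabla$ is self-adjoint (Corollary \ref{NablaAdjoint}), while Lemmas \ref{FAdjoint1} and \ref{FAdjoint2} supply the adjoints of the $\Omega$- and $\TT$-components (after the built-in reversal $X^{(-k)}\mapsto X^{(k)}$ and using $\iota\sigma\iota=\sigma^{-1}$), of $\mathsf{V}^{(-k)}$-type but with a leftover translation $\TT[X^{(0)}]$ now acting on $H_\lambda$. Reorganizing $\TT[X^{(0)}]$ against the surrounding $\nabla$'s and $\Omega$-factor via the commutation relations \eqref{NablaP1}--\eqref{NablaPerp*} and the shifted form of the ``$f^\perp\mathsf{V}$'' identity of Proposition \ref{MainPropertiesofV} turns it into a $\Delta$-operator diagonal on $H_\lambda$; its eigenvalue, combined with the $\nabla$-eigenvalue (Corollary \ref{NablaEv}) and the grading powers $u^{|\quot(\cdot)|}$, reassembles exactly into $\prod_{\square\in\lambda\backslash\core(\lambda),\ \bar c_\square=k}(1-u\chi_\square)$. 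What remains is the symmetric pairing $\langle \TT[X^{(0)}]H_\mu\otimes e^{\core(\mu)},\ \mathbb{E}_\lambda^{(k)}[uX^\bullet]\otimes e^{\sigma^{-k}\core(\lambda)}\rangle'_{q,t}=H_\mu[1+u\sigma^k\iota D_\lambda^\bullet]$, divided by the corresponding product for $\mu$ — which is \eqref{MKDual}.

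\textbf{Main obstacle.} The substantive difficulty is bookkeeping rather than ideas: at each transposition one must check that the hypothesis $\core(\mu)=w_0\sigma^{-k}\core(\lambda)$ is precisely what makes the pairing on $\mathcal{W}$ nonzero on the wanted component, and that the color shifts $X^{(-k)}\leftrightarrow X^{(k)}$ and $X^{(0)}$ produced by Lemmas \ref{FAdjoint1}, \ref{FAdjoint2} and by $\iota$ all align; and since $H_\lambda$ is homogeneous while $\TT$ is not, the grading variable must be carried consistently through conjugation by $u^{\deg}$, which is exactly the identity $\langle f[uX^\bullet],\TT[AX^\bullet]g\rangle'_{q,t}=\langle f,\TT[AX^\bullet u^{-1}]g[uX^\bullet]\rangle'_{q,t}$ recorded before the theorem. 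One can sanity-check the outcome by letting $u\to\infty$, recovering the $\sigma^k$-shifted version of Corollary \ref{0Rec}, and by setting $\mu=w_0\sigma^{-k}\core(\lambda)$, which must specialize \eqref{MKDual} to the evaluation formula \eqref{EvalWr}.
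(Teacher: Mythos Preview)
Your high-level plan matches the paper's: pair against $\mathbb{E}^{(k)}$, invoke the shifted Tesler formula, and transpose via Corollary \ref{NablaAdjoint}. But Step 2 contains a genuine gap. After you write $\mathbb{E}_\mu^{(k)}[uX^\bullet]=u^{|\quot(\mu)|}\nabla\,\Omega[\cdot]\,\TT[\cdot]\,H_\mu[\sigma^{-k}X^\bullet]$ and move the outer $\nabla$ to the other side, the left argument becomes $\nabla_{\core(\lambda)}\TT[X^{(0)}]H_\lambda$, which is \emph{not} $\mathbb{E}_\lambda$ or any delta function. Your claim that ``reorganizing $\TT[X^{(0)}]$ against the surrounding $\nabla$'s and $\Omega$-factor via \eqref{NablaP1}--\eqref{NablaPerp*} and Proposition \ref{MainPropertiesofV}'' produces a $\Delta$-operator whose eigenvalue, together with the $\nabla$-eigenvalue, reassembles into $\prod_{\bar c_\square=k}(1-u\chi_\square)$ is not substantiated: the relations \eqref{NablaP1}--\eqref{NablaPerp*} only commute $p_1^\perp$ and $\tilde p_1$ past $\nabla$, not the full $\TT$; the $f^\perp\mathsf{V}$ identity of Proposition \ref{MainPropertiesofV} requires the entire operator $\mathsf{V}$, not the fragment $\nabla\TT$ that you have; and most tellingly, the $\nabla_{\core(\lambda)}$-eigenvalue is a product over \emph{color-$0$} boxes, so there is no mechanism in your sketch for the color-$k$ product to appear when $k\neq 0$.

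The paper's device is to carry an extra $\Omega$-factor from the very start: one computes
\[
\Omega\!\left[-u\Bigl(\tfrac{D_\lambda^\bullet}{(1-q)(1-t)}\Bigr)^{(k)}\right]H_\mu[1+u\sigma^k\iota D_\lambda^\bullet]
=\Bigl\langle \Omega\!\left[\tfrac{uX^{(0)}}{(1-q\sigma^{-1})(t\sigma-1)}\right]\TT[X^{(0)}u^{-1}]H_\mu[uX^\bullet],\ \mathbb{E}_\lambda^{(k)}\Bigr\rangle_{q,t}'.
\]
After substituting the Tesler form of $\mathbb{E}_\lambda^{(k)}$ and transposing only the single outer $\nabla$, the entire left argument is recognized---via the $k=0$ Tesler identity applied to $\mu$ (using $\core(\mu)=w_0\sigma^{-k}\core(\lambda)$)---as $\mathbb{E}_\mu[uX^\bullet]$. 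Shifting $u$ and $\sigma^{-k}$ then gives the symmetric expression, and the two leftover $\Omega$-factors are $\Omega[-uB_\lambda^{(k)}]$ and $\Omega[-uB_\mu^{(k)}]$ up to a common factor; Proposition \ref{RevProp} cancels the core contributions. This is where the color-$k$ products actually come from. Your setup, lacking the $\Omega$-factor, cannot reproduce this step.
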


\begin{proof}
By Proposition \ref{ShiftDelta},
\begin{align}
\label{MKDual1}
\Omega\left[ -u\left(\frac{D_\lambda^\bullet}{(1-q)(1-t)} \right)^{(k)} \right]&H_{\mu}\left[ 1+u\sigma^k\iota D_\lambda^\bullet \right]\\
\label{MKDual2}
&= 
\left\langle
\Omega\left[ \frac{uX^{(0)}}{(1-q\sigma^{-1})(t\sigma-1)} \right]
\TT\left[X^{(0)}u^{-1}\right]H_{\mu}[u X^\bullet],
\mathbb{E}_\lambda^{(k)}
\right\rangle_{q,t}'.
\end{align}
By Theorem \ref{ShiftedFourier}, we can write
\[
\mathbb{E}_\lambda^{(k)}=
\nabla_{\sigma^{-k}\core(\lambda)}\Omega\left[ \frac{X^{(-k)}}{(1-q\sigma^{-1})(t\sigma-1)} \right]\TT\left[X^{(-k)}\right]H_\lambda[\sigma^{-k}X^\bullet].
\]
Putting this into (\ref{MKDual2}), we use Corollary \ref{NablaAdjoint} to move $\nabla_{\sigma^{-k}\core(\lambda)}$ to the other side, where it becomes $\nabla_{w_0\sigma^{-k}\core(\lambda)}$.
Moving the variable $u$ to the other side, (\ref{MKDual2}) is equal to
\begin{align}
\nonumber
&\left\langle
\mathbb{E}_{\mu},
\Omega\left[ \frac{u  X^{(-k)}}{(1-q\sigma^{-1})(t\sigma-1)} \right]\TT\left[X^{(-k)}u^{-1} \right]H_\lambda[u\sigma^{-k}X^\bullet]
\right\rangle_{q,t}'\\
\nonumber
&=\left\langle
\mathbb{E}_{\mu}[\sigma^{-k}X^\bullet], 
\Omega\left[ \frac{uX^{(0)}}{(1-q\sigma^{-1})(t\sigma-1)} \right]
\TT\left[X^{(0)}u^{-1}\right]
H_\lambda[u X^\bullet]
\right\rangle_{q,t}'\\ 
\nonumber
&=\left\langle
\mathbb{E}_{\mu}^{(k)},
\Omega\left[ \frac{u X^{(0)}}{(1-q\sigma^{-1})(t\sigma-1)} \right] H_\lambda[1+u X^\bullet]
\right\rangle_{q,t}'\\
\label{MKDual3}
&= \Omega\left[ -u\left( \frac{D_{\mu}^\bullet}{(1-q)(1-t)} \right)^{(k)} \right]H_\lambda[1+u\sigma^kD_{\mu}^\bullet].
\end{align}
The $\Omega$-terms in (\ref{MKDual1}) (\ref{MKDual3}) are equal to
\begin{equation}
\begin{aligned}
\Omega\left[ -u\left(\frac{D_\lambda^\bullet}{(1-q)(1-t)} \right)^{(k)} \right]
&=\Omega\left[ -uB_\lambda^{(k)}\right]\Omega\left[u\left( \frac{1}{(1-q)(1-t)} \right)^{(k)} \right] \text{ and }\\
\Omega\left[ -u\left(\frac{D_{\mu}^\bullet}{(1-q)(1-t)} \right)^{(k)} \right]
&=\Omega\left[ -uB_{\mu}^{(k)}\right]\Omega\left[u\left( \frac{1}{(1-q)(1-t)} \right)^{(k)} \right].
\end{aligned}
\label{MKDual4}
\end{equation}
The common factors in (\ref{MKDual4}) will cancel.
Finally, because $\core(\mu)=w_0\sigma^{-k}\core(\lambda)$, by Proposition \ref{RevProp}, $\core(\mu)$ and $\core(\lambda)$ have the same color $k$ boxes, so their corresponding factors will cancel as well.
Equation (\ref{MKDual}) follows after rearranging the terms.
\end{proof}

\subsubsection{Evaluation formulas}
In Theorem \ref{MKThm}, considering the case where $\mu=w_0 \sigma^{-k}\core(\lambda)$ yields a concrete evaluation formula:

\begin{cor}
For $k\in\ZZ/r\ZZ$, we have:
\begin{equation}
H_\lambda\left[ 1+u\sigma^k\iota D_{w_0\sigma^{-k}\core(\lambda)}^\bullet \right]
=
\prod_{\substack{\square\in\lambda\backslash\core(\lambda)\\\bar{c}_\square=k}}(1-u\chi_\square).
\label{Eval1}
\end{equation}
\end{cor}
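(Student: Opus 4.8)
The plan is to derive \eqref{Eval1} directly from Theorem \ref{MKThm} by substituting $\mu = w_0\sigma^{-k}\core(\lambda)$. First I would check that this choice of $\mu$ is legal in Theorem \ref{MKThm}: we need $\core(\mu) = w_0\sigma^{-k}\core(\lambda)$, and since $\mu$ is itself the $r$-core $w_0\sigma^{-k}\core(\lambda)$, we indeed have $\core(\mu) = \mu = w_0\sigma^{-k}\core(\lambda)$, using that the core of a core is itself and that (as noted in \ref{Reverse}) $w$ commutes with $\core$ for any $w \in \Sigma_r$.

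Next I would evaluate each side of \eqref{MKDual} at this $\mu$. On the left-hand side, the numerator becomes $H_\lambda[1 + u\sigma^k\iota D_{w_0\sigma^{-k}\core(\lambda)}^\bullet]$, which is exactly the left-hand side of \eqref{Eval1}, and the denominator is unchanged: $\prod_{\square\in\lambda\backslash\core(\lambda),\ \bar{c}_\square = k}(1-u\chi_\square)$. On the right-hand side, the key simplifications are: (i) the product $\prod_{\square\in\mu\backslash\core(\mu),\ \bar c_\square = k}(1-u\chi_\square)$ is empty, hence equals $1$, because $\mu$ is an $r$-core so $\mu = \core(\mu)$ and $\mu\backslash\core(\mu) = \varnothing$; and (ii) the numerator $H_\mu[1 + u\sigma^k\iota D_\lambda^\bullet]$ collapses to $1$. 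For (ii), since $\mu$ is an $r$-core, $\quot(\mu)$ is the empty multipartition, so $H_\mu = H_{\core(\mu)} = 1$ by the normalization in Definition \ref{ModDef} (condition (3), $H_\mu[1]=1$, together with the fact that the only wreath Macdonald polynomial indexed by an $r$-core is the constant $1$ — this is implicit in the setup, e.g. it appears in \ref{BaseCase} where $H_\lambda = H_\alpha = 1$ is the trivial case). Then $H_\mu[\text{anything}] = 1[\text{anything}] = 1$. Assembling these, the right-hand side of \eqref{MKDual} equals $1/1 = 1$, so \eqref{MKDual} becomes
\[
\frac{H_\lambda[1 + u\sigma^k\iota D_{w_0\sigma^{-k}\core(\lambda)}^\bullet]}{\prod_{\square\in\lambda\backslash\core(\lambda),\ \bar{c}_\square = k}(1-u\chi_\square)} = 1,
\]
which rearranges to \eqref{Eval1}.

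I do not expect any genuine obstacle here — this is a specialization argument. The only point requiring a moment of care is confirming that $H_\mu = 1$ when $\mu$ is an $r$-core, i.e. that the wreath Macdonald polynomial attached to a partition with empty quotient is the constant function; this is forced by the triangularity conditions in Definition \ref{ModDef} degenerating to the span of $s_{\vec\varnothing} = 1$ together with the normalization $H_\mu[1] = 1$. Everything else is bookkeeping with the reverse and cyclic-shift actions on cores, all of which is already set up in \ref{Reverse} and Proposition \ref{RevProp}.
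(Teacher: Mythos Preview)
Your proposal is correct and follows exactly the paper's approach: the paper states only that the corollary is obtained by ``considering the case where $\mu = w_0\sigma^{-k}\core(\lambda)$'' in Theorem~\ref{MKThm}, and you have spelled out precisely the details of that specialization.
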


In the limit, as $u\rightarrow\infty$, (\ref{Eval1}) becomes:
\begin{cor}\label{NablaEvK}
For $k\in\ZZ/r\ZZ$, we have:
\[
H_\lambda\left[ \sigma^k\iota D_{w_0\sigma^{-k}\core(\lambda)}^\bullet \right]=\prod_{\substack{\square\in\lambda\backslash\core(\lambda)\\\bar{c}_\square=k}}(-\chi_\square).
\]
\end{cor}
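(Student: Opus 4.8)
The plan is to obtain Corollary \ref{NablaEvK} directly from the evaluation formula \eqref{Eval1} by letting $u\to\infty$, that is, by comparing the top-degree coefficients in $u$ on the two sides of that identity. Both sides of \eqref{Eval1} are polynomials in $u$: the right-hand side $\prod_{\square}(1-u\chi_\square)$ manifestly, and the left-hand side because, writing $\mu=w_0\sigma^{-k}\core(\lambda)$, the substitution $1+u\,\sigma^k\iota D_\mu^\bullet$ amounts to $p_m[X^{(i)}]\mapsto \delta_{i,0}+u^m\,p_m\!\left[(\sigma^k\iota D_\mu^\bullet)^{(i)}\right]$, which is polynomial in $u$. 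So it suffices to read off the coefficient of the top power $u^n$ on each side, where $n:=|\quot(\lambda)|$.

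First I would record the two inputs this needs. The first is that $H_\lambda$ is homogeneous of degree $n$: this is immediate from Definition \ref{ModDef}, since the plethysms $X^\bullet\mapsto(1-q\sigma^{-1})X^\bullet$ and $X^\bullet\mapsto(1-t^{-1}\sigma^{-1})X^\bullet$ preserve degree and every Schur function $s_{\quot(\mu)}$ appearing there satisfies $|\quot(\mu)|=|\quot(\lambda)|=n$. Then, writing $H_\lambda=\sum_{\vec\nu}c_{\vec\nu}\,p_{\vec\nu}$ with each $p_{\vec\nu}$ of total degree $n$, one checks that the coefficient of $u^n$ in $p_{\vec\nu}[1+u\,\sigma^k\iota D_\mu^\bullet]$ is exactly $p_{\vec\nu}[\sigma^k\iota D_\mu^\bullet]$ — one simply selects the $u^m$-term from every factor $p_m[X^{(i)}]$ — so that the $u^n$-coefficient of the left-hand side of \eqref{Eval1} is $H_\lambda[\sigma^k\iota D_{w_0\sigma^{-k}\core(\lambda)}^\bullet]$. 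The second input is that $\{\square\in\lambda\backslash\core(\lambda):\bar c_\square=k\}$ has exactly $n$ elements: $\lambda$ is built from $\core(\lambda)$ by adjoining $n$ ribbons of length $r$ (Proposition \ref{CoreQuotDec}), and each ribbon contains precisely one box of each color; hence the right-hand side of \eqref{Eval1} has degree exactly $n$ in $u$, with leading coefficient $\prod_{\square\in\lambda\backslash\core(\lambda),\ \bar c_\square=k}(-\chi_\square)$. Equating the two $u^n$-coefficients then yields the corollary.

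I do not expect a serious obstacle: the arithmetic content is already carried by \eqref{Eval1}, and what remains is bookkeeping. The one point that must be handled carefully is the plethystic limit itself — because the ``$1$'' in $1+u\,\sigma^k\iota D_\mu^\bullet$ is a character of color $0$ rather than a neutral constant, one has to verify that extracting the coefficient of $u^n$ genuinely discards this ``$1$'' and returns $H_\lambda$ evaluated at $\sigma^k\iota D_\mu^\bullet$, which is precisely the homogeneity argument above. It is also worth confirming, when counting, that no $\chi_\square$ vanishes so that the right-hand side truly attains degree $n$; but $\chi_\square=q^at^b$ is always a nonzero monomial, so this is automatic.
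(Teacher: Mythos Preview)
Your proposal is correct and follows exactly the paper's approach: the paper simply states that Corollary~\ref{NablaEvK} arises from \eqref{Eval1} ``in the limit, as $u\rightarrow\infty$,'' and you have supplied the details of that limit by matching the top-degree coefficients in $u$. Your justifications for the degree count on each side (homogeneity of $H_\lambda$ of degree $|\quot(\lambda)|$, and the fact that $\lambda\backslash\core(\lambda)$ contains exactly $|\quot(\lambda)|$ boxes of each color) are the right ones.
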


\subsubsection{Shifted reciprocity}
Taking the limit as $u\rightarrow\infty$ in Theorem \ref{MKThm} and applying Corollary \ref{NablaEvK} to the denominators, we obtain a shifted analogue of Corollary \ref{0Rec}:

\begin{cor}
For $k\in\ZZ/r\ZZ$ and partitions $\lambda,\mu$ with $\core(\mu)=w_0\sigma^{-k}\core(\lambda)$, we have:
\begin{equation*}
\frac{H_\lambda\left[\sigma^k\iota D_{\mu}^\bullet \right]}{H_\lambda\left[ \sigma^k\iota D_{w_0\sigma^{-k}\core(\lambda)}^\bullet\right]}
= 
\frac{H_{\mu}\left[ \sigma^k\iota D_{\lambda}^\bullet \right]}{H_{\mu}\left[ \sigma^k\iota D_{w_0\sigma^{-k}\core(\mu)}^\bullet \right]}.
\end{equation*}
\end{cor}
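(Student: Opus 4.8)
The plan is to read off the identity from the Macdonald--Koornwinder duality of Theorem \ref{MKThm} by extracting the leading behaviour in the grading variable $u$, which is exactly the passage indicated after the statement. First I would rewrite \eqref{MKDual} so that both denominators are themselves wreath Macdonald evaluations. By the evaluation formula \eqref{Eval1}, $\prod_{\square\in\lambda\backslash\core(\lambda),\,\bar c_\square=k}(1-u\chi_\square)=H_\lambda\!\left[1+u\sigma^k\iota D_{w_0\sigma^{-k}\core(\lambda)}^\bullet\right]$, and likewise for $\mu$ (noting that $w_0\sigma^{-k}$ is an involution on $r$-cores up to the identity $w_0\sigma w_0=\sigma^{-1}$, so that $w_0\sigma^{-k}\core(\mu)=\core(\lambda)$ under our hypothesis). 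Thus Theorem \ref{MKThm} is equivalent to
\[
\frac{H_\lambda\!\left[1+u\sigma^k\iota D_\mu^\bullet\right]}{H_\lambda\!\left[1+u\sigma^k\iota D_{w_0\sigma^{-k}\core(\lambda)}^\bullet\right]}
=\frac{H_\mu\!\left[1+u\sigma^k\iota D_\lambda^\bullet\right]}{H_\mu\!\left[1+u\sigma^k\iota D_{w_0\sigma^{-k}\core(\mu)}^\bullet\right]}.
\]

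The key bookkeeping step is the degree count in $u$. Since $H_\lambda$ is homogeneous of degree $|\quot(\lambda)|$ (by Definition \ref{ModDef}, every multi-Schur function in its support has that many boxes) and $D_\mu^\bullet$ is a Laurent polynomial in $q,t$ by \eqref{DLambda}, the numerator $H_\lambda[1+u\sigma^k\iota D_\mu^\bullet]$ is a polynomial in $u$ of degree at most $|\quot(\lambda)|$, with the coefficient of $u^{|\quot(\lambda)|}$ equal to $H_\lambda[\sigma^k\iota D_\mu^\bullet]$. The denominator equals $\prod_{\bar c_\square=k}(1-u\chi_\square)$, which has degree exactly $d_k(\lambda):=\#\{\square\in\lambda\backslash\core(\lambda):\bar c_\square=k\}$; and $d_k(\lambda)=|\quot(\lambda)|$ because $\lambda\backslash\core(\lambda)$ is built from $|\quot(\lambda)|$ ribbons of length $r$, each of which meets $r$ consecutive contents and so contains exactly one box of every color (Proposition \ref{CoreQuotDec}). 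Its leading coefficient is $\prod_{\bar c_\square=k}(-\chi_\square)$, which by Corollary \ref{NablaEvK} is precisely $H_\lambda[\sigma^k\iota D_{w_0\sigma^{-k}\core(\lambda)}^\bullet]$, a nonzero monomial in $q,t$.

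With this in hand I would finish in one of two equivalent ways. Taking $u\to\infty$: numerator and denominator on each side have degree at most $|\quot(\lambda)|$ (resp. $|\quot(\mu)|$), the denominator attaining that degree with nonvanishing leading coefficient, so the ratio converges to the quotient of the $u^{|\quot(\lambda)|}$-coefficients, namely $H_\lambda[\sigma^k\iota D_\mu^\bullet]/H_\lambda[\sigma^k\iota D_{w_0\sigma^{-k}\core(\lambda)}^\bullet]$, and dually for $\mu$; equating the two limits gives the claim. Equivalently, and perhaps cleaner to present, I would cross-multiply the original \eqref{MKDual}, obtaining a polynomial identity in $u$ of degree $\le|\quot(\lambda)|+|\quot(\mu)|$ on each side, and compare the coefficient of $u^{|\quot(\lambda)|+|\quot(\mu)|}$: using Corollary \ref{NablaEvK} this reads
\[
H_\lambda[\sigma^k\iota D_\mu^\bullet]\,H_\mu[\sigma^k\iota D_{w_0\sigma^{-k}\core(\mu)}^\bullet]
=H_\mu[\sigma^k\iota D_\lambda^\bullet]\,H_\lambda[\sigma^k\iota D_{w_0\sigma^{-k}\core(\lambda)}^\bullet],
\]
and dividing through by the nonzero product $H_\lambda[\sigma^k\iota D_{w_0\sigma^{-k}\core(\lambda)}^\bullet]\,H_\mu[\sigma^k\iota D_{w_0\sigma^{-k}\core(\mu)}^\bullet]$ yields the stated shifted reciprocity. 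I do not expect a genuine obstacle here: the only points requiring care are the homogeneity of $H_\lambda$ and the color count $d_k(\lambda)=|\quot(\lambda)|$, both immediate from the definitions.
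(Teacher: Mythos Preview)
Your proposal is correct and follows essentially the same approach as the paper: take the limit $u\to\infty$ in Theorem \ref{MKThm} and identify the denominators via Corollary \ref{NablaEvK}. The paper states this in a single line, while you have spelled out the degree bookkeeping (homogeneity of $H_\lambda$ and the count $d_k(\lambda)=|\quot(\lambda)|$) that makes the limit rigorous; your cross-multiplication variant is an equally valid way to package the same extraction of the top-degree coefficient.
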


\subsubsection{An expansion of $e_n$}
For $r>1$, the single box partition $(1)$ is an $r$-core.
Here, we will derive a curious expansion of $e_n[X^{(k)}]$ in terms of $\{H_\lambda^\dagger\,|\, \core(\lambda)=w_0(1)\}$.
To make things concrete, note that $w_0(1)=(r^1 1^{r-1})$.

\begin{prop}
    For $r\ge 3$, we have:
    \begin{equation}
     e_n[X^{(k)}]= (-1)^n\sum_{\substack{\lambda\\ \core(\lambda)=w_0(1)\\ |\quot(\lambda)|=n}}
    \frac{H_\lambda^\dagger[\sigma^k X^\bullet]}{N_\lambda}
    \prod_{\substack{\square\in\lambda\backslash\core(\lambda)\\\bar{c}_\square=0}}(1- \chi_\square).       
    \label{EnExpand}
    \end{equation}
\end{prop}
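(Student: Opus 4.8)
The plan is to compute the coefficients on the right-hand side by expanding in a dual basis. First, $(1)$ consists of a single box and so contains no rim hook of length $r\ge 2$; hence it is an $r$-core, and therefore so is $w_0(1)=(r^1 1^{r-1})$. Next, the conditions of Definition \ref{ModDef} place $H_\lambda[(1-q\sigma^{-1})X^\bullet]$ in the span of the degree $|\quot(\lambda)|$ multi-Schur functions, and every plethysm occurring in the definitions of $H_\lambda$, $H_\lambda^\dagger$, and the $\sigma$-shift preserves degree; thus $H_\lambda^\dagger[\sigma^kX^\bullet]$ is homogeneous of degree $|\quot(\lambda)|$. Consequently $\{H_\lambda^\dagger[\sigma^kX^\bullet]\ |\ \core(\lambda)=w_0(1),\ |\quot(\lambda)|=n\}$ is a basis of the degree $n$ subspace of $\Lambda_{q,t}^{\otimes r}$, and by Proposition \ref{ShiftHProp} its dual basis (under $\langle-,-\rangle_{q,t}'$) is $\{N_\lambda^{-1}H_\lambda[\sigma^{-k}X^\bullet]\}$. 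Since $e_n[X^{(k)}]$ is homogeneous of degree $n$, the coefficient of $H_\lambda^\dagger[\sigma^kX^\bullet]$ in its expansion is $N_\lambda^{-1}\langle e_n[X^{(k)}],H_\lambda[\sigma^{-k}X^\bullet]\rangle_{q,t}'$. Writing $e_n[X^{(k)}]=(e_n[X^{(0)}])[\sigma^kX^\bullet]$ and using that $\sigma$ is self-adjoint for $\langle-,-\rangle^*$ and commutes with $(1-q\sigma^{-1})(t\sigma-1)$, so that $\langle f[\sigma^kX^\bullet],g[\sigma^{-k}X^\bullet]\rangle_{q,t}'=\langle f,g\rangle_{q,t}'$, this coefficient equals $N_\lambda^{-1}\langle e_n[X^{(0)}],H_\lambda\rangle_{q,t}'$, independent of $k$. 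So it suffices to prove $\langle e_n[X^{(0)}],H_\lambda\rangle_{q,t}'=(-1)^n\prod_{\square\in\lambda\backslash\core(\lambda),\ \bar c_\square=0}(1-\chi_\square)$ whenever $\core(\lambda)=w_0(1)$.

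To evaluate this pairing I would unfold Definition \ref{PairDef}: using $\iota X^{(0)}=X^{(0)}$, it equals the Hall pairing $\langle e_n[X^{(0)}],H_\lambda[(1-q\sigma^{-1})(t\sigma-1)X^\bullet]\rangle$. Moving the matrix plethysm onto the first argument via $\sigma^T=\sigma^{-1}$ (Proposition \ref{SigProp}) and pulling out a sign by homogeneity of $e_n$ turns this into $(-1)^n\langle e_n[(1-q\sigma)(1-t\sigma^{-1})X^{(0)}],H_\lambda\rangle$, where $(1-q\sigma)(1-t\sigma^{-1})X^{(0)}$ is the virtual alphabet $(1+qt)X^{(0)}-qX^{(1)}-tX^{(-1)}$. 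Now I would apply $e_n[B]=(-1)^nh_n[-B]$, the generating identity $\sum_{m\ge0}h_m[A]z^m=\Omega[zA]$, and the Hall Cauchy kernel $\langle\Omega[\sum_i X^{(i)}Y^{(i)}],H_\lambda[X^\bullet]\rangle=H_\lambda[Y^\bullet]$; since $H_\lambda$ is homogeneous of degree $n$, extracting the coefficient of $z^n$ is the same as setting $z=1$, and one arrives at $\langle e_n[(1-q\sigma)(1-t\sigma^{-1})X^{(0)}],H_\lambda\rangle=H_\lambda[V^\bullet]$, where $V^\bullet$ is the alphabet with $V^{(0)}=1+qt$, $V^{(1)}=-q$, $V^{(-1)}=-t$, and $V^{(i)}=0$ otherwise. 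Using the explicit form of $D_\lambda^{(i)}$ in \eqref{DLambda} applied to the partition $(1)$, one checks $V^\bullet=1+\iota D_{(1)}^\bullet$.

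Finally, taking $k=0$ in the evaluation formula \eqref{EvalWr} and using $\core(\lambda)=w_0(1)$, so that $w_0\sigma^{0}\core(\lambda)=(1)$, gives $H_\lambda[1+\iota D_{(1)}^\bullet]=\prod_{\square\in\lambda\backslash\core(\lambda),\ \bar c_\square=0}(1-\chi_\square)$. Combining the last two displays yields $\langle e_n[X^{(0)}],H_\lambda\rangle_{q,t}'=(-1)^n\prod_{\square\in\lambda\backslash\core(\lambda),\ \bar c_\square=0}(1-\chi_\square)$, which, after dividing by $N_\lambda$, is exactly the asserted coefficient; summing over $\lambda$ gives \eqref{EnExpand}. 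The one genuinely delicate point is the chain of plethystic manipulations in the second paragraph: each of $\iota$, the transpose of $\sigma$, the passage between $e_n$ and $h_n$, and the two reproducing kernels introduces signs and color reflections, and they must be combined carefully so as to land on the clean alphabet $1+\iota D_{(1)}^\bullet$ rather than an $\omega$-twisted variant; the rest is routine bookkeeping.
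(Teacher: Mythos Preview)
Your approach is essentially the paper's: both specialize the evaluation formula \eqref{Eval1} at $u=1$ with $\mu=(1)$ and then read off the coefficients from the Cauchy expansion. The paper packages your pairing computation as a single plethystic substitution $Y^\bullet\mapsto\iota(1-q)(1-t)B_{(1)}^\bullet$ in $\Omega_{q,t}[X^{-\bullet}Y^\bullet]$, which collapses directly to $\sum_{n\ge0}(-1)^ne_n[X^{(0)}]$ on one side and to the desired sum on the other; this sidesteps the sign bookkeeping you flag.

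There is, however, a genuine slip in your plethystic chain. You cannot ``pull out a sign by homogeneity of $e_n$'': one has $e_n[-B]=(-1)^nh_n[B]$, not $(-1)^ne_n[B]$. After moving the matrix plethysm you are at $\langle e_n[-(1-q\sigma)(1-t\sigma^{-1})X^{(0)}],H_\lambda\rangle$; applying $e_n[-B]=(-1)^nh_n[B]$ \emph{once} gives $(-1)^n\langle h_n[(1-q\sigma)(1-t\sigma^{-1})X^{(0)}],H_\lambda\rangle$, and the Hall Cauchy kernel then yields $(-1)^nH_\lambda[V^\bullet]$ directly, with no second use of the $e_n\leftrightarrow h_n$ identity. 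As written, your two moves do not cancel: following your intermediate expressions literally lands on $H_\lambda[-V^\bullet]$ rather than $(-1)^nH_\lambda[V^\bullet]$. The target formula and the identification $V^\bullet=1+\iota D_{(1)}^\bullet$ are correct; only this one step needs to be replaced.
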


\begin{proof}
It suffices to consider the case $k=0$, as we can obtain the other cases by applying $\sigma^k$.
The first step is to take the limit as $u \rightarrow 1$ in (\ref{Eval1}): 
for $\lambda$ with $\core(\lambda)=w_0(1)$,
\begin{equation}
 H_\lambda\left[ 1+ \iota D_{(1)}^\bullet \right]
=
\prod_{\substack{\square\in\lambda\backslash w_0(1)\\\bar{c}_\square=0}}(1- \chi_\square).   
\label{U1}
\end{equation}
Now observe that for any partition $\mu$,
\begin{align*}
 1+u D_\mu^\bullet \Big|_{u=1} & =    \left( (1-q)(1-t)B_\mu^\bullet  \right).
\end{align*}
Equation \eqref{U1} becomes:
\begin{equation}
 H_{\lambda} \left[  \iota (1-q)(1-t) B_{(1)}^{\bullet}\right] = \prod_{\substack{\square\in\lambda\backslash w_0(1)\\\bar{c}_\square=0}}(1-\chi_\square).
 \label{U12}
\end{equation}

In the Cauchy kernel, we substitute $Y^{\bullet}$ by $\iota (1-q)(1-t) B_{(1)}^{\bullet}$ by (1) applying $ \iota$, then (2) making the substitution of $Y^{(i)}$ at $((1-q)(1-t)B_{(1)}^\bullet)^{(i)}$:
\begin{align*}
\Omega_{q,t}[X^{-\bullet}Y^\bullet]
&= \Omega\left[ X^{(0)}\left( \frac{Y^{(0)}}{(1-q\sigma^{-1})(t\sigma-1)}\right)+\cdots +X^{(r-1)}\left( \frac{Y^{(1)}}{(1-q\sigma^{-1})(t\sigma-1)} \right) \right]\\
&\overset{(1)}{\mapsto}\Omega\left[ X^{(0)}\left( \frac{Y^{(0)}}{(1-q\sigma)(t\sigma^{-1}-1)}\right)+\cdots +X^{(r-1)}\left( \frac{Y^{(r-1)}}{(1-q\sigma)(t\sigma^{-1}-1)} \right) \right]\\
&\overset{(2)}{\mapsto} \Omega\left[ X^{(0)}\left( \frac{(1-q)(1-t)B_{(1)}^\bullet}{(1-q)(t-1)}\right)^{(0)}+\cdots +X^{(r-1)}\left( \frac{(1-q)(1-t)B_{(1)}^\bullet}{(1-q)(t-1)} \right)^{(r-1)} \right] 
\\
& = \Omega\left[ -X^{(0)}B_{(1)}^{(0)}-\cdots -X^{(r-1)}B_{(1)}^{(r-1)} \right]\\
&=
\Omega\left[ -  X^{\bullet} B^{\bullet}_{(1)} \right]\\
&=
\sum_{n \geq 0} (-1)^n
\sum_{\vec{\lambda} \vdash n } 
e_{\vec{\lambda}}[ X^{\bullet}] m_{\vec{\lambda}}[B_{(1)}^{\bullet}]\\
&=\sum_{n\ge 0}(-1)^n e_n[X^{(0)}].
\end{align*}
On the other hand, performing the same steps on
\[
\Omega_{q,t}[X^{-\bullet}Y^\bullet]=\sum_{\substack{\lambda\\ \core(\lambda)=w_0(1)}} \frac{H_\lambda^\dagger[X^\bullet]H_\lambda[Y^\bullet]}{N_\lambda}
\]
yields
\[
\sum_{\substack{\lambda\\ \core(\lambda)=w_0(1)}}
\frac{H_\lambda^\dagger[X^\bullet]}{N_\lambda}\prod_{\substack{\square\in\lambda\backslash w_0(1)\\\bar{c}_\square=0}}(1-\chi_\square)
\]
by \eqref{U12}.
\end{proof}

\begin{rem}
Observe that we can take any eigenoperator for $\{H_\lambda^\dagger[\sigma^kX^\bullet]\,|\, \core(\lambda)=w_0(1)\}$ and easily apply it to the right-hand-side of \eqref{EnExpand}.
By Lemma \ref{NablaK}, one such operator is provided by $\nabla_{\sigma^kw_0(1)}$.
A more interesting operator would be some version of the ``geometric nabla'' defined by Haiman (cf. \cite{OSWreath}).
It would also be interesting to remove the restriction on $\core(\lambda)$.

In the $r=1$ case, $\nabla e_n$ has a well-known expansion in terms of $\{H_\lambda\}$.
One familiar with modified Macdonald polynomials will note a big difference in \eqref{EnExpand} compared to the case when $r=1$. 
This stems from the fact that when $r=1$, $H_{(1)} = e_1$, but for $r >1$, $(1)$ is a core, meaning that $H_{(1)} = 1$. 

This forces, for instance, Theorem \ref{MKThm} to specialize differently when $r=1$ and $\mu = 1$:
\[
H_\lambda [1 - u + u (1-q)(1-t)] 
= 
(1 - u +u (1-q)(1-t)B_\lambda ) \prod_{ \square\in \lambda\backslash (1)  }\left( 1-u\chi_\square \right)
\]
At $u=1$, this then gives
\[
H_\lambda[ (1-q)(1-t) ] = (1-q)(1-t) B_\lambda \prod_{\square \in \lambda \backslash (1) } (1-\chi_\square),
\]
and therefore
\[
e_n[X] = (-1)^n\sum_{\lambda \vdash n} \frac{H_\lambda[X] (1-q)(1-t) B_\lambda }{N_\lambda}\prod_{\square \in \lambda \backslash (1)}( 1-\chi_\square).
\]   
\end{rem}

\subsection{Interpolation}
For an $r$-core $\alpha$, and any $f\in\Lambda_{q,t}^{\otimes r}$, define
\begin{equation}
f^{\mathfrak{F}}_{\alpha} \coloneq \nabla_{w_0\alpha}^{-1}\TT[ -X^{(0)}](f).
\label{IntF}
\end{equation}
This is the analogue of $\mathbf{\Pi}'_f$ from \cite{GHT}.
The following is an analogue of (I.16) from \textit{loc. cit.}

\begin{lem}
Let $\lambda$ be a partition with $\core(\lambda)=\alpha$.
For $f\in\Lambda_{q,t}^{\otimes r}$, we have
\begin{align}
\label{IntLem0}
f_{\alpha}^{\mathfrak{F}}\left[ \iota D_\lambda^\bullet \right]
&= \left\langle f,H_\lambda[1+X^\bullet]\right\rangle_{q,t}'\\
&=\left\langle \Omega\left[ \frac{X^{(0)}}{(1-q\sigma^{-1})(t\sigma-1)} \right] f, H_\lambda[X^\bullet]\right\rangle_{q,t}'.
\label{IntLem}
\end{align}
\end{lem}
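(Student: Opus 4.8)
The plan is to establish \eqref{IntLem} first, since \eqref{IntLem0} follows immediately from it: the operator $\TT[X^{(0)}]$ applied inside a pairing converts $H_\lambda[X^\bullet]$ into $H_\lambda[1+X^\bullet]$ once one recalls $\Omega[X^{(0)}z/((1-q\sigma^{-1})(t\sigma-1))]$ acts as the Cauchy kernel component. Concretely, I would start from the right-hand side of \eqref{IntLem}, unfold the definition \eqref{IntF} of $f^{\mathfrak F}_\alpha$, and use the adjunction properties already catalogued in the paper. The key computation is to move $\nabla_{w_0\alpha}^{-1}$ and $\TT[-X^{(0)}]$ across the pairing $\langle-,-\rangle_{q,t}'$: by Corollary \ref{NablaAdjoint}, the adjoint of $\nabla_{w_0\alpha}^{-1}$ is $\nabla_\alpha^{-1}$ (inverting the self-adjoint $\nabla$), and by Lemma \ref{FAdjoint1} the adjoint of $\TT[-X^{(0)}]$-type translation operators is controlled by the plethysm $(1-q\sigma)(1-t\sigma^{-1})$ on the negated color. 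So the steps are: (i) write $f^{\mathfrak F}_\alpha[\iota D_\lambda^\bullet] = \langle f^{\mathfrak F}_\alpha, \mathbb{E}_\lambda\rangle_{q,t}'$ using Corollary \ref{DeltaFunc}; (ii) substitute \eqref{IntF} and push $\nabla_{w_0\alpha}^{-1}$ and $\TT[-X^{(0)}]$ onto the $\mathbb{E}_\lambda$ side via adjunction; (iii) recognize the resulting operator acting on $\mathbb{E}_\lambda$ as producing $\nabla_\alpha^{-1}$ of something involving $H_\lambda$, using Theorem \ref{Tesler} (the Tesler identity) to rewrite $\mathbb{E}_\lambda = \nabla_\alpha \Omega[\cdots]\TT[X^{(0)}]\nabla_\alpha(H_\lambda/H_\lambda[\iota D_{w_0\alpha}^\bullet])$; (iv) the two copies of $\nabla_\alpha$ from Tesler partially cancel against the $\nabla_\alpha^{-1}$ coming from adjunction, and the normalization $H_\lambda[\iota D_{w_0\alpha}^\bullet]$ cancels against the $\nabla$ eigenvalue via Corollary \ref{NablaEv}.

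More precisely, I expect the cleanest route is to compute $\langle f^{\mathfrak F}_\alpha, \mathbb{E}_\lambda\rangle_{q,t}'$ by inserting the core tensorand: work in $\mathcal W$ with $\mathbb{E}_\lambda\otimes e^{\core(\lambda)}$ and $f^{\mathfrak F}_\alpha \otimes e^{w_0\alpha}$ (matching the $\delta_{\alpha,w_0\beta}$ in the extended pairing). Then $\langle \nabla_{w_0\alpha}^{-1}\TT[-X^{(0)}]f \otimes e^{w_0\alpha}, \mathbb{E}_\lambda\otimes e^{\core(\lambda)}\rangle_{q,t}' = \langle f\otimes e^{w_0\alpha}, \TT[-X^{(0)}]^\dagger \nabla^{-1}(\mathbb{E}_\lambda\otimes e^{\core(\lambda)})\rangle_{q,t}'$. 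By Corollary \ref{NablaAdjoint}, $\nabla^{-1}$ is self-adjoint (being the inverse of the self-adjoint $\nabla$), so this equals $\langle f\otimes e^{w_0\alpha}, \TT[-X^{(0)}]^\dagger (\nabla^{-1}\mathbb{E}_\lambda\otimes e^{\core(\lambda)})\rangle_{q,t}'$. Now Theorem \ref{Tesler} gives $\mathbb{E}_\lambda\otimes e^{\core(\lambda)} = \nabla\,\Omega[X^{(0)}/((1-q\sigma^{-1})(t\sigma-1))]\,\TT[X^{(0)}]\,\nabla(\tilde H_\lambda\otimes e^{\core(\lambda)})$ where $\tilde H_\lambda = H_\lambda/H_\lambda[\iota D_{w_0\core(\lambda)}^\bullet]$, so $\nabla^{-1}\mathbb{E}_\lambda\otimes e^{\core(\lambda)} = \Omega[\cdots]\TT[X^{(0)}]\nabla(\tilde H_\lambda\otimes e^{\core(\lambda)})$, and by Corollary \ref{NablaEv} the inner $\nabla(\tilde H_\lambda) = H_\lambda$ exactly. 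Then applying $\TT[-X^{(0)}]^\dagger$ (which by Lemmas \ref{FAdjoint1}, \ref{FAdjoint2} is essentially a translation on the opposite color) and using Lemma \ref{TOLem} to commute $\TT$ past $\Omega$, one should obtain $\langle f, \Omega[X^{(0)}/((1-q\sigma^{-1})(t\sigma-1))]H_\lambda\rangle_{q,t}'$, which is \eqref{IntLem}.

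The passage from \eqref{IntLem} to \eqref{IntLem0} is then the observation that $\Omega[X^{(0)}/((1-q\sigma^{-1})(t\sigma-1))]$ is the Cauchy-kernel factor specializing $Y^{(i)}\mapsto \delta_{i,0}$, so inside the pairing it acts by $\TT[X^{(0)}]$ on the second argument: $\langle \Omega[\cdots]f, H_\lambda\rangle_{q,t}' = \langle f, \TT[X^{(0)}]H_\lambda\rangle_{q,t}' = \langle f, H_\lambda[1+X^\bullet]\rangle_{q,t}'$, where the middle equality uses that $\TT[X^{(0)}] = \sum_n h_n^\perp[X^{(0)}]$ is adjoint under $\langle-,-\rangle_{q,t}'$ to the multiplication-type operator $\Omega[X^{(0)}/((1-q\sigma^{-1})(t\sigma-1))]$ — this is precisely the content of the displayed identity in the proof of Proposition \ref{MainPropertiesofV} (the computation $h_n^\perp[X^{(j)}]\mathsf V = \mathsf V\Delta[h_n[\cdots]]$), or more directly from the fact that $\TT[X^{(0)}]$ and $\Omega[X^{(0)}/((1-q\sigma^{-1})(t\sigma-1))]$ are mutually adjoint boson/dual-boson exponentials.

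The main obstacle I anticipate is bookkeeping the color reversals and the $\CC[Q]$-tensorands correctly: the operator $\nabla_{w_0\alpha}^{-1}$ is attached to the reversed core $w_0\alpha$, and its adjoint is $\nabla_\alpha^{-1}$, so one must be careful that the $\mathbb{E}_\lambda$ appearing in the pairing carries core $\core(\lambda)=\alpha$ and that the $\TT[-X^{(0)}]$ adjunction (which sends $X^{(0)}\mapsto X^{(0)}$ since $-0\equiv 0$) does not introduce spurious sign or color shifts. A secondary subtlety is justifying convergence/formal manipulation: $\mathbb{E}_\lambda$ lives in the degree completion, so one should either work degree-by-degree or invoke Corollary \ref{DeltaFunc} to stay at the level of genuine pairings. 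Provided these are handled as in the proofs of Section \ref{TeslerSec}, the argument is a direct adjunction-and-substitution computation with no new ideas required.
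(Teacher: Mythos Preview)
Your approach is essentially the same as the paper's: pair $f_\alpha^{\mathfrak F}$ against $\mathbb{E}_\lambda$ via Corollary~\ref{DeltaFunc}, insert the Tesler identity (Theorem~\ref{Tesler}), and cancel via adjunction. One slip to flag: you assert that $\TT[-X^{(0)}]^\dagger$ is ``essentially a translation on the opposite color,'' but under $\langle-,-\rangle_{q,t}'$ the adjoint of a $\TT$-operator is a \emph{multiplication} operator, not another translation---specifically $\TT[-X^{(0)}]^\dagger = \Omega\!\left[-X^{(0)}/((1-q\sigma^{-1})(t\sigma-1))\right]$, which is exactly the inverse of the $\Omega$-factor in Tesler. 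With that correction the cancellation $\Omega^{-1}\cdot\nabla_\alpha^{-1}\cdot\nabla_\alpha\cdot\Omega\cdot\TT[X^{(0)}]H_\lambda = \TT[X^{(0)}]H_\lambda$ goes through cleanly, landing you directly at \eqref{IntLem0}; then \eqref{IntLem} follows by one more adjunction (which is also the paper's order, despite your stated plan to do \eqref{IntLem} first). No commutation via Lemma~\ref{TOLem} is needed.
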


\begin{proof}
We begin with Theorem \ref{ShiftedFourier}:
\begin{align}
\nonumber
f_{\alpha}^{\mathfrak{F}}\left[ \iota D_\lambda^\bullet \right]
&= \left\langle \nabla_{w_0\alpha}^{-1}\TT[ -X^{(0)}](f),\mathbb{E}_\lambda\right\rangle_{q,t}'\\
\label{IntLem1}
&= \left\langle \nabla_{w_0\alpha}^{-1}\TT[ -X^{(0)}](f),\nabla_{\alpha}\Omega\left[ \frac{X^{(0)}}{(1-q\sigma^{-1})(t\sigma-1)} \right]\TT[X^{(0)}]H_\lambda[X^\bullet]\right\rangle_{q,t}'.
\end{align}
Applying adjoints, (\ref{IntLem1}) becomes
\begin{align}
\label{IntLem2}
\left\langle f,\TT[X^{(0)}]H_\lambda[X^\bullet]\right\rangle_{q,t}'&= \left\langle f,H_\lambda[1+X^\bullet]\right\rangle_{q,t}'
\end{align}
On the other hand, applying adjunction to the left-hand-side of (\ref{IntLem2}) yields (\ref{IntLem}).
\end{proof}

\subsubsection{Fourier pairing}\label{FourPair}
To motivate the definition (\ref{IntF}), we present an identity generalizing a result in \cite[5.5]{CNO}.
In \textit{loc. cit.}, the authors show that their identity is an infinite-variable analogue of \cite[Theorem 1.2]{ChereMM}.

\begin{thm}\label{CMMId}
For an $r$-core $\alpha$, consider the pairing $\langle -, -\rangle^{\mathfrak{F}}_{\alpha}$ on $\Lambda_{q,t}^{\otimes r}$ given by
\[
\langle f, g,\rangle^{\mathfrak{F}}_{\alpha} \coloneq \left\langle \TT[ X^{(0)} ]\nabla_{w_0\alpha} (f), \TT[ X^{(0)} ]\nabla_{\alpha}(g)\right\rangle_{q,t}'.
\]
Then for partitions $\lambda$ and $\mu$ with $\core(\lambda)=\alpha$ and $\core(\mu)=w_0\core(\lambda)$, we have:
\begin{align}
\label{CMM1}
\left\langle H_{\mu}[X^\bullet], H_\lambda[X^\bullet]\right\rangle^{\mathfrak{F}}_{\alpha}
&=H_{\mu}\left[\iota D_{\alpha}^\bullet\right]H_\lambda\left[\iota D_{\mu}^\bullet\right]\\
&=H_\lambda\left[\iota D_{w_0\alpha}^\bullet\right]H_{\mu}\left[\iota D_{\lambda}^\bullet\right].
\label{CMM2}
\end{align}
\end{thm}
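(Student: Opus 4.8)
The statement is a pairing identity relating wreath Macdonald polynomials with fixed cores $\alpha$ and $w_0\alpha$, and the natural route is to unfold the definition of $\langle-,-\rangle^{\mathfrak{F}}_\alpha$ and feed in the Tesler identity. I would start from the right-hand side of the definition: $\langle\TT[X^{(0)}]\nabla_{w_0\alpha}(H_\mu),\TT[X^{(0)}]\nabla_\alpha(H_\lambda)\rangle_{q,t}'$. The key observation is that, by Theorem \ref{Tesler} (in the form of Theorem \ref{VThm}), the operator $\nabla_\alpha\Omega\big[\tfrac{X^{(0)}}{(1-q\sigma^{-1})(t\sigma-1)}\big]\TT[X^{(0)}]\nabla_\alpha$ sends $H_\lambda$ to $H_\lambda[\iota D_{w_0\alpha}^\bullet]\,\mathbb{E}_\lambda$, so the vector $\TT[X^{(0)}]\nabla_\alpha(H_\lambda)$ is, up to applying $\nabla_\alpha\Omega[\cdots]$ afterward, essentially $\mathbb{E}_\lambda$. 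The plan is therefore to rewrite the pairing so that one of the $\TT[X^{(0)}]\nabla$ factors gets turned into $\mathbb{E}$ via adjunction, and then use the delta-function property of $\mathbb{E}$ (Corollary \ref{DeltaFunc}) to collapse the pairing into an evaluation.

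Concretely, I would proceed as follows. First, move $\nabla_{w_0\alpha}$ across the pairing using Corollary \ref{NablaAdjoint} (the adjoint of $\nabla_{w_0\alpha}$ is $\nabla_\alpha$); this converts $\langle\TT[X^{(0)}]\nabla_{w_0\alpha}(H_\mu),\TT[X^{(0)}]\nabla_\alpha(H_\lambda)\rangle_{q,t}'$ into $\langle\TT[X^{(0)}]H_\mu,\nabla_\alpha\TT[X^{(0)}]\nabla_\alpha(H_\lambda)\rangle_{q,t}'$ — here one must be careful because $\TT[X^{(0)}]$ and $\nabla_\alpha$ do not commute, so I would instead take the cleaner path of using Lemma \ref{FAdjoint1}/Lemma \ref{FAdjoint2} for the $\TT[X^{(0)}]$-factor (its adjoint involves $\TT[(1-q\sigma)(1-t\sigma^{-1})X^{(0)}]$ up to the $\Omega$-correction) together with Corollary \ref{NablaAdjoint}. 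The goal of these manipulations is to transform the right-hand factor into precisely $\nabla_\alpha\Omega\big[\tfrac{X^{(0)}}{(1-q\sigma^{-1})(t\sigma-1)}\big]\TT[X^{(0)}]\nabla_\alpha(H_\lambda)=H_\lambda[\iota D_{w_0\alpha}^\bullet]\mathbb{E}_\lambda$, at which point the $\Omega$-factor that was introduced by the adjunction of $\TT$ gets absorbed. Then $\langle H_\mu,\mathbb{E}_\lambda\rangle_{q,t}'=H_\mu[\iota D_\lambda^\bullet]$ by Corollary \ref{DeltaFunc}, which together with the scalar $H_\lambda[\iota D_{w_0\alpha}^\bullet]$ gives \eqref{CMM2} once we identify $w_0\core(\lambda)=w_0\alpha$. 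Running the same computation but peeling off the $\mu$-side instead of the $\lambda$-side (legitimate because $\core(\mu)=w_0\alpha$, so the roles of $\alpha$ and $w_0\alpha$ swap and $\nabla_\alpha$, $\nabla_{w_0\alpha}$ exchange) yields \eqref{CMM1}; the equality of \eqref{CMM1} and \eqref{CMM2} is then a consequence, and in fact it recovers Corollary \ref{0Rec}.

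An alternative and perhaps more transparent bookkeeping: expand both $\nabla_{w_0\alpha}(H_\mu)$-side and $\nabla_\alpha(H_\lambda)$-side using the Cauchy kernel / Proposition \ref{DeltaExpand}, i.e. write $\TT[X^{(0)}]\nabla_\alpha(H_\lambda)$ against the dual basis and use $\langle H_\nu^\dagger, H_\mu\rangle_{q,t}'=\delta_{\nu\mu}N_\mu$ together with Corollary \ref{NablaEv} to read off $\nabla$-eigenvalues as $H_\lambda[\iota D_{w_0\core(\lambda)}^\bullet]$. This reduces everything to a clean diagonal computation in the pairing, with the two factorizations of the answer coming from expanding the kernel in the two ways allowed by Proposition \ref{CauchyProp}.

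\textbf{Main obstacle.} The delicate point is the non-commutativity of $\TT[X^{(0)}]$ with $\nabla_\alpha$ and the precise tracking of the $\Omega$-correction term produced when one takes the adjoint of $\TT[X^{(0)}]$ (Lemma \ref{FAdjoint1}). One must verify that this correction term is exactly the $\Omega\big[\tfrac{X^{(0)}}{(1-q\sigma^{-1})(t\sigma-1)}\big]$ factor appearing in the explicit formula for $\mathsf{V}_\alpha$, so that after the dust settles the right-hand factor is genuinely $\mathsf{V}_\alpha(H_\lambda)=H_\lambda[\iota D_{w_0\alpha}^\bullet]\mathbb{E}_\lambda$ and not something off by a stray plethystic factor. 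Getting the colors and the $\iota$-twists right (which tensorand the $\Omega$ lands on, and whether it is $\sigma$ or $\sigma^{-1}$) is where the bulk of the care goes; the rest is a formal consequence of the Tesler identity and the delta-function property.
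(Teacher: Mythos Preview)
Your approach is essentially identical to the paper's: take the adjoint of $\TT[X^{(0)}]\nabla_{w_0\alpha}$ under $\langle-,-\rangle_{q,t}'$ (using Corollary \ref{NablaAdjoint} and the fact that the adjoint of $\TT[X^{(0)}]$ is multiplication by $\Omega\big[\tfrac{X^{(0)}}{(1-q\sigma^{-1})(t\sigma-1)}\big]$), recognize the resulting composite on the right as exactly $\mathsf{V}_\alpha$, apply the Tesler identity to get $H_\lambda[\iota D_{w_0\alpha}^\bullet]\,\mathbb{E}_\lambda$, and then pair with $H_\mu$ via Corollary \ref{DeltaFunc}. One small clarification: the adjoint of $\TT[X^{(0)}]$ is \emph{multiplication} by the $\Omega$-factor (not another $\TT$-operator as your second paragraph momentarily suggests), so there is no ``correction'' to track---the $\Omega$ lands directly, and the paper's proof is a clean three-line adjunction with no delicate bookkeeping.
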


\begin{rem}
Observe that
\[
\left\langle f_{\alpha}^{\mathfrak{F}}, g_{w_0\alpha}^{\mathfrak{F}}\right\rangle_{\alpha}^{\mathfrak{F}}=\langle f, g\rangle_{q,t}'.
\]
\end{rem}

\begin{proof}
Taking adjoints, we have:
\begin{align*}
\left\langle H_{\mu}[X^\bullet], H_\lambda[X^\bullet]\right\rangle^{\mathfrak{F}}_{\alpha}
&=
\left\langle \TT[ X^{(0)} ]\nabla_{w_0\alpha} H_{\mu}[X^\bullet], \TT[ X^{(0)} ]\nabla_{\alpha}H_\lambda[X^\bullet]\right\rangle_{q,t}'\\
&= 
\left\langle H_{\mu}[X^\bullet], \nabla_\alpha\Omega\left[ \frac{X^{(0)}}{(1-q\sigma^{-1})(t\sigma-1)} \right]\TT[ X^{(0)} ]\nabla_{\alpha}H_\lambda[X^\bullet]\right\rangle_{q,t}'\\
&=
H_\lambda[\iota D_{w_0\alpha}^\bullet]
\left\langle H_{\mu}[ X^\bullet], \mathbb{E}_\lambda\right\rangle_{q,t}' \\
&=H_\lambda[\iota D_{w_0\alpha}^\bullet] H_{\mu}\left[ \iota D_{\lambda}^\bullet \right] 
\end{align*}
This yields (\ref{CMM2}).
The proof of (\ref{CMM1}) is similar.
\end{proof}

\subsubsection{Skew functions}
We take a little interlude to introduce skew wreath Macdonald polynomials.
Because we have $H_\lambda$ and $H_\lambda^\dagger$, there are actually four flavors of wreath Macdonald skew functions; we will only focus on one.
For partitions $\lambda,\mu,\nu$ with equal core $\alpha$, define the coefficients $c_{\mu^\dagger\nu^\dagger}^{\lambda^\dagger}$ by
\begin{align*}
H_\mu^\dagger H_\nu^\dagger=\sum_{\core(\lambda)  = \alpha} c_{\mu^\dagger\nu^\dagger}^{\lambda^\dagger}H_\lambda^\dagger.
\end{align*}
By Lemma \ref{P1Supp}, these coefficients are nonzero only if $\lambda\supset\mu,\nu$.
Note that on partitions, the embellishment $(-)^\dagger$ is only being used to mark which version of wreath Macdonald the partition is attached to.

\begin{defn}
The \textit{wreath Macdonald skew functions} $H_{\lambda/\mu^\dagger}$ are defined to be
\begin{align*}
H_{\lambda/\mu^\dagger}& \coloneq N_\lambda\sum_{\nu} \frac{c_{\mu^\dagger\nu^\dagger}^{\lambda^\dagger}}{N_\nu}H_\nu.
\end{align*}
\end{defn}

We have that $H_{\lambda/\mu^\dagger}\not=0$ only if $\lambda\supset\mu$ and $\core(\lambda)=\core(\mu)$.
By construction, $H_{\lambda/\mu\dagger}$ satisfies
\[
\langle H_\mu^\dagger f, H_\lambda \rangle_{q,t}'=\langle f,  H_{\lambda/\mu^\dagger} \rangle_{q,t}',
\]
which can be checked for $f = H_\nu^\dagger$.
The following is an analogue of \cite[1.53]{GHT}:
\begin{lem}\label{SkewLem}
Given two alphabets $X^\bullet$ and $Y^\bullet$, we have
\[
H_\lambda\left[ X^\bullet+Y^\bullet \right]=\sum_{\mu}\frac{H_\mu[X^\bullet]H_{\lambda/\mu^\dagger}[Y^\bullet]}{N_\mu}.
\]
\end{lem}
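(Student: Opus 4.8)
The plan is to run the standard Cauchy-kernel argument for skew functions, with Proposition \ref{CauchyProp} playing the role of the usual Schur Cauchy identity. Fix the core $\alpha=\core(\lambda)$. First I would record that the kernel $\Omega_{q,t}[X^{-\bullet}Z^\bullet]$ is multiplicative in its second alphabet: since $Z^\bullet\mapsto\bigl(\tfrac{Z^\bullet}{(1-q\sigma^{-1})(t\sigma-1)}\bigr)$ is linear on power sums and $\Omega[A+B]=\Omega[A]\Omega[B]$, one gets
\[
\Omega_{q,t}[X^{-\bullet}(Z^\bullet+W^\bullet)]=\Omega_{q,t}[X^{-\bullet}Z^\bullet]\,\Omega_{q,t}[X^{-\bullet}W^\bullet],
\]
where $Z^\bullet+W^\bullet$ denotes the alphabet obtained by combining $Z^{(i)}$ and $W^{(i)}$ for each color $i$. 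Expanding the left side by the first form in Proposition \ref{CauchyProp} and each factor on the right side likewise, I obtain an equality of formal series (in the degree completion of $\Lambda_{q,t}^{\otimes r}\otimes\Lambda_{q,t}^{\otimes r}\otimes\Lambda_{q,t}^{\otimes r}$):
\[
\sum_{\core(\nu)=\alpha}\frac{H_\nu^\dagger[X^\bullet]\,H_\nu[Z^\bullet+W^\bullet]}{N_\nu}
=\sum_{\substack{\core(\mu)=\alpha\\ \core(\rho)=\alpha}}\frac{H_\mu^\dagger[X^\bullet]H_\rho^\dagger[X^\bullet]}{N_\mu N_\rho}\,H_\mu[Z^\bullet]\,H_\rho[W^\bullet].
\]

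Next I would expand $H_\mu^\dagger H_\rho^\dagger=\sum_{\core(\nu)=\alpha}c^{\nu^\dagger}_{\mu^\dagger\rho^\dagger}H_\nu^\dagger$ (the expansion stays within the single core $\alpha$ by Lemma \ref{P1Supp}, as noted where these coefficients are introduced) and collect the coefficient of $H_\nu^\dagger[X^\bullet]$ on both sides. Because $\{H_\nu^\dagger[X^\bullet]\mid\core(\nu)=\alpha\}$ is a basis of $\Lambda_{q,t}^{\otimes r}$, and the identity may be read off degree by degree where every homogeneous component is a finite sum, comparing these coefficients gives
\[
\frac{H_\nu[Z^\bullet+W^\bullet]}{N_\nu}=\sum_{\mu}\frac{H_\mu[Z^\bullet]}{N_\mu}\Bigl(\sum_{\rho}\frac{c^{\nu^\dagger}_{\mu^\dagger\rho^\dagger}}{N_\rho}H_\rho[W^\bullet]\Bigr).
\]
Multiplying by $N_\nu$, recognizing the inner sum as $N_\nu^{-1}H_{\nu/\mu^\dagger}[W^\bullet]$ from the definition of the skew functions, and then renaming $\nu\to\lambda$, $Z\to X$, $W\to Y$ yields the claim.

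The only point requiring care is the bookkeeping in the completed ring: all series manipulations should be interpreted degree by degree, so that multiplying the two Cauchy expansions and comparing coefficients of $H_\nu^\dagger[X^\bullet]$ is legitimate (each fixed total degree contributes only finitely many terms). I do not expect a genuine obstacle: the lemma is a formal consequence of Proposition \ref{CauchyProp} together with the definitions of $N_\lambda$, $H_\lambda^\dagger$, and $H_{\lambda/\mu^\dagger}$. One could alternatively prove it by pairing both sides against $H_\mu^\dagger[X^\bullet]$ in the $X$-variables under $\langle-,-\rangle_{q,t}'$ and invoking the defining adjunction $\langle H_\mu^\dagger f,H_\lambda\rangle_{q,t}'=\langle f,H_{\lambda/\mu^\dagger}\rangle_{q,t}'$ once the reproducing identity $\langle H_\mu^\dagger[X^\bullet],H_\lambda[X^\bullet+Y^\bullet]\rangle_{q,t}',_X=H_{\lambda/\mu^\dagger}[Y^\bullet]$ is checked on $f=H_\nu^\dagger$; but the Cauchy-kernel route above avoids having to set that up.
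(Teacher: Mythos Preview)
Your proof is correct and follows essentially the same Cauchy-kernel argument as the paper: you split the second alphabet in $\Omega_{q,t}[X^{-\bullet}(Z^\bullet+W^\bullet)]$ while the paper splits the first in $\Omega_{q,t}[(X+Y)^{-\bullet}Z^\bullet]$, but by the symmetry $\Omega_{q,t}[X^{-\bullet}Y^\bullet]=\Omega_{q,t}[Y^{-\bullet}X^\bullet]$ this is only a relabeling. The paper extracts the coefficient by pairing against $H_\lambda[Z^\bullet]$ under $\langle-,-\rangle_{q,t}'$ rather than by direct basis comparison, which is equivalent.
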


\begin{proof}
Let $\alpha=\core(\lambda)$.
As in \textit{loc. cit.}, the proof proceeds by considering
\begin{equation}
\Omega_{q,t}\left[ \left( X+Y \right)^{-\bullet}Z^\bullet \right]=\Omega_{q,t}\left[ X^{-\bullet}Z^\bullet \right]\Omega_{q,t}\left[ Y^{-\bullet}Z^\bullet \right].
\label{CauchyXYZ}
\end{equation}
Expanding both sides of (\ref{CauchyXYZ}) using Proposition \ref{CauchyProp}, we have
\begin{align*}
\sum_{\substack{\lambda\\\core(\lambda)=\alpha}}\frac{H_\lambda[X^\bullet+Y^\bullet]H_\lambda^\dagger[Z^\bullet]}{N_\lambda}
&= \sum_{\substack{\mu,\nu\\\core(\mu)=\core(\nu)=\alpha}}
\frac{H_{\mu}[X^\bullet]H_{\nu}[Y^\bullet]H_{\mu}^\dagger[Z^\bullet]H_\nu^\dagger[Z^\bullet]}{N_\mu N_\nu}\\
&= \sum_{\substack{\lambda,\mu,\nu\\\core(\lambda)=\core(\mu)=\core(\nu)=\alpha}}
\frac{c_{\mu^\dagger\nu^\dagger}^{\lambda^\dagger}H_{\mu}[X^\bullet]H_{\nu}[Y^\bullet]H_\lambda^\dagger[Z^\bullet]}{N_\mu N_\nu}\\
&= \sum_{\substack{\lambda,\mu\\\core(\lambda)=\core(\mu)=\alpha}}
\frac{H_{\mu}[X^\bullet]H_{\lambda/\mu^\dagger}[Y^\bullet]H_\lambda^\dagger[Z^\bullet]}{N_\mu N_\lambda}.
\end{align*}
The result follows from applying $\langle -, H_\lambda[Z^\bullet]\rangle_{q,t}'$ to both sides.
\end{proof}

\subsubsection{Wreath Macdonald interpolation polynomials}
Finally, we present wreath analogues of (modified) Macdonald interpolation polynomials \cite{SahiInt,KnopInt}.
Our approach and results are as in \cite[Theorem I.4]{GHT}.

\begin{defn}\label{InterDef}
The \textit{wreath Macdonald interpolation polynomial} $\mathfrak{H}_\mu\in\Lambda_{q,t}^{\otimes r}$ is defined to satisfy the following conditions: 
\begin{itemize}
\item $\mathfrak{H}_\mu$ has highest degree $|\quot(\mu)|$;
\item $\mathfrak{H}_\mu[\iota D_\lambda^\bullet]=0$ if $\core(\lambda)=\core(\mu)$, $|\lambda|\le|\mu|$, and $\lambda\not=\mu$;
\item $\mathfrak{H}_\mu[\iota D_\mu^\bullet]=1$.
\end{itemize}
\end{defn}

\begin{lem}
If $\mathfrak{H}_\mu$ exists, then it is unique.
\end{lem}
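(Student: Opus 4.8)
The plan is to show uniqueness by a standard triangularity/linear-algebra argument, using that the conditions in Definition \ref{InterDef} pin down the coefficients of $\mathfrak{H}_\mu$ in a suitable basis. First I would fix the $r$-core $\alpha=\core(\mu)$ and restrict attention to the subspace of $\Lambda_{q,t}^{\otimes r}$ spanned by the wreath Macdonald polynomials $\{H_\lambda \mid \core(\lambda)=\alpha,\ |\lambda|\le|\mu|\}$ together with lower-degree pieces; more precisely, write any candidate $\mathfrak{H}_\mu$ of top degree $|\quot(\mu)|$ in the basis $\{H_\lambda \mid \core(\lambda)=\alpha,\ |\quot(\lambda)|\le |\quot(\mu)|\}$, which is legitimate since these $H_\lambda$ span the degree-$\le|\quot(\mu)|$ part of $\Lambda_{q,t}^{\otimes r}$ after fixing the core.

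Next I would argue that the evaluation conditions $\mathfrak{H}_\mu[\iota D_\lambda^\bullet]=\delta_{\lambda\mu}$, ranging over $\lambda$ with $\core(\lambda)=\alpha$ and $|\lambda|\le|\mu|$, determine these coefficients uniquely. The key input is that the ``evaluation matrix'' $\big(H_\lambda[\iota D_\nu^\bullet]\big)_{\lambda,\nu}$, with $\lambda,\nu$ running over partitions with core $\alpha$ and bounded size, is invertible (in fact triangular with respect to a suitable order). This triangularity can be extracted from Corollary \ref{0Rec} together with the evaluation formula in Corollary \ref{NablaEvK}: the shifted reciprocity relates $H_\lambda[\iota D_\nu^\bullet]$ to $H_\nu[\iota D_\lambda^\bullet]$, and the vanishing/normalization built into the wreath Macdonald polynomials forces the matrix to be triangular with nonzero diagonal entries $H_\lambda[\iota D_{w_0\core(\lambda)}^\bullet]\cdot(\text{something})$. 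Since the number of such $\lambda$ equals the number of evaluation points $\nu$, an invertible square system has at most one solution with the prescribed right-hand side $(\delta_{\lambda\mu})$, which gives uniqueness. The degree constraint is what makes the system square: without the bound ``$|\lambda|\le|\mu|$'' one would have infinitely many evaluation conditions, but with it the span and the set of evaluation points have matching (finite) dimension.

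The main obstacle I expect is establishing the triangularity/invertibility of the evaluation matrix cleanly. One has to choose the right partial order on partitions sharing a fixed core — presumably dominance order $\le_r$ restricted to partitions of bounded size — and then verify that $H_\lambda[\iota D_\nu^\bullet]=0$ unless $\nu \le_r \lambda$ (or the reverse), which should follow from Proposition \ref{IntDef} together with the way $D_\nu^\bullet$ encodes the boxes of $\nu$. Concretely, $H_\lambda[(1-t^{-1}\sigma^{-1})X^\bullet]$ expands in $\{s_{\quot(\mu)} \mid \mu\le_r\lambda\}$, and evaluating at the appropriate specialization should kill the contribution unless the Young diagram inclusion holds. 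Once triangularity with nonvanishing diagonal is in hand — the diagonal entries being the nonzero evaluations $H_\lambda[\iota D_{w_0\core(\lambda)}^\bullet]$ computed in Corollary \ref{NablaEvK} — uniqueness is immediate from linear algebra. I would present the argument as: suppose $\mathfrak{H}_\mu$ and $\mathfrak{H}_\mu'$ both satisfy the conditions; their difference $g$ has degree $\le|\quot(\mu)|$, core $\alpha$, and vanishes at $\iota D_\lambda^\bullet$ for all $\lambda$ with $\core(\lambda)=\alpha$ and $|\lambda|\le|\mu|$; expanding $g$ in the $H_\lambda$ basis and evaluating in order of increasing $|\lambda|$ forces every coefficient to vanish, so $g=0$.
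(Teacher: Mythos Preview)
Your overall strategy---take the difference $g=\mathfrak{H}_\mu-\mathfrak{H}_\mu'$, observe that $g$ has degree at most $|\quot(\mu)|$ and vanishes at $\iota D_\lambda^\bullet$ for all $\lambda$ with $\core(\lambda)=\alpha$ and $|\lambda|\le|\mu|$, and conclude $g=0$---is exactly what the paper does. The gap is in your justification that these evaluation conditions force $g=0$.

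You propose to show the evaluation matrix $\big(H_\lambda[\iota D_\nu^\bullet]\big)_{\lambda,\nu}$ (with $\core(\lambda)=\core(\nu)=\alpha$) is triangular for dominance order, citing Corollary~\ref{0Rec}, Corollary~\ref{NablaEvK}, and Proposition~\ref{IntDef}. None of these gives what you need. Corollary~\ref{0Rec} is a reciprocity between partitions whose cores are related by $w_0$, not between partitions with the \emph{same} core $\alpha$, so it says nothing about the entries of your matrix. The triangularity in Proposition~\ref{IntDef} is in the multi-Schur expansion and does not translate into vanishing of evaluations $H_\lambda[\iota D_\nu^\bullet]$ along dominance order. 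And the diagonal entries of your matrix are $H_\lambda[\iota D_\lambda^\bullet]$, not $H_\lambda[\iota D_{w_0\core(\lambda)}^\bullet]$; Corollary~\ref{NablaEvK} computes only the latter. So the claimed triangularity is unsupported.

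The paper circumvents this entirely. Via the Fourier pairing of Theorem~\ref{CMMId} (equivalently, the Tesler identity and Corollary~\ref{DeltaFunc}), one has
\[
\langle g,H_\lambda\rangle_\alpha^{\mathfrak{F}}
=\big\langle \TT[X^{(0)}]\nabla_{w_0\alpha}g,\ \TT[X^{(0)}]\nabla_\alpha H_\lambda\big\rangle_{q,t}'
= H_\lambda[\iota D_{w_0\alpha}^\bullet]\, g[\iota D_\lambda^\bullet],
\]
so the vanishing of $g[\iota D_\lambda^\bullet]$ is equivalent to $\TT[X^{(0)}]\nabla_{w_0\alpha}g$ being $\langle-,-\rangle_{q,t}'$-orthogonal to every $\TT[X^{(0)}]\nabla_\alpha H_\lambda$. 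Since $\TT[X^{(0)}]\nabla_\alpha$ is invertible and preserves the degree filtration, the set $\{\TT[X^{(0)}]\nabla_\alpha H_\lambda : \core(\lambda)=\alpha,\ |\lambda|\le|\mu|\}$ is a basis of the degree $\le|\quot(\mu)|$ subspace, and nondegeneracy of $\langle-,-\rangle_{q,t}'$ then forces $\TT[X^{(0)}]\nabla_{w_0\alpha}g=0$, hence $g=0$. This replaces your unproved triangularity with an invertible change of basis plus nondegeneracy of a known pairing.
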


\begin{proof}
Set $\alpha=\core(\mu)$.
Let $\mathfrak{H}_\mu$ and $\mathfrak{H}_\mu'$ both satisfy the conditions in Definition \ref{InterDef}.
We then have that $\mathfrak{D}_\mu \coloneq \mathfrak{H}_\mu-\mathfrak{H}_\mu'$ satisfies $\mathfrak{D}_\mu[\iota D_\lambda^\bullet]=0$ for all $\lambda$ with $\core(\lambda)=\alpha$ and $|\lambda|\le|\mu|$.
By Theorem \ref{CMMId}, this implies that
\[
\langle \mathfrak{D}_\mu,H_\lambda\rangle_{\alpha}^{\mathfrak{F}}=
\left\langle \TT[X^{(0)}]\nabla_{w_0\alpha}\mathfrak{D}_\mu, \TT[X^{(0)}]\nabla_\alpha H_\lambda\right\rangle_{q,t}'=
0
\]
for all such $\lambda$.
This implies that $\mathfrak{D}_\mu=0$ because:
\begin{itemize}
\item $\TT[X^{(0)}]\nabla_{w_0\alpha}\mathfrak{D}_\mu$ has highest degree at most $|\quot(\mu)|$;
\item the set 
\[
\left\{\TT[X^{(0)}]\nabla_\alpha H_\lambda\, \middle|\, \core(\lambda)=\alpha,\, |\lambda|\le|\mu|\right\}
\]
is a basis for the subspace of $\Lambda_{q,t}^{\otimes r}$ consisting of elements with degree no larger than $|\quot(\mu)|$;
\item $\TT[X^{(0)}]\nabla_{w_0\alpha}$ is invertible.\qedhere
\end{itemize}
\end{proof}

\begin{thm}
The interpolation polynomial $\mathfrak{H}_\mu$ exists and is given by 
\[
\mathfrak{H}_\mu= \frac{1}{N_\mu}\left( H_\mu^\dagger \right)_{\alpha}^{\mathfrak{F}}=\frac{1}{N_\mu}\nabla_{w_0\core(\mu)}^{-1}H_\mu^\dagger[X^\bullet-1].
\]
Moreover, it satisfies
\[
\mathfrak{H}_\mu\left[ \iota D_\lambda^\bullet \right]=\frac{H_{\lambda/\mu^\dagger}[1]}{N_\mu}
\]
for all $\lambda$ with $\core(\lambda) = \core(\mu)$. 
In particular, the evaluation above is nonzero only if $\lambda\supset\mu$.
\end{thm}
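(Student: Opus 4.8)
The plan is to check that the element $\tfrac{1}{N_\mu}\bigl(H_\mu^\dagger\bigr)^{\mathfrak{F}}_\alpha$, with $\alpha=\core(\mu)$, satisfies the three conditions of Definition~\ref{InterDef}; since the uniqueness lemma just proven shows there is at most one such element, this simultaneously gives existence and the claimed formula. First I note that this expression makes sense: by Corollary~\ref{NablaAdjoint}, $\nabla_{w_0\alpha}$ is diagonalized by $\{H_\lambda^\dagger\mid\core(\lambda)=\alpha\}$ with nonzero eigenvalues, so $\nabla_{w_0\alpha}^{-1}$ is well defined on this span and merely rescales each $H_\rho^\dagger$ by a nonzero element of $\CC(q,t)$ without changing its $x$-degree; thus $\bigl(H_\mu^\dagger\bigr)^{\mathfrak{F}}_\alpha=\nabla_{w_0\alpha}^{-1}H_\mu^\dagger[X^\bullet-1]$ lies in $\Lambda_{q,t}^{\otimes r}$. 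The key step is to apply \eqref{IntLem0} with $f=H_\mu^\dagger$: for every $\lambda$ with $\core(\lambda)=\alpha$,
\[
\frac{1}{N_\mu}\bigl(H_\mu^\dagger\bigr)^{\mathfrak{F}}_\alpha[\iota D_\lambda^\bullet]=\frac{1}{N_\mu}\bigl\langle H_\mu^\dagger,\,H_\lambda[1+X^\bullet]\bigr\rangle_{q,t}'.
\]

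Next I would expand $H_\lambda[1+X^\bullet]=H_\lambda[X^\bullet+1]$ by Lemma~\ref{SkewLem}, specializing the second alphabet to the single variable $1$: this gives $H_\lambda[1+X^\bullet]=\sum_{\nu}N_\nu^{-1}H_{\lambda/\nu^\dagger}[1]\,H_\nu[X^\bullet]$, the sum running over $\nu$ with $\core(\nu)=\alpha$. Pairing against $H_\mu^\dagger$ and using $\langle H_\mu^\dagger,H_\nu\rangle_{q,t}'=\delta_{\mu\nu}N_\mu$ (valid because $\core(\mu)=\alpha$) collapses the sum to $\nu=\mu$ and yields the evaluation formula
\[
\frac{1}{N_\mu}\bigl(H_\mu^\dagger\bigr)^{\mathfrak{F}}_\alpha[\iota D_\lambda^\bullet]=\frac{H_{\lambda/\mu^\dagger}[1]}{N_\mu}\qquad\text{for all }\lambda\text{ with }\core(\lambda)=\core(\mu).
\]
The final assertion of the theorem is then immediate, since $H_{\lambda/\mu^\dagger}\neq 0$ forces $\lambda\supset\mu$ by Lemma~\ref{P1Supp}.

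It remains to verify the three defining properties. For the degree bound, $H_\mu^\dagger$ is homogeneous of degree $|\quot(\mu)|$, the substitution $\TT[-X^{(0)}]$ has the identity as its top-degree part, and $\nabla_{w_0\alpha}^{-1}$ is invertible and degree-preserving, so the degree-$|\quot(\mu)|$ component of the candidate is a nonzero scalar multiple of $H_\mu^\dagger$ while all other components have strictly smaller degree. For the vanishing, if $\core(\lambda)=\core(\mu)$, $|\lambda|\le|\mu|$ and $\lambda\neq\mu$, then $\lambda\not\supset\mu$, hence $H_{\lambda/\mu^\dagger}=0$ and the evaluation formula gives $0$. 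For the normalization at $\lambda=\mu$, the difference $H_\mu[1+X^\bullet]-H_\mu[X^\bullet]$ has $x$-degree strictly below $|\quot(\mu)|$, so it lies in the span of $\{H_\rho\mid\core(\rho)=\alpha,\ \rho\neq\mu\}$ and is orthogonal to $H_\mu^\dagger$; therefore $\langle H_\mu^\dagger,H_\mu[1+X^\bullet]\rangle_{q,t}'=\langle H_\mu^\dagger,H_\mu\rangle_{q,t}'=N_\mu$ and the candidate evaluates to $1$ at $\iota D_\mu^\bullet$. (Equivalently, in Lemma~\ref{SkewLem} the only surviving term is $\nu=\alpha$, where $H_\alpha=H_\alpha^\dagger=1$ and $N_\alpha=1$.) By the uniqueness lemma above, the three properties pin down $\mathfrak{H}_\mu$, so it exists, coincides with $\tfrac1{N_\mu}(H_\mu^\dagger)^{\mathfrak{F}}_\alpha$, and satisfies the displayed evaluation.

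Once \eqref{IntLem0} and Lemma~\ref{SkewLem} are in hand, the argument is essentially bookkeeping; the only points that need genuine care are the degree and triangularity tracking underlying the first and third conditions, together with the verification that the leading coefficient of the candidate does not vanish — which is exactly where invertibility of $\nabla_{w_0\alpha}$ on the $H^\dagger$-basis (Corollary~\ref{NablaAdjoint}) enters. I do not anticipate a serious obstacle beyond arranging these steps in the right order.
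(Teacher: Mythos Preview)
Your proof is correct and follows essentially the same approach as the paper: apply \eqref{IntLem0} with $f=H_\mu^\dagger$, expand $H_\lambda[1+X^\bullet]$ via Lemma~\ref{SkewLem}, and use the orthogonality of $\{H_\nu\}$ and $\{H_\mu^\dagger\}$ to extract $H_{\lambda/\mu^\dagger}[1]/N_\mu$. The paper records the normalization by the single observation $H_{\mu/\mu^\dagger}=N_\mu$, whereas you spell out the degree and vanishing checks more explicitly, but the substance is the same.
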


\begin{proof}
By (\ref{IntLem0}) and Lemma \ref{SkewLem}, we have
\begin{align*}
\mathfrak{H}_\mu[\iota D_\lambda^\bullet]&= \frac{1}{N_\mu}\langle H_\mu^\dagger, H_\lambda[1+X^\bullet]\rangle_{q,t}'\\
&= \frac{1}{N_\mu}\left\langle H_\mu^\dagger, \sum_{\nu} \frac{H_\nu[X^\bullet]H_{\lambda/\nu^\dagger}[1]}{N_\nu}\right\rangle_{q,t}'\\
&= \frac{1}{N_\mu}H_{\lambda/\mu^\dagger}[1].
\end{align*}
Finally, note that $H_{\mu/\mu^\dagger}=N_\mu$.
\end{proof}

\begin{rem}
Note that $\{\mathfrak{H}_\mu\, |\, \core(\mu)=\alpha\}$ is not an orthogonal basis for $\langle-,-\rangle_{\alpha,0}^{\mathfrak{F}}$.
A dual orthogonal basis is given by
\[
\mathfrak{H}_\mu^\dagger \coloneq \nabla_{\alpha}^{-1}H_\mu\left[ X^\bullet-1 \right].
\]
\end{rem}

\subsection{Kostka coefficients}
The \textit{wreath $(q,t)$-Kostka coefficients} are the coefficients $K_{\vec{\gamma},\mu}=K_{\vec{\gamma},\mu}(q,t)$ appearing in the expansion
\[
H_\mu = \sum_{\vec{\gamma},\mu} K_{\vec{\gamma},\mu} s_{\vec{\gamma}}[X^{\bullet}].
\]
For $\vec{\gamma}=(\gamma^{(0)},\gamma^{(1)},\ldots,\gamma^{(r-1)})$, let
\[
\iota\vec{\gamma}=(\gamma^{(0)},\gamma^{(r-1)},\gamma^{(r-2)},\ldots,\gamma^{(1)}).
\]
Because
\begin{equation}
s_{\vec{\gamma}}=s_{\gamma^{(0)}}[X^{(0)}]\cdots s_{\gamma^{(r-1)}}[X^{(r-1)}],
\label{SchurFactor}
\end{equation}
we have $s_{\vec{\gamma}}[\iota X^\bullet]=s_{\iota\vec{\gamma}}$.
Thus, in terms of the $\iota$-twisted Hall scalar product, we can write:
\[
K_{\vec{\gamma},\mu} = \langle s_{\iota\vec{\gamma}}, H_\lambda \rangle^*.
\]
Finally, in terms of the wreath Macdonald pairing, we have 
\begin{align}
\label{KostkaPair}
K_{\vec{\gamma},\mu} & = \left\langle s_{\iota\vec{\gamma}}\left[ \frac{X^{\bullet}}{(1-q\sigma^{-1})(t\sigma-1)}  \right], H_\mu[ X^{\bullet}] \right\rangle_{q,t}' 
\end{align}
Here, we will give a plethystic formula for $K_{\vec{\gamma}, \mu}$ analogous to \cite[Theorem I.1]{GHT}.

\subsubsection{Bernstein operators}
Recall the following creation operator for Schur functions.

\begin{prop}[\protect{\cite[Example I.5.29]{Mac}}]
Suppose that $\lambda=(\lambda_1,\lambda_2,\ldots)$ and set $\lambda_{>1}=(\lambda_2,\lambda_3,\ldots)$.
We then have
\[
s_\lambda=\left\{ z^{-\lambda_1}\Omega\left[ X z\right]s_{\lambda_{>1}}[X-z^{-1}] \right\}_0.
\]
\end{prop}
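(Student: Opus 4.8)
The plan is to unfold the constant-term expression into the classical Bernstein creation operator and then reduce to Bernstein's theorem. Writing $\Omega[Xz]=\sum_{m\ge 0}h_m[X]\,z^m$ and using that substituting a single negated variable amounts to skewing, $s_{\lambda_{>1}}[X-z^{-1}]=\exp\!\left(-\sum_{k}\tfrac{z^{-k}}{k}p_k^\perp\right)\!s_{\lambda_{>1}}=\sum_{k\ge 0}(-z^{-1})^k\,e_k^\perp s_{\lambda_{>1}}$ (recall $e_k=s_{(1^k)}$, so $e_k^\perp$ is the skew by a column), the coefficient of $z^0$ keeps exactly the terms with $m=\lambda_1+k$:
\[
\left\{ z^{-\lambda_1}\,\Omega[Xz]\,s_{\lambda_{>1}}[X-z^{-1}] \right\}_0=\sum_{k\ge 0}(-1)^k\,h_{\lambda_1+k}\,e_k^\perp\,s_{\lambda_{>1}}\eqcolon B_{\lambda_1}\big(s_{\lambda_{>1}}\big).
\]
Thus the statement is precisely the Bernstein creation identity $B_m s_\mu=s_{(m,\mu)}$ applied with $m=\lambda_1$ and $\mu=\lambda_{>1}$; this is legitimate because $\lambda$ is a partition, so $m=\lambda_1\ge\mu_1=\lambda_2$. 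Note that well-definedness of the constant term is automatic: the $k$-sum is finite (as $e_k^\perp s_{\lambda_{>1}}=0$ for $k>|\lambda_{>1}|$) and the $z$-series has finitely many terms in each degree, and the $\mathbb{Z}$-grading is respected throughout.

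For the core identity $B_m s_\mu=s_{(m,\mu)}$ with $m\ge\mu_1$, I would proceed by induction on $\ell(\mu)$ (equivalently $|\mu|$), comparing Jacobi--Trudi determinants. Expanding $e_k^\perp s_\mu$ by the dual Pieri rule (vertical $k$-strips removed from $\mu$) and then $h_{m+k}s_\nu$ by the ordinary Pieri rule (horizontal $(m+k)$-strips added), one writes $B_m s_\mu=\sum_\rho c_\rho s_\rho$ and must show $c_\rho=\delta_{\rho,(m,\mu)}$. The cleanest route to the cancellations is the boson--fermion correspondence, which is natural here given the Young--Maya machinery already set up: under the correspondence of Proposition~\ref{YoungMaya}, $\Lambda$ is the charge-$0$ fermionic Fock space with $s_\mu\leftrightarrow |m(\mu)\rangle$, and $\sum_m B_m z^m$ is, up to the standard normalization, the fermion field $\psi(z)$; then $B_m s_\mu$ corresponds to inserting a bead into the edge sequence of $\mu$, which yields $\pm|m(\rho)\rangle$ for the unique Maya diagram $\rho$ carrying that extra bead, and $0$ when the site is already occupied. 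For $m\ge\mu_1$ the relevant site is exactly the one whose occupation turns $m(\mu)$ into $m((m,\mu))$, and the normalization is chosen so the sign is $+1$.

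The only genuine content is Bernstein's theorem itself, and the main obstacle is bookkeeping: in the Pieri formulation, organizing the alternating sum so that every $s_\rho$ with $\rho\ne(m,\mu)$ cancels, or, equivalently, in the fermionic picture, tracking the reordering signs coming from the anticommutation relations so that they reproduce precisely the Schur-function straightening relations. I would then close by emphasizing that no straightening is actually invoked in the statement being proved, since $\lambda$ is assumed to be a partition: $B_{\lambda_1}$ simply prepends a row of length $\lambda_1\ge\lambda_2$, returning $s_\lambda$ directly. (Alternatively, one may avoid fermions entirely by computing $\langle B_{\lambda_1}\cdots B_{\lambda_\ell}\cdot 1,\,m_\mu\rangle$ against monomial symmetric functions and matching the bialternant formula, but the sign cancellations are the same.)
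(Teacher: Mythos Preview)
Your proposal is correct: you correctly unfold the constant-term expression into the Bernstein operator $B_{\lambda_1}$ and then reduce to the classical Bernstein creation identity $B_m s_\mu = s_{(m,\mu)}$ for $m\ge\mu_1$. The paper itself does not prove this proposition; it simply cites \cite[Example I.5.29]{Mac} and uses the result as a black box, so your write-up actually supplies more than the paper does.
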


Because $s_{\vec{\gamma}}$ is factored according to color as in (\ref{SchurFactor}), we can apply these creation operators for each color to add a longest row.
We will only do this for color zero.
Taking the plethysm in (\ref{KostkaPair}) into account, we have

\begin{cor}
For $\vec{\gamma}=(\gamma^{(0)},\gamma^{(1)},\ldots, \gamma^{(r-1)})$, let $\gamma_{>1}^{(0)}=(\gamma_2^{(0)}, \gamma_3^{(0)},\ldots)$.
Then
\begin{equation}
\begin{aligned}
s_{\iota\vec{\gamma}}\left[ \frac{X^{(0)}}{(1-q\sigma^{-1})(t\sigma-1)} \right]
&=
\prod_{i=1}^{r-1}s_{\gamma^{(i)}}\left[ \frac{X^{(-i)}}{(1-q\sigma^{-1})(t\sigma-1)} \right]\\
&\quad\times
\left\{ z^{-\gamma_1^{(0)}}\Omega\left[ \frac{X^{(0)}z}{(1-q\sigma^{-1})(t\sigma-1)} \right] 
s_{\gamma^{(0)}_{>1}}\left[ \frac{X^{(0)}}{(1-q\sigma^{-1})(t\sigma-1)}-z^{-1} \right]
\right\}_0.
\end{aligned}
\label{BernCor}
\end{equation}

\end{cor}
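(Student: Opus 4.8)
The plan is to combine three facts: the color factorization of a multi-Schur function, the fact that matrix plethysm is a ring homomorphism (see \ref{MatPleth} and Proposition \ref{SigProp}), and the Bernstein creation formula stated above applied one alphabet at a time.

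First I would unpack the left-hand side. From the definition of $s_{\vec\gamma}$ and of $\iota$ on $r$-tuples, $(\iota\vec\gamma)^{(j)}=\gamma^{(-j)}$ (consistent with the already-noted identity $s_{\vec\gamma}[\iota X^\bullet]=s_{\iota\vec\gamma}$), so
\[
s_{\iota\vec\gamma}=\prod_{j\in\ZZ/r\ZZ}s_{\gamma^{(-j)}}[X^{(j)}]=\prod_{i\in\ZZ/r\ZZ}s_{\gamma^{(i)}}[X^{(-i)}],
\]
the second equality being the reindexing $i=-j$. Since the matrix plethysm $f\mapsto f\!\left[\frac{X^\bullet}{(1-q\sigma^{-1})(t\sigma-1)}\right]$ is an algebra homomorphism — the relevant plethysm being invertible by Proposition \ref{SigProp}(1), with all denominators expanded under the standing $|q|,|t|<1$ convention — it distributes over this product. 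The factors with $i\neq 0$ become $s_{\gamma^{(i)}}\!\left[\frac{X^{(-i)}}{(1-q\sigma^{-1})(t\sigma-1)}\right]$, exactly as in \eqref{BernCor}, so only the $i=0$ factor requires further work.

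Next I would apply the Bernstein operator to the $i=0$ factor. Let $\psi\colon\Lambda\to\Lambda_{q,t}^{\otimes r}$ (coefficients being formal power series in $q,t$) be the algebra homomorphism sending the single alphabet $X$ to $E\coloneq\frac{X^{(0)}}{(1-q\sigma^{-1})(t\sigma-1)}$, i.e. $p_k[X]\mapsto p_k[E]$; extending $\psi$ to Laurent series in the auxiliary variable $z$ by $\psi(z)=z$, it acts coefficientwise in $z$ and hence commutes with the constant-term functional $\{-\}_0$. Applying $\psi$ to
\[
s_{\gamma^{(0)}}=\Bigl\{z^{-\gamma_1^{(0)}}\,\Omega[Xz]\,s_{\gamma^{(0)}_{>1}}[X-z^{-1}]\Bigr\}_0,
\]
and using $\psi(\Omega[Xz])=\Omega[Ez]$ together with $\psi\bigl(s_{\gamma^{(0)}_{>1}}[X-z^{-1}]\bigr)=s_{\gamma^{(0)}_{>1}}[E-z^{-1}]$ — the scalar $-z^{-1}$ being untouched by $\psi$ — yields precisely the braced factor on the right-hand side of \eqref{BernCor}. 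Multiplying back the $i\neq 0$ factors finishes the proof.

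The only point needing care is bookkeeping: one must know that $s_{\gamma^{(0)}_{>1}}[X-z^{-1}]$ is a genuine finitely-supported Laurent polynomial in $z$, so that interchanging $\psi$ with $\{-\}_0$ is unambiguous, and that the plethystic subtraction $X-z^{-1}$ commutes with the matrix plethysm in the $X^{(i)}$-variables; both become immediate once everything is phrased through the ring homomorphism $\psi$. Everything else is the $\iota$-reindexing above.
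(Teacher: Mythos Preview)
Your proof is correct and follows essentially the same approach as the paper: factor $s_{\iota\vec\gamma}$ by color, distribute the matrix plethysm across the product (it is a ring homomorphism), and apply the Bernstein creation formula to the color-zero factor. The paper only sketches this in the sentence preceding the corollary; your write-up supplies the details the paper omits, in particular the careful justification that the substitution homomorphism $\psi$ commutes with the constant-term extraction $\{-\}_0$.
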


\subsubsection{Plethystic formula}
Recall the operation in (\ref{IntF}).
Given an $r$-core $\alpha$, for $\vec{\gamma}=(\gamma^{(0)},\ldots, \gamma^{(r-1)})$, set
\[
k_{\vec{\gamma}}^\alpha \coloneq 
\left(s_{\gamma^{(0)}}\left[ \frac{X^{(0)}}{(1-q\sigma^{-1})(t\sigma-1)}-1 \right]
\prod_{i=1}^{r-1}s_{\gamma^{(i)}}\left[ \frac{X^{(-i)}}{(1-q\sigma^{-1})(t\sigma-1)} \right]\right)_\alpha^{\mathfrak{F}}.
\]
Let $\vec{\gamma}_{>1}=(\gamma^{(0)}_{>1},\gamma^{(1)},\ldots, \gamma^{(r-1)})$, i.e. $\vec{\gamma}$ minus its longest row in color zero.
If $\gamma^{(0)}=\varnothing$, then we set $\vec{\gamma}_{>1}=\vec{\gamma}$.

\begin{thm}
Let $\mu$ be a partition with $\core(\mu)=\alpha$.
Given $\vec{\gamma}$ with $|\quot(\mu)|=|\vec{\gamma}|$, we have
\[
K_{\vec{\gamma},\mu}=k_{\vec{\gamma}_{>1}}^\alpha\left[ \iota D_\mu^\bullet \right].
\]
\end{thm}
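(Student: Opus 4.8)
The plan is to combine the Bernstein-operator decomposition of $s_{\iota\vec\gamma}$ under the relevant plethysm (Corollary, equation \eqref{BernCor}) with the interpolation identity \eqref{IntLem} and the defining/transform properties of the operation $(-)_\alpha^{\mathfrak F}$ from \eqref{IntF}. Starting from the pairing formula \eqref{KostkaPair}, namely
\[
K_{\vec\gamma,\mu}=\left\langle s_{\iota\vec\gamma}\left[\frac{X^\bullet}{(1-q\sigma^{-1})(t\sigma-1)}\right], H_\mu[X^\bullet]\right\rangle_{q,t}',
\]
I would first substitute \eqref{BernCor} for the color-$0$ Bernstein step and recognize the $\Omega\!\left[\frac{X^{(0)}z}{(1-q\sigma^{-1})(t\sigma-1)}\right]$ factor as exactly the operator appearing in \eqref{IntLem}. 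Thus pairing against $H_\mu$ converts (via \eqref{IntLem0}--\eqref{IntLem}) the $\Omega$-factor into the ``$+1$'' shift on the argument of $H_\mu$ — equivalently, by the definition of $(-)_\alpha^{\mathfrak F}$, into $\nabla_{w_0\alpha}^{-1}\TT[-X^{(0)}]$ applied to the remaining symmetric function, evaluated at $\iota D_\mu^\bullet$. Concretely, \eqref{IntLem} gives
\[
\left\langle \Omega\left[\frac{X^{(0)}}{(1-q\sigma^{-1})(t\sigma-1)}\right] g,\ H_\mu\right\rangle_{q,t}' = g_\alpha^{\mathfrak F}[\iota D_\mu^\bullet]
\]
for any $g$, so the whole pairing collapses to $g_\alpha^{\mathfrak F}[\iota D_\mu^\bullet]$ where $g$ is the product of the other Schur factors with $s_{\gamma^{(0)}_{>1}}$ carrying the $-z^{-1}$ shift.

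Next I would handle the bookkeeping of the $z$-variable and the longest row. The $\{\cdot\}_0$ constant-term extraction in $z$ together with the $z^{-\gamma_1^{(0)}}$ prefactor selects the coefficient that rebuilds $s_{\gamma^{(0)}}$ from $s_{\gamma^{(0)}_{>1}}$; after pushing the pairing inside (the pairing is $\FF$-bilinear and the $z$-series have bounded-below exponents, so interchanging $\{\cdot\}_0$ with $\langle-,-\rangle_{q,t}'$ is legitimate), the shift $-z^{-1}$ on $s_{\gamma^{(0)}_{>1}}$ becomes a $-1$ shift once the $z$-constant term is taken — this is the standard Bernstein mechanism, and it is precisely why $k^\alpha_{\vec\gamma_{>1}}$ is defined with the $s_{\gamma^{(0)}_{>1}}[\cdots -1]$ factor rather than $s_{\gamma^{(0)}}[\cdots -1]$. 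The key point is that the remaining alphabet in $g$, after stripping the color-$0$ longest row and absorbing the $\Omega$-factor, is exactly $s_{\gamma^{(0)}_{>1}}\!\left[\frac{X^{(0)}}{(1-q\sigma^{-1})(t\sigma-1)}-1\right]\prod_{i=1}^{r-1}s_{\gamma^{(i)}}\!\left[\frac{X^{(-i)}}{(1-q\sigma^{-1})(t\sigma-1)}\right]$, whose image under $(-)_\alpha^{\mathfrak F}$ is by definition $k^\alpha_{\vec\gamma_{>1}}$. Evaluating at $\iota D_\mu^\bullet$ then yields $K_{\vec\gamma,\mu}=k^\alpha_{\vec\gamma_{>1}}[\iota D_\mu^\bullet]$.

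I would also need to treat the degenerate case $\gamma^{(0)}=\varnothing$ separately: there is no longest row to strip, the Bernstein step is vacuous, one has $\vec\gamma_{>1}=\vec\gamma$ by the stated convention, and the argument simplifies — the $\Omega$-factor with $z$ is replaced by the plain $\Omega\!\left[\frac{X^{(0)}}{(1-q\sigma^{-1})(t\sigma-1)}\right]$ and \eqref{IntLem} applies directly. Finally, a degree check is warranted: $s_{\iota\vec\gamma}$ has degree $|\vec\gamma|=|\quot(\mu)|$, and \eqref{IntLem}/\eqref{IntF} are set up exactly for functions of this degree against $H_\mu$, so no convergence or truncation subtleties arise.

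The main obstacle I anticipate is not conceptual but the careful justification of interchanging the constant-term operator $\{\cdot\}_0$ in $z$ with the pairing $\langle-,-\rangle_{q,t}'$ and with the application of $(-)_\alpha^{\mathfrak F}=\nabla_{w_0\alpha}^{-1}\TT[-X^{(0)}]$: one must check that the $z$-series produced by $\Omega\!\left[\frac{X^{(0)}z}{(1-q\sigma^{-1})(t\sigma-1)}\right]s_{\gamma^{(0)}_{>1}}[\cdots-z^{-1}]$ has coefficients lying in a fixed finite-dimensional graded piece once paired against the fixed-degree $H_\mu$, so that the manipulations are finite sums. Granting that (which follows from the degree bound $|\vec\gamma|=|\quot(\mu)|$ and the fact that $\TT[-X^{(0)}]$ and $\nabla_{w_0\alpha}$ preserve or lower degree appropriately on this piece), the rest is a direct unwinding of definitions exactly parallel to the proof of \cite[Theorem I.1]{GHT}.
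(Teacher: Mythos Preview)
Your overall strategy---combine \eqref{KostkaPair}, \eqref{BernCor}, and \eqref{IntLem}---is exactly the paper's, and the ingredients you list are the right ones. The gap is in how you eliminate the Bernstein variable $z$. You assert that ``the shift $-z^{-1}$ on $s_{\gamma^{(0)}_{>1}}$ becomes a $-1$ shift once the $z$-constant term is taken,'' but this is not a general fact about constant terms and, as stated, is false: $\{z^{-\gamma_1^{(0)}}\Omega[Yz]\,s_{\gamma^{(0)}_{>1}}[Y-z^{-1}]\}_0$ is \emph{not} equal to $\Omega[Y]\,s_{\gamma^{(0)}_{>1}}[Y-1]$. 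What is true is that these two expressions agree \emph{in degree $|\gamma^{(0)}|$}. Indeed, in $\Omega[Yz]\,s_{\gamma^{(0)}_{>1}}[Y-z^{-1}]$ the coefficient of $z^m$ is homogeneous of $X$-degree $m+|\gamma^{(0)}_{>1}|$, so the $z$-grading and the $X$-grading coincide up to a shift; extracting $z^{\gamma_1^{(0)}}$ is therefore the same as setting $z=1$ and taking the degree-$|\gamma^{(0)}|$ piece. That homogeneity observation is the missing step, and it is precisely what makes the pairing against the homogeneous $H_\mu$ see only the correct piece.

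The paper avoids this pitfall by running the argument in the opposite direction, which is cleaner: start from $k_{\vec\gamma_{>1}}^\alpha[\iota D_\mu^\bullet]$, apply \eqref{IntLem} to write it as
\[
\left\langle \Omega\!\left[\tfrac{X^{(0)}}{(1-q\sigma^{-1})(t\sigma-1)}\right] s_{\gamma^{(0)}_{>1}}\!\left[\tfrac{X^{(0)}}{(1-q\sigma^{-1})(t\sigma-1)}-1\right]\prod_{i=1}^{r-1}s_{\gamma^{(i)}}\!\left[\tfrac{X^{(-i)}}{(1-q\sigma^{-1})(t\sigma-1)}\right],\ H_\mu\right\rangle_{q,t}',
\]
and then observe that because $H_\mu$ is homogeneous of degree $|\vec\gamma|$, only the degree-$|\vec\gamma|$ summand of the left argument contributes---and that summand is exactly the right-hand side of \eqref{BernCor}, hence equals $s_{\iota\vec\gamma}\bigl[\tfrac{X^\bullet}{(1-q\sigma^{-1})(t\sigma-1)}\bigr]$, giving $K_{\vec\gamma,\mu}$ by \eqref{KostkaPair}. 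Your ``degree check'' at the end is actually the heart of the matter, not an afterthought; once you promote it to the main mechanism, your argument becomes the paper's. The separate treatment of $\gamma^{(0)}=\varnothing$ and the worry about interchanging $\{\cdot\}_0$ with the pairing are then unnecessary.
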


\begin{proof}
Applying (\ref{IntLem}), we have:
\begin{align}
\nonumber
&k_{\vec{\gamma}_{>1}}^\alpha[\iota D_\mu^\bullet]\\
\label{KGamma1}
&= 
\left\langle
\Omega\left[ \frac{X^{(0)}}{(1-q\sigma^{-1})(t\sigma-1)} \right] 
s_{\gamma^{(0)}_{>1}}\left[ \frac{X^{(0)}}{(1-q\sigma^{-1})(t\sigma-1)}-1 \right]
\prod_{i=1}^{r-1}s_{\gamma^{(i)}}\left[ \frac{X^{(-i)}}{(1-q\sigma^{-1})(t\sigma-1)} \right]
, H_\mu
\right\rangle_{q,t}'
\end{align}
Note that $H_\mu$ is homogeneous of degree $|\quot(\mu)|$.
The summand of the same degree in the left argument of (\ref{KGamma1}) is exactly the right-hand-side of (\ref{BernCor}).
The theorem follows from (\ref{KostkaPair}).
\end{proof}

\subsection{Global eigenfunctions}
Finally, we describe an analogue of Cherednik's \textit{global spherical function} \cite{ChereMM} (called the \textit{basic hypergeometric function} in \cite{StokC}).
Our analogy is imperfect because we work in the infinite-variable rather than the finite-variable case.
Nonetheless, we wish to initiate the study of such an object.

We will work with two alphabets of variables, $X^\bullet$ and $Y^\bullet$.
When it is unclear, we will embellish an operator with $(-)^X$ or $(-)^Y$ to specify which family of variables it is acting on.

\begin{defn}
For $k\in\ZZ/r\ZZ$ and an $r$-core $\alpha$, we define
\begin{equation}
\begin{aligned}
\mathcal{H}^{(k)}_\alpha\left[ X^\bullet,Y^\bullet \right] & \coloneq 
\left(\nabla_{w_0\sigma^{-k}\alpha}^Y\right)^{-1}\TT\left[ -Y^{(0)} \right]\Omega\left[\frac{-Y^{(0)}}{(1-q\sigma^{-1})(t\sigma-1)}  \right]\\
&\quad\times
\left(\nabla_{\sigma^{-k}\alpha}^X\right)^{-1}\TT\left[ -X^{(-k)} \right]\Omega\left[\frac{-X^{(-k)}}{(1-q\sigma^{-1})(t\sigma-1)}  \right]
\left(\nabla_{\sigma^{-k}\alpha}^X\right)^{-1}\Omega_{q,t}[X^{-\bullet}Y^\bullet].
\end{aligned}
\label{GlobalDef}
\end{equation}
\end{defn}

\begin{rem}\label{TwoShifts}
Notice that we only introduce shifts in the $X$-variables.
If we wish to shift the $Y$-variables by $\sigma^{-j}$ for $j\in\ZZ/r\ZZ$, then by Lemma \ref{NablaK}, that is just
\[
\mathcal{H}^{(j+k)}_\alpha[\sigma^j X^\bullet, \sigma^{-j}Y^\bullet].
\]
\end{rem}

\subsubsection{Symmetry}
$\mathcal{H}^{(k)}_\alpha[X^\bullet,Y^\bullet]$ exhibits an $X\leftrightarrow Y$ symmetry, but we must be careful when accounting for the cores and shifts.

\begin{thm}\label{GlobSym}
For $k\in\ZZ/r\ZZ$ and an $r$-core $\alpha$, we have
\[
\mathcal{H}_\alpha^{(k)}[X^\bullet,Y^\bullet]=\mathcal{H}_{w_0\sigma^{-k}\alpha}^{(k)}\left[ \sigma^{k}Y^\bullet, \sigma^{-k}X^\bullet \right].
\]
\end{thm}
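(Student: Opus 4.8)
The plan is to reduce the claimed $X \leftrightarrow Y$ symmetry of $\mathcal{H}^{(k)}_\alpha$ to the self-adjointness of the operator $\mathsf{V}$ (equivalently, Theorem \ref{ShiftedFourier} together with Corollary \ref{NablaAdjoint}), exactly as the $r=1$ global spherical function symmetry follows from the Tesler identity. First I would observe that $\Omega_{q,t}[X^{-\bullet}Y^\bullet]$ is the reproducing kernel for $\langle -,-\rangle_{q,t}'$, so that $\mathcal{H}^{(k)}_\alpha[X^\bullet, Y^\bullet]$ is, up to the various $\nabla^{-1}$ and $\TT[-X^{(-k)}]\Omega[\cdots]$ factors, the integral kernel of the composite operator appearing in (\ref{GlobalDef}) acting on the $Y$-variables. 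Concretely, for a test function $f[Y^\bullet]$ one has
\[
\left\langle \mathcal{H}^{(k)}_\alpha[X^\bullet, Y^\bullet], f[Y^\bullet]\right\rangle_{q,t}'^{Y}
= \left(\nabla_{\sigma^{-k}\alpha}^X\right)^{-1}\TT[-X^{(-k)}]\Omega\left[\frac{-X^{(-k)}}{(1-q\sigma^{-1})(t\sigma-1)}\right]\left(\nabla_{\sigma^{-k}\alpha}^X\right)^{-1}\left(M^Y f\right)[X^\bullet],
\]
where $M^Y$ is the single-variable operator $\left(\nabla_{w_0\sigma^{-k}\alpha}^Y\right)^{-1}\TT[-Y^{(0)}]\Omega\left[\frac{-Y^{(0)}}{(1-q\sigma^{-1})(t\sigma-1)}\right]$ pushed through the kernel by the reproducing property. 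So the content of Theorem \ref{GlobSym} is that $M^Y$ and its $X$-analogue, after conjugation by the remaining factors, are mutually adjoint with the appropriate core/shift bookkeeping. This is precisely the statement that $\mathsf{V}^{(k)}$ from Theorem \ref{ShiftedTes} is self-adjoint up to swapping $\alpha \leftrightarrow w_0\sigma^{-k}\alpha$ and $X \leftrightarrow Y$, which is built into Proposition \ref{VKProp} and Corollary \ref{NablaAdjoint}.

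The cleanest route, I think, is to expand both sides of the asserted identity in the wreath Macdonald basis attached to the core $\sigma^{-k}\alpha$ and use the explicit form of $\mathcal{H}^{(k)}_\alpha$. Using Proposition \ref{CauchyProp} to write $\Omega_{q,t}[X^{-\bullet}Y^\bullet] = \sum_{\core(\lambda) = \sigma^{-k}\alpha} H_\lambda^\dagger[X^\bullet]H_\lambda[Y^\bullet]/N_\lambda$, one applies $\left(\nabla_{\sigma^{-k}\alpha}^X\right)^{-1}$ twice and the $\TT$/$\Omega$ factor once in the $X$-variables, and the $\left(\nabla_{w_0\sigma^{-k}\alpha}^Y\right)^{-1}$ and $\TT[-Y^{(0)}]\Omega[\cdots]$ factor once in the $Y$-variables. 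Here one invokes: (i) Corollary \ref{NablaEv} (and Lemma \ref{NablaK}) to evaluate the $\nabla^{-1}$ eigenvalues as $H_\lambda[\iota D^\bullet_{w_0\core(\lambda)}]^{-1}$; (ii) the computation of $\nabla \Omega[\frac{X^{(-k)}}{(1-q\sigma^{-1})(t\sigma-1)}]\TT[X^{(-k)}]$ on $H_\lambda[\sigma^{-k}X^\bullet]$ from Theorem \ref{ShiftedFourier}, read backwards, to collapse the $X$-side into a multiple of $H_\mu[\sigma^{-k} X^\bullet]$ paired against $\mathbb{E}^{(k)}_\mu$-type series; (iii) the Macdonald--Koornwinder reciprocity of Corollary \ref{0Rec} (or directly Theorem \ref{MKThm} in the limit) to symmetrize the resulting scalar $H_\lambda[\iota D^\bullet_\mu]H_\mu[\iota D^\bullet_{w_0\core(\lambda)}]^{-1}$ into something manifestly invariant under $\lambda \leftrightarrow \mu$. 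After carrying this out, both sides of Theorem \ref{GlobSym} become the same double sum $\sum_{\lambda,\mu} c_{\lambda\mu} H_\lambda^\dagger[\text{-}][\sigma^{k}\text{-}] \, H_\mu[\text{-}][\sigma^{-k}\text{-}]$ with $c_{\lambda\mu} = c_{\mu\lambda}$, and since $w_0\sigma^{-k}(w_0\sigma^{-k}\alpha) = \alpha$ the swap $\alpha \mapsto w_0\sigma^{-k}\alpha$ on the right is exactly what matches the two cores up.

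The main obstacle I anticipate is purely bookkeeping: tracking which core each $\nabla^{-1}$ is attached to as one commutes it past the Cauchy kernel and the $\TT$/$\Omega$ intertwiners, and making sure the shift $\sigma^{-k}$ lands on the correct alphabet. The relation $\sigma w_0\alpha = w_0\sigma^{-1}\alpha$ noted before Proposition \ref{ShiftHProp}, together with Corollary \ref{NablaAdjoint} ($\nabla_\alpha^\dagger = \nabla_{w_0\alpha}$) and Lemma \ref{NablaK} ($\sigma^{-k}\nabla_{\core(\lambda)}\sigma^k = \nabla_{\sigma^{-k}\core(\lambda)}$), should handle all of it, but the identity involves three $\nabla^{-1}$'s, two $\TT$'s, two $\Omega$'s, and a kernel, so the risk is a sign or shift error rather than a conceptual gap. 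A secondary technical point is justifying the manipulations at the level of completions (both $\mathbb{E}^{(k)}_\lambda$ and $\Omega_{q,t}$ live in the degree completion of $\Lambda_{q,t}^{\otimes r}$ in one set of variables), but this is routine given that all the operators involved are degree-preserving or degree-lowering in the relevant alphabet. Once the scalar $c_{\lambda\mu}$ is shown to be symmetric via Corollary \ref{0Rec}, the theorem follows by comparing coefficients.
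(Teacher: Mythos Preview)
Your proposal would likely work in principle, but it is considerably more involved than necessary and takes a genuinely different route from the paper. You plan to expand both sides in the Macdonald basis, push all the operators through, and then invoke reciprocity (Corollary \ref{0Rec}) to exhibit a symmetric coefficient matrix $c_{\lambda\mu}$. That is a heavy hammer: reciprocity is itself a downstream consequence of the Tesler machinery, and you are proposing to feed it back in to prove what turns out to be an almost formal statement.

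The paper's proof is a direct three-line manipulation that uses only two ingredients, both of which you already identified as ``bookkeeping'': Corollary \ref{NablaAdjoint} and Lemma \ref{NablaK}. The key observation you are missing is that because $\Omega_{q,t}[X^{-\bullet}Y^\bullet]$ is the reproducing kernel for $\langle-,-\rangle_{q,t}'$, any operator $A^X$ acting on it in the $X$-variables can be replaced by its adjoint $(A^\dagger)^Y$ acting in the $Y$-variables. Applied to the innermost $(\nabla_{\sigma^{-k}\alpha}^X)^{-1}$ in the definition (\ref{GlobalDef}), this gives
\[
\left(\nabla_{\sigma^{-k}\alpha}^X\right)^{-1}\Omega_{q,t}[X^{-\bullet}Y^\bullet]
=\left(\nabla_{w_0\sigma^{-k}\alpha}^Y\right)^{-1}\Omega_{q,t}[X^{-\bullet}Y^\bullet],
\]
which one checks instantly by expanding the kernel as $\sum H_\lambda[X^\bullet]H_\lambda^\dagger[Y^\bullet]/N_\lambda$. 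After this swap, the $X$- and $Y$-operator blocks commute (they act on disjoint alphabets), so one simply reorders them to obtain the alternate expression (\ref{Sym1}). Finally, one writes out $\mathcal{H}_{w_0\sigma^{-k}\alpha}^{(k)}[\sigma^k Y^\bullet,\sigma^{-k}X^\bullet]$ from the definition and uses Lemma \ref{NablaK} to pass the $\sigma^{\pm k}$ through the nablas; the result is exactly (\ref{Sym1}). No reciprocity, no Tesler identity, no double sums.

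What each approach buys: the paper's argument is short and transparent, isolating the single nontrivial step (the adjoint swap on the kernel). Your approach, if carried out, would give an independent consistency check tying the symmetry directly to reciprocity, but at the cost of the ``sign or shift error'' risk you yourself flag, and it obscures the fact that the symmetry is really just self-adjointness of $\nabla$ in disguise.
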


\begin{proof}
By Corollary \ref{NablaAdjoint}, we have
\begin{equation*}
\begin{aligned}
\left(\nabla_{\sigma^{-k}\alpha}^X\right)^{-1}\Omega_{q,t}[X^{-\bullet}Y^\bullet]
&=\sum_{\substack{\lambda \\ \core(\lambda)=\sigma^{-k}\alpha}} \frac{\left(\nabla_{\sigma^{-k}\alpha}^X\right)^{-1}H_\lambda[X^\bullet]H_\lambda^\dagger[Y^\bullet]}{N_\lambda}\\
&= \sum_{\substack{\lambda \\ \core(\lambda)=\sigma^{-k}\alpha}} \frac{H_\lambda[X^\bullet]\left(\nabla_{w_0\sigma^{-k}\alpha}^Y\right)^{-1}H_\lambda^\dagger[Y^\bullet]}{N_\lambda}\\
&=\left(\nabla_{w_0\sigma^{-k}\alpha}^Y\right)^{-1}\Omega_{q,t}[X^{-\bullet}Y^\bullet] .
\end{aligned}
\label{NablaCauchy}
\end{equation*}
Thus, we also have
\begin{equation}
\begin{aligned}
\mathcal{H}^{(k)}_\alpha\left[ X^\bullet,Y^\bullet \right] & = 
\left(\nabla_{\sigma^{-k}\alpha}^X\right)^{-1}\TT\left[ -X^{(-k)} \right]\Omega\left[\frac{-X^{(-k)}}{(1-q\sigma^{-1})(t\sigma-1)}  \right]\\
&\quad\times
\left(\nabla_{w_0\sigma^{-k}\alpha}^Y\right)^{-1}\TT\left[ -Y^{(0)} \right]\Omega\left[\frac{-Y^{(0)}}{(1-q\sigma^{-1})(t\sigma-1)}  \right]
\left(\nabla_{w_0\sigma^{-k}\alpha}^Y\right)^{-1}\Omega_{q,t}[X^{-\bullet}Y^\bullet].
\end{aligned}
\label{Sym1}
\end{equation}
We use Lemma \ref{NablaK} to appropriately change the nablas when computing $\mathcal{H}_{w_0\sigma^{-k}\alpha}^{(k)}[\sigma^k Y^\bullet, \sigma^{-k}X^\bullet]$, which yields (\ref{Sym1}).
\end{proof}

\begin{rem}
Should we have chosen to define $\mathcal{H}_\alpha^{(k)}$ with a shift on the $Y$-variables as well, then by Remark {\ref{TwoShifts}}, the symmmetry becomes 
\[
\mathcal{H}_\alpha^{(j+k)}\left[\sigma^j X^\bullet, \sigma^{-j}Y^\bullet\right]
=\mathcal{H}_{w_0\sigma^{j+k}\alpha}^{(j+k)}\left[ \sigma^k Y^\bullet, \sigma^{-k}X^\bullet \right].
\]
\end{rem}

\subsubsection{Eigenfunction equations}
The following follows from applying $\sigma^{-k}$ Proposition \ref{MainPropertiesofV}.

\begin{lem}\label{DeltaSkew}
We have
\[
h_n^\perp[X^{(i-k)}]\mathsf{V}^{(k)}=\mathsf{V}^{(k)}\Delta_{(k)}\left[ h_n\left[ \frac{X^{(i)}}{(1-q\sigma)(1-t\sigma^{-1})} \right] \right].
\]
\end{lem}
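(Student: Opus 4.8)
The plan is to deduce this from the second bullet of Proposition \ref{MainPropertiesofV} by conjugating with $\sigma^k$. Recall that by definition $\mathsf{V}^{(k)}=\sigma^{-k}\mathsf{V}\sigma^k$ and $\Delta_{(k)}[f]=\sigma^{-k}\Delta[f]\sigma^k$, so it suffices to understand how the skewing operator $h_n^\perp[X^{(i-k)}]$ conjugates through $\sigma^{-k}$ and then to invoke the relation $h_n^\perp[X^{(i)}]\mathsf{V}=\mathsf{V}\Delta\left[h_n\left[\frac{X^{(i)}}{(1-q\sigma)(1-t\sigma^{-1})}\right]\right]$ established inside the proof of Proposition \ref{MainPropertiesofV}, applied to $f=h_n[X^{(i)}]$ (viewed as a symmetric function in color $i$ alone).

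First I would record the conjugation rule for $\TT$-operators. In the proof of Lemma \ref{NablaK} it is observed that $\sigma^{-1}\TT[X^{(i)}z]\sigma=\TT[X^{(i-1)}z]$. Since $\TT[vX^{(j)}]=\sum_{n\ge0}v^nh_n^\perp[X^{(j)}]$, comparing coefficients of $v^n$ gives $\sigma^{-1}h_n^\perp[X^{(j)}]\sigma=h_n^\perp[X^{(j-1)}]$, equivalently $h_n^\perp[X^{(j)}]\sigma^{-1}=\sigma^{-1}h_n^\perp[X^{(j+1)}]$. Iterating this identity $k$ times yields $h_n^\perp[X^{(i-k)}]\sigma^{-k}=\sigma^{-k}h_n^\perp[X^{(i)}]$.

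The remainder is then a one-line computation: starting from $h_n^\perp[X^{(i-k)}]\mathsf{V}^{(k)}=h_n^\perp[X^{(i-k)}]\sigma^{-k}\mathsf{V}\sigma^k$, the conjugation identity moves $\sigma^{-k}$ to the front, producing $\sigma^{-k}h_n^\perp[X^{(i)}]\mathsf{V}\sigma^k$; the second bullet of Proposition \ref{MainPropertiesofV} rewrites the middle factor as $\mathsf{V}\Delta\left[h_n\left[\frac{X^{(i)}}{(1-q\sigma)(1-t\sigma^{-1})}\right]\right]$; and inserting $\sigma^k\sigma^{-k}=\mathrm{id}$ between $\mathsf{V}$ and the Delta operator regroups everything as $\left(\sigma^{-k}\mathsf{V}\sigma^k\right)\left(\sigma^{-k}\Delta[\cdots]\sigma^k\right)=\mathsf{V}^{(k)}\Delta_{(k)}\left[h_n\left[\frac{X^{(i)}}{(1-q\sigma)(1-t\sigma^{-1})}\right]\right]$, which is the claim.

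There is no genuine obstacle here; the one point requiring care is the direction of the index shift in the conjugation rule, so that the color $i-k$ appearing on the $\mathsf{V}^{(k)}$-side matches the color $i$ appearing inside $\Delta_{(k)}$. If one prefers to avoid extracting coefficients twice, the whole argument can equivalently be run at the level of the generating operator $\TT[vX^{(i-k)}]$, extracting the coefficient of $v^n$ only at the end.
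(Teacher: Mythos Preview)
Your proof is correct and is exactly the argument the paper has in mind: the paper's entire proof is the single sentence ``follows from applying $\sigma^{-k}$ to Proposition \ref{MainPropertiesofV}'', and you have carefully unpacked what that conjugation means, checking the index shift for $h_n^\perp$ via the relation $\sigma^{-1}\TT[X^{(i)}z]\sigma=\TT[X^{(i-1)}z]$ from the proof of Lemma \ref{NablaK}.
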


Recall that our Delta operators depend on a core $\alpha$.
Let $\Delta_{(k)}[f]_\alpha$ denote the operator on $\Lambda_{q,t}^{\otimes r}$ such that
\[
\Delta_{(k)}[f]\left(g\otimes e^{\sigma^{-k}\alpha}\right)=\Delta_{(k)}[f]_\alpha(g)\otimes e^{\sigma^{-k}\alpha}.
\]

\begin{thm}\label{BispectralThm}
$\mathcal{H}_\alpha^{(k)}[X^\bullet,Y^\bullet]$ solves the bispectral problem: for any $f\in\Lambda_{q,t}^{\otimes r}$,
\begin{align}
\label{Bispectral1}
\Delta_{(k)}\left[f\right]^X_\alpha\left( \mathcal{H}_\alpha^{(k)}[X^\bullet, Y^\bullet] \right)&= \left(f\left[ -\sigma^k\iota Y^\bullet \right]\right)_{\sigma^{-k}\alpha}^\mathfrak{F}\mathcal{H}_\alpha^{(k)}[X^\bullet, Y^\bullet], \text{ and }\\
\label{Bispectral2}
\Delta_{(0)}\left[f\right]^Y_{w_0\sigma^{-k}\alpha}\left( \mathcal{H}_\alpha^{(k)}[X^\bullet, Y^\bullet] \right)&= \left(f\left[ -\iota X^\bullet \right]\right)_{w_0\sigma^{-k}\alpha}^{\mathfrak{F}}\mathcal{H}_\alpha^{(k)}[X^\bullet, Y^\bullet].
\end{align}
where $f_\alpha^{\mathfrak{F}}$ was defined in (\ref{IntF}).
\end{thm}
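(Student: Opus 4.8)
The plan is to prove the bispectral equations by recognizing $\mathcal{H}_\alpha^{(k)}[X^\bullet, Y^\bullet]$ as (up to the $\nabla^{-1}$ prefactors) an image of the Cauchy kernel under the operators $\mathsf{V}^{(k)}$, $\mathsf{V}^{(0)}$, and the interpolation twist $(-)_\beta^{\mathfrak{F}}$, and then to transport the known eigenoperator relations for $\mathsf{V}$ (Proposition \ref{MainPropertiesofV}, Lemma \ref{DeltaSkew}) across the Cauchy kernel. More precisely, first I would rewrite \eqref{GlobalDef} using the definition $f_\beta^{\mathfrak{F}}=\nabla_{w_0\beta}^{-1}\TT[-X^{(0)}]f$ from \eqref{IntF} together with the formula for $\mathsf{V}^{(k)}$ from Theorem \ref{ShiftedTes}: the middle block $\TT[-X^{(-k)}]\Omega[\tfrac{-X^{(-k)}}{(1-q\sigma^{-1})(t\sigma-1)}]$ composed with the two surrounding $\nabla^{-1}$'s is, after conjugation, essentially $(\mathsf{V}_*^{(k)})^{X}$-type data acting on the $X$-variables, and likewise the outer block acts as the analogous operator in $Y$. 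The cleanest bookkeeping is to expand $\Omega_{q,t}[X^{-\bullet}Y^\bullet]$ via Proposition \ref{CauchyProp} in the basis $\{H_\lambda[X^\bullet]H_\lambda^\dagger[Y^\bullet]/N_\lambda\}$ with $\core(\lambda)=\sigma^{-k}\alpha$, and to check that each summand is an eigenvector of both sides of \eqref{Bispectral1} and \eqref{Bispectral2}.

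For \eqref{Bispectral1}: acting by $\Delta_{(k)}[f]^X_\alpha$ on the $H_\lambda[X^\bullet]$ factor gives the eigenvalue $f[-D_\lambda^\bullet]$ by Proposition \ref{ShiftedDeltaOp} (here the $\nabla^{-1}$'s in front of the $X$-block only rescale $H_\lambda$, and $\Delta_{(k)}[f]$ commutes with $\nabla_{\sigma^{-k}\alpha}$ and with $\nabla_{w_0\sigma^{-k}\alpha}$ since they share the eigenbases $\{H_\mu[\sigma^{-k}X^\bullet]\}$ and $\{H_\mu^\dagger[\sigma^k X^\bullet]\}$ respectively). On the other hand, by Corollary \ref{NablaEvK} and the shifted Tesler identity (Theorem \ref{ShiftedFourier}/\ref{VMacK}), the $Y$-side block applied to $H_\lambda^\dagger[Y^\bullet]$ produces exactly the delta-function expansion of $\mathbb{E}_\lambda^{(k)}$-type objects, whose pairing against $f$ computes $f[\sigma^k\iota D_\lambda^\bullet]$; the point is that $-D_\lambda^\bullet=$ the argument appearing after running the reciprocity through, so $f[-D_\lambda^\bullet]=(f[-\sigma^k\iota Y^\bullet])_{\sigma^{-k}\alpha}^{\mathfrak{F}}$ evaluated at the relevant character. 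I would make this precise by the identity \eqref{IntLem0}: $g_{\sigma^{-k}\alpha}^{\mathfrak{F}}[\iota D_\lambda^\bullet]=\langle g, H_\lambda[1+X^\bullet]\rangle_{q,t}'$, which converts the operator $(f[-\sigma^k\iota Y^\bullet])_{\sigma^{-k}\alpha}^{\mathfrak{F}}$ acting on the $Y$-variables into a scalar matching $f[-D_\lambda^\bullet]$ after invoking Proposition \ref{ShiftDelta} and Corollary \ref{DDiag}. Equation \eqref{Bispectral2} is the mirror image, obtained either by a direct repeat of the argument with the roles of $X$ and $Y$ swapped and using the $k=0$ shift on the $Y$-side, or — more slickly — by simply invoking the symmetry Theorem \ref{GlobSym}, which exchanges $X\leftrightarrow Y$ and $\alpha\leftrightarrow w_0\sigma^{-k}\alpha$, turning \eqref{Bispectral1} into \eqref{Bispectral2}.

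The main obstacle I anticipate is purely organizational: keeping the cores and shifts consistent across the four $\nabla^{-1}$'s and the two $(-)^{\mathfrak{F}}$-twists. In particular one must verify that $\Delta_{(k)}[f]^X$ genuinely commutes past $(\nabla_{\sigma^{-k}\alpha}^X)^{-1}$, $\TT[-X^{(-k)}]$, and $\Omega[\tfrac{-X^{(-k)}}{(1-q\sigma^{-1})(t\sigma-1)}]$ \emph{up to} converting it into a skewing operator on the other side — this is exactly the content of Lemma \ref{DeltaSkew} (applied with the input $h_n[X^{(i)}/((1-q\sigma)(1-t\sigma^{-1}))]$ and then extended algebraically to all $f$ using that the $\hat h_n^{(p)}$, equivalently the modified complete homogeneous functions, generate $\Lambda_{q,t}^{\otimes r}$). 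So the real work is to assemble Lemma \ref{DeltaSkew}, Proposition \ref{MainPropertiesofV}, and the interpolation lemma \eqref{IntLem0}–\eqref{IntLem} into a single chain of equalities; once the correct intertwining $\Delta_{(k)}[f]^X\,(\text{$X$-block}) = (\text{$X$-block})\,\big(\text{skew version of }f\big)^X$ is in hand, the skew operator hits $\Omega_{q,t}[X^{-\bullet}Y^\bullet]$ and, by the standard Cauchy-kernel adjunction $\langle \Phi^X g, -\rangle = \langle g, (\Phi^\dagger)^Y-\rangle$ between the two alphabets, moves over to the $Y$-variables as the claimed operator $(f[-\sigma^k\iota Y^\bullet])_{\sigma^{-k}\alpha}^{\mathfrak{F}}$. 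I would present \eqref{Bispectral1} in full and then dispatch \eqref{Bispectral2} in one line via Theorem \ref{GlobSym}.
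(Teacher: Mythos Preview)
Your third paragraph is the paper's proof almost verbatim: apply Lemma \ref{DeltaSkew} to trade $\Delta_{(k)}[h_n[\tfrac{X^{(i)}}{(1-q\sigma)(1-t\sigma^{-1})}]]^X$ past the $X$-block $(\mathsf{V}^{(k)})^{-1,X}$ for the skewing operator $h_n^\perp[X^{(i-k)}]$, let that hit $\Omega_{q,t}[X^{-\bullet}Y^\bullet]$ and convert to multiplication by $h_n[\tfrac{Y^{(k-i)}}{(1-q\sigma^{-1})(t\sigma-1)}]=f[-\sigma^k\iota Y^\bullet]$, then push this $Y$-function leftward through the $Y$-block $(\nabla^Y_{w_0\sigma^{-k}\alpha})^{-1}\TT[-Y^{(0)}]\Omega[\tfrac{-Y^{(0)}}{\ldots}]$ to produce $(f[-\sigma^k\iota Y^\bullet])^{\mathfrak F}_{\sigma^{-k}\alpha}$; finally extend from the generators $h_n[\ldots]$ to all $f$. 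The paper likewise obtains \eqref{Bispectral2} by invoking the symmetric rewriting \eqref{Sym1} from Theorem \ref{GlobSym}, exactly as you propose.

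Your second paragraph, however, does not work as written and should be dropped. The Cauchy expansion $\Omega_{q,t}=\sum_{\core(\lambda)=\sigma^{-k}\alpha} H_\lambda[X^\bullet]H_\lambda^\dagger[Y^\bullet]/N_\lambda$ does \emph{not} give $X$-eigenvectors of $\Delta_{(k)}[f]^X_\alpha$: by Proposition \ref{ShiftedDeltaOp} the eigenbasis is $\{H_\lambda[\sigma^{-k}X^\bullet]:\core(\lambda)=\alpha\}$, not $\{H_\lambda[X^\bullet]:\core(\lambda)=\sigma^{-k}\alpha\}$, and these are genuinely different bases. Nor do the $\nabla^{-1}$'s ``only rescale $H_\lambda$'' in a way that lets $\Delta_{(k)}[f]$ slide through the full $X$-block; that sliding is precisely the nontrivial intertwining of Lemma \ref{DeltaSkew}, which you correctly invoke later. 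The eigenbasis heuristic is thus misleading here, and the clean route is to go straight to the argument of your third paragraph, which is what the paper does.
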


\begin{proof}
First consider (\ref{Bispectral1}).
Using Lemma \ref{DeltaSkew}, we have:
\begin{align}
\nonumber
&\Delta_{(k)}\left[ h_n\left[ \frac{X^{(i)}}{(1-q\sigma)(1-t\sigma^{-1})} \right] \right]^X_\alpha\left( \mathcal{H}_\alpha^{(k)}[X^\bullet, Y^\bullet] \right)\\
\label{Bi1}
&=
\left(\nabla_{w_0\sigma^{-k}\alpha}^Y\right)^{-1}\TT\left[ -Y^{(0)} \right]\Omega\left[\frac{-Y^{(0)}}{(1-q\sigma^{-1})(t\sigma-1)}  \right]\\
\label{Bi2}
&\quad\times
\left(\nabla_{\sigma^{-k}\alpha}^X\right)^{-1}\TT\left[ -X^{(-k)} \right]\Omega\left[\frac{-X^{(-k)}}{(1-q\sigma^{-1})(t\sigma-1)}  \right]
\left(\nabla_{\sigma^{-k}\alpha}^X\right)^{-1}h_n^\perp[X^{(i-k)}]\Omega_{q,t}[X^{-\bullet}Y^\bullet].
\end{align}
Notice that
\[
\TT[X^{(i-k)}u]\left(\Omega_{q,t}[X^{-\bullet}Y^\bullet]\right)=\Omega_{q,t}[X^{-\bullet}Y^\bullet]\Omega\left[ \frac{Y^{(k-i)}u}{(1-q\sigma^{-1})(t\sigma-1)} \right].
\]
Therefore, in (\ref{Bi2}),
\begin{align*}
h_n^\perp[X^{(i-k)}]\Omega_{q,t}[X^{-\bullet}Y^\bullet]
&=h_n\left[ \frac{Y^{(k-i)}}{(1-q\sigma^{-1})(t\sigma-1)} \right]\Omega_{q,t}[X^{-\bullet}Y^\bullet]\\
&= h_n\left[\sigma^k\iota\left( \frac{Y^{(i)}}{(1-q\sigma)(t\sigma^{-1}-1)}\right) \right]\Omega_{q,t}[X^{-\bullet}Y^\bullet].
\end{align*}
Commuting past (\ref{Bi1}) yields
\[
\Delta_{(k)}\left[ h_n\left[ \frac{X^{(i)}}{(1-q\sigma)(1-t\sigma^{-1})} \right] \right]^X_\alpha\left( \mathcal{H}_\alpha^{(k)}[X^\bullet, Y^\bullet] \right)
=
\left(h_n\left[\sigma^k\iota\left( \frac{Y^{(i)}}{(1-q\sigma)(t\sigma^{-1}-1)}\right) \right]\right)_{\sigma^{-k}\alpha}^{\mathfrak{F}}\mathcal{H}_\alpha^{(k)}.
\]
Because these modified $h_n$ generate $\Lambda_{q,t}^{\otimes r}$, (\ref{Bispectral1}) follows for general $f$.
The proof of (\ref{Bispectral2}) is similar, after applying (\ref{Sym1}).
\end{proof}

\begin{rem}
In the spirit of Remark \ref{TwoShifts}, we have:
\begin{align*}
\Delta_{(k)}\left[f\right]^X_\alpha\left( \mathcal{H}_\alpha^{(j+k)}[\sigma^jX^\bullet, \sigma^{-j}Y^\bullet] \right)&= \left(f\left[ -\sigma^k\iota Y^\bullet \right]\right)_{\sigma^{-k}\alpha}^\mathfrak{F}\mathcal{H}_\alpha^{(j+k)}[\sigma^jX^\bullet, \sigma^{-j}Y^\bullet]\\
\Delta_{(j)}\left[f\right]^Y_{w_0\sigma^{-k}\alpha}\left( \mathcal{H}_\alpha^{(j+k)}[\sigma^j X^\bullet, \sigma^{-j}Y^\bullet] \right)&= \left(f\left[ -\sigma^j\iota X^\bullet \right]\right)_{w_0\sigma^{-k}\alpha}^{\mathfrak{F}}\mathcal{H}_\alpha^{(j+k)}[\sigma^jX^\bullet, \sigma^{-j}Y^\bullet].
\end{align*}
\end{rem}

\begin{rem}
In the finite-variable setting of \cite{ChereMM}, the analogue of $\mathsf{V}$ is written in terms of the \textit{gaussian}, which commutes with multiplication operators.
Here, we can drop the left two nabla operators in the definition of $\mathcal{H}_\alpha^{(k)}$, and then $f$ would not be modified by $(-)_\alpha^{\mathfrak{F}}$ as in Theorem \ref{BispectralThm}. 
\end{rem}

\subsubsection{Kernel of the Fourier pairing}
Finally, let us mention a simpler variant, which is the reproducing kernel for the Fourier pairing $\langle -, - \rangle_{\alpha}^{\mathfrak{F}}$ defined in \ref{FourPair}.

\begin{defn}
For an $r$-core $\alpha$, we set
\[
\mathcal{F}_\alpha[X^\bullet,Y^\bullet] \coloneq (\nabla_\alpha^X)^{-1}\TT[-X^{(0)}](\nabla_{w_0\alpha}^Y)^{-1}\TT[-Y^{(0)}]\Omega_{q,t}[X^{-\bullet}Y^\bullet].
\]
\end{defn}

\begin{thm}
$\mathcal{F}_\alpha$ satisfies the following properties:
\begin{enumerate}
\item \textit{Symmetry}:
\[
\mathcal{F}_\alpha[X^\bullet, Y^\bullet]
=\mathcal{F}_{w_0\alpha}[Y^\bullet, X^\bullet].
\]
\item \textit{Eigenfunction equations}: for $f\in\Lambda_{q,t}^{\otimes r}$,
\begin{align*}
\Delta[f]_\alpha^X\mathcal{F}_\alpha[X^\bullet, Y^\bullet]
&= f[-\iota Y^\bullet]\mathcal{F}_\alpha[X^\bullet, Y^\bullet]~ \text{ and }\\
\Delta[f]_{w_0\alpha}^Y\mathcal{F}_\alpha[X^\bullet, Y^\bullet]
&= f[-\iota X^\bullet]\mathcal{F}_\alpha[X^\bullet, Y^\bullet].
\end{align*}
\item \textit{Specialization to Macdonald}: for $\lambda$ with $\core(\lambda)=\alpha$,
\begin{align*}
\mathcal{F}_\alpha[X^\bullet, \iota D_\lambda^\bullet]&= \frac{H_\lambda[X^\bullet]}{H_\lambda[\iota D_{w_0\alpha}^\bullet]}~ \text{ and }\\
\mathcal{F}_\alpha[\iota D_{w_0\lambda}^\bullet, Y^\bullet]&= \frac{H_{w_0\lambda}[Y^\bullet]}{H_{w_0\lambda}[\iota D_\alpha^\bullet]}.
\end{align*}
\end{enumerate}
\end{thm}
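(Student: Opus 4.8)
The plan is to reduce all three properties to the single identity
\[
\mathcal{F}_\alpha[X^\bullet, Y^\bullet] = (\mathsf{V}^{X}_\alpha)^{-1}\,\Omega_{q,t}[X^{-\bullet}Y^\bullet] = (\mathsf{V}^{Y}_{w_0\alpha})^{-1}\,\Omega_{q,t}[X^{-\bullet}Y^\bullet],
\]
where $\mathsf{V}^{Z}_\beta \coloneq \nabla_\beta^Z\,\Omega\bigl[\tfrac{Z^{(0)}}{(1-q\sigma^{-1})(t\sigma-1)}\bigr]\,\TT[Z^{(0)}]\,\nabla_\beta^Z$ is the explicit operator of Theorem \ref{Tesler} (core $\beta$ encoded in the nablas), acting on the alphabet $Z$. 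To prove it I would start from the definition of $\mathcal{F}_\alpha$, push the mutually commuting $Y$-operators $(\nabla_{w_0\alpha}^Y)^{-1}\TT[-Y^{(0)}]$ to the left of the $X$-operators, then commute $\TT[-X^{(0)}]$ past the Cauchy kernel using
\[
\TT[-X^{(0)}]\,\Omega_{q,t}[X^{-\bullet}Y^\bullet] = \Omega_{q,t}[X^{-\bullet}Y^\bullet]\,\Omega\!\left[-\tfrac{Y^{(0)}}{(1-q\sigma^{-1})(t\sigma-1)}\right]
\]
(the $k=i=0$ case, read with the plethystic minus, of the relation used in the proof of Theorem \ref{BispectralThm}), and finally slide the remaining $(\nabla_\alpha^X)^{-1}$ across this now pure-$Y$ factor and onto $\Omega_{q,t}[X^{-\bullet}Y^\bullet]$, where it may be replaced by $(\nabla_{w_0\alpha}^Y)^{-1}$ via the identity from the proof of Theorem \ref{GlobSym}. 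Regrouping the three resulting $Y$-blocks as $\bigl(\nabla_{w_0\alpha}^Y\,\Omega[\tfrac{Y^{(0)}}{(1-q\sigma^{-1})(t\sigma-1)}]\,\TT[Y^{(0)}]\,\nabla_{w_0\alpha}^Y\bigr)^{-1}$ gives the second displayed form, and the first follows from the symmetry in (1).

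\emph{Property (1)} needs none of this: in the definition the $X$- and $Y$-operators commute, $w_0^2 = \mathrm{id}$, and $\Omega_{q,t}[X^{-\bullet}Y^\bullet] = \Omega_{q,t}[Y^{-\bullet}X^\bullet]$ by Proposition \ref{CauchyProp}, so interchanging the names $X\leftrightarrow Y$ and replacing $\alpha$ by $w_0\alpha$ in the definition of $\mathcal{F}_{w_0\alpha}[Y^\bullet,X^\bullet]$ literally returns $\mathcal{F}_\alpha[X^\bullet,Y^\bullet]$. For \emph{property (2)}, using the key identity I write $\Delta[f]_\alpha^X\mathcal{F}_\alpha = \Delta[f]_\alpha^X(\mathsf{V}_\alpha^X)^{-1}\Omega_{q,t}[X^{-\bullet}Y^\bullet]$; rearranging the second bullet of Proposition \ref{MainPropertiesofV} gives $\Delta[f]\,\mathsf{V}^{-1} = \mathsf{V}^{-1}\bigl(f[(1-q\sigma)(1-t\sigma^{-1})X^\bullet]\bigr)^{\perp}$, so this equals $(\mathsf{V}_\alpha^X)^{-1}\bigl(f[(1-q\sigma)(1-t\sigma^{-1})X^\bullet]\bigr)^{\perp,X}\Omega_{q,t}[X^{-\bullet}Y^\bullet]$. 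Since $\Omega_{q,t}[X^{-\bullet}Y^\bullet]$ is the Hall-pairing Cauchy kernel with $Y^\bullet$ replaced by $Y^\bullet/((1-q\sigma^{-1})(t\sigma-1))$, a skewing operator $g^{\perp,X}$ acts on it by multiplication by $g\bigl[\tfrac{\iota Y^\bullet}{(1-q\sigma)(t\sigma^{-1}-1)}\bigr]$ (computing on power sums, using $\iota\sigma=\sigma^{-1}\iota$); with $g = f[(1-q\sigma)(1-t\sigma^{-1})X^\bullet]$ the factor $(1-q\sigma)(1-t\sigma^{-1})$ cancels the denominator up to a sign, leaving $f[-\iota Y^\bullet]$, which commutes past $(\mathsf{V}_\alpha^X)^{-1}$. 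Hence $\Delta[f]_\alpha^X\mathcal{F}_\alpha = f[-\iota Y^\bullet]\mathcal{F}_\alpha$, and the $Y$-version follows by applying this with $X\leftrightarrow Y$, $\alpha\mapsto w_0\alpha$ and invoking (1).

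For \emph{property (3)}, writing $\mathcal{F}_\alpha = (\mathsf{V}_\alpha^X)^{-1}\Omega_{q,t}[X^{-\bullet}Y^\bullet]$ and specializing $Y^\bullet\mapsto\iota D_\lambda^\bullet$, the computation in the proof of Proposition \ref{DeltaExpand} identifies $\Omega_{q,t}[X^{-\bullet}Y^\bullet]\big|_{Y^\bullet\mapsto\iota D_\lambda^\bullet}$ with $\mathbb{E}_\lambda[X^\bullet]$, so $\mathcal{F}_\alpha[X^\bullet,\iota D_\lambda^\bullet] = (\mathsf{V}_\alpha)^{-1}\mathbb{E}_\lambda[X^\bullet]$; when $\core(\lambda)=\alpha$, Theorem \ref{VThm} gives $\mathsf{V}_\alpha H_\lambda = H_\lambda[\iota D_{w_0\alpha}^\bullet]\,\mathbb{E}_\lambda$, yielding the first formula, and the second follows again from (1) (with $\alpha\mapsto w_0\alpha$, $\lambda\mapsto w_0\lambda$, since $\core(w_0\lambda)=w_0\alpha$).

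I expect the key identity to be the only real obstacle: it is pure operator bookkeeping, but one must track precisely which alphabet each $\nabla$, $\TT$, and $\Omega$-factor acts on and handle the plethystic identities ($\iota\sigma = \sigma^{-1}\iota$ together with the cancellation of the $\sigma$-twisted quadratic factors), exactly in the style of the proofs of Theorems \ref{GlobSym} and \ref{BispectralThm}. Once it is available, properties (1)--(3) are short consequences of facts already established for $\mathsf{V}$, $\Omega_{q,t}$, and $\mathbb{E}_\lambda$.
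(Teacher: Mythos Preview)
Your approach is essentially the same as the paper's: both proofs hinge on rewriting $\mathcal{F}_\alpha$ as $(\mathsf{V}_\alpha^X)^{-1}\Omega_{q,t}[X^{-\bullet}Y^\bullet]$ (equivalently $(\mathsf{V}_{w_0\alpha}^Y)^{-1}\Omega_{q,t}[X^{-\bullet}Y^\bullet]$) via the commutation of $\TT$ with the Cauchy kernel and the identity $(\nabla_\alpha^X)^{-1}\Omega_{q,t}=(\nabla_{w_0\alpha}^Y)^{-1}\Omega_{q,t}$ from the proof of Theorem~\ref{GlobSym}, and then invoking Proposition~\ref{MainPropertiesofV} for (2) and (3). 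The only cosmetic difference is that you push $\TT[-X^{(0)}]$ through the kernel to reach the $Y$-form first, whereas the paper pushes $\TT[-Y^{(0)}]$ through to reach the $X$-form first; these are mirror computations, and your more explicit treatment of the skewing action on $\Omega_{q,t}$ in (2) matches exactly what the paper abbreviates as ``as in Theorem~\ref{BispectralThm}.''
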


\begin{proof}
Property (1) is obvious.
The other two follow from manipulations similar to those done in the proof of Theorem \ref{GlobSym}.
Namely,
\begin{align*}
\mathcal{F}_\alpha[X^\bullet,Y^\bullet]&=(\nabla_\alpha^X)^{-1}\TT[-X^{(0)}](\nabla_{w_0\alpha}^Y)^{-1}\TT[-Y^{(0)}]\Omega_{q,t}[X^{-\bullet}Y^\bullet]\\
&= (\nabla_\alpha^X)^{-1}\TT[-X^{(0)}]\Omega\left[ \frac{-X^{(0)}}{(1-q\sigma^{-1})(t\sigma-1)} \right](\nabla_{w_0\alpha}^Y)^{-1}\Omega_{q,t}[X^{-\bullet}Y^\bullet]\\
&= (\nabla_\alpha^X)^{-1}\TT[-X^{(0)}]\Omega\left[ \frac{-X^{(0)}}{(1-q\sigma^{-1})(t\sigma-1)} \right](\nabla_{\alpha}^X)^{-1}\Omega_{q,t}[X^{-\bullet}Y^\bullet],
\end{align*}
which is $\mathsf{V}^{-1}$ at $e^\alpha$ applied to the $X$-variables. 
We prove the first eigenfunction equation as in Theorem \ref{BispectralThm}, using the second property in Proposition \ref{MainPropertiesofV}. The first specialization to Macdonalds is the fourth property of Proposition \ref{MainPropertiesofV}.
A similar conclusion for the $Y^{\bullet}$ can be made from the following equalities:
\begin{align*}
\mathcal{F}_\alpha[X^\bullet,Y^\bullet]&=(\nabla_\alpha^X)^{-1}\TT[-X^{(0)}](\nabla_{w_0\alpha}^Y)^{-1}\TT[-Y^{(0)}]\Omega_{q,t}[X^{-\bullet}Y^\bullet]\\
&= (\nabla_{w_0\alpha}^Y)^{-1}\TT[-Y^{(0)}]\Omega\left[ \frac{-Y^{(0)}}{(1-q\sigma^{-1})(t\sigma-1)} \right](\nabla_{w_0\alpha}^Y)^{-1}\Omega_{q,t}[X^{-\bullet}Y^\bullet].\qedhere
\end{align*}
\end{proof}

\section{Acknowledgements}
We'd like to thank
Michele D'Adderio,
Hunter Dinkins,
Mark Haiman, 
Seung Jin Lee, 
Jaeseong Oh, 
Daniel Orr, 
Mark Shimozono,
and 
Meesue Yoo for helpful discussions.
Computations were done on MAPLE using a package developed by Hunter Dinkins.
We must especially thank Anton Mellit, whose ingenious methods in the $r=1$ case inspired the breakthrough idea for our work. Last, but most definitely not least, we thank Adriano Garsia for his numerous lessons regarding Macdonald polynomials and plethystic substitution.
This work was supported by ERC consolidator grant No. 101001159 ``Refined invariants in combinatorics, low-dimensional topology and geometry of moduli spaces''.

\bibliographystyle{alpha}
\bibliography{Wreath}

\end{document}